\newcommand*\patchAmsMathEnvironmentForLineno[1]{%
  \expandafter\let\csname old#1\expandafter\endcsname\csname #1\endcsname
  \expandafter\let\csname oldend#1\expandafter\endcsname\csname end#1\endcsname
  \renewenvironment{#1}%
     {\linenomath\csname old#1\endcsname}%
     {\csname oldend#1\endcsname\endlinenomath}}% 
\newcommand*\patchBothAmsMathEnvironmentsForLineno[1]{%
  \patchAmsMathEnvironmentForLineno{#1}%
  \patchAmsMathEnvironmentForLineno{#1*}}%
\numberwithin{equation}{subsection}
\theoremstyle{plain}
\newtheorem{thm}{Theorem}[subsection]
  \theoremstyle{plain}
  \newtheorem{prop}[thm]{Proposition}
    \newtheorem{defn-prop}[thm]{Definition/Proposition}
  \theoremstyle{definition}
  \newtheorem{defn}[thm]{Definition}
  \newtheorem{exam}[thm]{Example}
  \newtheorem{rmk}[thm]{Remark}
  \theoremstyle{plain}
  \newtheorem{lem}[thm]{Lemma}
    \newtheorem{cor}[thm]{Corollary}
\theoremstyle{remark}
\newtheorem*{claim}{Claim}
\newcommand{\sce}{\mathscr{E}}
\newcommand{\scw}{\mathscr{W}}
\newcommand{\scm}{\mathscr{M}}
\newcommand{\sco}{\mathscr{O}}
\newcommand{\scd}{\mathscr{D}}
\newcommand{\GL}{\operatorname{GL}}
\newcommand{\leftmod}{\backslash}
\newcommand{\alg}{\operatorname{alg}}
\newcommand{\rec}{\operatorname{rec}}
\newcommand{\Art}{\operatorname{Art}}
\newcommand{\Gal}{\operatorname{Gal}}
\newcommand{\Cl}{\operatorname{Cl}}
\newcommand{\Spf}{\operatorname{Spf}}
\newcommand{\Sp}{\operatorname{Sp}}
\newcommand{\rig}{\operatorname{rig}}
\newcommand{\ad}{\operatorname{ad}}
\newcommand{\Hom}{\operatorname{Hom}}
\newenvironment{smallpmatrix}
  {\left(\begin{smallmatrix}}
  {\end{smallmatrix}\right)}
\newcommand{\pd}{\operatorname{pd}}
\newcommand{\depth}{\operatorname{depth}}
\newcommand{\hol}{\operatorname{hol}}
\newcommand{\dR}{\operatorname{dR}}
\newcommand{\Frob}{\operatorname{Frob}}
\newcommand{\sgn}{\operatorname{sgn}}
\newcommand{\BM}{\operatorname{BM}}
\newcommand{\PD}{\operatorname{PD}}
\newcommand{\cl}{\operatorname{cl}}
\newcommand{\tw}{\operatorname{tw}}
\newcommand{\ES}{\operatorname{ES}}
\newcommand{\im}{\operatorname{im}}
\newcommand{\dirlim}{\varinjlim}
\newcommand{\invlim}{\varprojlim}
\newcommand{\pr}{\operatorname{pr}}
\newcommand{\tr}{\operatorname{tr}}
\newcommand{\Res}{\operatorname{Res}}
\newcommand{\sstable}{\operatorname{st}}
\newcommand{\crys}{\operatorname{crys}}
\newcommand{\Ext}{\operatorname{Ext}}
\newcommand{\LT}{\operatorname{LT}}
\newcommand{\HT}{\operatorname{HT}}
\providecommand{\Ref}{\operatorname{Ref}}
\renewcommand{\Ref}{\operatorname{Ref}}
\newcommand{\End}{\operatorname{End}}
\newcommand{\rmmid}{\operatorname{mid}}
\newcommand{\WD}{\operatorname{WD}}
\newcommand{\nr}{\operatorname{nr}}
\newcommand{\ab}{\operatorname{ab}}
\newcommand{\soc}{\operatorname{soc}}
\newcommand{\new}{\operatorname{new}}
\newcommand{\red}{\operatorname{red}}
\newcommand{\supp}{\operatorname{supp}}
\newcommand{\Tor}{\operatorname{Tor}}
\newcommand{\Ann}{\operatorname{Ann}}
\newcommand{\SO}{\operatorname{SO}}
\newcommand{\cycl}{\operatorname{cycl}}
\newcommand{\ev}{\operatorname{ev}}
\newcommand{\Spec}{\operatorname{Spec}}
\newcommand\redsout{\bgroup\markoverwith{\textcolor{red}{\rule[0.5ex]{2pt}{1pt}}}\ULon}
\newcommand\bluesout{\bgroup\markoverwith{\textcolor{magenta}{\rule[0.5ex]{2pt}{1pt}}}\ULon}
\subjclass[2000]{11F67, 11F85 (11F41, 11F03, 11F80, 11F33)}
\begin{document}

\title{On $p$-adic $L$-functions for Hilbert modular forms}

\author{John Bergdall}
\address{John Bergdall\\Department of Mathematics\\Bryn Mawr College\\101 North Merion Avenue \\Bryn Mawr, PA 19096\\USA}
\email{jbergdall@brynmawr.edu}
\urladdr{https://jbergdall.digital.brynmawr.edu/}

\author{David Hansen} 
\address{David Hansen\\Max Planck Institute for Mathematics\\Vivatsgasse 7\\53111 Bonn\\Germany}
\email{dhansen@mpim-bonn.mpg.de}
\urladdr{http://www.davidrenshawhansen.com}

\begin{abstract} We construct $p$-adic $L$-functions associated with $p$-refined cohomological cuspidal Hilbert modular forms over any totally real field under a mild hypothesis.  Our construction is canonical, varies naturally in $p$-adic families, and does not require any small slope or non-criticality assumptions on the $p$-refinement.  The main new ingredients are an adelic definition of a canonical map from overconvergent cohomology to a space of locally analytic distributions on the relevant Galois group, and a smoothness theorem for certain eigenvarieties at critically refined points.
\end{abstract}

\maketitle
\setcounter{tocdepth}{2}
\tableofcontents

\section{Introduction}\label{sec:introduction}

The goal of this article to define canonical $p$-adic $L$-functions associated with $p$-refined cohomological cuspidal automorphic representations of ${\GL_2}$ over totally real number fields. We make no assumptions on the so-called {\em slope} (other than finiteness), and our construction varies naturally in $p$-adic families.

\subsection{The main result}
To state our results we begin by setting notation. Let $F$ be a totally real number field of degree $d$ and write $\Sigma_F$ for the set of embeddings $F \hookrightarrow \mathbf R$. The completion of $F$ at a place $v$ will be written $F_v$; the ramification index will be written $e_v$; the residue field will have $q_v$-many elements. We write $\pi$ for a cohomological cuspidal automorphic representation of $\GL_2(\mathbf A_F)$ and $\lambda$ for its weight. Throughout the introduction we will omit `cohomological cuspidal' and simply refer to $\pi$ as an automorphic representation, except when more precision is helpful. In our normalization, the cohomological condition means the weight $\lambda$ is a pair $(\kappa,w)$ such that $\kappa = (\kappa_{\sigma})_{\sigma \in \Sigma_F}$ is a $\Sigma_F$-tuple of non-negative integers, $w \in \mathbf Z$, and $\kappa_{\sigma} \equiv w \bmod 2$.  An integer $m$ is called (Deligne-)critical with respect to $\lambda$ if 
\begin{equation*}
{w-\kappa_{\sigma} \over 2} \leq  m \leq { w+ \kappa_{\sigma}\over 2} \;\;\;\;\; (\forall \sigma \in \Sigma_F).
\end{equation*}
For precise explanations of the basic definitions and normalizations, see Sections \ref{sec:cls} and \ref{sec:hmf}.

The starting point of our work is a famous algebraicity result of Shimura for special values of the $L$-functions associated with such $\pi$. More precisely, for any finite order Hecke character $\theta$ we may consider the completed $L$-function $\Lambda(\pi\otimes \theta,s)$ associated with the twist of $\pi$ by $\theta$. It is entire in the variable $s$, and it satisfies a functional equation under $s \mapsto w+2-s$. Shimura proved (\cite{Shimura-HMF}) that there is a collection of periods $\Omega_\pi^{\epsilon} \in \mathbf C^\times$ indexed by signs $\epsilon = (\epsilon_\sigma) \in \{\pm 1\}^{\Sigma_F}$ with the property that for any integer $m$ critical with respect to $\lambda$ and any finite order $\theta$, the number
\begin{equation*}
\Lambda^{\alg}(\pi\otimes \theta,m+1) := {\left(\prod_{\sigma\in\Sigma_F} \theta_\sigma(-1) i^{1+m+{\kappa_\sigma-w\over 2}}\right) \Delta_{F/\mathbf Q}^{m+1} \Lambda(\pi\otimes \theta,m+1)\over \Omega_\pi^{\epsilon} G(\theta)}
\end{equation*}
lies in the field $\mathbf Q(\pi,\theta)$ generated by the Hecke eigenvalues of $\pi$ together with the values of $\theta$. Here $G(\theta)$ is a certain Gauss sum and the sign $\epsilon$ is determined by  $\epsilon_\sigma = (-1)^m \theta_\sigma(-1)$ for all $\sigma\in\Sigma_F$ ($\theta_\sigma$ being the $\sigma$-th component of $\theta$). Technically, Shimura assumes the weights $\kappa_\sigma$ are 3 or larger. See Theorem 4.3 of {\em loc.\ cit.} We will give a complete exposition of this result in Section \ref{sec:shimuras-theorem}, roughly following Hida (\cite{Hida-CriticalValues}).

Now let $p$ be a prime number. We will fix an isomorphism $\iota: \mathbf C \overset{\sim}{\longrightarrow} \overline{\mathbf Q}_p$ where $\overline{\mathbf Q}_p$ is a fixed algebraic closure of the field of $p$-adic numbers $\mathbf Q_p$. It then makes sense to try to $p$-adically interpolate the algebraic special values $\iota\left(\Lambda^{\alg}(\pi\otimes \theta,m+1)\right)$ as $m$ and $\theta$ vary. 

In order to do this, we introduce a certain $p$-adic analytic space of characters.  Let $\Gamma_F$ be the Galois group of the maximal abelian extension of $F$ unramified away from $p$ and $\infty$. This is a compact and abelian topological group. It also contains an open (so finite index) subgroup topologically isomorphic to finitely many copies of the $p$-adic integers $\mathbf Z_p$. Given any such group, there is a canonically associated rigid analytic character variety $\mathscr X(\Gamma_F)$ whose $\mathbf C_p$-points correspond to continuous characters $\Gamma_F \rightarrow \mathbf C_p^\times$. In particular, if $\theta$ is finite order Hecke character with $p$-power conductor, then $\theta^{\iota} := \iota\circ \theta$ defines a point in $\mathscr X(\Gamma_F)$. By global class field theory, each character $\chi \in \mathscr X(\Gamma_F)$ can be seen as a $p$-adic Hecke character and so in particular has signs, at infinity, as above. The group $\Gamma_F$ and its character variety play a key role in this article: our $p$-adic $L$-functions will be elements in the ring $\mathscr O(\mathscr X(\Gamma_F))$ of rigid analytic functions on $\mathscr X(\Gamma_F)$. 

We also need the notion of a $p$-refinement. For simplicity, we assume for the remainder of the introduction that $\pi$ is an unramified principal series at each $v \mid p$. In the body of the text we will also allow $\pi$ to be an unramified special representation. Let $\chi_\pi$ be the nebentype character of $\pi$. If $v \mid p$, then write $a_\pi(v)$ for the $v$-th eigenvalue in the Hecke eigensystem associated to $\pi$ and $\varpi_v$ for a uniformizing parameter.

\begin{defn}\label{defn:intro-refinement}
A $p$-refinement for $\pi$ is a tuple $(\alpha_v)_{v\mid p}$ where $\alpha_v$ is a root of the $v$-th Hecke polynomial $X^2 - a_\pi(v) X  + \chi_\pi(\varpi_v)q_v^{w+1}$.
\end{defn}
If $\alpha$ is a $p$-refinement, we write $(\beta_v)_{v\mid p}$ for the list of `other' roots determined by the factorizations
\begin{equation*}
X^2 - a_\pi(v) X  + \chi_\pi(\varpi_v)q_v^{w+1} = (X-\alpha_v)(X-\beta_v).
\end{equation*}
We often refer to the pair $(\pi,\alpha)$ as a $p$-refined automorphic representation (or some minor variant thereof). When $F = \mathbf Q$ and $\pi$ corresponds to a holomorphic eigenform $f(z)$ of level $N$ that is prime to $p$, a $p$-refinement $\alpha$ is often instantiated through the eigenform
\begin{equation*}
f_\alpha(z) = f(z) - \beta f(pz)
\end{equation*}
which now has level $Np$. See Section \ref{subsec:refinements} for more details.

In Section \ref{subsec:intro-control}, we will define what it means for a $p$-refined $(\pi,\alpha)$ to be {\em non-critical} and, more generally, {\em decent}. We call $\alpha$ critical if it is not non-critical.\footnote{There are two completely unrelated uses of the word `critical' in this article, an unfortunate collision. We will stress the context by always referring to an integer as being critical with respect to a weight and a refinement being a (non-)critical refinement.}  We note immediately that non-critical is implied by a `small slope' condition on $\alpha$, but it is certainly not equivalent, and that non-critical implies decent. The condition of being decent is very mild in our estimation. Conjecturally, being critical but decent reduces to the condition that $\alpha_v$ and $\beta_v$ as above are distinct for all $v\mid p$, which is expected to always hold when $p$ is totally split in $F$. In Section \ref{subsec:intro-decent-hyps} we discuss the hypothesis of decency in detail. 

Absent the definition of decent we can state our main theorem. We re-iterate that we have assumed $\pi$ cohomological cuspidal and, for simplicity only, that $\pi$ is an unramified principal series at each $v \mid p$.

\begin{thm}[Section \ref{subsec:padicLfunctions}]\label{thm:intro-main-theorem}
Let $(\pi,\alpha)$ be a decently $p$-refined cohomological cuspidal automorphic representation of weight $\lambda$. Let $E = \mathbf Q(\pi,\alpha)$ be the subfield of $\mathbf C$ generated by $\mathbf Q(\pi)$ and the refinement $\alpha$, and let $L \subset \overline{\mathbf Q}_p$ be the subfield generated by $\iota(E)$. 

Then, for each $\epsilon \in \{\pm 1\}^{\Sigma_F}$ there exists an element $L_{p}^{\epsilon}(\pi,\alpha) \in \mathscr O(\mathscr X(\Gamma_F)) \otimes_{\mathbf Q_p} L$ satisfying the following properties.\\

\noindent \textbf{\emph{a. Canonicity:}} The construction of $L_p^{\epsilon}(\pi,\alpha)$ is canonically specified up to $L^\times$-multiple in general and up to $\iota(E^\times)$-multiple if $\alpha$ is non-critical.\\

\noindent \textbf{\emph{b. Support:}} $L_p^{\epsilon}(\pi,\alpha)(\chi) = 0$ unless $\sgn(\chi_\sigma) = \epsilon_\sigma$ for all $\sigma \in \Sigma_F$.\\

\noindent \textbf{\emph{c. Growth:}} $L_p^{\epsilon}(\pi,\alpha)$ has growth bounded by $\sum_{v \mid p} e_vv_p(\iota(\alpha_v)) + \sum_{\sigma \in \Sigma_F} {\kappa_\sigma - w\over 2}$.\footnote{Growth is defined in Section \ref{subsec:growth-properties}.}\\

\noindent \textbf{\emph{d. Interpolation:}} Let $m$ be an integer that is critical with respect to $\lambda$, and assume that $\theta$ is a finite order Hecke character of $p$-power conductor with $\epsilon_{\sigma} = \sgn(\theta_\sigma)(-1)^m$ for each $\sigma \in \Sigma_F$. Then,
\begin{equation*}
L_p^{\epsilon}(\pi,\alpha)(\theta^{\iota}\chi_{\cycl}^m) = e_p(\alpha,m)\cdot \iota\left(\Lambda^{\alg}(\pi\otimes \theta,m+1)\right)
\end{equation*}
where the interpolation factor $e_p(\alpha,m)=\prod_{v\mid p} e_v(\alpha,m)$ is defined as follows:
\begin{enumerate}[(i)]
\item If $\alpha$ is non-critical, then 
\begin{equation*}
\iota^{-1}(e_v(\alpha,m)) = \begin{cases}
 \biggl(1-\theta(\varpi_v)\alpha_v^{-1}q_v^m\biggr)\biggl(1-\theta(\varpi_v)\beta_vq_v^{-(m+1)}\biggr) & \text{(if $\theta_v$ is unramified);}\\
\left(\displaystyle {q_v^{m+1} \over \alpha_v}\right)^{f_v} & \text{(if $\theta_v$ is ramified of conductor $\varpi_v^{f_v}$).}
\end{cases}
\end{equation*}
\item If $\alpha$ is critical then $e_v(\alpha,m) = 0$ for all $v \mid p$.\\
\end{enumerate}

\noindent \textbf{\emph{e. Variation:}} Suppose the eigenvariety $\mathscr E(\mathfrak n)_{\rmmid}$ is smooth at the classical point $x_{\pi,\alpha}$ associated with $(\pi,\alpha)$.\footnote{This is almost always satisfied for decent $(\pi,\alpha)$. See Theorem \ref{thm:intro-smoothness} and the discussion following that result.} Then for any sufficiently small open neighborhood $U$ of $x$ in $\mathscr E(\mathfrak n)_{\rmmid}$ there exists an element $\mathbf L^{\epsilon}_p \in \mathscr O(U)\widehat{\otimes}_{\mathbf Q_p} \mathscr O(\mathscr X(\Gamma_F))$ canonically specified up to $\mathscr O(U)^\times$-multiple and such that for each decent point $x' \in U$ associated with a $p$-refined cohomological cuspidal automorphic representation $(\pi',\alpha')$ we have
\begin{equation*}
\mathbf L^{\epsilon}_p|_{x'} = c_{x'}L^{\epsilon}_p(\pi',\alpha')
\end{equation*}
for some non-zero constant $c_{x'}$ in the residue field $k_{x'}$ of $\mathscr E(\mathfrak n)_{\rmmid}$ at $x'$.\\

\noindent \textbf{\emph{f. Uniqueness:}} If the Leopoldt defect of $F$ at $p$ is zero, then (up to $L^\times$ ambiguity) the assignment $(\pi,\alpha) \rightsquigarrow L_p^{\epsilon}(\pi,\alpha)$ is uniquely determined by conditions b.\ through e.

\end{thm}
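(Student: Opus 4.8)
The plan is to build $L_p^{\epsilon}(\pi,\alpha)$ as the image of a canonical overconvergent cohomology class under an adelic evaluation map, and then to verify the properties (a)--(f) one at a time. First we would fix a suitable tame level $K$ with associated locally symmetric spaces $S_K$ for $\GL_2/F$; because $F$ is totally real the cuspidal classes of weight $\lambda$ sit in the single degree $d := \#\Sigma_F$, which matches the dimension of the locally symmetric space of the diagonal torus used below. The $p$-refined pair $(\pi,\alpha)$ cuts out a maximal ideal $\mathfrak m_{\pi,\alpha}$ of the Hecke algebra acting on the overconvergent cohomology $H^d_c(S_K,\scd_\lambda)$, where $\scd_\lambda$ is the module of locally analytic distributions attached to $\lambda$, and hence a classical point $x_{\pi,\alpha}$ on the middle-degree eigenvariety $\sce(\mathfrak n)_{\rmmid}$. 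The role of the decency hypothesis is to provide, after decomposing along the archimedean component group $\pi_0 \cong \{\pm 1\}^{\Sigma_F}$, a canonical class $\phi^{\epsilon}_{\pi,\alpha}$ in $H^d_c(S_K,\scd_\lambda)[\mathfrak m_{\pi,\alpha}]^{\epsilon}$, well defined up to a scalar (decency amounts, roughly, to the overconvergent cohomology at $x_{\pi,\alpha}$ being minimal). When $\alpha$ is non-critical the control theorem identifies this piece with the classical eigenspace $H^d_c(S_K,V_\lambda)[\mathfrak m_{\pi,\alpha}]^{\epsilon}$, which multiplicity one makes defined over $E$, so the scalar ambiguity shrinks to $\iota(E^\times)$; this is property~(a).

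The first main new ingredient we would construct is a canonical, adelically defined evaluation map
\[
\mathrm{Ev}\colon H^d_c(S_K,\scd_\lambda) \longrightarrow \scd(\Gamma_F),
\]
obtained by capping a $\scd_\lambda$-valued cohomology class in degree $d$ against the fundamental homology class of the sub-locally-symmetric space attached to the diagonal torus $\mathbf{G}_m\hookrightarrow\GL_2$, and then pushing the resulting distribution on the idele class group of $F$ forward to $\Gamma_F$ via Artin reciprocity. We then \emph{define} $L_p^{\epsilon}(\pi,\alpha) := \mathrm{Ev}(\phi^{\epsilon}_{\pi,\alpha})$, regarded inside $\sco(\scx(\Gamma_F))\otimes_{\mathbf Q_p} L$ by means of the Amice--Fourier isomorphism $\scd(\Gamma_F)\cong\sco(\scx(\Gamma_F))$. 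Property~(a) is inherited from the canonicity of $\phi^{\epsilon}_{\pi,\alpha}$, and property~(b) is the formal observation that $\mathrm{Ev}$ is $\pi_0$-equivariant, so an $\epsilon$-isotypic input is killed by every test character of the wrong sign.

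Properties (c) and (d) are the analytic core. For (c), the standard argument that a finite-slope class gives an admissible distribution, combined with the integral structure on $\scd_\lambda$, bounds the order of $\mathrm{Ev}(\phi^{\epsilon}_{\pi,\alpha})$ by the total slope $\sum_{v\mid p} e_v v_p(\iota(\alpha_v))$, and the remaining $\sum_{\sigma} (\kappa_\sigma - w)/2$ is a shift coming from the weight $\lambda$, equivalently from the normalization of the cyclotomic variable. For (d), we would evaluate $\mathrm{Ev}(\phi^{\epsilon}_{\pi,\alpha})$ at a locally algebraic character $\theta^{\iota}\chi_{\cycl}^m$ with $m$ critical: such a character pairs with a locally analytic distribution through its polynomial part, so in the non-critical case the control theorem reduces the computation to the classical evaluation of the associated modular symbol, and a Birch-lemma-type unfolding of the adelic zeta integral -- following Shimura and Hida -- identifies the outcome with $\iota(\Lambda^{\alg}(\pi\otimes\theta,m+1))$ times the local factors $e_v(\alpha,m)$ of~(i). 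This unfolding is the delicate step, since the archimedean and ramified-at-$p$ local integrals must reproduce Shimura's periods, Gauss sums, and signs on the nose. In the critical case one shows instead -- here using the structure of overconvergent cohomology at a critical point -- that $\phi^{\epsilon}_{\pi,\alpha}$ lies in the kernel of specialization to classical cohomology, so every such evaluation vanishes, i.e.\ $e_v(\alpha,m) = 0$ as in~(ii).

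For the variation statement (e), smoothness of $\sce(\mathfrak n)_{\rmmid}$ at $x_{\pi,\alpha}$ makes the coherent sheaf built from overconvergent cohomology locally free of rank one near $x$; a choice of generator gives a family of classes $\Phi^{\epsilon}\in H^d_c(S_K,\scd_\lambda)\widehat{\otimes}\sco(U)$, and applying $\mathrm{Ev}$ in families -- legitimate because $\mathrm{Ev}$ was defined integrally and adelically and hence commutes with base change -- produces $\mathbf L_p^{\epsilon}\in\sco(U)\widehat{\otimes}_{\mathbf Q_p}\sco(\scx(\Gamma_F))$ specializing at each decent $x'$ to $\mathrm{Ev}(\phi^{\epsilon}_{\pi',\alpha'})$ up to the scalar comparing generators. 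Finally, for (f), vanishing of the Leopoldt defect makes $\scx(\Gamma_F)$ equidimensional of the expected dimension, so that the interpolation characters of (d) together with the growth bound of (c) form a system rigid enough that an element of that growth is determined by its values there; a $p$-adic interpolation argument in the style of Amice--V\'elu then pins $L_p^{\epsilon}(\pi,\alpha)$ down from (b)--(d) at non-critical points, and (e) propagates uniqueness to critical points by specializing the family $\mathbf L_p^{\epsilon}$ along a path through a nearby non-critical member. The two genuine obstacles are exactly the two new ingredients advertised in the abstract: making $\mathrm{Ev}$ at once canonical, adelic, family-compatible, and compatible with Shimura's archimedean normalizations (the content needed for (d)); and proving the smoothness theorem at critically refined points, which is the deepest input -- it is what turns the mild hypothesis of decency into a usable tool and what makes property (e) available in the critical case.
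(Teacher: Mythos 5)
Your proposal is correct and follows essentially the same route as the paper: the evaluation map you call $\mathrm{Ev}$ is the paper's period map $\mathscr P_\lambda$ (cap product against the pushed-forward Poincar\'e dual of a section over the Shintani cone $\mathrm C_\infty$, followed by the Amice transform), the canonical class comes from the one-dimensionality of the $\epsilon$-socle at decent points (which the paper derives from the smoothness theorem via Galois deformation theory), and properties (b)--(e) are checked exactly as you describe, with the critical-case vanishing in (d) coming from the fact that the socle class lies in the kernel of the integration map $I_\lambda$. You have correctly isolated the two genuinely hard inputs --- the integrality/continuity and adelic interpolation formula for the period map, and the smoothness of $\mathscr E(\mathfrak n)_{\rmmid}$ at critically refined points --- which is where all the real work in the paper lives.
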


This article is not the first place a result like Theorem \ref{thm:intro-main-theorem} has been proven, and we owe a great deal to previous work. On the other hand, our results apply in many situations that were previously inaccessible. For instance, recall that if $F$ is a (real) quadratic extension of $\mathbf Q$, then any elliptic curve over $F$ is modular by \cite{FreitasLeHungSiksek-Modularity}.

\begin{cor} Let $F$ be a real quadratic field, and let $E_{/F}$ be a non-CM elliptic curve with associated automorphic represenatation $\pi_E$. Then, for all sufficiently large rational primes $p$ that are split in $F$, all four possible $p$-refinements $(\pi_E,\alpha)$ are decent, and consequently each determines a canonical p-adic L-function $L_p^{\epsilon}(\pi_E,\alpha)$ as in Theorem \ref{thm:intro-main-theorem}.
\end{cor}

By contrast, at 100\% of the prime split in $F$ all previous constructions were only able to unconditionally access \emph{one} of these four $p$-adic $L$-functions, namely the one associated with the ``ordinary-ordinary'' refinement.

We will further compare our results with the literature in Section \ref{subsec:intro-comparison}. In order to put these comparisons in the proper context, however, we first expand on the definition of decency and the method of our construction. We hope this delay is not taken as a slight.

\subsection{The story when $F = \mathbf Q$}\label{subsec:intro-F=Q}
Our strategy is modeled on the case $F = \mathbf Q$ which is more or less understood.  To motivate our constructions, we outline the necessary ingredients in that case.

\subsubsection{Archimedean considerations}
Let $f = \sum a_n(f) q^n$ be a normalized cuspidal Hecke newform of weight $k\geq 2$ and level $\Gamma_1(N)$ with $N$ prime to $p$. The construction of Eichler and Shimura associates with $f$ a canonical cohomology class $\omega_f \in H^1_c(Y_1(N), \mathscr L_{k-2})$ where $\mathscr L_{k-2}$ is a local system on the modular curve $Y_1(N)$ defined by a `weight $k-2$' action on the space of complex polynomials of degree at most $k-2$ in a single variable.  It turns out that when $m=0,1,\dotsc,k-2$ (i.e., when $m$ is critical with respect to $k$), the special value $\Lambda(f,m+1)$ can be realized as $\Lambda(f,m+1) = \ev_m(\omega_f)$ where
\begin{equation*}
\ev_m : H^1_c(Y_1(N),\mathscr L_{k-2}) \rightarrow \mathbf C
\end{equation*}
is a certain canonical linear functional.  The functional $\ev_m$ is actually defined over $\mathbf Q$, and after renormalizing the Eichler--Shimura construction by a period, everything is defined over a number field. Putting these observations together, one obtains Shimura's result. (One also considers variants of these constructions taking finite-order twists into account, cf. below.) To summarize, this argument for Shimura's result makes use of two essentially distinct ingredients:
\begin{enumerate}
\item Canonical cohomology classes $\omega_f$ associated with each $f$.
\item Natural functionals $\ev_m$ on cohomology that record $L$-values.
\end{enumerate}

\subsubsection{$p$-adic considerations}
In the authors' view, the construction of $p$-adic $L$-functions should closely mirror the steps (1) and (2) above. The emphasis on a dichotomy like this is largely due to Stevens in the case $F = \mathbf Q$. Let us explain the two steps in reverse.

The local systems $\mathscr L_{k-2}$ are algebraic, so they can be taken to have $p$-adic coefficients, and they exist on modular curves of any level. On modular curves of level $Np$ (with $p \nmid N$) there is a second local system $\mathscr D_{k-2}$ of locally analytic distributions on $\mathbf Z_p$ equipped with a `weight $k-2$' action of a certain monoid containing $\Gamma:= \Gamma_1(N)\cap \Gamma_0(p)$. If $\Phi \in H^1_c(Y(\Gamma),\mathscr D_{k-2})$ is any cohomology class, then it makes sense to evaluate $\Phi$ on the cycle `$\{\infty\}-\{0\}$' on $Y(\Gamma)$. The output of this evaluation is thus a distribution on $\mathbf Z_p$ that can be restricted to $\mathbf Z_p^\times$. So, each $\Phi$ defines natural elements in the space $\mathscr D(\mathbf Z_p^\times)$ of locally analytic distributions on $\mathbf Z_p^\times$. Now note that $\Gamma_{\mathbf Q} \simeq \mathbf Z_p^\times$, and so a theorem of Amice from the 1970's (\cite{Amice-Interpolation}) implies that $\mathscr D(\mathbf Z_p^\times)$ is canonically isomorphic to $\mathscr O(\mathscr X(\Gamma_{\mathbf Q}))$, which is exactly where our $p$-adic $L$-functions are meant to live. This suggests the following  (2') as an analog of (2) above:
\begin{enumerate}
\item[(2')] Consider the linear map 
\begin{equation*}
\mathscr P_{k-2}: H^1_c(Y(\Gamma),\mathscr D_{k-2}) \rightarrow \mathscr O(\mathscr X(\Gamma_{\mathbf Q}))
\end{equation*}
that associates to each $\Phi \in H^1_c(Y(\Gamma),\mathscr D_{k-2})$ the element $\Phi(\{\infty\}-\{0\})|_{\mathbf Z_p^\times}$.
\end{enumerate}
To further illuminate the connection with the maps $\ev_m$, note that there is a canonical map $I_{k-2}: \mathscr D_{k-2} \rightarrow \mathscr L_{k-2}$ of local systems  over $Y(\Gamma)$ given by recording the first $k-2$ moments of a distribution. It is then not difficult to establish a direct relationship between the map $\mathscr P_{k-2}$, the map induced by $I_{k-2}$ on cohomology, the evaluation maps $\ev_m$ defined above, and the Hecke operators at $p$. (More glibly:\ the cycle `$\{\infty\}-\{0\}$' is `clearly' related to $L$-values by the integral representation of $L$-series as a Mellin transform on the upper-half plane.)

One important point to stress is that the local system $\mathscr D_{k-2}$ can only be defined over modular curves with $\Gamma_0(p)$-structure. Thus to an eigenform $f$ of level $N$ with $p \nmid N$, we are naturally led to consider the $p$-refined eigenform $f_{\alpha}$ of level $\Gamma$, corresponding to some choice of refinement $\alpha$. An ambitious choice for the $p$-adic analog to the archimedean step (1) would then be:
\begin{enumerate}
\item[(1')] `Canonically' associate with each $p$-refined eigenform $f_{\alpha}$ a class $\Phi_{f_\alpha} \in H^1_c(Y(\Gamma),\mathscr D_{k-2})$.
\end{enumerate}
If (1') can be carried out, then one may combine (1') and (2') to produce a $p$-adic $L$-function as in Theorem \ref{thm:intro-main-theorem}.

To what extent is (1') possible? For any $\alpha$, the class $\omega_{f_\alpha} \in H^1_c(Y(\Gamma),\mathscr L_{k-2})$ is in the image of the map $I_{k-2}$, but the kernel of $I_{k-2}$ is infinite-dimensional. One might then try to produce a Hecke eigenclass $\Phi_{f_\alpha}$ that maps to $\omega_{f_\alpha}$ under $I_{k-2}$, and one might hope that it is unique; this would certainly pin down a 	`canonical' $\Phi_{f_\alpha}$. However, this is only possible some of the time. Specifically, $\omega_{f_\alpha}$ can be uniquely lifted to a Hecke eigenclass exactly when the refinement $\alpha$ is non-critical in our sense. In the case $F = \mathbf Q$ this combines the two cases commonly referred to as being `non-critical slope' or `critical slope but not $\theta$-critical'. These cases were handled by Pollack and Stevens (\cite{PollackStevens-OverconvergentModSymb,PollackStevens-CriticalSlope}).

When $\alpha$ is critical, but still decent, Bella\"iche (\cite{Bellaiche-CriticalpadicLfunctions}) observed that it is {\em never} possible to lift $\omega_{f_\alpha}$ to a Hecke eigenclass via $I_{k-2}$. He did this by showing, in an indirect way, that there is still a unique (up to scalar) Hecke eigenclass $\Phi_{f_\alpha} \in H^1_c(Y(\Gamma),\mathscr D_{k-2})$ with the same Hecke eigensystem as $f_\alpha$; it just happens to lie in the kernel of $I_{k-2}$. This is precisely why one sees `funny' behavior in the interpolation properties of $p$-adic $L$-functions for critical $\alpha$ (Theorem \ref{thm:intro-main-theorem}).  We will explain Bella\"iche's method in more detail below; the argument uses $p$-adic families in a crucial way.  

In any case, we can safely say that when $F = \mathbf Q$ the ingredient (1') is available (under the decency hypothesis). The aim of the present paper is to generalize both steps (1') and (2') to any totally real base field $F$, while maintaining a view towards unrestrictive hypotheses.

\subsection{Basic objects}

Having stated our result and outlined the known methods when $F = \mathbf Q$, we now unload the requisite terminology and notations for the general case.

Write $\mathbf A_F$ for the adeles of $F$, $\mathbf A_{F,f}$ for the finite adeles. The $p$-th component of $\mathbf A_F$ is $F_p = F\otimes_{\mathbf Q} \mathbf Q_p \simeq \prod_{v \mid p} F_v$, and we also write $\mathcal O_F \otimes_{\mathbf Z} \mathbf Z_p = \mathcal O_p \subset F_p$ for the corresponding product of rings of integers. The tuple of uniformizers $\varpi_v$ at $v \mid p$ thus defines an element $\varpi_p \in \mathcal O_p$. Suppose that $\mathfrak n \subset \mathcal O_F$ is an integral ideal that is prime to $p$. We will assume from now on that $\pi$ has conductor exactly $\mathfrak n$. We will write $K=\prod_{v} K_v$ for the compact open subgroup of $\GL_2(\widehat{\mathcal{O}_F})$ consisting of matrices $\begin{smallpmatrix} a & b \\ c & d \end{smallpmatrix}$ whose entries satisfy $c \equiv 0 \bmod \varpi_p \mathfrak n \widehat{\mathcal{O}_F}$ and $d \equiv 1 \bmod \mathfrak n \widehat{\mathcal{O}_F}$. We write $Y_K$ for the open Hilbert modular variety of level $K$ (it is the analog of the modular curve $Y(\Gamma)$ above).

For a fixed cohomological weight $\lambda=(\kappa,w)$, we will consider a finite-dimensional local system $\mathscr L_\lambda$ on $Y_K$ of $L$-vector spaces, where $L \subset \overline{\mathbf{Q}}_p$ is the field generated over $\mathbf{Q}_p$ by all embeddings $\iota(\sigma(F))$.  More precisely, $\mathscr L_\lambda$ is defined as the finite-dimensional vector space $\mathscr L_\lambda \subset L[\{X_\sigma\}_{\sigma \in \Sigma_F}]$ spanned by polynomials whose $X_\sigma$-degree is at most $\kappa_\sigma$, and the group $\GL_2(F_p)$ acts by a natural weight $\lambda$ left  action (see Section \ref{subsec:weights} for the precise definition of the action). The cohomology $H^{\ast}_c(Y_K,\mathscr L_\lambda)$ is naturally acted upon by the Hecke algebra $\mathbf T$ generated by the `standard' Hecke operators $T_v$ ($v \nmid \mathfrak n p$), $U_v$ ($v \mid p$), and $S_v$ ($v \nmid \mathfrak n$), cf. Definition \ref{defn:hecke-operators}. If $(\pi,\alpha)$ is a $p$-refined automorphic representation, then it has (via $\iota$) an associated $\overline{\mathbf Q}_p$-valued $\mathbf T$-eigensystem (in particular, the eigenvalue of $U_v$ is $\iota(\alpha_v)$). This defines a maximal ideal $\mathfrak m_{\pi,\alpha} \subset \mathbf T$ and the Eichler--Shimura construction implies that $H^{\ast}_c(Y_K,\mathscr L_\lambda)_{\mathfrak m_{\pi,\alpha}}$ is non-zero and concentrated in middle degree. More precisely, the cohomology $H^{\ast}_c(Y_K,\mathscr L_\lambda)$ decomposes into $2^d$-many direct summands $H^{\ast}_c(Y_K,\mathscr L_\lambda)^{\epsilon}$ indexed by signs $\epsilon \in \{\pm 1\}^{\Sigma_F}$, which correspond to choosing eigenvalues for each of the $d$ `archimedean Hecke operators' induced by the partial complex conjugations on $Y_K$ (cf. Section \ref{subsec:archimedean} for a precise discussion). For each $\epsilon$ the eigenspace $$ \left( H^{\ast}_c(Y_K,\mathscr L_\lambda)\otimes_{L} \overline{\mathbf{Q}}_p \right)^{\epsilon}[\mathfrak m_{\pi,\alpha}]$$ is one-dimensional and concentrated in middle degree. 

To introduce $p$-adic automorphic forms we first consider $p$-adic weights. For us, this is a pair $\lambda=(\lambda_1,\lambda_2)$ of continuous characters $\lambda_i : \mathcal O_p^\times \rightarrow \mathbf C_p^\times$. If $\lambda=(\kappa,w)$ is cohomological then it defines a $p$-adic weight $(\lambda_1,\lambda_2)$ by the recipe
\begin{equation*}
\lambda_1(x) = \prod_{\sigma \in \Sigma_F} (\iota\circ\sigma)(x)^{w + \kappa_\sigma \over 2}, \;\;\;\;\;\; \lambda_2(x) =  \prod_{\sigma \in \Sigma_F} (\iota\circ\sigma)(x)^{w - \kappa_\sigma \over 2}.
\end{equation*}Note that if $\lambda$ is a cohomological weight, then the values of the characters $\lambda_i$ generate a field $k_{\lambda}$ that is a subfield of $L$.

For each $p$-adic weight we then define a $k_{\lambda}$-Frechet space $\mathscr D_{\lambda}$ whose underlying module is the locally analytic distributions $\mathscr D(\mathcal O_p)$ on $\mathcal O_p$. The subscripted $\lambda$ indicates that we equip it with a specific left action of the monoid 
\begin{equation*}
\Delta = \{\begin{smallpmatrix} a & b \\ c & d \end{smallpmatrix} \in M_2(\mathcal O_p) \cap \GL_2(F_p) \mid c \in \varpi_p\mathcal O_p\ \text{ and } d \in \mathcal O_p^\times\}.
\end{equation*}
We omit the definition of the action here (see Section \ref{subsec:monoid-action}). Now, since $\Delta \supset K_p$, we can also consider the cohomology $H^{\ast}_c(Y_K,\mathscr D_\lambda)$ for each $p$-adic weight $\lambda$, and the Hecke algebra $\mathbf T$ still acts on this cohomology by endomorphisms. Moreover, in the special case that $\lambda$ is a cohomological weight, there is a natural map
\begin{equation*}
I_\lambda: H^{\ast}_c(Y_K,\mathscr D_\lambda \otimes_{k_\lambda} L) \rightarrow H^{\ast}_c(Y_K,\mathscr L_\lambda)
\end{equation*}
induced by a $\Delta$-equivariant map on the underlying local systems. In particular, $I_\lambda$ commutes with the $\mathbf T$-action, and it commutes with the archimedean Hecke operators.\footnote{Strictly speaking, the map $I_{\lambda}$ only commutes with the $U_v$-operators for $v|p$ up to a scaling; we elide this point in the introduction.\label{footnote:scaling-intro}}

All of these objects are designed as analogs of the objects we considered when discussing the case $F = \mathbf Q$ earlier. Let us now turn towards our ingredients for $p$-adic $L$-functions.

\subsection{The period maps}
The portion of this article that requires no hypotheses is the construction of a certain $\mathscr O(\mathscr X(\Gamma_F))$-valued functional $\mathscr P_\lambda$ on the middle-degree distribution-valued cohomology $H^{d}_c(Y_K,\mathscr D_\lambda)$. We call $\mathscr P_\lambda$ a period map because of its interaction with the Hecke integrals that compute the completed $L$-series of automorphic representations in the case where $\lambda$ is a cohomological weight. We remark ahead of time that is absolutely crucial to the generality of Theorem \ref{thm:intro-main-theorem} that the definition of $\mathscr{P}_\lambda$ works for more general $p$-adic weights, as well as for affinoid families of weights.

To state a precise result here, we need a little more notation. Let $\lambda$ be a cohomological weight. Then we can consider the local system $\mathscr L_\lambda^\vee$ on $Y_K$ dual to $\mathscr L_\lambda$, and then we can take its middle degree Borel--Moore homology $H_d^{\BM}(Y_K,\mathscr L_\lambda^\vee)$ (homology defined by locally finite chains). There is a natural pairing
\begin{equation*}
\langle - , - \rangle : H^d_c(Y_K,\mathscr L_\lambda) \otimes_{L} H_d^{\BM}(Y_K,\mathscr L_\lambda^\vee) \rightarrow L \subset \overline{\mathbf Q}_p.
\end{equation*}
In Section \ref{subsec:padic-eval-class} we will define, for each integer $m$ critical with respect to $\lambda$, a certain evaluation class $\cl_p(m) \in H_d^{\BM}(Y_K,\mathscr L_\lambda^\vee)$. Its purpose is that if $\psi_{\pi,\alpha} \in H^d_c(Y_K,\mathscr L_\lambda)$ is the Hecke eigenclass associated with a $p$-refined cohomological cuspidal automorphic representation $(\pi,\alpha)$ of weight $\lambda$ (via Eichler--Shimura), then $\langle \psi_{\pi,\alpha} , \cl_p(m) \rangle$ is a natural scaling (depending on $\alpha$) of  the special value $\Lambda(\pi,m+1)$. In fact, $\psi \mapsto \langle \psi, \cl_p(m) \rangle$ is a $p$-adic analog of the evaluation maps $\ev_m$.

\begin{thm}\label{thm:intro-period-map}
For each $p$-adic weight $\lambda$, there exists a canonical linear morphism
\begin{equation*}
\mathscr P_\lambda: H^{d}_c(Y_K,\mathscr D_\lambda) \rightarrow \mathscr O(\mathscr X(\Gamma_F)) \otimes k_\lambda
\end{equation*}
that, among other things, satisfies the following formal interpolation property:\ 

If $\lambda$ is a cohomological weight, $m$ is an integer that is critical with respect to $\lambda$, and $\Psi \in H^d_c(Y_K,\mathscr D_{\lambda})$ is a $U_v$-eigenvector with eigenvalue $\alpha_v^{\sharp}$, for each $v \mid p$, then
\begin{equation*}
\mathscr P_\lambda(\Psi)(\chi_{\cycl}^m) = \prod_{v \mid p} (1 - (\alpha_v^{\sharp}\varpi_v^{w-\kappa\over 2})^{-1}q_v^m) \cdot \langle I_\lambda(\Psi), \cl_p(m)\rangle.
\end{equation*}
\end{thm}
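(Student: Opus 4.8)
The plan is to construct $\mathscr P_\lambda$ adelically as the composite of two maps: first, an ``evaluation at a cycle'' map sending $H^d_c(Y_K,\mathscr D_\lambda)$ to a space of locally analytic distributions attached to $\Gamma_F$, and second, the identification (via a $p$-adic Amice-type transform / global class field theory) of that distribution space with $\mathscr O(\mathscr X(\Gamma_F))\otimes k_\lambda$. More precisely, on the Hilbert modular variety $Y_K$ there is a canonical family of torus-orbit cycles indexed by the class group of $F$ (the adelic avatars of the cycle ``$\{\infty\}-\{0\}$'' used when $F=\mathbf Q$), and capping a class $\Psi\in H^d_c(Y_K,\mathscr D_\lambda)$ against these cycles yields, for each cycle, a locally analytic distribution on $\mathcal O_p$ (or rather on the relevant quotient that surjects onto $\Gamma_F$ via class field theory). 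Summing over the cycles in a way dictated by the adelic geometry produces a distribution on the idele class group $\mathbf A_F^\times/F^\times$ supported on the part unramified away from $p$, i.e.\ a distribution on $\Gamma_F$, hence an element of $\mathscr O(\mathscr X(\Gamma_F))$; one must tensor with $k_\lambda$ because the distribution action of $\mathscr D_\lambda$ is defined over $k_\lambda$.

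First I would set up the cycle: the maximal split torus $T\subset\GL_2$ and its orbit on $Y_K$ gives a locally finite $d$-cycle (after choosing an orientation coming from a fixed ordering of $\Sigma_F$), and I would pin down its precise support using the $\varpi_p\mathcal O_p$-condition in $K_p$ so that the cap product lands in distributions on $\mathcal O_p^\times$ rather than all of $\mathcal O_p$. Second, I would check $\Delta$-equivariance: the monoid action on $\mathscr D_\lambda$ interacts with translation on the torus orbit in exactly the way needed for the cap product to be well-defined on cohomology (this is the ``weight $\lambda$'' bookkeeping, and is where the $k_\lambda$-structure enters). Third, I would invoke the Amice/class-field-theory dictionary to identify the resulting distribution space with $\mathscr O(\mathscr X(\Gamma_F))\otimes k_\lambda$; this uses that $\Gamma_F$ contains an open subgroup isomorphic to a power of $\mathbf Z_p$, as remarked in the introduction. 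Canonicity of $\mathscr P_\lambda$ follows because every choice made (the torus, its orbit, the orientation, the class-field-theory normalization) is canonical given the fixed data $(\iota,\lambda,K)$.

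For the interpolation formula, I would specialize $\lambda$ to a cohomological weight and evaluate $\mathscr P_\lambda(\Psi)$ at the character $\chi_{\cycl}^m$. Under the Amice transform, evaluating the distribution $\mathscr P_\lambda(\Psi)$ at $\chi_{\cycl}^m$ amounts to integrating the monomial $x^m$ (suitably interpreted componentwise over $v\mid p$) against the cycle-capped distribution; since $m$ is critical with respect to $\lambda$, the monomial $x\mapsto x^m$ lies in the finite-dimensional quotient of $\mathscr D_\lambda$ recording the first $\kappa_\sigma$ moments, i.e.\ it factors through the moment map $I_\lambda:\mathscr D_\lambda\to\mathscr L_\lambda$. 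Hence the integral equals the pairing of $I_\lambda(\Psi)$ against the corresponding class in $H_d^{\BM}(Y_K,\mathscr L_\lambda^\vee)$ — which is precisely $\cl_p(m)$ — up to a correction. The correction is the product $\prod_{v\mid p}(1-(\alpha_v^\sharp\varpi_v^{(w-\kappa)/2})^{-1}q_v^m)$: it arises because restricting the torus-orbit distribution from $\mathcal O_p$ to $\mathcal O_p^\times$ (equivalently, removing the contribution of the boundary stratum where one coordinate is divisible by $\varpi_v$) introduces an ``unramified Euler factor at $v$,'' and one computes this factor explicitly by pushing the $U_v$-operator through the cap product, using that $\Psi$ is a $U_v$-eigenvector with eigenvalue $\alpha_v^\sharp$ and that the footnote-scaling of $I_\lambda$ by $\varpi_v^{(w-\kappa)/2}$ accounts for the weight twist.

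The main obstacle I expect is the second step above: making the adelic cap-product construction genuinely well-defined on cohomology and canonical, including getting the $\varpi_p\mathcal O_p$-support condition and the class-group bookkeeping exactly right so that the output is a distribution on $\Gamma_F$ (not merely on some auxiliary torus quotient) and is independent of the chain-level representative. The computation of the Euler factor $\prod_{v\mid p}(1-(\alpha_v^\sharp\varpi_v^{(w-\kappa)/2})^{-1}q_v^m)$ is delicate but is essentially a local computation at each $v\mid p$ comparing the ``old'' and ``new'' cycles and tracking the $U_v$-action, so it is laborious rather than conceptually hard; the genuinely structural work is in the canonical adelic definition of the period map, which the authors flag as one of their two main new ingredients.
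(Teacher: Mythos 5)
Your proposal is correct and follows essentially the same route as the paper: the period map is defined by pairing $\Psi$ against the pushforward (under the proper embedding $\mathrm t$ of the Shintani cone $\mathrm C_\infty=F^\times\backslash\mathbf A_F^\times/\widehat{\mathcal O}_F^\times$) of the fundamental class decorated with a locally analytic function on $\Gamma_F$ twisted by $\lambda_2^{-1}$ and extended by zero off $\mathcal O_p^\times$, and the interpolation factor arises exactly as you say, from comparing this extension-by-zero with the moment functional $\delta_{m,p}$ and pushing the $U_v$-eigenvector property through the cap product (the decomposition $U_v-V_v^+$ together with $(V_v^+)_\ast\cl_p(m)=q_v^m\cl_p(m)$ produces $1-\alpha_v^{-1}q_v^m$). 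The one point your sketch underplays is that the real content of the construction is not the well-definedness of the cycle but the \emph{continuity} of the resulting functional — i.e.\ that the abstract linear form on $\mathscr A(\Gamma_F)$ is a genuine element of $\mathscr D(\Gamma_F)$ — which the paper proves by showing the whole construction preserves integral lattices $\mathbf A^{\mathbf s,\circ}$, $\mathbf D^{\mathbf s,\circ}$ at each radius $\mathbf s$.
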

One should compare the formal interpolation in Theorem \ref{thm:intro-period-map} with the interpolation property in Theorem \ref{thm:intro-main-theorem}. (The scalar factor $\varpi_v^{w-\kappa\over 2}$, whose meaning can be found in Section \ref{subsec:notation}, appears because of the implicit scaling mentioned in Footnote \ref{footnote:scaling-intro}.) The formal interpolation of course generalizes to also allow twists by finite order Hecke characters of $p$-power conductor; see Theorem \ref{thm:abstract-interpolation} and Corollary \ref{cor:abstract-interp-eigen} for these more complicated statements. In addition, the period maps enjoy certain growth properties (Section \ref{subsec:growth-properties}) and natural interaction with the signs $\epsilon$ (Section \ref{subsec:compatibilities}). Finally, they also vary naturally in the $p$-adic weight variable $\lambda$ (in fact, we define period maps functorially for any affinoid weight). The map described in Theorem \ref{thm:intro-period-map} is thus a natural analog of `evaluating at $\{\infty\}-\{0\}$' in the setting of $F = \mathbf Q$. (It is also a short exercise to check that our definition truly generalizes that construction.)

In fact, the definition of $\mathscr P_\lambda$ is quite brief once the groundwork is laid. It involves first constructing a natural $k_\lambda$-linear map $\mathscr P_\lambda: H^d_c(Y_K,\mathscr D_\lambda) \rightarrow \Hom_{k_\lambda}(\mathscr A(\Gamma_F) \otimes k_\lambda, k_\lambda)$ where $\mathscr A(\Gamma_F)$ is the ring of locally analytic functions on $\Gamma_F$. We then manage to check that the image of $\mathscr P_\lambda$ actually lands in the subspace of locally analytic distributions $\mathscr D(\Gamma_F)$, which is the continuous (as opposed to abstract) $k_\lambda$-linear dual of $\mathscr A(\Gamma_F) \otimes k_\lambda$. Once this is proven (Theorem \ref{theorem:period-image}), it is easy to obtain the map described in Theorem \ref{thm:intro-period-map} using the theorem of Amice we previously mentioned. The proof of the continuity condition in the definition of $\mathscr P_\lambda$ amounts to constructing it canonically enough that it naturally preserves various integral structures on both sides. We refer to Section \ref{subsec:period-definition} for further details.

\subsection{Control of Hecke eigenclasses}\label{subsec:intro-control}
With Theorem \ref{thm:intro-period-map} in hand, we also need a means of canonically associating distribution-valued Hecke eigenclasses with $p$-refined automorphic representations $(\pi,\alpha)$. Recall that there is a natural integration map $I_\lambda: H^{\ast}_c(Y_K,\mathscr D_\lambda \otimes_{k_\lambda} L) \rightarrow H^{\ast}_c(Y_K,\mathscr L_\lambda)$, and that to a pair $(\pi,\alpha)$ we have a maximal ideal $\mathfrak m_{\pi,\alpha} \subset \mathbf T$.

\begin{defn}[Non-critical]\label{defn:intro-noncritical}
A $p$-refined automorphic representation  $(\pi,\alpha)$ is called non-critical if $I_\lambda: H^{\ast}_c(Y_K,\mathscr D_\lambda \otimes_{k_\lambda} L)_{\mathfrak m_{\pi,\alpha}} \rightarrow H^{\ast}_c(Y_K,\mathscr L_\lambda)_{\mathfrak m_{\pi,\alpha}}$ is an isomorphism.
\end{defn}
A well-known argument shows that non-critical slope implies non-critical, but the two conditions are not equivalent (see Section \ref{subsec:special-points}). In the case $F=\mathbf Q$, non-critical is equivalent to what is sometimes known as being `not $\theta$-critical' as in \cite{PollackStevens-CriticalSlope}. Reasoning with classical facts about automorphic representations, it is easy to prove that if $(\pi,\alpha)$ is non-critical, then the Hecke eigenspace $\left( H^d_c(Y_K,\mathscr D_\lambda) \otimes_{k_{\lambda}} \overline{\mathbf{Q}}_p \right)^{\epsilon}[\mathfrak m_{\pi,\alpha}]$ is one-dimensional (for any $\epsilon$) and so Theorem \ref{thm:intro-period-map} can be used to associate $p$-adic $L$-functions $L_p^{\epsilon}(\pi,\alpha)$ with non-critically refined forms $(\pi,\alpha)$. More precisely the Eichler--Shimura construction gives us, after scaling by a period, a canonical class in $H^d_c(Y_K,\mathscr L_\lambda)^{\epsilon}[\mathfrak m_{\pi,\alpha}]$. We lift this class via the isomorphism $I_{\lambda}$ (in the non-critical case) and thus define the $p$-adic $L$-function $L_p^{\epsilon}(\pi,\alpha)$ as the output of $\mathscr P_\lambda$ applied to this lift.

In general, and already when $F = \mathbf Q$, there definitely exist critically refined $(\pi,\alpha)$. To handle these cases, our methods demand some input from the theory of Galois representations. Given any $\pi$, write $\rho_\pi$ for the natural two-dimensional irreducible  representation of the absolute Galois group $G_F = \Gal(\overline F/ F)$ associated with $\pi$. Recall also that if $\alpha = (\alpha_v)_{v\mid p}$ is a refinement then there is an evident tuple of `other roots' $\beta = (\beta_v)_{v \mid p}$ (Definition \ref{defn:intro-refinement}).

\begin{defn}\label{defn:intro-decent}
A $p$-refined automorphic representation  $(\pi,\alpha)$ is called decent if at least one of the following two conditions is true.
\begin{enumerate}
\item $(\pi,\alpha)$ is non-critical.
\item The following three conditions hold.
\begin{enumerate}
\item $H^{j}_c(Y_K,\mathscr D_\lambda)_{\mathfrak m_{\pi,\alpha}}$ is non-zero if and only if $j=d$ (the middle degree).
\item The adjoint Bloch--Kato Selmer group $H^1_f(G_F,\ad \rho_\pi)$ is trivial.
\item $\alpha_v \neq \beta_v$ for each $v\mid p$.
\end{enumerate}
\end{enumerate}
\end{defn}
Before discussing the three conditions in part (2) of this definition, we state our main result on the Hecke eigenspaces in distribution-valued cohomology associated with a decently $p$-refined $(\pi,\alpha)$.
\begin{thm}\label{thm:intro-one-dimensional}
If $(\pi,\alpha)$ is a decently $p$-refined automorphic representation of weight $\lambda$, then 
\begin{equation*}
\dim_{\overline{\mathbf Q}_p} H^d_c(Y_K,\mathscr D_\lambda \otimes_{k_\lambda} \overline{\mathbf Q}_p)^{\epsilon}[\mathfrak m_{\pi,\alpha}] = 1
\end{equation*}
for each $\epsilon \in \{\pm 1\}^{\Sigma_F}$.
\end{thm}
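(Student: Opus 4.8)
The plan is to split into the two cases of Definition \ref{defn:intro-decent}. If $(\pi,\alpha)$ is non-critical, then by definition $I_\lambda$ is an isomorphism after localizing at $\mathfrak m_{\pi,\alpha}$; since $I_\lambda$ is $\mathbf T$-equivariant and commutes with the archimedean Hecke operators (up to the harmless scaling of Footnote \ref{footnote:scaling-intro}, which does not affect eigenspace dimensions), it induces an isomorphism on each sign eigenspace, so $\left(H^d_c(Y_K,\mathscr D_\lambda)\otimes \overline{\mathbf Q}_p\right)^\epsilon[\mathfrak m_{\pi,\alpha}] \cong \left(H^d_c(Y_K,\mathscr L_\lambda)\otimes \overline{\mathbf Q}_p\right)^\epsilon[\mathfrak m_{\pi,\alpha}]$, and the right-hand side is one-dimensional and concentrated in middle degree by the Eichler--Shimura description recalled in Section \ref{sec:introduction}. (One must also check that localization at $\mathfrak m_{\pi,\alpha}$ sees exactly the $\mathfrak m_{\pi,\alpha}$-generalized eigenspace, and that $\mathbf T \otimes \overline{\mathbf Q}_p$ acting on the localized module is supported at the single maximal ideal coming from $(\pi,\alpha)$; this is standard once one knows the relevant cohomology is finitely generated.)

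The substance is case (2). Here I would work on the eigenvariety: condition (2a) says the distribution-valued cohomology $H^\ast_c(Y_K,\mathscr D_\lambda)_{\mathfrak m_{\pi,\alpha}}$ is concentrated in degree $d$, which (together with the usual slope-decomposition and base-change formalism for overconvergent cohomology) means the point $x_{\pi,\alpha}$ on the middle-degree eigenvariety $\mathscr E(\mathfrak n)_{\rmmid}$ is not `contaminated' by classes in other degrees, and the completed local ring of $\mathscr E(\mathfrak n)_{\rmmid}$ at $x_{\pi,\alpha}$ governs exactly this eigenspace. The strategy is then to show that $\mathscr E(\mathfrak n)_{\rmmid}$ is smooth of dimension $d$ (equal to $\dim \mathscr X$ of the weight space) at $x_{\pi,\alpha}$; since the weight map is then étale at that point, the fibre over $\lambda$ is reduced of length one, and by the control theory this forces each sign-eigenspace $\left(H^d_c(Y_K,\mathscr D_\lambda)\otimes \overline{\mathbf Q}_p\right)^\epsilon[\mathfrak m_{\pi,\alpha}]$ to be exactly one-dimensional (it is always at least one-dimensional, e.g.\ because some nonzero overconvergent class with this eigensystem exists — this follows from the existence of $\rho_\pi$ and a classicality/interpolation input, or more simply from nonvanishing of $H^d_c(Y_K,\mathscr L_\lambda)^\epsilon[\mathfrak m_{\pi,\alpha}]$ together with a lifting argument à la Bella\"iche). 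The smoothness itself is where conditions (2b) and (2c) enter: following Bella\"iche's method in the case $F = \mathbf Q$, one compares the tangent space of $\mathscr E(\mathfrak n)_{\rmmid}$ at $x_{\pi,\alpha}$ with a Selmer group for $\ad\rho_\pi$ — the relevant deformations of the Galois representation carried by a first-order deformation of the automorphic point are crystalline/ordinary-at-$p$ precisely because of the regularity hypothesis $\alpha_v \neq \beta_v$ (2c), which pins down a unique refinement-compatible filtration and rules out the extra `non-critical direction', and vanishing of $H^1_f(G_F,\ad\rho_\pi)$ (2b) then forces the tangent space to have dimension exactly $d$.

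Concretely, the key steps in order are: (i) dispose of the non-critical case via $I_\lambda$; (ii) in case (2), use (2a) to reduce to studying $\mathscr E(\mathfrak n)_{\rmmid}$ étale-locally at $x_{\pi,\alpha}$ and to identify its local structure with the relevant distribution cohomology; (iii) construct the map from the tangent space of $\mathscr E(\mathfrak n)_{\rmmid}$ at $x_{\pi,\alpha}$ to a space of Galois deformations of $\rho_\pi$ (via the pseudocharacter / Galois representation carried by the eigenvariety), and use (2c) to show the image lands in the subspace of deformations that are `properly refined'/de Rham at $p$ of the correct Hodge–Tate type; (iv) invoke (2b) to conclude this deformation space is $d$-dimensional, hence $\mathscr E(\mathfrak n)_{\rmmid}$ is smooth of dimension $d$ there and the weight map is étale; (v) combine étale-ness with the control theorem and the lower bound $\dim \geq 1$ to get equality, then split off the sign-eigenspaces.

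The main obstacle I expect is step (iii): making the comparison between infinitesimal deformations on the eigenvariety and Galois deformations genuinely rigorous over a general totally real $F$ — in particular, showing that a tangent vector on $\mathscr E(\mathfrak n)_{\rmmid}$ gives a Galois deformation that is de Rham (equivalently, `finite' at each $v\mid p$ in the Bloch–Kato sense) with the prescribed refinement, so that it is classified by $H^1_f(G_F,\ad\rho_\pi)$ rather than a larger Selmer group. This requires the full strength of the local triangulation / $(\varphi,\Gamma)$-module theory near $x_{\pi,\alpha}$ together with the regularity hypothesis (2c), and subtleties about several primes above $p$ (as opposed to the single prime when $F=\mathbf Q$) have to be handled carefully; this is presumably the `smoothness theorem for certain eigenvarieties at critically refined points' advertised in the abstract.
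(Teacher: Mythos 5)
Your treatment of the non-critical case is correct and is exactly what the paper does (Theorem \ref{thm:socle-one-d}, first part). The gap is in your step (v) for the critical case: you claim that smoothness of $\mathscr E(\mathfrak n)_{\rmmid}$ at $x_{\pi,\alpha}$ makes the weight map \'etale there, so that the fibre over $\lambda$ is reduced of length one. This is false at critically refined points, and in fact \'etaleness of the weight map essentially characterizes non-criticality (it is proved only in the non-critical case in Proposition \ref{prop:etaleness}; the paper explicitly expects it to fail whenever 2(c) fails, and at a genuinely critical point the fibre $T_x = \mathbf T_x/\mathfrak m_\lambda \mathbf T_x$ is a non-reduced Artinian local ring of length $>1$). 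Concretely, the localization $H^d_c(\mathfrak n,\mathscr D_\lambda)^{\epsilon}_{\mathfrak m_x}$ — the generalized eigenspace — has dimension equal to the length of $T_x$, which exceeds $1$ in the critical case; the theorem is about the honest eigenspace $[\mathfrak m_{\pi,\alpha}]$, which is the \emph{socle} of this $T_x$-module, and your argument never separates the two.

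The paper's route from smoothness to one-dimensionality is therefore different and is where the real content lies after Theorem \ref{thm:smoothness}. One uses that $R_x = \mathscr O(\Omega)_{\mathfrak m_\lambda} \to \mathbf T_x$ is a finite injection with $R_x$ regular and $\mathbf T_x$ regular (hence Cohen--Macaulay), so $\mathbf T_x$ is flat over $R_x$; since $M_x = H^d_c(\mathfrak n,\mathscr D_\Omega)_{\mathfrak m_x}$ is projective over $R_x$ (by the Tor spectral sequence and hypothesis 2(a)), the Auslander--Buchsbaum formula upgrades this to $M_x$ projective over $\mathbf T_x$, and a deformation to nearby extremely non-critical points shows each $M_x^{\epsilon}$ is free of rank \emph{one} over $\mathbf T_x$. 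Then $T_x = \mathbf T_x/\mathfrak m_\lambda\mathbf T_x$ is a zero-dimensional local complete intersection, hence Gorenstein, hence has one-dimensional socle, and
\begin{equation*}
H^d_c(\mathfrak n,\mathscr D_\lambda \otimes_{k_\lambda}\overline{\mathbf Q}_p)^{\epsilon}[\mathfrak m_{\pi,\alpha}] \simeq \soc_{T_x}(T_x)
\end{equation*}
gives the result. Two smaller corrections: the relevant dimension of weight space (and of the tangent space bound from the Selmer computation) is $1+d+\delta_{F,p}$, not $d$; and in the critical case the eigenclass lies in the kernel of $I_\lambda$ (Lemma \ref{lem:dont-be-crazy}), so the lower bound $\geq 1$ comes from nonvanishing of the socle of a nonzero Artinian module, not from lifting the classical class.
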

We already mentioned why Theorem \ref{thm:intro-one-dimensional} is true when $(\pi,\alpha)$ is non-critical, but the fact that it extends to all decently refined $(\pi,\alpha)$ is rather more difficult. In any case, if we apply the period map of Theorem \ref{thm:intro-period-map} to the unique-up-to-scalar Hecke eigenclass provided by Theorem \ref{thm:intro-one-dimensional}, we get the $p$-adic $L$-functions $L_p^{\epsilon}(\pi,\alpha)$ claimed in Theorem \ref{thm:intro-main-theorem}. We make no further claim on how to canonically choose a non-zero vector in the above one-dimensional vector space, so we are ambiguous up to scalars in a $p$-adic field rather than a number field. 

The proof of Theorem \ref{thm:intro-one-dimensional} relies in a crucial way on $p$-adic families of $p$-refined automorphic representations and their finer geometric properties. Before discussing this further, let us explain what is known about the decency hypothesis.

\subsection{The decency hypothesis}\label{subsec:intro-decent-hyps}
It is worth detailing what is known about part (2) of the `decent' hypothesis.\footnote{The terminology is borrowed directly from Bella\"iche (\cite{Bellaiche-CriticalpadicLfunctions}).} In order to orient the discussion from least technical to most technical, let us discuss the conditions in reverse from (c) to (a).

The simplest condition is the condition that $\alpha_v \neq \beta_v$ for each $v \mid p$. Unfortunately, this is also the only condition we do not conjecture always holds. For instance, if $E_{/\mathbf Q}$ is an elliptic curve with good supersingular reduction at $p$, $F$ is a real quadratic field in which $p$ is inert, and $\pi$ is the parallel weight two automorphic representation associated with the base change $E_{/F}$, then the Hecke polynomial of $\pi$ at the unique $p$-adic place is $(X-p)^2$. We do not know if all such examples are non-critical, but we have no strong feeling either way. We do note, however, that when $p$ is totally split in $F$, it would follow from the Tate conjecture that $\alpha_v \neq \beta_v$ for each $v \mid p$ (cf. \cite{ColemanEdixhoven}). Moreover, if $\pi$ is associated with a modular elliptic curve and $v|p$ has degree one, then $\alpha_v \neq \beta_v$; in particular, if $p$ splits completely, it is easy to see that condition 2(c) holds. In any case, for a fixed $\pi$ and $p$, the condition that $\alpha_v \neq \beta_v$ is surely easy to check (depending on the data you are given to represent the fixed $\pi$, of course).

The next condition we consider is the vanishing of the Selmer group in part (b). This is a well-established consequence of a conjecture of Bloch and Kato (\cite{BlochKato-TamagawaNumbersOfMotives}) extending the Birch--Swinnerton-Dyer conjecture. In fact, condition 2(b) is now known to hold for all non-CM $\pi$ by very recent work of Newton-Thorne \cite{NewtonThorneSelmer}, building on earlier work of Kisin (for $F = \mathbf Q$, see \cite{Kisin-GeometricDeformations}) and Allen (for general totally real $F$, see \cite{Allen-SelmerGroups}). Note as well that hypothesis (b) does not involve the refinement $\alpha$ in any way.

Finally we come to the thorniest of the three hypotheses:\ the assumption that the distribution-valued eigensystem associated to $(\pi,\alpha)$ occurs only in the middle degree. This is a classically known fact for the finite-dimensional classical cohomology $H^\ast_c(Y_K,\mathscr L_\lambda)$. So, in particular the non-critical hypothesis overlaps with the middle-degree support hypothesis. Further, when $F = \mathbf Q$ the condition 2(a) is also true by a direct analysis:\ the relevant $H^2_c$'s only contain Eisenstein Hecke eigensystems. With current technology, we can verify condition 2(a) unconditionally under a mild assumption on the mod $p$ representation $\overline \rho_\pi$, building on recent work of Caraiani-Tamiozzo. We refer the reader to Appendix \ref{app:decent} for a precise statement and proof.  Based on these evidences, we conjecture that condition 2(a) \emph{always} holds (remember that $\pi$ is cuspidal). 

Synthesizing these observations, we deduce that decent refinements are ubiquitous. Here is one precise result in this direction; the proof is given in the final lines of Appendix \ref{app:decent}.

\begin{thm}\label{thm:decency-ubiquitous} Let $E/F$ be a non-CM modular elliptic curve over a totally real field $F$ of degree $d$, with associated automorphic representation $\pi$. Let $p$ be any sufficiently large prime that splits completely in $F$. Then all of the $2^d$ distinct $p$-refinements of $\pi$ are decent.
\end{thm}

\subsection{The eigenvariety (proving Theorem \ref{thm:intro-one-dimensional})}\label{subsec:intro-eigenvariety}
The method we use to prove Theorem \ref{thm:intro-one-dimensional} in the decent, but possibly critical cases, is closely modeled on the method used by Bella\"iche in \cite{Bellaiche-CriticalpadicLfunctions}. However, there are a number of new complications that arise in our more general setting. We would like to discuss this in some detail since we expect it will also help explain the role of the hypothesis 2(a) for the reader whose experience with $p$-adic families is limited to the eigencurve and to other simple situations like groups that are compact modulo their center at infinity.

The first point is the Hecke eigenvarieties parameterizing eigensystems corresponding to (finite slope) automorphic representations for ${\GL_2}_{/F}$ come in different flavors. For instance, there is the parallel weight eigencurve of Kisin and Lai (\cite{KisinLai-OHMF}) and one modeled on overconvergent $p$-adic Hilbert cusp forms by Andreatta, Iovita and Pilloni (\cite{AndreattaIovitaPilloni-HMF}). But history (and Theorem \ref{thm:intro-period-map}) teaches us that the models for eigenvarieties that are closest to seeing $p$-adic $L$-functions are those built using distribution-valued cohomology. Beyond the case of $F = \mathbf Q$, these appear in the work of Urban (\cite{Urban-Eigenvarieties}) and the more general construction of the second author (\cite{Hansen-Overconvergent}). (They are exposed for $F = \mathbf Q$ in \cite{Bellaiche-CriticalpadicLfunctions} following ideas of Stevens).

More precisely, in \cite{Hansen-Overconvergent} the second author constructed a rigid analytic space $\mathscr E(\mathfrak n)$ parametrizing the finite slope $\mathbf T$-eigensystems appearing in the total cohomology $H^{\ast}_c(Y_K,\mathscr D_{\lambda})$ as $\lambda$ runs over the space of $p$-adic weights $\mathscr W(1) \subset \mathscr W$ that are trivial on the image of the global units (these are the only weights where the cohomology is non-trivial; see Section \ref{subsec:wt-space}). For notation, if $\psi$ is a finite slope $\mathbf T$-eigensystem appearing in the total cohomology, then write $x_\psi \in \mathscr E(\mathfrak n)$ for the corresponding point. For instance, if $(\pi,\alpha)$ is a $p$-refined automorphic representation as above then its eigensystem appears in the cohomology, in some degree, and thus we get classical points $x_{\pi,\alpha}$ on $\mathscr E(\mathfrak n)$.

The first difficulties are that $\mathscr E(\mathfrak n)$ is certainly not equidimensional if $F \neq \mathbf Q$, and it is possibly not reduced. Both the equidimensionality and reducedness of the Coleman-Mazur eigencurve are crucial in the proof of Theorem \ref{thm:intro-one-dimensional} given by  Bella\"iche in \cite{Bellaiche-CriticalpadicLfunctions} for $F = \mathbf Q$. One of the theorems we prove is the following.

\begin{thm}[Section \ref{subsec:middle-degree}]\label{thm:intro-eigenvariety-existence}
There exists a Zariski-open subspace $\mathscr E(\mathfrak n)_{\rmmid}$ inside $\mathscr E(\mathfrak n)$ uniquely characterized by the following property:\ a point $x_\psi$, of weight $\lambda$, is in $\mathscr E(\mathfrak n)_{\rmmid}$ if and only if the eigensystem $\psi$ appears only in the middle degree $H^d_c(Y_K,\mathscr D_\lambda)$.

Moreover, $\mathscr E(\mathfrak n)_{\rmmid}$ is reduced, equidimensional of the same dimension as its weight space $\mathscr W(1)$, and the classical points (up to twist) are Zariski-dense and accumulating.
\end{thm}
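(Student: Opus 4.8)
\section*{Proof proposal for Theorem \ref{thm:intro-eigenvariety-existence}}

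The plan is to carve $\mathscr E(\mathfrak n)_{\rmmid}$ out of the local models for $\mathscr E(\mathfrak n)$ and then to read off its properties from a Poincar\'e duality for overconvergent cohomology in families. Recall from \cite{Hansen-Overconvergent} that $\mathscr E(\mathfrak n)$ is glued from pieces $\Sp(\mathbf T_{\Omega,h})$, where $\Omega=\Sp(A)$ is an affinoid open in $\mathscr W(1)$, $h$ a slope bound, and $\mathbf T_{\Omega,h}$ the image of the abstract Hecke algebra acting on the slope $\leq h$ part of $R\Gamma_c(Y_K,\mathscr D_\Omega)$; that part is computed by a bounded complex $M^\bullet=M^\bullet_{\Omega,h}$ of finite projective $A$-modules --- the slope $\leq h$ truncation of the Borel--Serre cochain complex with $\mathscr D_\Omega$-coefficients --- with a Hecke action, and since $\mathscr D_\Omega$ is $A$-flat with $\mathscr D_\Omega\otimes_A k(\lambda)=\mathscr D_\lambda$ one has $M^\bullet\otimes^{\mathbf L}_A k(\lambda)\simeq R\Gamma_c(Y_K,\mathscr D_\lambda)^{\leq h}$. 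For each degree $j$ let $\mathbf T^{(j)}_{\Omega,h}$ be the quotient of $\mathbf T_{\Omega,h}$ acting faithfully on $H^j(M^\bullet)$, so that $\Sp(\mathbf T^{(j)}_{\Omega,h})=\operatorname{Supp}_A H^j(M^\bullet)$ is Zariski-closed in $\Sp(\mathbf T_{\Omega,h})$. I would check that these glue over the cover and set $\mathscr E(\mathfrak n)_{\rmmid}:=\mathscr E(\mathfrak n)\setminus\bigcup_{j\neq d}\operatorname{Supp}_A H^j(M^\bullet)$; since only finitely many $j$ occur on each piece this is a Zariski-open subspace, and as such it is determined by its set of points.

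The heart of the matter is identifying those points: for $x=x_\psi$ of weight $\lambda$, I claim $H^j(M^\bullet)_{\mathfrak m_\psi}=0$ for all $j\neq d$ if and only if $H^j_c(Y_K,\mathscr D_\lambda)_{\mathfrak m_\psi}=0$ for all $j\neq d$. For ``$\Leftarrow$'' in degrees $j>d$, I run the base-change spectral sequence $E_2^{s,t}=\Tor^A_{-s}(H^t(M^\bullet),k(\lambda))\Rightarrow H^{s+t}_c(Y_K,\mathscr D_\lambda)^{\leq h}$ localized at $\mathfrak m_\psi$: it is supported in $s\leq 0$ and in the bounded amplitude of $M^\bullet$, so if $t_{\max}$ is the largest $t$ with $H^t(M^\bullet)_{\mathfrak m_\psi}\neq 0$ then $E_2^{0,t_{\max}}=H^{t_{\max}}(M^\bullet)_{\mathfrak m_\psi}\otimes_A k(\lambda)$ is nonzero by Nakayama and is a permanent cycle, forcing $H^{t_{\max}}_c(Y_K,\mathscr D_\lambda)_{\mathfrak m_\psi}\neq 0$ and hence $t_{\max}=d$. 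For $j<d$ I would invoke Poincar\'e--Lefschetz duality for the $2d$-manifold $Y_K$ in families: combined with the standard comparisons between distribution- and analytic-coefficient overconvergent cohomology away from the (Eisenstein) boundary, it identifies $R\Gamma_c(Y_K,\mathscr D_\Omega)^{\leq h}$, up to a shift by $2d$ and a twist of the weight family, with the $A$-linear dual of the corresponding object, turning the $j<d$ statement into a $j>d$ statement for the dual, twisted eigenvariety. For ``$\Rightarrow$'', over $\mathscr E(\mathfrak n)_{\rmmid}$ the complex $M^\bullet$ is concentrated in degree $d$, and at any point $x_\psi$ there the self-duality forces $\Ext^{>0}_{A_{\mathfrak m_\lambda}}(H^d(M^\bullet)_{\mathfrak m_\psi},A_{\mathfrak m_\lambda})=0$, so $H^d(M^\bullet)_{\mathfrak m_\psi}$ is free over the regular local ring $A_{\mathfrak m_\lambda}$ and $M^\bullet\otimes^{\mathbf L}_A k(\lambda)$ is again concentrated in degree $d$. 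I expect this family Poincar\'e duality with slope decompositions --- pinning down the dual complex, eliminating the boundary, tracking the twist --- to be the main obstacle. It is the stand-in for the automatic self-duality of the eigencurve used by Bella\"iche, and it is exactly what confers good behaviour on $\mathscr E(\mathfrak n)_{\rmmid}$ rather than on all of $\mathscr E(\mathfrak n)$.

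Granting the characterization, the rest is commutative algebra. For each point $x_\psi$ of $\mathscr E(\mathfrak n)_{\rmmid}$ the module $H^d(M^\bullet)_{\mathfrak m_\psi}$ is free over $A_{\mathfrak m_\lambda}$, so $(\mathbf T^{(d)}_{\Omega,h})_{\mathfrak m_\psi}$ embeds into the endomorphisms of a free $A_{\mathfrak m_\lambda}$-module and is therefore torsion-free over the (regular, hence reduced) ring $A_{\mathfrak m_\lambda}$. It follows that every irreducible component of $\mathscr E(\mathfrak n)_{\rmmid}$ dominates an irreducible component of $\mathscr W(1)$ and hence, the weight map being finite over affinoids, has dimension $\dim\mathscr W(1)$; and that $\mathbf T^{(d)}_{\Omega,h}$ has no embedded primes along $\mathscr E(\mathfrak n)_{\rmmid}$. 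For reducedness it remains to exhibit a Zariski-dense set of reduced points: at a non-critically refined classical point $x$ of weight $\lambda$ the control isomorphism $I_\lambda$ identifies the relevant distribution cohomology with the classical cohomology $H^d_c(Y_K,\mathscr L_\lambda)_{\mathfrak m_x}$, on which --- by classical multiplicity one and Eichler--Shimura --- the Hecke algebra acts through the residue field, and the usual argument then shows $\mathscr E(\mathfrak n)_{\rmmid}$ is reduced at $x$ (in fact \'etale over $\mathscr W(1)$ there). Since such points are Zariski-dense, accumulate, and meet every irreducible component, the absence of embedded primes promotes this to reducedness of $\mathscr E(\mathfrak n)_{\rmmid}$; the \'etaleness at such points is also what underlies Theorem \ref{thm:intro-main-theorem}(e).

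Finally, the Zariski-density and accumulation of classical points up to twist is the standard output of the eigenvariety machine: classical cohomological weights accumulate at every point of $\mathscr W(1)$ once one allows an appropriate cyclotomic-type twist --- forced because $\mathscr W(1)$ retains only the weights trivial on the global units --- and the small-slope classicality theorem identifies overconvergent with classical cohomology in that range, so the non-critically refined classical points are Zariski-dense and accumulating in $\mathscr E(\mathfrak n)_{\rmmid}$, which also supplies the dense set of reduced points invoked above.
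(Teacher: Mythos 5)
Your overall architecture — excise closed supports to get a Zariski-open subspace, characterize its points via the Tor (base-change) spectral sequence in degrees above $d$, invoke a duality to handle degrees below $d$, then run commutative algebra off the resulting flatness and faithfulness for reducedness, equidimensionality and density — matches the paper's. But the step you yourself flag as ``the main obstacle'' is a genuine gap, and the tool you reach for is the wrong one. You propose to handle $j<d$ by a Poincar\'e--Lefschetz duality for the $2d$-manifold $Y_K$ in families, ``up to a shift by $2d$ and a twist of the weight family,'' after ``eliminating the boundary.'' No such self-duality of $R\Gamma_c(Y_K,\mathscr D_\Omega)^{\leq h}$ is available: the coefficients $\mathscr D_\Omega$ are not self-dual under any topological duality, and controlling the boundary/Eisenstein contribution would be a serious separate project. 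The duality that actually exists is linear-algebraic, not topological: $\mathscr D_\Omega$ is the continuous dual of $\mathscr A_\Omega$, so the compactly supported $\mathscr D_\Omega$-cochain complex is the $\mathscr O(\Omega)$-dual of the Borel--Moore $\mathscr A_\Omega$-chain complex, giving the spectral sequence $\Ext^i_{\mathscr O(\Omega)}(H_j^{\BM}(\mathfrak n,\mathscr A_\Omega)_{\leq h},\mathscr O(\Omega))\Rightarrow H^{i+j}_c(\mathfrak n,\mathscr D_\Omega)_{\leq h}$ of \cite[Theorem 3.3.1]{Hansen-Overconvergent} — with no shift by $2d$, no twist, and no boundary terms. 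Correspondingly, the paper does \emph{not} define $\mathscr E(\mathfrak n)_{\rmmid}$ as the complement of $\bigcup_{j\neq d}\supp(\mathscr M^j_c)$: for $j<d$ it removes the supports of the \emph{homological} sheaves $\tau^{\ast}\mathscr M_j^{\BM}$, pulled back along a map $\tau:\mathscr E(\mathfrak n)^{\red}\to\mathscr E'(\mathfrak n)$ to an auxiliary eigenvariety built from $H_{\ast}^{\BM}(\mathfrak n,\mathscr A_\Omega)$ (Definition \ref{defn:middle-degree}); the two spectral sequences then yield the pointwise characterization (Proposition \ref{prop:middle-alternate-characterization}), and the vanishing of $\mathscr M^j_c$ for $j<d$ on this locus is a \emph{consequence}, not the definition. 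Your set is open, but without a working duality you cannot show it has the stated points: a point avoiding $\supp(\mathscr M^j_c)$ for all $j\neq d$ can still have $H^{d-n}_c(Y_K,\mathscr D_\lambda)_{\mathfrak m_\psi}\simeq\Tor_n^{A_{\mathfrak m_\lambda}}(H^d(M^\bullet)_{\mathfrak m_\psi},k(\lambda))\neq 0$ for $n>0$ if $H^d(M^\bullet)_{\mathfrak m_\psi}$ fails to be flat, and your only route to that flatness is the same unavailable self-duality.

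A secondary gap sits in your reducedness argument. Classical multiplicity one at a single non-critically refined point $x$ does not give that the Hecke algebra acts through the residue field on $H^d_c(Y_K,\mathscr L_\lambda)_{\mathfrak m_x}$: when $\pi_v$ is unramified and $\alpha_v=\beta_v$, the operator $U_v$ acts by a nontrivial Jordan block on the two-dimensional $p$-old eigenspace, so the localized Hecke algebra is non-reduced there even though the point is non-critical (indeed \'etaleness of the weight map genuinely fails in that case). What you need is semisimplicity of the full Hecke action on the fiber over a Zariski-dense set of weights; the paper secures this by restricting to \emph{extremely} non-critical points, where a valuation bound forces $\alpha_v\neq\beta_v$ (Proposition \ref{prop:implications-extremely-non-critical}), and then applies the generically-\'etale criterion of Lemma \ref{lem:reducedness-lemma-john}. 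Your equidimensionality and density arguments are essentially the paper's (faithfulness of $\mathscr O(U)$ on a finite projective $\mathscr O(\Omega)$-module plus Chenevier's lemmas), but they too presuppose the flatness and degree-$d$ faithfulness that hinge on the duality step above.
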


The space $\mathscr E(\mathfrak n)_{\rmmid}$ is defined as the complement of a finite union of closed subspaces in $\mathscr E(\mathfrak n)$, each of which has dimension strictly smaller than the dimension of weight space. The characterization of $\mathscr E(\mathfrak n)_{\rmmid}$ in Theorem \ref{thm:intro-eigenvariety-existence} follows from two spectral sequences developed by the second author in \cite{Hansen-Overconvergent}. The density of classical points and the reduced-ness follow standard lines of argument. Finally, the equidimensionality uses a theorem of Newton proved in an appendix to \cite{Hansen-Overconvergent}.

Now the role of the hypothesis 2(a) comes into view:\ assuming that $(\pi,\alpha)$ is decent tells us that the corresponding classical point $x_{\pi,\alpha}$ on $\mathscr E(\mathfrak n)$ in facts lies on the much better behaved sub-eigenvariety $\mathscr E(\mathfrak n)_{\rmmid}$. We then prove the following statement:

\begin{thm}\label{thm:intro-smoothness}
If $(\pi,\alpha)$ satisfies condition (2) in Definition \ref{defn:intro-decent}, then $x_{\pi,\alpha}$ is a smooth point on $\mathscr E(\mathfrak n)_{\rmmid}$.
\end{thm}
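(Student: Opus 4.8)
The plan is to compute the tangent space of $\mathscr E(\mathfrak n)_{\rmmid}$ at $x_{\pi,\alpha}$ via the usual eigenvariety recipe, identifying it with a suitable space of deformations of the pair $(\text{eigensystem}, \text{Galois representation})$, and then bounding that deformation space using the Selmer group hypothesis 2(b). First I would recall that a point of $\mathscr E(\mathfrak n)$ carries, by construction, a Galois pseudocharacter (hence, since $\rho_\pi$ is irreducible, an honest Galois representation over the local ring at $x_{\pi,\alpha}$ after restriction). Thus a tangent vector at $x_{\pi,\alpha}$ — equivalently a point valued in $k_{x_{\pi,\alpha}}[\varepsilon]/\varepsilon^2$ lying above $x_{\pi,\alpha}$ — gives rise to (i) a deformation of the weight, (ii) a deformation of the $\mathbf T$-eigensystem, and (iii) via the construction, a deformation $\tilde\rho$ of $\rho_\pi$ whose determinant and whose $U_v$-eigenvalue-refinement data are constrained. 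The refinement data means $\tilde\rho|_{G_{F_v}}$ is \emph{trianguline} in the sense appropriate to the eigenvariety, i.e. the deformation respects the $\alpha_v$-eigenline of crystalline Frobenius (this uses condition 2(c), $\alpha_v\neq\beta_v$, to guarantee the relevant eigenline is a free rank-one direct summand that deforms uniquely). One then shows, conversely, that such a refined/trianguline deformation of $\rho_\pi$ with fixed-up-to-the-cyclotomic-line determinant produces a point of $\mathscr E(\mathfrak n)$; here is where hypothesis 2(a) is used, since it guarantees that the classical eigensystem and its deformations live only in middle degree, so the relevant Hecke-module is, locally, a faithful module over the local ring of the eigenvariety and the "interpolation"/classicality argument identifies the tangent space of $\mathscr E(\mathfrak n)_{\rmmid}$ with the space of such Galois deformations. (This is precisely the point where the machinery of \cite{Hansen-Overconvergent} and the spectral sequences feeding into Theorem \ref{thm:intro-eigenvariety-existence} get invoked: middle-degree support is what makes the patched/interpolated cohomology module behave like the eigencurve situation in \cite{Bellaiche-CriticalpadicLfunctions}.)

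Next I would set up the Galois-cohomological computation. Writing $t_{x}$ for the tangent space of $\mathscr E(\mathfrak n)_{\rmmid}$ at $x_{\pi,\alpha}$ and $t_{\mathscr W}$ for the tangent space of weight space $\mathscr W(1)$, the above identification gives an exact sequence relating $t_x$, the fiber of the weight map, and a space of trianguline deformations of $\rho_\pi$ modulo the "obvious" (cyclotomic twist / local) ones. Unwinding, the fiber of $t_x \to t_{\mathscr W}$ is identified with a subspace of $H^1(G_F, \ad^0\rho_\pi)$ (or $H^1(G_F,\ad\rho_\pi)$, depending on the precise normalization of the determinant condition) cut out by: being unramified outside $\mathfrak n p$ (automatic from the level), matching $\rho_\pi$'s ramification at primes dividing $\mathfrak n$, and at each $v\mid p$ lying in the image of the local trianguline (Selmer) condition $H^1_{\mathrm{tri}}(G_{F_v}, \ad\rho_\pi)$ associated to the refinement $\alpha_v$. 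The key local input is that, because $\alpha_v\neq\beta_v$ and $\pi$ is unramified (principal series or special) at $v$, the trianguline local condition at $v$ has the \emph{same dimension} as the Bloch--Kato local condition $H^1_f(G_{F_v},\ad\rho_\pi)$ — indeed the two local subspaces differ, if at all, only in a controlled way, and one checks the trianguline condition is no larger than $H^1_f$ after accounting for the one extra dimension coming from varying $\alpha_v$ itself (the "$\pm$"-refinement direction), which is already accounted for on the $t_{\mathscr W}$ side or by the cyclotomic line. Consequently the refined global deformation space injects into the Bloch--Kato Selmer group $H^1_f(G_F,\ad\rho_\pi)$, which vanishes by 2(b).

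Putting it together: the fiber of $t_x \to t_{\mathscr W}$ vanishes, so $\dim_{k_{x}} t_x \le \dim t_{\mathscr W} = \dim \mathscr W(1)$; but $\mathscr E(\mathfrak n)_{\rmmid}$ is equidimensional of dimension $\dim\mathscr W(1)$ through $x_{\pi,\alpha}$ by Theorem \ref{thm:intro-eigenvariety-existence}, so the tangent space has exactly that dimension, i.e. $x_{\pi,\alpha}$ is a smooth point. I expect the main obstacle to be the bookkeeping that makes the local-at-$p$ comparison precise: one must carefully track the determinant/central-character normalization, the twist by a power of the cyclotomic character implicit in the Hecke normalization (the $\varpi_v^{(w-\kappa)/2}$ factor of Theorem \ref{thm:intro-period-map}), and the exact definition of the trianguline local condition for possibly-critical (non-regular, or equal-Hodge--Tate-weight) refinements — and to show that the map from eigenvariety tangent vectors to trianguline Galois deformations is not just well-defined but \emph{injective}, which again is exactly where 2(a) (middle-degree concentration, hence faithfulness of the relevant cohomology module over $\mathscr O_{\mathscr E(\mathfrak n)_{\rmmid},x}$) does the essential work. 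The rest — density of classical points, Zariski-openness of $\mathscr E(\mathfrak n)_{\rmmid}$, equidimensionality — is imported wholesale from Theorem \ref{thm:intro-eigenvariety-existence}.
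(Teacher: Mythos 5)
There is a genuine gap at the heart of your local-at-$p$ accounting. You claim that (after fixing the weight) the refined/trianguline local condition at $v\mid p$ has the same dimension as $H^1_f(G_{F_v},\ad\rho_\pi)$, so that the whole fiber of $t_x\to t_{\mathscr W}$ injects into $H^1_f(G_F,\ad\rho_\pi)=0$. This is false precisely in the case the theorem is really about, namely critical refinements. If it were true, your argument would show that the weight map is unramified (hence \'etale) at $x_{\pi,\alpha}$; but at a critically refined point the weight map is genuinely ramified --- the fiber $\mathbf T_x/\mathfrak m_\lambda\mathbf T_x$ is a non-reduced Artin ring, which is exactly why $I_\lambda$ kills the eigenclass and the interpolation factor vanishes. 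Concretely, for a critical refinement the space of \emph{Hodge--Tate-weight-preserving} refined deformations $\mathfrak t_v^{\Ref,\HT}$ strictly contains $H^1_f(G_{F_v},\ad\rho_v)$ in general: the paper's Proposition \ref{prop:local-bound} (via \cite{Bergdall-Smoothness} in the crystalline case and Appendix \ref{app:semistable} in the semistable case) only gives $\dim_L \mathfrak t_v^{\Ref}/H^1_f(G_{F_v},\ad\rho_v)\le 2(F_v:\mathbf Q_p)$, and even the constant-weight quotient $\mathfrak t_v^{\Ref,\HT}/H^1_f$ is bounded by the number of embeddings at which the refinement is critical, not by zero.

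The argument the paper actually runs accepts this local excess of $2\sum_v(F_v:\mathbf Q_p)=2d$ over the (vanishing) global Selmer group and then cuts it back down using the determinant: each local determinant map $\mathfrak t_v^{\Ref}\to H^1(G_{F_v},L)$ is surjective, and imposing the ``relevant'' determinant condition (deformations of $\det\rho$ killing a finite-index subgroup of $\mathcal O_F^\times$, which costs $d-1-\delta_{F,p}$ by Lemma \ref{lemma:co-dimension}) bounds the \emph{entire} tangent space --- not just its weight-fiber --- by $2d-(d-1-\delta_{F,p})=1+d+\delta_{F,p}=\dim\mathscr W(1)$; equidimensionality then forces equality. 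Two smaller points: you only need the injection $\mathfrak t_x\hookrightarrow\mathfrak t_\rho^{\Ref}$ coming from the surjection $R_\rho^{\Ref}\twoheadrightarrow\widehat{\mathscr O}_x$, so the converse construction of eigenvariety points from trianguline deformations (an $R=T$-type statement) is neither needed nor available here; and your use of 2(a) is right in spirit, since middle-degree concentration is what makes $\mathscr E(\mathfrak n)_{\rmmid}$ reduced and equidimensional and allows the interpolation of crystalline periods (Proposition \ref{prop:family-galois-properties}) that produces the map $R_\rho^{\Ref}\to\widehat{\mathscr O}_x$ in the first place.
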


The proof is an argument using deformations of Galois representations; this is where conditions 2(b) and 2(c) come in. The local deformation-theoretic calculations that are needed were carried out by the first author in \cite{Bergdall-Smoothness} (see also \cite{BreuilHellmannSchraen-Classicality}). We should emphasize that the properties in Theorem \ref{thm:intro-eigenvariety-existence}, thus condition 2(a), are absolutely crucial to getting the strategy off the ground:\ they are used not just to guarantee the variation of Galois representations over $\mathscr E(\mathfrak n)_{\rmmid}$ but also that the key generalizations of Kisin's theorem on crystalline periods (\cite{Kisin-OverconvergentModularForms,Liu-Triangulations}) hold as well.

Theorem \ref{thm:intro-smoothness} (Theorem \ref{thm:smoothness} in the text) is also true when $(\pi,\alpha)$ is non-critical, if it is further assumed that condition 2(c) in Definition \ref{defn:intro-decent} holds. The argument (due to Chenevier) is somewhat different and proves the stronger statement that the weight map is \'etale. While we expect that \'etaleness of the weight map definitely fails whenever 2(c) fails, it is open whether or not Theorem \ref{thm:intro-smoothness} as stated holds without 2(c).

Finally we deduce the one-dimensionality result in Theorem \ref{thm:intro-one-dimensional} as a consequence of Theorem \ref{thm:intro-smoothness} (again, it was already known in the non-critical case). The strategy is to prove that the image $T_{\pi,\alpha}$ of the Hecke algebra $\mathbf{T}$ in the endomorphism ring of $M_{\pi,\alpha}=H^{d}_c(Y_K,\mathscr D_\lambda)_{\mathfrak m_{\pi,\alpha}}$ is Gorenstein (of dimension zero), and that each sign eigenspace $M_{\pi,\alpha}^{\epsilon}$ is free of rank one over $T_{\pi,\alpha}$.  The idea to prove this is that Theorem \ref{thm:intro-smoothness} implies the statement for $T_{\pi,\alpha}$ replaced by the weight fiber $\mathscr{O}_{\mathscr{E}(\mathfrak n), x_{\pi,\alpha}} \otimes_{\mathscr{O}_{\mathscr{W}(1),\lambda}} k_{\lambda}$. In general, the base change map $\mathscr{O}_{\mathscr{E}(\mathfrak n), x_{\pi,\alpha}} \otimes_{\mathscr{O}_{\mathscr{W}(1),\lambda}} k_{\lambda} \to T_{\pi,\alpha}$ is surjective with nilpotent kernel, but we use classical theorems in commutative algebra (the Auslander--Buchsbaum formula and properties of depth) to show it is an isomorphism at $x_{\pi,\alpha}$. We refer to the text (Section \ref{subsec:consequences}) for more details.

\subsection{Comparison to other results}\label{subsec:intro-comparison}

As we have already indicated, when $F=\mathbf{Q}$ the results we prove can be found in Bella\"iche's article{\color{magenta}.}  The first paragraph of that article provides more than ample references to the relevant history.  

We note, however, that there is something a bit special about $F = \mathbf Q$. Precisely, the truth of Leopoldt's conjecture implies that the group $\Gamma_F$ is a $1$-dimensional $p$-adic Lie group, so a theorem of Amice and V\'elu (\cite{AmiceVelu-pAdicLfunction}) implies in turn that the $p$-adic $L$-functions described in Theorem \ref{thm:intro-main-theorem} are uniquely determined by their growth and interpolation properties when the growth is sufficiently small. This has the notable advantage that constructions by different methods (for instance, modular symbols vs. Rankin--Selberg methods) necessarily give the same $p$-adic $L$-functions in non-critical slope cases, and so only $p$-adic $L$-functions beyond non-critical slope have any ambiguity. In the critical slope case, there are constructions by Pollack--Stevens (\cite{PollackStevens-CriticalSlope}) and Bella\"iche (\cite{Bellaiche-CriticalpadicLfunctions}). These obviously agree on their overlap. There is also a construction, which applies in the critical slope case, using Kato's Euler systems the dual exponential map of Perrin-Riou (cf.\ the introduction to \cite{LeiLoeffersZerbes-CMforms}). This construction agrees with the previous references in the non-theta-critical case (see\ \cite{Wang-EulerFamily} for instance).

Now let us move to a general totally real field $F$. We would first like to mention the articles of Ash--Ginzburg (\cite{AshGinzburg-pAdicLfns}), Januszewski (\cite{Januszewski-Symbols,Januszewski-pAdicIMRN,Januszewski-FamiliesGLn}), Manin (\cite{Manin-pAdic}),  and Haran (\cite{Haran-pAdicLfunctions}), which all give constructions of $p$-adic $L$-functions associated with Hilbert modular forms in varying degrees of generality. However, the main goals of these articles are somewhat orthogonal to ours. On the one hand they are more general in some ways. For instance, they actually do not assume the base field is totally real and \cite{AshGinzburg-pAdicLfns} and \cite{Januszewski-Symbols,Januszewski-FamiliesGLn} construct $p$-adic $L$-functions for $\mathrm{GL}_{2n}$ and $\mathrm{GL}_{n+1} \times \mathrm{GL}_n$, respectively.  On the other hand, of these only the very recent \cite{Januszewski-FamiliesGLn} considers variation in families (ordinary, in this case), and none of them go beyond small slope cases. And without input from Leopoldt's conjecture, we can not say for certain that their methods produce the same objects as ours in the overlapping cases.

More closely related to the present article are the recent works of Dimitrov (\cite{Dmitrov-OrdinarypAdic}), Barrera (\cite{Barrera-Thesis}), Barrera and Williams (\cite{BarreraWilliams-pAdicLfunctions}), and a very recent article of Dimitrov, Barrera, and Jorza (\cite{BarreraDimitrovJorza-pAdicGL2}). Dimitrov's article, in particular, gives a clean and definitive construction of $p$-adic $L$-functions associated with ordinary $p$-refined Hilbert modular forms and with Hida families thereof.  In \cite{Barrera-Thesis}, Barrera combined the formalism of overconvergent cohomology with the modular cycles introduced in \cite{Dmitrov-OrdinarypAdic}, obtaining a construction of $p$-adic $L$-functions in the non-critical case with the correct growth and interpolation properties. This method was generalized in \cite{BarreraWilliams-pAdicLfunctions} to allow for any number field. (The statements in \cite{Barrera-Thesis, BarreraWilliams-pAdicLfunctions} assume non-critical slope, but it is clear from reading these works that non-criticality is a sufficient hypothesis.) In the course of all these works, and in \cite{BarreraDimitrovJorza-pAdicGL2} in particular, one finds a map from eigenclasses in overconvergent cohomology to distributions on a Galois group that bears a resemblance to the period map we have defined and which presumably can be verified to be the \emph{same} map. In particular, even without Leopoldt one might hope that our constructions and those of \cite{Barrera-Thesis,BarreraDimitrovJorza-pAdicGL2,BarreraWilliams-pAdicLfunctions} coincide in the overlapping cases.

The difference between our period map and that of the above works is best illustrated by examining the proofs of the interpolation property. For instance, in \cite{BarreraWilliams-pAdicLfunctions}, the authors check the interpolation property by making use of  modular cycles and ``hands-on'' calculations with group cohomology.  These modular cycles do not appear explicitly in our calculations (although they are implicit in some way in what we do). Rather than introduce auxiliary cycles, we instead calculate directly using the adelic chains and cochains introduced by Ash and Stevens (see Section \ref{sec:cls}).  At first glance, this may seem more complicated. However, we believe our approach is quite natural, for at least two reasons.

First, modular cycles were originally introduced in the context of Hida theory, and in particular in a framework where $p$-adic families can be constructed by considering cohomology with constant coefficients of a $Y_1(\mathfrak{n} p^\infty)$-tower. In this context, it is natural (and in some sense, necessary) to introduce fairly complicated cycles when defining $p$-adic $L$-functions and checking their interpolation property. In Stevens's setup, by contrast, there is no tower, but the cohomology has extremely complicated coefficients.  Our perspective then is that the difficulty should be shifted from defining the correct modular cycles to defining the correct period map. Second, the details of our construction are consistent with the adelic philosophy we have adopted. For instance, our definition eliminates the need to choose representatives for various objects, thereby avoiding the ambiguities such choices can engender. This is in contrast to several points in the arguments of the referenced works where one has to check somewhat non-trivial independence of choices.  Our approach avoids this kind of issue.

\subsection{Organization}
The body of this article is divided into seven main sections. The first three (Sections \ref{sec:cls}, \ref{sec:hmf}, and \ref{sec:shimuras-theorem}) are comprised of a verbose discussion of adelic (co)chains on locally symmetric spaces, Hilbert modular forms, and Shimura's algebraicity theorem. Here we have adopted a maximalist approach to the exposition, so that our notations are as precise as possible and to ensure this work is reasonably self-contained.

Starting in Section \ref{sec:loc-anal-dist} we turn towards $p$-adic matters. First we discuss generalities on certain $p$-adic Lie groups and define various modules of locally analytic functions and distributions. 

Section \ref{sec:mid-eigen} is devoted to an exposition of the middle-degree Hilbert modular eigenvariety mentioned above. We include here (and in the previous section) a lengthy discussion, most of which is moot if we were to assume Leopoldt's conjecture, of twisting classical points by $p$-adic Hecke characters.

In Section \ref{sec:period-maps} we define and analyze the period maps. The heart of this section is the proof of the abstract interpolation theorem, which is the key ingredient in proving the correct interpolation formula for our $p$-adic $L$-functions.

The final section, Section \ref{sec:padicL-functions}, contains the definition of $p$-adic $L$-functions and the proofs of Theorem \ref{thm:intro-one-dimensional} and Theorem \ref{thm:intro-main-theorem}.

\subsection{Notations}\label{subsec:notation}
For convenience, we list here notations that will remain in force throughout the paper.

We will always write $\GL_2$ for the general linear group over $\mathbf Z$ (and ${\GL_2}_{/R}$ for its base change to a ring $R$ if needed). We write $Z \subset T \subset \GL_2$ for the center, resp. the diagonal torus. If $H$ is a real Lie group we generally write $H^{\circ}$ for the connected component of $H$ containing the identity.

Unless noted, $F$ is a totally real number field of degree $d$. Its ring of integers is written $\mathcal O_F$. We write $\Sigma_F = \Hom(F,\mathbf C)$. The adeles of $F$ are written $\mathbf A_F$. We write $F_{\infty} = F\otimes_{\mathbf Q} \mathbf R$ for the infinite component of $\mathbf A_F$. We write $\mathbf A_{F,f}$ for the finite component of $\mathbf A_F$.

The map $F \rightarrow \mathbf R^{\Sigma_F}$ given by $\xi \mapsto (\sigma(\xi))$ for $\xi \in F$ extends $\mathbf R$-linearly to an isomorphism $F_{\infty} \simeq \mathbf R^{\Sigma_F}$ of $\mathbf R$-algebras. If $x \in F_\infty$ we write $x =(x_\sigma)$ for its coordinates in $\mathbf R^{\Sigma_F}$. We say $x \in F_\infty$ is totally positive if $x_\sigma > 0$ for all $\sigma \in \Sigma_F$; the set of totally positive elements is written $F_{\infty,+}$. Or, the invertible totally positive elements of $F_\infty$ is equal to $(F_\infty^\times)^{\circ}$ (our preferred notation in many places).

We fix a prime number $p$. We write $\overline{\mathbf Q}_p$ for an algebraic closure of the $p$-adic numbers. We also fix an isomorphism $\iota: \mathbf C \overset{\sim}{\rightarrow}\overline{\mathbf Q}_p$. Using $\iota$ we have a decomposition
\begin{equation}\label{eqn:union}
\Sigma_F = \bigsqcup_{v \mid p} \Sigma_v
\end{equation}
where an element $\sigma \in \Sigma_F$ lies in $\Sigma_v$ if and only if the composition $\iota\circ \sigma$ induces the $p$-adic place $v$ on $F$. Write $F_p = F\otimes_{\mathbf Q} \mathbf Q_p \simeq \prod_{v \mid p} F_v$ where $F_v$ is the completion of $F$ with respect to $v \mid p$. If $\sigma \in \Sigma_v$ then $\sigma$ extends to a $\mathbf Q_p$-linear embedding $\sigma: F_v \hookrightarrow \overline{\mathbf Q}_p$ for which we use the same symbol. In this way $\Sigma_v$ is identified with $\Hom_{\mathbf Q_p}(F_v,\overline{\mathbf Q}_p)$.

If $K/\mathbf Q_\ell$ is a finite extension, $\ell$ a prime, we write $\Art_K : K^\times \rightarrow G_K^{\ab}$ for the local Artin map, normalized so uniformizers map to geometric Frobenius elements. If $\pi$ is a smooth, irreducible representation of $\GL_n(K)$ we denote $\rec_K(\pi)$ the Weil--Deligne representation corresponding to $\pi$ by the local Langlands correspondence as constructed by Harris and Taylor (\cite{HarrisTaylor-LocalLanglands}). We further specify $r(\pi) = \rec_K(\pi\otimes |{\det}|^{1-n\over 2})$ for the arithmetically normalized local Langlands correspondence. Finally, we write $r^{\iota}(\pi)$ for the corresponding representation over $\overline{\mathbf Q}_p$ obtained via $\iota$.

We also use two shorthand notations for tuple-based operations. First, suppose that $S$ is a set and we are given collections $\{X_s\}_{s\in S}$, $\{Y_s\}_{s \in S}$,  and $\{Z_s\}_{s \in S}$ with a binary operations $X_s \times Y_s \overset{\bullet_s}{\longrightarrow} Z_s$. If $X = \prod_{s \in S} X_s$, $Y =\prod_{s \in S} Y_s$ and $Z = \prod_{s \in S} Z_s$ we then define a binary operation $X\times Y \overset{\bullet}{\longrightarrow} Z$ by $(x_s) \bullet (y_s) := (x_s \bullet_s y_s)$. A typical situation where we might use this is when, for each $s \in S$, $X_s$ is a group acting on a set $Y_s$ (so $Y_s = Z_s$). The second situation we will find ourselves in is we are given a collection $x=(x_s)_{s \in S}$ of elements of a common ring $R$, and we are given a collection $n=(n_s)_{s \in S}$ of integers. In that case we define $x^n = \prod_{s \in S} x_s^{n_s}$. This notation satisfies the obvious compatibilities with usual multiplication in a ring.

If $v$ is a place of $F$ then we write $\mathfrak p_v$ for the corresponding prime ideal. If $p$ is a prime then we will use the bold letter $\mathbf p := \prod_{v \mid p} \mathfrak p_v$ for the product of the primes above $p$.

\subsection{Acknowledgments}  

This project began in May 2012 when D.H. attended William Stein's plenary lecture on elliptic curves over $\mathbf{Q}(\sqrt{5})$ at the Atkin Memorial Conference, and he would like to heartily thank Stein for this crucial initial inspiration.  Some of these results (in particular the non-critical case of Theorem \ref{thm:intro-main-theorem}) were first announced by D.H. in a conference at UCLA in May 2013.  Decisive progress beyond the non-critical case occurred in early 2016, and the authors gave some lectures on these results beginning in Spring 2016.  In any case, the authors would like to apologize for the very long delay between the first announcement(s) of these results and the appearance of this manuscript. 

The first author's research was partially supported by NSF grant DMS-1402005. J.B.\ would also like to thank the Institut des Hautes \'Etudes Scientifiques (Bures-sur-Yvette), and the Max-Planck-Institut f\"ur Mathematik (Bonn) for hospitality during visits in the spring of 2017. The majority of this work was carried out while J.B.\ was a postdoctoral researcher at Boston University, and we thank them for their stimulating atmosphere and for providing material support for D.H. to make multiple visits during this collaboration. D.H. would like to thank Boston College, l'Institut de Math\'ematiques de Jussieu, and Columbia University for providing congenial working conditions during the various stages of this project. 

We would finally like to thank Avner Ash, Michael Harris, Keenan Kidwell, Barry Mazur, and Glenn Stevens for useful and inspiring conversations at various times throughout this project.

\section{Cohomology and local systems}\label{sec:cls}

This section concerns the cohomology of local systems on symmetric spaces which arise in the context of Hilbert modular forms. Almost nothing is original in our treatment. However, a number of calculations later in the paper rely on the precise formulas we present and so we found it prudent to expose them in detail. The reader is encouraged to skim the results as needed.

\subsection{Topology}\label{subsec:topology}
Throughout this subsection, we write $X$ and $Y$ for topological spaces which are locally compact and Hausdorff. We let $R$ be a fixed principal ideal domain. Unless otherwise noted, ``sheaves'' are sheaves of $R$-modules.

If $\mathscr L$ is a sheaf on $X$ we consider the cohomology $H^{\ast}(X,\mathscr L)$, homology $H_{\ast}(X,\mathscr L)$, compactly supported cohomology $H^{\ast}_c(X,\mathscr L)$ or Borel--Moore homology $H_{\ast}^{\BM}(X,\mathscr L)$. These are all $R$-modules. Primary sources for $H^{\ast}_c$ and $H_{\ast}^{\BM}$ are \cite{Steenrod-LocalCoefficients,BorelMoore-Homology}. We refer to \cite{Bredon-SheafTheory}  for what follows.\footnote{We warn the reader that our homology notation is in conflict with \cite{Bredon-SheafTheory}. Namely, $H_{\ast}^{\BM}$ here is written $H_{\ast}$ there and $H_{\ast}$ here is written $H_{\ast}^{c}$ there (cf.\ the caution at the start of \cite[Section V.3]{Bredon-SheafTheory}).} Along with the usual functorialities in algebraic topology (pushforward in homology, pullback in cohomology, and so forth) we summarize important properties of compactly supported cohomology and Borel--Moore homology.

If $\mathscr L$ and $\mathscr M$ are two sheaves on $X$, there is a functorial cup product (\cite[Sections II.7]{Bredon-SheafTheory})\begin{equation}\label{eqn:cup-products}
\cup: H^p_{?}(X,\mathscr L)\otimes_R H^q(X,\mathscr M) \rightarrow H^{p+q}_{?}(X,\mathscr L\otimes_R \mathscr M)
\end{equation}
for ? either $c$ or the empty symbol.  Further, there are two separate cap products (\cite[Section V.10]{Bredon-SheafTheory})
\begin{align}
H^{p}_c(X,\mathscr L) \otimes_R H_q^{\BM}(X,\mathscr M) &\overset{\cap}{\longrightarrow} H_{q-p}(X, \mathscr L\otimes_R \mathscr M);\label{eqn:cap-products}\\
H^{p}(X,\mathscr L) \otimes_R H_q^{\BM}(X,\mathscr M) &\overset{\cap}{\longrightarrow} H_{q-p}^{\BM}(X, \mathscr L\otimes_R \mathscr M).\nonumber
\end{align}
The cup and cap products commute in the sense that 
\begin{equation}\label{eqn:cap-cup-commute}
(\Psi \cup \Psi') \cap \Phi = \Psi \cap (\Psi' \cap \Phi),
\end{equation}
under apparent qualifications on where these elements are defined.

If $\mathscr L$ is a sheaf on $Y$ and $f: X \rightarrow Y$ is a  proper morphism, then there are functorial
 pushforward and pullback maps 
 \begin{align}
 H_\ast^{\BM}(X,f^{\ast}\mathscr L) &\overset{f_{\ast}}{\longrightarrow} H_{\ast}^{\BM}(Y,\mathscr L);\label{eqn:pushfoward/pullback}\\
H^{\ast}_c(Y,\mathscr L) &\overset{ f^{\ast}}{\longrightarrow} H^{\ast}_c(X,f^{\ast}\mathscr L).\nonumber
 \end{align}
The cup product commutes with pullbacks. The cap products are compatible with pushfowards and pullbacks along proper morphisms $f: X \rightarrow Y$ in that
\begin{equation}\label{eqn:push-pull-caps-formula}
f_{\ast}(f^{\ast}\Psi \cap \Phi) = \Psi \cap f_{\ast}\Phi
\end{equation}
for all $\Psi \in H^p_{?}(Y,\mathscr L)$ and $\Phi \in H_q^{\BM}(X, f^{\ast}\mathscr M)$.

Now suppose that $p=q$ in \eqref{eqn:cap-products} and that we have a pairing $\mathscr L\otimes_R \mathscr M \rightarrow R$. Taking the natural composition
\begin{equation*}
H_{0}^{?}(X,\mathscr L\otimes_R \mathscr M)  \rightarrow H_0^{?}(X,R)  \overset{\tr}{\longrightarrow}R
\end{equation*}
and combining it with the cap product, $\langle \Psi , \Phi \rangle := \tr(\Psi \cap \Phi)$  defines a functorial $R$-bilinear pairing
\begin{equation*}
\langle - , - \rangle : H^p_{c}(X,\mathscr L) \otimes_R H_p^{\BM}(X,\mathscr M) \rightarrow R
\end{equation*}
under which $f^{\ast}$ and $f_{\ast}$ are adjoint (by \eqref{eqn:push-pull-caps-formula} and because trace commutes with pushforwards). Thus, our convention is that cap products $\Phi \cap \Psi$ are homology classes and values of pairings $\langle \Phi, \Psi \rangle$  are elements of $R$.

Suppose now that $X$ is an oriented real manifold of dimension $n$. Then there is a Borel--Moore fundamental class $[X] \in H^{\BM}_n(X,R)$ with the property that $\PD(\Psi) := \Psi \cap [X]$ defines a functorial isomorphism
\begin{equation}
\PD: H^q(X,\mathscr L) {\rightarrow} H^{\BM}_{n-q}(X,\mathscr L)\label{eqn:poincare-duality}
\end{equation}
for each $0 \leq q \leq n$. See \cite[Theorem V.10.1 and Corollary V.10.2]{Bredon-SheafTheory}. We refer to $\PD$ as ``Poincar\'e duality.'' It satisfies the following properties. First, if $f: X \rightarrow X$ is an orientation preserving homeomorphism, then $f_{\ast}[X] = [X]$ and so 
\eqref{eqn:push-pull-caps-formula} implies that
\begin{equation}\label{eqn:naturality-PD}
f_{\ast} \PD f^{\ast} = \PD.
\end{equation}
Second, if $f: X \rightarrow Y$ is a proper morphism, $\mathscr L$ is a sheaf on $X$, $\mathscr M$ is a sheaf on $Y$ and we have a pairing $\mathscr L \otimes_R \mathscr M \rightarrow R$, then  from \eqref{eqn:cap-cup-commute}, \eqref{eqn:push-pull-caps-formula}, and \eqref{eqn:poincare-duality} we obtain
\begin{equation}\label{eqn:push-pull-duality-formula}
\langle \Phi, f_{\ast}\PD(\Psi) \rangle = \langle f^{\ast}\Phi \cup \Psi, [X] \rangle
\end{equation}
for all $\Phi \in H^p_c(Y,\mathscr L)$ and $\Psi \in H^{n-p}(X,f^{\ast}\mathscr M)$ (the cup product $f^{\ast}\Phi \cup \Psi$ is implicitly viewed in $H^n_c(X,R)$ for the purposes of this formula).  Finally, when $R$ is a subring of $\mathbf C$ there is an integration map $\int_X : H^n_c(X,R) \rightarrow R$ which is natural with respect to Poincar\'e duality in that $\int_X = \tr \circ \PD$ on $H^n_c(X,R)$.

\subsection{Adelic cochains on symmetric spaces}\label{subsec:adelic-cochains}
In this subsection, we review the adelic (co)chains introduced by Ash and Stevens (see \cite[Section 2]{Hansen-Overconvergent} and the references there).

Write $G$ for a connected reductive group over $\mathbf Q$, $\mathbf A$ for the adeles of $\mathbf Q$, and $\mathbf A_{f}$ for the finite adeles (rather than $\mathbf A_{\mathbf Q}$ and $\mathbf A_{\mathbf Q,f}$ as Section \ref{subsec:notation} would suggest). Let $G(\mathbf R)^{\circ}$ be the connected component of the identity in $G(\mathbf R)$ and let $K_\infty^{\circ} \subset G(\mathbf R)^{\circ}$ be a subgroup which is either maximal compact or maximal compact mod-center. Let $Z$ be the center of $G$.

Write $D_\infty = G(\mathbf R)^{\circ}/K_\infty^{\circ}$ and $D_{\mathbf A} = D_\infty \times G(\mathbf A_f)$, which we view as topological spaces where $D_\infty$ gets its structure as a real manifold and $G(\mathbf A_f)$ gets the discrete topology.  Then, we write $C_{\bullet}(D_{\mathbf A})$ for the chain complex of singular chains in $D_{\mathbf A}$. The discrete topology is totally disconnected, so any singular chain in $G(\mathbf A_f)$ is a single point, meaning $C_{\bullet}(D_{\mathbf A}) = C_{\bullet}(D_\infty)\otimes_{\mathbf Z} \mathbf Z[G(\mathbf A_f)]$ with $\partial \otimes 1$ as the boundary map (and we could have also given $G(\mathbf A_f)$ its natural topology).

Fix a compact open subgroup $K \subset G(\mathbf A_f)$. Then $G(\mathbf Q)^{\circ}$ acts diagonally on $D_{\mathbf A}=D_\infty \times G(\mathbf A_f)$ from the left, and $K$ acts on the right via the second coordinate. We write $Y_K$ for the double quotient
\begin{equation}\label{eqn:symmetric-space}
Y_K := G(\mathbf Q)\backslash G(\mathbf A) /K_\infty^{\circ} K = G(\mathbf Q)^{\circ} \backslash D_{\mathbf A} / K.
\end{equation}
In general, although this may not be a manifold, it is a disjoint union of orbifolds. Specifically, if $\{g_i\}$ is a finite collection of elements $g_i \in G(\mathbf A_f)$ such that $G(\mathbf A) = \bigsqcup_i  G(\mathbf Q)^{\circ}G(\mathbf R)g_iK$, then
\begin{equation}\label{eqn:union-decomposition}
Y_K = \bigsqcup_i \Gamma(g_i) \leftmod D_\infty,
\end{equation}
where $\Gamma({g}) := gKg^{-1} \cap G(\mathbf Q)^{\circ} \subset G(\mathbf Q)^{\circ}$ for $g \in G(\mathbf A_f)$. When the $\Gamma(g_i)/(Z \cap \Gamma(g_i))$ are without torsion, $Y_{K}$ is a real manifold of dimension $2d$ (compare with Proposition \ref{prop:neatness} below).

Now suppose that $N$ is a $(G(\mathbf Q)^{\circ},K)$-bimodule, meaning:
\begin{enumerate}
\item $N$ is a right $K$-module whose action we write $n|k$ for $n \in N$ and $k \in K$, and
\item $N$ is a left $G(\mathbf Q)^{\circ}$-module whose actions we write $\gamma \cdot n$ for $n \in N$ or $\gamma \in G(\mathbf Q)^{\circ}$.
\end{enumerate}
For instance, the left action of $G(\mathbf Q)^{\circ}$, and the right action of $K$, on $D_{\mathbf A}$ equips $C_{\bullet}(D_{\mathbf A})$ with a natural structure of complex of $(G(\mathbf Q)^{\circ},D_{\mathbf A})$-bimodules. We consider any $N$ with the discrete topology and write $\underline N$ (in the text we will remove underlines for readability) for the local system defined by the sheaf of locally constant sections of the natural projection map
\begin{equation*}
G(\mathbf Q)^{\circ} \backslash (D_{\mathbf A} \times N)/K \rightarrow Y_K.
\end{equation*}
We also use the standard abuse of notation to write $\underline N$ for the double quotient itself. 

The adelic cochain complex associated with $N$ is
\begin{equation*}
C_{\ad}^{\bullet}(K,N) := \Hom_{(G(\mathbf Q)^{\circ}, K)}(C_{\bullet}(D_{\mathbf A}), N).
\end{equation*}
The subscript ``$\ad$'' in this context refers to the word {\em ad}elic.

Let $g_f \in G(\mathbf A_f)$. Then, for each singular chain $\sigma_\infty \in C_{\bullet}(D_\infty)$ there is a singular chain $\sigma_\infty \otimes [g_f] \in C_{\bullet}(D_{\mathbf A})$. This allows us to define a morphism of abelian groups
\begin{align}
\Hom(C_{\bullet}(D_{\mathbf A}), N) &\rightarrow \Hom(C_{\bullet}(D_{\infty}), N);\label{eqn:augment}\\
\phi &\mapsto \left[\phi_{g_f}: \sigma_\infty \mapsto \phi(\sigma_\infty\otimes [g_f])\right].\nonumber
\end{align}
We note that the chain complex $C_{\bullet}(D_\infty)$ is naturally a chain complex of left $\Gamma(g_f)$-modules, where $\Gamma(g_f)$ acts on $D_\infty$ through the inclusion $\Gamma(g_f) \subset G(\mathbf Q)^{\circ}$. On the other hand, we write $N(g_f)$ for the left $\Gamma(g_f)$-module whose underlying abelian group is still $N$ but equipped with a left $\Gamma(g_f)$-action
\begin{equation*}
\gamma \cdot_{g_f} n = \gamma\cdot n|{(g_f^{-1}\gamma^{-1}g_f)}.
\end{equation*}
These definitions given, it is straightforward to see that the map \eqref{eqn:augment} descends to a morphism
\begin{equation*}
C_{\ad}^{\bullet}(K,N) \rightarrow \Hom_{\Gamma(g_f)}(C_{\bullet}(D_\infty), N(g_f)).
\end{equation*}
Finally, let $C^{\bullet}(D_\infty;N) = \Hom(C_{\bullet}(D_\infty), N)$ and write $C^{\bullet}_c(D_\infty;N) \subset C^{\bullet}(D_\infty;N)$ for the cochains on $D_\infty$ with compact support. We define the compactly supported adelic cochains by
\begin{equation*}
C_{\ad,c}^{\bullet}(K,N) := \{ \phi \in C_{\ad}^{\bullet}(K,N) \mid \phi_{g_f} \in C^{\bullet}_c(D_\infty;N) \text{ for all $g_f \in G(\mathbf A_f)$}\}.
\end{equation*}
\begin{prop}\label{prop:canonical}
There are canonical isomorphisms
\begin{equation*}
\xymatrix{
H^{\ast}(C^{\bullet}_{\ad,c}(K,N)) \ar[r]^-{\simeq} \ar[d] & H^{\ast}_c(Y_K,\underline N) \ar[d] \\
H^{\ast}(C^{\bullet}_{\ad}(K,N)) \ar[r]^-{\simeq} & H^{\ast}(Y_K,\underline N)
}
\end{equation*}
\end{prop}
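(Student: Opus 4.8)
The plan is to reduce the statement to the connected‑component description $Y_K = \bigsqcup_i \Gamma(g_i)\backslash D_\infty$ from \eqref{eqn:union-decomposition}, and then to the standard comparison between equivariant singular cochains on the contractible symmetric space $D_\infty$ and the sheaf cohomology of a local system on the quotient.

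First I would make the adelic cochain complex completely explicit. Fixing representatives $g_1,\dots,g_r$ for the finite double coset set $G(\mathbf Q)^\circ\backslash G(\mathbf A_f)/K$, the augmentation maps \eqref{eqn:augment}, together with their observed descent to $\Hom_{\Gamma(g_i)}(C_\bullet(D_\infty),N(g_i))$, assemble into a morphism of complexes
\begin{equation*}
C^{\bullet}_{\ad}(K,N)\longrightarrow \bigoplus_{i=1}^r \Hom_{\Gamma(g_i)}\bigl(C_\bullet(D_\infty),N(g_i)\bigr),
\end{equation*}
and the first step is to check this is an isomorphism. Injectivity is the observation that every chain in $C_\bullet(D_{\mathbf A})=C_\bullet(D_\infty)\otimes_{\mathbf Z}\mathbf Z[G(\mathbf A_f)]$ is $(G(\mathbf Q)^\circ,K)$-equivalent to one of the form $\sigma_\infty\otimes[g_i]$; surjectivity is the reverse construction $\phi(\sigma_\infty\otimes[\gamma g_i k]):=\gamma\cdot\bigl(\psi_i(\gamma^{-1}\sigma_\infty)\bigr)|k$, which is consistent precisely because each $\psi_i$ is equivariant for the twisted $\Gamma(g_i)$-action defining $N(g_i)$. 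One then matches the compactly supported subcomplexes on the two sides — on the right, ``compact support'' understood through the quotient map $D_\infty\to\Gamma(g_i)\backslash D_\infty$ (equivalently, through the Borel--Serre bordification, so that the downstairs and upstairs notions of support agree) — by directly unwinding the support conditions.

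Second, for each $i$ I would invoke the comparison isomorphism of complexes
\begin{equation*}
\Hom_{\Gamma(g_i)}\bigl(C_\bullet(D_\infty),N(g_i)\bigr)\;\cong\; C^{\bullet}\bigl(\Gamma(g_i)\backslash D_\infty,\underline{N(g_i)}\bigr),
\end{equation*}
and likewise with compact supports, coming from the fact that a singular simplex in $\Gamma(g_i)\backslash D_\infty$ has a $\Gamma(g_i)$-torsor of lifts to $D_\infty$, so that a $\Gamma(g_i)$-equivariant function on lifts is the same as a section of the local system over the downstairs simplex. When $\Gamma(g_i)$ is torsion-free this is the textbook identification of cohomology with local coefficients; in general (so that $Y_K$ is a genuine orbifold) a barycentric-subdivision argument reduces to simplices lying in orbifold charts $\Gamma_x\backslash U$ with $U$ contractible and $\Gamma_x$ finite, where the identification is immediate. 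Combined with the standard fact (\cite{Bredon-SheafTheory}) that the sheafified singular cochain complex is a c-soft (resp. soft) resolution of the local system, this shows the right-hand complexes compute $H^{\ast}_c$ (resp. $H^{\ast}$) of $\underline{N(g_i)}$ on $\Gamma(g_i)\backslash D_\infty$.

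Finally I would assemble the pieces: summing over $i$, using that $\underline N$ restricts to $\underline{N(g_i)}$ on the $i$-th component and that both sheaf cohomology and (compactly supported) singular cohomology turn the finite disjoint union into a direct sum, yields the two horizontal isomorphisms. Commutativity of the square is automatic because every map in sight is induced by the single inclusion ``compactly supported (co)chains $\hookrightarrow$ all (co)chains'' on each side, and canonicity (independence of the $g_i$) follows since a change of representatives only alters the intermediate identifications through the evident transition isomorphisms. I expect the one genuinely non-formal point to be the torsion case of the equivariant-cochain/sheaf-cohomology comparison — that is, justifying the subdivision argument and the (c-)softness of the cochain sheaves on the orbifold quotient; everything else is bookkeeping with the $(G(\mathbf Q)^\circ,K)$-bimodule structure and the definition of the local system.
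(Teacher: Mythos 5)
Your argument is exactly the one the paper has in mind: the proof in the text is a one-line citation of \cite[Proposition 2.1.1]{Hansen-Overconvergent}, whose content is precisely your reduction via \eqref{eqn:union-decomposition} to the classical comparison between $\Gamma(g_i)$-equivariant singular cochains on the contractible space $D_\infty$ and the (compactly supported) cohomology of the local system on $\Gamma(g_i)\backslash D_\infty$, together with the bookkeeping identifying $C^\bullet_{\ad}(K,N)$ with $\bigoplus_i\Hom_{\Gamma(g_i)}(C_\bullet(D_\infty),N(g_i))$. You also correctly isolate the two delicate points (the support convention and the torsion in $\Gamma(g_i)/Z(\Gamma(g_i))$).

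One caveat: in the torsion case the chart-level identification is not ``immediate.'' Over an orbifold chart $\Gamma_x\backslash U$ with $U$ contractible and $\Gamma_x$ finite, the equivariant cochain complex computes the group cohomology $H^{\ast}(\Gamma_x,N)$, while the sheaf side gives only $N^{\Gamma_x}$ in degree zero; these agree exactly when $H^{>0}(\Gamma_x,N)=0$, e.g.\ when the order of every torsion element acts invertibly on $N$. This invertibility is a standing hypothesis in the cited reference and holds in every use made of the proposition here (characteristic-zero coefficients, or neat level after Proposition \ref{prop:neatness}), but it should be stated rather than absorbed into ``immediate.''
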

\begin{proof}
This follows from the same argument as in \cite[Proposition 2.1.1]{Hansen-Overconvergent}.
\end{proof}
``Canonical'' in Proposition \ref{prop:canonical} refers to at least the following functorialities:
\begin{enumerate}[(i)]
\item \label{list:adelic-Nfunctoriality} If $f: N \rightarrow N'$ is a $(G(\mathbf Q)^{\circ},K)$-equivariant morphism, then the natural map $H^{\ast}_{?}(Y_K,\underline N) \overset{f}{\rightarrow} H^{\ast}_{?}(Y_K,\underline{N}')$ is induced by the morphism of cochain complexes $f\circ-: C_{\ad,?}^{\bullet}(K,N) \rightarrow C_{\ad,?}^{\bullet}(K,N')$.
\item \label{list:adelic-subgroupfunctoriality} If $K' \subset K$ is a subgroup then the inclusion $C_{\ad,?}^{\bullet}(K,N) \subset C_{\ad,?}^{\bullet}(K',N)$ induces the pullback $\pr^{\ast}:H^{\ast}_{?}(Y_K,\underline N) \rightarrow H^{\ast}_{?}(Y_K,\underline N')$ on cohomology.
\item \label{list:adelic-finindexfunctoriality}
Suppose that $K' \subset K$ is a subgroup of finite index. Then, $\pr: Y_{K'}\rightarrow Y_K$ is proper, so it induces a pushfoward map $\pr_{\ast}: H^{\ast}_{?}(Y_{K'},\underline N) \rightarrow H^{\ast}_{?}(Y_K,\underline N)$. On the other hand, if $K = \coprod x_i K'$ then $\tr(\phi)(\sigma) = \sum \phi(\sigma x_i)|x_i^{-1}$ induces a natural map of cochain complexes $\tr : C^{\bullet}_{\ad,?}(K',N) \rightarrow C^{\bullet}_{\ad,?}(K,N)$, whose induced map on cohomology is $\pr_{\ast}$.
\item \label{list:adelic-multiplicationfunctoriality} 
Finally, let $g \in G(\mathbf A_f)$. Write $N(g^{-1})$ for the $(G(\mathbf Q)^{\circ},g^{-1}Kg)$-module whose right $g^{-1}Kg$-action is given by $n|_{g^{-1}}x = n|gxg^{-1}$. Then, the map $r_g: Y_{K} \rightarrow Y_{g^{-1}Kg}$ given by $x \mapsto xg$ induces a map on cohomology $r_g^{\ast} : H^{\ast}_?(Y_{g^{-1}Kg}, \underline{N(g^{-1})}) \rightarrow H^{\ast}_?(Y_K,\underline N)$. On the other hand, if we set $r_g(\phi)(\sigma) = \phi(\sigma g)$ then $r_g: C^{\bullet}_{\ad,?}(g^{-1}Kg,N(g^{-1})) \rightarrow C^{\bullet}_{\ad,?}(K,N)$ is a map of cochain complexes which induces $r_g^{\ast}$ on cohomology.
\end{enumerate}

We next recall how to canonically lift Hecke operators to endomorphisms of adelic cochains. Let $K \subset G(\mathbf A_f)$ be an compact open subgroup, and let $\Delta \subset G(\mathbf A_f)$ be a monoid containing $K$ such that $K$ and $\delta^{-1}K\delta$ are commensurable for all $\delta \in \Delta$. We assume that $N$ is equipped with a {\em left} $\Delta$-module structure $\delta \cdot n$ which commutes with a given left $G(\mathbf Q)^{\circ}$-module structure. We give $N$ the structure of a right $K$-module by $n|k = k^{-1}\cdot n$ under which we now have a $(G(\mathbf Q)^{\circ},K)$-bimodule again.  We equip $\Hom_{G(\mathbf Q)^{\circ}}(C_{\bullet}(D_{\mathbf A}), N)$ with the left action of $\Delta$ given by $(\delta \cdot\phi)(\sigma) = \delta\cdot \phi(\sigma\delta)$ under which we have $C^{\bullet}_{\ad}(K,N) = \Hom_{G(\mathbf Q)^{\circ}}(C_{\bullet}(D_{\mathbf A}), N)^{K}$ (and an obvious analog for $C_{\ad,c}^{\bullet}(K,N)$). If $\delta \in \Delta$ is a given element and $K\delta K = \coprod  \delta_i K$ is a decomposition into right cosets, then for any $\phi \in C^{\bullet}_{\ad,?}(K,N)$ the sum
\begin{equation}\label{eqn:hecke-action-adelic-cochains}
[K\delta K](\phi) = \sum_i \delta_i\cdot \phi
\end{equation}
is independent of the choice of $\delta_i$'s and defines another element of $C^{\bullet}_{\ad,?}(K,N)$. The resulting endomorphism of $C^{\bullet}_{\ad,?}(K,N)$ canonically lifts the usual Hecke operator on cohomology, i.e. the operator defined by the composition
\begin{multline}\label{eqn:adelic-cochains}
H^{\ast}_{?}(Y_K,\underline N) \overset{\pr^{\ast}}{\longrightarrow} H^{\ast}_{?}(Y_{K\cap \delta^{-1}K\delta},\underline N) \rightarrow H^{\ast}_{?}(Y_{K\cap \delta^{-1}K\delta},\underline{N(\delta^{-1})})\\ \overset{r_{\delta}^{\ast}}{\longrightarrow} H^{\ast}_{?}(Y_{K\cap \delta K \delta^{-1}}, \underline N)
\overset{\pr_{\ast}}{\longrightarrow} H^{\ast}_{?}(Y_K,\underline N).
\end{multline}
Here, for $\delta \in \Delta$ the morphism $n \mapsto \delta \cdot n$ defines a morphism $N \rightarrow N(\delta^{-1})$ which is equivariant for the action of $K \cap \delta^{-1}K\delta$ on either side, giving the unlabeled arrow.

We end our discussion with an algebraic situation. Suppose that $F$ is any number field, and write $\mathscr N$ for an $F$-algebraic representation of $G$, i.e.\ an $F$-vector space $\mathscr N$ and a representation $G \rightarrow \Res_{F/\mathbf Q} \GL(\mathscr N)$. Recall that we fixed an isomorphism $\iota: \mathbf C \simeq \overline{\mathbf Q}_p$. Suppose that $E \subset \mathbf C$ is a field and $L := \mathbf Q_p(\iota(E))$. Then, we deduce linear representations $G(L) \rightarrow \GL_L(N_p)$, and $G(E) \rightarrow \GL_E(N_\infty)$ where $N_p := \mathscr N\otimes_{\mathbf Q} L$ and $N_\infty := \mathscr N\otimes_{\mathbf Q} E$. By construction, $\iota$ induces a morphism of $\mathbf Q$-vector spaces $\iota: N_\infty \rightarrow N_p$, which becomes an isomorphism $\iota: N_\infty \otimes_{E,\iota} L \simeq N_p$. Let $K$ be a compact open subgroup of $G(\mathbf A_f)$, and write $K_p \subset G(\mathbf Q_p)$ for its $p$-th component. Using the inclusion $G(\mathbf Q)^{\circ} \subset G(E)$ we thus get a local system $\underline N_\infty$ on $Y_K$; or we can use the inclusion $K_p \subset G(\mathbf Q_p) \subset G(L)$ to get a local system $\underline N_p$. Note that $k_p \in K_p$ acts on the right of $N_p$ via $n|k_p = k_p^{-1}\cdot n$.

\begin{prop}\label{prop:archi-shift}
\leavevmode
\begin{enumerate}
\item If $\gamma \in G(\mathbf Q)$ then $\gamma_p \iota(n) = \iota(\gamma_\infty n)$ for all $n \in N_\infty$.
\item The map $\iota((g,n)) = (g,g_p^{-1}\iota(n))$ defines a morphism $\iota:\underline N_\infty \longrightarrow \underline N_p$ of local systems on $Y_K$.
\item The map $\iota(\phi)(\sigma_\infty\otimes[g_f]) = g_p^{-1}\iota(\phi(\sigma_\infty\otimes[g_f]))$ defines a morphism $\iota:C^{\bullet}_{\ad,?}(K,N_\infty) \rightarrow C^{\bullet}_{\ad,?}(K,N_p)$ of cochain complexes.
\item The maps in parts (2) and (3) induce a canonical commuting diagram
\begin{equation*}
\xymatrix{
H^{\ast}_{?}(C^{\bullet}_{\ad}(K,N_\infty)) \ar[d]_-{\iota}\ar[r]^-{\simeq} & H^{\ast}_{?}(Y_K,\underline N_\infty) \ar[d]^-{\iota}\\
H^{\ast}_{?}(C^{\bullet}_{\ad}(K,N_p)) \ar[r]^-{\simeq} & H^{\ast}_{?}(Y_K,\underline N_p).
}
\end{equation*}
\end{enumerate}
\end{prop}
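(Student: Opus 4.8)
The argument is largely bookkeeping, tracking the twist by the $p$-component $g_p$ of the adelic variable; part (1) supplies the only algebraic input, and (2)--(4) follow formally.

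For (1), a point $\gamma\in G(\mathbf Q)$ acts on $\mathscr N$ through the fixed homomorphism $G(\mathbf Q)\to(\Res_{F/\mathbf Q}\GL(\mathscr N))(\mathbf Q)=\GL_F(\mathscr N)$; write $\rho(\gamma)$ for the resulting $F$-linear automorphism. The actions of $\gamma_\infty$ on $N_\infty=\mathscr N\otimes_{\mathbf Q}E$ and of $\gamma_p$ on $N_p=\mathscr N\otimes_{\mathbf Q}L$ are the base changes $\rho(\gamma)\otimes 1_E$ and $\rho(\gamma)\otimes 1_L$, while $\iota\colon N_\infty\to N_p$ is $1_{\mathscr N}\otimes(\iota|_E)$. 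The identity $\gamma_p\iota(n)=\iota(\gamma_\infty n)$ is thus immediate on elements $n_0\otimes 1$ with $n_0\in\mathscr N$, and extends to all of $N_\infty$ by additivity together with the $\iota$-semilinearity of $\iota$ over $E$ (and the $E$-linearity of $\gamma_\infty$).

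Parts (2) and (3) follow from (1) by the same cancellation, repeated. In (2), to see that $(g,n)\mapsto(g,g_p^{-1}\iota(n))$ descends from $D_{\mathbf A}\times N_\infty$ to a morphism $\underline N_\infty\to\underline N_p$, one checks it respects the two defining equivalence relations. A left translation by $\gamma\in G(\mathbf Q)^\circ$ carries $g_p$ to $\gamma_pg_p$ and $n$ to $\gamma_\infty n$; the new value $(\gamma_pg_p)^{-1}\iota(\gamma_\infty n)$ equals $g_p^{-1}\gamma_p^{-1}\gamma_p\iota(n)=g_p^{-1}\iota(n)$ by (1), matching the image of $(g,n)$ up to the trivial left $G(\mathbf Q)^\circ$-action on $N_p$. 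A right translation by $k\in K$ carries $g_p$ to $g_pk_p$ and fixes $n$ (trivial right $K$-action on $N_\infty$), producing the factor $k_p^{-1}$ which is exactly the right $K$-action $n|k=k_p^{-1}\cdot n$ on $N_p$. For (3), the same computation applied to a cochain $\phi$ and the basic chains $\sigma_\infty\otimes[g_f]$ of \eqref{eqn:augment} shows that $\iota(\phi)$ lies in $\Hom_{(G(\mathbf Q)^\circ,K)}(C_\bullet(D_{\mathbf A}),N_p)=C^\bullet_{\ad}(K,N_p)$ for the bimodule structure on $N_p$ (trivial left $G(\mathbf Q)^\circ$-action, right action $n|k=k_p^{-1}\cdot n$); commutation with the differential is clear since the differential only touches the $\sigma_\infty$ factor, and the compact-support condition is preserved because $\iota(\phi)_{g_f}=(g_p^{-1}\circ\iota)\circ\phi_{g_f}$ has the same support as $\phi_{g_f}$. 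Hence $\iota$ is also a morphism of complexes $C^\bullet_{\ad,c}(K,N_\infty)\to C^\bullet_{\ad,c}(K,N_p)$.

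The real content is (4). The subtlety is that $\iota\colon\underline N_\infty\to\underline N_p$ is \emph{not} induced by a $(G(\mathbf Q)^\circ,K)$-equivariant homomorphism of coefficient modules — the $g_p$-twist is genuine — so the coefficient-functoriality recorded just after Proposition \ref{prop:canonical} does not apply directly. Instead I would argue one connected component at a time, using the decomposition \eqref{eqn:union-decomposition} and the twisted $\Gamma(g_f)$-modules $N(g_f)$ attached to \eqref{eqn:augment}. On the component $\Gamma(g_f)\leftmod D_\infty$, both the cochain map of (3), pushed through $\phi\mapsto\phi_{g_f}$, and the local-system map of (2) are induced by the single $\mathbf Z$-linear map $g_p^{-1}\circ\iota\colon N_\infty(g_f)\to N_p(g_f)$; using (1) once more one checks this map is $\Gamma(g_f)$-equivariant for the twisted actions (it intertwines $\gamma_\infty\cdot(-)$ with $(g_p^{-1}\gamma_pg_p)\cdot(-)$). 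Since the isomorphisms of Proposition \ref{prop:canonical}, constructed as in \cite[Proposition 2.1.1]{Hansen-Overconvergent}, are assembled from the component-wise comparisons and are functorial for morphisms of these twisted coefficient modules, the square in (4) commutes, and identically in the compactly supported case. I expect this final identification — recognizing the two a priori different incarnations of $\iota$ as the \emph{same} $\Gamma(g_f)$-equivariant map after restriction to a component — to be the one step requiring genuine care; everything else is routine diagram-chasing.
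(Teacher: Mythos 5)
Your verification is correct, and it fills in exactly what the paper dismisses with ``Everything is straightforward to check'': part (1) is the only algebraic input, (2) and (3) are the two cancellations of the $g_p$-twist against the defining equivalence relations, and (4) reduces to the component-wise observation that both incarnations of $\iota$ are induced by the single $\Gamma(g_f)$-equivariant map $g_p^{-1}\circ\iota\colon N_\infty(g_f)\to N_p(g_f)$. No gaps.
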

\begin{proof}
Everything is straightforward to check.
%
%The part (1) is immediate. For (2), we need to show that if $g \in G(\mathbf A)$ and $n \in N_\infty$ then
%\begin{equation*}
%\iota(\gamma(g,n)x_\infty k) \in G(\mathbf Q)\iota((g,n))K_\infty K
%\end{equation*} 
%for all $\gamma \in G(\mathbf Q)$, $x_\infty \in K_\infty^{\circ}$ and $k \in K$. For that, we have
%\begin{align*}
%\iota(\gamma(g,n)x_\infty k) &= \iota((\gamma g x_\infty k, \gamma_\infty n))\\
%&= (\gamma g x_\infty k, k_p^{-1} g_p^{-1}\gamma_p^{-1}\iota(\gamma_\infty n)) & \text{($(x_\infty)_p=1$)}\\
%&= (\gamma g x_\infty k, k_p^{-1}g_p^{-1} \iota(n)) & \text{Part (1)}\\
%&= \gamma (g, g_p^{-1} \iota(n)) x_\infty k\\
%&= \gamma \iota((g,n)) x_\infty k.
%\end{align*}
%This completes the proof of (2). The proof of (3) is completely analogous. The statement (4) follows immediately from (the proof of) Proposition \ref{prop:canonical}. 
\end{proof}

\subsection{Symmetric spaces for $F$}\label{subsec:sym-spaces}
Here we specialize the above discussion to the setting of this article. In particular, $F$ now denotes a totally real number field.

First, let $G = \Res_{F/\mathbf Q}{\GL_1}$. Write $\widehat{\mathcal O}_F$ for the profinite completion of ${\mathcal O}_F$ and $K_\infty^{\circ} = \{1\} \subset (F_\infty^\times)^{\circ}$ (maximal compact) and $K =  \widehat{\mathcal O}_F^\times \subset \GL_1(\mathbf A_{F,f})$ . The corresponding symmetric space is written $\mathrm C_\infty := F^\times \backslash \mathbf A_F^\times / \widehat{\mathcal O}_F^\times$. 

Write $\mathbf A_{F,+}^\times := (F_\infty^\times)^{\circ} \times \mathbf A_{F,f}^\times$ and $F_+^\times = F^\times \cap (F_\infty^\times)^{\circ}$. By weak approximation, $F^\times \backslash \mathbf A_{F}^\times \simeq F_+^\times \backslash \mathbf A_{F,+}^\times$ and so we may also write
\begin{equation}\label{eqn:shin-cone}
\mathrm C_\infty = F^\times \backslash \mathbf A_F^\times / \widehat{\mathcal O}_F^\times \simeq F_+^\times \backslash \mathbf A_{F,+}^\times/\widehat{\mathcal O}_F^\times.
\end{equation}
This is a real Lie group that sits inside an exact sequence
\begin{equation}
1 \rightarrow (F_\infty^\times)^\circ/{\mathcal O_{F,+}^\times} \rightarrow \mathrm C_\infty \rightarrow \Cl_F^+ \rightarrow 1
\end{equation}
where $\Cl_F^+$ is the narrow class group and $\mathcal O_{F,+}^\times$ are the totally positive units in $F$. 

We will write ${dx_\infty \over x_\infty}$ for the canonical choice of Haar measure on $(F_\infty^\times)^{\circ}$, which then induces a translation-invariant orientation on $\mathrm C_\infty$. This fixes a Borel--Moore fundamental class $[\mathrm C_\infty] \in H_d^{\BM}(\mathrm C_\infty, \mathbf Z)$. We record this discussion as a proposition.

\begin{prop}\label{prop:oritentation-preserving}
If $x \in \mathbf A_F^\times$ then right multiplication $r_x: \mathrm C_\infty \rightarrow \mathrm C_\infty$ is orientation preserving. In particular, $(r_x)_{\ast}[\mathrm C_\infty] = [\mathrm C_\infty]$.
\end{prop}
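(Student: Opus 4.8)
The statement is that for every $x \in \mathbf A_F^\times$, right multiplication $r_x \colon \mathrm C_\infty \to \mathrm C_\infty$ is orientation preserving, hence fixes the Borel--Moore fundamental class $[\mathrm C_\infty]$. The second assertion follows from the first: an orientation-preserving self-homeomorphism of an oriented manifold fixes the fundamental class, which is the content of the discussion around \eqref{eqn:naturality-PD} (one applies $f_\ast[X] = [X]$ for orientation-preserving homeomorphisms). So the whole task is to check that $r_x$ preserves the chosen orientation.

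\textbf{Key steps.} First I would use the presentation \eqref{eqn:shin-cone}, $\mathrm C_\infty \simeq F_+^\times \backslash \mathbf A_{F,+}^\times / \widehat{\mathcal O}_F^\times$, together with the observation that the orientation on $\mathrm C_\infty$ is by construction translation-invariant: it is induced from the volume form $dx_\infty/x_\infty$ on the identity component $(F_\infty^\times)^\circ$, transported along the quotient map. Since the group $\mathbf A_{F,+}^\times$ acts on $\mathrm C_\infty$ by right translations that are compatible with this construction, it suffices to check the claim for $x$ ranging over a set of generators, or rather to reduce to two cases: (i) $x$ with trivial archimedean component, i.e. $x \in \mathbf A_{F,f}^\times$, and (ii) $x \in (F_\infty^\times)^\circ$. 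In case (i), $r_x$ permutes the connected components of $\mathrm C_\infty$ (indexed by $\Cl_F^+$) and on each component it is a translation by an element of $(F_\infty^\times)^\circ/\mathcal O_{F,+}^\times$ after suitable identification — because $\mathbf A_{F,f}^\times$ acts through the finite quotient $\Cl_F^+$ on components, with the ``remainder'' absorbed archimedeanly — and such a map is visibly orientation preserving since it is a translation in the Lie group structure. In case (ii), $r_x$ is literally translation by $x$ in the connected Lie group $(F_\infty^\times)^\circ$ modulo a discrete subgroup, and left/right translations on a Lie group are orientation preserving for any translation-invariant orientation; concretely, the Jacobian of multiplication by $x_\infty$ with respect to the logarithmic coordinates $dx_\infty/x_\infty$ is $1$. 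Finally I would observe that a general $x = x_\infty x_f$ factors as the composition $r_{x_f} \circ r_{x_\infty}$ (they commute), and a composition of orientation-preserving maps is orientation preserving; then \eqref{eqn:naturality-PD} gives $(r_x)_\ast[\mathrm C_\infty] = [\mathrm C_\infty]$.

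\textbf{Main obstacle.} The only subtle point is case (i): one must be careful that an element $x \in \widehat{\mathcal O}_F^\times$ acts trivially on $\mathrm C_\infty$ (so really only the class of $x$ in $F^\times \backslash \mathbf A_F^\times / \widehat{\mathcal O}_F^\times$ matters, i.e. the statement is about the action of the group $\mathrm C_\infty$ on itself by translation), and that the chosen orientation is genuinely translation-invariant across all the components, not just on the identity component. Once one unwinds that the orientation was \emph{defined} to be the translation-invariant one coming from $(F_\infty^\times)^\circ$ via \eqref{eqn:shin-cone}, the argument is essentially immediate: any translation of a Lie group (or a quotient of one by a discrete subgroup, with the induced orientation) by an element of the group is orientation preserving. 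So in fact I would expect the final write-up to be just a couple of sentences: reduce to the self-action of $\mathrm C_\infty$, note the orientation is translation-invariant by construction, conclude $r_x$ is orientation preserving, and invoke $f_\ast[X]=[X]$ for orientation-preserving homeomorphisms.
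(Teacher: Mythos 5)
Your proposal is correct and matches the paper's (implicit) argument: the paper simply records the proposition as a consequence of the fact that the orientation on $\mathrm C_\infty$ is \emph{defined} to be the translation-invariant one induced by $dx_\infty/x_\infty$, so any right translation preserves it, and then $(r_x)_\ast[\mathrm C_\infty]=[\mathrm C_\infty]$ follows from the general fact recorded at \eqref{eqn:naturality-PD}. Your extra case analysis (archimedean vs.\ finite components) is harmless but not needed once one observes that $r_x$ descends to translation by the class of $x$ in the group $\mathrm C_\infty$ itself.
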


Now let $G = \Res_{F/\mathbf Q} \GL_2$. Here, we take $K_\infty^{\circ} = \mathrm{SO}_2(F_\infty)Z(F_\infty) \subset \GL_2(F_\infty)^{\circ}$ (maximal compact mod-center). For $K \subset \GL_2(\mathbf A_{F,f})$ we write $Y_K$ for the symmetric space as in \eqref{eqn:symmetric-space}. We will be a bit more concrete regarding $Y_K$. Let $\mathfrak h$ denote the complex upper half plane. Then, $\GL_2(F_\infty)^{\circ}$ acts on $\mathfrak h^{\Sigma_F}$ via fractional linear transformations
\begin{equation}\label{eqn:mobius-action}
g \cdot z := {az + b\over cz+d}
\end{equation}
for $g = \begin{smallpmatrix}
a & b \\ c & d 
\end{smallpmatrix} \in \GL_2(F_\infty)^\circ$ and $z \in \mathfrak h^{\Sigma_F}$. If $i \in \mathfrak h^{\Sigma_F}$ means the complex number $i$ diagonally embedded then $K_\infty^{\circ}$ is the stabilizer of $i$ so that $D_\infty =  \GL_2(F_\infty)^\circ/K_\infty^{\circ} \simeq \mathfrak h^{\Sigma_F}$. Thus
\begin{equation}\label{eqn:double-coset}
Y_{K} = \GL_2(F)\leftmod \GL_2(\mathbf A_F) / K_\infty^{\circ} K \simeq \GL_2^+(F) \leftmod D_\infty \times \GL_2(\mathbf A_{F,f})/K,
\end{equation}
and $Y_K$ is a $2d$-dimensional real orbifold, decomposing into a finite disjoint union of quotients $\Gamma(g)\leftmod D_\infty$ where $\Gamma(g) = gKg^{-1} \cap \GL_2^+(F)$ (see \eqref{eqn:union-decomposition}). We make the following definition.

\begin{defn}\label{defn:good-level}
Let $K \subset \GL_2(\mathbf A_{F,f})$ be a compact open subgroup.
\begin{enumerate}
\item $K$ is neat if $\Gamma(g)/Z(\Gamma(g))$ is torsion-free for all $g \in \GL_2(\mathbf A_{F,f})$.
\item $K$ is $\mathrm t$-good if $\begin{smallpmatrix} \widehat{\mathcal O}_F^\times \\ & 1 \end{smallpmatrix} \subset K$.
\end{enumerate}
\end{defn}

As mentioned above, if $K$ is a neat level then $Y_K$ is a manifold. The purpose of the $\mathrm t$-good definition is that for $\mathrm t$-good levels $K$,  the map $\mathbf A_{F}^\times \rightarrow \GL_2(\mathbf A_F)$ given by $x \mapsto \begin{smallpmatrix} x \\ & 1 \end{smallpmatrix}$ descends to a closed (thus, proper) immersion 
\begin{equation}\label{eqn:shintani-torus}
\mathrm t: \mathrm C_\infty \hookrightarrow Y_K.
\end{equation}
In particular, for such $K$ one gets pullbacks (resp.\ pushforwards) along $\mathrm t$ on compactly supported cohomology (resp.\ Borel--Moore homology).

Beginning in Section \ref{subsec:hecke-operators} we will mostly be concerned with level subgroups of the form 
\begin{equation}\label{eqn:K1(n)}
K_1(\mathfrak n) = \biggl\{g = \begin{pmatrix} a & b \\ c & d\end{pmatrix} \in \GL_2(\widehat{\mathcal O}_{F}) \mid c \equiv 0 \bmod \mathfrak n \widehat{\mathcal O}_F, d \equiv 1 \bmod \mathfrak n \widehat{\mathcal O}_F\biggr\}
\end{equation}
with $\mathfrak n$ an integral ideal.
\begin{prop}\label{prop:neatness}
Let $\mathfrak n \subset \mathcal O_F$ be an integral ideal.
\begin{enumerate}
\item There exists $\mathfrak n' \subset \mathfrak n$ such that $K_1(\mathfrak n')$ is neat.
\item $K_1(\mathfrak n)$ is $\mathrm t$-good.
\end{enumerate}
\end{prop}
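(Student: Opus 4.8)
The plan is to dispose of part (2) in a line and concentrate on part (1).

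\emph{Part (2)} is immediate: for any $a\in\widehat{\mathcal O}_F^\times$ the matrix $\begin{smallpmatrix}a&0\\0&1\end{smallpmatrix}$ lies in $\GL_2(\widehat{\mathcal O}_F)$ with lower-left entry $0$ and lower-right entry $1$, hence in $K_1(\mathfrak n)$; thus $\begin{smallpmatrix}\widehat{\mathcal O}_F^\times&\\&1\end{smallpmatrix}\subset K_1(\mathfrak n)$, which is exactly the $\mathrm t$-good condition.

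For \emph{part (1)} I would first make a purely group-theoretic reduction. Since each $\Gamma(g)$ is an arithmetic (hence Zariski-dense) subgroup of $\GL_2^+(F)$, its center $Z(\Gamma(g))$ consists precisely of the scalar matrices it contains, and $\Gamma(g)/Z(\Gamma(g))$ is torsion-free if and only if $\Gamma(g)$ contains no \emph{elliptic} element, i.e.\ no semisimple $\gamma$ whose ratio of eigenvalues $\zeta=\alpha/\beta$ is a root of unity $\neq 1$ (indeed such a $\gamma$ satisfies $\gamma^m\in Z$ for $m=\mathrm{ord}(\zeta)$ while $\gamma\notin Z$, and conversely). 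So it suffices to choose $\mathfrak n'\subseteq\mathfrak n$ so that no $gK_1(\mathfrak n')g^{-1}\cap\GL_2^+(F)$ contains an elliptic element. Next I would record two facts about a hypothetical elliptic $\gamma\in gK_1(\mathfrak n')g^{-1}\cap\GL_2^+(F)$ with $\zeta$ a primitive $m$-th root of unity, $m\ge 2$. First, an \emph{order bound}: $\det\gamma$ is totally positive, and $\alpha,\beta$ cannot both lie in $F$ (otherwise $\zeta\in F$, forcing either $\mathbf Q(\zeta_m)\subseteq F$, impossible for $m\ge 3$ as $F$ is totally real, or $m=2$, whence $\det\gamma=\alpha\beta=-\beta^2$ is totally negative); hence $\alpha,\beta$ lie in a quadratic extension $K/F$, $\beta$ is the $K/F$-conjugate of $\alpha$, and $\zeta+\zeta^{-1}=\mathrm{Tr}_{K/F}(\zeta)\in F$, so $\mathbf Q(\zeta_m+\zeta_m^{-1})\subseteq F$ and $m$ ranges over a finite set depending only on $F$ (always containing $\{2,3,4,6\}$). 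Second, a \emph{congruence constraint}: because $K_1(\mathfrak n')\subseteq\GL_2(\widehat{\mathcal O}_F)$, $\gamma$ is everywhere locally conjugate into $\GL_2(\mathcal O_{F,\mathfrak q})$, so $\det\gamma$ is a totally positive \emph{unit}; and for each prime $\mathfrak q\mid\mathfrak n'$ the level condition forces $g_{\mathfrak q}^{-1}\gamma g_{\mathfrak q}\equiv\begin{smallpmatrix}*&*\\0&1\end{smallpmatrix}$ modulo $\mathfrak q$, so the two eigenvalues of $\gamma$ reduce (mod a prime over $\mathfrak q$) to $\{1,\overline{\det\gamma}\}\subset\mathbf F_{\mathfrak q}$, whence $\bar\zeta=\overline{\det\gamma}^{\pm1}$ in $\mathbf F_{\mathfrak q}^\times$.

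Now I would choose $\mathfrak n'$. Let $\epsilon_1,\dots,\epsilon_{d-1}$ be a $\mathbf Z$-basis of the torsion-free group $\mathcal O_{F,+}^\times$, put $F'=F(\sqrt{\epsilon_1},\dots,\sqrt{\epsilon_{d-1}})$ (a totally real field), and let $L$ be the least common multiple of the finitely many $m\ge 2$ with $\mathbf Q(\zeta_m+\zeta_m^{-1})\subseteq F$, so $4\mid L$. By Chebotarev — using that $F'$ (hence its Galois closure) is totally real, so the condition ``$\mathrm{Frob}_\ell$ is trivial on the Galois closure of $F'$'' is compatible with ``$\ell\equiv -1\pmod L$'', the latter being complex conjugation on $\mathbf Q(\zeta_L)$ and trivial on the real intersection — there are infinitely many rational primes $\ell$ splitting completely in $F'$ with $\ell\equiv -1\pmod L$. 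Fix one with $\ell\nmid N_{F/\mathbf Q}(\mathfrak n)$, let $\mathfrak q_0$ be a prime of $F$ over $\ell$ (residue degree one), and set $\mathfrak n'=\mathfrak n\mathfrak q_0$. If some $gK_1(\mathfrak n')g^{-1}\cap\GL_2^+(F)$ contained an elliptic $\gamma$ of order $m$, then $m\mid L$; since $\ell\nmid m$ the congruence constraint at $\mathfrak q_0$ shows $\overline{\det\gamma}$ has order exactly $m$ in $\mathbf F_{\mathfrak q_0}^\times$; but $\det\gamma\in\mathcal O_{F,+}^\times$ is a product of the $\epsilon_i$, each a square mod $\mathfrak q_0$ because $\ell$ splits in $F'$, so $\overline{\det\gamma}$ is a square and $m\mid(\ell-1)/2$. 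Together with $\ell\equiv -1\pmod m$ this forces $m\mid 2$; and $m=2$ is excluded since then $2\mid(\ell-1)/2$ would give $\ell\equiv 1\pmod 4$, contradicting $\ell\equiv -1\pmod 4$ (as $4\mid L$). Hence $K_1(\mathfrak n')$ is neat.

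I expect the main obstacle to be the \emph{packaging}: each ingredient is elementary, but they must be coordinated so that a single auxiliary prime $\mathfrak q_0$ simultaneously rules out all possible torsion orders. The delicate point is $m=2$ — elliptic elements of order two in $\GL_2^+(F)$ (which exist and are ``integral everywhere'') are governed by totally negative square classes rather than by primitive roots of unity, and handling them is exactly where one needs $4\mid L$ together with the fact that totally positive units become squares modulo the chosen prime. One could instead invoke Borel's theorem that arithmetic groups are virtually neat, but that does not produce a subgroup of the prescribed shape $K_1(\mathfrak n')$, so a congruence argument of the above type seems unavoidable.
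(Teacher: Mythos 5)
Your part (2) coincides with the paper's, which simply declares it clear. For part (1) the paper gives no argument at all — it cites \cite[Lemma 2.1]{Dimitrov-Ihara} — so your contribution is a self-contained proof rather than a reconstruction of the paper's. The argument is correct: the reduction of neatness to the absence of elliptic elements, the finiteness of the possible torsion orders via $\mathbf Q(\zeta_m+\zeta_m^{-1})\subseteq F$ (with the total positivity of $\det\gamma$ ruling out eigenvalues in $F$ even when $m=2$), and the incompatibility at the auxiliary degree-one prime $\mathfrak q_0$ between $m\mid \ell-1$ (forced by the $d\equiv 1$ congruence together with injectivity of reduction on prime-to-$\ell$ roots of unity), $m\mid \ell+1$ (from $\ell\equiv -1\bmod L$), and the quadratic-residue condition on totally positive units, all check out, including the delicate $m=2$ case where $4\mid L$ is what saves you. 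The mechanism — adjoin a well-chosen auxiliary prime of residue degree one so that the congruences at that prime exclude every admissible torsion order — is the same one underlying Dimitrov's lemma, so what your version buys is self-containedness and an explicit recipe for $\mathfrak n'$, not a new idea. Two points are worth making explicit in a write-up: the identification $Z(\Gamma(g))=\Gamma(g)\cap Z(\GL_2(F))$ rests on Zariski density of $\Gamma(g)\cap\SL_2(F)$ (Borel density), and the assertion that the eigenvalues of $\gamma$ reduce to $\{1,\overline{\det\gamma}\}$ uses that the characteristic polynomial of $\gamma$ has coefficients in $\mathcal O_{F,\mathfrak q_0}$, which holds because $g_{\mathfrak q_0}^{-1}\gamma g_{\mathfrak q_0}$ lies in $\GL_2(\mathcal O_{F,\mathfrak q_0})$.
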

\begin{proof}
(1) follows from \cite[Lemma 2.1]{Dimitrov-Ihara}. (2) is clear.
\end{proof}

\subsection{Weights and algebraic local systems}
\label{subsec:weights}
Here we specify a collection algebraic local systems.
\begin{defn}\label{defn:cohomological-wt}
A cohomological weight $\lambda=(\lambda_1,\lambda_2)$ is a pair of characters $\lambda_i : F^\times \rightarrow \mathbf C^\times$ of the form
\begin{equation*}
\lambda_i(\xi) = \prod_{\sigma \in \Sigma_F} \sigma(\xi)^{e_i(\sigma)}
\end{equation*}
for $e_i(\sigma) \in \mathbf Z$ such that:
\begin{enumerate}
\item If $\omega_{\lambda} = \lambda_1\lambda_2 : F^\times \rightarrow \mathbf C^\times$ then $\omega_{\lambda}$ is trivial on a finite index subgroup of $\mathcal O_F^\times$, and
\item $e_1(\sigma) \geq e_2(\sigma)$ for all $\sigma \in \Sigma_F$.
\end{enumerate}
\end{defn}
Let $\lambda$ be a cohomological weight. An argument of Weil implies that $w(\sigma) = e_1(\sigma) + e_2(\sigma)$ is independent of $\sigma \in \Sigma_F$. Set $\kappa_\sigma = e_1(\sigma) - e_2(\sigma)$; this is a non-negative integer. Thus a cohomological weight $\lambda$ is the same data as a pair $(\kappa,w) \in \mathbf Z_{\geq 0}^{\Sigma_F} \times \mathbf Z$ with $\kappa_\sigma \equiv w \bmod 2$ for each $\sigma \in \Sigma_F$. We will almost always write $\lambda = (\kappa,w)$ to indicate a cohomological weight in this way.

If $n$ is a non-negative integer, write $\mathscr L_n$ for the space of polynomials over $\mathbf Z$ with degree at most $n$. If $R$ is a ring, write $\mathscr L_n(R) = \mathscr L_n \otimes_{\mathbf Z} R$. We equip $\mathscr L_n$ with an algebraic left-action of ${\GL_2}$ via 
\begin{equation}\label{eqn:polynomial-action}
(g \cdot P)(X) = (a+cX)^nP\left({b + dX \over a + cX}\right)
\end{equation}
for $g = \begin{smallpmatrix} a & b \\ c & d \end{smallpmatrix} \in \GL_2(R)$ and $P \in \mathscr L_n(R)$. Given a cohomological weight $\lambda = (\kappa,w)$ we write
\begin{equation}\label{eqn:ell-lambda}
\mathscr L_{\lambda} := \bigotimes_{\sigma \in \Sigma_F}\left( \mathscr L_{\kappa_{\sigma}}(F) \otimes {\det}^{w - \kappa_{\sigma}\over 2}\right)
\end{equation}
(where $\det: \GL_2 \rightarrow \mathbf G_m$ is the determinant character).  Thus $\mathscr L_\lambda$ is an $F$-vector space equipped with an algebraic representation of the $F$-algebraic group $(\Res_{F/\mathbf Q} \GL_2) \times_{\mathbf Q} F$, and so we can apply the discussion at the end of Section \ref{subsec:adelic-cochains} to $G = \Res_{F/\mathbf Q} \GL_2$ and $\mathscr N = \mathscr L_{\lambda}$. 

 Specifically, suppose that $E \subset \mathbf C$ contains $\sigma(F)$ for all $\sigma \in \Sigma_F$, and let $L = \mathbf Q_p(\iota(E))$. Then, $G(E)=\GL_2(F\otimes_{\mathbf Q} E) \simeq \GL_2(E)^{\Sigma_F}$ and the action of $\GL_2(E)^{\Sigma_F}$ on 
\begin{equation*}
\mathscr L_\lambda(E) := \bigotimes_{\sigma\in \Sigma_F} \mathscr L_{\kappa_\sigma}(E) \otimes {\det}^{w-\kappa_\sigma\over 2}
\end{equation*}
is the one where the $\sigma$-th factor acts on the $\sigma$-th term in the tensor product, as in \eqref{eqn:polynomial-action}. On the other hand,
\begin{equation*}
G(L) = \GL_2(F\otimes_{\mathbf Q} L) \simeq \GL_2(F_p\otimes_{\mathbf Q_p} L) \simeq \prod_{v\mid p} \GL_2(F_v\otimes_{\mathbf Q_p} L) \simeq \prod_{v \mid p} \GL_2(L)^{\Sigma_v}
\end{equation*}
and $G(L)$ acts on the $L$-vector space
\begin{equation}\label{eqn:padic-Llambda}
\mathscr L_{\lambda}(L) := \bigotimes_{v \mid p} \bigotimes_{\sigma \in \Sigma_v} \mathscr L_{\kappa_{\sigma}}(L) \otimes {\det}^{w - \kappa_\sigma\over 2}
\end{equation}
in the analogous way.

\begin{rmk}
 For any compact open subgroup $K \subset \GL_2(\mathbf A_{F,f})$, the above representations define local systems $\mathscr L_\lambda(E)$ and $\mathscr L_\lambda(L)$ on $Y_K$, and $\iota$ induces a $\mathbf Q$-linear morphism of local systems $\iota: \mathscr L_{\lambda}(E) \rightarrow \mathscr L_{\lambda}(L)$ by Proposition \ref{prop:archi-shift}.  However, we note that the $\iota$-transfer from $\mathscr L_\lambda(E)$ to $\mathscr L_\lambda(L)$ has a non-trivial effect on certain formulas (cf.\ Section \ref{subsec:padic-twisting}).
 
 For instance, suppose that $g \in \GL_2(\mathbf A_{F,f})$, $K \subset \GL_2(\mathbf A_{F,f})$ is a compact open subgroup and $K' \subset K$ is another compact open subgroup so that $g^{-1}K'g \subset K$.  Write $\mathscr L_\lambda(L)(g)$ for the left $G(L)$-representation whose action is given by $h\cdot_{g} P := g_p^{-1} h g_p\cdot  P$ for $P \in \mathscr L_\lambda(L)$ and $h \in G(L)$. Then $P \mapsto g_p^{-1}\cdot P $ defines a $G(L)$-equivariant isomorphism $\mathscr L_\lambda(L) \simeq \mathscr L_\lambda(L)(g)$ (compare with \eqref{eqn:adelic-cochains}) that fits into a commutative diagram of local systems whose bases are as indicated:
\begin{equation}\label{eqn:iota-transfer}
\xymatrix{
{{\mathscr L}_\lambda(E) \ar[r]^-{\iota}}_{/Y_{K'}} \ar[d]_-{r_g} & {{\mathscr L}_\lambda(L)}_{/Y_{K'}}  \ar[dr]^-{P \mapsto g_p^{-1}\cdot P}\\
{{\mathscr L}_{\lambda}(E)}_{/Y_{g^{-1}K'g}} \ar[d]_-{\pr} & {{\mathscr L}_{\lambda}(L)}_{/Y_{g^{-1}K'g}} \ar[d]^-{\pr} & {{\mathscr L}_{\lambda}(L)(g)}_{/Y_{K'}} \ar[l]^-{r_g}\\
{{\mathscr L}_{\lambda}(E)}_{/Y_{K}} 
\ar[r]_-{\iota} & {{\mathscr L}_{\lambda}(L)}_{/Y_{K}}.
}
\end{equation}
\end{rmk}

\section{Hilbert modular forms}\label{sec:hmf}
\subsection{Recollection of definitions}\label{subsec:hmf-3ways}
The goal of this subsection is to describe the three points of view that we need to adopt regarding Hilbert modular forms. General references for automorphic representation theory are \cite{BorelJacquet-Corvallis, Bump-AutomorphicForms}. Specific to Hilbert modular forms, one might refer to \cite[Section 2]{Hida-padicHeckeTotallyReal} or \cite[Section 3]{Hida-CriticalValues}. For us, the most frequently useful reference is \cite{GetzGoresky}, which itself follows Hida's papers in my places. To help the reader, after our definitions we will explain how to translate between our notations (or, if you like, conventions) and those of \cite{GetzGoresky}. See Remark \ref{rmk:normalization}.

Let $t$ be a real number. We write $\omega_t$ for the character of $F_\infty^\times$ given by $\omega_t(x_\infty) = \prod_{\sigma} x_\sigma^t$ for $x_\infty = (x_\sigma) \in F_\infty^\times$. When $t = w$ is an integer, the restriction to $F^\times \subset F_\infty^\times$ is what we called $\omega_{\lambda}$ in Definition \ref{defn:cohomological-wt}.  Suppose that $\omega: F_\infty^\times \rightarrow \mathbf C^\times$ is a continuous character such that $\omega|_{(F_\infty^\times)^{\circ}} = {\omega_t}|_{(F_\infty^\times)^{\circ}}$. We write $L^2(\GL_2(F)\leftmod \GL_2(\mathbf A_F),\omega)$ for the space of functions $f : \GL_2(F)\leftmod \GL_2(\mathbf A_F) \rightarrow \mathbf C$ that satisfy the following two properties:
\begin{enumerate}
\item $f(x_\infty g) = \omega^{-1}(x_\infty)f(g)$ for all $g \in \GL_2(\mathbf A_F)$ and $x_\infty \in F_\infty^\times$.
\item $|\!\det g|^{t/2}|f(g)|$ is square-integrable on $(F_\infty^\times)^\circ\GL_2(F)\leftmod \GL_2(\mathbf A_F)$.
\end{enumerate}
The condition in (2) is well-defined by the condition (1) and the assumption on $\omega$.  We further write $L^2_0(\GL_2(F)\leftmod \GL_2(\mathbf A_F),\omega)$ for those $f \in L^2(\GL_2(F)\leftmod \GL_2(\mathbf A_F),\omega)$ which are cuspidal, meaning that
\begin{equation}\label{eqn:cuspidal-condition}
\int_{F\leftmod \mathbf A_F} f\left(\begin{smallpmatrix} 1 & u \\ & 1 \end{smallpmatrix} g \right) du = 0 \;\;\;\;\;\;\; \text{(for all $g \in \GL_2(\mathbf A_F)$).}
\end{equation}
Note that the group $\GL_2(\mathbf A_{F})$ acts on these $L^2$-spaces by right translation in the domain.

\begin{defn}
A  cuspidal automorphic representation $\pi$ for $\GL_2(\mathbf A_F)$ is an irreducible admissible $\GL_2(\mathbf A_F)$-subrepresentation of $L^2_0(\GL_2(F)\leftmod \GL_2(\mathbf A_F),\omega)$ for some $\omega$.
\end{defn}

By admissible here, we mean the induced $(\mathfrak{gl}_2(F_\infty),K_\infty^{\circ}) \times \GL_2(\mathbf A_{F,f})$-module on the $K_\infty^{\circ}$-finite vectors of $\pi$ are admissible in the usual sense (\cite[Section 3.3]{Bump-AutomorphicForms}). For a cuspidal automorphic representation $\pi$, we write $\pi = \bigotimes_{v}' \pi_v$ for its factorization as a restricted tensor product (\cite{Flath-TensorProduct}). We further specify the notation $\pi_{\infty} := \bigotimes_{\sigma \in \Sigma_F} \pi_{\sigma}$, and $\pi_f:= \bigotimes_{v}' \pi_v$ where $v$ runs over finite places of $F$, so $\pi = \pi_\infty \otimes \pi_f$.

For the rest of this subsection, fix a cohomological weight $\lambda = (\kappa,w)$. We need two representations associated to $\lambda$. First, $\mathbf C_\lambda$ is the 1-dimensional $\mathbf C$-vector space $\mathbf C_\lambda = \mathbf C \cdot v$ on which we let $K_\infty^{\circ}$ act by
\begin{equation}\label{eqn:Clambda-action}
v|_{k_\infty} := \omega_w^{-1}(x_\infty)e^{i\theta_\infty(\kappa+2)}\cdot v.
\end{equation}
Here, $k_\infty \in K_\infty^{\circ}$ is written $k_\infty = x_\infty r_\infty$ with $x_\infty \in F_\infty^\times$ and $r_\infty = \begin{smallpmatrix} \cos \theta_\infty & \sin \theta_\infty \\ -\sin \theta_\infty & \cos \theta_\infty\end{smallpmatrix} \in \SO_2(F_\infty)$.  Second, for $\sigma \in \Sigma_F$ we write $D_{\kappa_\sigma+2,w}$ for the weight $\kappa_\sigma + 2$ discrete series representation of $\GL_2(\mathbf R)$ with central character $x \mapsto x^{-w}$ (see \cite[Section 11]{KnightlyLi-TracesHecke} for example). Then, we define $D_\lambda := \bigotimes_{\sigma \in \Sigma_F} D_{\kappa_\sigma+2,w}$ (a representation of $\GL_2(F_\infty)$).
\begin{defn}
A cuspidal automorphic representation $\pi$ is cohomological of weight $\lambda$ if $\pi_\infty \simeq D_{\lambda}$.
\end{defn}
We recall that there is a unique $K_\infty^{\circ}$-equivariant embedding $\mathbf C_\lambda \subset D_\lambda$, the image of which generates $D_\lambda$ as a $\GL_2(F_\infty)$-representation. Given $\pi$, cohomological of weight $\lambda$, we write $\pi_\infty^+ \subset \pi_\infty$ for the corresponding line. We also note that the irreducibility and admissibility of such a $\pi$ implies that $\mathbf A_F^\times$ acts on $\pi$ through a Hecke character $\omega_\pi$ (the central character). Of course, $\omega_{\pi,\infty} := \omega_\pi|_{F_\infty^\times} = \omega_w^{-1}$ and thus $\pi \subset L^2_0(\GL_2(F)\leftmod \GL_2(\mathbf A_F),\omega_w)$.

We now turn towards automorphic forms.

\begin{defn}\label{defn:automorphic-form}
Let $K \subset \GL_2(\mathbf A_{F,f})$ be a compact open subgroup. The space of cohomological cuspidal automorphic forms of weight $\lambda$ and level $K$ is the set $S_{\lambda}(K)$  of all functions $\phi: \GL_2(\mathbf A_F) \rightarrow \mathbf C_{\lambda}$ satisfying the following conditions.
\begin{enumerate}
\item If $g_f \in \GL_2(\mathbf A_{F,f})$, then the function $g_\infty \mapsto \phi(g_\infty g_f)$ is a smooth function on $\GL_2(F_\infty)$.
\item If $\sigma \in \Sigma_F$, then $C_{\sigma}(\phi) = \left(\kappa_{\sigma} + {1\over 2}\kappa_\sigma^2\right)\phi$, where $C_{\sigma}$ denotes the Casimir operator.\footnote{The Casimir operator is the element $XY + YX + {1\over 2}H^2$ in the center of $\mathrm U(\mathfrak{sl}_2(\mathbf R)\otimes_{\mathbf R} \mathbf C)$ where $X = {1\over 2}\begin{smallpmatrix} 1 & i \\ i & - 1\end{smallpmatrix}$, $Y = {1\over 2}\begin{smallpmatrix} 1 & -i \\ -i & - 1\end{smallpmatrix}$ and $H = \begin{smallpmatrix} 0 & i \\ -i & 0 \end{smallpmatrix}$. It acts as a differential operator on smooth functions $\GL_2(\mathbf R) \rightarrow \mathbf C$. What we mean by $C_{\sigma}$ is the Casimir operator acting on the $\sigma$-th component of functions $\GL_2(F_\infty) \rightarrow \mathbf C$.}

\item If $\gamma \in \GL_2(F)$, $g \in \GL_2(\mathbf A_F)$, $k_\infty \in K_\infty^{\circ}$ and $k \in K$, then $\phi(\gamma g k_\infty k) = \phi(g)|_{k_\infty}$.
\item $\phi$ is cuspidal in the sense that \eqref{eqn:cuspidal-condition} holds for $f = \phi$ and all $g \in \GL_2(\mathbf A_F)$.
\end{enumerate}
\end{defn}
The $\mathbf C$-vector space $S_\lambda(K)$ is finite-dimensional, but it is not a representation of $\GL_2(\mathbf A_F)$. Instead, if $g \in \GL_2(\mathbf A_F)$ and $\phi \in S_\lambda(K)$ then $(g\cdot \phi)(g') := \phi(g'g)$ defines a natural $\mathbf C$-linear map $S_\lambda(K) \rightarrow S_\lambda(gKg^{-1})$. Note as well that $S_\lambda(K) \subset L^2_0(\GL_2(F)\leftmod \GL_2(\mathbf A_F),\omega_w)$. Indeed, this is true by \cite[Section 4.4]{BorelJacquet-Corvallis} when $\phi \in S_\lambda(K)$ has a central character (i.e.\ there exists a Hecke character $\omega_\phi$ such that $\phi(z g) = \omega(z) \phi(g)$ for all $z \in \mathbf A_F^\times$) and it is not difficult to see that any $\phi$ is a finite sum of $\phi$'s with central character (because $S_\lambda(K)$ is finite-dimensional). Moreover, the discussion in \cite{BorelJacquet-Corvallis} implies:

\begin{prop}\label{prop:decomposition-cusp-auto-forms}
Let $\mathcal A_\lambda^0$ be the set of all cohomological cuspidal automorphic representations of weight $\lambda$. Then, for each compact open subgroup $K \subset \GL_2(\mathbf A_{F,f})$ there is a canonical isomorphism
\begin{equation}\label{eqn:auto-rep-v-form}
S_{\lambda}(K) \simeq \bigoplus_{\pi \in \mathcal A_\lambda^0} \pi_\infty^+ \otimes_{\mathbf C} \pi_f^{K}
\end{equation}
as subspaces of $L_0^2(\GL_2(F)\leftmod \GL_2(\mathbf A_F), \omega_{w})$.
\end{prop}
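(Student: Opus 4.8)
The plan is to realize both sides of \eqref{eqn:auto-rep-v-form} as subspaces of $L^2_0(\GL_2(F)\leftmod\GL_2(\mathbf{A}_F),\omega_w)$ and to match them up using the discrete spectral decomposition of cusp forms together with the local representation theory of $\GL_2(\mathbf{R})$ at the archimedean places.

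First I would make the embedding $S_\lambda(K)\subset L^2_0$ concrete. Fixing the generator $v$ of $\mathbf{C}_\lambda$ and writing $\phi = f_\phi\cdot v$, condition~(3) of Definition~\ref{defn:automorphic-form} says precisely that $f_\phi$ is left $\GL_2(F)$-invariant, right $K$-invariant, and satisfies $f_\phi(gk_\infty) = \omega_w^{-1}(x_\infty)e^{i\theta_\infty(\kappa+2)}f_\phi(g)$ for every $k_\infty = x_\infty r_{\theta_\infty}\in K_\infty^{\circ}$, while conditions~(1), (2), (4) say that $f_\phi$ is smooth at infinity, satisfies $C_\sigma f_\phi = (\kappa_\sigma+\tfrac{1}{2}\kappa_\sigma^2)f_\phi$ for each $\sigma$, and is cuspidal. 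Using the inclusion $S_\lambda(K)\subset L^2_0(\GL_2(F)\leftmod\GL_2(\mathbf{A}_F),\omega_w)$ recalled just before the statement, the assignment $\phi\mapsto f_\phi$ therefore identifies $S_\lambda(K)$ with the subspace of $L^2_0(\GL_2(F)\leftmod\GL_2(\mathbf{A}_F),\omega_w)$ consisting of those vectors which are smooth, right $K$-invariant, isotypic for $K_\infty^{\circ}$ of the character appearing in \eqref{eqn:Clambda-action}, and annihilated by $C_\sigma - (\kappa_\sigma+\tfrac{1}{2}\kappa_\sigma^2)$ for all $\sigma\in\Sigma_F$.

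Next I would invoke the discrete decomposition of the cuspidal spectrum (Gelfand--Piatetski-Shapiro; see \cite{BorelJacquet-Corvallis}): the space $L^2_0(\GL_2(F)\leftmod\GL_2(\mathbf{A}_F),\omega_w)$ is the Hilbert-space completion of $\bigoplus_\pi\pi$, where $\pi$ runs over the cuspidal automorphic representations contained in it, each occurring with multiplicity one by strong multiplicity one for $\GL_2$, and each factoring as $\pi = \pi_\infty\otimes\pi_f$. Restricting to smooth vectors and imposing the conditions of the previous paragraph, the right $K$-invariance extracts the factor $\pi_f^{K}$, and the archimedean conditions extract the subspace of $\pi_\infty = \bigotimes_{\sigma}\pi_\sigma$ equal to the tensor product over $\sigma\in\Sigma_F$ of the subspace of $\pi_\sigma$ on which $x_\sigma r_{\theta_\sigma}\in\SO_2(\mathbf{R})\mathbf{R}^\times$ acts by $x_\sigma^{-w}e^{i\theta_\sigma(\kappa_\sigma+2)}$ and on which the Casimir acts by $\kappa_\sigma+\tfrac{1}{2}\kappa_\sigma^2$.

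The heart of the argument, and the step I expect to be the main obstacle, is the identification of this archimedean factor. One first checks that $\kappa_\sigma+\tfrac{1}{2}\kappa_\sigma^2$ is exactly the Casimir eigenvalue of $D_{\kappa_\sigma+2,w}$, so the Casimir condition forces $\pi_\sigma$ to have the same infinitesimal character as $D_{\kappa_\sigma+2,w}$. Since $\pi$ is cuspidal, $\pi_\sigma$ is an irreducible unitary $(\mathfrak{gl}_2(\mathbf{R}),\SO_2(\mathbf{R})\mathbf{R}^\times)$-module, and by the classification of such modules the only one having that infinitesimal character, central character $x\mapsto x^{-w}$, and containing a vector of $\SO_2(\mathbf{R})$-type $e^{i\theta(\kappa_\sigma+2)}$ is $D_{\kappa_\sigma+2,w}$ itself (the finite-dimensional constituent with that infinitesimal character has all its $\SO_2(\mathbf{R})$-types of modulus at most $\kappa_\sigma$, and no irreducible unitary principal or complementary series has that infinitesimal character). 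Hence $\pi_\sigma\simeq D_{\kappa_\sigma+2,w}$ for all $\sigma$, i.e.\ $\pi_\infty\simeq D_\lambda$ and $\pi\in\mathcal{A}_\lambda^0$, and inside $D_\lambda$ the subspace extracted above is precisely the line $\pi_\infty^{+}$ by the uniqueness of the $K_\infty^{\circ}$-equivariant embedding $\mathbf{C}_\lambda\hookrightarrow D_\lambda$ recalled just before Definition~\ref{defn:automorphic-form}. Reassembling the tensor factors and summing over $\pi$, the map $\phi\mapsto f_\phi$ carries $S_\lambda(K)$ isomorphically onto $\bigoplus_{\pi\in\mathcal{A}_\lambda^0}\pi_\infty^{+}\otimes_{\mathbf{C}}\pi_f^{K}$; since both sides have been constructed inside $L^2_0(\GL_2(F)\leftmod\GL_2(\mathbf{A}_F),\omega_w)$, this is an equality of subspaces, and its compatibility with inclusions $K'\subset K$ and with the $\GL_2(\mathbf{A}_{F,f})$-action on levels is immediate from the construction, which is the sense in which it is canonical.
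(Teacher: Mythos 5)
Your proposal is correct and is essentially the argument the paper has in mind: the paper offers no proof beyond the remark that "the discussion in Borel--Jacquet implies" the statement, and what that discussion encodes is exactly your route — the discrete decomposition of $L^2_0(\GL_2(F)\leftmod\GL_2(\mathbf A_F),\omega_w)$ with multiplicity one, extraction of $\pi_f^K$ by $K$-invariance, and identification of the archimedean isotypic piece with the line $\pi_\infty^+\subset D_\lambda$ via the Casimir eigenvalue and $K_\infty^{\circ}$-type. The only cosmetic slips are that multiplicity one (Jacquet--Langlands), rather than strong multiplicity one, is the statement actually used, and that one should note the sum is finite (equivalently $S_\lambda(K)$ is finite-dimensional) so the Hilbert-space decomposition restricts to an algebraic direct sum; neither affects correctness.
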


In order to describe the Eichler--Shimura construction (Section \ref{subsec:eichler-shimura}), we also need a holomorphic version of the previous notion. Recall from Section \ref{subsec:sym-spaces} that we write $D_\infty := \mathfrak h^{\Sigma_F}$. If $g = \begin{smallpmatrix} a_\sigma & b_\sigma \\ c_\sigma & d_\sigma\end{smallpmatrix} \in \GL_2(F_\infty)$ and $z = (z_\sigma) \in D_\infty$, then we define an automorphy factor
\begin{equation*}
j(g,z) = (c_\sigma z_\sigma + d_{\sigma})_{\sigma \in \Sigma_F} \in \mathbf C^{\Sigma_F}.
\end{equation*}
In particular, one can take $g = \gamma \in \GL_2(F)$ embedded diagonally into $\GL_2(F_\infty)$. Recall also that $\gamma \in \GL_2^+(F)$ acts on $z \in D_\infty$ by fractional linear transformation $z \mapsto \gamma \cdot z$.
\begin{defn}\label{defn:holomorphic-HMF}
Let $K \subset \GL_2(\mathbf A_{F,f})$ be a compact open subgroup. A holomorphic Hilbert cuspform $\mathbf f$ of weight $(\kappa+2,w)$ and level $K$ is a function 
\begin{equation*}
\mathbf f: D_\infty \times \GL_2(\mathbf A_{F,f}) \rightarrow \mathbf C
\end{equation*}
satisfying the following conditions.
\begin{enumerate}
\item If $g_f \in \GL_2(\mathbf A_{F,f})$, then the function $z \mapsto \mathbf f(z,g_f)$ is holomorphic in $z$.
\item If $\gamma = \begin{smallpmatrix} a & b \\ c & d \end{smallpmatrix} \in \GL_2^+(F)$, $k \in K$ and $g_f \in \GL_2(\mathbf A_{F,f})$ then 
\begin{equation}\label{eqn:holo-transform}
\mathbf f(\gamma\cdot z, \gamma g_f k) = \det(\gamma)^{{w - \kappa \over 2} - 1}j(\gamma,z)^{\kappa+2}\mathbf f(z,g_f).
\end{equation}
\item $\mathbf f$ is cuspidal in the sense that $\phi_{\mathbf f}$ as defined below satisfies \eqref{eqn:cuspidal-condition}. 
\end{enumerate}
\end{defn}
We write $S^{\hol}_\lambda(K)$ for the space of holomorphic Hilbert cuspforms $\mathbf f$ of weight $(\kappa + 2, w)$. It is entirely straightforward to compare the spaces $S_{\lambda}^{\hol}(K)$ and $S_\lambda(K)$. Namely, given $\phi \in S_\lambda(K)$ we define
\begin{equation*}
\mathbf f_{\phi}(g_\infty,g_f) := \det(g_\infty)^{{w-\kappa\over 2} - 1}\cdot j(g_\infty,i)^{\kappa+2}\phi(g_\infty g_f).
\end{equation*}
Here $g_\infty \in \GL_2(F_\infty)^{\circ}$ and $g_f \in \GL_2(\mathbf A_{F,f})$. It is straightforward to see that $g_\infty \mapsto \mathbf f_{\phi}(g_\infty,g_f)$ is invariant under right-multiplication by $K_\infty^{\circ}$ and thus descends to a function on $D_\infty \times \GL_2(\mathbf A_{F,f})$.\footnote{To be clear:\ to compute $\mathbf f_{\phi}(z,g_f)$ one finds a $g_\infty \in \GL_2(F_\infty)^{\circ}$ such that $g_\infty \cdot i = z$ and then computes $\mathbf f_{\phi}(g_\infty, g_f)$ by the formula we just gave.} It is also readily verified that $\mathbf f_{\phi} \in S^{\hol}(K)$, i.e. that $\mathbf f_{\phi}$ is actually holomorphic, cf. \cite[p. 460]{Hida-CriticalValues}. To go backwards, given $\mathbf f \in S_\lambda^{\hol}(K)$, view it as a function on $\GL_2(F_\infty)^{\circ} \times \GL_2(\mathbf A_{F,f})$. Then define $\phi_{\mathbf f}$ on the same domain by
\begin{equation*}
\phi_{\mathbf f}(g) := \det(g_\infty)^{1 - {w-\kappa\over 2}}j(g_\infty,i)^{-\kappa - 2}\mathbf f(g_\infty,g_f)
\end{equation*}
for $g = g_\infty g_f \in \GL_2(F_\infty)^{\circ} \times \GL_2(\mathbf A_{F,f})$. Finally, extend $\phi_{\mathbf f}$ to all of $\GL_2(\mathbf A_{F})$ by \eqref{eqn:double-coset}. We finally remark that $\phi \leftrightarrow \mathbf f_{\phi}$ and $\mathbf f \leftrightarrow \phi_{\mathbf f}$ are clearly compatible with right translation by $g_f \in \GL_2(\mathbf A_{F,f})$.

\begin{rmk}\label{rmk:normalization}
It remains an open question how many notations and normalizations for Hilbert modular forms possibly exist. We pause here to align our own notations with just one of our major references, the text of Getz--Goreskey \cite{GetzGoresky}. 

The most fundamental normalization is the weight. For us, a weight is a certain pair $(\kappa,w)$. Getz and Gorskey define weights in \cite[Section 5.3]{GetzGoresky} as a certain pair $(k,m)$, each of $k,m$ being a $\Sigma_F$-tuple. Our pair $(\kappa,w)$ defines a pair $(k,m)$ via the dictionary $k_\sigma = \kappa_\sigma$ and $m_\sigma = \frac{w - \kappa_\sigma}{2}$. 

Under this correspondence the space $S_\lambda(K)$ we defined in Definition \ref{defn:automorphic-form} is, up to fixing central character, the spaces denoted $S_{(k,m)}(K,\chi)$ in \cite[Section 5.4]{GetzGoresky}. (Notice the assumption (5.4.2) in {\em loc.\ cit.}, which explains in part how we came to choose to insert an inverse into the $L^2(-,\omega)$ notation at the start of this subsection.)

Getz and Goresky also helpfully discuss the issue of normalizations for Hilbert modular forms. One discussion focuses on what they call the cohomogical normalization. See \cite[Section 5.5]{GetzGoresky}. The cohomological normalization will be needed for readers of this paper who want to compare our claims in Section \ref{subsec:eichler-shimura} below, on the Eichler--Shimura constructions, with those in \cite[Chapter 6]{GetzGoresky}. The second discussion, which can be found in see \cite[Section 5.13]{GetzGoresky}, compares their notations (and thus ours) with those of Hida's papers \cite{Hida-padicHeckeTotallyReal,Hida-CriticalValues}. .
\end{rmk}

\subsection{Hecke operators, Fourier expansions and newforms}\label{subsec:hecke-operators}
The main goal of this subsection to make precise the notion of the newform associated to a cohomological cuspidal automorphic representation $\pi$. We will also record information about Hecke operators and Fourier expansions. We leave  transcription of the discussion to $S_{\lambda}^{\hol}(K)$ to the reader.

Let $K$ be a compact open subgroup in $\GL_2(\mathbf A_{F,f})$ and $g \in \GL_2(\mathbf A_{F,f})$. The double coset $KgK$ can be decomposed into a finite disjoint union $KgK = \bigcup_i g_i K$ of right $K$-cosets. Then for any cohomological weight $\lambda$, we get a Hecke operator $[KgK]$ acting on the space $S_{\lambda}(K)$ by
\begin{equation}\label{eqn:hecke-operator}
([KgK]\phi)(g) = \sum \phi(g g_i) \;\;\;\;\; (\phi \in S_{\lambda}(K)).
\end{equation}
The operator $[KgK]$ is independent of the choice of the $g_i$'s.

For the rest of the subsection we are interested in $K$ of the form $K_1(\mathfrak n)$ (see \eqref{eqn:K1(n)}) for $\mathfrak n \subset \mathcal O_F$ an integral ideal.
\begin{defn}\label{defn:hecke-operators}
Let $\mathfrak m \subset \mathcal O_{F}$ be an integral ideal, written $\mathfrak m = \prod_{v} \mathfrak p_v^{m_v}$, and $\varpi_{\mathfrak m} = \prod_{v} \varpi_v^{m_v} \in \mathbf A_{F,f}^\times$.
\begin{enumerate}
\item $T_{\mathfrak m} := [K_1(\mathfrak n)\begin{smallpmatrix} \varpi_{\mathfrak m} \\ & 1\end{smallpmatrix}K_1(\mathfrak n)]$. 
\item If $(\mathfrak m, \mathfrak n) = 1$, $S_{\mathfrak m} := [K_1(\mathfrak n)\begin{smallpmatrix} \varpi_{\mathfrak m} \\ & \varpi_{\mathfrak m} \end{smallpmatrix}K_1(\mathfrak n)]$.
\item When $\mathfrak m = \mathfrak p_v$ is a prime ideal we write $T_v := T_{\mathfrak p_v}$ and $S_v = S_{\mathfrak p_v}$ (when $(\mathfrak p_v,\mathfrak n) = 1$).
\end{enumerate}
\end{defn}

We denote $\mathbf T_{\mathbf Z}(K_1(\mathfrak n))$ the $\mathbf Z$-algebra abstractly generated by the Hecke operators. So, for each cohomological weight $\lambda$ we have a natural morphism of $\mathbf C$-algebras
\begin{equation*}
\mathbf T_{\mathbf C}(K_1(\mathfrak n)):=\mathbf T_{\mathbf Z}(K_1(\mathfrak n)) \otimes_{\mathbf Z} \mathbf C \rightarrow \End_{\mathbf C}(S_{\lambda}(K_1(\mathfrak n))).
\end{equation*}

\begin{rmk}\label{rmk:well-posed-locally-calculated}
We will assume the reader is familiar with basic properties of the $T_{\mathfrak m}$ (see \cite[Section 5.6]{GetzGoresky} for example). For instance, $T_{\mathfrak m}$ and $S_{\mathfrak m}$, when defined, are independent of the choice of uniformizers and they are multiplicative over co-prime ideals $\mathfrak m$ because the double coset representatives $g_i$ as in \eqref{eqn:hecke-operator} are calculated ``locally at $\mathfrak m$'' in that they can be chosen to be $\begin{smallpmatrix} 1 \\ & 1 \end{smallpmatrix}$ at each place $v$ where $\mathfrak p_v \nmid \mathfrak n$.
\end{rmk}
\begin{rmk}\label{rmk:U-operator-notation}
If $\mathfrak m \mid \mathfrak n$, then we will sometimes use the notations $U_{\mathfrak m} := T_{\mathfrak m}$, $U_v := T_v$, etc. Let us recall an explicit formula in that case. When $\mathfrak m \mid \mathfrak n$, one may check that the representatives $K_1(\mathfrak n)\begin{smallpmatrix}\varpi_{\mathfrak m} \\ & 1 \end{smallpmatrix} K_1(\mathfrak n)/K_1(\mathfrak n)$ can be chosen to be of the form $\begin{smallpmatrix} \varpi_{\mathfrak m} & a \\ & 1 \end{smallpmatrix}$ where $a$ runs over a choice of representatives in $\prod_{v \mid \mathfrak m} \mathcal O_v$ for $\prod_{\mathfrak p_v \mid \mathfrak m} \mathcal O_{v}/\mathfrak m \mathcal O_{v}$. So, we will often write expressions like
\begin{equation}\label{eqn:Uv-summation}
(U_{\mathfrak m} \phi)(g) = \sum_{a \in \mathcal O_v/\mathfrak m \mathcal O_v} \phi\left(g \begin{smallpmatrix} \varpi_v & a \\ & 1 \end{smallpmatrix}\right),
\end{equation}
omitting the choices of lifts. This makes clear, for instance, that $U_{\mathfrak p_v^j} = U_{v}^j$ for all integers $j\geq 0$.
\end{rmk}

\begin{rmk}\label{rmk:Tv-formula}
If $\mathfrak p_v \nmid \mathfrak n$ then there is a formula similar to \eqref{eqn:Uv-summation} for $T_v$. Specifically,
\begin{equation*}
(T_v\phi)(g) = \phi(g \begin{smallpmatrix} 1 \\ & \varpi_v \end{smallpmatrix}) + \sum_{a \in \mathcal O_v/\varpi_v \mathcal O_v} \phi\left(g \begin{smallpmatrix} \varpi_v & a \\ & 1 \end{smallpmatrix}\right).
\end{equation*}
Thus $T_v$ ``is equal to'' $U_v + V_v^-$ where $V_v^-$ means translation by $\begin{smallpmatrix} 1 \\ & \varpi_v \end{smallpmatrix}$ (see Section \ref{subsec:refinements} below). The quotes refer to $T_v$ being the bona fide endomorphism of $S_{\lambda}(K_1(\mathfrak n))$ given by Definition \ref{defn:hecke-operators} whereas $U_v$ (resp.\ $V_v^-$) means the formal operator on functions $\GL_2(\mathbf A_F) \rightarrow \mathbf C$ given by \eqref{eqn:Uv-summation} (resp.\ right translation by $\begin{smallpmatrix} 1 \\ & \varpi_v \end{smallpmatrix}$). Their sum $U_v +V_v^-$ happens to be well-defined on $S_{\lambda}(K_1(\mathfrak n))$. See the calculation in Proposition \ref{prop:refined-eigenform} below.
\end{rmk}

In this article, an eigenform means an element $\phi \in S_{\lambda}(K_1(\mathfrak n))$ such that there exists a $\mathbf C$-algebra morphism $\psi : \mathbf T_{\mathbf C}(K_1(\mathfrak n)) \rightarrow \mathbf C$ such that $T\phi = \psi(T)\phi$ for all $T \in \mathbf T(K_1(\mathfrak n))$. If $\phi$ is an eigenform then we refer to $\psi = \psi_{\phi}$ as its Hecke eigensystem. 

An eigenform is only possibly unique up to scalar,  but we can normalize it in a natural way using Fourier expansions. Start by writing $e_{\mathbf Q}: \mathbf A_{\mathbf Q}\rightarrow \mathbf C^\times$ for the natural non-degenerate character
\begin{equation*}
e_{\mathbf Q}(x) = e^{2 \pi i x_{\infty}} e^{-2\pi i \{ x_f \}},
\end{equation*}
where $\{ - \}$ is the morphism on the finite adeles  given by the composition 
\begin{equation*}
\{-\}: \mathbf A_{\mathbf Q,f} \rightarrow \mathbf A_{\mathbf Q,f}/\widehat{\mathbf Z} \simeq \mathbf Q/\mathbf Z \hookrightarrow \mathbf R / \mathbf Z.
\end{equation*}
Then, define $e_F : \mathbf A_{F} \rightarrow \mathbf C^\times$ to be the composition $e_F := e_{\mathbf Q} \circ \tr_{F/\mathbf Q}$.  Next, if $\lambda = (\kappa,w)$ is a cohomological weight, then we define $W_{\lambda} : F_\infty^\times \rightarrow \mathbf C$ (an Archimedean Whittaker function) to be
\begin{equation*}
W_{\lambda}(x_\infty) := \prod_{\sigma \in \Sigma_F} |x_{\sigma}|^{\kappa_{\sigma} - w \over 2} e^{-2 \pi |x_{\sigma}|}.
\end{equation*}
Finally, we set two more notations. If $x_f \in \mathbf A_{F,f}$, then we define $[x_f]$ to be the fractional ideal $F \cap x_f \widehat{\mathcal O}_F$ and we also write $\mathcal D_{F/\mathbf Q}$ for the different ideal associated to the extension $F/\mathbf Q$.
\begin{prop}\label{prop:fourier-expansion}
For each $\phi \in S_{\lambda}(K_1(\mathfrak n))$ there exists a uniquely determined function $\widetilde a_{\phi}: \mathbf A_{F,f}^\times \rightarrow \mathbf C$ such that $\widetilde a_{\phi}(x_f)$ depends only on $[x_f]$ and
\begin{equation}\label{eqn:fourier-expansion}
\phi\left(\begin{smallpmatrix} x & y \\ & 1 \end{smallpmatrix}\right) = |x|_{\mathbf A_F} \sum_{\xi \in F_{+}^\times} \widetilde a_{\phi}(\xi x_f) W_{\lambda}(\xi x_\infty) e_F(\xi y).
\end{equation}
Moreover, $\widetilde a_{\phi}(x_f) = 0$ if $[x_f]\mathcal D_{F/\mathbf Q}$ is not integral.
\end{prop}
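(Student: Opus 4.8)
The plan is to carry out the classical Whittaker (Fourier) expansion of a cusp form in adelic language. First I would fix $g \in \GL_2(\mathbf A_F)$ and note that, by left $\GL_2(F)$-invariance of $\phi$ (Definition~\ref{defn:automorphic-form}(3) with $k_\infty = k = 1$), the function $u \mapsto \phi\left(\begin{smallpmatrix} 1 & u \\ & 1 \end{smallpmatrix} g\right)$ descends to a smooth function on the compact group $F \leftmod \mathbf A_F$ (smooth in the archimedean variable, locally constant in the finite one). Expanding it in Fourier series against the characters $u \mapsto e_F(\xi u)$, $\xi \in F$ --- which exhaust the Pontryagin dual of $F \leftmod \mathbf A_F$ since $e_F$ is nondegenerate --- yields Whittaker coefficients
\begin{equation*}
W_\xi(g) = \int_{F \leftmod \mathbf A_F} \phi\left(\begin{smallpmatrix} 1 & u \\ & 1 \end{smallpmatrix} g\right) e_F(-\xi u)\, du,
\end{equation*}
the series converging absolutely because cusp forms are rapidly decreasing. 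The cuspidality hypothesis \eqref{eqn:cuspidal-condition} is exactly the statement $W_0 \equiv 0$. For $\xi \in F^\times$, substituting $u = \xi^{-1}v$ and peeling off $\begin{smallpmatrix}\xi^{-1} & \\ & 1\end{smallpmatrix} \in \GL_2(F)$ by left-invariance identifies $W_\xi(g) = W\left(\begin{smallpmatrix}\xi & \\ & 1\end{smallpmatrix}g\right)$ with the single function $W := W_1$. Taking $g = \begin{smallpmatrix}x & y \\ & 1\end{smallpmatrix} = \begin{smallpmatrix}1 & y \\ & 1\end{smallpmatrix}\begin{smallpmatrix}x & \\ & 1\end{smallpmatrix}$, $x \in \mathbf A_F^\times$, $y \in \mathbf A_F$, the expansion becomes $\phi\left(\begin{smallpmatrix}x & y \\ & 1\end{smallpmatrix}\right) = \sum_{\xi \in F^\times} W\left(\begin{smallpmatrix}\xi x & \\ & 1\end{smallpmatrix}\right) e_F(\xi y)$.

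Next I would factor $W\left(\begin{smallpmatrix}t & \\ & 1\end{smallpmatrix}\right)$, $t = t_\infty t_f \in \mathbf A_F^\times$, into an archimedean and a finite part. The Casimir equations (Definition~\ref{defn:automorphic-form}(2)) and the $K_\infty^\circ$-equivariance \eqref{eqn:Clambda-action} pass from $\phi$ to $W$ (the unipotent integral commutes with right translation by $g_f$, with the $C_\sigma$, and with $K_\infty^\circ$), so for fixed $t_f$ the function $t_\infty \mapsto W\left(\begin{smallpmatrix}t_\infty t_f & \\ & 1\end{smallpmatrix}\right)$ solves the attached system with the holomorphy/moderate-growth boundary conditions; as is classical for holomorphic Hilbert modular forms, this forces it to be a scalar multiple of $\prod_{\sigma} |t_\sigma|^{\frac{\kappa_\sigma - w + 2}{2}} e^{-2\pi|t_\sigma|} = \left(\prod_{\sigma} |t_\sigma|\right) W_\lambda(t_\infty)$, nonzero only when $t_\infty$ is totally positive. (Equivalently, this is the classical $q$-expansion of the holomorphic form $\mathbf f_\phi$ of Section~\ref{subsec:hmf-3ways}.) On the finite side, $\begin{smallpmatrix}\widehat{\mathcal O}_F^\times & \\ & 1\end{smallpmatrix} \subset K_1(\mathfrak n)$, so inserting $\begin{smallpmatrix}u & \\ & 1\end{smallpmatrix}$, $u \in \widehat{\mathcal O}_F^\times$, on the right inside the integral defining $W$ and using right-invariance of $\phi$ gives $W\left(\begin{smallpmatrix}t_f u & \\ & 1\end{smallpmatrix}\right) = W\left(\begin{smallpmatrix}t_f & \\ & 1\end{smallpmatrix}\right)$; hence the remaining $t_f$-dependence is a function of the fractional ideal $[t_f] = F \cap t_f \widehat{\mathcal O}_F$ alone. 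Absorbing the finite-norm factor into this function, we obtain $W\left(\begin{smallpmatrix}t & \\ & 1\end{smallpmatrix}\right) = |t|_{\mathbf A_F}\, \widetilde a_\phi(t_f)\, W_\lambda(t_\infty)$ for a well-defined $\widetilde a_\phi$ depending only on $[t_f]$; substituting $t = \xi x$ (so $|t|_{\mathbf A_F} = |x|_{\mathbf A_F}$ by the product formula, and $\xi x_\infty$ totally positive forces $\xi \in F_+^\times$ once $x_\infty$ is normalized totally positive) into the display from the first paragraph gives \eqref{eqn:fourier-expansion}. Uniqueness of $\widetilde a_\phi$ follows by Fourier inversion applied to $y \mapsto \phi\left(\begin{smallpmatrix}x & y \\ & 1\end{smallpmatrix}\right)$: it recovers $\widetilde a_\phi(\xi x_f)\, W_\lambda(\xi x_\infty)\, |x|_{\mathbf A_F}$, and choosing $x$ with $x_\infty$ totally positive makes the other two factors nonzero, determining $\widetilde a_\phi$ on all of $\mathbf A_{F,f}^\times$.

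Finally, for the integrality assertion I would exploit right-invariance of $\phi$ under $\begin{smallpmatrix}1 & n \\ & 1\end{smallpmatrix} \in K_1(\mathfrak n)$ for $n \in \widehat{\mathcal O}_F$: this says $\phi\left(\begin{smallpmatrix}x & y + x n \\ & 1\end{smallpmatrix}\right) = \phi\left(\begin{smallpmatrix}x & y \\ & 1\end{smallpmatrix}\right)$, and comparing Fourier coefficients in $y$ yields $\widetilde a_\phi(\xi x_f)\big(e_F(\xi x_f n) - 1\big) = 0$ for all such $n$. So $\widetilde a_\phi(\xi x_f) \ne 0$ forces $e_F(\xi x_f n) = 1$ for every $n \in \widehat{\mathcal O}_F$, i.e.\ $\xi x_f \in \mathcal D_{F/\mathbf Q}^{-1}\widehat{\mathcal O}_F$, i.e.\ $[\xi x_f]\mathcal D_{F/\mathbf Q} \subseteq \mathcal O_F$; renaming $\xi x_f$ as $x_f$ gives the claim.

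The \emph{only} genuinely computational step is the archimedean identification in the second paragraph: solving the Casimir system with the holomorphy boundary condition and tracking the determinant twist and central-character normalization from Section~\ref{subsec:hmf-3ways} so that the prefactor comes out as $|x|_{\mathbf A_F}$ and the archimedean factor as $W_\lambda(\xi x_\infty)$ exactly. Everything else --- the adelic Fourier expansion, cuspidality killing the constant term, reduction to one Whittaker function, $\widehat{\mathcal O}_F^\times$-invariance, and the inverse-different computation --- is routine.
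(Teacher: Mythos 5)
Your argument is correct and is exactly the standard proof of the adelic Fourier--Whittaker expansion that the paper itself does not reproduce but delegates to \cite[Theorem 5.8]{GetzGoresky} and \cite[Theorem 6.1]{Hida-CriticalValues}: unipotent Fourier expansion with cuspidality killing the constant term, reduction to a single Whittaker function by left $\GL_2(F)$-invariance, the Casimir/lowest-weight computation at infinity, $\widehat{\mathcal O}_F^\times$-invariance on the finite side, and the inverse-different support condition from invariance under $\begin{smallpmatrix} 1 & n \\ & 1 \end{smallpmatrix}$. You also correctly flag the one genuinely computational step (pinning down the archimedean Whittaker function and the $|x|_{\mathbf A_F}$ normalization) and the implicit normalization $x_\infty \in (F_\infty^\times)^\circ$ needed for the index set to be exactly $F_+^\times$, which is the same convention the cited sources use.
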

\begin{proof}
See \cite[Theorem 5.8]{GetzGoresky} (also, \cite[Theorem 6.1]{Hida-CriticalValues}).
\end{proof}

\begin{defn}
Let $\phi \in S_{\lambda}(K_1(\mathfrak n))$.
\begin{enumerate}
\item If $\mathfrak m \subset \mathcal O_F$ is an integral ideal, then $a_{\phi}(\mathfrak m) := \widetilde a_\phi(\xi x_f)$ for any choice of $\xi \in F_+^\times$ and $x_f \in \mathbf A_{F,f}^\times$ such that $\mathfrak m = [\xi x_f]\mathcal D_{F/\mathbf Q}$.
\item We say that $\phi$ is a normalized if $a_{\phi}(\mathcal O_F) = 1$.
\end{enumerate}
\end{defn}
\begin{rmk}\label{rmk:fourier-linearity}
For each $\mathfrak m$, the function $\phi \mapsto a_{\phi}(\mathfrak m)$ is linear. It is also helpful to note that $a_{\phi}(\mathfrak m) = a_{T_{\mathfrak m}\phi}(\mathcal O_F)$ (see \cite[Corollary 6.2]{Hida-CriticalValues} where the central character is not fixed and \cite[Chapter VI]{Weil-LNM189}). Combining these points, if $\phi$ is an eigenvector for $T_{\mathfrak m}$ and $a_{\phi}(\mathcal O_F) = 0$, then $a_{\phi}(\mathfrak m) = 0$ as well.
\end{rmk}

\begin{prop}\label{prop:hecke-eigensystems}
Let $\phi \in S_{\lambda}(K_1(\mathfrak n))$ be a normalized eigenform.
\begin{enumerate}
\item If $\mathfrak m$ is an integral ideal, then $a_{\phi}(\mathfrak m) = \psi_{\phi}(T_{\mathfrak m})$.
\item $\phi$ has a central character $\omega_{\phi}$ of conductor dividing $\mathfrak n$, and $\omega_{\phi}(\varpi_v) = \psi_\phi(S_v)$ for $\mathfrak p_v \nmid \mathfrak n$.
\end{enumerate}
\end{prop}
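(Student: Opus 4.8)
\medskip
\noindent\emph{Proof proposal.} The plan is to obtain (1) as a one-line formal consequence of Section \ref{subsec:hecke-operators}, and then to bootstrap (2) out of (1) together with the $q$-expansion principle. For (1), I would combine the two assertions of Remark \ref{rmk:fourier-linearity}: that $a_\phi(\mathfrak m) = a_{T_{\mathfrak m}\phi}(\mathcal O_F)$ and that $\phi \mapsto a_\phi(\mathfrak m)$ is $\mathbf C$-linear. Since $\phi$ is an eigenform, $T_{\mathfrak m}\phi = \psi_\phi(T_{\mathfrak m})\phi$, and since it is normalized, $a_\phi(\mathcal O_F) = 1$; hence $a_\phi(\mathfrak m) = \psi_\phi(T_{\mathfrak m})\,a_\phi(\mathcal O_F) = \psi_\phi(T_{\mathfrak m})$, with no further work.

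For (2), the first point is that right translation by a central idele $z \in \mathbf A_F^\times$ preserves $S_\lambda(K_1(\mathfrak n))$ (as $z$ normalizes $K_1(\mathfrak n)$) and commutes with each $T_{\mathfrak m}$ and $S_{\mathfrak m}$, because $z$ lies in the center of $\GL_2(\mathbf A_F)$ while the Hecke operators are built from right cosets inside $\GL_2(\mathbf A_{F,f})$; thus $z \cdot \phi$ is again an eigenform with the same eigensystem $\psi_\phi$. The second point is that part (1), reused for an arbitrary nonzero (not necessarily normalized) eigenform, shows via Remark \ref{rmk:fourier-linearity} that such a form has all its Fourier coefficients determined by its coefficient at $\mathcal O_F$; by the uniqueness clause of Proposition \ref{prop:fourier-expansion} this forces the $\psi_\phi$-eigenspace of $S_\lambda(K_1(\mathfrak n))$ to be one-dimensional, spanned by $\phi$. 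Putting the two points together, $z \cdot \phi = \omega_\phi(z)\phi$ for a scalar $\omega_\phi(z) \in \mathbf C^\times$, and $z \mapsto \omega_\phi(z)$ is visibly multiplicative, i.e.\ $\phi$ has a central character $\omega_\phi$.

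It then remains to read off the properties of $\omega_\phi$. Applying condition (3) of Definition \ref{defn:automorphic-form} with $\gamma \in F^\times$ central, respectively with $k$ a central scalar of $K_1(\mathfrak n)$, gives $z \cdot \phi = \phi$ for $z \in F^\times$ and for $z$ in the group of central scalars of $K_1(\mathfrak n)$; hence $\omega_\phi$ is trivial on $F^\times$ and on an open subgroup of $\widehat{\mathcal O}_F^\times$, so it is a continuous Hecke character, and its triviality on the central scalars of $K_1(\mathfrak n)$ is exactly the statement that its conductor divides $\mathfrak n$. Finally, for $\mathfrak p_v \nmid \mathfrak n$ the matrix $\begin{smallpmatrix}\varpi_v\\&\varpi_v\end{smallpmatrix}$ is central, so the double coset defining $S_v$ collapses to the single right coset $\begin{smallpmatrix}\varpi_v\\&\varpi_v\end{smallpmatrix}K_1(\mathfrak n)$ and $S_v$ is just right translation by the central idele $\varpi_v$; comparing $S_v\phi = \psi_\phi(S_v)\phi$ with $S_v\phi = \omega_\phi(\varpi_v)\phi$ gives $\omega_\phi(\varpi_v) = \psi_\phi(S_v)$. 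The only non-formal ingredient I anticipate is the appeal to the $q$-expansion principle that makes the eigenspace one-dimensional; everything else is bookkeeping with coset decompositions and the defining conditions of $S_\lambda(K_1(\mathfrak n))$. Alternatively, one could instead invoke Proposition \ref{prop:decomposition-cusp-auto-forms} and strong multiplicity one to conclude that $\phi$ lies in a single $\pi$-isotypic summand, and hence inherits the central character $\omega_\pi$.
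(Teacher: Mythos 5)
Your proof is correct and follows essentially the same route as the paper: part (1) is the identity $a_{\phi}(\mathfrak m)=a_{T_{\mathfrak m}\phi}(\mathcal O_F)$ from Remark \ref{rmk:fourier-linearity} combined with linearity and normalization (the paper simply cites Getz--Goresky and Hida for this), and part (2) is the same standard argument the paper gives, namely that a central translate is an eigenform with the same eigensystem and nonvanishing constant coefficient, hence a scalar multiple of $\phi$ by multiplicity one, with the conductor statement and the formula for $\psi_\phi(S_v)$ read off from Definitions \ref{defn:automorphic-form} and \ref{defn:hecke-operators}. The one point you rightly flag as non-formal --- that the eigensystem together with $a_\phi(\mathcal O_F)$ pins down $\phi$, which needs slightly more than the literal uniqueness clause of Proposition \ref{prop:fourier-expansion} since a cusp form must be recovered from its values on $\begin{smallpmatrix} x & y \\ & 1\end{smallpmatrix}$ --- is exactly the ``multiplicity one'' the paper also invokes without further comment.
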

\begin{proof}
For (1), see the end of \cite[Section 5.9]{GetzGoresky} (and \cite[Corollary 6.2]{Hida-CriticalValues}). For part (2), we give a standard argument. If $x \in \mathbf A_{F,f}^\times$ then the translate $x\cdot \phi$ is a $T_{\mathfrak m}$-eigenvector with the same eigenvalue as $\phi$, so Remark \ref{rmk:fourier-linearity} above implies $a_{x\cdot \phi}(\mathcal O)\neq 0$. So, by multiplicity one, $x\cdot \phi = \omega_{\phi}(x)\phi$ for some non-zero constant $\omega_\phi(x)$. The assertions about $\omega_\phi$ follow immediately from Definitions \ref{defn:automorphic-form} and \ref{defn:hecke-operators}.
\end{proof}

If $\delta \in \widehat{\mathcal O}_F$ and $\mathfrak n'$ is an integral ideal with $\mathfrak n\widehat{\mathcal O}_F \subset \delta\mathfrak n'\widehat{\mathcal O}_F$, then $\phi \mapsto \phi_{\delta}(g) := \phi\left(g \begin{smallpmatrix} 1/\delta \\ & 1 \end{smallpmatrix}\right)$ gives a well-defined morphism $j_{\mathfrak n',\delta}:S_{\lambda}(K_1(\mathfrak n')) \rightarrow S_{\lambda}(K_1(\mathfrak n))$. The Hecke-stable subspace $S^{\new}_\lambda(K_1(\mathfrak n)) \subset S_{\lambda}(K_1(\mathfrak n))$ is the orthogonal complement of $\sum_{\mathfrak n \subsetneq \mathfrak n'} \im(j_{\mathfrak n',\delta})$ under the Petersson product (see \cite[Section 3]{Hida-CriticalValues} or \cite[Sections 5.7-8]{GetzGoresky}). We highlight our convention for the word ``newform'':\footnote{Note that by \cite[Theorem 5.7]{GetzGoresky}, an equivalent definition would be to require that $\phi \in S_\lambda^{\new}(K_1(\mathfrak n))$ which is normalized and an eigenform just for almost all the Hecke operators $T_v$.}
\begin{defn}
A newform $\phi$ of level $\mathfrak n$ is a normalized eigenform $\phi \in S_{\lambda}^{\new}(K_1(\mathfrak n))$.
\end{defn}
If $\pi$ is a cohomological cuspidal automorphic representation then there exists an ideal $\mathfrak n$, called the conductor of $\pi$, which is maximal among all ideals with $\pi_f^{K_1(\mathfrak n)} \neq (0)$. A famous result of Casselman (\cite{Casselman-NewVectorGL2}) implies in fact that $\dim_{\mathbf C} \pi_f^{K_1(\mathfrak n)} = 1$.

\begin{defn-prop}\label{prop:new-vector}
If $\pi$ is a cohomological cuspidal automorphic representation of conductor $\mathfrak n$, then there exists a unique newform $\phi_{\pi}$ of level $\mathfrak n$ such that $\phi_{\pi}$ generates the representation $\pi$ under the isomorphism \eqref{eqn:auto-rep-v-form}. We call $\phi_\pi$ the newform associated to $\pi$.
\end{defn-prop}
\begin{proof}
From Casselman's theorem,  we immediately get a unique normalized cuspform $\phi_{\pi} \in S_{\lambda}(K_1(\mathfrak n))$ which generates $\pi$ under \eqref{eqn:auto-rep-v-form}. Its unicity implies it is a normalized eigenform, and checking it is a newform is straightforward (see \cite[Theorem E.1]{GetzGoresky} for instance).
\end{proof}
Now let $\pi$ be a cohomological cuspidal automorphic representation. We define its Hecke eigensystem $\psi_{\pi}$ to be $\psi_{\pi} = \psi_{\phi_\pi}$ where $\phi_\pi$ is the associated newform, $a_{\pi}(\mathfrak m) = \psi_\pi(T_{\mathfrak m})$ for each integral ideal $\mathfrak m$, and the Hecke field of $\pi$ is $\mathbf Q(\pi) := \mathbf Q(\psi_\pi(T) \mid T \in \mathbf T_{\mathbf Z}(K_1(\mathfrak n)))$.
\begin{prop}
If $\pi$ is a cohomological cuspidal automorphic representation then $\mathbf Q(\pi)$ is a finite extension of $\mathbf Q$.
\end{prop}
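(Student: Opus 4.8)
The plan is to realize $\psi_\pi$ as a $\mathbf Q$-algebra point of a \emph{finite-dimensional} $\mathbf Q$-algebra. The first step is to observe that the local system $\mathscr L_\lambda$ of Section \ref{subsec:weights} is defined over a number field: if $\widetilde F$ denotes the Galois closure of $F$ in $\mathbf C$ and $E_\lambda \subseteq \widetilde F$ the subfield fixed by the stabilizer of the tuple $\kappa$ under $\mathrm{Gal}(\widetilde F/\mathbf Q)$, then the representation defining $\mathscr L_\lambda$ descends to a representation of $\Res_{F/\mathbf Q}\GL_2$ over $E_\lambda$, so that $\mathscr L_\lambda$ is a local system of finite-dimensional $E_\lambda$-vector spaces on $Y_{K_1(\mathfrak n)}$. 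Then $H^{\ast}_c(Y_{K_1(\mathfrak n)},\mathscr L_\lambda)$ is a finite-dimensional $E_\lambda$-vector space (a standard finiteness fact for arithmetic locally symmetric spaces, via Borel--Serre), and the Hecke operators of Definition \ref{defn:hecke-operators}, being induced by correspondences on $Y_{K_1(\mathfrak n)}$ insensitive to the coefficient field, act on it $E_\lambda$-linearly. Hence the image $\mathbb T$ of $\mathbf T_{\mathbf Z}(K_1(\mathfrak n))$ in $\End_{E_\lambda}\!\big(H^{\ast}_c(Y_{K_1(\mathfrak n)},\mathscr L_\lambda)\big)$ is a commutative ring with $\mathbb T\otimes_{\mathbf Z}\mathbf Q$ finite-dimensional over $\mathbf Q$.

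Next I would invoke the Eichler--Shimura realization (recalled in Section \ref{subsec:eichler-shimura} below; it is classical): the eigensystem $\psi_\pi$ of the newform $\phi_\pi$ attached to $\pi$ occurs in $H^{\ast}_c(Y_{K_1(\mathfrak n)},\mathscr L_\lambda\otimes_{E_\lambda}\mathbf C)$ via $\iota$, so each $\psi_\pi(T)$ is an eigenvalue of the image of $T$ in $\mathbb T$; therefore $\psi_\pi$ extends to a $\mathbf Q$-algebra homomorphism $\mathbb T\otimes_{\mathbf Z}\mathbf Q\to\mathbf C$. Its image is a quotient of a finite-dimensional $\mathbf Q$-algebra that is also a subring of $\mathbf C$, hence an integral domain finite-dimensional over $\mathbf Q$, hence a number field; and that image is precisely $\mathbf Q[\psi_\pi(T):T\in\mathbf T_{\mathbf Z}(K_1(\mathfrak n))]=\mathbf Q(\pi)$, so $[\mathbf Q(\pi):\mathbf Q]<\infty$.

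I do not expect a genuine obstacle here: the two inputs --- finiteness of $\dim_{E_\lambda}H^{\ast}_c$ and the occurrence of $\psi_\pi$ in this cohomology --- are both standard, and the only thing to keep straight is the coefficient field $E_\lambda$ over which $\mathscr L_\lambda$ is defined. If one prefers to avoid referring forward to Eichler--Shimura at this point, the same conclusion follows from Shimura's arithmeticity theorem for Hilbert modular forms (\cite{Shimura-HMF}; see also \cite[Section 3]{Hida-CriticalValues}): for each $\tau\in\mathrm{Aut}(\mathbf C)$ there is a cohomological cuspidal automorphic representation $\pi^\tau$ of conductor $\mathfrak n$ with eigensystem $\tau\circ\psi_\pi$, whose weight is $\lambda$ with the components of $\kappa$ permuted by the action of $\tau$ on $\Sigma_F$ (this action factoring through $\mathrm{Gal}(\widetilde F/\mathbf Q)$). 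Since the weights $\lambda^\tau$ range over a finite set and each $S_{\lambda^\tau}(K_1(\mathfrak n))$ is finite-dimensional, the $\mathrm{Aut}(\mathbf C)$-orbit of $\psi_\pi$ is finite; as $\mathrm{Stab}(\psi_\pi)=\mathrm{Aut}(\mathbf C/\mathbf Q(\pi))$ and, by arithmeticity, $\mathbf Q(\pi)\subset\overline{\mathbf Q}$, orbit--stabilizer forces $[\mathbf Q(\pi):\mathbf Q]<\infty$. In this alternative the only delicate point is correctly tracking the change of weight under $\tau$, so that the conjugate eigensystems manifestly land in explicitly finite-dimensional spaces.
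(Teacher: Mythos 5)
Your proposal is correct. The paper itself disposes of this proposition with a one-line citation of Shimura (\cite[Proposition 2.8]{Shimura-HMF}), whose argument is exactly your second route: conjugate the eigensystem by $\tau\in\mathrm{Aut}(\mathbf C)$, observe that the conjugates land in finitely many finite-dimensional spaces of cusp forms, and apply orbit--stabilizer. So your "alternative" is in fact the paper's approach, and you have tracked the one delicate point (the permutation of $\kappa$ by $\tau$) correctly. Your first route is genuinely different and arguably preferable in the context of this paper: it realizes $\psi_\pi$ as a character of the image $\mathbb T$ of the abstract Hecke algebra in $\End_{E}\bigl(H^{\ast}_c(Y_{K_1(\mathfrak n)},\mathscr L_\lambda(E))\bigr)$ for a number field $E$, and finiteness of $\mathbf Q(\pi)$ drops out of finite-dimensionality of cohomology plus Eichler--Shimura, with no appeal to $\mathrm{Aut}(\mathbf C)$-conjugation of automorphic representations. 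What it buys is a self-contained argument using only objects the paper already constructs (and it gives integrality of the eigensystem over $\mathcal O_E$ for free); what it costs is a forward reference to Theorem \ref{thm:signedES}, which you flag. One inessential refinement you could drop: descending $\mathscr L_\lambda$ to the fixed field $E_\lambda$ of the stabilizer of $\kappa$ requires a (harmless for $\GL_2$, but nonzero) field-of-rationality versus field-of-definition argument; since any number field works, it is cleaner to take $E$ to be the Galois closure $\widetilde F$, over which the paper's $\mathscr L_\lambda(E)$ is already defined.
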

\begin{proof}
See \cite[Proposition 2.8]{Shimura-HMF} (and replace $\phi_\pi$ by $\mathbf f_{\phi_\pi}$).
\end{proof}

\subsection{$L$-functions}\label{subsec:L-functions}
Suppose that $\phi \in S_{\lambda}(K_1(\mathfrak n))$. Its $L$-series is defined to be
\begin{equation}\label{eqn:L-series}
L(\phi,s) := \sum_{\mathfrak m \subset \mathcal O_F} a_{\phi}(\mathfrak m) N_{F/\mathbf Q}(\mathfrak m)^{-s},
\end{equation}
where the sum $\mathfrak m$ runs over integral ideals of $F$ and $N_{F/\mathbf Q}(-)$ means the absolute norm. The series \eqref{eqn:L-series} converges absolutely for the real part of $s$ sufficiently large. Further, it admits analytic continuation to all $s \in \mathbf C$ as we now recall. 

Define $\Gamma_{\mathbf C}(s) = (2\pi)^{-s}\Gamma(s)$ and then complete $L(\phi,s)$ by defining
\begin{equation*}
\Lambda(\phi, s) := \Gamma_{\mathbf C}\left(s+{\kappa-w\over 2}\right) L(\phi,s) = \left(\prod_{\sigma \in \Sigma_F} \Gamma_{\mathbf{C}}\left(s +  {\kappa_\sigma - w \over 2}\right)\right)L(\phi,s).
\end{equation*}
We can also define the Mellin transform of $\phi$
\begin{equation}\label{eqn:mellin-transform}
\mathbf M(\phi,s) := \int_{F^\times \leftmod \mathbf A_F^\times} \phi\left(\begin{smallpmatrix} x & \\  & 1\end{smallpmatrix}\right) |x|^{s} d^{\times}x.
\end{equation}
The integral \eqref{eqn:mellin-transform} is absolutely convergent for all $s \in \mathbf C$ (\cite[Section 3.5]{Bump-AutomorphicForms}). Here, $d^\times x$ is the natural Haar measure on $\mathbf A_F^\times$:\ $d^{\times}x_\infty$ is the canonical measure $\prod_{\sigma} \frac{dx_\sigma}{  |x_\sigma|}$ on $F_\infty^\times$ and $d^{\times}x_v$ is the unique multiple of $\frac{dx_v}{|x_v|_v}$ on $F_v^\times$ such that $\mathcal{O}_v^{\times}$ has measure one.

Now write $\Delta_{F/\mathbf Q}$ for the absolute discriminant $\Delta_{F/\mathbf Q} = N_{F/\mathbf Q}(\mathcal D_{F/\mathbf Q})$. The analytic continuation of $\Lambda(\phi,s)$ follows from the proposition.
\begin{prop}\label{prop:integral-representation}
If $\phi \in S_\lambda(K_1(\mathfrak n))$, then $\mathbf{M}(\phi,s) = \Delta_{F/\mathbf Q}^{s+1}\Lambda(\phi,s+1)$.
\end{prop}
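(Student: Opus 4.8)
The plan is to compute the Mellin transform $\mathbf{M}(\phi,s)$ directly by plugging in the Fourier expansion from Proposition \ref{prop:fourier-expansion} and unfolding the integral over $F^\times \leftmod \mathbf A_F^\times$. First I would write
\begin{equation*}
\mathbf M(\phi,s) = \int_{F^\times \leftmod \mathbf A_F^\times} \phi\left(\begin{smallpmatrix} x & \\ & 1 \end{smallpmatrix}\right) |x|^s\, d^\times x
\end{equation*}
and substitute \eqref{eqn:fourier-expansion} with $y=0$, i.e. $\phi\left(\begin{smallpmatrix} x & \\ & 1\end{smallpmatrix}\right) = |x|_{\mathbf A_F} \sum_{\xi \in F_+^\times} \widetilde a_\phi(\xi x_f) W_\lambda(\xi x_\infty)$. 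The sum over $\xi \in F_+^\times$ is exactly an orbit sum for the action of $F^\times$ (note $F^\times$ acts on $\mathbf A_F^\times$ and the totally positive elements parametrize the relevant coset representatives once one is careful about the archimedean signs and the fact that $\phi$ is invariant under $F^\times$). So the standard unfolding trick collapses the quotient $F^\times \leftmod \mathbf A_F^\times$ against the $\xi$-sum, turning $\mathbf M(\phi,s)$ into a single integral over $\mathbf A_F^\times$ (or rather over $(F_\infty^\times)^\circ \times \mathbf A_{F,f}^\times$, since $W_\lambda$ forces the archimedean part to be totally positive up to the action) of $|x|_{\mathbf A_F}\, \widetilde a_\phi(x_f) W_\lambda(x_\infty) |x|^s$.

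Next I would factor the resulting integral as a product of an archimedean integral and a non-archimedean integral, using $d^\times x = d^\times x_\infty \prod_v d^\times x_v$ and the fact that $\widetilde a_\phi(x_f)$ depends only on the ideal $[x_f]$. The archimedean piece is $\int_{(F_\infty^\times)^\circ} W_\lambda(x_\infty)|x_\infty|^{s+1}\, d^\times x_\infty$, which factors over $\sigma \in \Sigma_F$ into Gamma integrals $\int_0^\infty t^{\kappa_\sigma - w\over 2} e^{-2\pi t}\, t^{s+1}\, \frac{dt}{t} = (2\pi)^{-(s+1+{\kappa_\sigma-w\over 2})}\Gamma(s+1+{\kappa_\sigma-w\over 2})= \Gamma_{\mathbf C}(s+1+{\kappa_\sigma-w\over 2})$; taking the product over $\sigma$ yields precisely the factor $\Gamma_{\mathbf C}(s+1+{\kappa-w\over 2})$ appearing in $\Lambda(\phi,s+1)$. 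The non-archimedean piece, after breaking $\mathbf A_{F,f}^\times/\widehat{\mathcal O}_F^\times$ into fractional ideals and using $a_\phi(\mathfrak m) = \widetilde a_\phi(x_f)$ when $\mathfrak m = [x_f]\mathcal D_{F/\mathbf Q}$ together with $\widetilde a_\phi(x_f) = 0$ unless $[x_f]\mathcal D_{F/\mathbf Q}$ is integral, becomes $\sum_{\mathfrak m} a_\phi(\mathfrak m) N_{F/\mathbf Q}(\mathfrak m\, \mathcal D_{F/\mathbf Q}^{-1})^{-(s+1)}= \Delta_{F/\mathbf Q}^{s+1} L(\phi, s+1)$, where the shift by the different ideal $\mathcal D_{F/\mathbf Q}$ produces exactly the discriminant power $\Delta_{F/\mathbf Q}^{s+1} = N_{F/\mathbf Q}(\mathcal D_{F/\mathbf Q})^{s+1}$. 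Multiplying the two pieces gives $\mathbf M(\phi,s) = \Delta_{F/\mathbf Q}^{s+1}\Gamma_{\mathbf C}(s+1+{\kappa-w\over 2}) L(\phi,s+1) = \Delta_{F/\mathbf Q}^{s+1}\Lambda(\phi,s+1)$, as desired. (Absolute convergence for all $s$, already recorded in Section \ref{subsec:L-functions} for $\mathbf M$, justifies all interchanges.)

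The main obstacle I anticipate is bookkeeping rather than conceptual: getting the normalizations exactly right — specifically, the precise relationship between the idele $x_f$, the fractional ideal $[x_f]$, and the integral ideal $\mathfrak m = [x_f]\mathcal D_{F/\mathbf Q}$ (hence the appearance of $\mathcal D_{F/\mathbf Q}$, and therefore of $\Delta_{F/\mathbf Q}$), together with the chosen Haar measure normalization (that $\mathcal O_v^\times$ has volume one, and the specific form of $d^\times x_\infty$), and making sure the $\xi$-orbit unfolding accounts correctly for the totally-positive condition and the $|x|_{\mathbf A_F}$ prefactor in \eqref{eqn:fourier-expansion}. Each of these is routine, but a sign or normalization slip would change the power of $\Delta_{F/\mathbf Q}$ or the shift in the Gamma factor, so I would carry out the archimedean and non-archimedean factorizations carefully and cross-check against the scalar case $F = \mathbf Q$.
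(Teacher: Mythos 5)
Your proposal is correct and follows essentially the same route as the paper's proof: unfold the Mellin integral against the Fourier expansion (the paper first passes from $F^\times\backslash\mathbf A_F^\times$ to $F_+^\times\backslash\mathbf A_{F,+}^\times$ by weak approximation, which is the clean way to handle the totally-positive bookkeeping you flag), then factor into the archimedean Gamma integrals and the non-archimedean sum over ideals, with the different ideal producing the $\Delta_{F/\mathbf Q}^{s+1}$. All of your normalization checks match the paper's computation.
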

We include a proof of this proposition for completeness, especially as this integral expression of the (completed) $L$-function is crucial for the algebraicity of the special values (see Section \ref{subsec:special-values}).
\begin{proof}[Proof of Proposition \ref{prop:integral-representation}]
By weak approximation, the integral \eqref{eqn:mellin-transform} is unchanged by replacing $F^\times \backslash \mathbf A_F^\times$ by $F_+^\times \backslash \mathbf A_{F,+}^\times$. Further, $x \mapsto \phi\left(\begin{smallpmatrix} x \\ & 1 \end{smallpmatrix}\right)|x|^s_{\mathbf A_F}$ is invariant under $x \mapsto \xi x$ for $\xi \in F^\times$. Thus, using the Fourier expansion (Proposition \ref{prop:fourier-expansion}) and unfolding the integral \eqref{eqn:mellin-transform}, we get
\begin{align}
\int_{F_+^\times \backslash \mathbf A_{F,+}^\times} \phi\left(\begin{pmatrix}x \\ & 1\end{pmatrix}\right)|x|_{\mathbf A_F}^s d^\times x & =\int_{F_+^\times \backslash \mathbf A_{F,+}^\times}  \left(\sum_{\xi \in F_+^\times} \tilde a_\phi(\xi x_f) |x|^{s+1}_{\mathbf A_F}W_\lambda(\xi x_\infty)\right) d^\times x \nonumber\\
&= \int_{\mathbf A_{F,+}^\times} \tilde a_\phi(x_f)|x|^{s+1}_{\mathbf A_F} W_\lambda(x_\infty) d^\times x\nonumber\\
&= \left(\int_{(F_\infty^\times)^{\circ}} x_\infty^{1+s+\kappa - w \over 2}e^{-2\pi x_\infty} {dx_\infty \over x_\infty}\right) \cdot \left(\int_{\mathbf A_{F,f}^\times} \tilde a_\phi(x_f) |x_f|^{s+1}_{\mathbf A_F} d^\times x_f\right).\label{eqn:split-product}
\end{align}
The first integral in the product \eqref{eqn:split-product} is clearly
\begin{align}
\int_{(F_\infty^\times)^{\circ}} x_\infty^{1+s+\kappa - w \over 2}e^{-2\pi x_\infty} {dx_\infty \over x_\infty} &= \prod_{\sigma \in \Sigma_F} \int_{0}^\infty \left({x_\sigma \over 2\pi}\right)^{1 + s + {\kappa_\sigma - w \over 2}}e^{-x_\sigma} {dx_\sigma\over x_\sigma}\nonumber\\
&= \Gamma_{\mathbf C}\left(1 + s + {\kappa - w \over 2}\right).\label{eqn:product1}
\end{align}
For the second integral in \eqref{eqn:split-product}, recall that $\tilde a_\phi(x_f)$ depends only on $[x_f]$ and is trivial unless $[x_f]\mathcal D_{F/\mathbf Q}$ is an integral ideal. Thus we may compute the integral
\begin{align}
\int_{\mathbf A_{F,f}^\times} \tilde a_\phi(x_f) |x_f|^{s+1} d^\times x_f &= \sum_{\mathfrak m \subset \mathcal O_F} a_\phi(\mathfrak m)\int_{\mathfrak m \mathcal D^{-1}_{F/\mathbf Q}\widehat{\mathcal O}_F^\times} |x_f|^{s+1}_{\mathbf A_F}d^\times x_f\nonumber\\
&= \Delta_{F/\mathbf Q}^{s+1}\sum_{\mathfrak m \subset \mathcal O_F} a_\phi(\mathfrak m) N_{F/\mathbf Q}(\mathfrak m)^{-(1+s)}.\label{eqn:product2}
\end{align}
For the final equality we used that $x_f \in \mathfrak m \mathcal D^{-1}_{F/\mathbf Q}\widehat{\mathcal O}_F^\times$ if and only if $|x_f|_{\mathbf A_F} = \Delta_{F/\mathbf Q}N_{F/\mathbf Q}(\mathfrak m)^{-1}$. Putting \eqref{eqn:product1} and \eqref{eqn:product2} back into \eqref{eqn:split-product}, the proof is complete.
\end{proof}

If $\phi$ is a normalized eigenform with central character $\omega_{\phi}$ (Proposition \ref{prop:hecke-eigensystems}), the Dirichlet series $L(\phi,s)$ admits an Euler product expansion $L(\phi,s) = \prod_{v} L_v(\phi,s)$, where
\begin{equation}\label{eqn:local-factors}
L_v(\phi,s)^{-1} = \begin{cases}
1 - a_{\phi}(\mathfrak p_v)q_v^{-s} + \omega_{\phi}(\varpi_v)q_v^{1-2s} & \text{(if $\mathfrak p_v \nmid \mathfrak n$);}\\
1 - a_{\phi}(\mathfrak p_v)q_v^{-s} & \text{(if $\mathfrak p_v \mid \mathfrak n$).}
\end{cases}
\end{equation}
See \cite[Section 5.12.1]{GetzGoresky}.  If, furthermore, $\phi = \phi_{\pi}$ is the newform associated to a cohomological cuspidal automorphic representation $\pi$ (Proposition \ref{prop:new-vector}) then this is the same as the Euler product expresssion
\begin{equation}\label{eqn:agreement-L-function}
L(\phi,s) = L(\pi,s) := \prod_{v} L_v(\pi_v,s)
\end{equation}
where the product runs over finite places $v$ of $F$ and the local $L$-factor $L_v(\pi_v,s)$ is defined to be
\begin{equation*}
L_v(\pi_v,s) := \det\left(1 - q_v^{-s}\Frob_v\big|_{r(\pi_v)^{I_v,N=0}}\right)^{-1}.
\end{equation*}
Here, $r(\pi_v)$ is Weil--Deligne representation associated to $\pi_v$ via the normalized local Langlands correspondence (see Section \ref{subsec:notation}), and $N$ is the monodromy operator acting on $r(\pi_v)$.

\subsection{Refinements}\label{subsec:refinements}
In this subsection we discuss the notion of ($p$-)refinements of cohomological cuspidal automorphic representations. Fix a cohomological weight $\lambda$. 

If $v$ is a finite place of $F$ and $\varpi_v$ is a choice of uniformizer then write $V_v^{-} = \begin{smallpmatrix} 1 \\ & \varpi_v \end{smallpmatrix}$. (As for why ``$-$'' is here, the opposite ordering of the diagonal will be denoted by $V_v^+$ in Section \ref{subsec:padic-eval-class}.) If $\phi \in S_\lambda(K)$, then the translate $V_v^{-} \phi$ belongs to $S_\lambda(V_v^{-} K (V_v^{-})^{-1})$ and explicitly depends on the choice of $\varpi_v$. Its independence of $\varpi_v$ can be shown if the level is prime to $v$.

\begin{lem}\label{lem:refinement-lemma}
Let $\mathfrak n$ be an integral ideal, $\phi \in S_\lambda(K_1(\mathfrak n))$, and assume that $\mathfrak p_v \nmid \mathfrak n$.
\begin{enumerate}
\item $V_v^- \phi$ belongs to $S_{\lambda}(K_1(\mathfrak n \mathfrak p_v))$ and it is independent of the choice of $\varpi_v$.
\item For any $c \in \mathbf C$, we have $a_{\phi}(\mathcal O) = a_{(1-c V_v^-)\phi}(\mathcal O)$. In particular, if $\phi$ is normalized then so is $(1-cV^-_v)\phi$.
\item $U_v V_v^{-} \phi = q_v S_v \phi$.
\item If $\mathfrak m$ is an integral ideal and $\mathfrak p_v \nmid \mathfrak m$, then $V_v^- T_{\mathfrak m} \phi = T_{\mathfrak m} V_v^- \phi$.
\end{enumerate}
\end{lem}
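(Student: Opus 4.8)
The four assertions are essentially formal manipulations with right translation and the Fourier expansion of Proposition~\ref{prop:fourier-expansion}, and I would prove them in that order. For~(1), in the translation notation of Section~\ref{subsec:hmf-3ways} one has $V_v^-\phi=\begin{smallpmatrix}1\\&\varpi_v\end{smallpmatrix}\cdot\phi$, which a priori lies in $S_\lambda\!\left(V_v^-K_1(\mathfrak n)(V_v^-)^{-1}\right)$, so it is enough to check the inclusion of compact open subgroups $K_1(\mathfrak n\mathfrak p_v)\subseteq V_v^-K_1(\mathfrak n)(V_v^-)^{-1}$. Away from $v$ the two level subgroups agree and $V_v^-$ is trivial, so this reduces to a local statement at $v$, where $K_1(\mathfrak n)_v=\GL_2(\mathcal O_v)$ (because $\mathfrak p_v\nmid\mathfrak n$) and a direct $2\times2$ computation shows that conjugating $\GL_2(\mathcal O_v)$ by $\begin{smallpmatrix}1\\&\varpi_v\end{smallpmatrix}$ produces a group containing $K_1(\mathfrak p_v)_v=K_1(\mathfrak n\mathfrak p_v)_v$. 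Independence of the choice of $\varpi_v$ is then immediate: if $\varpi_v'=u\varpi_v$ with $u\in\mathcal O_v^\times$ then, since diagonal matrices commute, $\phi\!\left(g\begin{smallpmatrix}1\\&u\varpi_v\end{smallpmatrix}\right)=\phi\!\left(g\begin{smallpmatrix}1\\&\varpi_v\end{smallpmatrix}\begin{smallpmatrix}1\\&u\end{smallpmatrix}\right)$, and $\begin{smallpmatrix}1\\&u\end{smallpmatrix}\in K_1(\mathfrak n)$ (there is no congruence condition at $v$), so right $K_1(\mathfrak n)$-invariance of $\phi$ finishes it.

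For~(2), which is the only part with real content, by linearity of $\phi\mapsto a_\phi(\mathcal O)$ (Remark~\ref{rmk:fourier-linearity}) it suffices to prove $a_{V_v^-\phi}(\mathcal O)=0$. Write $z=\begin{smallpmatrix}\varpi_v\\&\varpi_v\end{smallpmatrix}\in\GL_2(\mathbf A_{F,f})$, a central element supported at $v$, and set $\psi:=z\cdot\phi$; since $z$ normalizes $K_1(\mathfrak n)$ one has $\psi\in S_\lambda(K_1(\mathfrak n))$. The identity $\begin{smallpmatrix}x&y\varpi_v\\&\varpi_v\end{smallpmatrix}=\begin{smallpmatrix}x\varpi_v^{-1}&y\\&1\end{smallpmatrix}z$ gives $V_v^-\phi\!\left(\begin{smallpmatrix}x&y\\&1\end{smallpmatrix}\right)=\psi\!\left(\begin{smallpmatrix}x\varpi_v^{-1}&y\\&1\end{smallpmatrix}\right)$. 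Expanding both sides by Proposition~\ref{prop:fourier-expansion} — the archimedean argument is unchanged, while the finite-idele argument is multiplied by $\varpi_v^{-1}$ and the idelic norm by $q_v$ — and invoking the uniqueness of the Fourier coefficients, one obtains $\widetilde a_{V_v^-\phi}(\xi x_f)=q_v\,\widetilde a_\psi(\xi x_f\varpi_v^{-1})$. Taking $\xi x_f$ with $[\xi x_f]\mathcal D_{F/\mathbf Q}=\mathcal O$, the ideal $[\xi x_f\varpi_v^{-1}]\mathcal D_{F/\mathbf Q}=\mathfrak p_v^{-1}$ fails to be integral, so $\widetilde a_\psi(\xi x_f\varpi_v^{-1})=0$ by the final clause of Proposition~\ref{prop:fourier-expansion}; hence $a_{V_v^-\phi}(\mathcal O)=0$, and the ``in particular'' follows at once.

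For~(3): by~(1), $V_v^-\phi$ has level $K_1(\mathfrak n\mathfrak p_v)$, so $\mathfrak p_v$ divides the level and formula~\eqref{eqn:Uv-summation} applies:
\begin{equation*}
(U_vV_v^-\phi)(g)=\sum_{a\in\mathcal O_v/\varpi_v\mathcal O_v}\phi\!\left(g\begin{smallpmatrix}\varpi_v&a\varpi_v\\&\varpi_v\end{smallpmatrix}\right)=\sum_{a\in\mathcal O_v/\varpi_v\mathcal O_v}\phi\!\left(g\begin{smallpmatrix}\varpi_v\\&\varpi_v\end{smallpmatrix}\begin{smallpmatrix}1&a\\&1\end{smallpmatrix}\right).
\end{equation*}
Since $\begin{smallpmatrix}1&a\\&1\end{smallpmatrix}\in K_1(\mathfrak n)$ and $\phi$ is right $K_1(\mathfrak n)$-invariant, every summand equals $\phi\!\left(g\begin{smallpmatrix}\varpi_v\\&\varpi_v\end{smallpmatrix}\right)$, and there are $q_v$ of them; moreover, $\begin{smallpmatrix}\varpi_v\\&\varpi_v\end{smallpmatrix}$ being central, $K_1(\mathfrak n)\begin{smallpmatrix}\varpi_v\\&\varpi_v\end{smallpmatrix}K_1(\mathfrak n)$ is a single right coset, so $g\mapsto\phi\!\left(g\begin{smallpmatrix}\varpi_v\\&\varpi_v\end{smallpmatrix}\right)$ is exactly $S_v\phi$, whence $U_vV_v^-\phi=q_vS_v\phi$. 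For~(4): by Remark~\ref{rmk:well-posed-locally-calculated} choose right-coset representatives $\{x_i\}$ for $T_{\mathfrak m}$ supported at the places dividing $\mathfrak m$; since $\mathfrak m$ is prime to $\mathfrak n\mathfrak p_v$, the same $\{x_i\}$ compute $T_{\mathfrak m}$ on both $S_\lambda(K_1(\mathfrak n))$ and $S_\lambda(K_1(\mathfrak n\mathfrak p_v))$, and each $x_i$ is trivial at $v$, hence commutes with $V_v^-=\begin{smallpmatrix}1\\&\varpi_v\end{smallpmatrix}$. Therefore $(V_v^-T_{\mathfrak m}\phi)(g)=\sum_i\phi(gV_v^-x_i)=\sum_i\phi(gx_iV_v^-)=(T_{\mathfrak m}V_v^-\phi)(g)$.

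The only genuine difficulty is the computation in~(2): everything there hinges on seeing that right translation by $V_v^-$ shifts the finite-idele argument of the Fourier coefficient by $\varpi_v^{-1}$, moving the associated fractional ideal out of the integral range, which forces the vanishing; the remaining work in all four parts is the careful bookkeeping of which compact-open level each translate inhabits and at which level the quoted formulas are being applied.
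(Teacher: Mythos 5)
Your proof is correct and follows essentially the same route as the paper's: part (1) via the inclusion $K_1(\mathfrak n\mathfrak p_v)\subset V_v^-K_1(\mathfrak n)(V_v^-)^{-1}$ and the unit diagonal matrices, part (2) via the commutation identity $\begin{smallpmatrix} x & y \\ & 1 \end{smallpmatrix}\begin{smallpmatrix} 1 & \\ & \varpi_v \end{smallpmatrix}=\begin{smallpmatrix} x\varpi_v^{-1} & y \\ & 1\end{smallpmatrix}\begin{smallpmatrix} \varpi_v & \\ & \varpi_v \end{smallpmatrix}$ forcing the shifted Fourier coefficient out of the integral range (your factor $q_v$ is the paper's $|\varpi_v^{-1}|_{\mathbf A_F}$), and parts (3)--(4) by the same coset manipulations. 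No gaps.
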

\begin{proof}
Since $\mathfrak p_v \nmid \mathfrak n$,  $\begin{smallpmatrix} 1 \\ & \mathcal O_v^\times \end{smallpmatrix} \subset K_1(\mathfrak n)$ and thus $V_v^-\phi$ is independent of the choice of $\varpi_v$. That it is an automorphic form of level $K_1(\mathfrak n \mathfrak p_v)$ follows from the straightforward inclusion $K_1(\mathfrak n \mathfrak p_v) \subset V_v^- K_1(\mathfrak n) (V_v^-)^{-1}$. This completes the proof of (1).

We will check (2) using Fourier expansions. As mentioned in Remark \ref{rmk:fourier-linearity}, $\phi \mapsto a_{\phi}(\mathfrak m)$ is linear. So, it suffices to show that $a_{V_v^-\phi}(\mathcal O_F) = 0$. To this end, we note the relation
\begin{equation}\label{eqn:commutation}
\begin{smallpmatrix} x & y \\ 0 & 1 \end{smallpmatrix} \begin{smallpmatrix} 1 & 0 \\ 0 & \varpi_v \end{smallpmatrix} = \begin{smallpmatrix} x\varpi_v^{-1} & y \\ 0 & 1\end{smallpmatrix}\begin{smallpmatrix} \varpi_v & 0 \\ 0 & \varpi_v \end{smallpmatrix}.
\end{equation}
By \eqref{eqn:commutation} and Proposition \ref{prop:fourier-expansion} we deduce that
\begin{equation}\label{eqn:fourier-shift}
\widetilde a_{V_v^-\phi}(\xi x_f) = |\varpi_v^{-1}|_{\mathbf A_F} \widetilde a_{S_v \phi}(\xi x_f \varpi_v^{-1}).
\end{equation}
In particular, if $\xi$ and $x_f$ are chosen so that $[\xi x_f]\mathcal D_{F/\mathbf Q} = \mathcal O_F$ then certainly $[\xi x_f \varpi_v^{-1}]\mathcal D_{F/\mathbf Q}$ is not an integral ideal. But then the quantity \eqref{eqn:fourier-shift} vanishes by Proposition \ref{prop:fourier-expansion}, completing the proof of (2).

For part (3), we have already checked in part (1) that $V_v^-\phi \in S_{\lambda}(K_1(\mathfrak n \mathfrak p_v))$. Thus by Remark \ref{rmk:U-operator-notation} and \eqref{eqn:commutation} we get 
\begin{equation}\label{eqn:UvVv-}
(U_v V_v^- \phi)(g) = \sum_{a \in \mathcal O_v / \varpi_v \mathcal O_v} \phi\left(g \begin{smallpmatrix} \varpi_v & a \\ & 1 \end{smallpmatrix}\begin{smallpmatrix} 1\\ & \varpi_v \end{smallpmatrix} \right) = \sum_{a \in \mathcal O_v /\varpi_v \mathcal O_v} \phi(g\begin{smallpmatrix} 1 & a \\ & 1 \end{smallpmatrix}\begin{smallpmatrix} \varpi_v\\ & \varpi_v \end{smallpmatrix}).
\end{equation}
The $a$-th term in the sum \eqref{eqn:UvVv-} is equal to $(S_v\phi)(g\begin{smallpmatrix}1 & a \\ & 1 \end{smallpmatrix})$ which equals $(S_v\phi)(g)$ because $\begin{smallpmatrix}1 & a \\ & 1 \end{smallpmatrix} \in K_1(\mathfrak n)$ and $S_v\phi \in S_\lambda(K_1(\mathfrak n))$. Thus from \eqref{eqn:UvVv-} we get
\begin{equation*}
(U_vV_v^-\phi)(g) = \sum_{a \in \mathcal O_v/\varpi_v\mathcal O_v} (S_v\phi)(g) = (q_v S_v \phi)(g),
\end{equation*}
as promised.

Part (4) is clear. Indeed, the matrices involved in the definition of $T_{\mathfrak m}$ are the identity at $v$ because $\mathfrak p_v \nmid \mathfrak m$ (Remark \ref{rmk:well-posed-locally-calculated}), so they obviously commute with the action of $V_v^-$.
\end{proof}

For the rest of this subsection, we fix a cohomological cuspidal automorphic representation $\pi$ and a prime $p$. We write $\mathfrak n$ for the conductor of $\pi$ (not necessarily prime to $p$) and assume that $\pi$ has weight $\lambda$.
\begin{defn}\label{defn:refinement-intext}\label{defn:p-refined}
\leavevmode
\begin{enumerate}
\item $\pi$ is called $p$-refinable if for each place $v \mid p$, $\pi_v$ is either an unramified principal series representation or an unramified twist of the Steinberg.\footnote{There is a more general notion of $\pi$ being ``finite slope'' at $p$ (we will not use it). Specifically one could say that $\pi$ is finite slope at $p$ provided the smooth $\GL_2(F_v)$-representation $\pi_v$ has non-zero Jacquet module $(\pi_v)_{N_v}$ for all $v \mid p$ (\cite[Section 3.2]{Casselman-padicReductiveGroups}). It follows from Frobenius reciprocity that a $p$-refinement as in Definition \ref{defn:refinement-intext}(2) is the equivalent to an eigenspace for the torus action on $(\pi_v)_{N_v}$.}
\item If $\pi$ is $p$-refinable, then a $p$-refinement $\alpha$ for $\pi$ is the choice of $\alpha = (\alpha_v)_{v \mid p}$ of one of the following equivalent data.
\begin{enumerate}
\item For each $v$ where $\pi_v$ is an unramified principal series,  $\alpha_v$ is  a root of $X^2 - a_\pi(\mathfrak p_v)X + \omega_\pi(\varpi_v)q_v$, and for each $v$ where $\pi_v$ is Steinberg, $\alpha_v = a_{\pi}(\mathfrak p_v)$.
\item $\alpha_v = \chi_v(\varpi_v)$ where $\chi_v$ is the choice of smooth character $\chi_v : F_v^\times \rightarrow \mathbf C^\times$ such that $\chi_v \circ \Art_{F_v}^{-1}$ is a subrepresentation of $r(\pi_v)$.
\end{enumerate}
\item If $\alpha$ is a $p$-refinement of $\pi$, then the associated refined eigenform is
\begin{equation*}
\phi_{\pi,\alpha} := \prod_{\substack{v \mid p\\ \mathfrak p_v \nmid \mathfrak n}} (1 - \alpha_v^{-1}V_v^-)\cdot \phi_{\pi},
\end{equation*}
where $\phi_\pi\in S_\lambda(K_1(\mathfrak n))$ is newform associated with $\pi$ (see Proposition \ref{prop:new-vector}).
\end{enumerate}
\end{defn}
The equivalence in parts (a) and (b) of Definition \ref{defn:refinement-intext}(2) is the same unwinding of definitions that goes into \eqref{eqn:agreement-L-function}. As a matter of course, we will often abuse language and simply say things like ``Let $\alpha$ be a $p$-refinement for $\pi$...''\; by which we mean ``Assume that $\pi$ is $p$-refineable and that $\alpha$ is a $p$-refinement for $\pi$...'' (we already did this in part (3) of Definition \ref{defn:p-refined} for instance).

\begin{rmk}\label{rmk:steinberg-situation}
We stress that if $v \mid p$ and $\mathfrak p_v \mid \mathfrak n$ then $\pi_v$ is necessarily a Steinberg representation, so $\alpha_v = a_{\pi}(\mathfrak p_v)$ already, and $\mathfrak p_v^2 \nmid \mathfrak n$.
\end{rmk}

Recall that we write $\mathbf p = \prod_{v \mid p} \mathfrak p_v$ for the product of the primes above $p$ in $F$.
\begin{prop}\label{prop:refined-eigenform}
Let $\alpha$ be a $p$-refinement for $\pi$.
\begin{enumerate}
\item $\phi_{\pi,\alpha} \in S_{\lambda}(K_1(\mathfrak n \cap \mathbf p))$
\item $\phi_{\pi,\alpha}$ is a normalized eigenform which generates the representation $\pi$ under \eqref{eqn:auto-rep-v-form} and the Fourier coefficients/Hecke eigenvalues of $\phi_{\pi,\alpha}$ are given by 
\begin{equation*}
 a_{\phi_{\pi,\alpha}}( \mathfrak{p}_{v}^j) = \begin{cases} a_{\pi}(\mathfrak{p}_{v}^j) & \text{if $v \nmid p$;}\\ \alpha_{v}^j &\text{if $v \mid p$}.
 \end{cases}
 \end{equation*}
 In particular, $U_v(\phi_{\pi,\alpha}) = \alpha_v \phi_{\pi,\alpha}$ for each $v \mid p$.
\end{enumerate}
\end{prop}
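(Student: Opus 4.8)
The plan is to verify both parts simultaneously by reducing everything to a ``local at $p$'' calculation, exactly in the spirit of Remark \ref{rmk:well-posed-locally-calculated}. First, recall that $\phi_{\pi,\alpha} = \prod_{v \mid p,\, \mathfrak p_v \nmid \mathfrak n} (1 - \alpha_v^{-1} V_v^-) \phi_\pi$, and that each factor $(1-\alpha_v^{-1}V_v^-)$ commutes with the others (they involve matrices supported at distinct places) and with $T_{\mathfrak m}$ for $\mathfrak p_v \nmid \mathfrak m$ by Lemma \ref{lem:refinement-lemma}(4). For part (1), I would argue place-by-place: by Lemma \ref{lem:refinement-lemma}(1), applying $(1-\alpha_v^{-1}V_v^-)$ to a form of level $K_1(\mathfrak m)$ with $\mathfrak p_v \nmid \mathfrak m$ produces a form of level $K_1(\mathfrak m \mathfrak p_v)$. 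Starting from $\phi_\pi \in S_\lambda(K_1(\mathfrak n))$ and applying the operators for each $v \mid p$ with $\mathfrak p_v \nmid \mathfrak n$, we land in $S_\lambda(K_1(\mathfrak n'))$ where $\mathfrak n'$ is $\mathfrak n$ multiplied by exactly those $\mathfrak p_v$ with $v \mid p$, $\mathfrak p_v \nmid \mathfrak n$; for the $v \mid p$ with $\mathfrak p_v \mid \mathfrak n$ we have $\mathfrak p_v \| \mathfrak n$ (Remark \ref{rmk:steinberg-situation}), so $\mathfrak n' = \mathfrak n \cap \mathbf p$. Here one uses that $\mathfrak n$ and $\mathbf p$ share exactly the primes $\mathfrak p_v$ with $v \mid p$ and $\mathfrak p_v \mid \mathfrak n$, and those divide $\mathfrak n$ to the first power.

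For normalization and the Fourier-coefficient/eigenvalue formulas in part (2): normalization $a_{\phi_{\pi,\alpha}}(\mathcal O_F) = 1$ is immediate from Lemma \ref{lem:refinement-lemma}(2) applied once for each factor $(1-\alpha_v^{-1}V_v^-)$, since $a_{\phi_\pi}(\mathcal O_F) = 1$. That $\phi_{\pi,\alpha}$ generates $\pi$ under \eqref{eqn:auto-rep-v-form} follows because it is obtained from $\phi_\pi$ by applying translation operators $V_v^-$, which act inside the $\GL_2(\mathbf A_F)$-span of $\phi_\pi$, i.e.\ inside $\pi$; and it is nonzero by normalization. For the Hecke eigenvalues away from $p$: for $v \nmid p$, $U_v$ or $T_v$ commutes with every factor $(1-\alpha_w^{-1}V_w^-)$ (Lemma \ref{lem:refinement-lemma}(4), as $\mathfrak p_w \nmid \mathfrak p_v$), so $\phi_{\pi,\alpha}$ inherits the $T_{\mathfrak p_v}$-eigenvalue $a_\pi(\mathfrak p_v)$ from $\phi_\pi$, and then $a_{\phi_{\pi,\alpha}}(\mathfrak p_v^j) = \psi_{\phi_{\pi,\alpha}}(T_{\mathfrak p_v^j}) = \psi_{\phi_{\pi,\alpha}}(T_{\mathfrak p_v})^j = a_\pi(\mathfrak p_v^j)$ by Proposition \ref{prop:hecke-eigensystems}(1) together with multiplicativity of the $T_{\mathfrak p_v^j}$.

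The crux is the $U_v$-eigenvalue computation at $v \mid p$, which I expect to be the main obstacle and which splits into two cases. When $\mathfrak p_v \mid \mathfrak n$ (Steinberg case), the refining factor at $v$ is absent, so $\phi_{\pi,\alpha}$ is already a $U_v$-eigenvector on $\phi_\pi$ with eigenvalue $a_\pi(\mathfrak p_v) = \alpha_v$ by definition, giving $a_{\phi_{\pi,\alpha}}(\mathfrak p_v^j) = \alpha_v^j$. When $\mathfrak p_v \nmid \mathfrak n$, one computes directly: write $\phi' = \prod_{w \neq v} (1-\alpha_w^{-1}V_w^-)\phi_\pi$, so $\phi_{\pi,\alpha} = (1-\alpha_v^{-1}V_v^-)\phi'$ and $\phi'$ is a $T_v$-eigenform with eigenvalue $a_\pi(\mathfrak p_v)$ and an $S_v$-eigenform with eigenvalue $\omega_\pi(\varpi_v)$ of level prime to $v$. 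Using Remark \ref{rmk:Tv-formula}, $T_v = U_v + V_v^-$ on forms of level prime to $v$, so $U_v \phi' = (a_\pi(\mathfrak p_v) - V_v^-)\phi'$; combining with Lemma \ref{lem:refinement-lemma}(3), $U_v V_v^- \phi' = q_v S_v \phi' = q_v \omega_\pi(\varpi_v)\phi'$. Then
\begin{align*}
U_v \phi_{\pi,\alpha} &= U_v(1-\alpha_v^{-1}V_v^-)\phi' = (a_\pi(\mathfrak p_v) - V_v^-)\phi' - \alpha_v^{-1} q_v \omega_\pi(\varpi_v)\phi' \\
&= \bigl(a_\pi(\mathfrak p_v) - \alpha_v^{-1}q_v\omega_\pi(\varpi_v)\bigr)\phi' - V_v^- \phi'.
\end{align*}
Since $\alpha_v$ is a root of $X^2 - a_\pi(\mathfrak p_v)X + \omega_\pi(\varpi_v)q_v$, we have $a_\pi(\mathfrak p_v) - \alpha_v^{-1}q_v\omega_\pi(\varpi_v) = \alpha_v$, so $U_v\phi_{\pi,\alpha} = \alpha_v\phi' - V_v^-\phi' = \alpha_v(1 - \alpha_v^{-1}V_v^-)\phi' = \alpha_v \phi_{\pi,\alpha}$. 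Hence $\phi_{\pi,\alpha}$ is a $U_v$-eigenform with eigenvalue $\alpha_v$, and since $U_{\mathfrak p_v^j} = U_v^j$ (Remark \ref{rmk:U-operator-notation}) and $a_{\phi_{\pi,\alpha}}(\mathfrak p_v^j) = a_{U_{\mathfrak p_v^j}\phi_{\pi,\alpha}}(\mathcal O_F) = \alpha_v^j a_{\phi_{\pi,\alpha}}(\mathcal O_F) = \alpha_v^j$ (Remark \ref{rmk:fourier-linearity}), the formula follows. Finally, being an eigenform for $T_v$ ($v \nmid p$) and $U_v$ ($v \mid p$) and $S_v$, $\phi_{\pi,\alpha}$ is an eigenform for all of $\mathbf T_{\mathbf C}(K_1(\mathfrak n \cap \mathbf p))$.
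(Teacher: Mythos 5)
Your proof is correct and follows essentially the same route as the paper: Lemma \ref{lem:refinement-lemma}(1)--(2) for the level and normalization, the translate argument for generation, and the identity $T_v = U_v + V_v^-$ combined with $U_vV_v^- = q_vS_v$ and the Hecke-polynomial relation $a_\pi(\mathfrak p_v) - \alpha_v^{-1}q_v\omega_\pi(\varpi_v) = \alpha_v$ for the $U_v$-eigenvalue. You are somewhat more explicit than the paper about commuting the refining factors at the other places (via your $\phi'$) and about deducing $a_{\phi_{\pi,\alpha}}(\mathfrak p_v^j) = \alpha_v^j$ for general $j$, but these are elaborations, not a different method.
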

\begin{proof} 
The fact that $\phi_{\pi,\alpha}$ lies in $S_{\lambda}(K_1(\mathfrak n \cap \mathbf p))$ and is normalized (thus non-zero!) follows from repeated uses of parts (1) and (2) in Lemma \ref{lem:refinement-lemma}. Since $\phi_{\pi,\alpha}$ is a $\GL_2(\mathbf A_{F,f})$-translate of $\phi_\pi$, it lies in $\pi$ under \eqref{eqn:auto-rep-v-form} and thus generates $\pi$ since $\pi$ is irreducible and $\phi_{\pi,\alpha}$ is non-zero. This proves parts (1) and the normalized portion of part (2).

It remains to check that $\phi_{\pi,\alpha}$ is an eigenform with the prescribed Hecke eigensystem. For that, it is enough to show that $\phi_{\pi,\alpha}$ is a $U_v$-eigenvector with eigenvalue $\alpha_v$ when $v \mid p$ and $\mathfrak p_v \nmid \mathfrak n$ (by Lemma \ref{lem:refinement-lemma}(4) and the end of Remark \ref{rmk:U-operator-notation}). So, fix $v \mid p$ and $\mathfrak p_v \nmid \mathfrak n$. Then, $\alpha_v$ is a root of $X^2 - a_\pi(\mathfrak p_v) X + \omega_\pi(\varpi_v)q_v$. Write $\beta_v = a_\pi(\mathfrak p_v) - \alpha_v = \alpha_v^{-1}\omega_\pi(\varpi_v)q_v$ for the other root. Then,
\begin{equation}\label{Uv-give-beta}
U_v(1-\alpha_v^{-1}V_v^-)\phi_\pi = U_v\phi_\pi - \beta_v\phi_\pi
\end{equation}
by Lemma \ref{lem:refinement-lemma}(3). Since the operator $T_v$ on $S_{\lambda}(K_1(\mathfrak n))$ decomposes into a sum $T_v = U_v + V_v^-$ (Remark \ref{rmk:Tv-formula}) we can continue \eqref{Uv-give-beta} and get
\begin{multline*}
U_v(1-\alpha_v^{-1}V_v^-)\phi_{\pi} = U_v \phi_\pi  - \beta_v \phi_\pi = (T_v - V_v^-)\phi_\pi - \beta_v \phi_\pi
= a_\pi(\mathfrak p_v)\phi_\pi - V_v^-\phi_\pi - \beta_v \phi_\pi\\ = (\alpha_v - V_v^-)\phi_\pi.
\end{multline*}
Thus, $(1-\alpha_v^{-1}V_v)\phi_\pi$ is a $U_v$-eigenvector with eigenvalue $\alpha_v$, completing the proof.
\end{proof}

\section{Algebraicity of special values}\label{sec:shimuras-theorem}
\subsection{Archimedean Hecke operators}\label{subsec:archimedean}
We denote by $K$ any compact open subgroup of $\GL_2(\mathbf A_{F,f})$ and $N$ any $(\GL_2^+(F), K)$-bimodule with a left action of a monoid $\Delta \subset \GL_2(\mathbf A_{F,f})$ as in Section \ref{subsec:adelic-cochains}. Write $\pi_0(F_\infty^\times) = F_\infty^\times/(F_{\infty}^\times)^{\circ}$ for the component group of $F_\infty^\times$. There is a natural isomorphism  $\widehat{\pi_0(F_\infty^\times)} \simeq \{\pm 1\}^{\Sigma_F}$ where $\widehat{\pi_0(F_\infty^\times)}$ is the character group of $\pi_0(F_\infty^\times)$. So, we will often confuse signs $\epsilon \in \{\pm 1\}^{\Sigma_F}$ with the corresponding character of $\pi_0(F_\infty^\times)$. On the other hand, the function $\sgn : F_\infty^\times \rightarrow \{\pm 1\}^{\Sigma_F}$ defines a section $\pi_0(F_\infty^\times) \hookrightarrow F_\infty^\times$ of the natural quotient map. We fix this identification. By doing so, we may consider the  double coset operator $T_\zeta = [K_{\infty}^\circ \begin{smallpmatrix} \zeta \\ & 1\end{smallpmatrix} K_{\infty}^{\circ}]$ acting on the cohomology $H^\ast_c(Y_K,N)$ (trivially on $N$). Since $\begin{smallpmatrix} \zeta \\ & 1\end{smallpmatrix}$ normalizes $K_\infty^{\circ}$, this operator is just pullback under right multiplication by $\begin{smallpmatrix} \zeta \\ & 1\end{smallpmatrix}$. Since $\begin{smallpmatrix} \zeta \\ & 1 \end{smallpmatrix} \subset \GL_2(F_\infty)$, $T_\zeta$ obviously commutes with any Hecke action arising from elements of $\Delta \subset \GL_2(\mathbf A_{F,f})$. Further, if $\zeta,\zeta' \in \pi_0(F_\infty^\times)$, then $T_{\zeta}T_{\zeta'} = T_{\zeta\zeta'}$. In particular $T_{\zeta}$ commutes with $T_{\zeta'}$ and $T_\zeta^2 = 1$. Thus $T_{\zeta}$ has only eigenvalues $\pm 1$. If $2$ acts invertibly on $N$, then for each $\epsilon \in \{\pm 1\}^{\Sigma_F}$ we define 
\begin{equation*}
\pr^{\epsilon} = {1\over 2^d} \sum_{\zeta \in \pi_0(F_\infty^\times)} \epsilon(\zeta)T_{\zeta}
\end{equation*}
as an endomorphism of $H^\ast_c(Y_K,N)$. It is an idempotent projector mapping onto
\begin{equation*}
H^\ast_c(Y_K,N)^{\epsilon} = \{ v \in H^\ast_c(Y_K,N) \mid T_{\zeta}(v) = \epsilon(\zeta) v \text{ for all $\zeta \in \pi_0(F_\infty^\times)$}\}.
\end{equation*}

\subsection{The Eichler--Shimura construction}\label{subsec:eichler-shimura}
We now recall a transcendental construction associating a certain differential form to a holomorphic Hilbert modular form. Throughout this subsection we fix a cohomological weight $\lambda$.

Recall that $D_\infty=\mathfrak h^{\Sigma_F}$. Denote by $\Omega^d(D_\infty)$ the space of $\mathbf C$-valued smooth differential forms on $D_\infty$ (as a real manifold). For $z = (z_{\sigma})$ the canonical coordinate on $D_\infty$, we define $dz := \wedge_{\sigma} dz_{\sigma} \in \Omega^d(D_\infty)$. Here we have to choose an ordering of $\Sigma_F$, technically, and so we do that by insisting that $dz$ restricts to $dx_\infty/x_\infty$ along $(F_\infty^\times)^{\circ} \hookrightarrow D_\infty$ (see Section \ref{subsec:sym-spaces}). Before the next lemma, we remind ourselves that $\GL_2^+(F)$ acts on both $D_\infty$ and the algebraic local system $\mathscr L_\lambda(\mathbf C)$ defined in Section \ref{subsec:weights}.

\begin{lem}\label{eqn:polynomial-invariance}
If $z \in D_\infty$ and $P_z \in \mathscr L_\lambda(\mathbf C)$ is defined by $P_z = (z+X)^{\kappa}$, then
\begin{equation*}
P_{\gamma(z)} = (\det \gamma)^{\kappa-w\over 2}j(\gamma,z)^{\kappa} (\gamma \cdot P_z)
\end{equation*}
for all $\gamma \in \GL_2^+(F)$.
\end{lem}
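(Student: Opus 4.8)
The statement is a direct computation tracing through the definition of the weight-$\lambda$ action on $\mathscr L_\lambda(\mathbf C)$ from \eqref{eqn:polynomial-action} and \eqref{eqn:ell-lambda}, combined with the fractional linear action \eqref{eqn:mobius-action} on $D_\infty$ and the automorphy factor $j(\gamma,z) = (c_\sigma z_\sigma + d_\sigma)_\sigma$. The plan is to reduce to the case $d = 1$ (i.e.\ $F = \mathbf Q$), since both sides factor as a tensor product over $\sigma \in \Sigma_F$ and the claimed identity is the $\sigma$-by-$\sigma$ product of the one-variable identities (the determinant powers $(\det\gamma)^{(\kappa_\sigma - w)/2}$ and the factors $j(\gamma,z)^{\kappa_\sigma}$ multiply out correctly). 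So it suffices to prove: for $\gamma = \begin{smallpmatrix} a & b \\ c & d\end{smallpmatrix} \in \GL_2^+(\mathbf R)$ acting on $\mathscr L_n(\mathbf C) \otimes \det^{(w-n)/2}$, one has $P_{\gamma z} = (\det\gamma)^{(n-w)/2} (cz+d)^n \cdot(\gamma\cdot P_z)$, where $P_z = (z+X)^n$.

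First I would compute $\gamma \cdot P_z$ directly from \eqref{eqn:polynomial-action}: the polynomial part contributes $(a + cX)^n P_z\!\left(\tfrac{b+dX}{a+cX}\right) = (a+cX)^n\left(z + \tfrac{b+dX}{a+cX}\right)^n = \big((a + cz)X' \text{ terms}\big)$; expanding, $(a+cX)(z) + (b + dX) = (az + b) + (cz + d)X$, so $\gamma\cdot P_z = \big((az+b) + (cz+d)X\big)^n = (cz+d)^n\big(\tfrac{az+b}{cz+d} + X\big)^n = (cz+d)^n (\gamma(z) + X)^n = (cz+d)^n P_{\gamma(z)}$ — but this is only the $\mathscr L_n$ factor. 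The $\det^{(w-n)/2}$ twist means the full action multiplies this by $(\det\gamma)^{(w-n)/2}$. Rearranging, $P_{\gamma(z)} = (\det\gamma)^{(\kappa-w)/2} (cz+d)^n (\gamma\cdot P_z)$, which in the tuple notation of Section \ref{subsec:notation} is exactly the claimed formula once the factors are assembled over all $\sigma$.

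The one genuine point requiring care — really the only obstacle — is bookkeeping the ordering and sign conventions: one must check that the chosen ordering of $\Sigma_F$ used to define $dz$ and the tuple exponents $j(\gamma,z)^\kappa$, $(\det\gamma)^{(\kappa-w)/2}$ is consistent, and that $\GL_2^+(F) \hookrightarrow \GL_2(F_\infty)^\circ$ acts diagonally so each $\sigma$-component sees $\gamma$ via $\sigma$. Since the action in \eqref{eqn:polynomial-action}–\eqref{eqn:ell-lambda} is defined tensorand-by-tensorand and $j(\gamma,z)$, $\det\gamma$ are defined component-wise, this consistency is built in, and the verification is purely formal. I would therefore present the proof as: (i) reduce to a single embedding by multiplicativity of all terms over $\Sigma_F$; (ii) carry out the displayed one-variable expansion $(a+cX)^n(z + \tfrac{b+dX}{a+cX})^n = ((az+b)+(cz+d)X)^n$; (iii) absorb the determinant twist and reassemble over $\Sigma_F$.
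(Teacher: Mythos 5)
Your reduction to a single embedding and the core expansion are exactly right, and indeed constitute the whole content of the lemma (the paper's own proof is just ``Clear''). The problem is your final ``rearranging'' step, which is an algebra error. Your displayed computation correctly establishes
\begin{equation*}
\gamma\cdot P_z \;=\; (\det\gamma)^{\frac{w-\kappa}{2}}\, j(\gamma,z)^{\kappa}\, P_{\gamma(z)},
\end{equation*}
and solving for $P_{\gamma(z)}$ requires dividing by $j(\gamma,z)^{\kappa}$, giving
\begin{equation*}
P_{\gamma(z)} \;=\; (\det\gamma)^{\frac{\kappa-w}{2}}\, j(\gamma,z)^{-\kappa}\,(\gamma\cdot P_z),
\end{equation*}
with exponent $-\kappa$, not $+\kappa$ as you wrote. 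You cannot obtain the statement as printed from your (correct) intermediate identity; the sign of the $j(\gamma,z)$ exponent flips under the rearrangement, and your write-up silently suppresses this.

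What has actually happened is that the lemma as printed appears to carry a typo in the exponent of $j(\gamma,z)$, and your incorrect rearrangement compensates for it. You can confirm that $j(\gamma,z)^{-\kappa}$ is the intended and needed version by looking at how the lemma is used in the proof of Proposition \ref{prop:eichler-shimura}: there one combines $d(\gamma z)=\det(\gamma)\,j(\gamma,z)^{-2}\,dz$ with the transformation law \eqref{eqn:holo-transform} to get $\omega_{\mathbf f}(\gamma z,\gamma g_f)=\det(\gamma)^{\frac{w-\kappa}{2}}j(\gamma,z)^{\kappa}\mathbf f(z,g_f)P_{\gamma z}\,dz$, and descent demands this equal $\mathbf f(z,g_f)(\gamma\cdot P_z)\,dz$ --- i.e.\ precisely the identity with $j(\gamma,z)^{-\kappa}$. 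So the fix is twofold: correct your last line to read $j(\gamma,z)^{-\kappa}$, and note explicitly that this is the form of the identity consistent with both the definition \eqref{eqn:polynomial-action} and the application. As written, your proof asserts a conclusion that contradicts its own penultimate line.
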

\begin{proof}
Clear.
\end{proof}

Now denote by $\Omega^d(D_\infty, \mathscr L_{\lambda}(\mathbf C)) = \Omega^d(D_\infty)\otimes_{\mathbf C} \mathscr L_\lambda(\mathbf C)$ the smooth $\mathscr L_\lambda(\mathbf C)$-valued differential forms on $D_\infty$. If $K$ is a neat level, so that $Y_K$ is a smooth real manifold, then we denote by $\Omega^d(Y_K,\mathscr L_{\lambda}(\mathbf C))$ the smooth $\mathscr L_{\lambda}(\mathbf C)$-valued $d$-forms on $Y_K$.

\begin{prop}\label{prop:eichler-shimura}
Let $K \subset \GL_2(\mathbf A_{F,f})$ be a neat compact open subgroup and $\mathbf f \in S_\lambda^{\hol}(K)$.
\begin{enumerate}
\item The differential form
\begin{equation*}
\omega_{\mathbf f}(z, g_f) := \mathbf f(z,g_f) (z+X)^\kappa dz  \in \Omega^d(D_\infty,\mathscr L_{\lambda}(\mathbf C)) \otimes_{\mathbf{C}} \mathcal{C}^{\infty}(\GL_2(\mathbf{A}_{F,f}),\mathbf{C})
\end{equation*}
descends to $\Omega^d(Y_K,\mathscr L_\lambda(\mathbf C))$. In fact, it defines a canonical element $\omega_{\mathbf f} \in H^d_c(Y_K,\mathscr L_\lambda(\mathbf C))$.
\item If $g \in \GL_2(\mathbf A_{F,f})$, then $gKg^{-1}$ is also neat and if $r_g: Y_{gKg^{-1}} \rightarrow Y_{K}$ is right multiplication by $g$, then $r_g^{\ast} \omega_{\mathbf f} = \omega_{g\cdot \mathbf f}$.
\item If $K' \subset K$ is an open subgroup and $\pr: Y_{K'} \rightarrow Y_K$ is the projection map, then $\pr^{\ast}\omega_{\mathbf f} = \omega_{\mathbf f}$.
\end{enumerate}
\end{prop}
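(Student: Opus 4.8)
The plan is to verify the three assertions essentially by direct computation with the defining formulas, leaning on Lemma \ref{eqn:polynomial-invariance} for the transformation behaviour and on standard facts about automorphic forms for the growth condition. For part (1), I would first check that the $\mathscr{L}_\lambda(\mathbf{C})$-valued form $\omega_{\mathbf f}(z,g_f) = \mathbf f(z,g_f)(z+X)^\kappa\,dz$ is invariant under the left $\GL_2^+(F)$-action and the right $K$-action, so that it genuinely descends to $Y_K = \GL_2^+(F)\backslash D_\infty \times \GL_2(\mathbf{A}_{F,f})/K$. The right $K$-invariance is immediate from the transformation law \eqref{eqn:holo-transform} with $\gamma = 1$ (recall $\mathbf f(z,g_f k) = \mathbf f(z,g_f)$ since $\det k$ and $j(1,z)$ are trivial). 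For the left $\GL_2^+(F)$-invariance one combines three facts: the automorphy of $\mathbf f$ in \eqref{eqn:holo-transform}, namely $\mathbf f(\gamma z,\gamma g_f k) = \det(\gamma)^{\frac{w-\kappa}{2}-1}j(\gamma,z)^{\kappa+2}\mathbf f(z,g_f)$; Lemma \ref{eqn:polynomial-invariance}, namely $P_{\gamma z} = (\det\gamma)^{\frac{\kappa-w}{2}}j(\gamma,z)^\kappa(\gamma\cdot P_z)$ with $P_z = (z+X)^\kappa$; and the elementary identity $\gamma^\ast(dz) = \det(\gamma)^{-1} j(\gamma,z)^{-2}\,dz$ coming from the Jacobian of a Möbius transformation in each coordinate. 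Multiplying these together, the exponents of $\det\gamma$ cancel as $(\tfrac{w-\kappa}{2}-1) + \tfrac{\kappa-w}{2} + 1 = 0$ and the exponents of $j(\gamma,z)$ cancel as $(\kappa+2) + \kappa + (-2) \cdot 1 = 2\kappa$... wait, one must be careful: $j$ is a $\Sigma_F$-tuple and the exponents $\kappa$, $2$ are tuples, so the bookkeeping is $(\kappa+2) - \kappa - 2 = 0$ componentwise once one notes the $(z+X)^\kappa$ contributes $j^{-\kappa}$ after pulling $\gamma$ inside. This gives $\gamma^\ast\omega_{\mathbf f} = \omega_{\mathbf f}$, hence descent.

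Next, within part (1), I would observe that $\omega_{\mathbf f}$ is closed: since $\mathbf f$ is holomorphic in $z$ and $dz = \wedge_\sigma dz_\sigma$ is already a top holomorphic form in the "half" of the variables, $d(\mathbf f\,(z+X)^\kappa\,dz) = \bar\partial(\mathbf f (z+X)^\kappa)\wedge dz = 0$ because $(z+X)^\kappa$ is also holomorphic in $z$ and the wedge of $dz$ with any $d\bar z_\sigma$ still has to be checked — actually the cleanest statement is that $\omega_{\mathbf f}$ is a holomorphic $d$-form on the complex $d$-fold $D_\infty$, hence automatically $d$-closed. The rapid-decrease (equivalently, cuspidality/compact-support) claim follows from the cuspidality hypothesis in Definition \ref{defn:holomorphic-HMF}(3) together with the standard fact that cusp forms decay rapidly in the cusps of the Hilbert modular variety; this identifies the class of $\omega_{\mathbf f}$ in $H^d_c(Y_K,\mathscr{L}_\lambda(\mathbf{C}))$ (using, say, the de Rham description of compactly supported cohomology via rapidly decreasing forms, which is where "rapidly decreasing" is exactly what one needs). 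I would cite the relevant references (e.g. \cite{Hida-CriticalValues}, \cite{GetzGoresky}) rather than reprove these analytic estimates.

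For parts (2) and (3): part (3) is a special case of part (2) with $g = 1$, since $\pr: Y_{K'} \to Y_K$ is literally right multiplication by the identity followed by the quotient, and $1\cdot\mathbf f = \mathbf f$; so it suffices to treat (2). Here $r_g: Y_{gKg^{-1}}\to Y_K$, $x\mapsto xg$, and we must compare $r_g^\ast\omega_{\mathbf f}$ with $\omega_{g\cdot\mathbf f}$, where $(g\cdot\mathbf f)(z,g_f') = \mathbf f(z,g_f'g)$ by the convention stated just after Definition \ref{defn:holomorphic-HMF}. But $g \in \GL_2(\mathbf{A}_{F,f})$ acts trivially on the archimedean variable $z$ and on $\mathscr{L}_\lambda(\mathbf{C})$ (which is a local system built from the archimedean/algebraic data), so pulling back $\omega_{\mathbf f}(z,g_f) = \mathbf f(z,g_f)(z+X)^\kappa dz$ along $g_f\mapsto g_f g$ simply replaces $\mathbf f(z,g_f)$ by $\mathbf f(z,g_f g)$, i.e. $r_g^\ast\omega_{\mathbf f} = \omega_{g\cdot\mathbf f}$ on the nose, and neatness of $gKg^{-1}$ is clear since $\Gamma(hg)$ for the conjugated level equals $\Gamma(h)$ up to relabelling. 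I expect the main obstacle to be part (1): precisely, nailing down the exponent bookkeeping in the $\GL_2^+(F)$-invariance (the cancellation involves the $-1$ shift in \eqref{eqn:holo-transform}, the $\det$-twist $\tfrac{w-\kappa}{2}$ built into $\mathscr{L}_\lambda$ in \eqref{eqn:ell-lambda}, and the Jacobian factor, all as $\Sigma_F$-tuples), and making the passage from "closed rapidly-decreasing $d$-form" to "class in $H^d_c$" rigorous with the correct orientation/ordering conventions fixed for $dz$. The family-variable statements (parts 2, 3) and closedness are comparatively routine.
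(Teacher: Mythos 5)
Your proposal is correct and follows the paper's own (very terse) argument: descent from \eqref{eqn:holo-transform}, Lemma \ref{eqn:polynomial-invariance} and the Jacobian of the M\"obius action, with closedness and rapid decrease delegated to the standard references, and parts (2)--(3) treated as formal. Your self-correction on the exponent of $j(\gamma,z)$ is the right one --- the usable form of the lemma is $\gamma\cdot P_z = (\det\gamma)^{\frac{w-\kappa}{2}}j(\gamma,z)^{\kappa}P_{\gamma z}$, so solving for $P_{\gamma z}$ contributes $j^{-\kappa}$ and the total exponent $(\kappa+2)-\kappa-2$ vanishes as you say.
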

\begin{proof}
For (1), the descent of $\omega_{\mathbf f}$ to $Y_K$ follows from \eqref{eqn:holo-transform}, Lemma \ref{eqn:polynomial-invariance} and the chain rule. Next, $\omega_{\mathbf f}$ naturally defines an element of $H^d(Y_K,\mathscr L_\lambda(\mathbf C)$ and, in fact, by \cite[Proposition 2.1]{Hida-CriticalValues}, it lies in the cuspidal cohomology $H^d_{\mathrm{cusp}}(Y_K,\mathscr L_\lambda(\mathbf C)) \subseteq H^d(Y_K,\mathscr L_\lambda(\mathbf C)$. (See \cite[p.\ 465]{Hida-CriticalValues} or \cite[p.\ 61]{Harder} for definitions of $H_{\mathrm{cusp}}^{\bullet}$. The reader might like to compare with \cite[Proposition 6.6]{GetzGoresky} for this article's normalizations.) As explained in \cite[Section 5]{Hida-CriticalValues}, the cuspidal cohomology is itself a subspace of the compactly supported cohomology. This completes the proof of (1). The claims (2) and (3) of the proposition are formal.
\end{proof}
Now let $K$ be any compact open subgroup. We may choose a finite index normal subgroup $K' \subset K$ so that $K'$ is neat. Then we have a natural map $S_\lambda^{\hol}(K') \rightarrow H^d_c(Y_{K'},\mathscr L_\lambda(\mathbf C))$ given by $\mathbf f \mapsto \omega_{\mathbf f}$. By part (2) of Proposition \ref{prop:eichler-shimura}, it is equivariant with respect to the action of $K/K'$ on either side, so descends to well-defined map $S_\lambda^{\hol}(K) \rightarrow H^d_c(Y_K,\mathscr L_\lambda(\mathbf C))$. By part (3) of Proposition \ref{prop:eichler-shimura}, construction is independent of the choice of $K'$.
\begin{defn}
If $K \subset \GL_2(\mathbf A_{F,f})$ is a compact open subgroup, then the Eichler--Shimura map is the map $\ES: S_{\lambda}^{\hol}(K) \rightarrow H^d_c(Y_K,\mathscr L_\lambda(\mathbf C))$ defined above.
\end{defn}
We will sometimes also write $\ES$ for the map $\ES: S_\lambda(K) \rightarrow H^d_c(Y_K,\mathscr L_\lambda(\mathbf C))$ obtained by pre-composing with $\phi \mapsto \mathbf f_{\phi}$. This should cause no confusion. Note as well that parts (2) and (3) of Proposition \ref{prop:eichler-shimura} imply that $\ES$ is Hecke-equivariant. We now state a theorem proven by Hida and its apparent applications.

\begin{thm}\label{thm:normalize-by-periods}
Suppose that $\pi$ is a cohomological cuspidal automorphic representation of weight $\lambda$ and conductor $\mathfrak n$. Assume that $E \subset \mathbf C$ is any subfield containing $\mathbf Q(\pi)$ and the Galois closure of $F$ inside $\mathbf C$. Then, for each choice of sign $\epsilon \in \{\pm 1\}^{\Sigma_F}$,
\begin{equation*}
\dim_E H^d_c(Y_1(\mathfrak n), \mathscr L_{\lambda}(E))^{\epsilon}[\psi_\pi] = 1,
\end{equation*}
where $(-)[\psi_\pi]$ denotes subspace on which the Hecke operators acts through the character $\psi_\pi$, and so there exists an element $\Omega_{\pi}^{\epsilon} \in \mathbf C^\times$ such that 
\begin{equation*}
{\pr^{\epsilon}\ES({\mathbf f_{\pi})} \over \Omega_{\pi}^{\epsilon}} \in H^d_c(Y_1(\mathfrak n), \mathscr L_{\lambda}(E))^{\epsilon}[\psi_\pi].
\end{equation*}
\end{thm}
\begin{proof}
By (4.2) in \cite[Section 4]{Hida-CriticalValues}, for any choice of sign $\epsilon \in \{\pm 1\}^{\Sigma_F}$, the composition 
\begin{equation*}
\pr^\epsilon \circ \ES: S_{\lambda}^{\hol}(Y_1(\mathfrak n)) \to H^d_{c}(Y_1(\mathfrak n), \mathscr L_\lambda(\mathbf C))^{\epsilon}
\end{equation*}
is a Hecke-equivariant injection onto the $\epsilon$-component of the cuspidal cohomology $H^{d}_{\mathrm{cusp}}(Y_1(\mathfrak n),\mathscr L_\lambda(\mathbf C))^{\epsilon}$. Moreover, the cokernel of the inclusion $H^d_{\mathrm{cusp}}(Y_1(\mathfrak n),\mathscr L_\lambda(\mathbf C)) \subseteq H^d_c(Y_1(\mathfrak n),\mathscr L_\lambda(\mathbf C))$ supports only Eisenstein Hecke eigensystems. Indeed, by \cite[Section 2]{Harder}, up to Eisenstein eigensystems there is no distinction between $H^d_c(Y_1(\mathfrak n),\mathscr L_\lambda(\mathbf C))$ and its image $H^d_!(Y_1(\mathfrak n),\mathscr L_\lambda(\mathbf C))$ (sometimes called interior cohomology) in $H^d(Y_1(\mathfrak n),\mathscr L_\lambda(\mathbf C))$. Then, the difference between the interior cohomology and the cuspidal cohomology is shown to be Eisenstein in \cite[Section 3.2]{Harder} (especially p.\ 61 of {\em loc.\ cit.}, where $H^{\bullet}_{!}$ will be written $\widetilde H^{\cdot}$). And so, in fact, $\pr^{\epsilon}\circ \ES$ induces an isomorphism
\begin{equation}\label{eqn:iso-ES}
S_\lambda^{\hol}(Y_1(\mathfrak n))[\psi_{\pi}] \cong H^d_c(Y_1(\mathfrak n),\mathscr L_\lambda(\mathbf C))^{\epsilon}[\psi_{\pi}].
\end{equation}
By the existence and the uniqueness of the newform associated with $\pi$ (see Proposition \ref{prop:new-vector}), either side of \eqref{eqn:iso-ES} is thus one-dimensional. Since $\psi_{\pi}$ takes values in $E$, for $E$ as in the theorem statement, this completes the proof.
\end{proof}

\begin{rmk}
The choice of $\Omega_{\pi}^{\epsilon}$ in Theorem \ref{thm:normalize-by-periods} is unique up to an element of $E^\times$ (for $E$ as in in the theorem statement). We do not discuss further how to possibly specify these periods.
\end{rmk}

\subsection{Twisting}\label{subsec:twisting-archimedean}
In this subsection we discuss twisting by finite order Hecke characters. We will do this carefully since we will need a less familiar $p$-adic version of these ideas in Section \ref{subsec:padic-twisting}. Our treatment here is inspired by \cite[Sections 5.10 and 9.4]{GetzGoresky}. Throughout, we fix a cohomological weight $\lambda$ and an integral ideal $\mathfrak n$. We will also let $E$ denote a variable subfield of $\mathbf C$ containing the Galois closure of $F$.

To start, if $t \in \mathbf A_{F,f}$ then write $u_t := \begin{smallpmatrix} 1 & t\\ & 1 \end{smallpmatrix}$. For an integral ideal $\mathfrak m$, we write
\begin{equation*}
K_{11}(\mathfrak m) = \{\begin{smallpmatrix} a & b \\ c& d \end{smallpmatrix} \in \GL_2(\widehat{\mathcal O}_F)\mid a,d \equiv 1 \bmod \mathfrak m \widehat{\mathcal O}_F \text{ and } c \equiv 0 \bmod \mathfrak m\widehat{\mathcal O}_F\}.
\end{equation*}
Now let $\mathfrak f$ be an integral ideal and $t \in \mathfrak f^{-1}\widehat{\mathcal O}_F$. Then, $K_{11}(\mathfrak n \mathfrak f^2)_t:=u_t^{-1}K_{11}(\mathfrak n \mathfrak f^2)u_t \subset K_1(\mathfrak n)$. In particular if $\phi \in S_\lambda(K_1(\mathfrak n))$, then $\phi_t(g) := \phi(gu_t)$ is in $S_\lambda(K_{11}(\mathfrak n \mathfrak f^2))$. We also have a diagram of Hilbert modular varieties
\begin{equation*}
\xymatrix{
& Y_{11}(\mathfrak n \mathfrak f^2) \ar[dl]_-{\pr} \ar@{-->}[drr]_-{v_t} \ar[r]^-{r_{u_t}} & Y_{K_{11}(\mathfrak n \mathfrak f^2)_t} \ar[dr]^-{\pr} \\
Y_{1}(\mathfrak n \mathfrak f^2) & & & Y_1(\mathfrak n)
}
\end{equation*}
where $v_t$ is defined to be the composition as indicated. Since $u_t \in \GL_2(\mathbf A_{F,f})$, the identity map defines an isomorphism $v_t^{\ast}{\mathscr L}_{\lambda}(E) \simeq \mathscr L_{\lambda}(E)$ of local systems on $Y_{11}(\mathfrak n \mathfrak f^2)$.

\begin{lem}\label{lem:part-twist}
For each $t \in \mathfrak f^{-1}\widehat{\mathcal O}_F$, the diagram
\begin{equation*}
\xymatrix{
S_{\lambda}(K_1(\mathfrak n)) \ar[r]^-{\ES} \ar[d]_-{\phi\mapsto\phi_t} & H^d_c(Y_1(\mathfrak n), {\mathscr L}_{\lambda}(\mathbf C)) \ar[d]^-{v_t^{\ast}}\\
S_{\lambda}(K_{11}(\mathfrak n\mathfrak f^2))  \ar[r]_-{\ES} & H^d_c(Y_{11}(\mathfrak n\mathfrak f^2), {\mathscr L}_{\lambda}(\mathbf C)).
}
\end{equation*}
is commutative.
\end{lem}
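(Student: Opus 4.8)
The plan is to deduce the commutativity from the two functoriality properties of Eichler--Shimura differential forms in Proposition \ref{prop:eichler-shimura} together with a direct comparison of holomorphic forms. First I would record that, by construction, $v_t$ is the composite $v_t = \pr\circ r_{u_t}$, where $r_{u_t}\colon Y_{11}(\mathfrak n\mathfrak f^2)\to Y_{K_{11}(\mathfrak n\mathfrak f^2)_t}$ is right translation by $u_t$ (legitimate since $u_t^{-1}K_{11}(\mathfrak n\mathfrak f^2)u_t = K_{11}(\mathfrak n\mathfrak f^2)_t$) and $\pr\colon Y_{K_{11}(\mathfrak n\mathfrak f^2)_t}\to Y_1(\mathfrak n)$ is the projection (legitimate since $K_{11}(\mathfrak n\mathfrak f^2)_t\subset K_1(\mathfrak n)$). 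Assuming first that all levels in sight are neat, Proposition \ref{prop:eichler-shimura}(3) gives $\pr^{\ast}\omega_{\mathbf f_\phi} = \omega_{\mathbf f_\phi}$ (with $\mathbf f_\phi$ viewed on $Y_{K_{11}(\mathfrak n\mathfrak f^2)_t}$ via $K_{11}(\mathfrak n\mathfrak f^2)_t\subset K_1(\mathfrak n)$), and then Proposition \ref{prop:eichler-shimura}(2) gives $r_{u_t}^{\ast}\omega_{\mathbf f_\phi} = \omega_{u_t\cdot\mathbf f_\phi}$. Composing, $v_t^{\ast}\omega_{\mathbf f_\phi} = \omega_{u_t\cdot\mathbf f_\phi}$, where $v_t^{\ast}$ is taken through the canonical identification $v_t^{\ast}\mathscr L_\lambda(\mathbf C)\simeq \mathscr L_\lambda(\mathbf C)$ noted just before the lemma (valid because $u_t\in\GL_2(\mathbf A_{F,f})$ acts trivially on the archimedean local system $\mathscr L_\lambda(\mathbf C)$).

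The only genuine input is then the identity $\mathbf f_{\phi_t} = u_t\cdot\mathbf f_\phi$ of holomorphic Hilbert cuspforms, which I would check directly from the defining formula $\mathbf f_\psi(g_\infty,g_f) = \det(g_\infty)^{\frac{w-\kappa}{2}-1}\,j(g_\infty,i)^{\kappa+2}\,\psi(g_\infty g_f)$. Since $u_t$ lies in $\GL_2(\mathbf A_{F,f})$ and the action on holomorphic forms is right translation in the finite variable, $(u_t\cdot\mathbf f_\phi)(g_\infty,g_f) = \mathbf f_\phi(g_\infty, g_f u_t) = \det(g_\infty)^{\frac{w-\kappa}{2}-1}\,j(g_\infty,i)^{\kappa+2}\,\phi(g_\infty g_f u_t)$, which is precisely $\mathbf f_{\phi_t}(g_\infty,g_f)$ because $\phi_t(g) = \phi(gu_t)$. (Along the way one checks $\phi_t\in S_\lambda(K_{11}(\mathfrak n\mathfrak f^2))$ from $u_t^{-1}K_{11}(\mathfrak n\mathfrak f^2)u_t\subset K_1(\mathfrak n)$ and the right $K_1(\mathfrak n)$-invariance of $\phi$, and similarly that $u_t\cdot\mathbf f_\phi$ has the claimed level.) Putting the pieces together, $\ES(\phi_t) = \omega_{\mathbf f_{\phi_t}} = \omega_{u_t\cdot\mathbf f_\phi} = v_t^{\ast}\omega_{\mathbf f_\phi} = v_t^{\ast}\ES(\phi)$, which is the asserted commutativity.

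The main obstacle is the purely bookkeeping issue that $K_1(\mathfrak n)$ and $K_{11}(\mathfrak n\mathfrak f^2)$ need not be neat, so Proposition \ref{prop:eichler-shimura} does not literally apply to $Y_1(\mathfrak n)$ and $Y_{11}(\mathfrak n\mathfrak f^2)$. I would handle this by fixing a neat normal subgroup $K'$ of finite index in $K_{11}(\mathfrak n\mathfrak f^2)$; then $K'\subset K_{11}(\mathfrak n\mathfrak f^2)$ forces $u_t^{-1}K'u_t\subset K_1(\mathfrak n)$, and $u_t^{-1}K'u_t$ is again neat, so one obtains maps $r_{u_t}\colon Y_{K'}\to Y_{u_t^{-1}K'u_t}$ and $\pr\colon Y_{u_t^{-1}K'u_t}\to Y_1(\mathfrak n)$ between neat varieties. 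Running the argument above at this level and then descending via the definition of $\ES$ on non-neat levels (which, per the discussion following Proposition \ref{prop:eichler-shimura}, is induced from any neat cover and independent of that choice) yields the lemma. This step is tedious but entirely formal; all of the mathematical content lies in the trivial identity $\mathbf f_{\phi_t} = u_t\cdot\mathbf f_\phi$ together with Proposition \ref{prop:eichler-shimura}.
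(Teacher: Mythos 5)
Your proof is correct and follows the paper's own route: the paper's proof of this lemma is literally the citation of parts (2) and (3) of Proposition \ref{prop:eichler-shimura}, which is exactly the decomposition $v_t^{\ast} = r_{u_t}^{\ast}\circ\pr^{\ast}$ you use, combined with the immediate identity $\mathbf f_{\phi_t} = u_t\cdot\mathbf f_{\phi}$. Your additional care about neatness and the trivialization of $v_t^{\ast}\mathscr L_\lambda(\mathbf C)$ is consistent with how the paper defines $\ES$ at non-neat levels.
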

\begin{proof}
See parts (2) and (3) of Proposition \ref{prop:eichler-shimura}.
\end{proof}
Now consider a finite order Hecke character $\theta$ and let $\mathfrak f$ be an ideal dividing the conductor of $\theta$.\footnote{Recall this means that $\theta(1 + \mathfrak f \widehat{\mathcal O}_F) = \{1\}$. } Write $\Upsilon_{\mathfrak f} = \mathfrak f^{-1} \widehat{\mathcal O}_F/\widehat{\mathcal O}_F$ and $\Upsilon_{\mathfrak f}^\times$ for cosets represented by $x/f$ with $f \in \mathfrak f$ and $x \in \widehat{\mathcal O}_F^\times$. We naturally view $\theta$ as a character on $\Upsilon_{\mathfrak f}^\times$.  If $t \in \Upsilon_{\mathfrak f}^\times$ write $t_0 \in \widehat{\mathcal O}_F$ for a lift of $t$ which is zero outside of $v \mid \mathfrak f$. Then, for $\phi \in S_{\lambda}(K_1(\mathfrak n))$ then we define $\tw_{\theta}(\phi)$ by
\begin{equation*}
\tw_{\theta}(\phi)(g) = \theta(\det g)\sum_{t \in \Upsilon_{\mathfrak f}^\times} \theta(t)\phi\left(gu_{t_0}\right).
\end{equation*}
By \cite[Proposition 5.11]{GetzGoresky}, this defines a linear map $\tw_{\theta}:  S_\lambda(K_1(\mathfrak{n})) \to S_\lambda(K_1(\mathfrak{nf}^2))$. 

On the other hand, suppose $E$ contains the values of $\theta$. Then, $\theta_{\det}(g) := \theta(\det g)$ defines an element of $H^0(Y_{11}(\mathfrak n \mathfrak f^2), E)$ (compare with Remark \ref{rmk:adelic-norm-twisting} below). So, cup product with $\theta_{\det}$ defines an endomorphism of $H^{\ast}_c(Y_{11}(\mathfrak{nf}^2), \mathscr L_\lambda(E))$ and we get a natural map
\begin{equation*}
\tw_{\theta} : H^{\ast}_c(Y_1(\mathfrak n), \mathscr L_\lambda(E)) \rightarrow H^{\ast}_c(Y_{11}(\mathfrak n \mathfrak f^2), {\mathscr L}_{\lambda}(E))
\end{equation*}
given by
\begin{equation}\label{eqn:twistin-cohomology}
\tw_\theta = \theta_{\det} \cup \sum_{t \in \Upsilon_{\mathfrak f}^\times} \theta(t)v_{t_0}^{\ast}.
\end{equation}

We claim that \eqref{eqn:twistin-cohomology} descends to the cohomology at level $K_1(\mathfrak{n f}^2)$. To see that, note that $Y_{11}(\mathfrak n\mathfrak f^2) \rightarrow  Y_1(\mathfrak n \mathfrak f^2)$ is a Galois cover with Galois group $(\widehat{\mathcal O}_F/\mathfrak n \mathfrak f^2\widehat{\mathcal O}_F)^\times$. Specifically, if $a \in \widehat{\mathcal O}_F^\times$ then $\eta_a:=\begin{smallpmatrix} a \\ & 1 \end{smallpmatrix}$ normalizes $K_{11}(\mathfrak n \mathfrak f^2)$ and so right multiplication by $\eta_a$ defines an automorphism of $Y_{11}(\mathfrak n \mathfrak f^2)$ over $Y_{1}(\mathfrak n \mathfrak f^2)$ which depends only on the image of $a$ inside $(\widehat{\mathcal O}_F/\mathfrak n \mathfrak f^2\widehat{\mathcal O}_F)^\times$. Since $(\widehat{\mathcal O}_F/\mathfrak n \mathfrak f^2\widehat{\mathcal O}_F)^\times$ is a finite group, and $E$ has characteristic zero, we may identify $H^{\ast}_c(Y_{1}(\mathfrak n\mathfrak f^2),{\mathscr L}_{\lambda}(E))$ as the $(\widehat{\mathcal O}_F/\mathfrak n \mathfrak f^2\widehat{\mathcal O}_F)^\times$-invariants in $H^{\ast}_c(Y_{11}(\mathfrak n \mathfrak f^2), {\mathscr L}_{\lambda}(E))$ (with $a$ acting via pullback $\eta_a^{\ast}$).
\begin{lem}\label{lemma:twisting-compatibilities}
$\tw_{\theta}\left(H^\ast_c(Y_1(\mathfrak n)),{\mathscr L}_{\lambda}(E))\right) \subset H^{\ast}_c(Y_{1}(\mathfrak n\mathfrak f^2),{\mathscr L}_{\lambda}(E))$ and the diagram
\begin{equation*}
\xymatrix{
S_{\lambda}(K_1(\mathfrak n)) \ar[r]^-{\ES} \ar[d]_-{\tw_{\theta}} & H^d_c(Y_1(\mathfrak n), {\mathscr L}_{\lambda}(\mathbf C)) \ar[d]^-{\tw_{\theta}}\\
S_{\lambda}(K_{1}(\mathfrak n\mathfrak f^2))  \ar[r]_-{\ES} & H^d_c(Y_{1}(\mathfrak n\mathfrak f^2), {\mathscr L}_{\lambda}(\mathbf C))
}
\end{equation*}
is commutative.
\end{lem}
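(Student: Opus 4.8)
The plan is to prove the two assertions in turn: (i) that the operator $\tw_\theta$ of \eqref{eqn:twistin-cohomology} carries $H^{\ast}_c(Y_1(\mathfrak n),\mathscr L_\lambda(E))$ into the $(\widehat{\mathcal O}_F/\mathfrak n\mathfrak f^2\widehat{\mathcal O}_F)^\times$-invariant subspace of $H^{\ast}_c(Y_{11}(\mathfrak n\mathfrak f^2),\mathscr L_\lambda(E))$, which by the identification recalled just before the lemma is exactly $H^{\ast}_c(Y_1(\mathfrak n\mathfrak f^2),\mathscr L_\lambda(E))$; and (ii) the commutativity of the square. For (ii), since $\ES$ is compatible with the pullback $\pr^{\ast}$ along $Y_{11}(\mathfrak n\mathfrak f^2)\to Y_1(\mathfrak n\mathfrak f^2)$ (Proposition \ref{prop:eichler-shimura}(3)) and that pullback is injective — it identifies the source with the invariants — it is enough to check the identity after applying $\pr^{\ast}$, i.e.\ inside $H^d_c(Y_{11}(\mathfrak n\mathfrak f^2),\mathbf C)$.

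For (i), I would fix $a \in \widehat{\mathcal O}_F^\times$ and recall that right translation by $\eta_a = \begin{smallpmatrix} a \\ & 1\end{smallpmatrix}$ is the deck transformation of $Y_{11}(\mathfrak n\mathfrak f^2)\to Y_1(\mathfrak n\mathfrak f^2)$ attached to the class of $a$. The mechanism is the matrix identity $\eta_a u_{t_0}=u_{at_0}\eta_a$ together with the fact that $\eta_a\in K_1(\mathfrak n)$ (lower-left entry $0$, lower-right entry $1$); these give $v_{t_0}\circ\eta_a = v_{at_0}$ as maps $Y_{11}(\mathfrak n\mathfrak f^2)\to Y_1(\mathfrak n)$, and since $at_0$ is again a legitimate lift of $at\in\Upsilon_{\mathfrak f}^\times$ this equals $v_{(at)_0}$. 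Combined with $\eta_a^{\ast}\theta_{\det}=\theta(a)\,\theta_{\det}$, which is immediate from the definition of $\theta_{\det}$, applying $\eta_a^{\ast}$ to \eqref{eqn:twistin-cohomology} and reindexing the sum by the bijection $t\mapsto at$ of $\Upsilon_{\mathfrak f}^\times$ yields $\eta_a^{\ast}\circ\tw_\theta=\tw_\theta$. Hence the image of $\tw_\theta$ is $\eta_a$-invariant for all $a$, which is (i).

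For (ii), I would first record that for $\phi\in S_\lambda(K_1(\mathfrak n))$ one has $\tw_\theta(\phi)=\theta_{\det}\cdot\sum_t\theta(t)\phi_{t_0}$ with each $\phi_{t_0}\in S_\lambda(K_{11}(\mathfrak n\mathfrak f^2))$, and that $\ES$ turns multiplication by the locally constant function $\theta_{\det}$ into cup product with the class $\theta_{\det}\in H^0(Y_{11}(\mathfrak n\mathfrak f^2),E)$: unwinding the definition of $\phi\mapsto\mathbf f_\phi$ shows that for $\psi\in S_\lambda(K_{11}(\mathfrak n\mathfrak f^2))$ one has $\mathbf f_{\theta_{\det}\cdot\psi}(g_\infty,g_f)=\theta(\det(g_\infty g_f))\,\mathbf f_\psi(g_\infty,g_f)=\theta(\det g_f)\,\mathbf f_\psi(g_\infty,g_f)$ because $\det g_\infty\in(F_\infty^\times)^{\circ}$ and $\theta$ has finite order. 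Then linearity of $\ES$ and Lemma \ref{lem:part-twist} applied to each $t_0$ give, in $H^d_c(Y_{11}(\mathfrak n\mathfrak f^2),\mathbf C)$,
\begin{equation*}
\ES(\tw_\theta\phi)=\theta_{\det}\cup\sum_t\theta(t)\,\ES(\phi_{t_0})=\theta_{\det}\cup\sum_t\theta(t)\,v_{t_0}^{\ast}\ES(\phi)=\tw_\theta(\ES\phi),
\end{equation*}
which is the desired commutativity after the reduction above.

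I expect the only real work to be bookkeeping: keeping straight the three levels $K_1(\mathfrak n)$, $K_{11}(\mathfrak n\mathfrak f^2)$, $K_1(\mathfrak n\mathfrak f^2)$ and the maps among them, checking independence of the coset lifts $t_0$, and verifying that the canonical local-system identifications $v_{t_0}^{\ast}\mathscr L_\lambda(E)\simeq\mathscr L_\lambda(E)$ and $\eta_a^{\ast}\mathscr L_\lambda(E)\simeq\mathscr L_\lambda(E)$ (both the identity, since finite adelic matrices act trivially on $\mathscr L_\lambda(E)$) match those implicit in \eqref{eqn:twistin-cohomology} and Lemma \ref{lem:part-twist}. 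There is no analytic or topological obstacle beyond this; the argument rests entirely on the two inputs $\eta_a u_{t_0}=u_{at_0}\eta_a$ with $\eta_a\in K_1(\mathfrak n)$, and the compatibility of $\ES$ with the $\det$-character twist.
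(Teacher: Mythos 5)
Your proposal is correct and follows essentially the same route as the paper: the $\eta_a$-invariance of $\tw_\theta$ via the identity $\eta_a u_{t_0}=u_{at_0}\eta_a$ with $\eta_a\in K_1(\mathfrak n)$, the relation $\eta_a^{\ast}\theta_{\det}=\theta(a)\theta_{\det}$, and the reindexing $t\mapsto at$, with the commutativity of the square then reduced to Lemma \ref{lem:part-twist}. The only difference is that you spell out the compatibility of $\ES$ with multiplication by $\theta_{\det}$ (as cup product with the $H^0$-class), which the paper leaves implicit.
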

\begin{proof}
We need to show that $\eta_a^{\ast}\tw_{\theta} = \tw_{\theta}$ for each $a \in \widehat{\mathcal O}_F^\times$. If $t \in \mathbf A_{F,f}$ then
\begin{equation*}
\eta_a u_t = \begin{smallpmatrix} a & at \\ & 1\end{smallpmatrix}= \begin{smallpmatrix} 1 & at \\ & 1\end{smallpmatrix}\begin{smallpmatrix} a &  \\ & 1\end{smallpmatrix} \in u_{at}K_1{(\mathfrak n)},
\end{equation*}
so $\eta_a^{\ast}v_t^{\ast} = v_{at}^{\ast}$. Moreover, $\eta_a^{\ast}\theta_{\det} = \theta(a)\theta_{\det}$. So, since $at_0 = (at)_0$ for $t \in \Upsilon_{\mathfrak f}^\times$, we can finally compute:
\begin{equation*}
\eta_a^{\ast}\tw_{\theta} = \eta_a^{\ast}\theta_{\det} \cup \sum_{t \in \Upsilon_{\mathfrak f}^\times} \theta(t)\eta_a^{\ast}v_{t_0}^{\ast} = \theta(a)\theta_{\det} \cup \sum_{t \in \Upsilon_{\mathfrak f}^\times} \theta(t)v_{(at)_{0}}^{\ast} = \theta_{\det} \cup \sum_{t \in \Upsilon_{\mathfrak f}^\times} \theta(at)v_{(at)_{0}}^{\ast} = \tw_{\theta}.
\end{equation*}
The commutativity of $\tw_{\theta}$ with $\ES$ follows from Lemma \ref{lem:part-twist}.
\end{proof}
\begin{rmk}\label{rmk:adelic-norm-twisting}
One may also consider twisting by characters of the form $|\cdot|_{\mathbf A_F}^n \theta$ where $\theta$ is finite order and $n$ is an integer. Namely, there is a suitable modification of $\theta_{\det}$ (compare with Definition \ref{prop:eval-3}) so that the cup product \eqref{eqn:twistin-cohomology} induces a linear map
\begin{equation}\label{eqn:allow-adelic-twisting}
\tw_{|\cdot|_{\mathbf A_F}^n \theta} : H^{\ast}_c(Y_1(\mathfrak n), \mathscr L_{\kappa,w}(E)) \rightarrow H^{\ast}_c(Y_1(\mathfrak n \mathfrak f^2), \mathscr L_{\kappa,w-2n}(E)).
\end{equation}
We omit an explicit description, but in Section \ref{subsec:padic-twisting} we will explain the same idea.
\end{rmk}

We note for later (Lemma \ref{lem:allowed-to-project}) the interaction between twisting and Archimedean Hecke operators.
\begin{prop}\label{prop:twisting-Tzetas}
If $\zeta \in \pi_0(F_\infty^\times)$, then $T_{\zeta} \circ \tw_{\theta} = \theta(\zeta) \tw_{\theta} \circ T_{\zeta}$.\footnote{The $\tw_{\theta}$ here means the one on cohomology. It must be, since the $T_{\zeta}$ are not defined on automorphic forms.}
\end{prop}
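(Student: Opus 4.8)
The plan is to expand the definition $\tw_\theta = \theta_{\det}\cup\sum_{t\in\Upsilon_{\mathfrak f}^\times}\theta(t)\,v_{t_0}^\ast$ from \eqref{eqn:twistin-cohomology} and chase $T_\zeta$ through each factor. Write $\eta_\zeta := \begin{smallpmatrix}\zeta\\ & 1\end{smallpmatrix}\in\GL_2(F_\infty)$; recall from Section \ref{subsec:archimedean} that $T_\zeta$ is pullback along right multiplication by $\eta_\zeta$, acting trivially on the coefficient system. The key observation is that $\eta_\zeta$ sits at the archimedean places while each $u_{t_0}$, the projection $\pr$, and the Galois-cover automorphisms $\eta_a$ all sit at the finite places; hence $\eta_\zeta$ commutes with all of them inside $\GL_2(\mathbf A_F)$. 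First I would deduce from this (via the formalism of Section \ref{subsec:adelic-cochains}) that the corresponding geometric maps commute exactly, so that $T_\zeta\circ v_{t_0}^\ast = v_{t_0}^\ast\circ T_\zeta$ on the relevant cohomology groups, that $T_\zeta$ commutes with the $\eta_a^\ast$ and hence preserves the subspace $H^\ast_c(Y_1(\mathfrak{nf}^2),\mathscr L_\lambda(E))\subset H^\ast_c(Y_{11}(\mathfrak{nf}^2),\mathscr L_\lambda(E))$ of Lemma \ref{lemma:twisting-compatibilities}, and --- since cup products commute with pullbacks (Section \ref{subsec:topology}) --- that $T_\zeta$ is multiplicative for $\cup$.

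The single substantive computation is the effect of $T_\zeta$ on the class $\theta_{\det}\in H^0(Y_{11}(\mathfrak{nf}^2),E)$. Since this class is the pullback of the adelic function $g\mapsto\theta(\det g)$ and $\det(g\eta_\zeta)=\zeta\det(g)$, one gets $T_\zeta(\theta_{\det})=\theta(\zeta)\,\theta_{\det}$. Here $\theta(\zeta)$ means $\theta$ evaluated on the idele equal to $\zeta$ at the archimedean places and $1$ elsewhere; because $\theta$ has finite order it is trivial on $(F_\infty^\times)^\circ$, so this depends only on the class $\zeta\in\pi_0(F_\infty^\times)$, in keeping with the notation of Section \ref{subsec:archimedean} and of the statement.

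Granting these points, for any $x\in H^\ast_c(Y_1(\mathfrak n),\mathscr L_\lambda(E))$ the computation is immediate:
\begin{align*}
T_\zeta(\tw_\theta(x)) &= T_\zeta(\theta_{\det})\cup\sum_{t\in\Upsilon_{\mathfrak f}^\times}\theta(t)\,T_\zeta(v_{t_0}^\ast x)\\
&= \theta(\zeta)\,\theta_{\det}\cup\sum_{t\in\Upsilon_{\mathfrak f}^\times}\theta(t)\,v_{t_0}^\ast(T_\zeta x) = \theta(\zeta)\,\tw_\theta(T_\zeta x).
\end{align*}
This identity is first obtained at level $Y_{11}(\mathfrak{nf}^2)$ and then holds at level $Y_1(\mathfrak{nf}^2)$ by restriction, since $T_\zeta$ preserves the invariant subspace. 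I do not expect any real obstacle here: the only point requiring care is the verification, inside the adelic formalism, that right translation by the archimedean matrix $\eta_\zeta$ commutes on the nose with the finite-adelic translations and projection maps, which is a routine consequence of the archimedean/finite factorization of $\GL_2(\mathbf A_F)$.
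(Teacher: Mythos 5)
Your proof is correct and follows essentially the same route as the paper's: identify $T_\zeta$ as pullback along an archimedean translation, note that it therefore commutes with the finite-adelic pullbacks $v_{t_0}^\ast$ and with cup products, and compute $T_\zeta(\theta_{\det})=\theta(\zeta)\theta_{\det}$. The extra check that $T_\zeta$ preserves the $(\widehat{\mathcal O}_F/\mathfrak{nf}^2\widehat{\mathcal O}_F)^\times$-invariant subspace is a reasonable addition the paper leaves implicit.
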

\begin{proof}
Recall that $T_{\zeta}$ is pullback along right-multiplication by $\begin{smallpmatrix} \zeta \\ & 1 \end{smallpmatrix}$ on $Y_K$ (for any $K$). In the definition \eqref{eqn:twistin-cohomology} of $\tw_{\theta}$, the pullbacks $v_{t_0}^{\ast}$ are pullbacks along multiplication by elements of $\GL_2(\mathbf A_{F,f})$, so they obviously commute with $T_{\zeta}$. Since pullbacks commute over cup products, the result is a straightforward check after noticing that $T_{\zeta}\circ\theta_{\det} = \theta(\zeta)\theta_{\det}$.
\end{proof}

We continue to assume that $\theta$ is a finite order Hecke character as above. We define a Gauss sum
\begin{equation*}
G(\theta^{-1}) = \sum_{t \in \Upsilon_{\mathfrak f}^\times} \theta(\delta^{-1})\theta(t)e_F(\delta^{-1}t)
\end{equation*}
where $\delta_{F/\mathbf Q} \in \mathbf A_{F,f}^\times$ is any choice of finite idele with $[\delta_{F/\mathbf Q}] = \mathcal D_{F/\mathbf Q}$ (notations as in Section \ref{subsec:hecke-operators}). We note now that if $\theta$ has conductor exactly $\mathfrak f$, then
\begin{equation}\label{eqn:gauss-sum-relation}
G(\theta^{-1}) = {\sgn(\theta_\infty) N_{F/\mathbf Q}(\mathfrak f) \over G(\theta)},
\end{equation}
where $N_{F/\mathbf Q}(-)$ is the absolute norm. (This is a classical calculation.)

By \cite[Proposition 5.11]{GetzGoresky}, if $\phi \in S_{\lambda}(K_1(\mathfrak n))$ then $G(\theta^{-1})^{-1}\tw_{\theta}(\phi) =: \phi \otimes \theta$ is what one usually thinks of as the ``twist'':\ the Fourier coefficients of $\phi\otimes \theta$ are given by 
\begin{equation}\label{eqn:twist-coefficients}
a_{\phi\otimes \theta}(\mathfrak m)  = \begin{cases}
\theta(\mathfrak m) a_{\phi}(\mathfrak m) & \text{if $(\mathfrak m,\mathfrak f) = 1$;}\\
0 & \text{otherwise}.
\end{cases}
\end{equation}
Here we have descended $\theta$ to a character of the prime-to-$\mathfrak f$ part of the ideal class group. It follows from \eqref{eqn:twist-coefficients} that if $\phi$ is a normalized eigenform of level $\mathfrak n$ then $\phi\otimes \theta$ is a normalized eigenform of level $\mathfrak n \mathfrak f^2$.

We end with the following synopsis of the relationship between twisting and $p$-refinements.

\begin{prop}\label{prop:L-values-agree}
Let $p$ be a prime. Suppose that $\pi$ is a cohomological cuspidal automorphic representation of conductor $\mathfrak n$, $\alpha$ is a $p$-refinement of $\pi$, and $\theta$ is a finite order Hecke character with conductor of the form $\mathfrak f = \prod_{v \mid p} \mathfrak p_v^{f_v}$ with $f_v \geq 0$. If $v \mid p$ and $\pi_v$ is a principal series representation, then write $\beta_v = a_\pi(\mathfrak p_v) -\alpha_v$.
\begin{enumerate}
\item $\phi_{\pi}\otimes \theta$ and $\phi_{\pi,\alpha}\otimes \theta$   are normalized eigenforms of levels $\mathfrak n \mathfrak f^2$ and $(\mathfrak n \cap \mathbf p) \mathfrak f^2$, respectively.
\item If $v \nmid p$ or $f_v > 0$ or $\mathfrak p_v \mid \mathfrak n$ then $L_v(\phi_{\pi,\alpha}\otimes \theta,s) = L_v(\phi_{\pi}\otimes \theta,s)$.
\item If $v \mid p$ and $f_v = 0$ and $\mathfrak p_v \nmid \mathfrak n$ then 
\begin{equation*}
L_v(\phi_{\pi,\alpha}\otimes\theta,s) = (1 - \theta_v(\varpi_v)\beta_vq_v^{-s})L_v(\phi_{\pi}\otimes \theta,s).
\end{equation*}
\item $L_v(\phi_{\pi}\otimes\theta,s) = L_v(\pi\otimes\theta,s)$ for all $v$.\footnote{Here, $\pi\otimes \theta$ is the automorphic representation on which the action of $\GL_2(\mathbf A_F)$ on $\pi$ is twisted by $\theta(\det g)$.}\label{prop-part:factors-agree}
\item Writing $\mathbf{M}(-,s)$ for the Mellin transform as in \S 3.3, we have \begin{equation*}
\mathbf{M}(\phi_{\pi,\alpha} \otimes \theta, s) = \bigl[\prod_{\substack{v\mid p\\ \mathfrak p_v \nmid \mathfrak{nf}}} (1- \beta_v\theta_v(\varpi_v) q_v^{-(s+1)})\bigr] \Delta_{F}^{s+1} \Lambda(\pi \otimes \theta , s+1).
\end{equation*}
\end{enumerate}
\end{prop}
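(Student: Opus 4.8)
The plan is to prove the five parts in sequence, with parts (1)--(4) being essentially bookkeeping and part (5) being the payoff that combines everything with the integral representation of Proposition \ref{prop:integral-representation}.

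First, part (1) follows directly from the discussion preceding the proposition: we established there (via \eqref{eqn:twist-coefficients}) that twisting a normalized eigenform of level $\mathfrak m$ by a finite order character of conductor dividing $\mathfrak f$ produces a normalized eigenform of level $\mathfrak m \mathfrak f^2$. Applying this with $\mathfrak m = \mathfrak n$ to $\phi_\pi$ and with $\mathfrak m = \mathfrak n \cap \mathbf p$ to $\phi_{\pi,\alpha}$ (the latter being a normalized eigenform by Proposition \ref{prop:refined-eigenform}) gives the claim. For parts (2) and (3), I would compute the local Euler factors using \eqref{eqn:local-factors} together with the explicit Hecke eigenvalues from Proposition \ref{prop:refined-eigenform} and the twisted Fourier coefficients \eqref{eqn:twist-coefficients}. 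At a place $v \mid p$ with $\mathfrak p_v \nmid \mathfrak n$ and $f_v = 0$, the refinement replaces the degree-two Euler factor $1 - a_\pi(\mathfrak p_v)q_v^{-s} + \omega_\pi(\varpi_v)q_v^{1-2s} = (1 - \alpha_v q_v^{-s})(1 - \beta_v q_v^{-s})$ by the degree-one factor $(1 - \alpha_v q_v^{-s})^{-1}$ coming from $U_v$-eigenvalue $\alpha_v$; after twisting by the unramified $\theta_v$ this multiplies the local $L$-factor by exactly $(1 - \theta_v(\varpi_v)\beta_v q_v^{-s})$, which is (3). At all other places the refinement either does nothing (places away from $p$, by Proposition \ref{prop:refined-eigenform}(2) which says the Fourier coefficients agree) or the local factor is already degree one and unchanged (the Steinberg/ramified-$\theta$ cases, using Remark \ref{rmk:steinberg-situation} and the vanishing in \eqref{eqn:twist-coefficients} when $\mathfrak p_v \mid \mathfrak f$), which is (2). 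Part (4) is the statement that the classical $L$-function of the newform equals the automorphic $L$-function of $\pi \otimes \theta$; this is \eqref{eqn:agreement-L-function} applied to the newform $\phi_\pi \otimes \theta$ of $\pi \otimes \theta$, noting that $\phi_\pi \otimes \theta$ is indeed the newform attached to $\pi \otimes \theta$ when $\theta$ has conductor prime to $\mathfrak n$, and more generally that the Euler products match factor by factor by the local Langlands compatibility recalled in Section \ref{subsec:L-functions}.

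Finally, for part (5) I would assemble the pieces: by Proposition \ref{prop:integral-representation} applied to $\phi_{\pi,\alpha} \otimes \theta \in S_\lambda(K_1((\mathfrak n \cap \mathbf p)\mathfrak f^2))$, we have $\mathbf M(\phi_{\pi,\alpha}\otimes\theta, s) = \Delta_{F/\mathbf Q}^{s+1}\Lambda(\phi_{\pi,\alpha}\otimes\theta, s+1)$. Then I expand $\Lambda(\phi_{\pi,\alpha}\otimes\theta,s+1)$ as a product of local factors times the archimedean $\Gamma$-factor, use parts (2) and (3) to rewrite each local factor in terms of those of $\phi_\pi \otimes \theta$, collect the correction terms $\prod_{v \mid p,\ \mathfrak p_v \nmid \mathfrak{nf}}(1 - \beta_v \theta_v(\varpi_v)q_v^{-(s+1)})$ out front (noting the product ranges precisely over the places where (3) applies, i.e.\ $v \mid p$, $\mathfrak p_v \nmid \mathfrak n$, and $f_v = 0$, which is the condition $\mathfrak p_v \nmid \mathfrak{nf}$), and identify the remaining product with $\Lambda(\phi_\pi\otimes\theta,s+1) = \Lambda(\pi\otimes\theta,s+1)$ using part (4) and the fact that the archimedean $\Gamma$-factor depends only on the weight $\lambda$, which twisting by a finite order character does not change.

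I expect the only genuinely delicate point to be part (4) and the precise identification in (5) of the newform attached to $\pi \otimes \theta$ versus the form $\phi_\pi \otimes \theta$ when the conductors of $\theta$ and $\pi$ are not coprime; here one must be careful that the completed $L$-function $\Lambda(\pi\otimes\theta,s)$ in the statement is defined via the automorphic representation $\pi\otimes\theta$ (as in footnote to part (4)) and that its local factors at places dividing both $\mathfrak n$ and $\mathfrak f$ coincide with those coming from $\phi_\pi\otimes\theta$ — but since the hypothesis constrains $\mathfrak f = \prod_{v\mid p}\mathfrak p_v^{f_v}$ and $\pi$ is $p$-refinable (so $\pi_v$ at $v \mid p$ with $\mathfrak p_v \mid \mathfrak n$ is an unramified twist of Steinberg, by Remark \ref{rmk:steinberg-situation}), this is a short case check using the known behavior of local $L$-factors under twisting. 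Everything else is routine manipulation of Euler products and Fourier coefficients.
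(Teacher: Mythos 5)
Your proposal is correct and follows essentially the same route as the paper: parts (1)–(3) by comparing local Euler factors via the explicit Fourier coefficients of Proposition \ref{prop:refined-eigenform} and \eqref{eqn:twist-coefficients} with the same case division at $v\mid p$ (including the factorization $(1-\theta(\varpi_v)\alpha_vq_v^{-s})(1-\theta(\varpi_v)\beta_vq_v^{-s})$ and the degree-one factor for the refined form), part (4) by the same case check that ramified twisting at $v\mid p$ trivializes both local factors, and part (5) by assembling these with Proposition \ref{prop:integral-representation}. The one point worth tightening is that in (2) for $v\nmid p$ you should also note explicitly that both forms have the same central character $\omega_\pi\theta^2$ (not just the same Fourier coefficients), since the degree-two Euler factor \eqref{eqn:local-factors} involves $\omega_\phi(\varpi_v)$.
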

\begin{proof}
As mentioned above, twisting by $\theta$ preserves the property of being a normalized eigenform. Since $\phi_\pi$ is a normalized eigenform, and $\phi_{\pi,\alpha}$ is one by Proposition \ref{prop:refined-eigenform}, part (1) is proven.

We will prove (2) and (3) at the same time. First note that since $\mathfrak f$ is divisible only by primes above $p$, the level of $\phi_\pi \otimes \theta$ and the level of $\phi_{\pi,\alpha}\otimes \theta$ are the same away from $p$. Note as well that the central characters are the same:\ they are both $\omega_{\pi}\theta^2$. Thus we see that (2) is true in the case $v \nmid p$ by Proposition \ref{prop:refined-eigenform} and \eqref{eqn:twist-coefficients}.

Now we consider $v \mid p$. If $f_v > 0$ or $\mathfrak p_v \mid \mathfrak n$ then $\mathfrak p_v$ divides the level of both $\phi_{\pi,\alpha}\otimes\theta$ and $\phi_{\pi}\otimes \theta$, and the $v$-th Fourier coefficient of either eigenform is the same:\ if $f_v > 0$ then the coefficients are both zero, and if $f_v = 0$ but $\mathfrak p_v \mid \mathfrak n$ then both coefficients are $\theta(\varpi_v)\alpha_v = \theta(\varpi_v)a_{\pi}(\mathfrak p_v)$ (compare with Remark \ref{rmk:steinberg-situation}). This completes the proof of (2).

Finally suppose that $v \mid p$ and $f_v=0$ and $\mathfrak p_v \nmid \mathfrak n$. Since $\mathfrak p_v$ is then co-prime to the level of $\phi_{\pi}\otimes \theta$, we have from \eqref{eqn:twist-coefficients} that
\begin{small}
\begin{equation*}
L_v(\phi_{\pi}\otimes \theta,s)\\ = {1 \over 1 - \theta(\varpi_v)a_{{\pi}}(\mathfrak p_v)q_v^{-s} + \omega_{\pi}\theta^2(\varpi_v)q_v^{1-2s}}
 = {1 \over (1-\theta(\varpi_v)\alpha_v q_v^{-s})(1-\theta(\varpi_v)\beta_v q_v^{-s})}.
\end{equation*}
\end{small}
On the other hand, by Proposition \ref{prop:refined-eigenform} and  \eqref{eqn:twist-coefficients} we have $a_{\phi_{\pi,\alpha}\otimes \theta}(\mathfrak p_v) = \theta(\varpi_v)\alpha_v$. Since $\phi_{\pi,\alpha}\otimes \theta$ has level divisible by $\mathfrak p_v$, its local $L$-factor is 
\begin{equation*}
L_v(\phi_{\pi,\alpha}\otimes\theta,s) = {1 \over 1 - a_{\phi_{\pi,\alpha}\otimes\theta}(\mathfrak p_v)q_v^{-s}} = {1\over 1 - \theta(\varpi_v)\alpha_vq_v^{-s}}.
\end{equation*}
Comparing the previous two displayed equations completes the proof of (3).

Point (4) is obvious if $f_v = 0$. Otherwise $\theta$ is ramified at $v$ and in particular $v \mid p$. We claim that $L_v(\pi\otimes\theta,s) = 1 = L_v(\phi_{\pi}\otimes\theta,s)$. Since $\pi$ is $p$-refineable and $v \mid p$, the first equality follows because twisting an unramified principal series or an unramified twist of the Steinberg by a ramified character trivializes the local $L$-factor. For the second equality, note that if $\theta_v$ is ramified then $\mathfrak p_v$ divides the level of $\phi_{\pi}\otimes \theta$ and $a_{\phi_{\pi}\otimes \theta}(\mathfrak p_v) = 0$ by \eqref{eqn:twist-coefficients}. The second inequality now follows from \eqref{eqn:local-factors}.

Finally, (5) follows from the previous parts and  Proposition \ref{prop:integral-representation}
\end{proof}

\subsection{Evaluation classes}\label{subsect:eval-classes}
In this subsection, $E$ denotes a subfield of $\mathbf C$ that contains the Galois closure of $F$. We will also fix a cohomological weight $\lambda = (\kappa,w)$. Our goal is to define an evaluation class in homology which is used to detect $L$-values.

Recall $\mathscr L_\lambda(E)$ is equipped with a left action of $\GL_2(F)$. We write $\mathscr L_\lambda(E)^{\vee}$ for $E$-linear dual space of $\mathscr L_\lambda(E)$ with its canonical right action of $\GL_2(F)$
\begin{equation*}
\mu\big|_{g}(P) = \mu(g\cdot P)
\end{equation*}
if $\mu \in \mathscr L_\lambda(E)^{\vee}$, $g \in \GL_2(F)$ and $P \in \mathscr L_\lambda(E)$.

\begin{lem}\label{lem:eval-1}
If $x \in F^\times$ and $P \in \mathscr L_\lambda(E)$, then $\left(\begin{smallpmatrix} x & \\ & 1 \end{smallpmatrix} \cdot P\right)(X) = x^{w+\kappa\over 2} P\left({X\over x}\right)$. 
\end{lem}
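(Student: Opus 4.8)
The statement to prove is Lemma \ref{lem:eval-1}: for $x \in F^\times$ and $P \in \mathscr L_\lambda(E)$, one has $\left(\begin{smallpmatrix} x & \\ & 1 \end{smallpmatrix} \cdot P\right)(X) = x^{\frac{w+\kappa}{2}} P\left(\frac{X}{x}\right)$.

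This is a direct unwinding of the definitions. The plan is to decompose the action on $\mathscr L_\lambda(E) = \bigotimes_{\sigma \in \Sigma_F}\left(\mathscr L_{\kappa_\sigma}(E) \otimes \det^{\frac{w-\kappa_\sigma}{2}}\right)$ into its tensor factors, since the diagonal matrix acts factor-by-factor. First I would fix a single $\sigma \in \Sigma_F$ and a polynomial $P_\sigma \in \mathscr L_{\kappa_\sigma}(E)$, and apply the formula \eqref{eqn:polynomial-action} for the $\GL_2$-action on $\mathscr L_n$ with $g = \begin{smallpmatrix} a & b \\ c & d \end{smallpmatrix} = \begin{smallpmatrix} x & 0 \\ 0 & 1 \end{smallpmatrix}$ (so $a = x$, $b = c = 0$, $d = 1$) and $n = \kappa_\sigma$. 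Plugging in, $(g \cdot P_\sigma)(X) = (x + 0\cdot X)^{\kappa_\sigma} P_\sigma\!\left(\frac{0 + 1\cdot X}{x + 0 \cdot X}\right) = x^{\kappa_\sigma} P_\sigma\!\left(\frac{X}{x}\right)$; note that here $\sigma(x)$ equals $x$ under the identification since $x \in F^\times \subset E$ already, though strictly one should track that the $\sigma$-factor sees $\sigma(x)$ — but as $\mathscr L_\lambda(E)$ is an $E$-vector space with $F$ embedded via a fixed $\sigma$ in each tensorand, the bookkeeping is exactly as in the statement.

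Next I would account for the determinant twist $\det^{\frac{w - \kappa_\sigma}{2}}$: the matrix $\begin{smallpmatrix} x & 0 \\ 0 & 1 \end{smallpmatrix}$ has determinant $x$, so acting on this one-dimensional piece multiplies by $x^{\frac{w - \kappa_\sigma}{2}}$. Combining, the $\sigma$-th tensorand contributes a scalar $x^{\kappa_\sigma} \cdot x^{\frac{w - \kappa_\sigma}{2}} = x^{\frac{w + \kappa_\sigma}{2}}$ together with the substitution $X_\sigma \mapsto X_\sigma / x$ inside the polynomial part. Taking the product over all $\sigma \in \Sigma_F$ and using the tuple-power notation $x^{\frac{w+\kappa}{2}} = \prod_\sigma x^{\frac{w + \kappa_\sigma}{2}}$ from Section \ref{subsec:notation} gives precisely the claimed formula, where $P(X/x)$ means substituting $X_\sigma/x$ for each variable $X_\sigma$.

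There is no real obstacle here — the only thing requiring a moment of care is the interaction between the global element $x \in F^\times$ and the place-by-place embeddings $\sigma$, i.e. making sure the exponent notation $x^{\frac{w+\kappa}{2}}$ and the substitution $X/x$ are interpreted consistently across the tensor factors; but this is purely notational and follows the conventions already set up in Sections \ref{subsec:weights} and \ref{subsec:notation}. I would present the argument in two or three lines: invoke \eqref{eqn:polynomial-action}, note $\det = x$, multiply the contributions, and conclude. So the proof is essentially "Clear" with the one-line computation spelled out.
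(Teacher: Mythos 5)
Your proof is correct and is exactly what the paper intends — the paper's own proof is simply ``See definition \eqref{eqn:ell-lambda},'' i.e.\ the same unwinding of \eqref{eqn:polynomial-action} together with the determinant twist, tensorand by tensorand. Your spelled-out computation, including the bookkeeping of $\sigma(x)$ and the tuple-power convention, matches the intended argument.
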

\begin{proof}
See definition \eqref{eqn:ell-lambda}.
\end{proof}
We now make two definitions.
\begin{defn}
An integer $m$ is critical with respect to $\lambda$ if 
\begin{equation*}
{w -\kappa_\sigma\over 2} \leq m \leq {w + \kappa_\sigma \over 2}
\end{equation*}
for all $\sigma \in \Sigma_F$.
\end{defn}

\begin{defn}\label{defn:deltam-star}
Let $m$ be critical with respect to $\lambda$. Then, $\delta_m^{\star} \in \mathscr L_\lambda(E)^\vee$ is defined by
\begin{equation*}
\delta_m^{\star}(X^j) = \begin{cases}
{\kappa \choose j}^{-1} & \text{if $j = {\kappa + w \over 2} - m$},\\
0 & \text{otherwise.}
\end{cases}
\end{equation*}
\end{defn}

\begin{lem}\label{lem:eval-2}
If $x \in F^\times$, then $\delta_m^{\star}\big|_{\begin{smallpmatrix} x \\ & 1 \end{smallpmatrix}} = x^m \delta_m^{\star}$.
\end{lem}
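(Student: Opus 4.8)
The plan is to verify the identity on the monomial basis of $\mathscr L_\lambda(E)$ and invoke Lemma \ref{lem:eval-1}. Recall (with the tuple conventions of Section \ref{subsec:notation}) that $\mathscr L_\lambda(E)$ has $E$-basis the monomials $X^k = \prod_{\sigma\in\Sigma_F} X_\sigma^{k_\sigma}$ with $0 \le k_\sigma \le \kappa_\sigma$, and that by Definition \ref{defn:deltam-star} the functional $\delta_m^\star$ kills every such monomial except $X^j$, where $j = {\kappa+w\over 2} - m$, on which it takes the value ${\kappa \choose j}^{-1}$.

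First I would compute the right action on a single monomial. Using the definition $\mu\big|_g(P) = \mu(g\cdot P)$ of the dual action together with Lemma \ref{lem:eval-1}, for $x \in F^\times$ and any monomial $X^k$ we have
\begin{equation*}
\delta_m^\star\big|_{\begin{smallpmatrix} x \\ & 1\end{smallpmatrix}}(X^k) = \delta_m^\star\!\left(\begin{smallpmatrix} x \\ & 1\end{smallpmatrix}\cdot X^k\right) = \delta_m^\star\!\left(x^{w+\kappa\over 2}\left({X\over x}\right)^{k}\right) = x^{{w+\kappa\over 2}-k}\,\delta_m^\star(X^k),
\end{equation*}
since $\begin{smallpmatrix} x \\ & 1\end{smallpmatrix}$ scales the monomial $X^k$ by the scalar $x^{{w+\kappa\over 2}-k} \in E$.

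Then I would split into cases. If $k \neq j$ both sides vanish. If $k = j$, then componentwise ${w+\kappa_\sigma\over 2} - j_\sigma = {w+\kappa_\sigma\over 2} - \big({\kappa_\sigma+w\over 2} - m\big) = m$ for every $\sigma$, so that $x^{{w+\kappa\over 2}-j} = x^m$ and the right-hand side equals $x^m\,\delta_m^\star(X^j)$. Hence $\delta_m^\star\big|_{\begin{smallpmatrix} x \\ & 1\end{smallpmatrix}}$ and $x^m\,\delta_m^\star$ agree on the monomial basis, and therefore coincide.

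The argument is purely formal once Lemma \ref{lem:eval-1} is available; the only point requiring a little care is the bookkeeping with the tuple notation $x^n = \prod_{\sigma}\sigma(x)^{n_\sigma}$, so that the symbol ``$x^m$'' in the statement is read via the constant tuple $m = (m)_{\sigma\in\Sigma_F}$. There is no substantive obstacle.
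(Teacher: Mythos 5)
Your proof is correct and follows the paper's argument exactly: apply Lemma \ref{lem:eval-1} to compute $\delta_m^{\star}\big|_{\begin{smallpmatrix} x \\ & 1 \end{smallpmatrix}}(X^k) = x^{{w+\kappa\over 2}-k}\delta_m^{\star}(X^k)$ on monomials, then observe both sides vanish unless $k = {\kappa+w\over 2}-m$, in which case the exponent collapses to $m$. Nothing to add.
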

\begin{proof}
By Lemma \ref{lem:eval-1}, if $ 0\leq j \leq \kappa$ then 
\begin{equation}\label{eqn:some-trivial-shit}
\delta_m^{\star}\big|_{\begin{smallpmatrix} x \\ & 1 \end{smallpmatrix}}(X^j) = x^{{\kappa+w\over 2} - j}\delta_m^{\star}(X^j).
\end{equation}
If $j \neq {\kappa + w \over 2} - m$, then both $x^m\delta_m^{\star}(X^j)$ and the right-hand side of \eqref{eqn:some-trivial-shit} vanish. And if $j = {\kappa + w \over 2} - m$ then clearly $x^m \delta_m^{\star}(X^j)$ is equal to the right-hand side of \eqref{eqn:some-trivial-shit}. The result follows.
\end{proof}
Recall the definition \eqref{eqn:shin-cone} of the Shintani cone $\mathrm C_\infty = F^\times\backslash \mathbf A_F^\times / \widehat{\mathcal O}_F^\times$.  Above we took a right action of $\GL_2(F)$ on $\mathscr L_\lambda(E)^{\vee}$ but now we restrict this to the {\em left} action of $F^\times$ where $x \in F^\times$ acts by $x\cdot \mu = \mu|_{\begin{smallpmatrix} x^{-1} & \\ & 1 \end{smallpmatrix}}$. Using this action, we define a local system
\begin{equation*}
\mathrm t^{\ast}{\mathscr L}_\lambda(E)^{\vee} = F^\times \backslash \mathbf A_F^\times \times \mathscr L_\lambda(E)^\vee / \widehat{\mathcal O}_F^\times \twoheadrightarrow \mathrm C_\infty.
\end{equation*}

\begin{defn-prop}\label{prop:eval-3}
If $m$ is critical with respect to $\lambda$, then the formula
\begin{equation*}
\delta_m(x) := \left(\sgn(x_\infty)|x_f|_{\mathbf A_F}\right)^m\delta_m^{\star},\;\; x \in \mathbf{A}_{F}^{\times}
\end{equation*}
defines an element of $H^0(\mathrm C_\infty, \mathrm t^{\ast}{\mathscr L}_\lambda(E)^\vee)$.
\end{defn-prop}
\begin{proof}
Since $\delta_m(x)$ is clearly constant on the connected component $(F_\infty^\times)^{\circ}$, what we need to show is that if $\xi \in F^\times$, $x \in \mathbf A_{F}^\times$ and $u \in \widehat{\mathcal O}_F^\times$ then
\begin{equation}\label{eqn:eval-3}
\delta_m(\xi x u) = \delta_m(x)\big|_{\begin{smallpmatrix} \xi^{-1} & \\ & 1 \end{smallpmatrix}}.
\end{equation}
Since elements of $\widehat{\mathcal O}_F^\times$ have trivial adelic norm and no infinite component, we see  that $\delta_m$ is right $\widehat{\mathcal O}_F^\times$-invariant. On the other hand, the product formula implies that
\begin{equation*}
\delta_m(\xi x) = (\sgn(\xi_\infty)|\xi_f|_{\mathbf A_F})^m \delta_m(x) = \xi_\infty^{-m} \delta_m(x),
\end{equation*}
if $\xi \in F^\times$. But this is exactly the right-hand side of \eqref{eqn:eval-3} by Lemma \ref{lem:eval-2}.
\end{proof}

Now suppose that $K \subset \GL_2(\mathbf A_{F,f})$ is a $\mathrm t$-good subgroup (Definition \ref{defn:good-level}). As in \eqref{eqn:shintani-torus} we consider the proper embedding $\mathrm t: \mathrm C_\infty \rightarrow Y_K$ given by $\mathrm t(x) = \begin{smallpmatrix} x \\ & 1 \end{smallpmatrix}$. The local system ${\mathscr L}_\lambda(E)^\vee$ on $Y_K$ defined by the left-action of $\GL_2(F)$ on $\mathscr L_\lambda(E)^\vee$ pulls back exactly to the local system $\mathrm t^{\ast}{\mathscr L}_{\lambda}(E)^\vee$ on $\mathrm C_\infty$ which we just considered.\footnote{We  consider left actions in order to define the local systems on $Y_K$ because the quotient by $\GL_2^+(F)$ is on the left.} Since $\mathrm t$ is proper, we get a pushforward map
\begin{equation*}
\mathrm t_{\ast}: H_{\ast}^{\BM}(\mathrm C_\infty, \mathrm t^{\ast} {\mathscr L}_\lambda(E)^{\vee}) \rightarrow H_{\ast}^{\BM}(Y_K, {\mathscr L}_\lambda(E)^\vee)
\end{equation*}
on the level of Borel--Moore homology. Furthermore, we also have a Poincar\'e duality map (see \eqref{eqn:poincare-duality})
\begin{equation*}
\PD: H^0(\mathrm C_\infty, \mathrm t^{\ast} {\mathscr L}_\lambda(E)^{\vee}) \rightarrow H_d^{\BM}(\mathrm C_\infty, \mathrm t^{\ast} {\mathscr L}_\lambda(E)^{\vee})
\end{equation*}
given by cap product with a Borel--Moore fundamental class $[\mathrm C_\infty]$.

\begin{defn}\label{defn:infinity-class}
If $m$ is critical with respect to $\lambda$, and $K$ is a $\mathrm t$-good subgroup, then we define
\begin{equation*}
\cl_\infty(m) = \mathrm t_\ast(\PD(\delta_m)) \in H_d^{\BM}(Y_K, {\mathscr L}_\lambda(E)^\vee).
\end{equation*}
We call $\cl_\infty(m)$ an Archimedean evaluation class.
\end{defn}
Note that strictly speaking we should write something like $\cl_\infty^K(m)$ to indicate the dependence on $K$. But, the local systems $\mathscr L_\lambda(E)^\vee$ live at all levels simultaneously and the next lemma shows we do not need this extra notation.
\begin{lem}\label{lemma:infinity-class}
If $K' \subset K$ are two compact open subgroups of $\GL_2(\mathbf A_{F,f})$ and $K'$ is $\mathrm t$-good, then $\pr^{\ast}(\cl_\infty^K(m)) = \cl_\infty^{K'}(m)$.
\end{lem}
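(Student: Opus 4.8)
The plan is to reduce the statement to the observation that the Shintani cone $\mathrm C_\infty$, the section $\delta_m \in H^0(\mathrm C_\infty, \mathrm t^{\ast}\mathscr L_\lambda(E)^{\vee})$ of Definition/Proposition \ref{prop:eval-3}, and its Poincar\'e dual $\PD(\delta_m) = \delta_m \cap [\mathrm C_\infty]$ are all intrinsic to $\mathrm C_\infty$ and make no reference to a level. First I would note that $\begin{smallpmatrix} \widehat{\mathcal O}_F^\times \\ & 1 \end{smallpmatrix} \subset K' \subset K$, so $K$ is again $\mathrm t$-good and both embeddings $\mathrm t^K \colon \mathrm C_\infty \hookrightarrow Y_K$ and $\mathrm t^{K'}\colon \mathrm C_\infty \hookrightarrow Y_{K'}$ of \eqref{eqn:shintani-torus} are defined; since both are induced by the single map $x \mapsto \begin{smallpmatrix} x \\ & 1\end{smallpmatrix}$ on $\mathbf A_F^{\times}$, they satisfy $\pr \circ \mathrm t^{K'} = \mathrm t^K$. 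Moreover the local systems written $\mathscr L_\lambda(E)^{\vee}$ on $Y_K$ and on $Y_{K'}$ are pullbacks of one another along $\pr$, so their common restriction to $\mathrm C_\infty$ is $\mathrm t^{\ast}\mathscr L_\lambda(E)^{\vee}$ in either description, and $\delta_m$ together with the Borel--Moore fundamental class $[\mathrm C_\infty]$ (fixed by the translation-invariant orientation of Section \ref{subsec:sym-spaces}) are the same data in both cases.

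Given this, Definition \ref{defn:infinity-class} expresses $\cl_\infty^K(m) = (\mathrm t^K)_{\ast}\PD(\delta_m)$ and $\cl_\infty^{K'}(m) = (\mathrm t^{K'})_{\ast}\PD(\delta_m)$ for one and the same class $\PD(\delta_m) \in H_d^{\BM}(\mathrm C_\infty, \mathrm t^{\ast}\mathscr L_\lambda(E)^{\vee})$, so the lemma becomes the identity $\pr^{\ast} \circ (\mathrm t^K)_{\ast} = (\mathrm t^{K'})_{\ast}$, where $\pr^{\ast}$ is the pullback on Borel--Moore homology corresponding, via Poincar\'e duality \eqref{eqn:poincare-duality}, to the cohomology pullback attached to $K' \subset K$ in Section \ref{subsec:adelic-cochains}. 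I would prove this by a base-change argument along the commuting square $\pr \circ \mathrm t^{K'} = \mathrm t^K$: here $\pr$ is a finite covering and $\mathrm t^{K'}$ is a closed embedding lifting the proper map $\mathrm t^K$, and the required compatibility of proper pushforward with the pullback $\pr^{\ast}$ follows from the projection formula for cap products \eqref{eqn:push-pull-caps-formula} and the naturality \eqref{eqn:naturality-PD} of Poincar\'e duality, applied after fixing compatible orientations (legitimate because the right multiplications occurring are orientation preserving, cf. Proposition \ref{prop:oritentation-preserving}). Equivalently, transporting through $\PD$, one verifies that the cohomology class $\PD^{-1}(\cl_\infty^K(m)) \in H^d(Y_K, \mathscr L_\lambda(E)^{\vee})$ restricts along $\pr$ to $\PD^{-1}(\cl_\infty^{K'}(m))$, which can be checked in a neighborhood of $\mathrm t^K(\mathrm C_\infty)$, where the construction of the evaluation class is manifestly functorial for \'etale maps.

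I expect the main obstacle to be exactly this last base-change identity for $\pr^{\ast}$ on Borel--Moore homology: one must be careful about the normalization of $\pr^{\ast}$ and about the local structure of the embedding $\mathrm t$ (a neighborhood of $\mathrm t^K(\mathrm C_\infty)$ in $Y_K$ need not have connected preimage in $Y_{K'}$), so the point to pin down is that, after passing through Poincar\'e duality, the evaluation class detects only the component lifting $\mathrm t^K$ and with multiplicity one. Once that compatibility is in place the rest is formal bookkeeping with the functorialities of Sections \ref{subsec:topology} and \ref{subsec:adelic-cochains}.
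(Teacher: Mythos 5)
Your first paragraph already contains the entire content of the paper's proof: the triangle $\pr \circ \mathrm t^{K'} = \mathrm t^{K}$ commutes, and $\PD(\delta_m) \in H_d^{\BM}(\mathrm C_\infty, \mathrm t^{\ast}\mathscr L_\lambda(E)^{\vee})$ is defined intrinsically on $\mathrm C_\infty$ with no reference to a level. The compatibility this yields — and the only one the paper needs, e.g.\ for the reduction to neat level in Theorem \ref{thm:cap-product} via the adjunction $\langle \pr^{\ast}\omega, \Phi\rangle = \langle \omega, \pr_{\ast}\Phi\rangle$ — is the \emph{pushforward} identity
\[
\pr_{\ast}\bigl(\cl_\infty^{K'}(m)\bigr) \;=\; \pr_{\ast}\,\mathrm t^{K'}_{\ast}\,\PD(\delta_m) \;=\; (\pr\circ \mathrm t^{K'})_{\ast}\,\PD(\delta_m) \;=\; \mathrm t^{K}_{\ast}\,\PD(\delta_m) \;=\; \cl_\infty^{K}(m),
\]
which is immediate from functoriality of proper pushforward. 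Note that Section \ref{subsec:topology} only ever equips Borel--Moore homology with a pushforward along proper maps (pullbacks live on compactly supported cohomology); the one-sentence proof in the paper is a proof of the displayed identity, and the $\pr^{\ast}$ in the statement should be read as this transposed compatibility.

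The gap in your proposal is that the identity you reduce to, $\pr^{\ast}\circ(\mathrm t^{K})_{\ast} = (\mathrm t^{K'})_{\ast}$, is false for any reasonable pullback $\pr^{!}$ on Borel--Moore homology (preimage of locally finite cycles, or equivalently $\PD_{K'}\circ \pr^{\ast}\circ \PD_K^{-1}$). Such a pullback of $(\mathrm t^{K})_{\ast}\PD(\delta_m)$ is supported on the \emph{full} preimage $\pr^{-1}(\mathrm t^{K}(\mathrm C_\infty))$, of which $\mathrm t^{K'}(\mathrm C_\infty)$ is only one component: $\pr$ restricted to $\mathrm t^{K'}(\mathrm C_\infty)$ is a homeomorphism onto $\mathrm t^{K}(\mathrm C_\infty)$, while $\pr$ has degree $[K:K']$, so there are extra sheets. (Already for $F=\mathbf Q$ with $K = K_1(1)$ and $K' = K_1(N)$, the preimage in $Y_1(N)$ of the modular geodesic from $0$ to $\infty$ is a union of many geodesics joining various cusps.) These extra components contribute nontrivially, so the claim you flag as ``the point to pin down'' — that the class detects only the lift through $\mathrm t^{K'}$ with multiplicity one — is exactly what fails, and no projection-formula manipulation closes it. Formally: $\pr_{\ast}\pr^{!}$ is multiplication by $\deg(\pr)$, so $\pr^{!}\cl_\infty^{K}(m) = \cl_\infty^{K'}(m)$ together with the (true) pushforward identity would force $\deg(\pr)\cdot\cl_\infty^{K}(m) = \cl_\infty^{K}(m)$, i.e.\ $\cl_\infty^{K}(m)=0$ whenever $K' \subsetneq K$. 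Keep your first paragraph, conclude by functoriality of $(-)_{\ast}$, and delete the base-change argument.
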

\begin{proof}
The two possible embeddings $\mathrm t$ commute with the projection $Y_{K'} \rightarrow Y_K$.
\end{proof}
We end by recording how Archimedean Hecke operators act on the Archimedean evaluation classes.

\begin{prop}\label{prop:pushforward-eval-class}
If $\zeta \in \pi_0(F_\infty^\times)$ then $T_{\zeta} \cl_\infty(m) = \zeta^{-m}\cl_\infty(m)$.
\end{prop}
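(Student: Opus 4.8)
The plan is to trace through the definition of $\cl_\infty(m)=\mathrm t_\ast(\PD(\delta_m))$ and see exactly where the operator $T_\zeta$ acts. Recall that $T_\zeta$ is, by the discussion in Section~\ref{subsec:archimedean}, the pullback along right multiplication by $\begin{smallpmatrix}\zeta\\&1\end{smallpmatrix}$ on $Y_K$; since $\begin{smallpmatrix}\zeta\\&1\end{smallpmatrix}\in\GL_2(F_\infty)$ normalizes $K_\infty^\circ$, this really is an honest homeomorphism $\rho_\zeta\colon Y_K\to Y_K$, and its effect on Borel--Moore homology is the pushforward $(\rho_\zeta)_\ast$ composed with the identification of local systems coming from the action of $\begin{smallpmatrix}\zeta\\&1\end{smallpmatrix}$. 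First I would observe that the Shintani embedding $\mathrm t\colon\mathrm C_\infty\hookrightarrow Y_K$, $\mathrm t(x)=\begin{smallpmatrix}x\\&1\end{smallpmatrix}$, intertwines $\rho_\zeta$ with right multiplication $r_\zeta\colon\mathrm C_\infty\to\mathrm C_\infty$ by $\zeta\in\mathbf A_F^\times$ (viewing $\pi_0(F_\infty^\times)\hookrightarrow F_\infty^\times\hookrightarrow\mathbf A_F^\times$ via $\sgn$): indeed $\begin{smallpmatrix}x\\&1\end{smallpmatrix}\begin{smallpmatrix}\zeta\\&1\end{smallpmatrix}=\begin{smallpmatrix}x\zeta\\&1\end{smallpmatrix}$. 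So by naturality of pushforward, $T_\zeta\cl_\infty(m)=\mathrm t_\ast\big((r_\zeta)_\ast\PD(\delta_m)\big)$, reducing everything to a computation purely on $\mathrm C_\infty$.

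Next I would compute $(r_\zeta)_\ast\PD(\delta_m)$ on $\mathrm C_\infty$. By Proposition~\ref{prop:oritentation-preserving} (or rather its variant: right multiplication on $\mathrm C_\infty$ is orientation-preserving, $(r_\zeta)_\ast[\mathrm C_\infty]=[\mathrm C_\infty]$) together with the naturality of Poincar\'e duality \eqref{eqn:naturality-PD}, we get $(r_\zeta)_\ast\PD(\delta_m)=\PD\big((r_\zeta^{-1})^\ast\delta_m\big)=\PD\big(r_\zeta^\ast\delta_m\big)^{-1}$—more carefully, $(r_\zeta)_\ast\PD=\PD\circ(r_\zeta^{-1})^\ast$ on $H^0$, so I need $(r_{\zeta^{-1}})^\ast\delta_m$. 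Using Definition/Proposition~\ref{prop:eval-3}, $\delta_m(x)=(\sgn(x_\infty)|x_f|_{\mathbf A_F})^m\delta_m^\star$, and the pullback of this section along right translation by $\zeta$ evaluates to $\delta_m(x\zeta)=(\sgn(\zeta_\infty)|\zeta_f|_{\mathbf A_F})^m\delta_m(x)$; since $\zeta\in\pi_0(F_\infty^\times)$ sits inside $F_\infty^\times$ with trivial finite part, $|\zeta_f|_{\mathbf A_F}=1$ and $\sgn(\zeta_\infty)=\zeta$ (under our identification $\pi_0(F_\infty^\times)\simeq\{\pm1\}^{\Sigma_F}$), giving a scaling by $\zeta^m$. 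Chasing the direction of the pullback (we want $r_{\zeta^{-1}}$, not $r_\zeta$) then produces the factor $\zeta^{-m}$ rather than $\zeta^m$, and since $\zeta^2=1$ this is a harmless but worth-tracking sign. Hence $(r_\zeta)_\ast\PD(\delta_m)=\zeta^{-m}\PD(\delta_m)$, and applying $\mathrm t_\ast$ yields $T_\zeta\cl_\infty(m)=\zeta^{-m}\cl_\infty(m)$.

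The only genuinely delicate point—and where I would be most careful—is bookkeeping the variance: which map is a pushforward versus a pullback, and whether the relevant translation is by $\zeta$ or $\zeta^{-1}$, given that $T_\zeta$ is defined as a \emph{pullback} on cohomology but we are computing its action on \emph{homology} (hence via a pushforward along the inverse homeomorphism). Formulas \eqref{eqn:naturality-PD} and \eqref{eqn:push-pull-caps-formula}, together with Proposition~\ref{prop:oritentation-preserving}, are exactly the tools that make this transparent, but one must apply them with the arrows pointing the right way. Because $\zeta^2=1$ in $\pi_0(F_\infty^\times)$, the sign ambiguity in the exponent is ultimately immaterial, but I would still present the computation so that the exponent $-m$ (matching the analogous Proposition~\ref{prop:twisting-Tzetas} and the intended use in Section~\ref{subsec:padicLfunctions}) comes out cleanly. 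Everything else—smoothness/orbifold subtleties on $Y_K$, the precise meaning of $\PD$ on $\mathrm C_\infty$—is subsumed in the cited results from Section~\ref{subsec:topology} and Section~\ref{subsec:sym-spaces}, so the proof should be short.
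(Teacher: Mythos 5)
Your proposal is correct and follows essentially the same route as the paper: identify $T_\zeta$ on Borel--Moore homology with the pushforward $(\rho_\zeta)_\ast$, use $\rho_\zeta\circ\mathrm t=\mathrm t\circ r_\zeta$ to reduce to $\mathrm C_\infty$, compute $r_\zeta^\ast\delta_m=\zeta^m\delta_m$ from Definition/Proposition \ref{prop:eval-3}, and conclude via Proposition \ref{prop:oritentation-preserving} and \eqref{eqn:naturality-PD}. The variance bookkeeping you flag is handled identically in the paper (which writes $\delta_m=\zeta^{-m}r_\zeta^\ast\delta_m$ and applies $f_\ast\PD f^\ast=\PD$ directly rather than inverting the pullback), and as you note the involutivity of $\zeta$ renders the residual sign ambiguity harmless.
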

\begin{proof}
Write $\rho_\zeta: Y_K \rightarrow Y_K$ for right-multiplication by $\begin{smallpmatrix} \zeta \\ & 1 \end{smallpmatrix}$, so $T_\zeta$ acting on homology is the pushfoward $(\rho_\zeta)_{\ast}$. Also write $r_\zeta: \mathrm C_\infty \rightarrow \mathrm C_\infty$ for right multiplication by $\zeta$ so that $\rho_\zeta \circ \mathrm t = \mathrm t \circ r_\zeta$. Since $\zeta = \sgn(\zeta_\infty)$, it follows from the definition of $\delta_m$ that $r_\zeta^{\ast}\delta_m = \zeta^m \delta_m$. Using this, we get
\begin{equation*}
T_\zeta \cl_\infty(m) = (\rho_\zeta)_{\ast} \mathrm t_{\ast} \PD (\delta_m)
= \mathrm t_{\ast} (r_{\zeta})_{\ast} \PD(\zeta^{-m} r_\zeta^{\ast}\delta_m)
= \zeta^{-m} \mathrm t_{\ast} (r_\zeta)_{\ast}\PD(r_{\zeta}^{\ast}\delta_m).
\end{equation*}
The proposition now follows from Proposition \ref{prop:oritentation-preserving} and \eqref{eqn:naturality-PD}.
\end{proof}

\subsection{Special values of $L$-functions}\label{subsec:special-values}
Throughout this subsection we will use $\lambda$ to denote a cohomological weight, $m$ an integer that is critical with respect to $\lambda$, and $\mathfrak n$ an integral ideal. Further, we will use $\langle -, - \rangle$ to denote the natural pairing (see Section \ref{subsec:topology})
\begin{equation*}
\langle -, - \rangle: H^d_c(Y_K,{\mathscr L}_\lambda(E)) \otimes_{E} H_d^{\BM}(Y_K, {\mathscr L}_\lambda(E)^\vee) \rightarrow E.
\end{equation*}
We combine our previous results to compute pairing between the image of the Eichler--Shimura map and Archimedean evaluation classes.
\begin{thm}\label{thm:cap-product}
If $\phi \in S_{\lambda}(K_1(\mathfrak n))$, then $\langle \ES(\phi),\cl_\infty(m)\rangle = i^{1 + m + {\kappa - w\over 2}}\mathbf M(\phi,m)$.
\end{thm}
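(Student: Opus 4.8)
The plan is to unwind both sides using the adelic/chain-level descriptions from Sections \ref{subsec:adelic-cochains}, \ref{subsec:eichler-shimura} and \ref{subsect:eval-classes}, and to reduce the pairing on $Y_K$ to an integral over the Shintani cone $\mathrm C_\infty$, which will then be recognizable as the Mellin transform $\mathbf M(\phi,m)$ up to the asserted power of $i$. Concretely, by the projection formula \eqref{eqn:push-pull-caps-formula} (and the fact that $\mathrm t$ is proper, so $\mathrm t_\ast$ is adjoint to $\mathrm t^\ast$ under the pairing $\langle-,-\rangle$), we have
\begin{equation*}
\langle \ES(\phi), \cl_\infty(m)\rangle = \langle \ES(\phi), \mathrm t_\ast\PD(\delta_m)\rangle = \langle \mathrm t^\ast\ES(\phi), \PD(\delta_m)\rangle = \langle \mathrm t^\ast\ES(\phi) \cup \delta_m, [\mathrm C_\infty]\rangle,
\end{equation*}
using that $\mathrm t^\ast \mathscr L_\lambda(E)^\vee = \mathrm t^\ast\mathscr L_\lambda(E)^\vee$ and the compatibility \eqref{eqn:push-pull-duality-formula} of Poincar\'e duality with proper pushforward; the last pairing is $\int_{\mathrm C_\infty}$ of the scalar-valued $d$-form obtained by contracting $\mathrm t^\ast\omega_{\mathbf f_\phi}$ against $\delta_m$. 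So the whole computation is: pull the Eichler--Shimura form $\omega_{\mathbf f_\phi}$ back along $\mathrm t: \mathrm C_\infty \hookrightarrow Y_K$ and pair it pointwise with $\delta_m$.

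Next I would make this pairing explicit. By Proposition \ref{prop:eichler-shimura}(1), $\omega_{\mathbf f_\phi}(z,g_f) = \mathbf f_\phi(z,g_f)(z+X)^\kappa\, dz$, and along $\mathrm t$ we restrict to the slice $z = x_\infty i$ with $g_f = x_f$, i.e.\ $z_\sigma = i x_\sigma$. We chose the orientation on $\mathrm C_\infty$ precisely so that $dz$ restricts to $dx_\infty/x_\infty$ (Section \ref{subsec:eichler-shimura}), which will produce the factor $i^{\#\Sigma_F}$-type contribution: writing $dz_\sigma = i\, dx_\sigma$ at this slice introduces $i^d$, but the normalization absorbs part of this — this bookkeeping is exactly what yields the exponent $1 + m + {\kappa-w\over 2}$ (summed over $\sigma$) rather than something cruder, and I expect it to be the fiddliest point. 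Now apply $\delta_m^\star$ to $(z+X)^\kappa = (ix_\infty + X)^\kappa$: by the multinomial theorem and Definition \ref{defn:deltam-star}, $\delta_m^\star$ picks out the coefficient of $X^{j}$ with $j = {\kappa+w\over 2} - m$, killing the binomial coefficient ${\kappa \choose j}$ that multiplies it, so $\delta_m^\star((ix_\infty+X)^\kappa) = (ix_\infty)^{\kappa - j} = (ix_\infty)^{{\kappa - w\over 2} + m}$ (componentwise, then take the product over $\sigma$). Combined with the twist $(\sgn(x_\infty)|x_f|_{\mathbf A_F})^m$ built into $\delta_m$ (Definition/Proposition \ref{prop:eval-3}) and the relation between $\mathbf f_\phi$ and $\phi$ (so that $\mathbf f_\phi(x_\infty i, x_f) = \det(\cdot)^{w-\kappa\over 2 -1} j(\cdot,i)^{\kappa+2}\phi(\begin{smallpmatrix} x_\infty x_f & \\ & 1\end{smallpmatrix})$, evaluated along the diagonal torus), the integrand over $\mathrm C_\infty \simeq F^\times\backslash\mathbf A_F^\times/\widehat{\mathcal O}_F^\times$ becomes, after collecting powers of $x_\infty$ and absolute values, a constant times $\phi(\begin{smallpmatrix} x & \\ & 1\end{smallpmatrix})|x|_{\mathbf A_F}^m$ integrated against $d^\times x$ — i.e.\ exactly $\mathbf M(\phi,m)$ as in \eqref{eqn:mellin-transform}.

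The main obstacle, as indicated, is keeping the archimedean normalizations consistent: there are four places where powers of $i$, of $\det$, of $x_\infty$, of the automorphy factor $j(g_\infty,i)$, and of $|x|_{\mathbf A_F}$ enter — the definition of $\mathbf f_\phi$ from $\phi$, the explicit form of $\omega_{\mathbf f}$ in Proposition \ref{prop:eichler-shimura}, the orientation convention $dz \leftrightarrow dx_\infty/x_\infty$, and the definition of $\delta_m$ with its $(\sgn(x_\infty)|x_f|)^m$ twist. I would organize the proof by working place-by-place in $\Sigma_F$: at a single real place the computation is the classical one relating the Eichler--Shimura class of a modular form to its Mellin transform, giving the local factor $i^{1+m+{\kappa_\sigma - w\over 2}}$; then take the product over $\sigma\in\Sigma_F$, which multiplies the exponents as claimed since $m$ and $w$ are constant in $\sigma$ while $\kappa$ varies. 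Convergence and the interchange of the sum over $\xi \in F_+^\times$ (from the Fourier expansion, Proposition \ref{prop:fourier-expansion}) with the integral over $\mathrm C_\infty$ is handled exactly as in the proof of Proposition \ref{prop:integral-representation}, since the rapid decay in Proposition \ref{prop:eichler-shimura}(1) is what makes $\mathrm t^\ast\omega_{\mathbf f_\phi}$ compactly-supported-enough to integrate.
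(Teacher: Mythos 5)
Your proposal is correct and follows essentially the same route as the paper's proof: reduce via the push--pull formula \eqref{eqn:push-pull-duality-formula} to $\langle \mathrm t^{\ast}\omega_{\mathbf f} \cup \delta_m, [\mathrm C_\infty]\rangle$, evaluate $\delta_m^{\star}$ on $(ix_\infty+X)^{\kappa}$ to get $(ix_\infty)^{{\kappa-w\over 2}+m}$, and recognize the resulting integral as $\mathbf M(\phi,m)$. The only step you elide is the initial reduction to a neat level subgroup (via Proposition \ref{prop:eichler-shimura}(3), Lemma \ref{lemma:infinity-class}, and Proposition \ref{prop:neatness}) so that $\omega_{\mathbf f}$ is a genuine differential form on a manifold, but this is a minor technicality.
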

\begin{rmk}
Note that since $\kappa = (\kappa_\sigma)$ is a $\Sigma_F$-tuple, $i^{1+m+{\kappa-w\over 2}}$ means the product $\prod_\sigma i^{1+m+{\kappa_\sigma-w \over 2}}$.
\end{rmk}
\begin{proof}[Proof of Theorem \ref{thm:cap-product}]
By Proposition \ref{prop:eichler-shimura}(3), Lemma \ref{lemma:infinity-class}, Proposition \ref{prop:neatness}, and because $\mathbf M(\phi,s)$ only depends on the underlying automorphic form $\phi$, we can and will assume that $K_1(\mathfrak n)$ is a neat level subgroup. Then, we will write $\mathbf f = \mathbf f_{\phi} \in S_{\lambda}^{\hol}(K_1(\mathfrak n))$ for the holomorphic Hilbert modular form corresponding to $\phi$, and $\omega_{\mathbf f} = \ES(\mathbf f)$ for the bona fide differential form on $Y_{1}(\mathfrak n)$ constructed in Proposition \ref{prop:eichler-shimura}. Now we turn towards computation. By the push-pull formula \eqref{eqn:push-pull-duality-formula} we have
\begin{equation}\label{eqn:evaluation-precompute-pairing}
\langle \omega_{\mathbf f}, \cl_\infty(m) \rangle = \langle \mathrm t^{\ast}\omega_{\mathbf f} \cup \delta_m, [\mathrm C_\infty] \rangle
\end{equation}
where $\cup$ is the cup product
\begin{equation*}
\cup : H^d_c(\mathrm C_\infty, \mathrm t^{\ast}{\mathscr L}_\lambda(E)) \otimes_{E} H^0(\mathrm C_\infty, \mathrm t^{\ast}{\mathscr L}_\lambda(E)^\vee) \rightarrow H^d_c(\mathrm C_\infty, E).
\end{equation*}
Let us first compute the $E$-valued differential form $\mathrm t^{\ast}\omega_{\mathbf f} \cup \delta_m$ on $\mathrm C_\infty$.  We recall that we have fixed our coordinate $z$ at the start of Section \ref{subsec:eichler-shimura} to be compatible with the canonical coordinate $x_\infty$ on $(F_\infty^\times)^{\circ}$. Thus, $\mathrm t^{\ast}\omega_{\mathbf f}$ is the $d$-form on $\mathrm C_\infty$ given in coordinates on $\mathbf A_{F,+}^\times = (F_{\infty}^\times)^{\circ} \times \mathbf A_{F,f}^\times$ by
\begin{equation*}
\mathrm t^{\ast}\omega_{\mathbf f}(x_\infty, x_f) = \mathbf f\left(i x_\infty, \begin{smallpmatrix} x_f \\ & 1 \end{smallpmatrix}\right)(ix_\infty + X)^{\kappa} d(ix_\infty)
\end{equation*}
for $x = x_\infty x_f \in \mathbf A_{F,+}^\times$. Further, by definition, $\delta_m^{\star}\left((ix_\infty + X)^{\kappa}\right) = (ix_\infty)^{{\kappa - w \over 2}+m}$. So, in coordinates we have
\begin{align}\label{eqn:form-expansion}
(\mathrm t^{\ast}\omega_{\mathbf f} \cup \delta_m)(x_\infty, x_f) &= \delta_m(x)\left(\mathbf f\left(i x_\infty, \begin{smallpmatrix} x_f \\ & 1 \end{smallpmatrix}\right)(ix_\infty + X)^{\kappa}\right)d(ix_\infty)\\
&= i^d \mathbf f\left(i x_\infty, \begin{smallpmatrix} x_f \\ & 1 \end{smallpmatrix}\right)|x_f|^m_{\mathbf A_F} (ix_\infty)^{{\kappa - w \over 2} + m}dx_\infty\nonumber\\
&= i^{1 + m + {\kappa - w \over 2}} |x|_{\mathbf A_F}^m\phi\left(\begin{smallpmatrix} x \\ & 1 \end{smallpmatrix}\right) {dx_\infty \over x_\infty}\nonumber.
\end{align}
Now we note that the pairing \eqref{eqn:evaluation-precompute-pairing} is computed by integrating $\mathrm t^{\ast} \omega_{\mathbf f} \cup \delta_m$ over $\mathrm C_\infty$. Since $x \mapsto |x|_{\mathbf A_F}^m \phi\left(\begin{smallpmatrix} x \\ & 1 \end{smallpmatrix}\right)$ is invariant under right multiplication by $\widehat{\mathcal O}_F^\times$, we get from \eqref{eqn:form-expansion} that
\begin{align*}
\langle \mathrm t^{\ast}\omega_{\mathbf f} \cup \delta_m, [\mathrm C_\infty] \rangle &= \int_{\mathrm C_\infty} \mathrm t^{\ast}\omega_{\mathbf f} \cup \delta_m\\
 &= i^{1 + m + {\kappa - w \over 2}} \int_{F_+^\times \backslash \mathbf A_{F,+}^\times}\phi \left(\begin{smallpmatrix} x \\ & 1 \end{smallpmatrix}\right) |x|^m_{\mathbf A_F} d^\times x\\
&= i^{1+m+{\kappa-w\over 2}}\mathbf M(\phi,m).
\end{align*}
This completes the proof.
\end{proof}

\begin{cor}\label{cor:cap-L-values}
If $\phi \in S_{\lambda}(K_1(\mathfrak n))$, then
\begin{equation*}
\langle \ES(\phi), \cl_\infty(m) \rangle =  i^{1 + m + {\kappa-w\over 2}}\Delta_{F/\mathbf Q}^{m+1}\Lambda(\phi,m+1).
\end{equation*}
\end{cor}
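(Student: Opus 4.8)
The proof is an immediate combination of Theorem \ref{thm:cap-product} with Proposition \ref{prop:integral-representation}. The plan is as follows. First I would invoke Theorem \ref{thm:cap-product}, which gives
\begin{equation*}
\langle \ES(\phi), \cl_\infty(m) \rangle = i^{1 + m + {\kappa - w\over 2}}\mathbf M(\phi,m).
\end{equation*}
Then I would substitute for $\mathbf M(\phi,m)$ using Proposition \ref{prop:integral-representation}, which asserts $\mathbf M(\phi,s) = \Delta_{F/\mathbf Q}^{s+1}\Lambda(\phi,s+1)$. Specializing at $s = m$ (which is legitimate since $\phi \in S_\lambda(K_1(\mathfrak n))$ is exactly the hypothesis of that proposition) yields $\mathbf M(\phi,m) = \Delta_{F/\mathbf Q}^{m+1}\Lambda(\phi,m+1)$, and combining the two identities gives the claim.

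There is essentially no obstacle here: the corollary is a bookkeeping consequence of two previously established results, and the only point requiring a moment's care is that the critical integer $m$ lies in the range where everything is defined — but $\Lambda(\phi,s)$ is entire (Section \ref{subsec:L-functions}) and $\mathbf M(\phi,s)$ is absolutely convergent for all $s \in \mathbf C$ (see the discussion after \eqref{eqn:mellin-transform}), so no convergence issue arises. One should also note that the interpolation factor $i^{1+m+{\kappa-w\over 2}}$ is, as explained in the remark following Theorem \ref{thm:cap-product}, shorthand for the product $\prod_{\sigma \in \Sigma_F} i^{1+m+{\kappa_\sigma - w\over 2}}$, which matches the shape of the period normalization appearing in Shimura's algebraicity theorem (the factor $\Delta_{F/\mathbf Q}^{m+1}$ likewise matches). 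I would therefore present the proof in two sentences, citing Theorem \ref{thm:cap-product} and Proposition \ref{prop:integral-representation} in turn.

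Finally, I would remark (though this is not strictly part of the proof) that this corollary is the precise point of contact between the cohomological machinery and Shimura's theorem: the pairing $\langle \ES(\phi), \cl_\infty(m)\rangle$ is manifestly an algebraic-over-$E$ quantity once $\ES(\phi)$ is normalized by a period as in Corollary \ref{corollary:normalize-by-periods}, so the displayed formula is what upgrades the analytic special value $\Lambda(\phi,m+1)$ to an algebraic number after dividing by $\Omega_\pi^\epsilon$. This is why the $p$-adic evaluation classes $\cl_p(m)$ introduced later are designed to mirror $\cl_\infty(m)$.
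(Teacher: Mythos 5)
Your proof is correct and is exactly the paper's argument: the corollary follows immediately by combining Theorem \ref{thm:cap-product} with Proposition \ref{prop:integral-representation} evaluated at $s=m$. The additional remarks on convergence and normalization are accurate but not needed.
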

\begin{proof}
This is immediate from Proposition \ref{prop:integral-representation} and Theorem \ref{thm:cap-product}.
\end{proof}
In the special case of a $p$-refined newform, we have the following.
\begin{cor}\label{cor:mellin-twist-pair}
Let $p$ be a prime. Suppose that $\pi$ is a cohomological cuspidal automorphic representation of conductor $\mathfrak n$, $\alpha$ is a $p$-refinement of $\pi$, and $\theta$ is a finite order Hecke character with conductor of the form $\mathfrak f = \prod_{v \mid p} \mathfrak p_v^{f_v}$ with $f_v \geq 0$. If $v \mid p$ and $\pi_v$ is a principal series representation, then write $\beta_v = a_\pi(\mathfrak p_v) -\alpha_v$. Then,
\begin{equation*}
\langle \ES(\phi_{\pi,\alpha}\otimes \theta), \cl_\infty(m) \rangle = \bigl(\prod_{\substack{v\mid p\\ \mathfrak p_v \nmid \mathfrak{nf}}} (1-\beta_v\theta_v(\varpi_v)q_v^{-(m+1)}) \bigr) i^{1+m+{\kappa-w\over 2}}\Delta_{F/\mathbf Q}^{m+1} \Lambda(\pi\otimes\theta,m+1).
\end{equation*}
\end{cor}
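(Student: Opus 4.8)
The plan is to obtain this corollary as a direct concatenation of Theorem~\ref{thm:cap-product} with part~(5) of Proposition~\ref{prop:L-values-agree}, preceded by a short remark about levels. First I would record, using Proposition~\ref{prop:L-values-agree}(1), that the twist $\phi_{\pi,\alpha}\otimes\theta$ is a normalized eigenform lying in $S_\lambda(K_1((\mathfrak n\cap\mathbf p)\mathfrak f^2))$, and that this level is $\mathrm t$-good by Proposition~\ref{prop:neatness}(2); hence $\ES(\phi_{\pi,\alpha}\otimes\theta)$, the Archimedean evaluation class $\cl_\infty(m)$, and the pairing between them are all defined at the common level $K_1((\mathfrak n\cap\mathbf p)\mathfrak f^2)$, the level-independence of $\cl_\infty(m)$ being Lemma~\ref{lemma:infinity-class}. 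Applying Theorem~\ref{thm:cap-product} (with $\mathfrak n$ there replaced by $(\mathfrak n\cap\mathbf p)\mathfrak f^2$ and $\phi=\phi_{\pi,\alpha}\otimes\theta$) then gives
\begin{equation*}
\langle \ES(\phi_{\pi,\alpha}\otimes\theta),\cl_\infty(m)\rangle = i^{1+m+{\kappa-w\over 2}}\,\mathbf M(\phi_{\pi,\alpha}\otimes\theta,m).
\end{equation*}

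Next I would substitute the Mellin-transform formula of Proposition~\ref{prop:L-values-agree}(5) evaluated at $s=m$,
\begin{equation*}
\mathbf M(\phi_{\pi,\alpha}\otimes\theta,m) = \Bigl[\prod_{\substack{v\mid p\\ \mathfrak p_v\nmid \mathfrak{nf}}}\bigl(1-\beta_v\theta_v(\varpi_v)q_v^{-(m+1)}\bigr)\Bigr]\,\Delta_{F/\mathbf Q}^{m+1}\,\Lambda(\pi\otimes\theta,m+1),
\end{equation*}
and combine the two displays to read off the asserted identity. As an alternative, one could run the argument through Corollary~\ref{cor:cap-L-values} applied to $\phi_{\pi,\alpha}\otimes\theta$ and then invoke parts~(2)--(4) of Proposition~\ref{prop:L-values-agree} to compare $\Lambda(\phi_{\pi,\alpha}\otimes\theta,s)$ with $\Lambda(\pi\otimes\theta,s)$ Euler factor by Euler factor; here the Archimedean completion factors $\Gamma_{\mathbf C}(s+{\kappa-w\over 2})$ match because neither twisting by $\theta$ nor $p$-refining alters the weight $\lambda$.

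I do not expect a serious obstacle: the two genuine pieces of mathematics — the duality and integration computation over $Y_K$, and the Euler-product bookkeeping comparing $\phi_{\pi,\alpha}\otimes\theta$ with $\pi\otimes\theta$ — have already been isolated as Theorem~\ref{thm:cap-product} and Proposition~\ref{prop:L-values-agree} respectively, so this corollary is purely their composite. The only point meriting a line of care, and hence the closest thing here to a ``main obstacle,'' is the bookkeeping that places $\ES$, $\cl_\infty(m)$, and the pairing at the single level $K_1((\mathfrak n\cap\mathbf p)\mathfrak f^2)$ before Theorem~\ref{thm:cap-product} is invoked, which follows at once from Lemma~\ref{lemma:infinity-class} and Proposition~\ref{prop:neatness}(2).
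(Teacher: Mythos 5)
Your proposal is correct and follows exactly the paper's own proof, which likewise applies Theorem~\ref{thm:cap-product} to $\phi=\phi_{\pi,\alpha}\otimes\theta$ and then substitutes the Mellin-transform identity of Proposition~\ref{prop:L-values-agree}(5) at $s=m$. The extra remarks on levels and the alternative route via Corollary~\ref{cor:cap-L-values} are fine but not needed.
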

\begin{proof}
Apply Theorem \ref{thm:cap-product} to $\phi = \mathbf \phi_{\pi,\alpha}\otimes\theta$, and then apply Proposition \ref{prop:L-values-agree}.
\end{proof}

Prior to the final result of this section, we need one more calculation.
\begin{lem}\label{lem:allowed-to-project}
Let $\theta$ be a finite order Hecke character and $E \subset \mathbf C$ a field containing the Galois closure of $F$ and the values of $\theta$. For each $\omega \in H^d_c(Y_1(\mathfrak n),\mathscr L_{\lambda}(E))$ and  $\zeta \in \pi_0(F_\infty)$ we have
\begin{equation}\label{eqn:key-pairing}
\langle \tw_{\theta}(T_\zeta \omega), \cl_\infty(m) \rangle = \theta(\zeta)\zeta^{-m}\langle \tw_\theta(\omega), \cl_\infty(m) \rangle.
\end{equation}
In particular, if $\epsilon \in \{\pm 1\}^{\Sigma_F}$ is uniquely defined by $\epsilon(\zeta) = \theta^{-1}(\zeta)\zeta^{m}$ for all $\zeta \in \pi_0(F_\infty^\times)$, then
\begin{equation*}
\langle \tw_{\theta}(\omega), \mathrm \cl_\infty(m)\rangle = \langle \tw_{\theta}(\pr^{\epsilon} \omega), \cl_\infty(m) \rangle.
\end{equation*}
\end{lem}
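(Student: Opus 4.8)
The plan is to deduce both assertions formally from two equivariance statements already in hand: Proposition~\ref{prop:twisting-Tzetas}, which gives $T_\zeta\circ\tw_\theta = \theta(\zeta)\,\tw_\theta\circ T_\zeta$ on compactly supported cohomology, and Proposition~\ref{prop:pushforward-eval-class}, which gives $T_\zeta\,\cl_\infty(m) = \zeta^{-m}\cl_\infty(m)$ in Borel--Moore homology (applicable at level $K_1(\mathfrak n\mathfrak f^2)$, which is $\mathrm t$-good by Proposition~\ref{prop:neatness}). The only additional ingredient is the adjointness of the Archimedean Hecke operator under $\langle-,-\rangle$: on cohomology $T_\zeta$ is pullback along the homeomorphism $\rho_\zeta$ of $Y_K$ given by right multiplication by $\begin{smallpmatrix} \zeta \\ & 1 \end{smallpmatrix}$, and on homology $T_\zeta$ is the corresponding pushforward $(\rho_\zeta)_\ast$; since $\rho_\zeta$ is proper, the adjointness of $f^\ast$ and $f_\ast$ recorded in Section~\ref{subsec:topology} (via \eqref{eqn:push-pull-caps-formula}) yields $\langle T_\zeta\Psi,\Phi\rangle = \langle\Psi,T_\zeta\Phi\rangle$.

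Granting this, the identity \eqref{eqn:key-pairing} is a short chain of rewrites. First, Proposition~\ref{prop:twisting-Tzetas} gives $\tw_\theta(T_\zeta\omega) = \theta(\zeta)^{-1}T_\zeta(\tw_\theta\omega)$; then adjointness moves $T_\zeta$ onto $\cl_\infty(m)$; then Proposition~\ref{prop:pushforward-eval-class} replaces $T_\zeta\,\cl_\infty(m)$ by $\zeta^{-m}\cl_\infty(m)$. This produces
\[
\langle\tw_\theta(T_\zeta\omega),\cl_\infty(m)\rangle = \theta(\zeta)^{-1}\zeta^{-m}\,\langle\tw_\theta(\omega),\cl_\infty(m)\rangle,
\]
and one finishes by observing that $\zeta$ has order dividing $2$ in $\pi_0(F_\infty^\times)$, so $\theta(\zeta)^2 = \theta(\zeta^2) = 1$ and hence $\theta(\zeta)^{-1} = \theta(\zeta)$.

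For the second assertion I would expand $\pr^\epsilon = 2^{-d}\sum_{\zeta\in\pi_0(F_\infty^\times)}\epsilon(\zeta)T_\zeta$ and use the linearity of $\tw_\theta$ and of the pairing together with \eqref{eqn:key-pairing} to get
\[
\langle\tw_\theta(\pr^\epsilon\omega),\cl_\infty(m)\rangle = \frac{1}{2^d}\sum_{\zeta}\epsilon(\zeta)\,\theta(\zeta)\,\zeta^{-m}\;\langle\tw_\theta(\omega),\cl_\infty(m)\rangle.
\]
Since $\epsilon(\zeta) = \theta^{-1}(\zeta)\zeta^{m}$ by hypothesis, each scalar $\epsilon(\zeta)\theta(\zeta)\zeta^{-m}$ equals $\bigl(\theta^{-1}(\zeta)\theta(\zeta)\bigr)\bigl(\zeta^m\zeta^{-m}\bigr) = 1$, so the average collapses to $\langle\tw_\theta(\omega),\cl_\infty(m)\rangle$, as desired. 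I do not expect a genuine obstacle: the whole argument is bookkeeping on top of Propositions~\ref{prop:twisting-Tzetas} and \ref{prop:pushforward-eval-class}, and the only points needing a moment's attention are the adjointness of $T_\zeta$ under the pairing and the harmless identities $\theta(\zeta)^{-1} = \theta(\zeta)$ and $\zeta^{-m}\cdot\zeta^{m} = 1$ coming from $\zeta$ being $2$-torsion.
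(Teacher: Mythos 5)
Your proof is correct and follows essentially the same route as the paper's: Proposition~\ref{prop:twisting-Tzetas} to commute $T_\zeta$ past $\tw_\theta$, adjointness of pullback/pushforward under $\langle-,-\rangle$ to move $T_\zeta$ onto the evaluation class, and Proposition~\ref{prop:pushforward-eval-class} to extract the factor $\zeta^{-m}$. You are in fact slightly more explicit than the paper on the harmless identity $\theta(\zeta)^{-1}=\theta(\zeta)$ and on the averaging argument for the second assertion, both of which the paper leaves implicit.
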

\begin{proof}
Proposition \ref{prop:twisting-Tzetas} and the adjointness of pushfowards/pullbacks under $\langle -, - \rangle$ implies that
\begin{equation*}
\langle \tw_{\theta}(T_\zeta \omega), \cl_\infty(m) \rangle = \theta(\zeta) \langle T_\zeta \tw_{\theta}(\omega), \cl_\infty(m) \rangle = \theta(\zeta) \langle \tw_{\theta}(\omega), T_{\zeta}\cl_\infty(m) \rangle.
\end{equation*} 
So, \eqref{eqn:key-pairing} follows from Proposition \ref{prop:pushforward-eval-class}.
\end{proof}

\begin{rmk}
The next result is originally due to Shimura \cite{Shimura-HMF}, albeit with a minor restriction on the weight $\lambda$. The general result, and the method we have followed, is due to Hida. See \cite{Hida-CriticalValues}.
\end{rmk}

\begin{thm}\label{thm:algebraic-normalization}
Let $\pi$ be a cohomological cuspidal automorphic representation of weight $\lambda$. Write $E$ for the smallest subfield of $\mathbf C$ containing $\mathbf Q(\pi)$ and the Galois closure of $F$. Then, for each $\epsilon \in \{\pm 1\}^{\Sigma_F}$ there exists $\Omega_\pi^{\epsilon} \in \mathbf C^\times$ such that, if $\theta$ is a finite order Hecke character of conductor $\mathfrak f$, then
\begin{equation}\label{eqn:shimura-algebraicity}
{\sgn(\theta_\infty)N_{F/\mathbf Q}(\mathfrak f)i^{1+m+{\kappa-w\over 2}}\Delta_{F/\mathbf Q}^{m+1}\Lambda(\pi\otimes \theta, m+1)\over G(\theta)\Omega_\pi^{\epsilon}} \in E(\theta),
\end{equation}
where 
\begin{enumerate}
\item $E(\theta)$ is the field generated by $E$ and the values of $\theta$, and
\item $\epsilon$ is chosen so that $\epsilon(\zeta) = \theta^{-1}(\zeta)\zeta^{m}$ for all $\zeta \in \pi_0(F_\infty^\times)$.
\end{enumerate}
\end{thm}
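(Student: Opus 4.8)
The plan is to combine the cohomological computation of $L$-values (Corollary \ref{cor:mellin-twist-pair}) with the rationality provided by the Eichler--Shimura map over $E$ (Corollary \ref{corollary:normalize-by-periods}), using the twisting operators $\tw_\theta$ on cohomology to handle the character $\theta$ and Lemma \ref{lem:allowed-to-project} to match signs. First I would apply Corollary \ref{cor:mellin-twist-pair} to the $p$-refined eigenform $\phi_{\pi,\alpha}$ for an auxiliary prime $p$ chosen so that $\theta$ has $p$-power conductor $\mathfrak f$ (in fact, since the statement is purely archimedean, one may take $\pi$ unramified at $p$ and the refinement irrelevant, or simply work directly with $\phi_\pi$ and $\tw_\theta$ so that no prime $p$ and no refinement are needed at all). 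Concretely, I would rewrite $\langle \ES(\phi_\pi\otimes\theta),\cl_\infty(m)\rangle = \langle\tw_\theta(\ES(\phi_\pi)),\cl_\infty(m)\rangle \cdot G(\theta^{-1})^{-1}$ up to the normalization $\phi\otimes\theta = G(\theta^{-1})^{-1}\tw_\theta(\phi)$, invoking Lemma \ref{lemma:twisting-compatibilities} to commute $\ES$ with $\tw_\theta$; Theorem \ref{thm:cap-product} and Proposition \ref{prop:integral-representation} then give that this pairing equals $i^{1+m+{\kappa-w\over 2}}\Delta_{F/\mathbf Q}^{m+1}\Lambda(\pi\otimes\theta,m+1)$ up to the Gauss-sum factor and the Euler factors from Proposition \ref{prop:L-values-agree}.

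Next I would feed in the period normalization. By Corollary \ref{corollary:normalize-by-periods} there are periods $\Omega_\pi^\epsilon\in\mathbf C^\times$ so that $\pr^\epsilon\ES(\mathbf f_\pi)/\Omega_\pi^\epsilon$ lands in $H^d_c(Y_1(\mathfrak n),\mathscr L_\lambda(E))^\epsilon[\psi_\pi]$. The twisting map $\tw_\theta$, the cup product with $\theta_{\det}$, the pushforward/pullback maps $v_{t_0}^\ast$, and the evaluation class $\cl_\infty(m)$ are all defined over the field $E(\theta)$ (the first by \eqref{eqn:twistin-cohomology} once $E$ contains the values of $\theta$, the second by Definition/Proposition \ref{prop:eval-3}). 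Therefore the pairing
\begin{equation*}
\left\langle \tw_\theta\!\left(\frac{\pr^\epsilon\ES(\mathbf f_\pi)}{\Omega_\pi^\epsilon}\right),\cl_\infty(m)\right\rangle
\end{equation*}
lies in $E(\theta)$. The point is then to check that this quantity equals, up to an explicit factor in $E(\theta)$ (a power of $i$ times rational numbers times $G(\theta^{-1})^{-1}$, and using \eqref{eqn:gauss-sum-relation} to convert $G(\theta^{-1})$ into $\sgn(\theta_\infty)N_{F/\mathbf Q}(\mathfrak f)/G(\theta)$), the expression in \eqref{eqn:shimura-algebraicity}.

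The remaining step, and the one requiring the most care, is to verify that inserting $\pr^\epsilon$ does not change the pairing, i.e. that $\langle\tw_\theta(\ES(\mathbf f_\pi)),\cl_\infty(m)\rangle = \langle\tw_\theta(\pr^\epsilon\ES(\mathbf f_\pi)),\cl_\infty(m)\rangle$ for the specific sign $\epsilon$ with $\epsilon(\zeta)=\theta^{-1}(\zeta)\zeta^m$. This is exactly the content of Lemma \ref{lem:allowed-to-project}, whose proof rests on Proposition \ref{prop:twisting-Tzetas} ($T_\zeta\circ\tw_\theta = \theta(\zeta)\tw_\theta\circ T_\zeta$) and Proposition \ref{prop:pushforward-eval-class} ($T_\zeta\cl_\infty(m)=\zeta^{-m}\cl_\infty(m)$); I would just cite it. One then assembles: the left side of \eqref{eqn:shimura-algebraicity} equals a fixed element of $i^{\mathbf Z}\cdot\mathbf Q^\times$ times $\langle\tw_\theta(\pr^\epsilon\ES(\mathbf f_\pi)/\Omega_\pi^\epsilon),\cl_\infty(m)\rangle\in E(\theta)$, using Corollary \ref{cor:mellin-twist-pair} and \eqref{eqn:gauss-sum-relation} to produce precisely the factor $\sgn(\theta_\infty)N_{F/\mathbf Q}(\mathfrak f)/G(\theta)$ and noting that the parasitic Euler factors $\prod_{v}(1-\beta_v\theta_v(\varpi_v)q_v^{-(m+1)})$ appearing in Corollary \ref{cor:mellin-twist-pair} also lie in $E(\theta)$ (they involve only Hecke eigenvalues of $\pi$ and values of $\theta$), so dividing them out keeps us in $E(\theta)$. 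The main obstacle, such as it is, is bookkeeping: tracking the archimedean sign characters carefully enough to see that the one sign $\epsilon$ governed by $\epsilon(\zeta)=\theta^{-1}(\zeta)\zeta^m$ is the one that survives, and confirming that every auxiliary operator used is genuinely $E(\theta)$-rational so that no transcendental factor other than $\Omega_\pi^\epsilon$ and $G(\theta)$ enters.
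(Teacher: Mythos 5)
Your proposal is correct and follows essentially the same route as the paper: periods from Corollary \ref{corollary:normalize-by-periods}, Lemma \ref{lem:allowed-to-project} to insert $\pr^\epsilon$ and obtain $E(\theta)$-rationality of the pairing, Lemma \ref{lemma:twisting-compatibilities} to commute $\ES$ with $\tw_\theta$, the cap-product computation of the $L$-value, and \eqref{eqn:gauss-sum-relation} to convert $G(\theta^{-1})$ into $\sgn(\theta_\infty)N_{F/\mathbf Q}(\mathfrak f)/G(\theta)$. The only difference is that the paper applies Corollary \ref{cor:cap-L-values} directly to $\phi_\pi\otimes\theta$ rather than passing through the $p$-refined Corollary \ref{cor:mellin-twist-pair} — the cleaner path you yourself identify — so no auxiliary prime, refinement, or division by (possibly vanishing) Euler factors is needed.
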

\begin{proof}
Write $\phi_{\pi}$ for the newform associated to $\pi$.  For each $\epsilon \in \{\pm 1\}^{\Sigma_F}$ choose the period  $\Omega_{\pi}^{\epsilon}$ as in Theorem \ref{thm:normalize-by-periods}. We claim that, given $\theta$, \eqref{eqn:shimura-algebraicity} now holds for the specific $\epsilon$ as in (3). 

To see the claim, let $\omega = \ES(\phi_{\pi})/\Omega_\pi^{\epsilon} \in H^d_c(Y_1(\mathfrak n),\mathscr L_{\lambda}(\mathbf C))$. The choice of period $\Omega_\pi^{\epsilon}$ means that $\pr^{\epsilon} \omega$ is actually defined over $E$ and so Lemma \ref{lem:allowed-to-project} implies that
\begin{equation}\label{eqn:key-rationality}
\langle \tw_{\theta}(\omega), \cl_\infty(m) \rangle \in E(\theta).
\end{equation}
On the other hand,
\begin{equation*}
\tw_{\theta}(\omega) = {1\over \Omega_\pi^{\epsilon}} \tw_\theta(\ES(\phi_\pi)) = {1\over \Omega_\pi^{\epsilon}}\ES(\tw_{\theta} \phi_\pi) = {G(\theta^{-1})\over \Omega_\pi^{\varepsilon}}\ES(\phi_\pi \otimes \theta).
\end{equation*}
Here we used Lemma \ref{lemma:twisting-compatibilities} for the second equality. Combining Corollary \ref{cor:cap-L-values} and \eqref{eqn:key-rationality}, we conclude
\begin{equation*}
{G(\theta^{-1})   i^{1 + m + {\kappa-w\over 2}}\Delta_{F/\mathbf Q}^{m+1}\Lambda(\phi_\pi\otimes \theta,m+1)\over\Omega^{\epsilon}_\pi} \in E(\theta).
\end{equation*}
The translation between this and \eqref{eqn:shimura-algebraicity} follows from \eqref{eqn:gauss-sum-relation}. Finally, $\phi_\pi$ and $\pi$ have the same $L$-function up to an element of $E^\times$ so we can replace $\Lambda(\phi_\pi\otimes \theta,m+1)$ with $\Lambda(\pi\otimes\theta ,m+1)$ as well.
\end{proof}

\section{Locally analytic distributions and $p$-adic weights}\label{sec:loc-anal-dist}

\subsection{Compact abelian $p$-adic Lie groups}\label{subsec:cpa-groups}

\begin{defn}\label{defn:CPA-group}
\leavevmode
\begin{enumerate}
\item A compact abelian $p$-adic Lie group $G$ (CPA group for short) is an abelian topological group $G$ which is compact and which contains an open subgroup topologically isomorphic to $\mathbf Z_p^n$ for some $0 \leq n < \infty$. 
\item The dimension of a CPA $G$ is the integer $\dim G := n$.
\item A chart of a CPA group $G$ is an injective and open group morphism $\mathbf Z_p^{\dim G} \hookrightarrow G$.
\end{enumerate}
\end{defn}

We note that CPA groups are exactly the $p$-adic Lie groups which are compact and abelian (\cite{Serre-CompactVarieties}) and the dimension is the dimension of the underlying $p$-adic manifold.\footnote{As in ``na\"ive'' $p$-adic manifolds, as opposed to rigid analytic spaces, etc.} The salient facts are contained in the next lemma. The proofs are left to the reader.

\begin{lem}\label{lem:CPA-groups}
\leavevmode
\begin{enumerate}
\item If $G$ and $H$ are CPA groups then $G \times H$ is a CPA group.
\item If $G$ is a CPA group and $H$ is a closed subgroup then $H$ and $G/H$ are CPA groups.
\item If $f : G \rightarrow H$ is a group morphism between CPA groups then $f$ is continuous, $\ker(f) \subset G$ and $\im(f) \subset H$ are closed subgroups and the group isomorphism $G/\ker(f) \simeq \im(f)$ is a homeomorphism.
\item Let $0 \to G \to H \to J \to 0$ be any short exact sequence of abelian groups. If any two of the groups are CPA, then all three are CPA and the morphisms in the sequence are continuous.  In particular, any abelian group which is an extension of one CPA group by another is automatically CPA.
\end{enumerate}
\end{lem}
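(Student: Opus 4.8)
\textbf{Proof plan for Lemma \ref{lem:CPA-groups}.} The plan is to establish parts (1)--(4) in order, using the basic structure theory of $p$-adic analytic groups to reduce everything to statements about $\mathbf Z_p^n$, and then to bootstrap (4) from (1)--(3). The recurring principle is that a CPA group is, by Definition \ref{defn:CPA-group}, a compact abelian group admitting an open subgroup isomorphic to $\mathbf Z_p^n$; this means any CPA group is profinite (compact and, having $\mathbf Z_p^n$ open, totally disconnected), and conversely a compact abelian totally disconnected group which is topologically finitely generated and torsion-free modulo a finite subgroup is CPA. It is probably cleanest to record the characterization ``$G$ is CPA if and only if $G$ is a compact abelian topological group possessing an open profinite subgroup that is topologically finitely generated with no elements of finite order other than $0$'' and then use it throughout.

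First I would do (1): if $U \cong \mathbf Z_p^m$ is open in $G$ and $V \cong \mathbf Z_p^n$ is open in $H$, then $U \times V \cong \mathbf Z_p^{m+n}$ is open in $G \times H$, which is compact and abelian; so $G\times H$ is CPA with $\dim(G\times H) = \dim G + \dim H$. For (2), let $U \cong \mathbf Z_p^n$ be open in $G$. Then $H \cap U$ is open in $H$ and is a closed subgroup of $\mathbf Z_p^n$; every closed subgroup of $\mathbf Z_p^n$ is isomorphic to $\mathbf Z_p^k$ for some $k \le n$ (a standard fact, which I would cite or prove via the elementary divisor theorem applied to the corresponding $\mathbf Z_p$-submodule of $\mathbf Q_p^n$), so $H$ has an open subgroup $\cong \mathbf Z_p^k$ and is compact abelian, hence CPA. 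For $G/H$: the image $\overline U$ of $U$ in $G/H$ is open, and $\overline U \cong U/(U\cap H) \cong \mathbf Z_p^n/\mathbf Z_p^k$; this quotient is a finitely generated $\mathbf Z_p$-module, hence $\cong \mathbf Z_p^{n-k} \oplus (\text{finite})$, which contains an open $\mathbf Z_p^{n-k}$. Since $G/H$ is compact (continuous image of $G$) and abelian, it is CPA. For (3): continuity of a group homomorphism $f: G \to H$ follows because $f$ is automatically continuous on the open subgroup $\mathbf Z_p^n \subset G$ — any abstract homomorphism $\mathbf Z_p^n \to H$ into a profinite group is continuous, since $\mathbf Z_p^n$ is topologically finitely generated, so the restriction is determined by where the finitely many topological generators go and continuity at $0$ holds as $p^k\mathbf Z_p^n$ forms a neighborhood basis of $0$ with $f(p^k\mathbf Z_p^n)$ shrinking — and continuity on an open subgroup plus the homomorphism property gives global continuity. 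Then $\ker f$ is closed (preimage of $\{0\}$), $\operatorname{im} f$ is compact hence closed in the Hausdorff group $H$, and $G/\ker f \to \operatorname{im} f$ is a continuous bijection from a compact space to a Hausdorff space, hence a homeomorphism.

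Finally (4): given $0 \to G \to H \to J \to 0$ with two of the three CPA. If $G$ and $J$ are CPA: pick charts $\mathbf Z_p^{\dim G} \hookrightarrow G$ and a chart $\mathbf Z_p^{\dim J}\hookrightarrow J$ with open image $\overline U$; since $\mathbf Z_p^{\dim J}$ is a free (hence projective) $\mathbf Z_p$-module and the preimage of $\overline U$ in $H$ surjects onto it, the extension splits over $\overline U$ after passing to a finite-index subgroup, exhibiting an open subgroup of $H$ that is an extension of $\mathbf Z_p^{a}$ by $\mathbf Z_p^{b}$, hence $\cong \mathbf Z_p^{a+b}$ (again using that such an extension of free $\mathbf Z_p$-modules splits); $H$ is compact because it is an extension of compact groups, and abelian by hypothesis, so $H$ is CPA. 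If $G$ and $H$ are CPA: then $J \cong H/G$ is CPA by part (2). If $H$ and $J$ are CPA: then $G$, being the kernel of the (automatically continuous, by a version of part (3)) surjection $H \to J$, is a closed subgroup of $H$, hence CPA by part (2). In all cases the maps are continuous by part (3). I expect the main obstacle to be part (4) in the case where only $G$ and $J$ are assumed CPA: one must verify that $H$ is a topological group in a compatible way (the hypothesis already grants it is an abelian group, but one needs the topology making the sequence exact as topological groups) and then produce the open $\mathbf Z_p^{a+b}$; the cleanest route is to first topologize $H$ via the extension and then invoke the splitting of extensions of free $\mathbf Z_p$-modules, but care is needed that ``extension of abelian groups'' in the statement is interpreted so that $H$ inherits a topology at all — in the intended application $H$ is already a subquotient of something topological, so this is harmless, and I would remark on this rather than belabor it.
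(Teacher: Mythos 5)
The paper offers no proof of this lemma --- it explicitly leaves the proofs to the reader --- so there is nothing to compare against; your overall architecture (reduce everything to the open $\mathbf Z_p^n$, use the structure of closed subgroups and quotients of $\mathbf Z_p^n$, bootstrap (4) from (1)--(3)) is the natural one and all four statements you prove are true. Parts (1) and (2) are fine as written. Two of your justifications, however, do not quite support what they claim and need patching.

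In part (3), the assertion that $f(p^k\mathbf Z_p^n)=p^kf(\mathbf Z_p^n)$ ``shrinks'' to $0$ in $H$ is not automatic: if $H$ has a finite subgroup of order prime to $p$, multiplication by $p^k$ does not contract it. You must first show that $f$ carries some open subgroup of $\mathbf Z_p^n$ into the open subgroup $V\simeq\mathbf Z_p^m$ of $H$. Write $[H:V]=p^a u$ with $(u,p)=1$; since $\mathbf Z_p^n$ is $u$-divisible, $f(p^a\mathbf Z_p^n)=f(p^au\cdot u^{-1}\mathbf Z_p^n)\subset [H:V]\cdot H\subset V$, and only then does the elementary ``$g(p^k\mathbf Z_p^n)\subset p^k\mathbf Z_p^m$'' argument apply. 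In part (4), in the case where $G$ and $J$ are CPA, you split the pulled-back extension $0\to G\to H'\to\mathbf Z_p^b\to 0$ by invoking projectivity of $\mathbf Z_p^b$ as a $\mathbf Z_p$-module; but this extension is a priori only one of abstract abelian groups, and $H'$ carries no given $\mathbf Z_p$-module structure, so projectivity over $\mathbf Z_p$ is not applicable. The correct (and still elementary) input is $\Ext^1_{\mathbf Z}(\mathbf Z_p^b,G)=0$ for $G$ CPA: writing $G\simeq\mathbf Z_p^a\oplus(\text{finite})$, this reduces to $\Ext^1_{\mathbf Z}(\mathbf Z_p,\mathbf Z_p)=0$ and $\Ext^1_{\mathbf Z}(\mathbf Z_p,\mathbf Z/n)=0$, both of which follow from the injectivity (divisibility) of $\mathbf Q_p$ and $\mathbf Q_p/\mathbf Z_p$ together with $\Hom(\mathbf Z_p,\mathbf Q_p)\twoheadrightarrow\Hom(\mathbf Z_p,\mathbf Q_p/\mathbf Z_p)$. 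Once the splitting produces a subgroup $W\subset H$ with $W\simeq\mathbf Z_p^{a+b}$ of finite index, the topology on $H$ is the one declaring $W$ open, and its uniqueness follows from your part (3); this also resolves the topologization worry you flag, so it is worth stating rather than deferring to the application.
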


For the rest of this subsection we fix a CPA group $G$ and write $n = \dim G$. We also fix a $\mathbf Q_p$-Banach algebra $R$.

For each integer $s \geq 0$ and each chart $\nu: \mathbf Z_p^n \hookrightarrow G$, we write $\mathbf A^s(G,\nu,R)$ for the functions $f: G \rightarrow R$ with the following property:\ for each $g \in G$, the function $z \mapsto f( g \nu\left(p^{s}z\right))$ is an $R$-valued rigid analytic function in the variable  $z=(z_1,\dotsc,z_n) \in \mathbf Z_p^n$. If $f \in \mathbf A^s(G,\nu,R)$ then $f(g\nu(p^s z))$ is defined by an element in the Tate-algebra $R\langle z_1,\dotsc,z_n\rangle$ (for each $g$) and so  $\mathbf A^s(G,\nu,R)$ is naturally an $R$-Banach algebra by considering the largest of the pullback norms from $R\langle z_1,\dotsc,z_n\rangle$ for any finite choice of coset representatives of $G/\nu(p^s\mathbf Z_p^n)$. Further, for $s' \geq s$ the canonical map $\mathbf A^s(G,\nu,R) \rightarrow \mathbf A^{s'}(G,\nu,R)$ is injective with dense image and compact if $s' > s$. We define the $R$-valued locally analytic functions on $G$ as the compact type space (see \cite[Section 1.1]{Emerton-LocalAnalMemoir})
\begin{equation*}
\mathscr A(G,R) := \dirlim_{s \rightarrow \infty} \mathbf A^s(G,\nu,R).
\end{equation*}
This is independent of the chart $\nu$.

 Next, we define $\mathbf D^s(G,\nu,R) := \mathbf A^s(G,\nu,R)'$ as the $R$-Banach module dual (equipped with the operator topology). This is also an $R$-Banach algebra under the convolution product $(\mu_1,\mu_2)\mapsto \mu_1\ast \mu_2$. If $s' \geq s$ then the canonical map $\mathbf D^{s'}(G,\nu,R) \rightarrow \mathbf D^s(G,\nu,R)$ is still injective (because the transpose has dense image) and compact when $s' > s$. We define the $R$-valued locally analytic distributions on $G$ as the projective limit 
\begin{equation*}
\mathscr D(G,R) := \invlim_{s \rightarrow \infty} \mathbf D^s(G,\nu,R).
\end{equation*}
Notice that there is a natural $R$-bilinear pairing
\begin{equation*}
\mathscr D(G,R) \otimes_R \mathscr A(G,R) \rightarrow R
\end{equation*}
which we write $(\mu,f) \mapsto \mu(f)$. 

\begin{rmk}\label{rmk:commuting-tensor-product-functions}
Each of $R\mapsto \mathbf A^s(G,\nu,R)$, $\mathscr A(G,R)$, and $\mathscr D(G,R)$ commute with completed tensor products; the distributions with a fixed value of $s$ do not. Compare with \cite[Remark 3.1]{Bellaiche-CriticalpadicLfunctions}.
\end{rmk}

We now define the space of $p$-adic characters on $G$.
\begin{defn}\label{defn:character-space}
$\mathscr X(G) = \Spf(\mathbf Z_p[[G]])^{\rig}$.
\end{defn}
Thus $\mathscr X(G)$ is a rigid analytic space over $\mathbf Q_p$ whose $R$-valued points are nothing but continuous characters $\chi: G \rightarrow R^\times$. It is well-known (see \cite[Proposition 3.6.10]{Emerton-LocalAnalMemoir} for example) that if $\chi \in \mathscr X(G)(\mathbf Q_p)$ then $g\mapsto \chi(g)$ defines an element of $\mathscr A(G,\mathbf Q_p)$. Further, if $\mu \in \mathscr D(G,\mathbf Q_p)$
\begin{equation*}
\mathcal A_{\mu}(\chi) := \mu(g\mapsto \chi(g))
\end{equation*}
extends to a rigid analytic function on $\mathscr X(G)$. For instance, if $g \in G$ and $\delta_g \in \mathscr D(G,\mathbf Q_p)$ is the Dirac distribution then $\mathcal A_{\delta_g}$ is the rigid function $\operatorname{ev}_g$ on $\mathscr X(G)$ given by ``evaluation at $g$''. Further, $\mathcal A_{\mu_1\ast \mu_2} = \mathcal A_{\mu_1}\mathcal A_{\mu_2}$. See \cite[Sections 1-2]{SchneiderTitelbaum-Fourier} for more details.

\begin{defn}\label{defn:amice-transform}
The Amice transform is the natural map 
\begin{align*}
\mathscr D(G,R) &\overset{\mathcal A}{\longrightarrow} \mathscr O(\mathscr X(G))\widehat{\otimes}_{\mathbf Q_p} R\\
\mu &\mapsto \mathcal A_\mu.
\end{align*}
\end{defn}

\begin{prop}
\label{prop:amice}
The Amice transform is a topological isomorphism.
\end{prop}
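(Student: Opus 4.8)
The statement asserts that the Amice transform $\mathcal A : \mathscr D(G,R) \to \mathscr O(\mathscr X(G))\widehat{\otimes}_{\mathbf Q_p} R$ is a topological isomorphism. I would prove this by reducing to the classical Fourier theory of Schneider--Teitelbaum (\cite{SchneiderTitelbaum-Fourier}), which handles exactly the case $R = \mathbf Q_p$ (or more precisely any finite extension), and then upgrading along two axes: the coefficient ring $R$, and the group $G$.

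First I would treat the case $R=\mathbf Q_p$. By Lemma \ref{lem:CPA-groups}, $G$ sits in a short exact sequence $0 \to \mathbf Z_p^n \to G \to \Phi \to 0$ with $\Phi$ finite (take the image of a chart $\nu$), so $\mathbf Z_p[[G]]$ is a finite free module over $\mathbf Z_p[[\mathbf Z_p^n]] \simeq \mathbf Z_p[[X_1,\dots,X_n]]$, and hence $\mathscr X(G)$ is a finite disjoint-union-of-twists of the open unit polydisc $\mathscr X(\mathbf Z_p^n) = (\mathbf B^n)^{\circ}$. Correspondingly $\mathscr A(G,\mathbf Q_p)$ and $\mathscr D(G,\mathbf Q_p)$ decompose over coset representatives of $G/\nu(\mathbf Z_p^n)$ into copies of $\mathscr A(\mathbf Z_p^n,\mathbf Q_p)$ and $\mathscr D(\mathbf Z_p^n,\mathbf Q_p)$. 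So it suffices to recall/cite that for $G=\mathbf Z_p^n$ the Amice transform $\mu \mapsto \mathcal A_\mu$ identifies $\mathscr D(\mathbf Z_p^n,\mathbf Q_p)$ with $\mathscr O((\mathbf B^n)^{\circ})$: this is the multivariable Amice theorem, and the compatible norm estimates at each radius $s$ (pairing $\mathbf D^s$ against $\mathbf A^s$, which by definition involves the Tate algebra $R\langle p^s z\rangle$) show the isomorphism is topological, i.e. a homeomorphism of Fréchet spaces realized as the inverse limit over $s$. One must check the multiplicativity $\mathcal A_{\mu_1 \ast \mu_2} = \mathcal A_{\mu_1}\mathcal A_{\mu_2}$ already recorded in the text to see it is a ring map, though the statement only claims it is a topological isomorphism of topological modules.

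Next I would extend scalars to a general $\mathbf Q_p$-Banach algebra $R$. The key input is Remark \ref{rmk:commuting-tensor-product-functions}: each of $\mathscr A(G,-)$ and $\mathscr D(G,-)$ commutes with completed tensor product, so $\mathscr D(G,R) \simeq \mathscr D(G,\mathbf Q_p)\widehat{\otimes}_{\mathbf Q_p} R$. Applying $-\widehat{\otimes}_{\mathbf Q_p} R$ to the isomorphism $\mathscr D(G,\mathbf Q_p)\overset{\sim}{\to}\mathscr O(\mathscr X(G))$ and using that completed tensor product preserves isomorphisms of Fréchet spaces (here one wants $\mathscr O(\mathscr X(G))$ to be a nuclear Fréchet space, which it is, being $\mathscr O$ of a union of open polydiscs, so the completed tensor product is unambiguous and exact on the relevant class) gives $\mathscr D(G,R) \simeq \mathscr O(\mathscr X(G))\widehat{\otimes}_{\mathbf Q_p} R$, and one checks this composite map is precisely $\mathcal A$ by chasing Dirac distributions $\delta_g \otimes r$, which generate a dense subspace and on which both sides give $r\cdot \mathrm{ev}_g$.

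\textbf{Main obstacle.} The genuinely delicate point is the topological (as opposed to merely algebraic) nature of the isomorphism, and its compatibility with the inverse-limit/direct-limit structures: one must verify that the norm on $\mathbf D^s(G,\nu,R)$ dual to the Tate-algebra norm on $\mathbf A^s(G,\nu,R)$ matches, under $\mathcal A$, the sup norm on functions rigid analytic on the polydisc of the corresponding radius, uniformly in $s$, and that the functor $-\widehat{\otimes}_{\mathbf Q_p}R$ genuinely commutes with the projective limit $\invlim_s \mathbf D^s$ defining $\mathscr D(G,R)$ (this is where the nuclearity/compactness of the transition maps $\mathbf D^{s'}\to\mathbf D^s$, noted in the construction above, is used). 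Once these continuity and limit-exchange facts are in place, the isomorphism statement is essentially bookkeeping on top of Schneider--Teitelbaum's theorem.
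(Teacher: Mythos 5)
Your proposal is correct and follows essentially the same route as the paper: reduce to $R=\mathbf Q_p$ via Remark \ref{rmk:commuting-tensor-product-functions}, decompose both sides as finite free modules over the corresponding objects for an open subgroup $H\simeq \mathbf Z_p^n$ using the Dirac distributions $\delta_g$ and evaluations $\operatorname{ev}_g$ on coset representatives, and invoke the multivariable Amice/Schneider--Teitelbaum theorem. The only difference is the order of the two reductions (you extend scalars last, the paper first), which is immaterial.
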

\begin{proof}
By Remark \ref{rmk:commuting-tensor-product-functions}, we can assume that $R =\mathbf Q_p$. Let $H$ be an open (thus finite index) subgroup of $G$. Then, $\mathscr D(G,\mathbf Q_p)$ is finite free over $\mathscr D(H,\mathbf Q_p)$ with basis given by $\{\delta_g\}$ with $g$ running over coset representatives of $G/H$ and $\mathscr O(\mathscr X(G))$ is finite and free over $\mathscr O(\mathscr X(H))$ with basis given by $\{\operatorname{ev}_g\}$. Since $\mathcal A_{\delta_g} = \operatorname{ev}_g$, the result for $G$ follows from the result for such an $H$. Since $G$ is a CPA group, there exists an $H$ topologically isomorphic to $\mathbf Z_p^n$, in which case the theorem is known by a multi-variable version of Amice's theorem \cite{Amice-Interpolation} (see \cite{SchneiderTitelbaum-Fourier}).
\end{proof}

\subsection{Locally analytic distributions on $\mathcal O_p$}\label{subsec:Op-dist}
In this section we consider the CPA group $\mathcal O_p = \mathcal O_F \otimes_{\mathbf Z} \mathbf Z_p = \prod_{v \mid p} \mathcal O_v$. For $v \mid p$, we fix a uniformizer $\varpi_v \in \mathcal O_v$ and we write $\varpi_p \in \mathcal O_p$ for the corresponding tuple. Let $e_v$ be the ramification index at $v \mid p$, and $\mathbf e = (e_v)_{v\mid p} \in \mathbf Z_{\geq 1}^{\{v\mid p\}}$.

Start by choosing a $\mathbf Z_p$-linear isomorphism $\nu: \mathbf Z_p^d \simeq \mathcal O_p$ which we use as a chart. Using this we write $\mathbf A^{\circ}(\mathcal O_p,\mathbf Q_p)$ for the ring of functions $f: \mathcal O_p \rightarrow \mathbf Q_p$ such that $f \circ \nu$ is defined by an element of the Tate algebra $\mathbf Z_p\langle z_1,\dotsc,z_d \rangle$. The ring $\mathbf A(\mathcal O_p, \mathbf Q_p) := \mathbf A^{\circ}(\mathcal O_p, \mathbf Q_p)[1/p]$ is the ring we denoted  $\mathbf A^0(\mathcal O_p, \nu, \mathbf Q_p)$ in Section \ref{subsec:cpa-groups}, so $f \mapsto f \circ \nu$ defines an isomorphism $\mathbf A(\mathcal O_p, \mathbf Q_p) \simeq \mathbf Q_p\langle z_1,\dotsc,z_d\rangle$. The $\mathbf Q_p$-Banach structure on with the norm $\|f\|_{\mathbf 0}$ on $\mathbf A(\mathcal O_p,\mathbf Q_p)$ defined by pulling back the supremum norm on $\mathbf Q_p\langle z_1,\dotsc,z_d\rangle$. It is independent of $\nu$.

For $\mathbf s = (s_v)_{v \mid p} \in \mathbf Z_{\geq 0}^{\{v\mid p\}}$ we now define
\begin{align*}
\mathbf A^{\mathbf s, \circ}(\mathcal O_p, \mathbf Q_p) &:= \{f : \mathcal O_p \rightarrow \mathbf Q_p \mid z \mapsto f(a + \varpi_p^{\mathbf s} z) \text{ lies in $\mathbf A^{\circ}(\mathcal O_p,\mathbf Q_p)$ for all $a \in \mathcal O_p$}\};\\
\mathbf A^{\mathbf s}(\mathcal O_p, \mathbf Q_p) &= \mathbf A^{\mathbf s, \circ}(\mathcal O_p,\mathbf Q_p)[1/p].
\end{align*}
If $f \in \mathbf A^{\mathbf s}(\mathcal O_p,\mathbf Q_p)$, then $f(a+\varpi_p^{\mathbf s}z)$ depends on $a \bmod \varpi_p^{\mathbf s}\mathcal O_p$ only up to translation in the $z$-variable. Thus we equip $\mathbf A^{\mathbf s}(\mathcal O_p,\mathbf Q_p)$ with a $\mathbf Q_p$-Banach norm by
\begin{equation*}
\|f\|_{\mathbf s} := \max_{a \in \mathcal O_p/\varpi_p^{\mathbf s}\mathcal O_p} \|f(a + \varpi_p^{\mathbf s}z)\|_{\mathbf 0}.
\end{equation*}
If $\mathbf s' \geq \mathbf s$ (i.e.\ $s_v' \geq s_v$ for all $v \mid p$) then the natural map $\mathbf A^{\mathbf s}(\mathcal O_p,\mathbf Q_p) \rightarrow \mathbf A^{\mathbf s'}(\mathcal O_p,\mathbf Q_p)$ is continuous with dense image. If $\mathbf s' \geq \mathbf s+\mathbf e$ (i.e. $s_v' \geq s_v + e_v$ for each $v \mid p$) then it is compact. Furthermore, the $\mathbf Q_p$-Banach algebras $\mathbf A^{\mathbf s}(\mathcal O_p,\mathbf Q_p)$ are a co-final defining sequence for $\mathscr A(\mathcal O_p,\mathbf Q_p)$, as in Section \ref{subsec:cpa-groups}, because if $s \in \mathbf Z_{\geq 0}$ and $\mathbf s := (se_v)_{v\mid p}$ then we have an obvious (topological) equality
\begin{equation*}
\mathbf A^{\mathbf s}(\mathcal O_p, \mathbf Q_p) = \mathbf A^{s}(\mathcal O_p, m_{u_p}\circ \nu, \mathbf Q_p)
\end{equation*}
where $m_{u_p}$ is multiplication by $\varpi_p^{\mathbf e}p^{-1}$ on $\mathcal O_p$. Thus we also have a topological isomorphism
\begin{equation}\label{eqn:defn-locally-anal}
\mathscr A(\mathcal O_p, \mathbf Q_p) = \dirlim_{|\mathbf s| \rightarrow +\infty} \mathbf A^{\mathbf s}(\mathcal O_p, \mathbf Q_p)
\end{equation}
where $|\mathbf s|=\min(s_v : v \mid p)$.

If $R$ is a $\mathbf Q_p$-Banach algebra and $\mathbf s \in \mathbf Z_{\geq 0}^{\{v\mid p\}}$, we define $\mathbf A^{\mathbf s}(\mathcal O_p, R) := \mathbf A^{\mathbf s}(\mathcal O_p,\mathbf Q_p) \widehat{\otimes}_{\mathbf Q_p} R$ with its inductive tensor product topology. Any $\mathbf Q_p$-Banach space is potentially orthonormalizable (\cite[Proposition 1]{Serre-CompactOperators}), so the $R$-Banach modules $\mathbf A^{\mathbf s}(\mathcal O_p,R)$ are potentially orthonormalizable as well (\cite[Lemma 2.8]{Buzzard-Eigenvarieties}). If $\mathbf s' \geq \mathbf s$ then the natural map $\mathbf A^{\mathbf s}(\mathcal O_p, R)\rightarrow \mathbf A^{\mathbf s}(\mathcal O_p,R)$ is injective with dense image (\cite[Corollary 1.1.27]{Emerton-LocalAnalMemoir}) and if $\mathbf s' \geq \mathbf s+\mathbf e$ then the map is compact (\cite[Lemma 18.12]{Schneider-NFA}). By $\eqref{eqn:defn-locally-anal}$ and \cite[Proposition 1.1.32(i)]{Emerton-LocalAnalMemoir} we deduce a topological identification
\begin{equation}\label{eqn:Rdefn-locally-anal}
\mathscr A(\mathcal O_p, R) = \dirlim_{|\mathbf s| \rightarrow +\infty} \mathbf A^{\mathbf s}(\mathcal O_p, R).
\end{equation}
Finally, we write $\mathbf D^{\mathbf s}(\mathcal O_p,R)$ for $R$-Banach dual $\mathbf A^{\mathbf s}(\mathcal O_p,R)'$ equipped with its operator topology and convolution product. The $R$-Banach algebras $\mathbf D^{\mathbf s}(\mathcal O_p, R)$ are co-final with the Banach algebras in Section \ref{subsec:cpa-groups} (for the same reasons as above) and thus we have a topological identification 
\begin{equation*}
\mathscr D(\mathcal O_p,R) = \invlim_{|\mathbf s| \rightarrow +\infty} \mathbf D^s(\mathcal O_p, R).
\end{equation*}

\begin{rmk}
The $R$-Banach modules $\mathbf D^{\mathbf s}(\mathcal O_p,R)$ are {\em not} the same as $\mathbf D^{\mathbf s}(\mathcal O_p,\mathbf Q_p)\widehat{\otimes}_{\mathbf Q_p} R$ and thus not obviously potentially orthornormalizable.
\end{rmk}

We now recall the following definition.
\begin{defn}
If $R$ is a $\mathbf Q_p$-Banach algebra, a ring of definition $R_0$ for $R$ is a subring $R_0 \subset R$ which is open and bounded.
\end{defn}
We note that this implies as well that $R_0$ is $p$-adically separated and complete, and $R_0[1/p] = R$.  After fixing $R_0 \subset R$ a ring of definition, we now define
\begin{equation*}
\mathbf A^{\mathbf s,\circ}(\mathcal O_p, R) := \mathbf A^{\mathbf s,\circ}(\mathcal O_p, \mathbf Q_p) \widehat{\otimes}_{\mathbf Z_p} R_0.
\end{equation*}
The $R_0$-algebra $\mathbf A^{\mathbf s,\circ}(\mathcal O_p, R)$ is naturally an open and bounded $R_0$-subalgebra $\mathbf A^{s}(\mathcal O_p,R)$ and we have an equality after inverting $p$. For the distributions, still with $R_0$ fixed, we define $\mathbf D^{s,\circ}(\mathcal O_p, R)$ as the $R_0$-linear dual
\begin{equation*}
\mathbf D^{\mathbf s,\circ}(\mathcal O_p, R) := \Hom_{R_0}(\mathbf A^{s,\circ}(\mathcal O_p, R), R_0).
\end{equation*}
\begin{rmk}\label{remark:notation}
The notations $\mathbf A^{\mathbf s,\circ}$ and $\mathbf D^{\mathbf s,\circ}$ are misleading in that they obviously depend on $R_0$. If $R$ is reduced, then we may take $R_0$ to be the subring of power-bounded elements in $R$. In any case, the reader may also notice that we never make ``natural use'' of the lattices (as opposed to the functors $\mathbf A^{\mathbf s}(\mathcal O_p,-)$ and $\mathbf D^{\mathbf s}(\mathcal O_p,-)$).
\end{rmk}

\subsection{Actions by the monoid $\Delta$}\label{subsec:monoid-action}

We maintain the notations of the previous subsection and we also fix a $\mathbf Q_p$-Banach algebra $R$ and a ring of definition $R_0 \subset R$. If $h(z)$ is a function on $\mathcal O_p^\times$ valued in a ring, then write $h(z)_{!}$ for its extension by zero to  $\mathcal O_p$.

\begin{lem}\label{lem:character-analytic}
If $\chi: \mathcal O_p^\times \rightarrow R^\times$ is a continuous character, then there exists $\mathbf s(\chi) \in \mathbf Z_{\geq 0}^{\{v\mid p\}}$ such that $f \in \mathbf A^{\mathbf s(\chi)}(\mathcal O_p,R)$ when $f$ is a function of either of the following two forms.
\begin{enumerate}
\item $f(z) = \chi(d+cz)$ with $c \in \varpi_p\mathcal O_p$ and $d \in \mathcal O_p^\times$.
\item $f(z) = \chi(z)_!$.
\end{enumerate}
If $\chi(\mathcal O_p^\times) \subset R_0^\times$, then there exists $\mathbf s^{\circ}(\chi) \in \mathbf Z_{\geq 0}^{\{v\mid p\}}$ depending on $R_0$ so that $f \in \mathbf A^{\mathbf s^{\circ}(\chi),\circ}(\mathcal O_p,R)$ for the same functions.
\end{lem}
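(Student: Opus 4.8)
The plan is to deduce the statement for $f(z)=\chi(z)_{!}$ from the one for $f(z)=\chi(d+cz)$, and the latter from the standard fact that a continuous character of a CPA group is locally analytic, made quantitative. First, the two clauses of the lemma are essentially one. Since $\mathcal{O}_p^{\times}$ is the disjoint union of the finitely many open-closed cosets $d_0+\varpi_p\mathcal{O}_p$ with $d_0\in(\mathcal{O}_p/\varpi_p\mathcal{O}_p)^{\times}$, and $\mathcal{O}_p\setminus\mathcal{O}_p^{\times}$ is open-closed in $\mathcal{O}_p$, the function $\chi(z)_{!}$ lies in $\mathbf{A}^{\mathbf{s},\circ}(\mathcal{O}_p,R)$ as soon as on each such coset the function $d_0+\varpi_p w\mapsto\chi(d_0)\chi(1+\varpi_p d_0^{-1}w)$ does, which is a case of type (1) with $d=d_0$ and $c=\varpi_p d_0^{-1}$. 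And the first clause follows from the second, because $\mathbf{A}^{\mathbf{s},\circ}(\mathcal{O}_p,R)\subseteq\mathbf{A}^{\mathbf{s}}(\mathcal{O}_p,R)$ once one enlarges $R_0$: the image $\chi(\mathcal{O}_p^{\times})$ is compact, hence a bounded subgroup of $R^{\times}$, and since it is multiplicatively closed the $R_0$-submodule it generates is, by the ultrametric inequality, a bounded open subring of $R$; its closure is then a ring of definition $R_0'\supseteq R_0$ with $\chi(\mathcal{O}_p^{\times})\subset(R_0')^{\times}$, to which the second clause applies.

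So fix a ring of definition $R_0$ with $\chi(\mathcal{O}_p^{\times})\subset R_0^{\times}$, and an integer $M\geq1$ (with $M\geq2$ if $p=2$) such that $\exp$ and $\log$ are mutually inverse isometries between $1+p^M R_0$ and $p^M R_0$. By continuity choose $N\geq1$ with $\chi(1+\varpi_p^N\mathcal{O}_p)\subseteq1+p^M R_0$, enlarging $N$ so that the $p$-adic logarithm and exponential are inverse isomorphisms $1+\varpi_p^N\mathcal{O}_p\cong\varpi_p^N\mathcal{O}_p$ and so that the series $\log_p(1+\varpi_p^N w)=\sum_{k\geq1}(-1)^{k-1}\varpi_p^{Nk}w^k/k$ has coefficients in $\varpi_p^N\mathcal{O}_p$ as a function of $w\in\mathcal{O}_p$. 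Then $L:=\log\circ\chi\circ\exp_p\colon\varpi_p^N\mathcal{O}_p\to p^M R_0$ is a continuous homomorphism of topological $\mathbf{Z}_p$-modules, hence $\mathbf{Z}_p$-linear, hence given in a $\mathbf{Z}_p$-basis of $\varpi_p^N\mathcal{O}_p$ by a homogeneous linear form with coefficients in $p^M R_0$. Composing this linear form with the series above shows that $w\mapsto\log\chi(1+\varpi_p^N w)$ is rigid-analytic from $\mathcal{O}_p$ to $p^M R_0$ and vanishes at $w=0$; applying $\exp$ — the denominators $1/m!$ being absorbed since $M>1/(p-1)$ — shows that $w\mapsto\chi(1+\varpi_p^N w)$ is rigid-analytic from $\mathcal{O}_p$ to $R_0^{\times}$, i.e.\ defines an element of $\mathbf{A}^{\circ}(\mathcal{O}_p,R)$.

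It remains to transport this to the functions of type (1), with $\mathbf{s}^{\circ}(\chi):=(N)_{v\mid p}$. For $d\in\mathcal{O}_p^{\times}$, $c\in\varpi_p\mathcal{O}_p$ and $a\in\mathcal{O}_p$, writing $\mathbf{s}=\mathbf{s}^{\circ}(\chi)$, we have $\chi\!\left(d+c(a+\varpi_p^{\mathbf{s}}z)\right)=\chi(d+ca)\cdot\chi\!\left(1+(d+ca)^{-1}c\varpi_p^{\mathbf{s}}z\right)$; since $v_v(c)\geq1$ for all $v\mid p$, the scalar $(d+ca)^{-1}c\varpi_p^{\mathbf{s}}$ lies in $\varpi_p^{N}\mathcal{O}_p$, so $1+(d+ca)^{-1}c\varpi_p^{\mathbf{s}}z=1+\varpi_p^N u(z)$ with $u$ a $\mathbf{Z}_p$-linear, $\mathcal{O}_p$-valued function of $z$. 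Composing $u$ with the function of the previous paragraph and multiplying by the unit $\chi(d+ca)\in R_0^{\times}$ shows that $z\mapsto\chi(d+cz)$ lies in $\mathbf{A}^{\mathbf{s}^{\circ}(\chi),\circ}(\mathcal{O}_p,R)$, uniformly in $d$, $c$ and $a$; the coset bookkeeping of the first paragraph then settles $\chi(z)_{!}$ as well, and inverting $p$ (after replacing $R_0$ by $R_0'$) gives the non-integral statement. There is no conceptual obstacle here — ``continuous implies locally analytic'', via $\exp$, $\log$, and the automatic $\mathbf{Z}_p$-linearity of continuous $\mathbf{Z}_p$-module maps, is routine. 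The one thing to watch is that a single $N$, depending on $\chi$ and on how fine the topology of $R_0$ is, makes \emph{all} of the composite power series converge and remain $R_0$-integral at once; this is exactly why the radius $\mathbf{s}^{\circ}(\chi)$ in the integral clause is permitted to depend on $R_0$.
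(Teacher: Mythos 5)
Your proof is correct, but it is organized in the opposite direction from the paper's. The paper reduces (1) to (2) by observing that $\chi(d+cz)=\chi(d+cz)_{!}$ and that precomposition with the affine map $z\mapsto d+cz$ (with $c\in\varpi_p\mathcal O_p$) preserves $\mathbf s$-analyticity; it then cites the rational case of (2) as well known and deduces the integral case by a rescaling trick: any $g\in\mathbf A(\mathcal O_p,R)$ with $g(0)\in R_0$ satisfies $g(\varpi_p^{\mathbf s(g)}z)\in\mathbf A^{\circ}(\mathcal O_p,R)$ because the higher Taylor coefficients, being bounded and tending to zero, become integral after scaling the variable. You instead reduce (2) to (1) via the coset decomposition of $\mathcal O_p^\times$, prove the \emph{integral} statement directly and quantitatively by the $\log$/$\exp$ argument (automatic $\mathbf Z_p$-linearity of the continuous homomorphism $\log\circ\chi\circ\exp_p$), and then recover the rational clause by manufacturing a ring of definition $R_0'$ containing $\chi(\mathcal O_p^\times)$ from compactness of the image. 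What your route buys is self-containment — you actually supply the ``well-known'' fact rather than citing it — and an explicit radius $N$ controlled by the continuity modulus of $\chi$ and the isometry range of $\exp$ in $R_0$, which makes the dependence of $\mathbf s^{\circ}(\chi)$ on $R_0$ transparent; your observation that the closure of the $R_0$-span of $R_0\cup\chi(\mathcal O_p^\times)$ is again a ring of definition is a nice device the paper does not need. What the paper's route buys is brevity, and its rescaling lemma is a reusable general principle (integrality at one point plus boundedness of coefficients gives integrality after shrinking the radius) that sidesteps all of the $\exp$/$\log$ convergence bookkeeping you have to carry out, e.g.\ the constraints $M>1/(p-1)$ and $N\geq e_v/(p-1)$.
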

\begin{proof}
If $c \in \varpi_p\mathcal O_p$ and $d \in \mathcal O_p^\times$ then $\chi(d+cz) = \chi(d+cz)_{!}$. Since $z \mapsto d + cz$ is polynomial in $z$, we only need to prove the lemma where $f(z) = \chi(z)_!$. In the case where $p$ is inverted, this is well-known. We now deduce the $R_0$-case from the $R$-case.

First, we observe that if $g \in \mathbf A(\mathcal O_p,R)$ and $g(0) \in R_0$, then there exist $\mathbf s(g)$ so that $g(\varpi_p^{\mathbf s(g)}z) \in \mathbf A^{\circ}(\mathcal O_p,R)$ (expand the series defining $g$). Now write $f(z) = \chi(z)_{!}$. For $a$ running over a (finite) set of coset representatives for $\mathcal O_p/\varpi_p^{\mathbf s}\mathcal O_p$, there exists $g_a \in \mathbf A(\mathcal O_p,R)$ such that $f(a+\varpi_{p}^{\mathbf s(\chi)}z) = g_a(z)$. Since $g_a(0) = f(a) \in R_0$, the first sentence of this paragraph applies to each $g_a$ and the lemma follows.
\end{proof}

Recall that $T \subset {\GL_2}_{/\mathbf Z}$ denotes the diagonal torus. Thus $T(\mathcal O_p) \simeq (\mathcal O_p^\times)^2$ is naturally a CPA group.
\begin{defn}
The space of $p$-adic weights is $\mathscr W = \mathscr X(T(\mathcal O_p))$.
\end{defn} 
If $\Omega = \Sp(R)$ and $\lambda_{\Omega}:\Omega\rightarrow \mathscr W$ is a point then we often confuse it with the corresponding pair  $\lambda_{\Omega}=(\lambda_{\Omega,1},\lambda_{\Omega,2})$ where $\lambda_{\Omega,i}: \mathcal O_p^\times \rightarrow R^\times$ are continuous character. If $R$ is a finite extension of $\mathbf Q_p$ we write just $\lambda$. In either case, we generally refer to both the point and the character as a $p$-adic weight.

Now consider the submonoid of $\GL_2(F_p)$ defined by
\begin{equation*}
\Delta := \left\{\begin{pmatrix} a & b \\ c & d \end{pmatrix} \in \GL_2(F_p) \cap M_2(\mathcal O_p) \mid c \in \varpi_p \mathcal O_p \text{ and } d \in \mathcal O_p^\times \right\}.
\end{equation*}
If $g = \begin{smallpmatrix} a & b \\ c & d\end{smallpmatrix} \in \Delta$ then $cz + d \in \mathcal O_p^\times$ and so the left action $g\cdot z = {az + b \over cz + d}$ of $\Delta$ on $\mathcal O_p$ is well-defined and it is clearly continuous.

Now consider $\Omega = \Sp(R)$ and let $\lambda_{\Omega}:\Omega\rightarrow \mathscr W$ be a $p$-adic weight. Set $\mathbf s(\Omega) := \max\{\mathbf s(\lambda_{\Omega,1}\lambda_{\Omega,2}^{-1}),\mathbf s(\lambda_{\Omega,2}^{-1})\}$ as above.\footnote{Inserting $\mathbf s(\lambda_{\Omega,2}^{-1})$ into the maximum is purely for convenience of notation later on (see Lemma \ref{lem:construction-Qlambda}).} Then, for $\mathbf s \geq \mathbf s(\Omega)$ we may endow $\mathbf A^{\mathbf s}(\mathcal O_p,R)$ with a continuous $R$-linear right action of $\Delta$ via
\begin{equation}\label{eqn:function-action}
f\big|_{g}(z) = \lambda_{\Omega,1}\lambda_{\Omega,2}^{-1}(cz+d)\lambda_{\Omega,2}\left(\det g \cdot \varpi_p^{-v(\det g)}\right)f(g \cdot z)
\end{equation}
where $g=\begin{smallpmatrix}a & b\\ c& d\end{smallpmatrix} \in \Delta$, $f \in \mathbf A^{\mathbf s}(\mathcal O_p,R)$ and $z \in \mathcal O_p$.\footnote{To be clear, we recall that $\varpi_p^{-v(\det g)}$ means $\prod_{v \mid p} \varpi_v^{-v(\det g_v)}$.} This definition is well-posed by Lemma \ref{lem:character-analytic}. We then equip $\mathbf D^{\mathbf s}(\mathcal O_p,R)$ with the dual left action:\ $(g \cdot \mu)(f) = \mu(f|_g)$. Either action is referred to as a ``weight $\lambda$-action.''

\begin{rmk}\label{rmk:difference-with-daves-paper}
The monoid $\Delta$ and the action \eqref{eqn:function-action} differ from their definitions in \cite[Section 2.2]{Hansen-Overconvergent} by conjugation by $\begin{smallpmatrix} & 1 \\ \varpi_p & \end{smallpmatrix} \in \GL_2(F_p)$. Compare with Proposition \ref{prop:implications-extremely-non-critical}(1).
\end{rmk}

The above action of $\Delta$ is compatible with the injective restriction map $\mathbf A^{\mathbf s}(\mathcal O_p,R) \rightarrow \mathbf A^{\mathbf s'}(\mathcal O_p,R)$ when $\mathbf s' \geq \mathbf s$, so we get a continuous action of $\Delta$ on $\mathscr A(\mathcal O_p,R)$. On the dual side, $\mathbf D^{\mathbf s}(\mathcal O_p,R)$ is equipped with a continuous $R$-linear left action by $\Delta$ and the compatibility extends this to a continuous action on $\mathscr D(\mathcal O_p,R)$. Finally, when the image of $\lambda_{\Omega}$ is contained in $R_0$, then  \eqref{eqn:function-action} defines an action of $\Delta$ on $\mathbf A^{\mathbf s,\circ}(\mathcal O_p,R)$ as well as a left action on $\mathbf D^{\mathbf s,\circ}(\mathcal O_p,R)$ for all $\mathbf s \geq \mathbf s^{\circ}(\Omega) := \max\{\mathbf s^{\circ}(\lambda_{\Omega,1}\lambda_{\Omega,2}^{-1}), \mathbf s^{\circ}(\lambda_{\Omega,2}^{-1})\}$.

We summarize the notations presented  above as follows.

\begin{defn}\label{defn:actions}
\leavevmode
\begin{enumerate}
\item If $\Omega=\Sp(R)$ is a $\mathbf Q_p$-affinoid space, $\lambda_{\Omega}:\Omega \rightarrow \mathscr W$ is a $p$-adic weight and $\mathbf s \geq \mathbf s(\Omega)$, then we write $\mathbf A^{\mathbf s}_{\Omega} := \mathbf A^{\mathbf s}(\mathcal O_p,R)$, $\mathbf D^{\mathbf s}_{\Omega} := \mathbf D^{\mathbf s}(\mathcal O_p,R)$, $\mathscr A_{\Omega} := \mathscr A(\mathcal O_p,R)$, and $\mathscr D_{\Omega} := \mathscr D(\mathcal O_p,R)$ for the above $R$-modules equipped with their continuous actions of $\Delta$ via $\lambda_{\Omega}$ above. When $R_0$ is a ring of definition containing the image of $\lambda_{\Omega}$ and $\mathbf s \geq \mathbf s^{\circ}(\Omega)$ then we write $\mathbf A^{\mathbf s,\circ}_{\Omega} = \mathbf A^{\mathbf s,\circ}(\mathcal O_p,R)$ and $\mathbf D^{\mathbf s,\circ} = \mathbf D^{\mathbf s, \circ}(\mathcal O_p,R)$ for the $R_0$-modules equipped with their action of $\Delta$ above.
\item If $\lambda \in \mathscr W(\overline{\mathbf Q}_p)$ with residue field $k_{\lambda}$, we write $\mathbf A^{\mathbf s}_{\lambda}$, $\mathbf D^{\mathbf s}_{\lambda}$, $\mathscr A_{\lambda}$, and $\mathscr D_{\lambda}$ in place of $\mathbf A^{\mathbf s}_{\Sp k_\lambda}$, $\mathbf D^{\mathbf s}_{\Sp k_\lambda}$, $\mathscr A_{\Sp k_\lambda}$, and $\mathscr D_{\Sp k_\lambda}$.
\end{enumerate}
\end{defn}

\subsection{The integration map for cohomological weights}\label{subsec:integration-map}
Throughout this subsection we fix $L \subset \overline{\mathbf Q}_p$ and assume it contains the Galois closure of $F$ inside $\overline{\mathbf Q}_p$. We also consider a fixed  cohomological weight $\lambda=(\kappa,w)$. (The notations of the previous two subsections also remain in force.)

Recall we defined the $L$-vector space $\mathscr L_\lambda(L)$, equipped with a left action of $\GL_2(F_p)$ in \eqref{eqn:padic-Llambda}. It thus inherits an action of the monoid $\Delta \subset \GL_2(F_p)$ from Section \ref{subsec:monoid-action}. We also view $\lambda$ as a $p$-adic weight $\lambda = (\lambda_1,\lambda_2)$ where $\lambda_i$ is given by
\begin{equation*}
\lambda_i(z) = \prod_{v \mid p} \prod_{\sigma \in \Sigma_v} \sigma(z)^{e_i(\sigma)}
\end{equation*}
where $e_1(\sigma) = {1\over 2}(w+\kappa_\sigma)$ and $e_2(\sigma) = {1\over 2}(w - \kappa_\sigma)$. The residue field $k_{\lambda}$ of $\lambda \in \mathscr W$ is contained in the Galois closure of $F$ inside $\overline{\mathbf Q}_p$. Thus to a cohomological weight $\lambda$ we also have a $\Delta$-module of distributions $\mathscr D_{\lambda}\otimes_{k_\lambda} L$.

\begin{defn}
The integration map is the $L$-linear map $I_\lambda: \mathscr D_\lambda\otimes_{k_\lambda} L \rightarrow \mathscr L_\lambda(L)$ given by
\begin{equation}\label{eqn:I-lambda-defn}
I_\lambda(\mu)(X) = \mu((z+X)^{\kappa}) := \sum_{0\leq j\leq \kappa} {\kappa \choose j} \mu(z^j) X^{\kappa - j}.
\end{equation}
\end{defn}
It is elementary to check the action of $\Delta$ has the following relationship to the integration map:\ if $g \in \Delta$ and $\mu \in \mathscr D_{\lambda}\otimes_{k_\lambda} L$, then
\begin{equation}\label{eqn:I-lambda-action}
I_\lambda(g \cdot \mu) = \left(\varpi_p^{-v(\det g)}\right)^{w -\kappa \over 2}g\cdot I_{\lambda}(\mu).
\end{equation} 

\begin{defn}\label{defn:sharp-notation}
$\mathscr L_{\lambda}^{\sharp}(L) := \mathscr L_{\lambda}(L)\otimes (\varpi_p^{-v(\det g)})^{w-\kappa \over 2}$ (as a left $\Delta$-module).
\end{defn}
Thus $\mathscr L_\lambda^{\sharp}$ coincides with $\mathscr L_\lambda^{\sharp}$ as a representation of the Iwahori subgroup of $\mathrm{GL}_2(\mathcal{O}_p)$, but the full action of $\Delta$ has been twisted so that $I_\lambda$ becomes equivariant (point (1) below). Before stating the next proposition, we note that any left $\Delta$-module becomes a left $\mathcal O_p^\times$-module via the inclusion  $\begin{smallpmatrix} \mathcal O_p^\times \\ & 1 \end{smallpmatrix} \subset \Delta$.
\begin{prop}\label{prop:change-to-sharp}
\leavevmode
\begin{enumerate}
\item $I_\lambda:  \mathscr D_{\lambda}\otimes_{k_\lambda} L \rightarrow \mathscr L_{\lambda}^{\sharp}(L)$ is $\Delta$-equivariant.
\item If $\mathcal O_L \subset L$ denotes the ring of integers and $\mathscr L_{\lambda}^{\sharp}(\mathcal O_L)$ are those polynomials with $\mathcal O_L$-coefficients then $\mathscr L_{\lambda}^{\sharp}(\mathcal O_L)$ is $\Delta$-stable.
\item The identity map $\mathscr L_{\lambda}(L) \rightarrow \mathscr L^{\sharp}_{\lambda}(L)$ is an isomorphism of left $\mathcal O_p^\times$-modules.
\end{enumerate}
\end{prop}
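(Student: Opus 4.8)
The plan is to extract all three assertions directly from the defining formulas; none needs more than an unwinding of \eqref{eqn:function-action}, \eqref{eqn:polynomial-action} and \eqref{eqn:ell-lambda}. Statement (1) is a reformulation of the identity \eqref{eqn:I-lambda-action}: since $\mathscr L_\lambda^\sharp(L)$ is by definition the $L$-vector space $\mathscr L_\lambda(L)$ with its $\Delta$-action multiplied by the character $g\mapsto(\varpi_p^{-v(\det g)})^{(w-\kappa)/2}$, the relation $I_\lambda(g\cdot\mu)=(\varpi_p^{-v(\det g)})^{(w-\kappa)/2}\,g\cdot I_\lambda(\mu)$ says exactly that $I_\lambda$ is $\Delta$-equivariant as a map into $\mathscr L_\lambda^\sharp(L)$. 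For completeness I would also include the short check of \eqref{eqn:I-lambda-action}: writing $I_\lambda(g\cdot\mu)(X)=\mu\big((z+X)^\kappa\big|_g\big)$ and expanding $(z+X)^\kappa|_g$ by \eqref{eqn:function-action}, one finds $\lambda_1\lambda_2^{-1}(cz+d)\,(g\cdot z+X)^\kappa$ is precisely $(z+X)^\kappa$ transformed by the untwisted polynomial action \eqref{eqn:polynomial-action}, so that the remaining scalar $\lambda_2(\det g\cdot\varpi_p^{-v(\det g)})$ together with the $\det^{(w-\kappa)/2}$ already present in \eqref{eqn:ell-lambda} produces the stated formula.

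For (2) I would write the $\Delta$-action on $\mathscr L_\lambda^\sharp$ out tensorand-by-tensorand. Given $g=\begin{smallpmatrix}a&b\\c&d\end{smallpmatrix}\in\Delta$ (so $a,b,c,d\in\mathcal O_p$, $c\in\varpi_p\mathcal O_p$, $d\in\mathcal O_p^\times$) and $\sigma\in\Sigma_v$, the $\sharp$-action on the $\sigma$-factor $\mathscr L_{\kappa_\sigma}(L)\otimes\det^{(w-\kappa_\sigma)/2}$ sends $P=\sum_{j=0}^{\kappa_\sigma}p_jX_\sigma^j$ to $\sigma\big(\det g_v\cdot\varpi_v^{-v(\det g_v)}\big)^{(w-\kappa_\sigma)/2}\sum_{j=0}^{\kappa_\sigma}p_j\,(\sigma(b_v)+\sigma(d_v)X_\sigma)^j(\sigma(a_v)+\sigma(c_v)X_\sigma)^{\kappa_\sigma-j}$. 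Since both exponents $j$ and $\kappa_\sigma-j$ are $\geq 0$ and $\sigma(\mathcal O_v)\subseteq\mathcal O_L$ (as $L$ is complete and contains the Galois closure of $F$), the polynomial part preserves $\mathcal O_L$-coefficients; and $\det g_v\cdot\varpi_v^{-v(\det g_v)}\in\mathcal O_v^\times$, so the scalar is a unit in $\mathcal O_L$. Hence $\mathscr L_\lambda^\sharp(\mathcal O_L)$ is $\Delta$-stable. This is exactly the role of the $\sharp$-twist: it absorbs the possibly negative power of $\varpi_v$ hidden in $(\det g)^{(w-\kappa_\sigma)/2}$ when $\kappa_\sigma>w$, which would otherwise destroy integrality.

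Finally, for (3), specializing to $g=\begin{smallpmatrix}u\\&1\end{smallpmatrix}$ with $u\in\mathcal O_p^\times$ gives $\det g=u\in\mathcal O_p^\times$, hence $v(\det g)=0$ and the twisting character $(\varpi_p^{-v(\det g)})^{(w-\kappa)/2}$ is trivial; so the $\sharp$-action coincides with the original action on $\begin{smallpmatrix}\mathcal O_p^\times\\&1\end{smallpmatrix}$ and the identity map is the claimed $\mathcal O_p^\times$-module isomorphism. I do not expect any real difficulty here; the only point that deserves care is the integrality assertion in (2) — precisely the place the $\sharp$-normalization is designed for — together with the routine remark that $\sigma(\mathcal O_v)\subseteq\mathcal O_L$.
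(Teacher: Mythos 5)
Your proof is correct and follows the same route as the paper: (1) is exactly the reformulation of \eqref{eqn:I-lambda-action}, (2) is the direct tensorand-by-tensorand check that the polynomial action has $\mathcal O_L$-integral coefficients while the $\sharp$-twist turns the determinant scalar into the unit $\det g\cdot\varpi_p^{-v(\det g)}$, and (3) is the observation that $v(\det g)=0$ for $g=\begin{smallpmatrix}u\\&1\end{smallpmatrix}$ with $u\in\mathcal O_p^\times$. The paper simply states all three as immediate/straightforward, so your write-up is the same argument with the details filled in.
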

\begin{proof}
Point (1) is immediate from \eqref{eqn:I-lambda-action}. The second point is straightforward from the definition. The third point is because if $x \in \mathcal O_p^\times$ and $g = \begin{smallpmatrix} x \\ & 1 \end{smallpmatrix}$ then $\det(g) \in \mathcal O_p^\times$, so $\lambda_2(\varpi_p^{-v_p(\det g)}) = 1$.
\end{proof}

\subsection{$p$-adic twisting}\label{subsec:padic-twisting}

In this subsection we consider two $p$-adic analogs of the twisting studied in Section \ref{subsec:twisting-archimedean}. Recall that $\Gamma_F$ is the Galois group of the maximal abelian extension of $F$ unramified away from  $p$ and $\infty$. Global class field theory defines an isomorphism
\begin{equation*}
\Gamma_F \simeq F^\times \backslash \mathbf A_F^\times / H
\end{equation*}
where $H$ is the closure of the subgroup generated by $(F_\infty^\times)^{\circ} \widehat{\mathcal O}_F^{(p),\times}$. Thus there is a natural short exact sequence
\begin{equation}\label{eqn:ses-GammaF}
1 \rightarrow \mathcal O_p^\times \big/ \overline{\mathcal O_{F,+}^\times} \rightarrow \Gamma_F \rightarrow \Cl_F^+ \rightarrow 1,
\end{equation}
where $\Cl_F^+$ is the narrow class group, $\mathcal O_{F,+}^\times$ are the totally positive units, and the bar indicates the $p$-adic closure under the natural inclusion $\mathcal O_{F,+}^\times\subset\mathcal O_p^\times$. By Lemma \ref{lem:CPA-groups} and \eqref{eqn:ses-GammaF}, $\Gamma_F$ is a CPA group. We write $\mathscr X(\Gamma_F)$ for the rigid analytic space parameterizing continuous $p$-adic characters on $\Gamma_F$.

\begin{defn}\label{eqn:defn-twisting}
Suppose that $R$ is a $\mathbf Q_p$-Banach algebra and $N$ is an $R$-module equipped with an $R$-linear left action $g\cdot n$ of the monoid $\Delta$. If $\vartheta: \Gamma_F \rightarrow R^\times$ is an $R$-valued point of $\mathscr X(\Gamma_F)$ then we define a new left $\Delta$-module by
\begin{equation*}
N(\vartheta) = N \otimes \vartheta^{-1}|_{\mathcal O_p^\times}(\det g \cdot \varpi_p^{-v(\det g)}).
\end{equation*}
\end{defn}

We note that $\mathscr X(\Gamma_F)$ also acts on $\mathscr W$ by central twists:\ if $\lambda=(\lambda_1,\lambda_2)$ is a character on $(\mathcal O_p^\times)^{\oplus 2}$ then we define we define $\vartheta \cdot \lambda := (\vartheta|_{\mathcal O_p^\times} \lambda_1, \vartheta|_{\mathcal O_p^\times} \lambda_2)$. 

For the next three results, let $\Omega \rightarrow \mathscr W$ be a $p$-adic weight. The previous paragraph allows us to define a new $p$-adic weight $\vartheta^{-1}\cdot \Omega$ whenever $\vartheta \in \mathscr X(\Gamma_F)(\Omega)$.
\begin{lem}\label{lemma:distr-twist}
If $\vartheta \in \mathscr X(\Gamma_F)(\Omega)$, then the identity map is an isomorphism $\mathscr D_{{\Omega}}(\vartheta) \simeq \mathscr D_{\vartheta^{-1}\cdot {\Omega}}$.
\end{lem}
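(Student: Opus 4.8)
The plan is to unwind both sides of the claimed identity directly from the definitions of the $\Delta$-action given in Section~\ref{subsec:monoid-action} and the twisting operation in Definition~\ref{eqn:defn-twisting}, and then observe that they agree on the nose. The point is purely formal: both $\mathscr D_{\Omega}(\vartheta)$ and $\mathscr D_{\vartheta^{-1}\cdot\Omega}$ have the \emph{same} underlying $R$-module, namely $\mathscr D(\mathcal O_p,R)$, so we need only check that the two left $\Delta$-actions on this module coincide, which (by dualizing) amounts to checking that the two right $\Delta$-actions on $\mathscr A(\mathcal O_p,R)$ coincide.

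First I would fix $g=\begin{smallpmatrix} a & b \\ c & d\end{smallpmatrix}\in\Delta$ and $f\in\mathbf A^{\mathbf s}(\mathcal O_p,R)$, and write out $f|_g$ for the weight-$(\vartheta^{-1}\cdot\Omega)$-action using \eqref{eqn:function-action}. Since $\vartheta^{-1}\cdot\Omega=(\vartheta^{-1}|_{\mathcal O_p^\times}\lambda_{\Omega,1},\,\vartheta^{-1}|_{\mathcal O_p^\times}\lambda_{\Omega,2})$, the factor $\lambda_{1}\lambda_{2}^{-1}(cz+d)$ is unchanged (the $\vartheta^{-1}$ cancels in the ratio), while the factor $\lambda_{2}(\det g\cdot\varpi_p^{-v(\det g)})$ picks up exactly the extra scalar $\vartheta^{-1}|_{\mathcal O_p^\times}(\det g\cdot\varpi_p^{-v(\det g)})$. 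This is precisely the twist appearing in Definition~\ref{eqn:defn-twisting}, so $f|_g$ computed in $\mathscr A_{\vartheta^{-1}\cdot\Omega}$ equals $f|_g$ computed in $\mathscr A_{\Omega}(\vartheta)$. Taking continuous $R$-duals and using $(g\cdot\mu)(f)=\mu(f|_g)$ gives the matching of left $\Delta$-actions on distributions, hence the identity map is $\Delta$-equivariant. One should also remark that the radius bookkeeping is consistent: since $\vartheta|_{\mathcal O_p^\times}$ is a continuous character of $\mathcal O_p^\times$, Lemma~\ref{lem:character-analytic} shows it (and hence $\vartheta^{-1}\cdot\Omega$) is analytic on some radius $\mathbf s(\vartheta^{-1}\cdot\Omega)$, and one checks $\mathbf A^{\mathbf s}_{\vartheta^{-1}\cdot\Omega}=\mathbf A^{\mathbf s}_{\Omega}$ as $\Delta$-modules for $\mathbf s$ large, so the identification passes to the limit $\mathscr A$ and thence to $\mathscr D$.

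There is essentially no main obstacle here; the only mild subtlety is notational housekeeping: making sure the restriction $\vartheta^{-1}|_{\mathcal O_p^\times}$ in Definition~\ref{eqn:defn-twisting} is being compared correctly with the central-twist convention $\vartheta\cdot\lambda=(\vartheta|_{\mathcal O_p^\times}\lambda_1,\vartheta|_{\mathcal O_p^\times}\lambda_2)$, and tracking the inverse so that $\vartheta^{-1}\cdot\Omega$ (not $\vartheta\cdot\Omega$) is the weight that appears. Once the conventions are lined up, the proof is a one-line verification, and I would simply write ``This is immediate from unwinding \eqref{eqn:function-action} and Definition~\ref{eqn:defn-twisting}, noting that the contribution of $\vartheta^{-1}$ to the automorphy factor $\lambda_1\lambda_2^{-1}(cz+d)$ cancels while its contribution to $\lambda_2(\det g\cdot\varpi_p^{-v(\det g)})$ is exactly the twisting scalar,'' possibly with the two-line display above spelled out for the reader's convenience.
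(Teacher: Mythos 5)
Your proof is correct and is exactly the verification the paper has in mind — its own proof of this lemma is simply ``This follows immediately from the definitions.'' Your unwinding is accurate: the $\vartheta^{-1}$ cancels in the ratio $\lambda_1\lambda_2^{-1}(cz+d)$ and survives only in the $\lambda_2(\det g\cdot\varpi_p^{-v(\det g)})$ factor, which is precisely the twisting scalar of Definition \ref{eqn:defn-twisting}, and the radius bookkeeping via Lemma \ref{lem:character-analytic} is the right remark to make.
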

\begin{proof}
This follows immediately from the definitions.
\end{proof}

Now consider a compact open subgroup $K$ of $\GL_2(\mathbf A_{F,f})$ such that $K_p \subset \Delta$. If $N$ is a left $\Delta$-module then we define a local system on $Y_K$ as in Section \ref{subsec:adelic-cochains}, with $\GL_2^+(F)$ acting trivially and $k \in K_p$ acting on the right as $k^{-1}$ acts on the left. We view $\mathscr O(\Omega)$ as a trivial left $\Delta$-module.
\begin{lem}\label{lem:theta-det}
If $\vartheta \in \mathscr X(\Gamma_F)(\Omega)$, then $\vartheta_{\det} : \GL_2(\mathbf A_F)\rightarrow \mathscr O(\Omega)^\times$ given by $g\mapsto \vartheta(\det g)$ defines an element of $H^0(Y_K, \mathscr O(\Omega)(\vartheta))$.
\end{lem}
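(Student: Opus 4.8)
The assertion is that the function $\vartheta_{\det}\colon \GL_2(\mathbf A_F) \to \mathscr O(\Omega)^\times$, $g \mapsto \vartheta(\det g)$, defines a global section of the local system $\mathscr O(\Omega)(\vartheta)$ on $Y_K$. Here $\mathscr O(\Omega)(\vartheta)$ is, by Definition \ref{eqn:defn-twisting} applied to the trivial $\Delta$-module $N = \mathscr O(\Omega)$, the $\Delta$-module whose underlying $\mathscr O(\Omega)$-module is $\mathscr O(\Omega)$ but with $g \in \Delta$ acting by multiplication by $\vartheta^{-1}|_{\mathcal O_p^\times}(\det g \cdot \varpi_p^{-v(\det g)})$. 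By the description of local systems in Section \ref{subsec:adelic-cochains} (specialized as in the paragraph preceding the statement, where $\GL_2^+(F)$ acts trivially and $k \in K_p$ acts on the right by $k^{-1}$ acting on the left), a class in $H^0(Y_K, \mathscr O(\Omega)(\vartheta))$ is exactly a locally constant function $\phi\colon \GL_2(\mathbf A_F) \to \mathscr O(\Omega)$ that is left $\GL_2(F)^{\circ}$-invariant, right $K_\infty^\circ$-invariant, and satisfies the twisted right $K$-equivariance $\phi(gk) = \phi(g)\big|_{k}$ for $k \in K$, where the right action of $k \in K_p$ on the value is through the $\Delta$-module structure on $\mathscr O(\Omega)(\vartheta)$ (and $k$ away from $p$ acts trivially). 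So the plan is simply to verify each of these invariance/equivariance properties for $\phi = \vartheta_{\det}$.

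First I would check that $\vartheta_{\det}$ is well-defined and locally constant: $\vartheta$ is a continuous character of the CPA group $\Gamma_F = F^\times\backslash \mathbf A_F^\times / H$ with values in $\mathscr O(\Omega)^\times$, and $\det\colon \GL_2(\mathbf A_F) \to \mathbf A_F^\times$ is continuous, so $\vartheta \circ \det$ is a continuous — hence, since $\Gamma_F$ has an open subgroup on which $\vartheta$ is analytic and in particular since $\det^{-1}(H)$ is open, locally constant — function to $\mathscr O(\Omega)^\times \subset \mathscr O(\Omega)$. Second, left $\GL_2(F)$-invariance (which subsumes the needed $\GL_2^+(F)$-invariance): for $\gamma \in \GL_2(F)$, $\det(\gamma g) = \det(\gamma)\det(g)$ with $\det(\gamma) \in F^\times$, and $\vartheta$ factors through $F^\times \backslash \mathbf A_F^\times$, so $\vartheta_{\det}(\gamma g) = \vartheta_{\det}(g)$. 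Third, right $K_\infty^\circ$-invariance: $K_\infty^\circ = \SO_2(F_\infty)Z(F_\infty)$, and $\det(K_\infty^\circ) \subset (F_\infty^\times)^\circ$ (the $\SO_2$ part has determinant $1$ and the center contributes squares, which are totally positive), and $(F_\infty^\times)^\circ \subset H$, so $\vartheta$ kills it.

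The one genuinely substantive point — and the one I expect to be the main obstacle in the sense of requiring care with the normalizations — is the twisted $K$-equivariance at $p$. For $k \in K$ write $k = k^{(p)} k_p$ with $k^{(p)}$ the prime-to-$p$ part. The prime-to-$p$ places: one needs $\vartheta(\det k^{(p)}) = 1$, which holds because the entries of $k^{(p)}$ lie in $\widehat{\mathcal O}_F^{(p),\times}$-adjacent compact subgroups and in any case $\det k^{(p)} \in \widehat{\mathcal O}_F^{(p),\times}$, which is contained in $H$; here one uses that $K \subset \GL_2(\widehat{\mathcal O}_F)$ so $\det k^{(p)}$ is a unit away from $p$ and away from the level, and the level is prime to $p$ while $H$ contains $\widehat{\mathcal O}_F^{(p),\times}$ — actually one only needs $\det k^{(p)} \in H$, which is immediate. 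At $p$: $k_p \in K_p \subset \Delta$ with $\det k_p \in \mathcal O_p^\times$ (since $k_p \in \GL_2(\widehat{\mathcal O}_F)$ forces the determinant to be a $p$-adic unit), hence $v(\det k_p) = 0$ and $\varpi_p^{-v(\det k_p)} = 1$, so the $\Delta$-action of $k_p$ on $\mathscr O(\Omega)(\vartheta)$ is multiplication by $\vartheta^{-1}(\det k_p)$. Thus $\vartheta_{\det}(g)\big|_{k_p} = \vartheta(\det g)\vartheta^{-1}(\det k_p) = \vartheta(\det g \cdot (\det k_p)^{-1}) = \vartheta(\det(g k_p^{-1}))$, and comparing with $\vartheta_{\det}(gk_p) = \vartheta(\det g \cdot \det k_p)$ one must be slightly careful about whether the convention is $\phi(gk) = \phi(g)|_k$ or $\phi(gk) = \phi(g)|_{k^{-1}}$; tracing through the definition in Section \ref{subsec:adelic-cochains} where ``$k \in K_p$ acting on the right as $k^{-1}$ acts on the left'' and the convention $n|k = k^{-1}\cdot n$, one gets $\phi(gk) = k^{-1}\cdot \phi(g)$ with $k^{-1}\cdot$ the $\Delta$-action, i.e. multiplication by $\vartheta^{-1}(\det k_p^{-1}) = \vartheta(\det k_p)$, matching $\vartheta_{\det}(gk) = \vartheta(\det g)\vartheta(\det k_p)$. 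So the equivariance holds on the nose. Assembling these four verifications shows $\vartheta_{\det}$ descends to the asserted global section, completing the proof; I would write it as a short paragraph invoking the dictionary of Section \ref{subsec:adelic-cochains} rather than re-deriving that dictionary.
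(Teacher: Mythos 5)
Your proof is correct and follows essentially the same route as the paper's: triviality of $\vartheta$ on $F^\times$, on $(F_\infty^\times)^\circ$, and on the prime-to-$p$ units gives the left $\GL_2(F)$-invariance, the $K_\infty^\circ$-invariance (and local constancy), and the reduction $\vartheta(\det k)=\vartheta(\det k_p)$, after which the twisted $K$-equivariance is the same one-line computation using $v(\det k_p)=0$. (One parenthetical aside is off: $\det^{-1}(H)$ is not open since $H$ has no open $p$-component, but local constancy is immediate anyway from constancy on the connected $\GL_2(F_\infty)^\circ$-direction, which you do establish.)
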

\begin{proof}
Since $\vartheta$ is trivial on $(F_\infty^\times)^{\circ}$,  $\vartheta_{\det}$ is trivial on $\GL_2^+(F_\infty)$. So, $\vartheta_{\det}$ is a locally constant on $\GL_2(\mathbf A_F)$ and invariant under multiplication by $K_\infty^{\circ}$. Further, $\vartheta_{\det}$ trivial on $\GL_2(F)$ since $\vartheta$ is trivial on $F^\times$. Finally, if $k \in K$ then $\vartheta(\det k) = \vartheta(\det k_p)$ because $\vartheta$ vanishes on the units away from $p$. So finally, if $g \in \GL_2(\mathbf A_{F})$ and $k \in K$, then
\begin{equation*}
\vartheta_{\det} (gk) = \vartheta(\det k_p) \vartheta_{\det}(g) = \vartheta_{\det}(g)|_{k}.
\end{equation*}
This concludes the proof.
\end{proof}

Following Lemma \ref{lem:theta-det}, the following definition is well-posed.

\begin{defn}\label{defn:p-adic-big-twisting}
If $\vartheta \in \mathscr X(\Gamma_F)(\Omega)$ and $N$ is a left $\mathscr O(\Omega)[\Delta]$-module then we define the twisting map
\begin{equation*}
\tw_{\vartheta}: H^{\ast}_c(Y_K,N) \rightarrow H^{\ast}_c(Y_K,N(\vartheta))
\end{equation*}
to be cup product with $\vartheta_{\det}$.
\end{defn}

Twisting defined this was is directly related to twisting the action of Hecke operators $[K\delta K]$ acting on $H^{\ast}_c(Y_K,N)$, as the next proposition shows. Its relation to twisting automorphic forms is explained afterward.
\begin{prop}\label{prop:twisting-hecke-eigenvalues}
Assume that $\begin{smallpmatrix} \mathcal O_p^\times \\ & 1 \end{smallpmatrix} \subset K_p$. Then, for each finite place $v$ of $F$ we have
\begin{equation*}
 [K\begin{smallpmatrix} \varpi_v \\ & 1\end{smallpmatrix} K] \circ \tw_{\vartheta}  = \vartheta(\varpi_v)\tw_{\vartheta} \circ [K\begin{smallpmatrix} \varpi_v \\ & 1\end{smallpmatrix} K].
\end{equation*}
\end{prop}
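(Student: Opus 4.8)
The statement is a twisting compatibility for the Hecke operator $[K\begin{smallpmatrix} \varpi_v \\ & 1\end{smallpmatrix} K]$ at any finite place $v$. The plan is to reduce everything to a cochain-level computation using the adelic cochain formalism from Section \ref{subsec:adelic-cochains}, together with the fact that $\tw_{\vartheta}$ is cup product with the degree-zero class $\vartheta_{\det} \in H^0(Y_K,\mathscr O(\Omega)(\vartheta))$ (Lemma \ref{lem:theta-det}). First I would fix a decomposition $K\begin{smallpmatrix} \varpi_v \\ & 1\end{smallpmatrix} K = \bigsqcup_i \delta_i K$ into right cosets; because $\begin{smallpmatrix} \mathcal O_p^\times \\ & 1 \end{smallpmatrix} \subset K_p$ and the double coset is supported at $v$, the $\delta_i$ may be taken equal to the identity away from $v$, and at $v$ they are the familiar representatives $\begin{smallpmatrix} \varpi_v & a \\ & 1 \end{smallpmatrix}$ (and possibly $\begin{smallpmatrix} 1 & \\ & \varpi_v \end{smallpmatrix}$ if $v\nmid$ the level) of the usual $T_v$ or $U_v$ formula, exactly as in Remarks \ref{rmk:U-operator-notation} and \ref{rmk:Tv-formula}. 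In particular $\det \delta_i = \varpi_v$ (unit component) for every $i$, so $\vartheta(\det \delta_i) = \vartheta(\varpi_v)$ for all $i$; this is where the eigenvalue $\vartheta(\varpi_v)$ will come from.

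Next I would unwind the definition of the Hecke operator on cochains, \eqref{eqn:hecke-action-adelic-cochains}, namely $([K\delta K]\phi)(\sigma) = \sum_i \delta_i\cdot\phi(\sigma\delta_i)$, now applied to $N(\vartheta)$-valued cochains. The only difference between the $\Delta$-action on $N$ and on $N(\vartheta)$ is the extra scalar $\vartheta^{-1}|_{\mathcal O_p^\times}(\det g\cdot \varpi_p^{-v(\det g)})$ from Definition \ref{eqn:defn-twisting}. For $g = \delta_i$ this scalar is $\vartheta^{-1}(\det \delta_i \cdot \varpi_v^{-1}) = \vartheta^{-1}(\text{unit}) = 1$ in the $v\mid p$ case, and it is simply trivial when $v\nmid p$ since then $\det\delta_i$ is a unit at $p$; so the two operators $[K\delta K]$ on $N$ and on $N(\vartheta)$ are literally given by the same formula on cochains. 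The remaining point is to compare $[K\delta K]$ applied after cupping with $\vartheta_{\det}$ versus cupping with $\vartheta_{\det}$ applied after $[K\delta K]$: since cup product with a degree-zero class $c$ is multiplication by the corresponding locally constant function, and Hecke operators act on the $H^0$-class $\vartheta_{\det}$ by $[K\begin{smallpmatrix}\varpi_v\\&1\end{smallpmatrix}K]\vartheta_{\det} = \sum_i \vartheta(\det \delta_i)\vartheta_{\det} = (\deg)\cdot\vartheta(\varpi_v)\vartheta_{\det}$ — but more carefully, one should track that $\tw_{\vartheta}$ is cup with $\vartheta_{\det}$ on the \emph{left} and Hecke commutes with cup via the projection formula and the coset bookkeeping. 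Concretely, for a cochain $\phi$,
\begin{align*}
[K\delta K](\vartheta_{\det}\cup \phi)(\sigma) &= \sum_i \delta_i\cdot\big(\vartheta_{\det}(\sigma\delta_i)\,\phi(\sigma\delta_i)\big) = \sum_i \vartheta_{\det}(\sigma)\vartheta(\det\delta_i)\,\delta_i\cdot\phi(\sigma\delta_i)\\
&= \vartheta(\varpi_v)\,\vartheta_{\det}(\sigma)\sum_i \delta_i\cdot\phi(\sigma\delta_i) = \vartheta(\varpi_v)\,\big(\vartheta_{\det}\cup [K\delta K]\phi\big)(\sigma),
\end{align*}
using $\vartheta_{\det}(\sigma\delta_i) = \vartheta_{\det}(\sigma)\vartheta(\det\delta_i)$ (since $\vartheta_{\det}$ is a homomorphism composed with $\det$) and $\vartheta(\det\delta_i) = \vartheta(\varpi_v)$ for every $i$. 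Passing to cohomology via Proposition \ref{prop:canonical} gives exactly the claimed identity.

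The main obstacle — really the only thing requiring care rather than routine checking — is the bookkeeping of how the twisted $\Delta$-action $N(\vartheta)$ interacts with the chosen coset representatives $\delta_i$, i.e.\ verifying that the extra scalar $\vartheta^{-1}|_{\mathcal O_p^\times}(\det\delta_i\cdot\varpi_p^{-v(\det\delta_i)})$ is genuinely trivial for the representatives at hand, and keeping the left/right placement of $\vartheta_{\det}$ in the cup product consistent so that no spurious inverse of $\vartheta(\varpi_v)$ appears. Once the representatives are normalized to be the identity outside $v$ (which uses the hypothesis $\begin{smallpmatrix}\mathcal O_p^\times\\&1\end{smallpmatrix}\subset K_p$ to absorb the ambiguity at $v$), this is a one-line verification. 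I would also remark that the same argument applies verbatim to the $S_v$ and $T_{\mathfrak m}$ operators, giving $\tw_\vartheta$ multiplies the $T_{\mathfrak m}$-eigenvalue by $\vartheta(\mathfrak m)$ (cf.\ the archimedean analogue, Proposition \ref{prop:twisting-Tzetas}, and the Fourier-coefficient formula \eqref{eqn:twist-coefficients}), which is presumably what is needed downstream.
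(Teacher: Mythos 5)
Your proposal is correct and follows essentially the same route as the paper: normalize the coset representatives (using the hypothesis $\begin{smallpmatrix}\mathcal O_p^\times\\ & 1\end{smallpmatrix}\subset K_p$ at $v\mid p$, and the triviality of $\vartheta$ on $\det(K^p)$ at $v\nmid p$) so that $\vartheta(\det\delta_i)=\vartheta(\varpi_v)$ and the twisting scalar $\vartheta^{-1}|_{\mathcal O_p^\times}(\det\delta_{i,p}\cdot\varpi_p^{-v(\det\delta_{i,p})})$ is trivial, then compare the two compositions on adelic cochains. The paper's computation tracks the $N(\vartheta)$-action explicitly via a "$\star$" notation rather than absorbing it into prose beforehand, but the content is identical.
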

\begin{proof}
First, we claim that we can write $K\begin{smallpmatrix} \varpi_v \\ & 1 \end{smallpmatrix} K = \bigcup \delta_i K$ with $\delta_i \in \GL_2(F_v)$ such that $\det \delta_i = \varpi_v$ if $v \mid p$ and $\vartheta(\det \delta_i) = \vartheta(\varpi_v)$ in general. This is true for any $\delta_i$ if $v \nmid p$ since $\vartheta$ is trivial on $\det(K^p)$. But if $v \mid p$ and $\delta_i$ is any choice then $\det(\delta_i) = \varpi_v u_i^{-1}$ for some $u_i \in \mathcal O_v^\times$. By the assumption on $K$, we can replace $\delta_i$ by $\delta_i \begin{smallpmatrix} u_i \\ & 1 \end{smallpmatrix} \in \delta_i K$.

Now to prove the proposition we fix a choice of $\delta_i$ as above. By Proposition \ref{prop:canonical} it suffices to individually calculate $\tw_{\vartheta}([K\delta K]\phi)$ and $[K\delta K]\tw_{\vartheta}(\phi)$ on the level of adelic cochains, from Section \ref{subsec:adelic-cochains}. For clarity, let us write $\delta \cdot n$ for the action of $\Delta$ on $N$ and $\delta \star n$ for the action of $\Delta$ on $N(\vartheta)$. 

Let $\phi \in C_{\ad}^{\bullet}(K,N)$. Each adelic chain (an element of $C_{\bullet}(D_{\mathbf A})$ as in Section \ref{subsec:adelic-cochains}) is a finite linear combination of the simple tensors $\sigma \otimes [g_f]$ where $\sigma \in C_{\bullet}(D_\infty)$ and $g_f \in \GL_2(\mathbf A_{F,f})$. For such simple tensors we have:
\begin{align*}
\tw_{\vartheta}([K\delta K]\phi)(\sigma\otimes [g_f]) &= \vartheta(\det g_f) ([K\delta K]\phi)(\sigma \otimes [g_f])\\
&= \vartheta(\det g_f) \sum_i \delta_i \cdot \phi(\sigma \otimes [g_f \delta_i]).
\end{align*}
On the other hand, since $\vartheta(\det \delta_i) = \vartheta(\varpi_v)$, and $\det \delta_{i,p} \cdot \varpi_p^{-v(\det \delta_{i,p})} = 1$ we get
\begin{align*}
[K\delta K] \tw_{\vartheta}(\phi)(\sigma\otimes [g_f]) &= \sum_i \delta_i \star (\tw_{\vartheta}\phi)(\sigma \otimes[g_f\delta_i]) \\
&= \sum_i \vartheta(\det g_f \det \delta_i)\vartheta(\det \delta_{i,p} \cdot \varpi_p^{-v(\det \delta_{i,p})})^{-1} \delta_i \cdot \phi(\sigma \otimes [g_f \delta_i])\\
&= \vartheta(\varpi_v)\vartheta(\det g_f) \sum_i \delta_i \cdot \phi(\sigma \otimes [g_f\delta_i]).
\end{align*}
Thus, the proposition is proven.
\end{proof}
So, under the mild hypothesis of Proposition \ref{prop:twisting-hecke-eigenvalues} (which is satisfied in practice), we can twist distribution-valued Hecke eigenclasses by $p$-adic characters of $\Gamma_F$ and obtain new Hecke eigenclasses  of a possibly different weight. But the twisting maps $\tw_{\vartheta}$ do not preserve the cohomology of the finite-dimensional spaces $\mathscr L_{\lambda}$, so we also need a second kind of twisting analogous to Section \ref{subsec:twisting-archimedean}. 

As before, write $\theta:\mathbf A_F^\times \rightarrow \mathbf C^\times$ for a finite order Hecke character but we assume now that it is unramified away from $p$. Write $\mathfrak f$ for its conductor. Then $\theta^{\iota} := \iota \circ \theta$ defines a finite order character $\theta^{\iota} : \mathbf A_F^\times \rightarrow \overline{\mathbf Q}_p^\times$ which descends to a character of $\Gamma_F$. Suppose that $L$ is a subfield of $\overline{\mathbf Q}_p$ containing the Galois closure of $F$ and the values of $\theta^{\iota}$ and also let $\mathfrak n$ be an integral ideal of $\mathcal O_F$. In analogy with Section \ref{subsec:twisting-archimedean} we define a linear map
\begin{equation}\label{eqn:padic-twisting-finite-order}
\tw_{\theta^{\iota}}^{\cl} : H^\ast_c(Y_{1}(\mathfrak n),{\mathscr L}_{\lambda}(L)) \rightarrow H^\ast_c(Y_{1}(\mathfrak{n f}^2), {\mathscr L}_{\lambda}(L))
\end{equation} 
by 
\begin{equation*}
\tw_{\theta^{\iota}}^{\cl} = \theta^{\iota}_{\det}\cup  \sum_{t \in \Upsilon_{\mathfrak f}^\times} \theta^{\iota}(t)v_{t_0,p}^{\ast}.
\end{equation*}
Here the notation is just as in Section \ref{subsec:twisting-archimedean}. Note, however, that because the local systems $\mathscr L_\lambda(L)$ are defined with respect to a right action of $\GL_2(F_p)$, we no longer have an isomorphism between $v_t^{\ast}\mathscr L_\lambda(L)$ and $\mathscr L_\lambda(L)$. In fact, the map written $v_{t,p}^{\ast}$ above is the map on cohomology fitting into the diagram
\begin{equation}\label{eqn:p-vtstar-diagram}
\xymatrix{
H^\ast_c(Y_{1}(\mathfrak n),{\mathscr L}_{\lambda}(L)) \ar[r]^-{v_{t,p}^{\ast}} \ar[d]_-{\pr^{\ast}} &  H^\ast_c(Y_{11}(\mathfrak {n f^2}), {\mathscr L}_{\lambda}(L)) \\
H^\ast_c(Y_{K_{11}(\mathfrak{nf^2})_t}, {\mathscr L}_{\lambda}(L)) \ar[r]_-{r_{u_t}^{\ast}} & H^\ast_c(Y_{11}(\mathfrak{nf^2}), {\mathscr L}_\lambda(L)(u_t)), \ar[u]_-{\simeq}
}
\end{equation}
where the right vertical arrow is induced by the isomorphism $P \mapsto u_t \cdot P$ of local systems $\mathscr L_{\lambda}(L)(u_t) \rightarrow \mathscr L_\lambda(L)$ in the opposite direction of the diagonal arrow in \eqref{eqn:iota-transfer}.

The image of $\tw_{\theta^{\iota}}^{\cl}$ is contained in $H^\ast_c(Y_{1}(\mathfrak{nf})^2,\mathscr L_{\lambda}(L))$ just as in the proof of Lemma \ref{lemma:twisting-compatibilities}. And, if $E$ is a subfield of $\mathbf C$ containing the Galois closure of $F$ and the values of $\theta$ and $L = \mathbf Q_p(\iota(E))$, then \eqref{eqn:iota-transfer} implies that the diagram
\begin{equation}
\xymatrix{
H^d_c(Y_{1}(\mathfrak n),{\mathscr L}_{\lambda}(L)) \ar[r]^-{\tw_{\theta^{\iota}}^{\cl}} & H^d_c(Y_{1}(\mathfrak{n f}^2), {\mathscr L}_{\lambda}(L))\\
H^d_c(Y_{1}(\mathfrak n),{\mathscr L}_{\lambda}(E)) \ar[r]_-{\tw_{\theta}}  \ar[u]^-{\iota} & H^d_c(Y_{1}(\mathfrak{n f}^2), {\mathscr L}_{\lambda}(E)) \ar[u]_-{\iota}.
}
\end{equation}
is commutative. 

We also record another adelic cochain computation, which will be used in the proof of Lemma \ref{lemma:adelic-twisting-expansion}.

\begin{prop}\label{prop:twisting-adelic-cochains}
If $\psi \in H^\ast_c(Y_{1}(\mathfrak n),{\mathscr L}_{\lambda}(L))$ is represented by $\widetilde \psi \in C_{\ad,c}^{\bullet}(K_1(\mathfrak n), \mathscr L_{\lambda}(L))$, then $\tw_{\theta^\iota}^{\cl}(\psi) \in H^\ast_c(Y_{1}(\mathfrak n\mathfrak f^2),{\mathscr L}_{\lambda}(L))$ is represented by $\tw_{\theta^{\iota}}^{\cl}(\widetilde \psi) \in C_{\ad,c}^{\bullet}(K_{1}(\mathfrak{n f}^2), \mathscr L_{\lambda}(L))$ whose value on a singular chain $\sigma = \sigma_\infty\otimes[g_f]$ is given by
\begin{equation*}
\tw_{\theta^\iota}^{\cl}(\widetilde \psi)(\sigma) = \theta^{\iota}(\det g_f) \sum_{t \in \Upsilon_{\mathfrak f}^\times} \theta^{\iota}(t) \begin{smallpmatrix} 1 & t_0 \\ & 1 \end{smallpmatrix}\cdot \widetilde \psi(\sigma \begin{smallpmatrix} 1 & t_0 \\ & 1 \end{smallpmatrix}).
\end{equation*}
\end{prop}
\begin{proof}
First, $\theta_{\det}^{\iota} \in H^0(Y_{11}(\mathfrak nf^2), L)$ is given by $g \mapsto \theta^{\iota}(\det g)$ and it is clearly represented on the level of adelic cochains by $\sigma_\infty\otimes[g_f] \mapsto \theta^{\iota}(\det g_f)$ (since $\theta^{\iota}$ is trivial on $(F_\infty^\times)^{\circ}$). Comparing our claim with the definition of $\tw_{\theta^{\iota}}$, it is enough to show that $v_{t,p}^{\ast}(\psi)$ is represented by the adelic cochain
\begin{equation}\label{eqn:formula-to-show}
v_{t,p}^{\ast}(\widetilde \psi)(\sigma) = \begin{smallpmatrix} 1 & t\\ & 1\end{smallpmatrix} \cdot \widetilde \psi(\sigma \begin{smallpmatrix} 1 & t\\ & 1\end{smallpmatrix})
\end{equation} 
for any $t \in \mathbf A_{F,f}$. According to the definition \eqref{eqn:p-vtstar-diagram} above, $v_{t,p}^{\ast}$ is the composition of three maps. The first map is the pullback of a projection. The second is the map induced by right multiplication by $u_t$. The third map is the map $P \mapsto u_t \cdot P$ on the level of local systems $\mathscr L_\lambda(L)(u_t) \mapsto \mathscr L_\lambda(L)$. Thus the computation \eqref{eqn:formula-to-show} of $v_{t,p}^{\ast}(\widetilde \psi)$ is immediate from the explanation following Proposition \ref{prop:canonical}.
\end{proof}

\begin{rmk}\label{rmk:twisting-compatibility}
The classical twisting \eqref{eqn:padic-twisting-finite-order} defined here compares directly with the twisting in Definition \ref{defn:p-adic-big-twisting}. Suppose that $\vartheta = \theta^{\iota}$ is a finite order $p$-adic Hecke character of $\Gamma_F$. We can apply the above discussion to $\mathfrak n \cap \mathbf p$ and then deduce is a commuting diagram
\begin{equation*}
\xymatrix{
H^{\ast}_c(Y_1(\mathfrak n\cap \mathbf p), \mathscr D_{\lambda}) \ar[r]^-{\tw_{\vartheta}} \ar[d]_-{I_{\lambda}} & H^{\ast}_c(Y_1(\mathfrak n\cap \mathbf p), \mathscr D_{\lambda}(\vartheta)) \ar[r]^-{I_{\lambda}} & H^{\ast}_c(Y_{1}(\mathfrak n\cap  \mathbf p), \mathscr L_{\lambda}^{\sharp}(\vartheta)) \ar[d]^-{\sum \theta^{\iota}(t)v_{t_0}^{\ast}}\\
H^{\ast}_c(Y_{1}(\mathfrak n\cap \mathbf p), \mathscr L_\lambda^{\sharp}) \ar[rr]^-{\tw_{\theta^{\iota}}^{\cl}}& & H^{\ast}_c(Y_{1}((\mathfrak n \cap \mathbf p) \mathfrak f^2), \mathscr L_{\lambda}^{\sharp}),
}
\end{equation*}
where the right vertical arrow makes implicit use of the identity map inducing an isomorphism $\mathscr L_{\lambda}(\vartheta) \simeq \mathscr L_{\lambda}$ of local systems on $Y_{11}(\mathfrak{(n\cap \mathbf p) f^2})$. (The $\sharp$-notation is defined in Definition \ref{defn:sharp-notation}.)
\end{rmk}

\section{The eigenvariety}\label{sec:mid-eigen}
In this section we assume that $\mathfrak n$ is an integral ideal that is co-prime to $p$. Our goal is to define a certain eigenvariety of tame level $\mathfrak n$ and then show that reasonable classical points are smooth on this eigenvariety. Since this section is likely perceived as more technical than others in this article, let us elaborate on our motivations before continuing.

Eigenvarieties are families, parametrized by weights, of systems of Hecke eigenvalues (`eigensystems') that generalize $p$-adic systems of eigenvalues appearing in classical spaces of automorphic forms. For our purposes, the relevant eigensystems appear in the $\mathscr D_{\lambda}$-valued cohomology as $\lambda$ varies over $p$-adic weights and are required to have non-vanishing eigenvalues on Hecke operators at places dividing the prime $p$.

The eigenvariety is necessary in this article, first of all, to make sense of the variation statement in part e.\ of Theorem \ref{thm:intro-main-theorem}. It is also required for a less tautological reason. The $p$-adic $L$-functions constructed in Section \ref{sec:padicL-functions} arise from applying the period maps of Section \ref{sec:period-maps} to distribution-valued cohomology classes. If each classical eigensystem obviously gave rise to a unique distribution-valued cohomology class, the story would end there. However, the supposition is unclear. In fact, by definition, it is only the ``non-critical'' eigensystems (see Definition \ref{defn:classical-noncritical}) that lift via the integration map. For other eigensystems, which are called critical, the situation is more subtle. Bella\"iche's work when $F = \mathbf Q$ (\cite{Bellaiche-CriticalpadicLfunctions}) suggests that identifying a canonical cohomology class is linked to the smoothness of an eigenvariety.\footnote{In {\em loc.\ cit.}, Bella\"iche clearly also credits Chenevier with the particular smoothness argument and its consequences.} This link is made precise by commutative algebra in Section \ref{subsec:consequences}.

So, in this section we start by defining the eigenvariety we use and then establishing its basic features, before moving on to justifying the smoothness statement. The essential source of all difficulty is that, in general, distribution-valued cohomology is supported in many degrees, not just the middle degree $d = (F:\mathbf Q)$. Related to this, the canonical eigenvariety is constructed from all cohomology classes may be non-reduced and may contain irreducible components of varying dimensions. (See \cite{Hansen-Overconvergent} or \cite{Urban-Eigenvarieties} for some discussion on the very interesting problem of dimensions of components of eigenvarieties.) Both features are problematic for applying various ``soft" $p$-adic analytic arguments from the literature, which usually only work well for eigenvarieties of maximal dimension.  The purpose of Sections \ref{subsec:wt-space} through \ref{subsec:middle-degree} is to define what we call the middle-degree eigenvariety $\mathscr E(\mathfrak n)_{\rmmid}$, which is an eigenvariety that parametrizes eigensystems supported only in middle degree. Conjecturally, $\mathscr E(\mathfrak n)_{\rmmid}$ is the complement of the irreducible components of non-maximal dimension within the larger canonical eigenvariety, and in principle $\mathscr E(\mathfrak n)_{\rmmid}$ contains all points corresponding to classical cohomological $p$-refined automorphic representations. The main technical requirements for the definition are the introduction of an auxiliary eigenvariety constructed from Borel--Moore homology and related spectral sequences from \cite{Hansen-Overconvergent}. We prove $\mathscr E(\mathfrak n)_{\rmmid}$ is equidimensional of the maximal possible dimension and reduced (see Proposition \ref{prop:density-results} and Theorem \ref{theorem:reducedness}).

We then prove the key smoothness statement in Sections \ref{subsec:interlude-galoisrep} and \ref{subsec:smoothness}. The high-level strategy goes back to Hida and Mazur and the earliest connections between $p$-adic modular forms and deformations of Galois representations. Namely, we  aim to control the rigid local rings on the eigenvariety by defining and controlling a deformation problem on Galois representations instead. The deformation condition imposed, which is called either being refined or weakly-refined depending on the reference, was discovered in the setting of $F = \mathbf Q$ by Kisin (\cite{Kisin-OverconvergentModularForms}) and sometimes is also referred to as the $p$-adic interpolation of crystalline eigenvalues. In the case at hand, the interpolation of crystalline eigenvalues is due to Liu (\cite{Liu-Triangulations}) and Kedlaya, Pottharst, and Xiao (\cite{KedlayaPottharstXiao-Finiteness}). The relevant smoothness theorems for $F = \mathbf Q$ (for cuspidal cohomological automorphic forms) were established by Bella\"iche (\cite{Bellaiche-CriticalpadicLfunctions}). We take his argument as a model for our own. A portion of the argument requires $p$-adic Hodge theory, including the theory of $(\varphi, \Gamma)$-modules, in the background. We have tried to provide many references and suggestions for the reader unfamiliar with these theories, though nearly all that we need is already contained in the literature (albeit sometimes in an obscured form). Appendix \ref{app:semistable} discusses technical points on $p$-adic Galois representations and the main result there is required in order to include, in our key smoothness statement, the possibility of automorphic representations that are special at some $p$-adic places.

The non-critical versus critical dichotomy is important to be aware of, although it is ultimately immaterial for the smoothness statement. If the reader would like a geometric way to understand the distinction, Proposition \ref{prop:etaleness} shows the weight map is smooth at non-critical points, whereas it is only the eigenvariety itself that is smooth at critical points. For $p$-adic automorphic forms in more general settings, the dichotomy is more striking and the geometry of the attendant eigenvarieties is more interesting. The interested reader could begin by focusing their attention on Remark \ref{rmk:critical-miracle}, leading to the reference \cite{Bergdall-Smoothness} or the work of Breuil, Hellmann, and Schraen (see \cite{BreuilHellmannSchraen-Classicality, BreuilHellmannSchraen-LocalModel}, for instance).

\subsection{A weight space}\label{subsec:wt-space}
Recall the notation from the start of Section \ref{subsec:padic-twisting}. View  $\overline{\mathcal O_{F,+}^\times} \subset T(\mathcal O_p)$ as a closed subgroup via the diagonal embedding.

\begin{defn}
$\mathscr W(1) := \mathscr X(T(\mathcal O_p)/\overline{\mathcal O_{F,+}^\times})$.
\end{defn}
The dimension of $\mathscr W(1)$ as a rigid analytic space is $1+ d + \delta_{F,p}$ where $\delta_{F,p}$ is the Leopoldt defect, defined here to be one less than the dimension of $\mathcal O_p^\times/\overline{\mathcal O_F^\times}$ as a CPA group. There is a natural closed immersion $\mathscr W(1) \rightarrow \mathscr W$ and every cohomological weight defines a point in $\mathscr W(1)(\overline{\mathbf Q}_p)$.\footnote{We could have also considered a more general $p$-adic weight space. Namely, we could also take $\mathscr W(\mathfrak n)$ defined to be those continuous characters of $T(\mathcal O_p)$ which vanish on the finite index subgroup $\Gamma(\mathfrak n) \subset \mathcal O_{F,+}^\times$ of units $u$ which are congruent to $1 \bmod \mathfrak n$. Then $\mathscr W(1) \subset \mathscr W(\mathfrak n)$ is an open and closed embedding onto a union of connected components containing all the cohomological weights. But the local systems $\mathscr D_{\lambda}$ at level $\mathfrak n \mathbf p$ considered below are non-trivial exactly for $\lambda \in \mathscr W(\mathfrak n)$.} There is also a natural action of $\mathscr X(\mathcal O_p^\times/\overline{\mathcal O_{F,+}^\times})$ on $\mathscr W(1)$ by central twisting (compare with Section \ref{subsec:padic-twisting}). We denote this action by $\eta \cdot \lambda$ for $\eta \in \mathscr X(\mathcal O_p^\times/\overline{\mathcal O_{F,+}^\times})$ and $\lambda \in \mathscr W(1)$.

\begin{defn}\label{defn:classical-weight}
A weight $\lambda \in \mathscr W(1)(\overline{\mathbf Q}_p)$ is called twist cohomological if it is in the $\mathscr X(\mathcal O_p^\times/\overline{\mathcal O_{F,+}^\times})(\overline{\mathbf Q}_p)$-orbit of the cohomological weights.
\end{defn}
The ambiguity in being simultaneously twist cohomological and cohomological is easy to control.
\begin{lem}\label{lem:cohomological-weights}
If $\lambda=(\kappa,w)$ and $\lambda'=(\kappa',w')$ are two cohomological weights and $\eta \in \mathscr X(\mathcal O_p^\times/\overline{\mathcal O_{F,+}^\times})(\overline{\mathbf Q}_p)$ such that $\lambda = \eta \cdot \lambda'$, then $\eta$ is of the form $z \mapsto z^n$ for some $n \in \mathbf Z$, $\kappa=\kappa'$, and $w = w'+2n$.
\end{lem}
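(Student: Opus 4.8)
The claim is an essentially elementary statement about characters, once one unwinds the definitions. Recall that a cohomological weight $\lambda = (\kappa, w)$ corresponds to the pair $(\lambda_1, \lambda_2)$ of characters of $\mathcal O_p^\times$ given by $\lambda_i(z) = \prod_{\sigma} (\iota \circ \sigma)(z)^{e_i(\sigma)}$ with $e_1(\sigma) = \tfrac{1}{2}(w + \kappa_\sigma)$ and $e_2(\sigma) = \tfrac{1}{2}(w - \kappa_\sigma)$, and similarly for $\lambda' = (\kappa', w')$. The hypothesis $\lambda = \eta \cdot \lambda'$ means $\lambda_i = \eta|_{\mathcal O_p^\times} \cdot \lambda'_i$ for $i = 1, 2$, so $\eta = \lambda_1 / \lambda'_1 = \lambda_2 / \lambda'_2$ as characters of $\mathcal O_p^\times$ (and this common character is automatically trivial on $\overline{\mathcal O_{F,+}^\times}$, being a ratio of such).

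First I would compute $\eta$ as a product of powers of the embeddings: from $\eta = \lambda_1/\lambda_1'$ we get $\eta(z) = \prod_{\sigma}(\iota\circ\sigma)(z)^{e_1(\sigma) - e_1'(\sigma)}$, and from $\eta = \lambda_2/\lambda_2'$ we get $\eta(z) = \prod_\sigma (\iota\circ\sigma)(z)^{e_2(\sigma) - e_2'(\sigma)}$. The key input is that the characters $z \mapsto (\iota\circ\sigma)(z)$ of $\mathcal O_p^\times$, for $\sigma \in \Sigma_F$, are \emph{multiplicatively independent as algebraic characters on an open subgroup}: more precisely, if $\prod_\sigma (\iota\circ\sigma)(z)^{a_\sigma} = \prod_\sigma (\iota\circ\sigma)(z)^{b_\sigma}$ for all $z$ in some open subgroup of $\mathcal O_p^\times$, with $a_\sigma, b_\sigma \in \mathbf Z$, then $a_\sigma = b_\sigma$ for all $\sigma$. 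This follows from the nonvanishing of the regulator / the fact that the $\sigma$ are distinct ring embeddings $F_v \hookrightarrow \overline{\mathbf Q}_p$ (so their induced maps on Lie algebras $\mathcal O_v \to \overline{\mathbf Q}_p$ are $\overline{\mathbf Q}_p$-linearly independent functionals by distinctness of field embeddings, hence the logarithmic derivatives of the characters are independent). Applying this to the two expressions for $\eta$ yields $e_1(\sigma) - e_1'(\sigma) = e_2(\sigma) - e_2'(\sigma)$ for every $\sigma$, i.e.\ $\kappa_\sigma = \kappa_\sigma'$, since $\kappa_\sigma - \kappa_\sigma' = (e_1 - e_2)(\sigma) - (e_1' - e_2')(\sigma) = 0$.

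Having $\kappa = \kappa'$, set $n_\sigma := e_1(\sigma) - e_1'(\sigma) = \tfrac{1}{2}(w - w')$, which visibly does not depend on $\sigma$; call it $n$. Then $w = w' + 2n$ (and one checks $n \in \mathbf Z$ since $w \equiv \kappa_\sigma \equiv w' \bmod 2$), and $\eta(z) = \prod_\sigma (\iota\circ\sigma)(z)^n = \left(\prod_\sigma(\iota\circ\sigma)(z)\right)^n = \mathrm{Nm}_{F_p/\mathbf Q_p}(z)^n$ — but wait, on $\mathcal O_p^\times$ the product $\prod_{\sigma\in\Sigma_F}(\iota\circ\sigma)(z)$ is precisely $\iota(\mathrm{Nm}_{F/\mathbf Q}(z)) = z^1$ under the identification of $\mathbf Z_p \subset \mathbf Q_p$ inside the character via the norm; more cleanly, the norm-to-$\mathbf Z_p$ map is the character $z \mapsto z$ in the sense that $\prod_\sigma \sigma$ restricted to $\mathcal O_p^\times$ is $\mathrm{Nm}$, and for an integer power this gives $\eta: z \mapsto z^n$ in the stated sense. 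So I would phrase the final line as: $\eta$ equals the $n$-th power of the norm character $\mathcal O_p^\times \to \mathbf Z_p^\times$, which is what "$z \mapsto z^n$" abbreviates in the paper's conventions.

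\textbf{Main obstacle.} The only nontrivial point is the multiplicative independence of the embedding-characters $(\iota\circ\sigma)$ on an open subgroup of $\mathcal O_p^\times$ — equivalently, that $\sum_\sigma a_\sigma (\iota\circ\sigma)$ vanishing on the Lie algebra $\mathcal O_p \otimes \mathbf Q_p$ forces all $a_\sigma = 0$. This is exactly the linear independence of the distinct $\overline{\mathbf Q}_p$-algebra embeddings $F_v \hookrightarrow \overline{\mathbf Q}_p$ (Artin/Dedekind independence of characters, or simply that $F_p \otimes_{\mathbf Q_p} \overline{\mathbf Q}_p \cong \prod_\sigma \overline{\mathbf Q}_p$ via $(\sigma)_\sigma$), so it is standard; everything else is bookkeeping with the exponents $e_i(\sigma)$ and the parity condition. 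I expect the author's proof to be a two-line appeal to this independence, and I would write it the same way.
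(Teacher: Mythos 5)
Your proof is correct and is essentially the paper's own argument: the paper forms the ratio $\lambda_1\lambda_2^{-1}=\lambda_1'\lambda_2'^{-1}$ to read off $\kappa=\kappa'$ (implicitly using the same multiplicative independence of the embedding characters you spell out), then uses parity to get $w-w'=2n$ and computes $\eta=\lambda_1\lambda_1'^{-1}=z^{(w-w')/2}$. Your closing hesitation about the meaning of ``$z\mapsto z^n$'' is unnecessary, since the paper defines this notation to be exactly $z\mapsto\prod_{\sigma}\sigma(z)^n$, which is what you derived.
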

We clarify before the proof that $z \mapsto z^n$ means the character on $\mathcal O_p^\times$ given by $z=(z_v) \mapsto \prod_{v \mid p} \prod_{\sigma \in \Sigma_v} \sigma(z_v)^n$.
\begin{proof}[Proof of Lemma \ref{lem:cohomological-weights}]
Write $\lambda = (\lambda_1,\lambda_2)$ and similarly for $\lambda'$. By assumption, we have $\lambda_i = \eta \lambda_i'$ for $i=1,2$. In particular $z^{\kappa} = \lambda_1 \lambda_2^{-1} = \lambda_1'\lambda_2'^{-1} = z^{\kappa'}$, so $\kappa = \kappa'$. Since $\kappa$ determines the parity of $w$ (and the same for $\kappa'$ and $w'$) we conclude that $w - w'$ is an even integer, say $w - w' = 2n$. We finally deduce $\eta = \lambda_1 \lambda_1'^{-1} = z^{w - w'\over 2} = z^n$, as claimed.
\end{proof}

Recall that if $X$ is a rigid analytic space and $Z \subset X(\overline{\mathbf Q}_p)$ is a subset then $Z$ is said to be accumulating if for each $z \in Z$ and $U$ a connected admissible open neighborhood of $z$, $Z \cap U$ is Zariski-dense in $U$.
\begin{lem}\label{lem:weight-density}
The twist cohomological weights in $\mathscr W(1)$ are Zariski-dense and accumulating.
\end{lem}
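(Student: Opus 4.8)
The statement to prove is that the twist cohomological weights form a Zariski-dense and accumulating subset of $\mathscr W(1)$. The plan is to reduce everything to the analogous (and standard) density/accumulation statement for the cohomological weights themselves inside the appropriate sub-weight-space, and then use the action of $\mathscr X(\mathcal O_p^\times/\overline{\mathcal O_{F,+}^\times})$ to spread this out over all of $\mathscr W(1)$. Recall from the short exact sequence $1 \to \mathcal O_p^\times/\overline{\mathcal O_{F,+}^\times} \to T(\mathcal O_p)/\overline{\mathcal O_{F,+}^\times} \to \mathcal O_p^\times \to 1$ (coming from $(\lambda_1,\lambda_2)\mapsto \lambda_2$, say, with kernel the first coordinate) that $\mathscr W(1)$ fibers over $\mathscr X(\mathcal O_p^\times)$, the latter being smooth of dimension $1+d+\delta_{F,p}$ minus $d$... more precisely $\mathscr W(1)$ has dimension $1+d+\delta_{F,p}$ and $\mathscr X(\mathcal O_p^\times)$ has dimension $1+\delta_{F,p}$.

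First I would recall the classical fact (this is the $\mathbf Q_p$-analytic avatar of the density of integral points in weight space, and is completely standard — the cohomological weights $\lambda=(\kappa,w)$ correspond to the locally algebraic characters of $T(\mathcal O_p)$ of the prescribed type) that the set of cohomological weights is Zariski-dense and accumulating in the Zariski closure of its span; concretely, writing $T(\mathcal O_p) \cong T(\mathcal O_p)^{\mathrm{tors}} \times \mathbf Z_p^{d'}$ for the appropriate $d'$, the cohomological weights are (up to the finite-order part) the $\mathbf Z$-points in a natural $\mathbf Q_p$-affinoid polydisc, and $\mathbf Z^{d'}$ is dense and accumulating in any polydisc over $\mathbf Q_p$ by Amice-style arguments. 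The subtlety is that cohomological weights only live on the components of $\mathscr W(1)$ where $\omega_\lambda$ is trivial on a finite-index subgroup of $\mathcal O_F^\times$ — but by construction $\mathscr W(1)$ is exactly the character space of $T(\mathcal O_p)/\overline{\mathcal O_{F,+}^\times}$, so this constraint is already built in. Next I would observe that every weight $\lambda' \in \mathscr W(1)(\overline{\mathbf Q}_p)$ is, after multiplying by a suitable $\eta \in \mathscr X(\mathcal O_p^\times/\overline{\mathcal O_{F,+}^\times})(\overline{\mathbf Q}_p)$, a small perturbation of a cohomological weight: indeed, picking a cohomological weight $\lambda_0$ (e.g. $(\underline 0, 0)$), the map $\eta \mapsto \eta\cdot\lambda_0$ identifies an open neighborhood of $\lambda_0$ with an open subset of $\mathscr X(\mathcal O_p^\times/\overline{\mathcal O_{F,+}^\times})$, so twist cohomological weights are dense near $\lambda_0$; but $\mathscr W(1)$ is a torsor-like space over this under central twist in a neighborhood of any point, so density near one point propagates.

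More carefully, the argument I would write is: fix $\lambda' \in \mathscr W(1)(\overline{\mathbf Q}_p)$ and a connected admissible open $U \ni \lambda'$. By definition of twist cohomological and by Lemma \ref{lem:cohomological-weights} it suffices to find cohomological weights whose $\mathscr X(\mathcal O_p^\times/\overline{\mathcal O_{F,+}^\times})$-orbit meets $U$ in a Zariski-dense set. Shrinking $U$, choose $\eta_0 \in \mathscr X(\mathcal O_p^\times/\overline{\mathcal O_{F,+}^\times})(\overline{\mathbf Q}_p)$ and a cohomological weight $\lambda_0$ with $\eta_0\cdot\lambda_0 \in U$ (possible because the twist cohomological weights are nonempty in every component and $U$ can be transported); now the central twist action gives an isomorphism of $U$ with an admissible open $V$ in $\mathscr W(1)$ containing $\lambda_0$, carrying twist cohomological weights to twist cohomological weights, so I reduce to: the twist cohomological weights are Zariski-dense in every connected admissible open $V$ containing a cohomological weight. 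For that, I use that near a cohomological weight $\lambda_0$, the cohomological weights already form a Zariski-dense accumulating set in the $(1+d+\delta_{F,p}-\dim\mathscr X(\mathcal O_p^\times/\overline{\mathcal O_{F,+}^\times}))$... no — rather, the twist cohomological weights near $\lambda_0$ are obtained by letting $\eta$ range over all of $\mathscr X(\mathcal O_p^\times/\overline{\mathcal O_{F,+}^\times})$ and $(\kappa,w)$ over all cohomological weights, and the resulting locally algebraic characters are Zariski-dense in $\mathscr W(1)$ because together they exhaust (up to torsion) the $\mathbf Z$-lattice points in the relevant polydisc of characters, whose rank equals $\dim \mathscr W(1)$. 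Concretely: the character $(\lambda_1,\lambda_2)$ of $T(\mathcal O_p)/\overline{\mathcal O_{F,+}^\times}$ is twist cohomological iff $\lambda_1 = \eta\cdot\prod_\sigma\sigma(\cdot)^{(w+\kappa_\sigma)/2}$, $\lambda_2 = \eta\cdot\prod_\sigma\sigma(\cdot)^{(w-\kappa_\sigma)/2}$ for some continuous $\eta$ and integers $w,\kappa_\sigma$ — and as $\eta$ ranges over the $\delta_{F,p}$-dimensional space and $(w,\kappa)$ over $\mathbf Z^{1+d}$ one sees the twist cohomological set contains a full-rank lattice of locally algebraic weights along a dense-in-every-disc direction, hence is Zariski-dense and accumulating by the standard polydisc argument.

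The main obstacle — really the only place requiring care — is the bookkeeping with the unit group $\overline{\mathcal O_{F,+}^\times}$ and the Leopoldt defect $\delta_{F,p}$: one must check that quotienting by $\overline{\mathcal O_{F,+}^\times}$ does not destroy the density, i.e. that the image in $\mathscr X(T(\mathcal O_p)/\overline{\mathcal O_{F,+}^\times})$ of the locally algebraic weights $(\kappa,w)$ with $\omega_\lambda$ trivial on a finite-index subgroup of units is still Zariski-dense, which comes down to the fact that cohomological weights are precisely the locally algebraic characters \emph{descending} to the quotient and that these are the honest $\mathbf Z$-points of the weight space built from $T(\mathcal O_p)/\overline{\mathcal O_{F,+}^\times}$. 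Accumulation then follows from the same argument applied in an arbitrarily small neighborhood, since the lattice of locally algebraic (twist cohomological) weights remains Zariski-dense in every polydisc. I expect the write-up to be short once the reduction to the standard "$\mathbf Z^n$ is dense and accumulating in a $\mathbf Q_p$-polydisc" lemma is made explicit.
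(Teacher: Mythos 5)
The paper offers no argument here (its proof is literally ``Clear''), and the concrete argument you arrive at in your final paragraph is the intended one: the twist cohomological weights are exactly the full fibres of $\lambda\mapsto\lambda_1\lambda_2^{-1}:\mathscr W(1)\to\mathscr X(\mathcal O_p^\times)$ over the points $z^{\kappa}$, $\kappa\in\mathbf Z_{\geq 0}^{\Sigma_F}$, and these points are Zariski-dense and accumulating in the $d$-dimensional base by the standard ``$\mathbf Z^n$ is dense in a $\mathbf Q_p$-polydisc'' fact, while the $(1+\delta_{F,p})$-dimensional fibre directions are filled in entirely by the $\eta$-twists. Two cautions on your write-up: the intermediate reduction in your second/third paragraph rests on the false claim that \emph{every} $\lambda'\in\mathscr W(1)$ becomes a small perturbation of a cohomological weight after a central twist --- the twist orbit of $\lambda'$ only has dimension $1+\delta_{F,p}$ and cannot move $\lambda_1'\lambda_2'^{-1}$ at all, so this fails whenever $\lambda_1'\lambda_2'^{-1}$ is far from every $z^\kappa$; you correctly abandon this mid-argument, but it should be deleted. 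Also fix the dimension bookkeeping: $\mathscr X(\mathcal O_p^\times)$ has dimension $d$ and $\eta$ ranges over the $(1+\delta_{F,p})$-dimensional space $\mathscr X(\mathcal O_p^\times/\overline{\mathcal O_{F,+}^\times})$, not a $\delta_{F,p}$-dimensional one.
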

\begin{proof}
Clear.
\end{proof}

\subsection{Distribution-valued cohomology and eigenvarieties}\label{subsec:eigenvarieties}

We write $I \subset \GL_2(\mathcal O_p)$ for the subgroup of matrices that are upper triangular modulo $\varpi_p \mathcal O_p$. Since $I \subset \Delta$, each point $\Omega \rightarrow \mathscr W(1)$ defines a local system $\mathscr D_{\Omega}$ on $Y_{K_1(\mathfrak n)I}$ and so we get  associated $\mathscr O(\Omega)$-modules $H^{\ast}_c(\mathfrak n, \mathscr D_{\Omega}) := H^{\ast}_c(Y_{K_1(\mathfrak n)I}, {\mathscr D}_{\Omega})$ and $H^{\ast}_c(\mathfrak n, \mathbf D_{\Omega}^{\mathbf s}):=H^{\ast}_c(Y_{K_1(\mathfrak n)I},  \mathbf D_{\Omega}^{\mathbf s})$ (for $\mathbf s \geq \mathbf s(\Omega)$). We define $H^{\BM}_{\ast}(\mathfrak n, \mathscr A_{\Omega})$ and $H^{\BM}_{\ast}(\mathfrak n, \mathbf A_{\Omega}^{\mathbf s})$ similarly. Denote by $\mathbf T(\mathfrak n) \subset \mathbf T_{\mathbf Q_p}(K_1(\mathfrak n) I)$ the $\mathbf Q_p$-subalgebra generated just by the operators $T_v,S_v$ for $v \nmid \mathfrak n \mathbf p$ and $U_v$ for $v \mid p$. Because $\Delta$ contains the elements $\begin{smallpmatrix} \varpi_v \\ & 1 \end{smallpmatrix}$ for $v \mid p$, the algebra $\mathbf T(\mathfrak n)$ acts by  $\mathscr O(\Omega)$-linear endomorphisms on $H^{\ast}_c(\mathfrak n, \mathscr D_{\Omega})$, $H_{\ast}^{\BM}(\mathfrak n, \mathscr A_{\Omega})$, $H^{\ast}_c(\mathfrak n, \mathbf D_{\Omega}^{\mathbf s})$,  and $H_{\ast}^{\BM}(\mathfrak n, \mathbf A_{\Omega}^{\mathbf s})$. Finally we set $U_p := \prod_{v \mid p} U_v^{e_v} \in \mathbf T(\mathfrak n)$.

\begin{rmk} Before moving forward, we acknowledge that we will reference many results from \cite{Hansen-Overconvergent} below that are, strictly speaking, written with ordinary (co)homology rather than (co)homology with supports. The changes required in \cite{Hansen-Overconvergent} are either explained there, implicit there, or they are inconsequential and transparent. We will directly reference \cite{Hansen-Overconvergent} without further warning.
\end{rmk}

For the rest of this subsection, $\Omega$ will denote an affinoid open subdomain in $\mathscr W(1)$ and $\mathbf s$ will implicitly mean $\mathbf s \geq \mathbf s(\Omega)$. Since $\mathbf D_{\Omega}^{\mathbf s}$ and $\mathbf A_{\Omega}^{\mathbf s}$ are $\mathbf Q_p$-vector spaces, the homology $H_{\ast}^{\BM}(\mathfrak n, \mathbf A_{\Omega}^{\mathbf s})$ is computed by a Borel--Serre complex $C_{\bullet}^{\BM}(\mathfrak n, \mathbf A_{\Omega}^{\mathbf s})$. The cohomology $H^{\ast}_c(\mathfrak n, \mathbf D_{\Omega}^{\mathbf s})$ is also computed by a Borel--Serre cochain complex $C^{\bullet}_c(\mathfrak n,\mathbf D_{\Omega}^{\mathbf s})$ (similarly for $\mathscr A_{\Omega}$ and $\mathscr D_{\Omega}$). These are complexes whose terms are finite direct sums of copies of the coefficients, or possibly the invariants of such a complex by the action of a finite group (see \cite[Section 2.1]{Hansen-Overconvergent}). 

The operator $U_p$ lifts to a compact operator (which we abusively write using the same symbol) on $C_{\bullet}^{\BM}(\mathfrak n, \mathbf A_{\Omega}^{\mathbf s})$. The Fredholm series $f_{\Omega}(t) = \det\left(1 - tU_p | C_{\bullet}^{\BM}(\mathfrak n, \mathbf A_{\Omega}^{\mathbf s})\right)$ is an entire function in $t$ over $\mathscr O(\Omega)$, by \cite[Proposition 3.1.1]{Hansen-Overconvergent} it is independent of $\mathbf s$, and it behaves naturally under base change $\Omega \rightarrow \Omega'$. Write $f(t) \in \mathscr O(\mathscr W(1))\{\{t \}\}$ for the unique function whose restriction to each $\Omega$ is $f_{\Omega}$. Following \cite[Section 4.1]{Hansen-Overconvergent}, we say that a pair $(\Omega,h)$, with $h\geq 0$ a real number, is slope adapted if the series $f_{\Omega}$ admits a slope-$\leq h$ decomposition $f_{\Omega} = Q_{\Omega,h} R_{\Omega,h}$ (where $Q_{\Omega,h}$ is a polynomial; see \cite[Section 4]{AshStevens-OverconvergentCohomology}). In that case, $\mathscr Z_{\Omega,h} := \Sp(\mathscr O(\Omega)[t]/Q_{\Omega,h}\mathscr O(\Omega)[t])$ is naturally an affinoid open subdomain of the spectral variety $\mathscr Z \subset \mathscr W(1) \times \mathbf G_m$ for $f$. By \cite[Proposition 4.1.4]{Hansen-Overconvergent}, the $\mathscr Z_{\Omega,h}$ form an admissible covering of $\mathscr Z$, as $(\Omega,h)$ runs over slope adapted pairs. We summarize the facts we will need from \cite[Section 3.1]{Hansen-Overconvergent}.
\begin{prop}\label{prop:salient-slope-decomps}
Suppose that $(\Omega,h)$ is slope adapted.
\begin{enumerate}
\item $C_{\bullet}^{\BM}(\mathfrak n, \mathscr A_{\Omega})$  and $C_c^{\bullet}(\mathfrak n, \mathscr D_{\Omega})$ admit slope-$\leq h$ decompositions
\begin{align*}
C_{\bullet}^{\BM}(\mathfrak n,\mathscr A_{\Omega}) &\simeq C_{\bullet}^{\BM}(\mathfrak n, \mathscr A_{\Omega})_{\leq h} \oplus C_{\bullet}^{\BM}(\mathfrak n, \mathscr A_{\Omega})_{>h}\\
C_c^{\bullet}(\mathfrak n, \mathscr D_\Omega) &\simeq C_c^{\bullet}(\mathfrak n, \mathscr D_\Omega)_{\leq h} \oplus C_c^{\bullet}(\mathfrak n, \mathscr D_\Omega)_{>h}.
\end{align*}
\item $C^{\bullet}_c(\mathfrak n, \mathscr D_{\Omega})_{\leq h} \simeq \Hom_{\mathscr O(\Omega)}(C_{\bullet}^{\BM}(\mathfrak n, \mathscr A_{\Omega})_{\leq h},\mathscr O(\Omega))$.
\item The homology $H^{\BM}_{\ast}(\mathfrak n, \mathscr A_{\Omega})$ and cohomology $H_c^{\ast}(\mathfrak n, \mathscr D_{\Omega})$ also admit slope-$\leq h$ decompositions
\begin{align*}
H^{\BM}_{\ast}(\mathfrak n, \mathscr A_{\Omega}) &\simeq H^{\BM}_{\ast}(\mathfrak n, \mathscr A_{\Omega})_{\leq h} \oplus H^{\BM}_{\ast}(\mathfrak n, \mathscr A_{\Omega})_{>h}\\
H_c^{\ast}(\mathfrak n, \mathscr D_{\Omega}) &\simeq H_c^{\ast}(\mathfrak n, \mathscr D_{\Omega})_{\leq h} \oplus H_c^{\ast}(\mathfrak n, \mathscr D_{\Omega})_{>h}.
\end{align*}
\item $H^{\BM}_{\ast}(\mathfrak n, \mathscr A_{\Omega})_{\leq h} = H_\ast(C_\bullet^{\BM}(\mathfrak n, \mathscr A_{\Omega})_{\leq h})$ and $H_c^{\ast}(\mathfrak n, \mathscr D_{\Omega})_{\leq h} = H^{\ast}(C_c^{\bullet}(\mathfrak n, \mathscr D_{\Omega})_{\leq h})$.
\item If $\Omega' \subset \Omega$ is an affinoid subdomain, then the slope-$\leq h$ parts in (1) and (3) naturally commute with base change $\mathscr O(\Omega) \rightarrow \mathscr O(\Omega')$.
\end{enumerate}
\end{prop}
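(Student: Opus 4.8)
\textbf{Proof proposal for Proposition \ref{prop:salient-slope-decomps}.}

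The plan is to reduce every assertion to the general slope-decomposition machinery of \cite[Section 4]{AshStevens-OverconvergentCohomology}, applied to the compact operator $U_p$ on the appropriate Borel--Serre complexes, exactly as in \cite[Section 3.1]{Hansen-Overconvergent}. Since the only change from \textit{loc.\ cit.}\ is that we use compactly supported cohomology and Borel--Moore homology in place of ordinary (co)homology, and these are computed by complexes of the same shape (finite direct sums of copies of the coefficient module, possibly passed to invariants under a finite group, and with $U_p$ lifting to a compact operator), the arguments transcribe without modification. Concretely, I would proceed as follows.

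First, for (1): by definition of slope-adaptedness $f_\Omega = Q_{\Omega,h}R_{\Omega,h}$ is a slope-$\leq h$ factorization of the Fredholm determinant of $U_p$ acting on $C_\bullet^{\BM}(\mathfrak n,\mathscr A_\Omega)$ (or its co-final Banach approximations $C_\bullet^{\BM}(\mathfrak n,\mathbf A_\Omega^{\mathbf s})$, which give the same Fredholm series by \cite[Proposition 3.1.1]{Hansen-Overconvergent}). The Riesz-theory formalism of \cite[Section 4]{AshStevens-OverconvergentCohomology} then produces a $U_p$-stable, $\mathscr O(\Omega)$-direct-summand decomposition of each term of the complex into a slope-$\leq h$ part, which is finite projective over $\mathscr O(\Omega)$, and a slope-$>h$ part; since the differentials commute with $U_p$ this is a decomposition of complexes. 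The same applies to $C_c^\bullet(\mathfrak n,\mathscr D_\Omega)$, using that $U_p$ is also compact there. For (2), one uses Proposition \ref{prop:canonical}-style duality at finite radius: $C_c^\bullet(\mathfrak n,\mathbf D_\Omega^{\mathbf s})$ is the $\mathscr O(\Omega)$-dual of $C_\bullet^{\BM}(\mathfrak n,\mathbf A_\Omega^{\mathbf s})$, and dualizing a slope-$\leq h$ projector for $U_p$ on the latter gives a slope-$\leq h$ projector for the transpose operator on the former; passing to the limit over $\mathbf s$ and noting that on the slope-$\leq h$ part the radius is irrelevant (the projector has image a finite projective module) yields the stated isomorphism. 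For (3) and (4), one takes cohomology of the decompositions in (1): because slope-$\leq h$ projectors are idempotent endomorphisms of complexes, they induce idempotents on (co)homology, giving the decompositions in (3), and since $C_\bullet^{\BM}(\mathfrak n,\mathscr A_\Omega)_{\leq h}$ and $C_c^\bullet(\mathfrak n,\mathscr D_\Omega)_{\leq h}$ are direct summand subcomplexes, their cohomology is computed termwise, giving (4). For (5), flatness is not needed: the slope-$\leq h$ summand is cut out by a characteristic-polynomial projector whose formation manifestly commutes with the ring map $\mathscr O(\Omega)\to\mathscr O(\Omega')$ (the relevant factor $Q_{\Omega,h}$ pulls back to $Q_{\Omega',h}$, this being part of what it means for the $\mathscr Z_{\Omega,h}$ to form an admissible cover per \cite[Proposition 4.1.4]{Hansen-Overconvergent}), so both the chain-level and (co)homology-level slope-$\leq h$ parts base change correctly.

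I do not anticipate a genuine obstacle here; this proposition is a packaging of standard facts, and the proof is essentially a citation. The one point requiring a sentence of care is (2)/(5): one must check that the duality between $\mathbf D_\Omega^{\mathbf s}$ and $\mathbf A_\Omega^{\mathbf s}$ is compatible with the $U_p$-action in the sense that the transpose of $U_p$ on homology chains is $U_p$ on cohomology cochains (which is built into the definition of the Hecke action in Section \ref{subsec:adelic-cochains}), and that the slope-$\leq h$ decomposition is insensitive to the auxiliary radius $\mathbf s$ — both of which are established in \cite[Section 3.1]{Hansen-Overconvergent}. Accordingly the proof will simply read: ``This follows from the same arguments as in \cite[Section 3.1]{Hansen-Overconvergent}, using the slope-decomposition formalism of \cite[Section 4]{AshStevens-OverconvergentCohomology} and the duality of Proposition \ref{prop:canonical}; the passage from ordinary to compactly supported (co)homology changes nothing.''
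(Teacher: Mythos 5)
Your proposal is correct and matches the paper's approach: the paper's entire proof is a citation to the second through fifth propositions of \cite[Section 3.1]{Hansen-Overconvergent}, exactly as you anticipate in your closing sentence, and the paper separately flags (in the remark at the start of Section 6.2) that the passage from ordinary to compactly supported cohomology and Borel--Moore homology is inconsequential. Your sketch of the underlying Ash--Stevens/Riesz machinery, the duality for (2), and the base-change compatibility for (5) is a faithful expansion of what that citation packages.
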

\begin{proof}
See the second through the fifth propositions of \cite[Section 3.1]{Hansen-Overconvergent}.
\end{proof}
The complexes $C_\bullet^{\BM}(\mathfrak n, \mathscr A_{\Omega})_{\leq h}$ and $C^{\bullet}_c(\mathfrak n, \mathscr D_{\Omega})_{\leq h}$ are naturally complexes $\mathscr O(\mathscr Z_{\Omega,h})$-modules where $t \in \mathscr O(\mathscr Z_{\Omega,h})$ acts via $U_p^{-1}$.
\begin{prop}
There exists complexes of coherent $\mathscr O_{\mathscr Z}$-modules $\mathscr K^{\BM}_{\bullet}$ and $\mathscr K_c^{\bullet}$ on $\mathscr Z$ uniquely determined by the property that
\begin{align*}
\mathscr K^{\BM}_{\bullet}(\mathscr Z_{\Omega,h}) &\simeq C_{\bullet}^{\BM}(\mathfrak n, \mathscr A_{\Omega})_{\leq h}\\
\mathscr K_c^{\bullet}(\mathscr Z_{\Omega,h}) &\simeq C_c^{\bullet}(\mathfrak n, \mathscr D_\Omega)_{\leq h}
\end{align*}
for any slope adapted pair $(\Omega,h)$.
\end{prop}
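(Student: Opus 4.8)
The plan is to construct $\mathscr K^{\BM}_\bullet$ and $\mathscr K_c^\bullet$ by descent/gluing along the admissible covering of $\mathscr Z$ by the affinoids $\mathscr Z_{\Omega,h}$, using the base-change compatibility from Proposition \ref{prop:salient-slope-decomps}(5) to identify the restrictions of the local pieces on overlaps. First I would recall, following \cite[Section 4.1]{Hansen-Overconvergent}, that as $(\Omega,h)$ runs over slope-adapted pairs the affinoids $\mathscr Z_{\Omega,h}$ form an admissible covering of $\mathscr Z$, and that for a nested pair $\Omega' \subset \Omega$ with $(\Omega',h)$ and $(\Omega',h')$ both slope-adapted (any two slope-adapted pairs over a common affinoid in $\mathscr Z$ can be refined this way), one has $\mathscr Z_{\Omega',h'} \subset \mathscr Z_{\Omega,h}$ as an affinoid subdomain. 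The key input is then that the slope-$\leq h$ decomposition commutes with base change: if $\Omega' \subset \Omega$ is an affinoid subdomain and $h' \leq h$, restriction of the complex $C_\bullet^{\BM}(\mathfrak n, \mathscr A_\Omega)_{\leq h}$ to $\mathscr O(\mathscr Z_{\Omega',h'})$ is canonically $C_\bullet^{\BM}(\mathfrak n, \mathscr A_{\Omega'})_{\leq h'}$, and likewise for $C_c^\bullet(\mathfrak n, \mathscr D_\Omega)_{\leq h}$. This is essentially Proposition \ref{prop:salient-slope-decomps}(5) together with the fact that a slope-$\leq h$ decomposition restricts to a slope-$\leq h'$ decomposition after further decomposing, i.e. $C_{\leq h} \otimes_{\mathscr O(\mathscr Z_{\Omega,h})} \mathscr O(\mathscr Z_{\Omega',h'}) \simeq C_{\leq h'}$ over $\mathscr O(\mathscr Z_{\Omega',h'})$.

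Granting this, the local complexes $C_\bullet^{\BM}(\mathfrak n, \mathscr A_\Omega)_{\leq h}$ (viewed as complexes of finite projective $\mathscr O(\mathscr Z_{\Omega,h})$-modules, hence of coherent $\mathscr O_{\mathscr Z_{\Omega,h}}$-modules) carry canonical isomorphisms on overlaps; one must check the cocycle condition on triple overlaps, but since all the comparison isomorphisms are the canonical base-change maps associated to slope decompositions, this is automatic from the uniqueness of slope decompositions (\cite[Section 4]{AshStevens-OverconvergentCohomology}). By the standard gluing of coherent sheaves on a rigid space along an admissible affinoid covering (Tate's acyclicity / descent for coherent modules), there is a complex of coherent $\mathscr O_{\mathscr Z}$-modules $\mathscr K^{\BM}_\bullet$, unique up to canonical isomorphism, restricting to $C_\bullet^{\BM}(\mathfrak n, \mathscr A_\Omega)_{\leq h}$ on each $\mathscr Z_{\Omega,h}$; the same argument with $C_c^\bullet(\mathfrak n, \mathscr D_\Omega)_{\leq h}$, which by Proposition \ref{prop:salient-slope-decomps}(2) is the $\mathscr O(\Omega)$-linear dual of $C_\bullet^{\BM}(\mathfrak n, \mathscr A_\Omega)_{\leq h}$, produces $\mathscr K_c^\bullet$. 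Uniqueness of $\mathscr K^{\BM}_\bullet$ and $\mathscr K_c^\bullet$ as asserted follows because a coherent sheaf (or complex thereof) on a rigid space is determined by its sections over an admissible affinoid covering together with the restriction maps.

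The main obstacle is bookkeeping rather than conceptual: one needs to verify carefully that the comparison isomorphisms between $C_\bullet^{\BM}(\mathfrak n, \mathscr A_\Omega)_{\leq h}|_{\mathscr Z_{\Omega',h'}}$ and $C_\bullet^{\BM}(\mathfrak n, \mathscr A_{\Omega'})_{\leq h'}$ are genuinely canonical and functorial (not just existing abstractly), so that the cocycle condition holds on the nose and the gluing is legitimate; this is exactly the content packaged by \cite[Proposition 4.1.4]{Hansen-Overconvergent} and the discussion of slope-adapted pairs in \cite[Section 4]{Hansen-Overconvergent}, and the same type of construction already appears there for the eigenvariety datum, so I would simply invoke that machinery. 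A secondary point to note is that the $\mathscr O_{\mathscr Z}$-module structure on the glued complexes is the one where $t$ acts as $U_p^{-1}$, consistent with the local description preceding the proposition; this is preserved by all the base-change maps and hence glues.
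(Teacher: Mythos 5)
Your proposal is correct and follows essentially the same route as the paper, which simply cites the analogous construction in \cite[Proposition 4.3.1]{Hansen-Overconvergent} and notes that the essential input is the base-change compatibility of Proposition \ref{prop:salient-slope-decomps}(5) — exactly the gluing-over-slope-adapted-pairs argument you spell out. (One trivial nitpick: the terms of $C_\bullet^{\BM}(\mathfrak n,\mathscr A_\Omega)_{\leq h}$ need only be finite over $\mathscr O(\mathscr Z_{\Omega,h})$, i.e.\ coherent, not necessarily projective over that ring, but this does not affect the argument.)
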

\begin{proof}
This is proven just like \cite[Proposition 4.3.1]{Hansen-Overconvergent} (the essential point is Proposition \ref{prop:salient-slope-decomps}(5)).
\end{proof}

\begin{defn}
$\mathscr M^{\BM}_{\ast}$  (resp.\ $\mathscr M_c^{\ast}$) is the homology (resp.\ cohomology) sheaf of the complex $\mathscr K_\bullet^{\BM}$ (resp.\ $\mathscr K_c^{\bullet}$).
\end{defn}

Thus, $\mathscr M^{\BM}_{\ast}$ and $\mathscr M_c^{\ast}$ are graded coherent $\mathscr O_{\mathscr Z}$-modules and if $(\Omega,h)$ is a slope adapted pair, then $\mathscr M^{\BM}_{\ast}(\mathscr Z_{\Omega,h}) \simeq H_\ast^{\BM}(\mathfrak n, \mathscr A_{\Omega})_{\leq h}$ and $\mathscr M_c^{\ast}(\mathscr Z_{\Omega,h}) \simeq H^{\ast}_c(\mathfrak n, \mathscr D_{\Omega})_{\leq h}$. We further have natural ring morphisms
\begin{equation*}
\xymatrix{
 & \End_{\mathscr O(\mathscr Z_{\Omega,h})}\left(H_c^{\ast}(\mathfrak n, \mathscr D_{\Omega})_{\leq h}\right)\\
\mathbf T(\mathfrak n) \ar[dr]_-{\psi_{\Omega,h}'} \ar[ur]^-{\psi_{\Omega,h}}\\
 & \End_{\mathscr O(\mathscr Z_{\Omega,h})}\left(H_\ast^{\BM}(\mathfrak n, \mathscr A_{\Omega})_{\leq h}\right),
}
\end{equation*}
which glue to define morphisms of algebras $\psi: \mathbf T(\mathfrak n) \rightarrow \End_{\mathscr O_{\mathscr Z}}(\mathscr M_c^{\ast})$ and $\psi': \mathbf T(\mathfrak n) \rightarrow \End_{\mathscr O_{\mathscr Z}}(\mathscr M_\ast^{\BM})$. (Compare with the text prior to \cite[Definition 4.3.2]{Hansen-Overconvergent}. Notice also that it is the same if we replace $\End_{\mathscr O(\mathscr Z_{\Omega,h})}(-)$ with $\End_{\mathscr O(\Omega)}(-)$, the former being a subring of the latter.)

\begin{defn}
The eigenvariety $\mathscr E(\mathfrak n)$ (resp.\ $\mathscr E'(\mathfrak n)$) is the $\mathbf Q_p$-rigid analytic space associated to the eigenvariety datum $(\mathscr W(1), \mathscr Z, \mathscr M_c^{\ast}, \mathbf T(\mathfrak n), \psi)$ (resp.\ $(\mathscr W(1), \mathscr Z, \mathscr M_\ast^{\BM}, \mathbf T(\mathfrak n), \psi')$) as in \cite[Definition 4.3.2]{Hansen-Overconvergent}.
\end{defn}

\begin{rmk}
By  calling one $\mathscr E(\mathfrak n)$ and the other $\mathscr E'(\mathfrak n)$, we indicate our focus on the distribution-valued cohomology. The function-valued homology is only a technical tool used later (see Section \ref{subsec:middle-degree}). Thus, in what follows, we will only indicate homology versions of results when strictly necessary (the reader should not infer a lack of truth from their lack of exposition).
\end{rmk}

By definition (see \cite[Definition 4.2.1 and Theorem 4.2.2]{Hansen-Overconvergent}), the space $\mathscr E(\mathfrak n)$ is a $\mathbf Q_p$-rigid analytic space that comes equipped with a pair of maps $\upsilon : \mathscr E(\mathfrak n) \rightarrow \mathscr Z$, which is finite, and $\lambda: \mathscr E(\mathfrak n) \rightarrow \mathscr W(1)$, and a coherent (graded) sheaf $\mathscr M_c^{\ast,\dagger}$ of $\mathscr O_{\mathscr E(\mathfrak n)}$-modules equipped with a ring morphism $\psi: \mathbf T(\mathfrak n) \rightarrow \End_{\mathscr O_{\mathscr E(\mathfrak n)}}(\mathscr M_c^{\ast,\dagger})$ such that $\mathscr M_c^{\ast} \simeq \upsilon_{\ast}\mathscr M_c^{\ast,\dagger}$, with the isomorphism being compatible with the two possible morphisms we have called $\psi$.\footnote{The morphisms $\upsilon$ and $\lambda$ are, respectively, labeled $\pi$ and $w$ in \cite{Hansen-Overconvergent}.} The morphism $\upsilon$ and $\lambda$ factorize
\begin{equation}\label{eqn:weight-projection-factor}
\xymatrix{
\mathscr E(\mathfrak n) \ar[r]^-{\upsilon} \ar[dr]_-{\lambda} & \mathscr Z \ar[d]^-{\pr}\\
& \mathscr W(1)
}
\end{equation}
where $\pr: \mathscr Z \subset \mathscr W(1)\times \mathbf G_m \rightarrow \mathscr W(1)$ is the projection.  If $x \in \mathscr E(\mathfrak n)$ we prefer to write $\lambda_x \in \mathscr W(1)$ for its weight, rather than $\lambda(x)$. By \cite[Theorem 4.3.3]{Hansen-Overconvergent}, if $\lambda \in \mathscr W(1)$ is fixed, then the points $x \in \mathscr E(\mathfrak n)(\overline{\mathbf Q}_p)$ with $\lambda_x = \lambda$ are in bijection with the ring morphisms $\psi_x: \mathbf T_\lambda(\mathfrak n) \rightarrow \overline{\mathbf Q}_p$ where
\begin{equation*}
\mathbf T_\lambda(\mathfrak n) := \invlim_{h \rightarrow \infty} \im\left(\mathbf T(\mathfrak n) \rightarrow \End_{k_\lambda}(H^{\ast}_c(\mathfrak n, \mathscr D_{\lambda})_{\leq h})\right).
\end{equation*}
Given $x \in \mathscr E(\mathfrak n)(\overline{\mathbf Q}_p)$, we write $\mathfrak m_x \subset \mathbf T(\mathfrak n)$ for the maximal ideal 
\begin{equation*}
\mathfrak m_x := \ker\left(\mathbf T(\mathfrak n) \rightarrow \mathbf T_\lambda(\mathfrak n) \overset{\psi_x}{\longrightarrow} \overline{\mathbf Q}_p\right).
\end{equation*}
We also write $k_x$ for the residue field of $x$.

The rigid analytic spaces $\mathscr Z$ and $\mathscr W(1)$ are both equidimensional of the same dimension. Since the map $\upsilon$ in \eqref{eqn:weight-projection-factor} is finite, every irreducible component of $\mathscr E(\mathfrak n)$ has dimension at most $\dim \mathscr Z = \dim \mathscr W(1) = 1 + d + \delta_{F,p}$. The space $\mathscr E(\mathfrak n)$ is generally {\em not} equidimensional beyond the case $F = \mathbf Q$. For instance, if $d>1$ there is always an Eisenstein component of $\mathscr E(\mathfrak n)$ of dimension strictly smaller than $1+d+\delta_{F,p}$.

\begin{prop}\label{prop:openness-maximal-dimensional}
If $X \subset \mathscr E(\mathfrak n)$ is an irreducible component of (maximal) dimension $1+d+\delta_{F,p}$, then $\lambda(X) \subset \mathscr W(1)$ is Zariski-open.
\end{prop}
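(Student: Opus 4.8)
The plan is to show that the image of a maximal-dimensional component $X$ under the weight map $\lambda$ contains an open subset, using the finiteness of $\upsilon$ and the standard structure theory of eigenvarieties (as in \cite[Section 4]{Hansen-Overconvergent}). First I would reduce to a local statement: pick a point $x \in X$ and a slope-adapted pair $(\Omega, h)$ with $\lambda_x \in \Omega$ such that $x$ lies in the affinoid piece $\mathscr E(\mathfrak n)_{\Omega, h} := \upsilon^{-1}(\mathscr Z_{\Omega, h})$. Since $\mathscr E(\mathfrak n)_{\Omega, h}$ is a (finite) affinoid over $\mathscr O(\mathscr Z_{\Omega, h})$, and $\mathscr Z_{\Omega, h}$ is finite flat over $\mathscr W(1)$ near $\Omega$ (the Fredholm hypersurface is flat over weight space), the composite $\mathscr E(\mathfrak n)_{\Omega, h} \to \Omega$ is a finite morphism of affinoids. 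It therefore suffices to prove: if $Z$ is an irreducible component of an affinoid $\Sp(A)$ finite over an equidimensional affinoid $\Sp(B)$, with $\dim Z = \dim B$, then the image of $Z$ in $\Sp(B)$ contains a Zariski-open subset of an irreducible component of $\Sp(B)$ — in our case $\mathscr W(1)$ is irreducible (or at least equidimensional), so this gives Zariski-open in $\mathscr W(1)$.

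The key algebraic input is the following. Let $B$ be a reduced equidimensional affinoid algebra of dimension $n$, $A$ a finite $B$-algebra, and $\mathfrak p \subset A$ a minimal prime with $\dim A/\mathfrak p = n$. Let $\mathfrak q = \mathfrak p \cap B$; then $\dim B/\mathfrak q = n$ by the going-up/finiteness dimension formula, so $\mathfrak q$ is a minimal prime of $B$ and $B/\mathfrak q$ is an $n$-dimensional domain. The inclusion $B/\mathfrak q \hookrightarrow A/\mathfrak p$ is finite, and both sides are domains of the same dimension, hence the extension of fraction fields is finite. Now I invoke the standard fact (a rigid-analytic avatar of Chevalley's theorem, or simply: a finite morphism is closed, and a dominant finite morphism of affinoid domains of the same dimension is surjective onto the target) that the morphism $\Sp(A/\mathfrak p) \to \Sp(B/\mathfrak q)$ is surjective. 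Composing with the inclusion of the component $\Sp(B/\mathfrak q) \hookrightarrow \Sp(B)$, and recalling that in the globalization the component $X$ of $\mathscr E(\mathfrak n)$ maps (set-theoretically) onto an irreducible component of $\mathscr W(1)$, which is open in $\mathscr W(1)$ since $\mathscr W(1)$ is equidimensional, we conclude $\lambda(X)$ contains a Zariski-open of $\mathscr W(1)$; as $\mathscr W(1)$ is irreducible every non-empty Zariski-open is dense, and in fact the argument shows $\lambda(X)$ \emph{equals} the Zariski-open locus one obtains by removing the (lower-dimensional) branch locus. To get actual Zariski-openness of $\lambda(X)$ itself — not just that it contains a Zariski-open — one notes that $\lambda|_X$ is finite onto its image when restricted to each $\mathscr Z_{\Omega,h}$-piece, and a finite surjection of equidimensional rigid spaces is open on the source iff... actually here it is cleaner to argue: $\lambda(X) = \bigcup_{(\Omega,h)} \lambda(X \cap \mathscr E(\mathfrak n)_{\Omega,h})$, each term is the image of an affinoid under a finite (hence closed) map, so is a closed analytic subset of $\Omega$ of full dimension $n$, hence equals a union of irreducible components of $\Omega$, which is Zariski-open in $\Omega$; patching over the admissible cover $\{\Omega\}$ of $\mathscr W(1)$ gives that $\lambda(X)$ is Zariski-open in $\mathscr W(1)$.

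The main obstacle, in my estimation, is the bookkeeping around \emph{flatness of the spectral variety over weight space} and making the dimension count rigorous: one must know that $\mathscr Z_{\Omega,h} \to \Omega$ is finite and flat (equivalently, $Q_{\Omega,h}$ is a monic polynomial with unit-ideal leading term over $\mathscr O(\Omega)$ — which is exactly the slope-$\le h$ decomposition), so that $\dim \mathscr Z_{\Omega,h} = \dim \Omega$, and then that $\upsilon : \mathscr E(\mathfrak n) \to \mathscr Z$ being finite forces $\dim X \le \dim \mathscr Z = \dim \mathscr W(1)$ with equality only when $X$ dominates a full component of $\mathscr Z$, hence of $\mathscr W(1)$. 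Once this dimension-theoretic rigidity is in place, the surjectivity of a finite dominant morphism between equidimensional affinoids of equal dimension (which one can check on the normalizations, where it is the statement that a finite injective ring map of normal domains with finite fraction field extension is, after Sp, surjective — a consequence of the going-up theorem and the fact that finite morphisms are closed) does all the remaining work, and the only real subtlety is whether one needs $\mathscr E(\mathfrak n)$ reduced for this — one does not, since minimal primes and their images behave as above regardless of nilpotents. I would also remark that this proposition is the rigid-analytic analogue of \cite[Lemma]{Bellaiche-CriticalpadicLfunctions}-type statements for the eigencurve and the proof is entirely parallel once the multi-dimensional weight space is accounted for.
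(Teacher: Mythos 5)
There is a genuine gap, and it sits exactly where the real content of the proposition lies: the passage from ``the image is locally full'' to ``the image is Zariski-open.'' Your local analysis is fine as far as it goes: $\upsilon$ is finite, so each $X \cap \mathscr E_{\Omega,h}$ is finite over $\Omega$, its image is a closed analytic subset of $\Omega$ of full dimension, and (for $\Omega$ connected, hence irreducible since $\mathscr W(1)$ is smooth) that image is all of $\Omega$. But this only exhibits $\lambda(X)$ as a union of affinoid subdomains of $\mathscr W(1)$, i.e.\ at best an admissible open. A union of affinoid subdomains need not be Zariski-open (think of the sub-disc $|T|\le r$, $r<1$, inside the open unit disc), and Zariski-openness --- having closed analytic complement --- is precisely what the proposition asserts and what is used later for Zariski-density arguments. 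Your patching step also fails on its own terms: for a \emph{fixed} affinoid $\Omega' \subset \mathscr W(1)$, the pairs $(\Omega',h)$ are slope-adapted only for a possibly bounded set of $h$ (the neighborhood in the slope-adapted pair shrinks as $h$ grows), so $\bigcup_h (X \cap \mathscr E_{\Omega',h})$ need not exhaust $X \cap \lambda^{-1}(\Omega')$, and you never actually compute $\lambda(X)\cap\Omega'$ for the members $\Omega'$ of an admissible cover of all of $\mathscr W(1)$. Since $\lambda = \pr\circ\upsilon$ and $\pr\colon\mathscr Z\to\mathscr W(1)$ is not finite or proper, no purely local finiteness argument can deliver Zariski-openness.

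The missing ingredient is the global Fredholm structure of the spectral variety, and this is how the paper argues: $\upsilon$ is finite and $X$ is closed, so $\upsilon(X)$ is a closed irreducible subset of $\mathscr Z$ of dimension $\dim\mathscr Z$, hence an irreducible component of $\mathscr Z$ by \cite[Corollary 2.2.7]{Conrad-IrredComponents}. Irreducible components of the Fredholm variety are themselves Fredholm hypersurfaces, cut out by irreducible Fredholm factors $Q = 1 + \sum_{i\ge 1} a_i t^i$ of $f$ (\cite[Proposition 4.1.2]{Hansen-Overconvergent}); the image of such a hypersurface in $\mathscr W(1)$ is the locus where $Q(\lambda,\cdot)\ne 1$, i.e.\ the complement of the common vanishing locus of the coefficients $a_i$, which is Zariski-open (\cite[Proposition 4.1.3]{Hansen-Overconvergent}). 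To repair your proof you would need to import this fact; the finite-dominant-surjective argument alone cannot substitute for it.
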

\begin{proof}
The map $\upsilon$ is finite and $X$ is closed in $\mathscr E(\mathfrak n)$, so $\upsilon(X) \subset \mathscr Z$ is closed. Moreover, it is evidently irreducible of dimension $\dim \mathscr Z$. Thus $\upsilon(X)$ is an irreducible component of $\mathscr Z$ (\cite[Corollary 2.2.7]{Conrad-IrredComponents}). Since the irreducible components of the Fredholm variety $\mathscr Z$ are all defined by Fredholm hypersurfaces (\cite[Proposition 4.1.2]{Hansen-Overconvergent}), we deduce $\lambda(X) = \pr(\upsilon(X))$ is Zariski-open in $\mathscr W(1)$ from \cite[Proposition 4.1.3]{Hansen-Overconvergent}.
\end{proof}

Having described $\mathscr E(\mathfrak n)$ by its defining characteristics, we will also need to briefly give an atlas. The eigenvariety $\mathscr E(\mathfrak n)$ is admissibly covered by affinoid subdomains $\mathscr E_{\Omega,h}:=\Sp(\mathbf T_{\Omega,h})$ where $\mathbf T_{\Omega,h}$ is the $\mathscr O(\Omega)$-algebra generated by the image of $\psi_{\Omega,h}$ inside $\End_{\mathscr O(\Omega)}(H^{\ast}_c(\mathfrak n, \mathscr D_{\Omega})_{\leq h})$ and $(\Omega,h)$ runs over slope adapted pairs. The sections $\mathscr M^{\ast,\dagger}_c(\mathscr E_{\Omega,h})$ are canonically identified with $\mathscr M^{\ast}_c(\mathscr Z_{\Omega,h}) = H^{\ast}_c(\mathfrak n, \mathscr D_{\Omega})_{\leq h}$.  This follows from the construction of eigenvarieties as in the proof of \cite[Theorem 4.2.2]{Hansen-Overconvergent}. By a slight abuse of notation, we will drop the $\dagger$ from the notation completely. Context should clarify whether an instance of the notation $\mathscr M_c^{\ast}$ is the sheaf on $\mathscr Z$ or $\mathscr E(\mathfrak n)$.

To set notations for an atlas on $\mathscr E'(\mathfrak n)$, it is covered by affinoid subdomains $\mathscr E'_{\Omega,h} := \Sp(\mathbf T_{\Omega,h}')$ where $\mathbf T_{\Omega,h}'$ is the $\mathscr O(\Omega)$-algebra generated by the image of $\psi_{\Omega,h}'$ inside $\End_{\mathscr O(\Omega)}(H_{\ast}^{\BM}(\mathscr A_{\Omega})_{\leq h})$ and $(\Omega,h)$ is a slope adapted pair.  There is also a graded sheaf $\mathscr M_{\ast}^{\BM}$ on $\mathscr E'(\mathfrak n)$ whose sections are given by $\mathscr M_{\ast}^{\BM}(\mathscr E_{\Omega,h}') \simeq H_{\ast}^{\BM}(\mathfrak n, \mathscr A_{\Omega})_{\leq h}$. 

\begin{defn}\label{defn:good-nbrhood}
Let $x \in \sce(\mathfrak{n})(\overline{\mathbf Q}_p)$.  A good neighborhood of $x$ is a connected affinoid open $U$ containing $x$ with the property that there exists a slope adapted pair $(\Omega, h)$ such that $U$ is a connected component of $\mathscr E_{\Omega,h}$.
\end{defn}

If $U$ is a good neighborhood of $x$ and $(\Omega,h)$ is as in the definition thereof, denote by $e_U \in \mathbf{T}_{\Omega,h}$ the idempotent so that $\sco(U)=e_U \mathbf{T}_{\Omega,h}$. Then, $\scm_c^{\ast}(U) \cong e_U H^{\ast}_c(\mathfrak{n},\scd_{\Omega})_{\leq h}$ is a Hecke-stable direct summand of $H^{\ast}_c(\mathfrak{n},\scd_{\Omega})_{\leq h}$.  The affinoid $U$ is completely determined by the triple $(\Omega,h,e_U)$, and we say that $U$ belongs to the slope adapted pair $(\Omega,h)$.

\begin{prop} \label{prop:good-neighborhoods}
For any $x \in \sce(\mathfrak{n})$, the collection of good neighborhoods of $x$ are cofinal in the collection of admissible opens containing $x$.
\end{prop}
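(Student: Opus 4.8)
The plan is to run the standard reduction for eigenvarieties, isolating as the main obstacle a general fact about finite morphisms of rigid spaces. Let $V$ be an admissible open with $x\in V$. Since the affinoids $\mathscr E_{\Omega,h}$ attached to slope adapted pairs $(\Omega,h)$ form an admissible covering of $\mathscr E(\mathfrak n)$, I would first fix such a pair $(\Omega_0,h)$ with $x\in\mathscr E_{\Omega_0,h}$ and, replacing $V$ by $V\cap\mathscr E_{\Omega_0,h}$, assume $V\subseteq\mathscr E_{\Omega_0,h}$. Because rational subdomains form a basis of the $G$-topology of the affinoid $\mathscr E_{\Omega_0,h}$, I may then shrink $V$ once more and assume it is an affinoid subdomain of $\mathscr E_{\Omega_0,h}$ containing $x$. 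The goal becomes: find a good neighborhood of $x$ inside this $V$.

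Next I would manufacture candidate good neighborhoods by shrinking the weight affinoid. The weight map $w\colon\mathscr E_{\Omega_0,h}\to\Omega_0$ is finite: $\upsilon\colon\mathscr E(\mathfrak n)\to\mathscr Z$ is finite and $\mathscr Z_{\Omega_0,h}\to\Omega_0$ is finite, so the composite is; equivalently $H^{\ast}_c(\mathfrak n,\mathscr D_{\Omega_0})_{\leq h}$ is a finite $\mathscr O(\mathscr Z_{\Omega_0,h})$-module, hence a finite $\mathscr O(\Omega_0)$-module, so $\mathbf T_{\Omega_0,h}$ is module finite over $\mathscr O(\Omega_0)$. Now for any affinoid subdomain $\Omega\subseteq\Omega_0$ with $\lambda_x\in\Omega$, Proposition~\ref{prop:salient-slope-decomps}(5) shows $(\Omega,h)$ is again slope adapted, and the compatibility of the slope$\,\leq h$ parts—hence of the eigenvariety construction—with base change in the weight gives $\mathscr E_{\Omega,h}=\mathscr E_{\Omega_0,h}\times_{\Omega_0}\Omega=w^{-1}(\Omega)$. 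The connected component $U_\Omega$ of $w^{-1}(\Omega)$ containing $x$ is then a connected affinoid open which is a connected component of $\mathscr E_{\Omega,h}$, i.e.\ a good neighborhood of $x$ belonging to $(\Omega,h)$. So it suffices to show $U_\Omega\subseteq V$ once $\Omega$ is small enough.

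The hard part is exactly this last assertion, a purely rigid-geometric statement: for the finite map $w$, the connected component of $w^{-1}(\Omega)$ through the fixed fibre point $x$ shrinks into any prescribed admissible neighborhood of $x$ as the affinoid $\Omega\ni\lambda_x$ shrinks. I would prove it in two moves. First, the finite set $w^{-1}(\lambda_x)=\{x,y_1,\dots,y_m\}$ is separated after shrinking the base: choosing $a$ in the module-finite $\mathscr O(\Omega_0)$-algebra $\mathscr O(\mathscr E_{\Omega_0,h})$ with $a(x)=1$ and $a(y_j)=0$, one has $a^2-a\in\mathfrak m_{\lambda_x}\mathscr O(\mathscr E_{\Omega_0,h})$, so over a sufficiently small affinoid $\Omega_1\ni\lambda_x$ the reduction of $a$ lifts by Hensel to a genuine idempotent, producing a clopen splitting $w^{-1}(\Omega_1)=W_0\sqcup W'$ with $x\in W_0$ and $W_0\cap w^{-1}(\lambda_x)=\{x\}$; in particular $U_\Omega\subseteq W_0$ for every $\Omega\subseteq\Omega_1$. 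Second, one is reduced to the case where $x$ is the unique point of $w^{-1}(\lambda_x)$, so $\mathscr O(W_0)$ is module finite over $\mathscr O(\Omega_1)$ with a single maximal ideal above $\mathfrak m_{\lambda_x}$; writing the rational subdomain $V\cap W_0$ as $W_0\langle f_1/g,\dots,f_r/g\rangle$ with $g(x)\neq 0$, finiteness of $w$ sends the zero locus of $g$ to a Zariski-closed subset of $\Omega_1$ missing $\lambda_x$, so after a further shrink $g$ is invertible on $w^{-1}(\Omega_1)$, and a Newton-polygon / maximum-modulus estimate for $f_i/g$ on the module-finite algebra $\mathscr O(w^{-1}(\Omega))$ (using that the fibre polynomial of $f_i/g$ over $\lambda_x$ is a power of $T-(f_i/g)(x)$ and that $|(f_i/g)(x)|\le 1$) forces $\max_i\|f_i/g\|_{\sup,\,w^{-1}(\Omega)}\le 1$ for $\Omega$ small, i.e.\ $w^{-1}(\Omega)\subseteq V\cap W_0\subseteq V$. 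Combining the two moves yields $U_\Omega\subseteq V$ and the proposition follows. I expect the bookkeeping in the second move, and more generally pinning down the cleanest reference for the finite-morphism fact, to be the only real friction; if such a reference is available in the construction of eigenvarieties in \cite{Hansen-Overconvergent}, the third paragraph can be compressed to a citation.
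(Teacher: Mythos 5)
Your argument is correct and is precisely the unwinding the paper has in mind: the paper's own proof is the single sentence ``this proposition is a direct consequence of the construction of $\sce(\mathfrak n)$,'' and your reduction (shrink $V$ to an affinoid subdomain of $\sce_{\Omega_0,h}$, note that $(\Omega,h)$ stays slope adapted and $\sce_{\Omega,h}=w^{-1}(\Omega)$ by base-change compatibility of the slope decomposition, then invoke the cofinality lemma for finite morphisms of rigid spaces) is exactly how one makes that precise. The finite-morphism fact you isolate in your third paragraph --- idempotent-lifting to separate $x$ from the rest of its fibre, followed by the sup-norm/characteristic-polynomial estimate --- is standard and available in the eigenvariety literature (e.g.\ in Chenevier's and Bella\"iche--Chenevier's treatments), so that paragraph can indeed be compressed to a citation as you anticipate.
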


\begin{proof} This proposition is a direct consequence of the construction of $\sce(\mathfrak{n})$.
\end{proof}

\subsection{Some special points}\label{subsec:special-points}
In this subsection, we catalog certain important points on $\mathscr E(\mathfrak n)$. Traditionally this would mean discussing ``classical points." Here we discuss, as well, twists of classical points by $p$-adic Hecke characters. Since we are not assuming the truth of Leopoldt's conjecture, we need to do this in order to unconditionally produce a dense set of points at which we have good \emph{a priori} control on the eigenvariety. 

For the moment, suppose that $\psi: \mathbf T(\mathfrak n) \rightarrow \overline{\mathbf Q}_p$ is a Hecke eigensystem and $\vartheta \in \mathscr X(\Gamma_F)(\overline{\mathbf Q}_p)$. Then we define a new Hecke eigensystem
\begin{equation}\label{eqn:twisting-hecke}
\tw_{\vartheta}(\psi)(T) := 
\begin{cases}
\vartheta(\varpi_v) \psi(T) & \text{if $T = T_v$ and $v \nmid \mathfrak n\mathbf p$ or $T=U_v$ and $v \mid p$;}\\
\vartheta(\varpi_v)^2\psi(T) & \text{if $T = S_v$ and $v \nmid \mathfrak n \mathbf p$.}
\end{cases}
\end{equation}
Let $\mathfrak m_{\psi} = \ker(\psi)$ and similarly set $\mathfrak m_{\tw_{\vartheta}(\psi)} = \ker(\tw_{\vartheta}(\psi))$. Recall that in Definition \ref{defn:p-adic-big-twisting} we introduced a linear map $\tw_{\vartheta}$ on the distribution-valued cohomology (see Lemma \ref{lemma:distr-twist} also).
\begin{lem}\label{lemma:twisting}
\leavevmode
\begin{enumerate}
\item $v_p(\psi(U_v)) = v_p(\tw_{\vartheta}(\psi)(U_v))$ for each $v \mid p$.
\item The linear map $\tw_{\vartheta}$ induces an isomorphism
\begin{equation*}
\tw_{\vartheta}: H^{\ast}_c(\mathfrak n, \mathscr D_{\lambda})_{\mathfrak m_\psi} \overset{\simeq}{\longrightarrow} H^{\ast}_c(\mathfrak n, \mathscr D_{\vartheta^{-1}\cdot \lambda})_{\mathfrak m_{\tw_{\vartheta}(\psi)}}.
\end{equation*}
\end{enumerate}
\end{lem}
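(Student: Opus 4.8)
The plan is to prove both parts by a direct computation with adelic cochains, leveraging the twisting formalism of Section \ref{subsec:padic-twisting}. First I would recall that by Lemma \ref{lemma:distr-twist}, the identity map gives an isomorphism of $\Delta$-modules $\mathscr D_{\lambda}(\vartheta) \simeq \mathscr D_{\vartheta^{-1}\cdot\lambda}$, so the twisting map of Definition \ref{defn:p-adic-big-twisting} may be regarded as a $\mathbf Q_p$-linear map
\begin{equation*}
\tw_{\vartheta}: H^{\ast}_c(\mathfrak n, \mathscr D_{\lambda}) \rightarrow H^{\ast}_c(\mathfrak n, \mathscr D_{\vartheta^{-1}\cdot\lambda}),
\end{equation*}
given by cup product with the cocycle $\vartheta_{\det}$ (which is a global section of the relevant local system by Lemma \ref{lem:theta-det}). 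Note that $\tw_{\vartheta}$ is an isomorphism with inverse $\tw_{\vartheta^{-1}}$, since $\vartheta_{\det} \cup \vartheta^{-1}_{\det}$ is the constant section $1$ (here I would observe that $\vartheta^{-1} \in \mathscr X(\Gamma_F)$ whenever $\vartheta$ is, as $\vartheta$ has values in $\overline{\mathbf Q}_p^\times$, and that $\vartheta^{-1}\cdot(\vartheta^{-1}\cdot\lambda)^{\vee}$-type bookkeeping is trivial). So the content of part (2) is purely the Hecke-equivariance statement: $\tw_{\vartheta}$ carries the $\mathfrak m_\psi$-localization to the $\mathfrak m_{\tw_{\vartheta}(\psi)}$-localization.

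For the Hecke-equivariance, the key input is Proposition \ref{prop:twisting-hecke-eigenvalues}: since $\mathfrak n$ is prime to $p$, the level $K_1(\mathfrak n)I$ satisfies $\begin{smallpmatrix} \mathcal O_p^\times \\ & 1\end{smallpmatrix} \subset (K_1(\mathfrak n)I)_p = I$, so for every finite place $v$ (whether $v \mid p$ or not) the proposition gives $[K\begin{smallpmatrix}\varpi_v \\ & 1\end{smallpmatrix}K]\circ\tw_{\vartheta} = \vartheta(\varpi_v)\,\tw_{\vartheta}\circ[K\begin{smallpmatrix}\varpi_v\\ & 1\end{smallpmatrix}K]$. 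Applying this to $T = T_v$ ($v \nmid \mathfrak n\mathbf p$) and $T = U_v$ ($v\mid p$) handles those operators directly; for $S_v = [K\begin{smallpmatrix}\varpi_v \\ & \varpi_v\end{smallpmatrix}K]$ one either runs the same adelic cochain computation (the relation $\vartheta(\det\delta_i) = \vartheta(\varpi_v^2)$ now produces the factor $\vartheta(\varpi_v)^2$) or notes $S_v$ differs from $T_v^2$-type combinations by central elements; in any case the twisted eigenvalue is $\vartheta(\varpi_v)^2\psi(S_v)$. Comparing with \eqref{eqn:twisting-hecke}, this shows precisely that $\tw_{\vartheta}$ intertwines the $\mathbf T(\mathfrak n)$-action on the source with the $\mathbf T(\mathfrak n)$-action on the target twisted by $\tw_{\vartheta}$, i.e.\ $T \cdot \tw_{\vartheta}(x) = \tw_{\vartheta}(\tw_{\vartheta}^{-1}(T)\cdot x)$ in the appropriate sense, so it restricts to an isomorphism on the localizations at $\mathfrak m_\psi$ and $\mathfrak m_{\tw_{\vartheta}(\psi)}$. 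Part (1) is then immediate: $U_v$ acts on the one side by $\psi(U_v)$ and on the other by $\tw_{\vartheta}(\psi)(U_v) = \vartheta(\varpi_v)\psi(U_v)$, and since $\vartheta(\varpi_v)$ is a unit in $\overline{\mathbf Q}_p$ (indeed $\vartheta$ is a continuous character of a profinite-by-$\mathbf Z_p^r$ group into $\overline{\mathbf Q}_p^\times$, but more to the point $\varpi_v$ maps to a well-defined element whose image under any such $\vartheta$ has a $p$-adic valuation; for $\vartheta\in\mathscr X(\Gamma_F)$ arising in practice this valuation is $0$) one concludes $v_p(\psi(U_v)) = v_p(\tw_{\vartheta}(\psi)(U_v))$. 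Actually the cleanest route to (1) is simply: $\vartheta(\varpi_v)$ is a unit because $\vartheta$ factors through $\Gamma_F$ and $\varpi_v$ has trivial image in $F_\infty^{\times,\circ}\widehat{\mathcal O}_F^{(p),\times}$, so $\vartheta(\varpi_v)$ lands in the value group of a character of a compact group, hence is a $p$-adic unit.

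The main obstacle I anticipate is bookkeeping rather than conceptual: one must be careful that $\tw_{\vartheta}$ truly takes values in the cohomology with $\mathscr D_{\vartheta^{-1}\cdot\lambda}$-coefficients (not merely $\mathscr D_\lambda(\vartheta)$-coefficients) and that the identification of Lemma \ref{lemma:distr-twist} is the identity on underlying spaces, so that ``localizing at $\mathfrak m_\psi$'' on the source and ``localizing at $\mathfrak m_{\tw_{\vartheta}(\psi)}$'' on the target are compatible under the map. One must also confirm the hypothesis of Proposition \ref{prop:twisting-hecke-eigenvalues} for the specific level $K_1(\mathfrak n)I$ with $\mathfrak n$ prime to $p$ — this is where the co-primality assumption on $\mathfrak n$ enters, guaranteeing $I$ contains $\begin{smallpmatrix}\mathcal O_p^\times \\ & 1\end{smallpmatrix}$. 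Beyond that, the proof is a formal consequence of the already-established propositions, and I would present it as such, citing Proposition \ref{prop:twisting-hecke-eigenvalues} and Lemma \ref{lemma:distr-twist} as the two load-bearing ingredients.
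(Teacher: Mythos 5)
Your proposal is correct and follows essentially the same route as the paper: part (1) from the compactness of $\Gamma_F$ forcing $\vartheta(\varpi_v)$ to be a $p$-adic unit, and part (2) from $\tw_{\vartheta^{-1}}$ inverting $\tw_{\vartheta}$ together with the Hecke-intertwining of Proposition \ref{prop:twisting-hecke-eigenvalues} (and the identification of Lemma \ref{lemma:distr-twist}). Your extra care about the $S_v$ operators, which the cited proposition does not literally cover, is a reasonable refinement the paper elides.
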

\begin{proof}
The group $\Gamma_F$ is compact, so $\vartheta(\varpi_v)$ is a unit for all places $v$. That proves part (1). For part (2), $\tw_{\vartheta}$ defines an isomorphism on the level of vector spaces (before localizing) because its inverse is $\tw_{\vartheta^{-1}}$. The compatibility with the Hecke action follows from Proposition \ref{prop:twisting-hecke-eigenvalues}.
\end{proof}
Lemma \ref{lemma:twisting} implies the following is well-posed.
\begin{defn}
If $x \in \mathscr E(\mathfrak n)(\overline{\mathbf Q}_p)$ and $\vartheta \in \mathscr X(\Gamma_F)(\overline{\mathbf Q}_p)$, then we define $\tw_{\vartheta}(x) \in \mathscr E(\mathfrak n)(\overline{\mathbf Q}_p)$ to be the point corresponding to the Hecke eigensystem $\tw_{\vartheta}(\psi_x)$.
\end{defn}
One can view twisting by characters of $\Gamma_F$ as giving a group action of $\mathscr X(\Gamma_F)(\overline{\mathbf Q}_p)$ on $\mathscr E(\mathfrak n)(\overline{\mathbf Q}_p)$ compatible with the weight twisting in that
\begin{equation}\label{eqn:twisting-diagram}
\xymatrixrowsep{3pc}
\xymatrixcolsep{6pc}
\xymatrix{
\mathscr X(\Gamma_F)(\overline{\mathbf Q}_p) \times \mathscr E(\mathfrak n)(\overline{\mathbf Q}_p) \ar[r]^-{(\vartheta,x)\mapsto \tw_{\vartheta}(x)} \ar[d]_-{\left(\vartheta|_{\mathcal O_p^\times},  \lambda\right)} & \mathscr E(\mathfrak n)(\overline{\mathbf Q}_p) \ar[d]^-{\lambda}\\
\mathscr X(\mathcal O_p^\times/\overline{\mathcal O_{F,+}^\times})(\overline{\mathbf Q}_p) \times \mathscr W(1)(\overline{\mathbf Q}_p)  \ar[r]_-{(\eta,\lambda) \mapsto \eta^{-1}\cdot \lambda} & \mathscr W(1)(\overline{\mathbf Q}_p)
}
\end{equation}
is a commuting diagram. Of course, this is completely functorial and then gives actions on the level of rigid analytic groups. 

\begin{lem}\label{lem:twist-coh-weight-well-posed}
For $x\in \mathscr E(\mathfrak n)(\overline{\mathbf Q}_p)$, $x$ is in the $\mathscr X(\Gamma_F)(\overline{\mathbf Q}_p)$-orbit of a point of cohomological weight if and only if $\lambda_x$ is twist cohomological (Definition \ref{defn:classical-weight}).
\end{lem}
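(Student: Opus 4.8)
The statement is an ``if and only if'' whose substance is essentially a bookkeeping argument built on the two actions already set up: the action of $\mathscr X(\Gamma_F)$ on $\mathscr E(\mathfrak n)$ by twisting Hecke eigensystems (Lemma \ref{lemma:twisting}, Definition following it) and the action of $\mathscr X(\mathcal O_p^\times/\overline{\mathcal O_{F,+}^\times})$ on $\mathscr W(1)$ by central twisting, together with the fact that these two actions are intertwined by the weight map $\lambda$ via the commuting square \eqref{eqn:twisting-diagram}. So the first thing I would note is the key compatibility: for $\vartheta\in\mathscr X(\Gamma_F)(\overline{\mathbf Q}_p)$ and $x\in\mathscr E(\mathfrak n)(\overline{\mathbf Q}_p)$ one has $\lambda_{\tw_\vartheta(x)}=\vartheta|_{\mathcal O_p^\times}^{-1}\cdot\lambda_x$, where $\vartheta|_{\mathcal O_p^\times}$ is viewed in $\mathscr X(\mathcal O_p^\times/\overline{\mathcal O_{F,+}^\times})$ (it is trivial on $\overline{\mathcal O_{F,+}^\times}$ because $\vartheta$, coming from $\Gamma_F$, is trivial on the global units; this uses the exact sequence \eqref{eqn:ses-GammaF}).

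For the forward direction, suppose $x=\tw_\vartheta(x_0)$ with $x_0$ of cohomological weight $\lambda_0$. Then $\lambda_x=\vartheta|_{\mathcal O_p^\times}^{-1}\cdot\lambda_0$, and since $\vartheta|_{\mathcal O_p^\times}\in\mathscr X(\mathcal O_p^\times/\overline{\mathcal O_{F,+}^\times})(\overline{\mathbf Q}_p)$ this exhibits $\lambda_x$ as being in the $\mathscr X(\mathcal O_p^\times/\overline{\mathcal O_{F,+}^\times})$-orbit of the cohomological weight $\lambda_0$, i.e.\ $\lambda_x$ is twist cohomological in the sense of Definition \ref{defn:classical-weight}.

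For the reverse direction, the main point is a surjectivity/lifting assertion: the restriction map $\mathscr X(\Gamma_F)\to\mathscr X(\mathcal O_p^\times/\overline{\mathcal O_{F,+}^\times})$ (dual to the inclusion $\mathcal O_p^\times/\overline{\mathcal O_{F,+}^\times}\hookrightarrow\Gamma_F$ from \eqref{eqn:ses-GammaF}) is surjective on $\overline{\mathbf Q}_p$-points. This follows from Lemma \ref{lem:CPA-groups}: the sequence $1\to\mathcal O_p^\times/\overline{\mathcal O_{F,+}^\times}\to\Gamma_F\to\Cl_F^+\to1$ is an extension of CPA groups with finite quotient $\Cl_F^+$, so any continuous character of the subgroup extends to $\Gamma_F$ (a character of a finite-index closed subgroup of a compact abelian group extends, e.g.\ by Pontryagin duality — the dual map is injective with cokernel dual to $\Cl_F^+$, hence the restriction is surjective on $\overline{\mathbf Q}_p$-points). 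Granting this, if $\lambda_x$ is twist cohomological, write $\lambda_x=\eta\cdot\lambda_0$ with $\lambda_0$ cohomological and $\eta\in\mathscr X(\mathcal O_p^\times/\overline{\mathcal O_{F,+}^\times})(\overline{\mathbf Q}_p)$; choose $\vartheta\in\mathscr X(\Gamma_F)(\overline{\mathbf Q}_p)$ restricting to $\eta^{-1}$ on $\mathcal O_p^\times/\overline{\mathcal O_{F,+}^\times}$. Then $\tw_\vartheta(x)$ has weight $\vartheta|_{\mathcal O_p^\times}^{-1}\cdot\lambda_x=\eta\cdot\lambda_x=\eta\cdot(\eta^{-1}\cdot\lambda_0)$; after possibly adjusting $\vartheta$ by a character trivial on $\mathcal O_p^\times$ (which does not change the weight) one arranges $\vartheta|_{\mathcal O_p^\times}^{-1}=\eta$ exactly, giving $\lambda_{\tw_\vartheta(x)}=\lambda_0$ cohomological; since twisting is invertible ($\tw_{\vartheta^{-1}}$ is inverse to $\tw_\vartheta$), $x=\tw_{\vartheta^{-1}}(\tw_\vartheta(x))$ lies in the $\mathscr X(\Gamma_F)$-orbit of the cohomological-weight point $\tw_\vartheta(x)$.

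\textbf{Main obstacle.} The only genuinely non-formal step is the character-extension claim for $\Gamma_F$, i.e.\ surjectivity of $\mathscr X(\Gamma_F)(\overline{\mathbf Q}_p)\to\mathscr X(\mathcal O_p^\times/\overline{\mathcal O_{F,+}^\times})(\overline{\mathbf Q}_p)$; but this is immediate from the structure of CPA groups in Lemma \ref{lem:CPA-groups} applied to \eqref{eqn:ses-GammaF} together with divisibility of $\overline{\mathbf Q}_p^\times$, so in practice the whole lemma is a short formal deduction and one could reasonably prove it with ``Immediate from the diagram \eqref{eqn:twisting-diagram} and Lemma \ref{lem:CPA-groups}'' after spelling out the two directions as above.
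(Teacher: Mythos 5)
Your proposal follows the paper's proof exactly: the forward direction is read off from the commuting square \eqref{eqn:twisting-diagram}, and the reverse direction extends the weight-twisting character to a character of $\Gamma_F$ (the paper simply invokes the finitely many extensions of $\eta$ through the finite quotient $\Cl_F^+$, which is the same finite-index extension argument you spell out) and then twists $x$ to a point of cohomological weight.

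One sign slip to fix in the reverse direction: since $\lambda_{\tw_\vartheta(x)}=(\vartheta|_{\mathcal O_p^\times})^{-1}\cdot\lambda_x$ and you have written $\lambda_x=\eta\cdot\lambda_0$, you must choose $\vartheta$ restricting to $\eta$ itself, not to $\eta^{-1}$; with your choice the computation gives $\eta\cdot\lambda_x=\eta^2\cdot\lambda_0$, not $\lambda_0$ (your displayed equality $\eta\cdot\lambda_x=\eta\cdot(\eta^{-1}\cdot\lambda_0)$ silently replaces $\lambda_x=\eta\cdot\lambda_0$ by $\eta^{-1}\cdot\lambda_0$, and the proposed ``adjustment of $\vartheta$ by a character trivial on $\mathcal O_p^\times$'' cannot repair this, since such an adjustment does not change $\vartheta|_{\mathcal O_p^\times}$ at all). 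Taking $\vartheta$ to extend $\eta$, one gets $\lambda_{\tw_\vartheta(x)}=\eta^{-1}\cdot\eta\cdot\lambda_0=\lambda_0$, which is exactly the paper's choice, and the rest of your argument goes through.
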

\begin{proof}
By \eqref{eqn:twisting-diagram}, if $x = \tw_{\vartheta}(x')$ and $x'$ has cohomological weight, then $x$ has twist cohomological weight. On the other hand, suppose that $\lambda_x = \eta \cdot \lambda$ where $\lambda$ is a cohomological weight and $\eta \in \mathscr X(\mathcal O_p^\times/\overline{\mathcal O_{F,+}^\times})(\overline{\mathbf Q}_p)$. Then, choose any one of the finite number of extensions $\vartheta$ of $\eta$ to a character of $\Gamma_F$ and set $x' = \tw_{\vartheta}(x)$. By \eqref{eqn:twisting-diagram} again, $x'$ has weight $\lambda$ and thus $x = \tw_{\vartheta^{-1}}(x')$ is in the $\mathscr X(\Gamma_F)(\overline{\mathbf Q}_p)$-orbit of a point of cohomological weight.
\end{proof}

Now suppose that $\pi$ is a cohomological cuspidal automorphic representation whose prime-to-$p$ conductor divides $\mathfrak n$. Then, each choice of $p$-refinement $\alpha$ for $\pi$ defines a Hecke eigensystem $\psi_{(\pi,\alpha)}:\mathbf T(\mathfrak n) \rightarrow \overline{\mathbf Q}_p$, depending on $\iota$. Write $\mathfrak m_{(\pi,\alpha)} = \ker(\psi_{(\pi,\alpha)})  \subset \mathbf T(\mathfrak n)$. If $L \subset \overline{\mathbf Q}_p$ denotes the residue field of $\psi_{(\pi,\alpha)}$ then $H^{\ast}_c(\mathfrak n, \mathscr L_{\lambda}(L))_{\mathfrak m_{(\pi,\alpha)}} \neq (0)$. 

Recall the $\sharp$-twisting in Definition \ref{defn:sharp-notation}, which allows for comparison between $\mathscr L_\lambda$ and $\mathscr D_\lambda$. Given $(\pi,\alpha)$ we define $\psi_{(\pi,\alpha)}^{\sharp}:\mathbf T(\mathfrak n) \rightarrow \overline{\mathbf Q}_p$ to be the ring morphism where $\psi_{(\pi,\alpha)}^{\sharp}(T) = \psi_{(\pi,\alpha)}(T)$ for $T = T_v$ or $T = S_v$ with $v \nmid \mathfrak n\mathbf p$ and
\begin{equation*}
\psi_{(\pi,\alpha)}^{\sharp}(U_v) = \alpha_v^{\sharp} = \varpi_v^{\kappa-w\over 2}\alpha_v = \varpi_v^{\kappa-w\over 2}\psi_{(\pi,\alpha)}(U_v) \;\;\;\;\; (\text{if $v\mid p$}).
\end{equation*}
We write $\mathfrak m_{(\pi,\alpha)}^{\sharp} =\ker(\psi_{(\pi,\alpha)}^{\sharp})$. Thus, $H^{\ast}_c(\mathfrak n, \mathscr L_{\lambda}^{\sharp}(L))_{\mathfrak m^{\sharp}_{(\pi,\alpha)}} \neq (0)$  and there is a canonical isomorphism \[H^{\ast}_c(\mathfrak n, \mathscr L_{\lambda}^{\sharp}(L))_{\mathfrak m^{\sharp}_{(\pi,\alpha)}} \cong H^{\ast}_c(\mathfrak n, \mathscr L_{\lambda}(L))_{\mathfrak m_{(\pi,\alpha)}}\]
of $L$-vector spaces that is equivariant for the prime-to-$p$ Hecke operators, and twisted-equivariant (in the evident sense) for the Hecke operators at $p$.

\begin{defn}\label{defn:classical-noncritical}
Let $x \in \mathscr E(\mathfrak n)(\overline{\mathbf Q}_p)$ be a point of cohomological weight $\lambda=(\kappa,w)$.
\begin{enumerate}
\item $x:=x(\pi,\alpha)$ is called classical if $\psi_x=\psi^{\sharp}_{(\pi,\alpha)}$ for some (unique) $p$-refined cuspidal automorphic representation $(\pi,\alpha)$ of weight $\lambda$ and prime-to-$p$ conductor dividing $\mathfrak n$. In this case we write $x = x(\pi,\alpha)$. We refer to the prime-to-$p$ conductor of $x$ as the prime-to-$p$ conductor of $\pi$.
\item $x$ is called non-critical if $x$ is classical and the integration map
\begin{equation*}
I_{\lambda}: H^{\ast}_c(\mathfrak n, \mathscr D_{\lambda}\otimes_{k_{\lambda}} k_x)_{\mathfrak m_x} \rightarrow H^{\ast}_c(\mathfrak n, \mathscr L_\lambda^{\sharp}(k_x))_{\mathfrak m_x}
\end{equation*}
is an isomorphism.
\end{enumerate}
\end{defn}

We stress that $(\pi,\alpha)$ being $p$-refined, for us, includes the condition that $\pi$ is Iwahori-spherical at places dividing $p$, i.e. either an unramified twist of a special representation or an unramified principal series.

We will extend these definitions below, and then we will also give numerical criteria for point to be non-critical. First, we check that being non-critical is stable (among classical points) under twisting.

\begin{lem}\label{lem:twist-noncritical-well-posed}
Suppose that $x,x' \in \mathscr E(\mathfrak n)(\overline{\mathbf Q}_p)$ are classical points and $x = \tw_{\vartheta}(x')$ for some $\vartheta \in \mathscr X(\Gamma_F)(\overline{\mathbf Q}_p)$. Then, the following conclusions hold.
\begin{enumerate}
\item $\vartheta = \mathbf N_p^ n \vartheta'$ for $\vartheta'$ an unramified Artin character and $n \in \mathbf Z$.
\item $x$ is non-critical if and only if $x'$ is non-critical.
\end{enumerate}
\end{lem}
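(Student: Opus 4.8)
\textbf{Proof plan for Lemma \ref{lem:twist-noncritical-well-posed}.}

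The plan is to reduce both assertions to statements about cohomological weights and the cases covered in Section~\ref{subsec:twisting-archimedean}, \ref{subsec:padic-twisting}. For part (1): since $x$ and $x'$ are both classical, their weights $\lambda_x$ and $\lambda_{x'}$ are cohomological, and by \eqref{eqn:twisting-diagram} we have $\lambda_x = (\vartheta|_{\mathcal O_p^\times})^{-1} \cdot \lambda_{x'}$. Applying Lemma~\ref{lem:cohomological-weights} to the two cohomological weights $\lambda_x$ and $\lambda_{x'}$ and the twisting character $\eta := \vartheta|_{\mathcal O_p^\times}$, we conclude $\eta$ is of the form $z \mapsto z^n$ for some $n \in \mathbf Z$. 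Now $\mathbf N_p^{-n} \vartheta$ is a character of $\Gamma_F$ whose restriction to $\mathcal O_p^\times$ is trivial, hence (by the exact sequence \eqref{eqn:ses-GammaF}) it factors through the finite group $\Cl_F^+$; in particular it is a finite-order character unramified at $p$, i.e.\ an unramified Artin character $\vartheta'$. (Here I am reading $\mathbf N_p$ as the $p$-cyclotomic-type character $z \mapsto z$ on $\mathcal O_p^\times$, extended appropriately; the point is simply that the non-Artin part of $\vartheta$ is pinned down by its restriction to $\mathcal O_p^\times$.) This gives $\vartheta = \mathbf N_p^n \vartheta'$ as claimed.

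For part (2), the heart of the matter is to compare the two integration maps from Definition~\ref{defn:classical-noncritical}(2), one at weight $\lambda_x$ and one at weight $\lambda_{x'}$, using the twisting isomorphism. By Lemma~\ref{lemma:twisting}(2), $\tw_{\vartheta}$ is an isomorphism $H^{\ast}_c(\mathfrak n, \mathscr D_{\lambda_{x'}})_{\mathfrak m_{x'}} \overset{\simeq}{\to} H^{\ast}_c(\mathfrak n, \mathscr D_{\lambda_x})_{\mathfrak m_{x}}$ on the distribution side. On the finite-dimensional side, I would use the comparison recorded in Remark~\ref{rmk:twisting-compatibility}: for a finite-order $\vartheta = \theta^{\iota}$ the map $\tw_{\vartheta}$ on $\mathscr D_\lambda$-cohomology intertwines, via $I_\lambda$, with the classical twisting $\tw_{\theta^{\iota}}^{\cl}$ on $\mathscr L_\lambda^{\sharp}$-cohomology (up to passing to level $\mathfrak{n f}^2$, which can then be projected back down since $\mathfrak f \mid \mathbf p$ and we localize at the relevant maximal ideal). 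When $n \neq 0$ one additionally twists the weight, and the corresponding statement is the $p$-adic analog \eqref{eqn:allow-adelic-twisting}/Remark~\ref{rmk:adelic-norm-twisting} of twisting by $|\cdot|^n$, whose $p$-adic incarnation is exactly Definition~\ref{defn:p-adic-big-twisting}; again $I_\lambda$ is equivariant by Proposition~\ref{prop:change-to-sharp}(1) and the identity $\mathscr L_\lambda(\vartheta) \simeq \mathscr L_\lambda$ used in Remark~\ref{rmk:twisting-compatibility}. Putting these together yields a commuting square whose horizontal arrows are $\tw_{\vartheta}$-isomorphisms and whose vertical arrows are the two integration maps; hence one vertical arrow is an isomorphism iff the other is, which is precisely the assertion that $x$ is non-critical iff $x'$ is.

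The main obstacle I anticipate is the bookkeeping in the comparison square of part (2): one must be careful that $\tw_{\vartheta}$ on the $\mathscr D_\lambda$-side really does carry $\mathfrak m_{x'}$ to $\mathfrak m_x$ (this is Lemma~\ref{lemma:twisting}, using that $\vartheta(\varpi_v)$ is a unit so slopes and localizations match), that the level change $\mathfrak n \rightsquigarrow \mathfrak{n f}^2$ induced by a ramified twist does not destroy the localization (it does not, because after localizing at the Hecke eigensystem the newform theory pins down a $1$-dimensional space at either level, and the trace/restriction maps are isomorphisms there), and that the two twisting operations — the ``big'' one $\tw_\vartheta$ of Definition~\ref{defn:p-adic-big-twisting} and the ``classical'' one $\tw_{\theta^\iota}^{\cl}$ of \eqref{eqn:padic-twisting-finite-order} — are matched up correctly via Remark~\ref{rmk:twisting-compatibility}, including the subtle $\iota$-transfer twist on the local systems $\mathscr L_\lambda(L)$. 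None of these steps is deep, but assembling them into a single clean diagram is where the real work lies; I would organize the write-up around that diagram.
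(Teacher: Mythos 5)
Your proposal is correct and follows essentially the same route as the paper: part (1) via Lemma \ref{lem:cohomological-weights} applied to the weights together with the exact sequence \eqref{eqn:ses-GammaF}, and part (2) via the commuting square supplied by Remark \ref{rmk:twisting-compatibility} (with Remark \ref{rmk:adelic-norm-twisting} absorbing the $\mathbf N_p^n$ factor), localized at the relevant Hecke maximal ideals. The only point worth noting is that the level-change bookkeeping you flag as the main obstacle is moot: part (1) already shows $\vartheta'$ is unramified, so its conductor is $\mathcal O_F$ and the cohomology stays at level $\mathfrak n$ throughout.
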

\begin{proof}
We first prove (1). By Lemma \ref{lem:twist-coh-weight-well-posed} and Lemma \ref{lem:cohomological-weights}, there exists an $n \in \mathbf Z$ such that $\vartheta|_{\mathcal O_p^\times}$ is $z \mapsto z^n$. Thus $\vartheta':= \vartheta \mathbf N_p^{-n}$ is trivial on $\mathcal O_p^\times$. We deduce from \eqref{eqn:ses-GammaF} that it factors through a character of the narrow class group, as promised.

For point (2) we use the notation of the previous paragraph, and we also write $\lambda_x = \lambda$ and $\lambda_{x'} = \lambda'$. We can write $\vartheta' = (\theta')^{\iota}$ where $\theta'$ is a finite order, unramified Hecke character. So, it follows from Remark \ref{rmk:twisting-compatibility} that the diagram
\begin{equation*}
\xymatrixcolsep{6pc}
\xymatrix{
H^{\ast}_c(\mathfrak n, \mathscr D_{\lambda'}) \ar[r]^-{\tw_{\vartheta}}_-{\simeq} \ar[d]_-{I_\lambda'} & H^{\ast}_c(\mathfrak n, \mathscr D_{\lambda}) \ar[d]^-{I_\lambda}\\
H^{\ast}_c(\mathfrak n, \mathscr L_{\lambda'}^{\sharp}) \ar[r]_-{\tw_{\mathbf N_p^n\theta^{\iota}}}^-{\simeq} & H^{\ast}_c(\mathfrak n,\mathscr L_{\lambda}^{\sharp})
}
\end{equation*}
is commutative (see Remark \ref{rmk:adelic-norm-twisting} for including twists by the adelic norm).  Localizing at Hecke eigensystems, this proves the claim.
\end{proof}

Now consider a twist cohomological weight $\lambda$. Thus there exists a cohomological weight $\lambda_0 = (\kappa_0,w_0)$ and $\lambda = \eta\cdot \lambda_0$ for some $\eta$. If $\lambda_1 = (\kappa_1,w_1)$ is another cohomological weight that can twisted to $\lambda$, then Lemma \ref{lem:cohomological-weights} implies that $\kappa_0 = \kappa_1$. Thus we can always write a twist cohomological weight $\lambda = (\kappa,\ast)$ to mean $\lambda = \eta \cdot (\kappa,w)$ for some $w$. This allows us to define numerical criteria at points $x \in \mathscr E(\mathfrak n)(\overline{\mathbf Q}_p)$ of twist cohomological, not just cohomological, weight.

\begin{defn}\label{defn:numerical-criteria}
Let $x \in \mathscr E(\mathfrak n)(\overline{\mathbf Q}_p)$ be of twist cohomological weight $\lambda_x = (\kappa,\ast)$. We say that:
\begin{enumerate}
\item $x$ is twist classical if there exists a classical point $x' \in \mathscr E(\mathfrak n)(\overline{\mathbf Q}_p)$ and $\vartheta \in \mathscr X(\Gamma_F)(\overline{\mathbf Q}_p)$ such that $x = \tw_{\vartheta}(x')$.
\item $x$ is twist non-critical if $x=\tw_{\vartheta}(x')$ with $x'$ a classical, non-critical point.
\item $x$ has non-critical slope if $v_p(\psi_x(U_p)) < \inf_\sigma (1+\kappa_\sigma)$.
\item $x$ is extremely non-critical if $v_p(\psi_x(U_p)) < {1\over 2}\inf_{\sigma}(1+\kappa_\sigma)$.
\end{enumerate}
\end{defn}

Note that Definition \ref{defn:numerical-criteria} applies in particular to points of cohomological weight. Further, Lemma \ref{lem:twist-noncritical-well-posed} implies that whether or not $x$ is twist non-critical is independent of the choice of classical point in the definition thereof. Finally, whether or not a point has non-critical slope (resp.\ is extremely non-critical) can be checked before or after twisting (by Lemma \ref{lemma:twisting}).

By definition a twist non-critical point is twist classical, but {\em a priori} the points (3) and (4) do not assume classicality. Proposition \ref{prop:implications-extremely-non-critical} below fills in the only non-trivial implication in the chain:
\begin{equation*}
\text{extremely non-critical $\implies$ non-critical slope $\implies$ twist non-critical $\implies$ twist classical.}
\end{equation*}
To prove this, we need a lemma.

\begin{lem}\label{lemma:bound-below-eigenvalues}
If $\pi$ is a cohomological cuspidal automorphic representation and $\alpha$ is a $p$-refinement, then $0 \leq v_p(\alpha_v^{\sharp})$ for all $v \mid p$.
\end{lem}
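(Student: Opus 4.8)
The statement to be established is that $0\le v_p(\alpha_v^\sharp)$ for every $v\mid p$, where $\alpha_v^\sharp=\varpi_v^{(\kappa-w)/2}\alpha_v$ and $\alpha_v$ is a root of the $v$-th Hecke polynomial attached to $\pi$. My plan is to reduce this to the classical fact that the Satake parameters of a cohomological cuspidal automorphic representation of $\GL_2$ are $p$-adic integers after the appropriate Tate twist, i.e. the Ramanujan-type integrality built into the Eichler--Shimura construction. First I would recall that, by Proposition \ref{prop:change-to-sharp}(2), the lattice $\mathscr L_\lambda^\sharp(\mathcal O_L)$ of polynomials with $\mathcal O_L$-coefficients is $\Delta$-stable; in particular it is stable under the operator $U_v$ for each $v\mid p$, because $\begin{smallpmatrix}\varpi_v & a\\ & 1\end{smallpmatrix}\in\Delta$. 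Consequently $U_v$ acts on the finitely generated $\mathcal O_L$-module $H^\ast_c(\mathfrak n,\mathscr L_\lambda^\sharp(\mathcal O_L))$, so its eigenvalues on $H^\ast_c(\mathfrak n,\mathscr L_\lambda^\sharp(L))$ are integral over $\mathcal O_L$, hence have non-negative $p$-adic valuation.

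The key step is then to identify $\alpha_v^\sharp$ with such an eigenvalue. By Proposition \ref{prop:refined-eigenform}, the refined eigenform $\phi_{\pi,\alpha}$ is a $U_v$-eigenvector with eigenvalue $\alpha_v$, and via the Eichler--Shimura map (Theorem \ref{thm:signedES}, Corollary \ref{cor:one-dimensional-eigens}) this produces a non-zero class in $H^d_c(Y_{K_1(\mathfrak n\cap\mathbf p)},\mathscr L_\lambda(L))$ on which $U_v$ acts by $\alpha_v$; passing to the $\sharp$-normalization (Section \ref{subsec:integration-map}) and recalling the scaling in footnote \ref{footnote:scaling-intro}, the operator $U_v$ acts on the corresponding class in $H^d_c(\mathfrak n,\mathscr L_\lambda^\sharp(L))$ precisely by $\alpha_v^\sharp=\varpi_v^{(\kappa-w)/2}\alpha_v$. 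Since that class lies in (the image of) the $\mathcal O_L$-lattice $H^\ast_c(\mathfrak n,\mathscr L_\lambda^\sharp(\mathcal O_L))/(\text{torsion})$ — or more robustly, since $\alpha_v^\sharp$ is an eigenvalue of the $\mathcal O_L$-linear endomorphism $U_v$ of that lattice — it is integral over $\mathcal O_L$, giving $v_p(\alpha_v^\sharp)\ge 0$.

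I expect the main obstacle to be purely bookkeeping: making precise the claim that $\alpha_v^\sharp$, rather than $\alpha_v$, is the eigenvalue that acts on the \emph{integral} cohomology $H^\ast_c(\mathfrak n,\mathscr L_\lambda^\sharp(\mathcal O_L))$. This requires carefully tracking the twist by $\varpi_p^{-v(\det g)}$ that distinguishes $\mathscr L_\lambda^\sharp$ from $\mathscr L_\lambda$ through the definitions of the Hecke operators $U_v$ in Definition \ref{defn:hecke-operators} and Remark \ref{rmk:U-operator-notation}, and verifying that the double-coset representatives $\begin{smallpmatrix}\varpi_v & a\\ & 1\end{smallpmatrix}$ indeed preserve $\mathscr L_\lambda^\sharp(\mathcal O_L)$ (which is exactly Proposition \ref{prop:change-to-sharp}(2)). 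Once this compatibility is pinned down, the integrality statement is immediate: a matrix with entries in $\mathcal O_L$ has characteristic polynomial with coefficients in $\mathcal O_L$, so all its eigenvalues have non-negative valuation. One could alternatively phrase the argument at the level of the classical Hecke algebra $\mathbf T_{\mathbf Z}(K_1(\mathfrak n\cap\mathbf p))$ acting on $H^\ast_c(\mathfrak n,\mathscr L_\lambda^\sharp(\mathcal O_L))$ and invoke that $U_v$ is integral over $\mathbf Z$ as an operator, but the direct lattice argument above is cleaner and self-contained given Proposition \ref{prop:change-to-sharp}.
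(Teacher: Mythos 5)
Your proposal is correct and follows essentially the same route as the paper's proof: the paper also deduces integrality of $\alpha_v^{\sharp}$ from the fact that the eigensystem $\mathfrak m_{\pi,\alpha}^{\sharp}$ occurs in $H^d_c(\mathfrak n,\mathscr L_\lambda^{\sharp}(L))$ together with Proposition \ref{prop:change-to-sharp}, which guarantees that $U_v$ preserves the lattice $H^d_c(\mathfrak n,\mathscr L_\lambda^{\sharp}(\mathcal O_L))$. Your extra bookkeeping about why the relevant eigenvalue is $\alpha_v^{\sharp}$ rather than $\alpha_v$ is exactly the content of the $\sharp$-twist and is consistent with the paper.
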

\begin{proof}
If $L \subset \overline{\mathbf Q}_p$ is sufficiently large, then $H^{d}_c(\mathfrak n, \mathscr L_{\lambda}^{\sharp}(L))[\mathfrak m_{\pi,\alpha}^{\sharp}] \neq (0)$. But by Proposition \ref{prop:change-to-sharp}, the $U_v$-operator acting on $H^{d}_c(\mathfrak n, \mathscr L_{\lambda}^{\sharp}(L))$ preserves the integral lattice $H^{d}_c(\mathfrak n, \mathscr L_{\lambda}^{\sharp}(\mathcal O_L))$. Thus $\alpha_v^{\sharp}$ must be integral.
\end{proof}

\begin{prop}\label{prop:implications-extremely-non-critical}
Let $x \in \mathscr E(\mathfrak n)(\overline{\mathbf Q}_p)$ be of twist cohomological weight $\lambda$.
\begin{enumerate}
\item If $x$ has non-critical slope, then $x$ is twist non-critical.
\item If $x$ is extremely non-critical, then the action of $\mathbf T_{\lambda}(\mathfrak n)$ on $H^d_c(\mathfrak n, \mathscr D_{\lambda})_{\mathfrak m_x}$ is semi-simple.
\end{enumerate}
\end{prop}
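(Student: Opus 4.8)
\textbf{Proof plan for Proposition \ref{prop:implications-extremely-non-critical}.}
The two assertions are different in character, so I will treat them separately, though both ultimately rest on comparing the $U_p$-slopes appearing in distribution-valued cohomology with the degree bound coming from the weight. By Lemma \ref{lemma:twisting} the properties of having non-critical slope or being extremely non-critical are insensitive to twisting by $\mathscr X(\Gamma_F)(\overline{\mathbf Q}_p)$, and by Lemma \ref{lem:twist-noncritical-well-posed} the same is true of twist non-criticality; so in both parts I may and will replace $x$ by a twist and assume outright that $\lambda=(\kappa,w)$ is a genuine cohomological weight. Thus the task reduces to the cohomological-weight case, where the integration map $I_\lambda \colon H^\ast_c(\mathfrak n,\mathscr D_\lambda\otimes_{k_\lambda}k_x)_{\mathfrak m_x}\to H^\ast_c(\mathfrak n,\mathscr L_\lambda^{\sharp}(k_x))_{\mathfrak m_x}$ and its kernel/cokernel are the objects to control.

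For part (1), the plan is the standard Pollack--Stevens/Ash--Stevens control argument, adapted to the monoid $\Delta$ and the $\mathscr L^{\sharp}_\lambda$-normalization of Section \ref{subsec:integration-map}. One has a short exact sequence of $\Delta$-modules $0\to \mathscr D_\lambda^{0}\to \mathscr D_\lambda\xrightarrow{I_\lambda}\mathscr L^{\sharp}_\lambda\to 0$, where $\mathscr D_\lambda^{0}$ is the kernel of ``recording the first $\kappa$ moments.'' The key local computation is that $U_p$ acts on $\mathscr D^{0}_\lambda$ (at a suitable radius, and on the associated integral lattice) topologically nilpotently after scaling by $p^{-\inf_\sigma(1+\kappa_\sigma)}$; equivalently, every $U_p$-eigenvalue occurring in the cohomology of $\mathscr D^0_\lambda$ has slope $\geq \inf_\sigma(1+\kappa_\sigma)$. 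Granting this, the long exact sequence in $H^\ast_c$ attached to the displayed short exact sequence, localized at $\mathfrak m_x$ and restricted to the slope-$<\inf_\sigma(1+\kappa_\sigma)$ part, shows $I_\lambda$ is an isomorphism on $H^\ast_c(\mathfrak n,-)_{\mathfrak m_x}$ once $v_p(\psi_x(U_p))<\inf_\sigma(1+\kappa_\sigma)$. This forces $H^\ast_c(\mathfrak n,\mathscr L^{\sharp}_\lambda)_{\mathfrak m_x}\neq 0$ in a single degree (by Eichler--Shimura / Corollary \ref{cor:one-dimensional-eigens}), hence $\psi_x=\psi^{\sharp}_{(\pi,\alpha)}$ for a $p$-refined $(\pi,\alpha)$: $x$ is classical, and the same isomorphism says $x$ is non-critical in the sense of Definition \ref{defn:classical-noncritical}. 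Therefore the original $x$ is twist non-critical. I would cite \cite[Section 3]{Hansen-Overconvergent} and \cite{AshStevens-OverconvergentCohomology} for the slope estimate rather than reprove it; the only thing to check by hand is that the conjugation described in Remark \ref{rmk:difference-with-daves-paper} does not disturb the bound, which it does not since it is conjugation by an element of $\GL_2(F_p)$ and $U_p$ is defined by the same double cosets.

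For part (2), assume $v_p(\psi_x(U_p))<\tfrac12\inf_\sigma(1+\kappa_\sigma)$. The argument of part (1) already gives that $x$ is classical and non-critical, so $I_\lambda$ identifies $H^d_c(\mathfrak n,\mathscr D_\lambda)_{\mathfrak m_x}$ with $H^d_c(\mathfrak n,\mathscr L^{\sharp}_\lambda(k_x))_{\mathfrak m_x}$ as a $\mathbf T_\lambda(\mathfrak n)$-module, and the $U_v$ eigenvalue on the latter is $\alpha^{\sharp}_v$ with $0\le v_p(\alpha_v^{\sharp})$ by Lemma \ref{lemma:bound-below-eigenvalues}. Semisimplicity of the $\mathbf T_\lambda(\mathfrak n)$-action is then the ``non-theta-critical'' phenomenon: there is a pairing (Poincaré duality, Section \ref{subsec:topology}) between $H^d_c(\mathfrak n,\mathscr D_\lambda)$ and $H_d^{\BM}(\mathfrak n,\mathscr A_\lambda)$ under which $U_p$ acts adjointly to its transpose, and the ``other root'' generalized eigenspace — the image of $V_p^-$-type operators — carries $U_p$-slope $\geq \sum_v e_v\bigl(1+\kappa_{\sigma}\bigr)-v_p(\psi_x(U_p))>\tfrac12\inf_\sigma(1+\kappa_\sigma)$, hence is disjoint from the slope-$<\tfrac12\inf_\sigma(1+\kappa_\sigma)$ part containing $\mathfrak m_x$. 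Concretely: a non-semisimple $\mathbf T_\lambda(\mathfrak n)$-module structure on $M:=H^d_c(\mathfrak n,\mathscr D_\lambda)_{\mathfrak m_x}$ would produce a nonsplit extension of the classical eigenline by itself inside $M$, and feeding such a generalized eigenvector through the short exact sequence $0\to \mathscr D^0_\lambda\to\mathscr D_\lambda\to\mathscr L^{\sharp}_\lambda\to 0$ again, together with the bound that $U_p$ on $H^\ast(\mathscr D^0_\lambda)$ has slope $\geq \inf_\sigma(1+\kappa_\sigma)$ while also $U_p\cdot(U_p^{\vee})=(\text{norm at }p)$ acting on the duality pairing gives the complementary lower bound on the ``wrong'' piece, yields a contradiction with the hypothesis $v_p(\psi_x(U_p))<\tfrac12\inf_\sigma(1+\kappa_\sigma)$. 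I would organize this as: (i) reduce to cohomological weight; (ii) invoke part (1) to get classicality and $I_\lambda$ an isomorphism on the $\mathfrak m_x$-part; (iii) use the Poincaré duality pairing and the relation $U_vV_v^-=q_vS_v$ (Lemma \ref{lem:refinement-lemma}(3), in its distribution-valued incarnation) to show the $\alpha_v^{\sharp}$ and $\beta_v^{\sharp}$ generalized eigenspaces are separated by slope under the extreme non-criticality bound; (iv) conclude that the local ring of $\mathbf T_\lambda(\mathfrak n)$ at $\mathfrak m_x$ acts on $M$ through a field, i.e. semisimply.

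The main obstacle is step (iii) of part (2): making precise the slope separation of the $\alpha^{\sharp}$- and $\beta^{\sharp}$-eigenspaces at each $v\mid p$ \emph{simultaneously}, since the relevant operator is $U_p=\prod_v U_v^{e_v}$ and the naive estimates only bound $v_p(\psi_x(U_p))$ rather than the individual $v_p(\alpha_v^{\sharp})$. I expect one resolves this exactly as in Bellaïche's treatment for $F=\mathbf Q$ (\cite{Bellaiche-CriticalpadicLfunctions}, and compare \cite{Bergdall-Smoothness}): work with the full $U_p$-action on the slope-adapted complex $C_c^\bullet(\mathfrak n,\mathscr D_\lambda)_{\le h}$ for $h$ just above $v_p(\psi_x(U_p))$, use that the complementary ``critical'' generalized eigenspace is killed because its $U_p$-slope is forced, by the duality $U_p \leftrightarrow U_p^\vee$ and Lemma \ref{lemma:bound-below-eigenvalues}, to be at least $\sum_{v\mid p} e_v v_p(\alpha_v) \geq \sum_{v\mid p}e_v\big(\tfrac{w-\kappa_\sigma}{2}\big)$-complementary to $v_p(\psi_x(U_p))$ — and the hypothesis $v_p(\psi_x(U_p))<\tfrac12\inf_\sigma(1+\kappa_\sigma)$ is precisely what makes these two ranges disjoint. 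If the bookkeeping with the $e_v$'s proves delicate I would retreat to the weaker but sufficient statement by summing the local contributions, which is all the downstream applications (Theorem \ref{thm:intro-smoothness}, Theorem \ref{thm:intro-one-dimensional}) actually require.
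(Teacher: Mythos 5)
Part (1) of your proposal is essentially the paper's argument: after the same twisting reduction (via Lemma \ref{lemma:twisting} and Lemma \ref{lem:twist-noncritical-well-posed}), the paper simply invokes the control theorem \cite[Theorem 3.2.5]{Hansen-Overconvergent}, which is exactly the $\mathscr D_\lambda^0$/slope-bound argument you sketch. No issues there.

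For part (2) your core intuition (a valuation estimate separating the two roots at each $v\mid p$) is the right one, but the proposed implementation is off track and the step you flag as the ``main obstacle'' is not where the difficulty lies. Once part (1) gives non-criticality, $I_\lambda$ identifies $H^d_c(\mathfrak n,\mathscr D_\lambda)_{\mathfrak m_x}$ with the \emph{classical} cohomology $H^d_c(\mathfrak n,\mathscr L_\lambda^{\sharp}(L))_{\mathfrak m_x}$, and from that point the question is purely classical: the prime-to-$\mathfrak n\mathbf p$ operators and the Steinberg $U_v$'s are semisimple for standard reasons, and the only possible failure is $U_v$ at an unramified $v\mid p$, where $U_v$ satisfies the quadratic $X^2-a_v(\pi)X+\omega_\pi(\varpi_v)q_v$ on the two-dimensional old space and is semisimple if and only if $\alpha_v\neq\beta_v$. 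Your detour back through Poincar\'e duality, the sequence $0\to\mathscr D^0_\lambda\to\mathscr D_\lambda\to\mathscr L^{\sharp}_\lambda\to 0$, and the relation ``$U_p\cdot U_p^\vee=$ norm'' is both unnecessary (the kernel $\mathscr D^0_\lambda$ contributes nothing to the localization once $I_\lambda$ is an isomorphism there, and $U_p$ is in fact self-adjoint under the pairing of Section \ref{subsec:topology}) and does not by itself produce the needed conclusion: if $\alpha_v=\beta_v$ there is only one generalized eigenspace, so ``separating eigenspaces by slope'' presupposes distinctness rather than proving it.

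The obstacle you name --- that one only controls $v_p(\psi_x(U_p))=\sum_v e_v\,v_p(\alpha_v^{\sharp})$ rather than each $v_p(\alpha_v^{\sharp})$ --- dissolves immediately: Lemma \ref{lemma:bound-below-eigenvalues} gives $v_p(\alpha_v^{\sharp})\geq 0$ for every $v$, so each nonnegative summand is bounded by the total, i.e.
\begin{equation*}
e_v\,v_p(\alpha_v^{\sharp})\;\leq\;v_p(\psi_x(U_p))\;<\;\tfrac{1}{2}\inf_{\sigma}(1+\kappa_\sigma).
\end{equation*}
Unwinding $\alpha_v^{\sharp}=\varpi_v^{(\kappa-w)/2}\alpha_v$ converts this into $v_p(\alpha_v)<f_v(1+w)/2=\tfrac{1}{2}v_p(\alpha_v\beta_v)$, which forces $v_p(\alpha_v)\neq v_p(\beta_v)$ and hence $\alpha_v\neq\beta_v$. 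That single inequality is the whole of step (iii); what you present as a fallback (``summing the local contributions'') is in fact the argument, and it should be promoted from an afterthought to the main line while the duality machinery is dropped.
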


Part (2) of this Proposition will play a key role in the proof that the eigenvariety $\mathscr{E}(\mathfrak{n})_{\mathrm{mid}}$ constructed in the next section is reduced, cf. Theorem \ref{theorem:reducedness}.

\begin{proof}
In case (1) (resp.\ (2)) we can write $x = \tw_{\vartheta}(x')$ where $x'$ has cohomological weight and $x'$ has non-critical slope (resp.\ is extremely non-critical). By Lemma \ref{lem:twist-noncritical-well-posed} in case (1) and Lemma \ref{lemma:twisting} in case (2), it suffices to replace $x$ by $x'$ and thus assume that $x$ has cohomological weight. In that case, point (1) follows from \cite[Theorem 3.2.5]{Hansen-Overconvergent}.\footnote{To make this calculation, one should take the Borel in \cite{Hansen-Overconvergent} to be the upper-triangular Borel and the element $t$ in \cite[Theorem 3.2.5]{Hansen-Overconvergent} to be $\begin{smallpmatrix} 1 &\\ & \varpi_p^{e_p}\end{smallpmatrix}$. Then, the $U_t$-operator in that reference is the $U_p$-operator in this paper (see Remark \ref{rmk:difference-with-daves-paper}).} 

We now prove (2) in the case $x$ has cohomological weight. First, by definition an extremely non-critical point has non-critical slope and so is non-critical by point (1). Thus $H^d_c(\mathfrak n, \mathscr D_{\lambda})_{\mathfrak m_x} \simeq H^d_c(\mathfrak n, \mathscr L_{\lambda}^{\sharp}(L))_{\mathfrak m_x}$. Now write $x = x(\pi,\alpha)$. It is known that the Hecke operators away from $\mathfrak n \mathbf p$ are semi-simple on the whole space $H^d_c(\mathfrak n, \mathscr L_{\lambda}^{\sharp}(L))$. If we localize at $\mathfrak m_x$ then the same is true for the operators $U_v$ when $\pi_v$ is Steinberg. Thus it remains to show that if $\pi_v$ is unramified, then the $U_v$ operator acts semi-simply. For that, it is sufficient to show that the two roots of $X^2 - a_v(\pi)X + \omega_\pi(\varpi_v)q_v$ are distinct. Here, $\omega_\pi(\varpi_v) = \zeta q_v^w$ where $\zeta$ is a root of unity, and $q_v = p^{f_v}$. In particular, it is enough to show that
\begin{equation}\label{eqn:sufficient-boundedness-condition}
v_p(\alpha_v) <  {f_v(1 + w)\over 2} = {1\over e_v} \sum_{\sigma \in \Sigma_v} {1 + w \over 2}.
\end{equation}
But $\alpha_v^{\sharp} = \psi_x(U_v)= \alpha_v \varpi_v^{\kappa-w\over 2}$ satisfies $v_p(\alpha_v^{\sharp}) \geq 0$ (Lemma \ref{lemma:bound-below-eigenvalues}) and, since $\psi_x(U_p) = \prod_{v \mid p} (\alpha_v^{\sharp})^{e_v}$ and $x$ is extremely non-critical, we see that 
\begin{equation*}
v_p(\alpha_v^{\sharp}) < {1\over e_v} \inf_{\sigma \in \Sigma_v} {1 + \kappa_{\sigma} \over 2} < {1\over e_v} \sum_{\sigma \in \Sigma_v} {1 + \kappa_{\sigma} \over 2}.
\end{equation*}
The bound \eqref{eqn:sufficient-boundedness-condition} follows immediately, completing the proof of (2).
\end{proof}

\begin{rmk}\label{rmk:extremely-non-critical-unramified-principal-series}
By Proposition \ref{prop:implications-extremely-non-critical}, any point $x$ that is extremely non-critical is of the form $x = \tw_{\vartheta}(x')$ where $x'$ is classical. With an assumption slightly stronger than extremely non-critical, slightly more can be said. Specifically, suppose that $x$ has twist cohomological weight $\lambda_x = (\kappa,\ast)$ and, moreover,
\begin{enumerate}
\item we have $\kappa_\sigma \geq 2$ for all $\sigma$, and
\item $v_p(\psi_x(U_p)) < \frac{1}{3}\inf_{\sigma}(1+\kappa_\sigma)$. 
\end{enumerate}
Then, we claim that $x = \tw_{\vartheta}(x')$ where $x'$ is a classical point such that the associated automorphic representation is an unramified principal series at each $v \mid p$. (The constants $2$ and $3$ in the conditions (1) and (2) are not particularly important. See the proof of Proposition \ref{prop:family-galois-properties}.)

Indeed, it suffices to show the claim holds when $x = (\pi,\alpha)$ is classical. Suppose instead that $\pi_v$ is Steinberg while (1) and (2) both hold. Since $\pi_v$ is Steinberg, $\alpha_v^2$ is a unit multiple of $q_v^w$. So, 
\begin{equation*}
v_p((\alpha_v^{\sharp})^{e_v}) = e_v \left(\frac{f_v w}{2} + \frac{1}{e_v}\sum_{\sigma \in \Sigma_v} \frac{\kappa_\sigma - w}{2}\right) = \sum_{\sigma \in \Sigma_v} \frac{\kappa_\sigma}{2}.
\end{equation*}
But, since $\kappa_\sigma \geq 2$ by (1) we have $\frac{\kappa_\sigma}{2} \geq \frac{1+\kappa_{\sigma}}{3}$ for any $\sigma \in \Sigma_v$ and thus
$$
v_p((\alpha_v^{\sharp})^{e_v}) \geq \sum_{\sigma \in \Sigma_v} \frac{1+\kappa_\sigma}{3} \geq \frac{1}{3}\inf_{\sigma}  (1 + \kappa_\sigma) > v_p(\psi_x(U_p)),
$$
using (2). But this contradicts that each $\alpha_v^{\sharp}$ is $p$-adically integral (Lemma \ref{lemma:bound-below-eigenvalues}), which completes the proof of the claim.
\end{rmk}

\subsection{The middle-degree eigenvariety}\label{subsec:middle-degree}

We now return to the eigenvarieties $\mathscr E(\mathfrak n)$. Recall the open affinoid charts $\mathscr E_{\Omega,h}=\Sp(\mathbf T_{\Omega,h})$ and $\mathscr E'_{\Omega,h} = \Sp(\mathbf T_{\Omega,h}')$ defined towards the end of Section \ref{subsec:eigenvarieties}. If $A$ is a commutative ring we write $A^{\red}$ for its nilreduction, and if $X$ is a rigid analytic space we write $X^{\red}$ for its nilreduction.

\begin{prop}\label{prop:reduced-factorization}
\leavevmode
\begin{enumerate}
\item If $(\Omega,h)$ is a slope adapted pair, then we have a natural commuting diagram
\begin{equation*}
\xymatrix{
\mathbf T(\mathfrak n) \otimes_{\mathbf Q_p} \mathscr O(\Omega) \ar@{>>}[r]^-{\psi_{\Omega,h}'} \ar@{>>}[d]_-{\psi_{\Omega,h}} & \mathbf T'_{\Omega,h} \ar@{.>}[d]\\
\mathbf T_{\Omega,h} \ar@{>>}[r] & \mathbf T_{\Omega,h}^{\red}
}
\end{equation*}
\item The morphisms $\mathbf T'_{\Omega,h} \rightarrow \mathbf T_{\Omega,h}^{\red}$ in part (1) glue to a canonical morphism $\tau: \mathscr E(\mathfrak n)^{\red} \rightarrow \mathscr E'(\mathfrak n)$.
\end{enumerate}
\end{prop}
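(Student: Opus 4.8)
The heart of the matter is part (1): constructing the dotted arrow $\mathbf T'_{\Omega,h} \to \mathbf T_{\Omega,h}^{\red}$ making the square commute, functorially enough that the pieces glue. My plan is to exploit the duality between the distribution-valued cochain complex and the function-valued chain complex, namely Proposition \ref{prop:salient-slope-decomps}(2), which gives a canonical isomorphism $C^{\bullet}_c(\mathfrak n, \mathscr D_{\Omega})_{\leq h} \simeq \Hom_{\mathscr O(\Omega)}(C_{\bullet}^{\BM}(\mathfrak n, \mathscr A_{\Omega})_{\leq h}, \mathscr O(\Omega))$. Passing to (co)homology, this duality intertwines the $\mathbf T(\mathfrak n)$-action on $H^{\ast}_c(\mathfrak n, \mathscr D_{\Omega})_{\leq h}$ with the \emph{transpose} of the action on $H_{\ast}^{\BM}(\mathfrak n, \mathscr A_{\Omega})_{\leq h}$ (one must be a little careful: duality on the level of complexes is perfect because the complexes are finite projective $\mathscr O(\Omega)$-modules after taking slope-$\leq h$ parts, but on the level of (co)homology there is a spectral-sequence/$\Tor$ discrepancy — however the Hecke operators still act compatibly, since they are induced by maps of complexes). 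Consequently, for any $T \in \mathbf T(\mathfrak n)$, the characteristic polynomial of $\psi_{\Omega,h}(T)$ acting on $H^{\ast}_c(\mathfrak n, \mathscr D_{\Omega})_{\leq h}$ divides a power of the characteristic polynomial of $\psi'_{\Omega,h}(T)$ acting on $H_{\ast}^{\BM}(\mathfrak n, \mathscr A_{\Omega})_{\leq h}$, and vice versa; in particular the two operators have the same eigenvalues (with multiplicity forgotten), hence the same \emph{minimal polynomial up to radical}. It follows that the surjection $\mathbf T(\mathfrak n)\otimes\mathscr O(\Omega) \twoheadrightarrow \mathbf T_{\Omega,h}$ and the surjection $\mathbf T(\mathfrak n)\otimes\mathscr O(\Omega)\twoheadrightarrow \mathbf T'_{\Omega,h}$ have, after nilreduction, the same kernel: $\ker(\psi_{\Omega,h})$ and $\ker(\psi'_{\Omega,h})$ have the same radical. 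This yields the factorization $\mathbf T'_{\Omega,h} \to \mathbf T_{\Omega,h}^{\red}$, and makes the diagram commute by construction.

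For the gluing in part (2), I would check compatibility with base change along an inclusion of slope-adapted pairs, which is Proposition \ref{prop:salient-slope-decomps}(5): the slope-$\leq h$ decompositions of both the chain and cochain complexes commute with $\mathscr O(\Omega)\to\mathscr O(\Omega')$, and the duality isomorphism of Proposition \ref{prop:salient-slope-decomps}(2) is manifestly base-change compatible. Hence the constructed morphisms $\Spec \mathbf T_{\Omega,h}^{\red}\to\Spec\mathbf T'_{\Omega,h}$ are compatible on overlaps $\mathscr Z_{\Omega',h'}\subset\mathscr Z_{\Omega,h}$, so they glue to a morphism of rigid spaces. Finally I would remark that the target of this glued morphism is $\mathscr E'(\mathfrak n)$, whereas its source is $\bigcup (\Spec\mathbf T_{\Omega,h})^{\red}$; but by \cite[Corollary 2.2.7]{Conrad-IrredComponents} (or simply because nilreduction commutes with the admissible open cover) this union \emph{is} $\mathscr E(\mathfrak n)^{\red}$, giving the map $\tau: \mathscr E(\mathfrak n)^{\red}\to\mathscr E'(\mathfrak n)$.

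The main obstacle I anticipate is the bookkeeping around the duality at the level of (co)homology rather than complexes: the functor $\Hom_{\mathscr O(\Omega)}(-,\mathscr O(\Omega))$ applied to the chain complex does not literally compute $H^{\ast}_c$ from $H_{\ast}^{\BM}$ on the nose, because of a universal-coefficients/$\Tor$ term (the complexes are \emph{finite} complexes of projectives only after the slope decomposition, so the homology is finitely generated but not a priori projective over $\mathscr O(\Omega)$). The clean way around this is to work with the characteristic ideals / Fitting ideals rather than eigenvalue multiplicities, or better: to observe that for the purpose of comparing \emph{radicals} of Hecke ideals it suffices that the supports of the coherent sheaves $\mathscr M^{\ast}_c$ and $\mathscr M_{\ast}^{\BM}$ over $\mathscr Z$ coincide, which follows from the duality of complexes alone (a bounded complex of finite projectives and its $\mathscr O(\Omega)$-dual have the same support when we remember the acyclicity locus, and Hecke operators act compatibly). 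I would phrase the argument in exactly that language — equality of supports, hence equality of the reduced Hecke subalgebras — to sidestep any $\Tor$ subtleties. A secondary, purely cosmetic point is to make sure the arrow in the diagram of part (1) is indeed a ring map and not merely a map of $\mathscr O(\Omega)$-modules; this is automatic since it is the factorization of a surjective ring homomorphism through the common radical.
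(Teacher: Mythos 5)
Your proposal lands in essentially the right place, but by a more roundabout route than the paper, and the first mechanism you propose does not survive scrutiny. The paper's proof is a one-liner built on a tool you never invoke: Hansen's Ext spectral sequence (\cite[Theorem 3.3.1]{Hansen-Overconvergent}), which reads
\begin{equation*}
E_2^{i,j} = \Ext_{\mathscr O(\Omega)}^{i}\bigl(H_j^{\BM}(\mathfrak n, \mathscr A_{\Omega})_{\leq h},\mathscr O(\Omega)\bigr) \Rightarrow H^{i+j}_c(\mathfrak n, \mathscr D_{\Omega})_{\leq h}
\end{equation*}
and is $\mathbf T(\mathfrak n)\otimes\mathscr O(\Omega)$-equivariant. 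If $T \in \ker(\psi'_{\Omega,h})$ then $T$ kills every $E_2$-term, hence acts nilpotently on the abutment, i.e.\ $\psi_{\Omega,h}(T)$ is nilpotent; this single containment $\ker(\psi')\subseteq\psi_{\Omega,h}^{-1}(\mathrm{nil})$ is exactly what the dotted arrow requires. Note the paper neither needs nor proves equality of radicals, whereas you aim for the stronger two-sided statement (which does hold by double duality, but is superfluous).

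The genuine weak point in your write-up is the step comparing ``characteristic polynomials'' of $\psi_{\Omega,h}(T)$ and $\psi'_{\Omega,h}(T)$: the modules $H^{\ast}_c(\mathfrak n,\mathscr D_{\Omega})_{\leq h}$ and $H_{\ast}^{\BM}(\mathfrak n,\mathscr A_{\Omega})_{\leq h}$ are finitely generated but not projective over $\mathscr O(\Omega)$, so characteristic polynomials of endomorphisms of these modules are not defined, and ``same eigenvalues'' has no meaning over an affinoid base. You correctly sense this and retreat to comparing supports via the chain-level duality of Proposition \ref{prop:salient-slope-decomps}(2). That fallback can be made to work --- for a bounded complex of finite projectives, localization at a prime where the dual complex is exact yields a split-exact complex, so the acyclicity loci agree, and $\sqrt{\Ann}$ equals the ideal of the support --- but the clean packaging of precisely this comparison \emph{is} the hyperext spectral sequence above, applied to the dual pair of complexes. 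In other words, your corrected argument and the paper's argument are the same argument; the paper just cites it in its efficient form. Your treatment of part (2) via base-change compatibility (Proposition \ref{prop:salient-slope-decomps}(5)) matches the paper's ``immediate from the construction and the local nature of nilreduction.''
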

We emphasize that the Hecke action on homology defined in \cite{Hansen-Overconvergent} is perhaps slightly nonstandard, but this normalization of the action is chosen exactly so the equivariance claim in the following proof holds true.
\begin{proof}
By \cite[Theorem 3.3.1]{Hansen-Overconvergent} there is a first quadrant spectral sequence
\begin{equation}\label{eqn:sp-seq}
E_2^{i,j} = \Ext_{\mathscr O(\Omega)}^{i}(H_j^{\BM}(\mathfrak n, \mathscr A_{\Omega})_{\leq h},\mathscr O(\Omega)) \Rightarrow H^{i+j}_c(\mathfrak n, \mathscr D_{\Omega})_{\leq h}
\end{equation}
which is equivariant for the action of $\mathbf T(\mathfrak n) \otimes_{\mathbf Q_p} \mathscr O(\Omega)$. Thus, if $T \in \ker(\psi_{\Omega,h}')$, then acts trivially on every term in the $E_2$-page for the spectral sequence \eqref{eqn:sp-seq}. In particular, that means that $T$ acts nilpotently on the abutment $H^{\ast}_c(\mathfrak n, \mathscr D_{\Omega})_{\leq h}$, which is what we wanted to show in (1).

The second part of the proposition is immediate from the construction of the eigenvariety and the local nature of the nilreduction.
\end{proof}

Now consider the graded sheaves $\mathscr M_\ast^{\BM} = \bigoplus_j \mathscr M_j^{\BM}$ on $\mathscr E'(\mathfrak n)$. Le $\tau$ be as in Proposition \ref{prop:reduced-factorization}(2). Since $\mathscr M_j^{\BM}$ is a coherent sheaf on $\mathscr E'(\mathfrak n)$, its pullback $\tau^{\ast}\mathscr M_j^{\BM}$ to $\mathscr E(\mathfrak n)^{\red}$ is also coherent. The natural map $i : \mathscr E(\mathfrak n)^{\red} \rightarrow \mathscr E(\mathfrak n)$ is a closed immersion, so $i_{\ast}\tau^{\ast}\mathscr M_j^{\BM}$ is thus a coherent sheaf on $\mathscr E(\mathfrak n)$. In particular, its support is a closed analytic subset. In general, we write $\supp(\mathscr M)$ for the support of a sheaf $\mathscr M$.

\begin{defn}\label{defn:middle-degree}
\begin{equation*}
\mathscr E(\mathfrak n)_{\rmmid} := \mathscr E(\mathfrak n) - \left[\left(\bigcup_{j=d+1}^{2d} \supp(\mathscr M_c^{j})\right) \cup \left(\bigcup_{j=0}^{d-1} \supp(i_{\ast}\tau^{\ast}\mathscr M_j^{\BM})\right)\right]
\end{equation*}
 \end{defn}
We immediately give a separate characterization of $\mathscr E(\mathfrak n)_{\rmmid}$. The entire reason for introducing the homology-based eigenvariety was to give Definition \ref{defn:middle-degree} because it is not clear that condition (2) in the next proposition gives a well-defined affinoid open subspace.

\begin{prop}\label{prop:middle-alternate-characterization}
If $x \in \mathscr E(\mathfrak n)(\overline{\mathbf{Q}}_p)$, then the following conditions are equivalent.
\begin{enumerate}
\item $x \in \mathscr E(\mathfrak n)_{\rmmid}(\overline{\mathbf Q}_p)$.
\item $H^j_c(\mathfrak n, \mathscr D_{\lambda_x}\otimes_{k_{\lambda_x}} k_x)_{\mathfrak m_x} \neq (0)$ if and only if $j = d$.
\end{enumerate}
Moreover, $\mathscr E(\mathfrak n)^{\rmmid} \cap \supp(\mathscr M^j_c)$ is empty if $0 \leq j \leq d-1$ also.
\end{prop}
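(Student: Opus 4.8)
The plan is to trade the geometry of $\mathscr E(\mathfrak n)$ for the two spectral sequences of \cite{Hansen-Overconvergent}. Fix $x$, write $\lambda:=\lambda_x$, $\mathfrak m:=\mathfrak m_x\subset\mathbf T(\mathfrak n)$, $k:=k_\lambda$; since $k_x/k_\lambda$ is faithfully flat it suffices to prove the statement with $\mathscr D_\lambda$ (resp. $\mathscr A_\lambda$) as coefficients in place of $\mathscr D_\lambda\otimes_{k_\lambda}k_x$ (resp. $\mathscr A_\lambda\otimes_{k_\lambda}k_x$). Choose a slope adapted pair $(\Omega,h)$ with $\lambda\in\Omega$ and $h>v_p(\psi_x(U_p))$, and set $R:=\mathscr O(\Omega)_{\mathfrak m_\lambda}$, a regular local ring with residue field $k$. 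The first --- and, I expect, most delicate --- step is pure eigenvariety bookkeeping: unwinding the constructions of $\mathscr E(\mathfrak n)$, $\mathscr E'(\mathfrak n)$ and of $\mathscr E(\mathfrak n)_{\rmmid}$, and using that $\tau$ carries $x$ to the point of $\mathscr E'(\mathfrak n)$ with the same Hecke eigensystem and the same weight, that the support of a pullback of a coherent sheaf is the preimage of the support, and Nakayama, one shows
\begin{gather*}
x\in\supp(\mathscr M_c^j)\iff \tilde M^j:=H^j\bigl(C^\bullet_c(\mathfrak n,\mathscr D_\Omega)_{\le h}\bigr)_{\mathfrak m}\otimes_{\mathscr O(\Omega)}R\ne 0,\\
x\in\supp(i_\ast\tau^\ast\mathscr M_j^{\BM})\iff \tilde M_j^{\BM}:=H_j\bigl(C_\bullet^{\BM}(\mathfrak n,\mathscr A_\Omega)_{\le h}\bigr)_{\mathfrak m}\otimes_{\mathscr O(\Omega)}R\ne 0.
\end{gather*}
Thus $x\in\mathscr E(\mathfrak n)_{\rmmid}$ exactly when $\tilde M^j=0$ for all $j>d$ and $\tilde M_j^{\BM}=0$ for all $j<d$.

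Next I would record the homological inputs. By Proposition \ref{prop:salient-slope-decomps}(2),(5), the localized complexes $C^\bullet_c(\mathfrak n,\mathscr D_\Omega)_{\le h}$ and $C_\bullet^{\BM}(\mathfrak n,\mathscr A_\Omega)_{\le h}$ are bounded complexes of finite free $R$-modules, $k$-linearly dual to one another and compatible with base change in $\Omega$; so their derived base change to $k$ computes $H^\ast_c(\mathfrak n,\mathscr D_\lambda)_{\mathfrak m}$ and $H_\ast^{\BM}(\mathfrak n,\mathscr A_\lambda)_{\mathfrak m}$, and (Proposition \ref{prop:salient-slope-decomps}(2) over the field $k$) $H^j_c(\mathfrak n,\mathscr D_\lambda)_{\mathfrak m}$ is $k$-dual to $H_j^{\BM}(\mathfrak n,\mathscr A_\lambda)_{\mathfrak m}$ for every $j$. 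I then have the two universal-coefficients spectral sequences attached to these perfect complexes,
\begin{gather*}
E_2^{p,q}=\Tor^R_{-p}(\tilde M^q,k)\Rightarrow H^{p+q}_c(\mathfrak n,\mathscr D_\lambda)_{\mathfrak m}\quad(p\le 0),\\
{}'E^2_{s,t}=\Tor^R_s(\tilde M_t^{\BM},k)\Rightarrow H_{s+t}^{\BM}(\mathfrak n,\mathscr A_\lambda)_{\mathfrak m}\quad(s\ge 0),
\end{gather*}
together with the localization at $(\mathfrak m,\mathfrak m_\lambda)$ of the spectral sequence \eqref{eqn:sp-seq} of \cite[Theorem 3.3.1]{Hansen-Overconvergent}, which reads $E_2^{i,j}=\Ext^i_R(\tilde M_j^{\BM},R)\Rightarrow \tilde M^{i+j}$ with $i\ge 0$.

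The rest is a min/max chase. Because $-p\ge 0$, every $E_\infty$-term of the first spectral sequence contributing in total degree $n$ has $q\ge n$; hence $\tilde M^q=0$ for all $q>d$ forces $H^n_c(\mathfrak n,\mathscr D_\lambda)_{\mathfrak m}=0$ for all $n>d$, and conversely, if $\tilde M^{j_0}\ne 0$ for the largest $j_0>d$, then $E_2^{0,j_0}=\tilde M^{j_0}\otimes_R k\ne 0$ (Nakayama) and no differential can enter it (the relevant $\tilde M^q$ with $q>j_0$ vanish) or leave it (negative $\Tor$), so $H^{j_0}_c(\mathfrak n,\mathscr D_\lambda)_{\mathfrak m}\ne 0$. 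Dually, since $s\ge 0$, an $E_\infty$-term of the second spectral sequence in homological degree $n$ has $t\le n$, and the analogous bottom-degree argument shows $\tilde M_j^{\BM}=0$ for all $j<d$ if and only if $H_j^{\BM}(\mathfrak n,\mathscr A_\lambda)_{\mathfrak m}=0$ for all $j<d$, equivalently (by the $k$-duality above) if and only if $H^j_c(\mathfrak n,\mathscr D_\lambda)_{\mathfrak m}=0$ for all $j<d$. Combining the two, and noting that $x\in\mathscr E(\mathfrak n)$ forces $H^\ast_c(\mathfrak n,\mathscr D_\lambda)_{\mathfrak m}\ne 0$ in some degree, yields the equivalence of (1) and (2). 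Finally, for the last clause: if $x\in\mathscr E(\mathfrak n)_{\rmmid}$ then $\tilde M_j^{\BM}=0$ for all $j<d$ by definition, and in the localized form of \eqref{eqn:sp-seq} every term $E_2^{i,j}=\Ext^i_R(\tilde M_j^{\BM},R)$ contributing to $\tilde M^n$ with $n<d$ has $j=n-i\le n<d$ and so vanishes; hence $\tilde M^j=0$, i.e. $x\notin\supp(\mathscr M_c^j)$, for all $0\le j\le d-1$.

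The genuine obstacle is the first step: matching the supports of the coherent sheaves $\mathscr M_c^\ast$ and $i_\ast\tau^\ast\mathscr M_\ast^{\BM}$ on $\mathscr E(\mathfrak n)$ with the nonvanishing of the doubly-localized (co)homology modules, keeping careful track of the slope truncation, of the two independent localizations (the Hecke ideal $\mathfrak m$ and the weight ideal $\mathfrak m_\lambda$, and the fact that after enlarging the residue field there is a unique maximal ideal of the relevant Hecke algebra over the pair $(\mathfrak m,\mathfrak m_\lambda)$), and of the behaviour of $\tau$ on points and on supports. Once that translation is set up, everything else is the routine spectral-sequence bookkeeping sketched above.
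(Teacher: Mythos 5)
Your argument is correct and is essentially the paper's: the proof given there simply cites \cite[Proposition 4.5.2]{Hansen-Overconvergent} plus ``elementary manipulations of supports,'' and what you have written out is precisely the content of that citation, using the same two spectral sequences from \cite[Theorem 3.3.1]{Hansen-Overconvergent} that the paper itself invokes as \eqref{eqn:sp-seq} and \eqref{eqn:tor-sp-seq}. The support/localization bookkeeping in your first step and the boundary-degree Nakayama arguments are exactly the intended ``elementary manipulations.''
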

\begin{proof}
This follows from \cite[Proposition 4.5.2]{Hansen-Overconvergent} and elementary manipulations of supports.
\end{proof}
Using Proposition \ref{prop:middle-alternate-characterization}, we can affirm that $\mathscr E(\mathfrak n)_{\rmmid}$ is non-empty, and afterwards we will go deeper into its properties.
\begin{lem}\label{lem:non-critical-pts-middle-degree}
Every non-critical point on $\mathscr E(\mathfrak n)$ belongs to $\mathscr E(\mathfrak n)_{\rmmid}$.
\end{lem}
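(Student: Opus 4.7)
The plan is to combine the defining isomorphism of non-criticality with the classical fact that cuspidal cohomology of a Hilbert modular variety is concentrated in middle degree, and then appeal to Proposition \ref{prop:middle-alternate-characterization}.

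More precisely, let $x = x(\pi,\alpha) \in \mathscr{E}(\mathfrak{n})(\overline{\mathbf{Q}}_p)$ be a non-critical point of cohomological weight $\lambda$. By Definition \ref{defn:classical-noncritical}(2), the integration map induces an isomorphism
\begin{equation*}
I_\lambda : H^{j}_c(\mathfrak{n}, \mathscr{D}_\lambda \otimes_{k_\lambda} k_x)_{\mathfrak{m}_x} \overset{\simeq}{\longrightarrow} H^{j}_c(\mathfrak{n}, \mathscr{L}_\lambda^{\sharp}(k_x))_{\mathfrak{m}_x}
\end{equation*}
for every degree $j$. Thus it suffices to prove that the classical target vanishes for $j \ne d$ and is non-zero for $j = d$; the conclusion will then follow from Proposition \ref{prop:middle-alternate-characterization}.

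First, I would handle non-vanishing in the middle degree. The eigensystem $\psi_x^\sharp = \psi_{(\pi,\alpha)}^\sharp$ appears in the Hecke eigensystem of a refined newform $\phi_{\pi,\alpha}$ (Proposition \ref{prop:refined-eigenform}), and via the Eichler--Shimura map of Section \ref{subsec:eichler-shimura} and Corollary \ref{cor:one-dimensional-eigens} the corresponding Hecke isotypic component in $H^d_c$ of the appropriate Hilbert modular variety is non-zero (in fact, one-dimensional per sign component). Transferring to level $K_1(\mathfrak{n}) I$ via the obvious degeneracy and twist by the appropriate power of $\det$ used to pass between $\mathscr{L}_\lambda$ and $\mathscr{L}_\lambda^\sharp$ (cf.\ Proposition \ref{prop:change-to-sharp}(3) and the discussion following Definition \ref{defn:hecke-operators}) shows that $H^d_c(\mathfrak{n}, \mathscr{L}_\lambda^\sharp(k_x))_{\mathfrak{m}_x} \ne 0$.

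Next, I would prove vanishing for $j \ne d$. This is the classical ``cuspidal cohomology lives in middle degree'' statement for the Hilbert modular variety $Y_{K_1(\mathfrak n)I}$, applied to coefficients in $\mathscr L_\lambda^\sharp$. Concretely, compactly supported cohomology fits into the long exact sequence of the Borel--Serre compactification, in which $H^\ast_c(Y_K, \mathscr L_\lambda^\sharp)$ differs from interior cohomology only by boundary (Eisenstein) contributions; cuspidal Hecke eigensystems for $\mathrm{GL}_2$ over a totally real field of degree $d$ occur exclusively in degree $d$ in interior cohomology, while the eigensystem $\psi_x^\sharp$ is cuspidal (it comes from a cuspidal $\pi$), so after localizing at $\mathfrak m_x$ all boundary contributions are killed and only the middle-degree interior piece survives. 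Combining the two paragraphs shows $H^j_c(\mathfrak{n}, \mathscr{D}_\lambda \otimes_{k_\lambda} k_x)_{\mathfrak{m}_x}$ is non-zero if and only if $j = d$, so $x \in \mathscr{E}(\mathfrak{n})_{\rmmid}$ by Proposition \ref{prop:middle-alternate-characterization}. The only non-routine step is the cuspidal-only-in-middle-degree input, but this is standard and is already implicit in the application of Corollary \ref{cor:one-dimensional-eigens}.
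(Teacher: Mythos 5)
Your proof is correct and takes essentially the same route as the paper: use the non-criticality isomorphism $I_\lambda$ to reduce to the classical coefficient system $\mathscr L_\lambda^\sharp$, then invoke the standard fact that cuspidal eigensystems in $H^\ast_c(\mathfrak n, \mathscr L_\lambda)$ appear only in middle degree, and conclude via Proposition~\ref{prop:middle-alternate-characterization}. The paper simply cites Harder for the middle-degree-only fact and does not bother to separately verify non-vanishing in degree $d$ (which follows for free, since a point of $\mathscr E(\mathfrak n)$ has its eigensystem appearing in \emph{some} degree of overconvergent cohomology, so vanishing outside $d$ forces non-vanishing at $d$); your version spells out both directions and sketches the Borel--Serre boundary argument, but it is the same proof.
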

\begin{proof}
If $x \in \mathscr E(\mathfrak n)(\overline{\mathbf Q}_p)$ is non-critical of cohomological weight $\lambda$, then $H^{\ast}_c(\mathfrak n, \mathscr D_{\lambda}\otimes_{k_\lambda} k_x)_{\mathfrak m_x}\simeq H^{\ast}_c(\mathfrak n, \mathscr L_{\lambda}^{\sharp}(k_x))_{\mathfrak m_x}$. Since cuspidal eigensystems in $H^{\ast}_c(\mathfrak n, \mathscr L_{\lambda})$ appear only in middle degree (see \cite{Harder}), we have $x \in \mathscr E(\mathfrak n)_{\rmmid}(\overline{\mathbf Q}_p)$ by Proposition \ref{prop:middle-alternate-characterization}.
\end{proof}

We note that $\mathscr E(\mathfrak n)_{\rmmid}$ is (non-empty) Zariski-open in $\mathscr E(\mathfrak n)$. In particular, if $x \in \mathscr E(\mathfrak n)_{\rmmid}$ then any sufficiently small good neighborhood $U$ of $x$ in $\mathscr E(\mathfrak n)$ is actually contained in $\mathscr E(\mathfrak n)_{\rmmid}$ (Proposition \ref{prop:good-neighborhoods}).

\begin{prop}\label{prop:looks-like-eigen}
\leavevmode
\begin{enumerate}
\item The coherent sheaf $\mathscr M^d_c|_{\mathscr E(\mathfrak n)_{\rmmid}}$ is flat over $\mathscr W(1)$. 
\item $\mathscr E(\mathfrak n)_{\rmmid}$ is admissibly covered by good neighborhoods $U$ belonging to slope adapated pairs $(\Omega,h)$  such that $\mathscr O(U)$ acts faithfully on the finite projective $\mathscr O(\Omega)$-module $\mathscr M^d_c(U) = e_UH^d_c(\mathfrak n, \mathscr D_{\Omega})_{\leq h}$.
\end{enumerate}
\end{prop}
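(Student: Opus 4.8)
The plan is to reduce both assertions to a local computation on $\mathscr E(\mathfrak n)_{\rmmid}$ and then feed them into a standard spectral sequence. Since $\mathscr E(\mathfrak n)_{\rmmid}$ is Zariski-open in $\mathscr E(\mathfrak n)$, Proposition \ref{prop:good-neighborhoods} produces an admissible covering of $\mathscr E(\mathfrak n)_{\rmmid}$ by good neighborhoods $U$ with $U \subseteq \mathscr E(\mathfrak n)_{\rmmid}$, each belonging to a slope adapted pair $(\Omega,h)$ with $\Omega$ an affinoid subdomain of $\mathscr W(1)$; this already gives the covering assertion in (2), and for (1) it reduces us to showing that $\mathscr M^d_c(U) = e_U H^d_c(\mathfrak n,\mathscr D_\Omega)_{\leq h}$ is a flat $\mathscr O(\Omega)$-module. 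The structural input I would use is Proposition \ref{prop:salient-slope-decomps}: the cochain complex $C^\bullet := C^\bullet_c(\mathfrak n,\mathscr D_\Omega)_{\leq h}$ is a bounded complex of finite projective $\mathscr O(\Omega)$-modules (it is the $\mathscr O(\Omega)$-linear dual of $C^{\BM}_\bullet(\mathfrak n,\mathscr A_\Omega)_{\leq h}$ by Proposition \ref{prop:salient-slope-decomps}(2)), it computes $H^\ast_c(\mathfrak n,\mathscr D_\Omega)_{\leq h}$, it carries a chain-level action of $\mathbf T(\mathfrak n)$ compatible with the action on cohomology, and — using Proposition \ref{prop:salient-slope-decomps}(5) and Remark \ref{rmk:commuting-tensor-product-functions} — its formation commutes with base change along $\mathscr O(\Omega)\to k_\lambda$ for $\lambda \in \Omega(\overline{\mathbf Q}_p)$, so that $H^\ast(C^\bullet\otimes_{\mathscr O(\Omega)}k_\lambda) = H^\ast_c(\mathfrak n,\mathscr D_\lambda)_{\leq h}$.

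The crucial vanishing is exactly what the definition of $\mathscr E(\mathfrak n)_{\rmmid}$ was built to supply. By Proposition \ref{prop:middle-alternate-characterization}, since $U\subseteq \mathscr E(\mathfrak n)_{\rmmid}$ we have $\mathscr M^q_c(U) = e_U H^q(C^\bullet) = 0$ for $q\neq d$, and moreover for every $\lambda$ the space $e_U H^q(C^\bullet\otimes_{\mathscr O(\Omega)} k_\lambda) = \bigoplus_{x\in U,\,\lambda_x=\lambda} H^q_c(\mathfrak n,\mathscr D_\lambda\otimes_{k_\lambda} k_x)_{\mathfrak m_x}$ vanishes for $q\neq d$. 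I would now invoke the universal-coefficients (hyper-$\Tor$) spectral sequence attached to the bounded complex of flat modules $C^\bullet$ and the module $k_\lambda$,
\begin{equation*}
E^2_{p,q} = \Tor^{\mathscr O(\Omega)}_p\!\big(H^q(C^\bullet),\,k_\lambda\big) \Longrightarrow H^{q-p}\!\big(C^\bullet\otimes_{\mathscr O(\Omega)} k_\lambda\big).
\end{equation*}
This spectral sequence is functorial in $C^\bullet$, hence $\mathbf T(\mathfrak n)$-equivariant and (through the quotient $\mathbf T(\mathfrak n)\otimes\mathscr O(\Omega)\to\mathbf T_{\Omega,h}$) $\mathbf T_{\Omega,h}$-equivariant, so the central idempotent $e_U$ may be applied termwise to produce the direct-summand spectral sequence with $e_U E^2_{p,q} = \Tor^{\mathscr O(\Omega)}_p(\mathscr M^q_c(U),k_\lambda)$. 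Because $\mathscr M^q_c(U)=0$ for $q\neq d$ this collapses onto the single row $q=d$, and we read off an isomorphism $\Tor^{\mathscr O(\Omega)}_p(\mathscr M^d_c(U),k_\lambda) \cong e_U H^{d-p}(C^\bullet\otimes_{\mathscr O(\Omega)} k_\lambda)$ for all $p\geq 0$. For $p\geq 1$ the target vanishes by the previous sentence, so $\Tor^{\mathscr O(\Omega)}_p(\mathscr M^d_c(U),k_\lambda)=0$ for all $p\geq 1$ and all $\lambda$; since $\mathscr O(\Omega)$ is Noetherian and $\mathscr M^d_c(U)$ is finitely generated, the local criterion for flatness gives that $\mathscr M^d_c(U)$ is flat, hence finite projective, over $\mathscr O(\Omega)$. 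This proves (1). (Equivalently one could run the familiar bounded-complex-of-projectives lemma: pointwise concentration of cohomology in degree $d$ forces the complex to split off its acyclic tails, leaving a finite projective module in degree $d$.)

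For the remaining claim in (2), let $U$ be as above, belonging to $(\Omega,h)$. Part (1) gives that $\mathscr M^d_c(U)=e_U H^d_c(\mathfrak n,\mathscr D_\Omega)_{\leq h}$ is finite projective over $\mathscr O(\Omega)$, so it remains to check that $\mathscr O(U)=e_U\mathbf T_{\Omega,h}$ acts faithfully on it. By construction $\mathbf T_{\Omega,h}$ is a subalgebra of $\End_{\mathscr O(\Omega)}\big(\bigoplus_j H^j_c(\mathfrak n,\mathscr D_\Omega)_{\leq h}\big)$, hence acts faithfully there; cutting by the central idempotent $e_U$, the algebra $e_U\mathbf T_{\Omega,h}$ acts faithfully on $e_U\bigoplus_j H^j_c(\mathfrak n,\mathscr D_\Omega)_{\leq h}=\bigoplus_j\mathscr M^j_c(U)$. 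But $\mathscr M^j_c(U)=0$ for $j\neq d$ (again Proposition \ref{prop:middle-alternate-characterization}, since $U\subseteq\mathscr E(\mathfrak n)_{\rmmid}$), so this is precisely $\mathscr M^d_c(U)$, and we are done.

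I expect the only genuine obstacle to be bookkeeping rather than mathematics: namely making rigorous that the universal-coefficients spectral sequence is $\mathbf T_{\Omega,h}$-equivariant so that $e_U$ can be applied termwise (equivalently, that $\mathscr M^\bullet_c(U)$ is computed by a perfect complex of $\mathscr O(\Omega)$-modules carrying a Hecke action), together with the routine compatibility of the slope-$\leq h$ decomposition with base change to the point $k_\lambda$; both are implicit in the construction of $\mathscr E(\mathfrak n)$ in \cite{Hansen-Overconvergent}. The conceptual content is light: Proposition \ref{prop:middle-alternate-characterization} makes $\mathscr M^\bullet_c(U)$ concentrated in degree $d$ with fibrewise-concentrated cohomology, and that alone forces flatness.
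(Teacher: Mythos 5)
Your proof is correct and follows essentially the same route as the paper: for (1) the paper invokes the Tor spectral sequence of \cite[Theorem 3.3.1]{Hansen-Overconvergent} (localized at $\mathfrak m_x$ rather than cut out by the idempotent $e_U$, a cosmetic difference), kills the rows $j\neq d$ and the fiber cohomology in degrees $\neq d$ using the definition of $\mathscr E(\mathfrak n)_{\rmmid}$ and Proposition \ref{prop:middle-alternate-characterization}, and concludes by the local criterion for flatness, exactly as you do. Your argument for (2) — faithfulness of $e_U\mathbf T_{\Omega,h}$ on the total cohomology plus vanishing of $\mathscr M^j_c(U)$ for $j\neq d$ — is the paper's verbatim.
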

\begin{proof}
For (1), we want to show that if $x \in \mathscr E(\mathfrak n)_{\rmmid}$ is of weight $\lambda=\lambda_x$, then for any slope adapated pair $(\Omega,h)$ the module $(H^d_c(\mathfrak n, \mathscr D_{\Omega})_{\leq h})_{\mathfrak m_x} = \mathscr M^d_c(\mathscr E_{\Omega,h})_{\mathfrak m_x}$ is finite free over $\mathscr O(\Omega)_{\mathfrak m_\lambda}$. To do this, we consider a second quadrant spectral sequence (\cite[Theorem 3.3.1]{Hansen-Overconvergent})
\begin{equation}\label{eqn:tor-sp-seq}
E_2^{i,j} = \Tor_{-i}^{\mathscr O(\Omega)_{\mathfrak m_\lambda}}(\mathscr M^j_c(\mathscr E_{\Omega,h})_{\mathfrak m_x}, k_\lambda) \Rightarrow (H^{i+j}_c(\mathfrak n, \mathscr D_{\lambda})_{\leq h})_{\mathfrak m_x}.
\end{equation}
If $j\neq d$ then, since $x \in \mathscr E(\mathfrak n)_{\rmmid}$, the $E_2^{i,j}$-term in \eqref{eqn:tor-sp-seq} vanishes for all $i$. Thus we deduce canonical isomorphisms
\begin{equation}\label{eqn:tor-isomorphisms}
\Tor_n^{\mathscr O(\Omega)_{\mathfrak m_\lambda}}(\mathscr M_c^d(\mathscr E_{\Omega,h})_{\mathfrak m_x},k_\lambda) \simeq (H^{d-n}_c(\mathfrak n, \mathscr D_\lambda)_{\leq h})_{\mathfrak m_x}
\end{equation}
for all $n \geq 0$. By Proposition \ref{prop:middle-alternate-characterization} we further deduce that either side of \eqref{eqn:tor-isomorphisms} vanishes for $n > 0$. By the local criterion for flatness (\cite[Section 22]{Matsumura-CommutativeRingTheory}), $\mathscr M^d_c(\mathscr E_{\Omega,h})_{\mathfrak m_x}$ is free over $\mathscr O(\Omega)_{\mathfrak m_\lambda}$. This proves (1).

Now we prove (2). First, it is immediate that $\mathscr E(\mathfrak n)_{\rmmid}$ is admissibly covered by good neighborhoods $U$ of $\mathscr E(\mathfrak n)$. By definition, $\mathscr O(U) = e_U\mathbf T_{\Omega,h}$ acts faithfully on $\mathscr M^{\ast}_c(U) = e_UH^{\ast}_c(\mathfrak n, \mathscr D_{\Omega})_{\leq h}$. But if $U \subset \mathscr E(\mathfrak n)_{\rmmid}$ and $j \neq d$, then $\Ann_{\mathscr O(U)}(\mathscr M^j_c(U)) = \mathscr O(U)$ by Proposition \ref{prop:middle-alternate-characterization}. We thus deduce that $\mathscr O(U)$ acts faithfully on $\mathscr M^d_c(U)$. Since $\mathscr M_c^d(U)$ is finite projective over $\mathscr O(\Omega)$ by part (1), we have completed the proof of  (2).
\end{proof}

\begin{rmk}\label{eqn:base-change-middle-degree}
Equation \eqref{eqn:tor-isomorphisms} shows, for instance, that if $x \in \mathscr E(\mathfrak n)_{\rmmid}$ is of weight $\lambda$ then the natural map $(H^d_c(\mathfrak n, \mathscr D_{\Omega})_{\leq h})_{\mathfrak m_x} \otimes_{\mathscr O(\Omega)} k_\lambda \rightarrow (H^d_c(\mathfrak n, \mathscr D_{\lambda})_{\leq h})_{\mathfrak m_x}$ is an isomorphism.
\end{rmk}

\begin{prop}\label{prop:density-results}
\leavevmode
\begin{enumerate}
\item $\mathscr E(\mathfrak n)_{\rmmid}$ is stable under twisting by $\mathscr X(\Gamma_F)$.
\item Every twist non-critical point on $\mathscr E(\mathfrak n)$ belongs to $\mathscr E(\mathfrak n)_{\rmmid}$.
\item If $X \subset \mathscr E(\mathfrak n)_{\rmmid}$ is an irreducible component then $\dim X = \dim \mathscr W(1)$ and $X$ is contained in a unique irreducible component of $\mathscr E(\mathfrak n)$.
\item The extremely non-critical points are a Zariski-dense accumulation subset of $\mathscr E(\mathfrak n)_{\rmmid}$.
\end{enumerate}
\end{prop}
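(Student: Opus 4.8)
\textbf{Proof strategy for Proposition \ref{prop:density-results}.}

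The plan is to prove the four assertions more or less in the order stated, with (1) and (2) being essentially formal consequences of what has already been set up, and (3)--(4) requiring the real work. For (1), I would argue that the characterization of $\mathscr E(\mathfrak n)_{\rmmid}$ given in Proposition \ref{prop:middle-alternate-characterization} is preserved by the twisting isomorphism: by Lemma \ref{lemma:twisting}(2) the map $\tw_{\vartheta}$ gives an isomorphism $H^{j}_c(\mathfrak n, \mathscr D_{\lambda_x})_{\mathfrak m_x} \simeq H^{j}_c(\mathfrak n, \mathscr D_{\vartheta^{-1}\cdot\lambda_x})_{\mathfrak m_{\tw_{\vartheta}(x)}}$ in every degree $j$ simultaneously, so condition (2) of Proposition \ref{prop:middle-alternate-characterization} holds for $x$ if and only if it holds for $\tw_{\vartheta}(x)$; since twisting is an automorphism of $\mathscr E(\mathfrak n)(\overline{\mathbf Q}_p)$ that extends to rigid-analytic automorphisms, stability of the Zariski-open locus follows. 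For (2): a non-critical point lies in $\mathscr E(\mathfrak n)_{\rmmid}$ by Lemma \ref{lem:non-critical-pts-middle-degree}, and a twist non-critical point is by definition $\tw_{\vartheta}$ of such a point, so it lies in $\mathscr E(\mathfrak n)_{\rmmid}$ by part (1).

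For (3), I would first record that $\dim X \le \dim\mathscr W(1)$ for any irreducible component $X$ of $\mathscr E(\mathfrak n)$ (hence of $\mathscr E(\mathfrak n)_{\rmmid}$), since $\upsilon$ is finite and $\mathscr Z, \mathscr W(1)$ are equidimensional of dimension $1+d+\delta_{F,p}$. The opposite inequality is the crux: on $\mathscr E(\mathfrak n)_{\rmmid}$ the sheaf $\mathscr M^d_c$ is flat (even locally free) over $\mathscr W(1)$ by Proposition \ref{prop:looks-like-eigen}(1), and over a good neighborhood $U$ belonging to $(\Omega,h)$ the ring $\mathscr O(U)$ acts faithfully on the finite projective (nonzero, after shrinking $U$ around a point) $\mathscr O(\Omega)$-module $\mathscr M^d_c(U)$ by Proposition \ref{prop:looks-like-eigen}(2). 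Faithfulness of a module-finite algebra action forces the structure map $\mathscr O(\Omega) \to \mathscr O(U)$ to have image over which $\mathscr O(U)$ is module-finite and which is not killed — concretely, $\mathscr O(U)$ is a finite $\mathscr O(\Omega)$-algebra that embeds into $\End_{\mathscr O(\Omega)}(\mathscr M^d_c(U))$, so $\dim U \ge \dim(\text{image of }\mathscr O(\Omega))$; combined with the fact that over each component of $\mathscr Z$ the weight map is finite flat onto a Zariski-open in $\mathscr W(1)$ (via \cite[Proposition 4.1.2, 4.1.3]{Hansen-Overconvergent} as in the proof of Proposition \ref{prop:openness-maximal-dimensional}), one gets $\dim X = \dim\mathscr W(1)$. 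The containment in a unique irreducible component of $\mathscr E(\mathfrak n)$ then follows because $\mathscr E(\mathfrak n)_{\rmmid}$ is Zariski-open in $\mathscr E(\mathfrak n)$ and a Zariski-open subset of a rigid space meets each irreducible component in which it is contained in a connected (hence irreducible) piece — so $X$ being irreducible of full dimension, its closure in $\mathscr E(\mathfrak n)$ is an irreducible component, and it is the unique one containing $X$ by \cite[Corollary 2.2.7]{Conrad-IrredComponents}.

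For (4), I would combine density of twist cohomological weights in $\mathscr W(1)$ (Lemma \ref{lem:weight-density}) with the finite-flatness of the weight map along components of $\mathscr E(\mathfrak n)_{\rmmid}$ established in (3): pulling back the dense accumulation set of weights, and then among the fibers selecting those points with small enough $U_p$-slope, produces a Zariski-dense accumulation set. The point is that a slope-$\le h$ piece over a small $\Omega$ has bounded $U_p$-slopes, and as one shrinks $\Omega$ toward a fixed twist cohomological weight $\lambda_0=(\kappa,\ast)$ the condition $v_p(\psi_x(U_p)) < \tfrac12\inf_\sigma(1+\kappa_\sigma)$ cuts out a nonempty open-and-closed union of components of $\mathscr E_{\Omega,h}$ (nonempty because, e.g., the ordinary/small-slope classical points of that weight are extremely non-critical for suitable $h$, cf. Proposition \ref{prop:implications-extremely-non-critical}); these classical points are Zariski-dense in those components by the standard eigenvariety density argument, and extreme non-criticality is a numerical condition stable under specialization within a fixed weight. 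Accumulation follows since this works in arbitrarily small neighborhoods of any such point. I expect the genuine obstacle to be the dimension lower bound in (3): one must leverage Proposition \ref{prop:looks-like-eigen} carefully to see that faithfulness of the Hecke action on the flat middle-degree module, rather than merely surjectivity-with-nilpotent-kernel, pins down the dimension — this is exactly the improvement over the generic eigenvariety situation alluded to in the introduction, and it is where the "middle degree only" hypothesis is doing its work.
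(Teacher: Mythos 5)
Your proposal is correct, and parts (1), (2), and (4) follow essentially the same lines as the paper: (1) and (2) are exactly the combination of Proposition \ref{prop:middle-alternate-characterization}, Lemma \ref{lemma:twisting}, and Lemma \ref{lem:non-critical-pts-middle-degree}, and (4) is the paper's argument (Zariski-openness of $\lambda(X)$, density of twist cohomological weights, shrinking $U$ so the slope is constant, and pulling back dense sets of weights along good neighborhoods). Where you genuinely diverge is (3). The paper gets the dimension lower bound by citing Newton's theorem (\cite[Theorem 1.1.6]{Hansen-Overconvergent}) together with Proposition \ref{prop:middle-alternate-characterization}, and then applies \cite[Corollary 2.2.9]{Conrad-IrredComponents}. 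You instead derive equidimensionality from Proposition \ref{prop:looks-like-eigen}: since $\mathscr O(U)$ embeds into $\End_{\mathscr O(\Omega)}(\mathscr M^d_c(U))$ with $\mathscr M^d_c(U)$ finite projective and nonzero, $\mathscr O(U)$ is finite and torsion-free over the domain $\mathscr O(\Omega)$, so every minimal prime of $\mathscr O(U)$ contracts to $(0)$ and every irreducible component of $U$ surjects onto $\Omega$ — this is exactly \cite[Lemme 6.2.10]{Chenevier-pAdicAutomorphicForm}, which the paper itself invokes only in part (4). Your route is more self-contained (it avoids the appendix result and reuses machinery already needed for (4)), at the cost of having to be careful that the torsion-freeness argument controls \emph{every} component of $U$, not just $\dim U$ itself; your phrasing ``$\dim U \ge \dim(\text{image of }\mathscr O(\Omega))$'' only bounds the top-dimensional component, so you should say explicitly that torsion-freeness over a domain forces each component to dominate $\Omega$. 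The paper's route buys a one-line citation but hides the content in Newton's appendix. One last small caveat on your (4): the numerical bound $v_p(\psi_x(U_p)) < \tfrac12\inf_\sigma(1+\kappa'_\sigma)$ must be checked at the \emph{nearby} weights $\lambda'=(\kappa',\ast)$, and the point is that twist cohomological weights $p$-adically close to $\lambda_{x_0}$ with $\inf_\sigma(1+\kappa'_\sigma)$ archimedean-large still accumulate, beating the slope that you have arranged to be constant on $U$; your sentence about extreme non-criticality being ``stable under specialization within a fixed weight'' is not the right formulation, though the surrounding argument recovers the correct one.
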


Recall that if $X$ is a rigid space, a subset $Z \subset X$ is an \emph{accumulation subset} if every $z \in Z$ admits a basis of affinoid neighborhoods $U \subset X$ such that $U \cap Z$ is Zariski-dense in $U$.

\begin{proof}
Part (1) follows immediately from Proposition \ref{prop:middle-alternate-characterization} and Lemma \ref{lemma:twisting}. Part (2) then follows from part (1) and Lemma \ref{lem:non-critical-pts-middle-degree}.

From \cite[Theorem 1.1.6]{Hansen-Overconvergent} and Proposition \ref{prop:middle-alternate-characterization} we deduce that if $x$ is a point on $\mathscr E(\mathfrak n)_{\rmmid}$ then any irreducible component of $\mathscr E(\mathfrak n)$ passing through $x$ has dimension equal to $\dim \scw(1)$. Thus the claim (3) follows from \cite[Corollary 2.2.9]{Conrad-IrredComponents}.

Finally we prove (4). First, if $X \subset \mathscr E(\mathfrak n)_{\rmmid}$ is an irreducible component then $\lambda(X)$ is Zariski-open in $\mathscr W(1)$ (by part (3) and Proposition \ref{prop:openness-maximal-dimensional}). By Lemma \ref{lem:weight-density} we deduce that $X$ contains a point $x_0$ of twist cohomological weight. This reduces the statement of (4) to proving that extremely non-critical points are accumulating on a neighborhood near any point $x_0$ of twist cohomological weight.

Consider a good neighborhood $U \subset \mathscr E(\mathfrak n)_{\rmmid}$ of $x_0$. Say $U$ belongs to a slope adapted pair $(\Omega,h)$. First, $U$ is the rigid analytic spectrum of $\mathscr O(U)$. Second, Proposition  \ref{prop:looks-like-eigen} implies $\mathscr O(U)$ acts faithfully on the finite projective $\mathscr O(\Omega)$-module $\mathscr M^d_c(U)$. So, by \cite[Lemme 6.2.10]{Chenevier-pAdicAutomorphicForm}, the irreducible components of $U$ map surjectively onto $\Omega$, and by \cite[Lemme 6.2.8]{Chenevier-pAdicAutomorphicForm} we deduce that the pre-image $(\lambda|_U)^{-1}(Z) \subset U$ of any Zariski-dense subset $Z \subset \Omega$ is still Zariski-dense in $U$. Since $x_0$ has twist cohomological weight we conclude that $U$ contains a Zariski-dense accumulating set of points of twist cohomological weight. On the other hand, we can easily shrink $U$ so that $x \mapsto v_p(\psi_x(U_p))$ is constant on $U$ as well, and thus see clearly that in fact we can take a Zariski-dense accumulating subset of extremely non-critical points as claimed.
\end{proof}

We now pause for a lemma of commutative algebra.

\begin{lem}\label{lem:reducedness-lemma-john}
Suppose that $A$ is a noetherian integral domain of characteristic zero and $A \rightarrow B$ is a finite morphism with $B$ torsion free over $A$. Then, the following conditions are equivalent.
\begin{enumerate}
\item $B$ is reduced.
\item $A \rightarrow B$ is generically \'etale.
\item The support of $\Omega^{1}_{B/A}$ in $\Spec(B)$ has positive codimension. (See the beginning of the proof.)
\end{enumerate}
If furthermore $M$ is a finite projective $A$-module and $B$ is actually a commutative $A$-subalgebra of $\End_A(M)$ then these conditions are all equivalent to:
\begin{enumerate}
\setcounter{enumi}{3}
\item There exists a Zariski-dense subset $X \subset \Spec(A)$ such that $B$ has reduced image inside $\End_{A_{\mathfrak p}/\mathfrak p A_{\mathfrak p}}(M_{\mathfrak p}/\mathfrak p M_{\mathfrak p})$ for all $\mathfrak p \in X$.
\end{enumerate}
\end{lem}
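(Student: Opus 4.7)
The plan is to handle $(1) \Leftrightarrow (2) \Leftrightarrow (3)$ first, and then establish $(1) \Leftrightarrow (4)$ under the extra projectivity hypothesis. Throughout, let $K$ denote the fraction field of $A$ and set $B_K := B \otimes_A K$, a finite commutative $K$-algebra. Since $B$ is torsion-free over $A$, the map $B \hookrightarrow B_K$ is injective, so $B$ is reduced iff $B_K$ is reduced. A finite commutative algebra over a characteristic-zero field is reduced iff it is a finite product of finite separable field extensions, iff it is étale over $K$; this is exactly $(1) \Leftrightarrow (2)$. For $(2) \Leftrightarrow (3)$, the key point is that $B$ torsion-free over the domain $A$ forces every minimal prime of $B$ to contract to $(0) \in \Spec A$, and hence the primes of $B_K$ correspond bijectively to the minimal primes of $B$. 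Since $\Omega^1_{B_K/K} = \Omega^1_{B/A} \otimes_A K$, this module vanishes iff $(\Omega^1_{B/A})_{\mathfrak q} = 0$ for every minimal prime $\mathfrak q$ of $B$, iff $\supp(\Omega^1_{B/A})$ contains no minimal prime, iff its support has positive codimension; and $\Omega^1_{B_K/K} = 0$ is equivalent to $B_K/K$ being étale.

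For $(1) \Rightarrow (4)$, assume $B$ is reduced, so by (2) the algebra $B_K/K$ is étale. Combining generic freeness of $B$ over $A$ with the generic vanishing of $\Omega^1_{B/A}$, one finds $f \in A \setminus \{0\}$ so that $B_f$ is finite étale over $A_f$. For any prime $\mathfrak p$ with $f \notin \mathfrak p$, the base change $B \otimes_A k(\mathfrak p)$ is étale over $k(\mathfrak p)$, hence a finite product $\prod_j L_j$ of finite separable field extensions. The ideals of such a product are precisely the subproducts, so every ring quotient of $\prod_j L_j$ is again a product of separable field extensions and thus reduced. Applying this to the surjection from $B \otimes_A k(\mathfrak p)$ onto the image of $B$ inside $\End_{k(\mathfrak p)}(M(\mathfrak p))$, that image is reduced. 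The locus $\{\mathfrak p : f \notin \mathfrak p\}$ is Zariski-open and dense in $\Spec A$, yielding (4).

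The crux of the proof is $(4) \Rightarrow (1)$. Given $X$ as in the hypothesis, suppose $b \in B$ is nilpotent. For each $\mathfrak p \in X$, the image of $b$ in $\End_{k(\mathfrak p)}(M(\mathfrak p))$ is a nilpotent element of the reduced image of $B$, hence is zero. Setting $E := \End_A(M)$ and using that projectivity of $M$ gives $E \otimes_A k(\mathfrak p) \simeq \End_{k(\mathfrak p)}(M(\mathfrak p))$, this vanishing in the fiber means that $b$, viewed in $E$ via the inclusion $B \hookrightarrow E$, lies in $\mathfrak p E$ for every $\mathfrak p \in X$. Because $E$ is finite projective over $A$, one may choose $g \in A \setminus \{0\}$ for which $E_g$ is a free $A_g$-module of some rank $r$; writing $b$ as a coordinate vector $(a_1, \ldots, a_r) \in A_g^r$, the hypothesis forces each $a_i$ to vanish at every $\mathfrak p \in X \cap \Spec A_g$. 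Since $X$ is Zariski-dense in $\Spec A$, the intersection $X \cap \Spec A_g$ is Zariski-dense in the integral scheme $\Spec A_g$, so $V(a_i) = \Spec A_g$ and hence $a_i = 0$ for each $i$. Thus $b = 0$ in $E_g$, and by torsion-freeness of $E$ this forces $b = 0$ in $E$, hence in $B$. The main obstacle is precisely this last step: one must bootstrap a pointwise reducedness condition at a mere dense set of primes into a global conclusion, and it is the projectivity of $M$ (ensuring $E \otimes_A k(\mathfrak p) \simeq \End_{k(\mathfrak p)}(M(\mathfrak p))$) that allows the translation from fiberwise vanishing to membership in $\mathfrak p E$.
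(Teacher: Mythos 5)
Your proof is correct and follows essentially the same structure as the paper's: reduce $(1)\Leftrightarrow(2)$ to the fraction field via torsion-freeness, relate étaleness to vanishing of $\Omega^1$ for $(2)\Leftrightarrow(3)$, base-change a generic étale locus for $(2)\Rightarrow(4)$, and for $(4)\Rightarrow(1)$ observe that a dense family of fibers detects nilpotents in a finite projective module over a domain. The only cosmetic difference is that for $(4)\Rightarrow(1)$ the paper cites the injectivity of $\End_A(M) \to \prod_{\mathfrak p \in X}\End_{k(\mathfrak p)}(M\otimes_A k(\mathfrak p))$ as a known fact while you unwind it by hand via generic freeness of $E = \End_A(M)$, and for $(2)\Leftrightarrow(3)$ the paper invokes generic flatness directly whereas you trace minimal primes of $B$ over $(0)\subset A$ — same substance in both cases.
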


Here we say a finite map of Noetherian rings $A\to B$ is generically \'etale if it satisfies either of the following two equivalent conditions: 
\begin{enumerate}[a.]
\item For all minimal primes $\mathfrak{p} \subset A$, the ring $B\otimes_A \mathrm{Frac}(A/\mathfrak{p})$ is a finite \'etale $\mathrm{Frac}(A/\mathfrak{p})$-algebra.
\item There is an open dense subsecheme $U \subset \Spec(A)$ such that $\Spec(B)\times_{\Spec(A)} U \rightarrow U$ is finite \'etale. 
\end{enumerate}

   These conditions are equivalent because the locus where $\Spec(B) \rightarrow \Spec(A)$ is not \'etale is closed in $\Spec(B)$ (see \cite[Proposition 3.8]{Milne-EC} for instance) and this locus has closed image in $\Spec(A)$ because $\Spec(B) \rightarrow \Spec(A)$ is proper ($B$ being finite over $A$).

\begin{proof}[Proof of Lemma \ref{lem:reducedness-lemma-john}]
If $\mathfrak p \in \Spec(A)$ write $k(\mathfrak p)$ for its residue field. When $\mathfrak p$ is the generic point, we write $K = k(\mathfrak p)$. 

We note first that the hypotheses imply that $B$ is equidimensional of the same dimension as $A$. This gives meaning to condition (3).  Now we will show that (1) and (2) are equivalent. Since $B$ is torsion free over $A$,  $B$ is reduced if and only if $B\otimes_A K$ is reduced. Thus it suffices to show that $B \otimes_A K$ is reduced if and only if $B\otimes_A K$ is a finite \'etale $K$-algebra. Since $K$ has characteristic zero, this follows from Wedderburn's theorem (see \cite[Prop.\ 3, Chap.\ VIII]{Bourbaki-Algebre8} for instance).

Our second claim is that (2) and (3) are equivalent. Since $A$ is reduced, noetherian and $A \rightarrow B$ is finite we have that $A \rightarrow B$ is generically flat (\cite[Theorem 6.9.1]{EGA4-2}). So being generically \'etale and generically unramified are equivalent, the latter being clearly equivalent to condition (3).

For the rest of the proof we will assume that $B$ is as in the ``furthermore''. It is elementary to check that $B$ is then a finite torsion free $A$-algebra, so that (1) through (3) are all equivalent. We will show that (2) implies (4) and (4) implies (1).

Begin by assuming (2) and choose a dense open subscheme $U \subset \Spec(A)$ that $A_{\mathfrak p} \rightarrow B\otimes_A A_{\mathfrak p}$ is finite \'etale for each $\mathfrak p \in U$. Then the fiber $B \otimes_A k(\mathfrak p)$ is a finite \'etale $k(\mathfrak p)$-algebra; in particular it is reduced. Since the natural map $B\rightarrow \End_{k(\mathfrak p)}(M\otimes_A k(\mathfrak p))$ factors through $B\otimes_A k(\mathfrak p)$ we see that $B$ has reduced image as in (4) for all $\mathfrak p \in U$ meaning we can take $X = U$ to witness (4).

Finally assume that (4) holds and consider such a set $X$. Since $M$ is projective over $A$ and $X$ is Zariski-dense in $\Spec(A)$, the natural map
\begin{equation*}
\End_A(M) \rightarrow \prod_{\mathfrak p \in X} \End_{k(\mathfrak p)}(M\otimes_A k(\mathfrak p))
\end{equation*}
is injective. Thus we deduce that
\begin{equation}\label{eqn:B-diagonal-embedding}
B \rightarrow \prod_{\mathfrak p \in X} \End_{k(\mathfrak p)}(M\otimes_A k(\mathfrak p))
\end{equation}
is also injective. On the other hand, $B$ has reduced image in each coordinate of \eqref{eqn:B-diagonal-embedding} by our assumption (4), so it follows that $B$ is reduced.
\end{proof}

The previous lemma is applied to prove the following theorem.

\begin{thm}\label{theorem:reducedness}
$\sce(\mathfrak{n})_{\rmmid}$ is reduced. 
\end{thm}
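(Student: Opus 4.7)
The plan is to apply Lemma \ref{lem:reducedness-lemma-john} on a local cover of $\sce(\mathfrak{n})_{\rmmid}$, using the density of extremely non-critical points to produce the required Zariski-dense set of ``good'' specializations. First, by Proposition \ref{prop:good-neighborhoods} I would cover $\sce(\mathfrak{n})_{\rmmid}$ by good neighborhoods $U$ belonging to slope adapted pairs $(\Omega, h)$, shrinking $\Omega$ if necessary so that it is a connected affinoid subdomain of a single polydisk component of $\mathscr W(1)$. Since $\mathscr W(1)$ is a disjoint union of open polydisks, such an $\Omega$ has $A := \sco(\Omega)$ a regular (in particular integral) domain of characteristic zero. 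By Proposition \ref{prop:looks-like-eigen}, the finite $A$-algebra $B := \sco(U)$ embeds faithfully into $\End_A(M)$ with $M := \scm_c^d(U)$ finite projective over $A$. In particular $B$ is torsion-free over $A$, so the hypotheses of the ``furthermore'' clause of Lemma \ref{lem:reducedness-lemma-john} are met.

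Next, I would produce the dense subset $X \subset \Spec(A)$ required by condition (4) of that lemma. Take $X$ to be the set of cohomological weights $\lambda = (\kappa,w) \in \Omega(\overline{\mathbf Q}_p)$ with $\inf_\sigma(1 + \kappa_\sigma) > 2h$. This set is Zariski-dense in $\Omega$ because, inside each polydisk component of $\mathscr W(1)$, the algebraic characters are dense and the lower bound $\inf_\sigma(1+\kappa_\sigma) > 2h$ imposes no genuine obstruction. For $\lambda \in X$ and any point $x \in U$ with $\lambda_x = \lambda$, one has $v_p(\psi_x(U_p)) \leq h < \tfrac{1}{2}\inf_\sigma(1+\kappa_\sigma)$ by the definition of $\mathscr E_{\Omega,h}$, so $x$ is extremely non-critical. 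By Proposition \ref{prop:implications-extremely-non-critical}(2), the Hecke algebra $\mathbf T_\lambda(\mathfrak n)$ then acts semi-simply on each summand $H^d_c(\mathfrak n, \scd_\lambda)_{\mathfrak m_x}$.

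Finally, I would use the decompositions
\[
M \otimes_A k_\lambda \simeq \bigoplus_{x \in U,\, \lambda_x = \lambda} H^d_c(\mathfrak n, \scd_\lambda)_{\mathfrak m_x}, \qquad B \otimes_A k_\lambda \simeq \prod_{x \in U,\, \lambda_x = \lambda} \sco_{U,x}/\mathfrak m_\lambda \sco_{U,x}
\]
to see that the image of $B\otimes_A k_\lambda$ in $\End_{k_\lambda}(M\otimes_A k_\lambda)$ is a product of commutative semi-simple subalgebras of matrix rings over $k_\lambda$ (by Artin--Wedderburn), hence reduced. This verifies condition (4) of Lemma \ref{lem:reducedness-lemma-john}, so $B = \sco(U)$ is reduced; since such $U$ cover $\sce(\mathfrak n)_{\rmmid}$, the theorem follows. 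The main technical point is the bootstrap from ``semi-simple Hecke action at a dense set of classical specializations'' to ``generic reducedness plus no embedded primes for the global Hecke algebra,'' which is precisely the content packaged into Lemma \ref{lem:reducedness-lemma-john}; the density step itself is delicate in that it relies on the smoothness (hence irreducibility of connected affinoids) of the weight space $\mathscr W(1)$ together with the density of algebraic characters.
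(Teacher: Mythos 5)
Your proposal is essentially correct, and it does take a genuinely different route from the paper's proof, though both ultimately rest on Lemma \ref{lem:reducedness-lemma-john} together with the semi-simplicity statement Proposition \ref{prop:implications-extremely-non-critical}(2). The paper argues in two steps: first it verifies condition (4) of the lemma for those good neighborhoods $U$ that contain an extremely non-critical point (applying the accumulation statement Proposition \ref{prop:density-results}(4) to get density of such points \emph{in} $U$); second, it uses the equivalence (1)$\Leftrightarrow$(3) of the lemma to translate generic reducedness into a codimension estimate for $\supp\,\Omega^1_{\mathscr E(\mathfrak{n})_{\rmmid}/\mathscr W(1)}$, and concludes for arbitrary $U$ --- including ones that, because they lie over a small ball around a non-twist-cohomological weight, may contain no extremely non-critical points at all. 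Your approach collapses this into a single step by moving the density argument from the eigenvariety to the weight space: you claim that \emph{every} connected affinoid $\Omega \subset \mathscr W(1)$ contains a Zariski-dense set of classical weights with $\inf_\sigma(1+\kappa_\sigma)>2h$, and apply condition (4) directly. This is slicker, but note that the paper never establishes this stronger density statement --- Lemma \ref{lem:weight-density} only asserts Zariski-density and \emph{accumulation}, which gives density of these weights in $\Omega$ only when $\Omega$ is a neighborhood of one of them. The statement you need is in fact true (algebraic characters are dense in the naive $p$-adic topology of each component, because $n \mapsto \gamma^n$ embeds $\mathbf Z$ densely into $1+p\mathbf Z_p$), but you should acknowledge that this is an extra ingredient not supplied by the paper; the paper's $\Omega^1$-codimension detour exists precisely to avoid having to prove it. One small but genuine gap in your write-up: your set $X$ consists of \emph{cohomological} weights, but these need not meet every component of $\mathscr W(1)$ (a cohomological weight constrains the restriction of $\lambda$ to the torsion of $T(\mathcal O_p)/\overline{\mathcal O_{F,+}^\times}$). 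You should take $X$ to consist of \emph{twist} cohomological weights $\lambda=(\kappa,\ast)$ with $\inf_\sigma(1+\kappa_\sigma)>2h$; these do hit every component (they are the preimage of a dense set under $\lambda \mapsto \lambda_1\lambda_2^{-1}$), the slope bound still forces extreme non-criticality per Definition \ref{defn:numerical-criteria}(4), and Proposition \ref{prop:implications-extremely-non-critical}(2) applies equally well to them.
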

\begin{proof}
We proved in Proposition \ref{prop:looks-like-eigen} that $\mathscr E(\mathfrak n)_{\rmmid}$ is admissibly covered by good affinoid opens $U$ belonging to slope adapated pairs $(\Omega,h)$ such that $\mathscr O(U)$ is an $\mathscr O(\Omega)$-subalgebra of the endomorphism $\End_{\mathscr O(\Omega)}(\mathscr M_c^d(U))$, and $\mathscr M_c^d(U)$ is finite projective over $\mathscr O(\Omega)$. So, Lemma \ref{lem:reducedness-lemma-john} provides criteria to check that each $\mathscr O(U)$ is reduced, which is what we will do. If $U$ itself contains an extremely non-critical point, then condition (4) of Lemma \ref{lem:reducedness-lemma-john} holds by Proposition \ref{prop:implications-extremely-non-critical}. So, $\mathscr O(U)$ is reduced when $U$ contains an extremely non-critical point.

In general, let $Z$ be the support of $\Omega^1_{\mathscr E(\mathfrak n)_{\rmmid}/\mathscr W(1)}$. Point (3) of Lemma \ref{lem:reducedness-lemma-john} and the prior paragraph implies that $Z$ meets any $U$ containing an extremely non-critical  in a closed subspace of positive codimension. By Proposition \ref{prop:density-results}, such $U$ are Zariski-dense and accumulating on each irreducible component of $\mathscr E(\mathfrak n)_{\rmmid}$. Thus $Z$ does not contain any irreducible component of $\mathscr E(\mathfrak n)_{\rmmid}$. 

But now it follows that $Z$ itself has positive codimension in $\mathscr E(\mathfrak n)_{\rmmid}$ (see the argument in \cite[Corollary 2.2.7]{Conrad-IrredComponents} for instance) and {\em a fortiori} meets any good neighborhood $U$ (all of which are equidimensional), regardless of whether $U$ contains an extremely non-critical point, in a closed subspace of positive codimension. Thus, the equivalence between conditions (1) and (3) in Lemma \ref{lem:reducedness-lemma-john} prove that $\mathscr O(U)$ is reduced in general.
\end{proof}

\subsection{Interlude on Galois representations}\label{subsec:interlude-galoisrep}
If $K$ is a field and $\overline K$ is a fixed algebraic closure we write $G_K$ for the Galois group of $\overline K$ over $K$. Recall that if $K/\mathbf Q_{\ell}$ is finite extension, and if $\ell \neq p$, then any continuous representation $\rho: G_K \rightarrow \GL_2(\overline{\mathbf Q}_p)$ has a corresponding Weil--Deligne representation $\WD(\rho)$ (\cite{Tate-NumberTheoreticBackground}). When $\ell = p$ we use the language (and standard notations like $D_{\dR}$, $D_{\crys}$, etc.) developed within the $p$-adic Hodge theory of Galois representations by Fontaine (\cite{Fontaine-RepresentationSemiStable}). In particular, if $\ell = p$ and $\rho$ is potentially semistable then it too has an associated Weil--Deligne representation $\WD(\rho)$. For each embedding $\sigma: K \rightarrow \overline{\mathbf Q}_p$, we also write $\HT_\sigma(\rho)$ for $\sigma$-th Hodge--Tate weight which is defined to be the jumps in the Hodge filtration on the $\overline{\mathbf Q}_p$-vector space $D_{\dR}(\rho)\otimes_{K,\sigma} \overline{\mathbf Q}_p$.

Recall that we defined a normalized local Langlands correspondence $r^{\iota}$ over $\overline{\mathbf Q}_p$ (Section \ref{subsec:notation}). If $\rho$ is a representation of $G_F$ then and $v$ is a place of $F$ then we write $\rho_v$ for its restriction to a decomposition group at $v$. The previous paragraph then applies to the various $\rho_v$.

Let $G_{F,\mathfrak{n}p}$ denote the Galois group of the maximal extension of $F$ unramified away from all infinite places and all places dividing $\mathfrak{n}p$.

\begin{thm}\label{thm:classical-galois-representations}
Let $\pi$ be a cohomological cuspidal automorphic representation of $\mathrm{GL}_2/F$ of conductor $\mathfrak n$. Then there exists a unique continuous and irreducible representation
\begin{equation*}
\rho_{\pi} : G_F \rightarrow \GL_2(\overline{\mathbf Q}_p)
\end{equation*}
such that $\rho_{\pi,v}$ is potentially semi-stable at all $v \mid p$ and $\WD(\rho_{\pi,v}) = r^{\iota}(\pi_v)$ for all $v$. In particular, $\rho_{\pi,v}$ factors over the surjection $G_F \to G_{F,\mathfrak{n}p}$.

Furthermore, if $\pi$ has weight $\lambda=(\kappa,w)$ and $v \mid p$ then the following conclusions hold:
\begin{enumerate}
\item If $\sigma \in \Sigma_v$, then $\HT_{\sigma}(\rho_{\pi,v}) = \{{w-\kappa_{\sigma}\over 2}, {w + \kappa_{\sigma}\over 2} + 1\}$.
\item If $\pi_v$ is an unramified special representation then $\rho_{\pi,v}$ is semistable non-crystalline.
\item If $\pi_v$ is an unramified principal series representation then $\rho_{\pi,v}$ is crystalline.
\end{enumerate}
(The second two are deduced from the equation $\WD(\rho_{\pi,v}) = r^{\iota}(\pi_v)$, already.)
\end{thm}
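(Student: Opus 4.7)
The plan is essentially to assemble this result from the extensive existing literature rather than prove it from scratch, since it is a summary of results due to many authors.

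First, I would handle uniqueness by a standard application of Chebotarev density. For almost all finite places $v\nmid\mathfrak n p$, the local representation $\pi_v$ is an unramified principal series, so $r^{\iota}(\pi_v)$ is the unramified representation with Frobenius eigenvalues the roots of the $v$-th Hecke polynomial. Imposing $\WD(\rho_{\pi,v})=r^{\iota}(\pi_v)$ at such $v$ forces $\tr(\rho_\pi(\Frob_v))=\iota(a_\pi(\mathfrak p_v))$ and similarly for the determinant. An irreducible (hence semisimple) continuous representation is determined by its traces on a dense set of Frobenius elements, so $\rho_\pi$ is unique up to isomorphism.

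For existence the strategy depends on how one realizes $\pi$ geometrically. When either $[F:\mathbf Q]$ is odd or there exists a finite place $v_0$ where $\pi_{v_0}$ is discrete series, the Jacquet--Langlands correspondence allows us to transfer $\pi$ to an automorphic representation $\pi^D$ on the multiplicative group of a quaternion algebra $D/F$ which is split at exactly one infinite place. One then constructs $\rho_\pi$ from the $\pi^D$-isotypic component of the $\ell$-adic \'etale cohomology of the associated Shimura curve with coefficients in the appropriate local system attached to $\lambda$. This construction, originally due to Carayol for parallel weight two with $F$ of odd degree, was extended by Wiles, Taylor, Blasius--Rogawski, and others to the general totally real case; in the residual case where $[F:\mathbf Q]$ is even and $\pi$ is principal series at every finite place, Taylor's congruence argument (passing to a totally real solvable extension and using base change) reduces to the previous case. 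Irreducibility of $\rho_\pi$ is due to Taylor and Ribet.

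Next I would address local-global compatibility at finite places $v\nmid p$. This is the assertion $\WD(\rho_{\pi,v})=r^{\iota}(\pi_v)$ and is due to Carayol (for principal series and special), with Saito, T. Liu, and Skinner handling the remaining cases (including the monodromy operator). At places $v\mid p$, the potentially semistable property of $\rho_{\pi,v}$ and the identification of $\WD(\rho_{\pi,v})$ with $r^{\iota}(\pi_v)$ are due to T. Saito. The Hodge--Tate weights in (1) come from comparing \'etale and de Rham realizations of the motive cut out by $\pi$ in the cohomology of the Shimura curve, together with the description of the Hodge filtration on modular forms of weight $(\kappa+2,w)$. Conclusions (2) and (3) then follow formally: since the monodromy $N$ in $\WD(\rho_{\pi,v})$ matches that of $r^\iota(\pi_v)$, it vanishes precisely when $\pi_v$ is unramified principal series (equivalent to crystalline) and is non-zero when $\pi_v$ is unramified special (equivalent to semistable non-crystalline, given potential semistability).

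The main obstacle, and the one that is technically the deepest, is the local-global compatibility at $v\mid p$ together with the exact computation of the Hodge--Tate weights in the mixed-weight (non-parallel) setting. All the other ingredients are now quite standard; since this theorem is used only as a black box in the present article, I would write the ``proof'' essentially as a guide to the references rather than reproduce any of Carayol's or Saito's arguments.
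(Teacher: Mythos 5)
Your proposal is correct and takes essentially the same route as the paper: the paper's proof is a two-sentence pointer to the same body of literature (Carayol, Wiles, Blasius--Rogawski, Taylor for construction and local-global compatibility away from $p$; Saito, Blasius--Rogawski, Skinner for the $p$-adic places), and your proposal fills in exactly the expected scaffolding (Chebotarev for uniqueness, Jacquet--Langlands and Shimura curves plus Taylor's congruence argument for existence, monodromy matching for (2) and (3)). The extra detail you supply is accurate but not a departure from the paper's approach.
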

\begin{proof}
The construction of $\rho_{\pi}$ and proof that it satisfies local-global compatibility away from $p$ can be deduced from independent work of Carayol (\cite{Carayol-HMF}), Wiles (\cite{Wiles-OrdinaryModular}), Blasius and Rogawski (\cite{BlasiusRogawski-HMFMotives}), and Taylor (\cite{Taylor-GaloisHMF}). The local-global compatibility at the $p$-adic places is due to Saito (\cite{Saito-padicHodgeTheory,Saito-HMFpAdic}), Blasius and Rogawski as before, and Skinner (\cite{Skinner-HMF}).
\end{proof}
\begin{rmk}\label{rmk:crystalline-eigenvalues}
If $\pi_v$ is an unramified principal series, then the characteristic polynomial of $\varphi^{f_v}$ acting on $D_{\crys}(\rho_{\pi,v})$ is equal to the characteristic polynomial of $r^{\iota}(\pi_v)(\Frob_v)$ or, what is the same, the image of the $v$-th Hecke polynomial $p_v(X) = X^2 - a_v(\pi)X  + \omega_{\pi}(\varpi_v)q_v$ under $\iota$. In particular, in the same case, if $\alpha$ is a $p$-refinement of $\pi$, then $\iota(\alpha_v)$ is an eigenvalue of $\varphi^{f_v}$ acting on $D_{\crys}(\rho_{\pi,v})$. Note that if $\beta_v$ is the second roots of $p_v(X)$ then the Ramanujan--Petersson conjecture for Hilbert modular forms (see \cite{Blasisu-HMFRamanujan} and the references there) implies that $\alpha_v$ and $\beta_v$ are Weil numbers of the same weight.
\end{rmk}

We will now globalize the construction of Galois representations in Theorem \ref{thm:classical-galois-representations} over $\mathscr E(\mathfrak n)_{\rmmid}$. Write $\psi: \mathbf T(\mathfrak n) \rightarrow \mathscr O(\mathscr E(\mathfrak n)_{\rmmid})$ to denote the universal Hecke eigensystem on $\mathscr E(\mathfrak n)_{\rmmid}$. 

\begin{prop}\label{prop:pseudo-representation}
There exists a unique two-dimensional pseudorepresentation
\begin{equation*}
T : G_{F,\mathfrak n \mathbf p} \rightarrow \mathscr O(\mathscr E(\mathfrak n)_{\rmmid})
\end{equation*}
such that if $v \nmid \mathfrak n \mathbf p$ then $T(\Frob_v) = \psi(T_v)$.
\end{prop}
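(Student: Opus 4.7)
The plan is to globalize the pointwise Galois representations of Theorem \ref{thm:classical-galois-representations} using the Zariski-density of classical points (Proposition \ref{prop:density-results}) and the reducedness of $\mathscr{E}(\mathfrak{n})_{\rmmid}$ (Theorem \ref{theorem:reducedness}).

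Uniqueness is immediate: given two such $T_1, T_2$, the difference $T_1 - T_2$ vanishes on the set $\{\Frob_v : v \nmid \mathfrak{n}\mathbf{p}\}$, which is dense in $G_{F,\mathfrak{n}\mathbf{p}}$ by Chebotarev. Since both $T_i$ are continuous and $\mathscr{O}(\mathscr{E}(\mathfrak{n})_{\rmmid})$ is Hausdorff, $T_1 = T_2$ everywhere. For existence, by the sheaf property it is enough to construct a continuous pseudorepresentation $T_U : G_{F,\mathfrak{n}\mathbf{p}} \to \mathscr{O}(U)$ on each good affinoid neighborhood $U$ of $\mathscr{E}(\mathfrak{n})_{\rmmid}$, and then glue using the same uniqueness argument on overlaps.

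Fix such a $U$. Let $X \subset U$ be the set of extremely non-critical classical points $x = x(\pi,\alpha)$; by Proposition \ref{prop:density-results}, $X$ is Zariski-dense in $U$. For each $x \in X$, Theorem \ref{thm:classical-galois-representations} gives a continuous two-dimensional representation $\rho_\pi : G_{F,\mathfrak{n}\mathbf{p}} \to \GL_2(k_x)$ (the conductor divides $\mathfrak{n}\mathbf{p}$ by local-global compatibility), whose trace $T_x := \operatorname{tr}\rho_\pi$ is a pseudocharacter satisfying $T_x(\Frob_v) = a_v(\pi) = \psi_x(T_v)$. Because $U$ is reduced and $X$ is Zariski-dense, the evaluation map
\begin{equation*}
\mathrm{ev}_X : \mathscr{O}(U) \hookrightarrow \prod_{x \in X} k_x
\end{equation*}
is injective, and sends $\psi(T_v)|_U$ to the tuple $(T_x(\Frob_v))_{x \in X}$. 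The key step is to show that for every $g \in G_{F,\mathfrak{n}\mathbf{p}}$ the tuple $\widetilde{T}(g) := (T_x(g))_{x \in X}$ lies in the image of $\mathrm{ev}_X$, thereby producing a well-defined element $T_U(g) \in \mathscr{O}(U)$.

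To produce the element $T_U(g)$, I would fix a ring of definition $\mathscr{O}(U)^\circ \subset \mathscr{O}(U)$ (possible since $U$ is reduced) and exhibit $T_U(g)$ as a Banach-limit of a Cauchy net $\psi(T_{v_n})|_U$ where $\Frob_{v_n} \to g$ in $G_{F,\mathfrak{n}\mathbf{p}}$. Convergence relies on uniform boundedness: each $\rho_\pi$ (after normalizing the central character by $\psi(S_v)$) preserves an $\mathcal{O}_{k_x}$-lattice, so the traces $T_x(g)$ are uniformly bounded in absolute value by $2$, and the differences $T_x(\Frob_v) - T_x(\Frob_{v'})$ are controlled by the image of $\Frob_v \Frob_{v'}^{-1}$ in the finite quotients of $G_{F,\mathfrak{n}\mathbf{p}}$ through which the residual representations $\overline{\rho}_\pi$ factor, uniformly over a suitable integral structure. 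Chebotarev and compactness of $G_{F,\mathfrak{n}\mathbf{p}}$ then allow one to arrange that $\psi(T_{v_n})|_U$ is Cauchy in $\mathscr{O}(U)^\circ$ with limit $T_U(g)$ whose image under $\mathrm{ev}_X$ is $\widetilde{T}(g)$. Once $T_U$ is constructed as a function to $\mathscr{O}(U)$, continuity and the pseudorepresentation identities hold pointwise on $X$ (since each $T_x$ is a pseudorepresentation), and by Zariski-density and reducedness they propagate to all of $U$.

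The main obstacle is exactly the uniform boundedness / Cauchy step of the preceding paragraph. In practice this is most cleanly handled by appealing to an established pseudo-character (or determinant) interpolation theorem of Rouquier--Nyssen--Chenevier over reduced rigid spaces: once one has a Zariski-dense family of pseudocharacters of constant dimension that are specializations of a global function on the generators $\Frob_v$, the family extends uniquely to a global continuous pseudocharacter on the reduced space. All hypotheses of such a theorem are in place here thanks to Theorem \ref{theorem:reducedness}, Proposition \ref{prop:density-results}, and Theorem \ref{thm:classical-galois-representations}, so this gives the cleanest route; the hands-on Cauchy argument above is an alternative in case one prefers to avoid citing that machinery.
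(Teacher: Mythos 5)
Your overall strategy matches the paper exactly: use reducedness of $\mathscr E(\mathfrak n)_{\rmmid}$, Zariski-density of nice points with attached Galois representations, and cite the Rouquier/Nyssen/Chenevier interpolation theorem for pseudocharacters over reduced rigid spaces. (The paper invokes \cite[Proposition 7.1.1]{Chenevier-pAdicAutomorphicForm}.) Your uniqueness argument via Chebotarev is fine. However, two things you slide over are actually the content the paper has to supply.

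First, the Zariski-dense set you pick cannot in general be taken to consist of classical points $x = x(\pi,\alpha)$ of cohomological weight. If the Leopoldt defect $\delta_{F,p}$ is nonzero, the cohomological weights are not Zariski-dense in $\mathscr W(1)$, so classical points are not Zariski-dense in $\mathscr E(\mathfrak n)_{\rmmid}$. Proposition \ref{prop:density-results}(4) gives density only for the \emph{twist} classical (extremely non-critical) points, i.e. points of the form $z = \tw_\vartheta(x)$ with $x$ classical and $\vartheta \in \mathscr X(\Gamma_F)$. For such a $z$, Theorem \ref{thm:classical-galois-representations} does not directly hand you a Galois representation; you must set $\rho_z := \rho_\pi \otimes \vartheta$ and check the trace-of-Frobenius identity against $\psi_z(T_v)$ using the twisting formula \eqref{eqn:twisting-hecke}. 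This is a small step but it is precisely why the paper went to the trouble of developing the twist formalism in Section \ref{subsec:special-points}, and your write-up simply doesn't mention it.

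Second, when you retreat to citing the interpolation theorem, you assert that ``all hypotheses are in place thanks to Theorem \ref{theorem:reducedness}, Proposition \ref{prop:density-results}, and Theorem \ref{thm:classical-galois-representations}.'' That is not quite right: Chenevier's theorem has a boundedness hypothesis — the pseudocharacter must take values in a compact (equivalently, power-bounded) subring of $\mathscr O(\mathscr E(\mathfrak n)_{\rmmid})$, and one must show the images $\psi(T_v)$ land there. This is not automatic and is the content of Lemma \ref{lemma:boundedness} in the paper, which is proved by showing the Hecke operators away from $\mathfrak n\mathbf p$ preserve a bounded integral structure $H^d_c(\mathfrak n, \mathbf D_\Omega^{\mathbf s,\circ})_{\leq h}$ in the finite-slope cohomology, together with the fact (reducedness + nestedness of the eigenvariety, from \cite[Lemma 7.2.11]{BellaicheChenevier-Book}) that power-bounded functions on $\mathscr E(\mathfrak n)_{\rmmid}$ form a compact subring. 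You correctly identify ``uniform boundedness'' as the main obstacle in your hands-on Cauchy approach, but when you switch to citing the machinery you need to verify the same thing, and the verification has real content. Without it, the proof is incomplete.
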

\begin{proof}
First, Theorem \ref{theorem:reducedness} implies that $\mathscr E(\mathfrak n)_{\rmmid}$ is reduced. Second,  Theorem \ref{thm:classical-galois-representations} and Proposition \ref{prop:density-results} implies that we have a Zariski-dense subset $Z \subset \mathscr E(\mathfrak n)_{\rmmid}(\overline{\mathbf Q}_p)$ such that if $z \in Z$ then there is a  Galois representations $\rho_{z}: G_{F,\mathfrak n \mathbf p} \rightarrow \GL_2(\overline{\mathbf Q}_p)$ with $\tr(\rho_z(\Frob_v)) = \psi_z(T_v)$ for all $v \nmid \mathfrak n \mathbf p$. Specifically, we take $Z$ to be all those points which are twist classical and for $z \in Z$ of the form $z = \tw_{\vartheta}(x)$, with $x=x(\pi,\alpha)$ classical, we take $\rho_{z} = \rho_{\pi}\otimes \vartheta$ with $\rho_\pi$ as in Theorem \ref{thm:classical-galois-representations}. This tautologically gives the Hecke eigensystem $\psi_{z'}$ away from $\mathfrak n \mathbf p$ by \eqref{eqn:twisting-hecke}. The Zariski-density of these points follows from Propositions  \ref{prop:implications-extremely-non-critical} and \ref{prop:density-results}. Thus this proposition follows from a result of Chenevier (\cite[Proposition 7.1.1]{Chenevier-pAdicAutomorphicForm}) once we check a boundedness condition. Specifically, the eigenvariety $\mathscr E(\mathfrak n)_{\rmmid}$ is reduced and nested (in the sense of \cite[Section 7.2]{BellaicheChenevier-Book}), so by \cite[Lemma 7.2.11]{BellaicheChenevier-Book} the power bounded functions on $\mathscr E(\mathfrak n)_{\rmmid}$ form a compact subring of $\mathscr O(\mathscr E(\mathfrak n)_{\rmmid})$. The Lemma \ref{lemma:boundedness}(2) below implies the Hecke eigenvalues away from $\mathfrak n \mathbf p$ lie in this compact subring, and so Chenevier's result applies for us.
\end{proof}
To fill the gap in the previous proposition we need a small bit of notation.  Define $\mathbf T_{\mathbf Z_p}^{\mathrm{nr}}(\mathfrak n)$ as the $\mathbf Z_p$-span of the Hecke operators $(T_v)_{v \nmid \mathfrak n \mathbf p}$ inside $\mathbf T(\mathfrak n)$. Let $\mathscr E_{\Omega,h}$ be an affinoid neighborhood on $\mathscr E(\mathfrak n)$ with $(\Omega,h)$ slope adapted. Let $R = \mathscr O(\Omega)$ and suppose that $R_0 \subset R$ is a ring of definition, so we get an $R_0$-lattice $D^{\mathbf s,\circ}_{\Omega} \subset D^{\mathbf s}_{\Omega}$ as in Section 5.2. We then define an $R_0$-module $H^{d}_c(\mathfrak n, \mathbf D^{\mathbf s,\circ}_{\Omega})_{\leq h}$ by
\begin{equation*}
H^{d}_c(\mathfrak n, \mathbf D^{\mathbf s,\circ}_{\Omega})_{\leq h} := \im\left(H^{d}_c(\mathfrak n, \mathbf D^{\mathbf s,\circ}_{\Omega}) \rightarrow H^{d}_c(\mathfrak n, \mathbf D^{\mathbf s}_{\Omega}) \twoheadrightarrow H^{d}_c(\mathfrak n, \mathbf D^{\mathbf s}_{\Omega})_{\leq h} \right).
\end{equation*}
Thus $H^d_c(\mathfrak n, \mathbf D_{\Omega}^{\mathbf s,\circ})_{\leq h}$ is an $R_0$-submodule of the finite Banach $R$-module $H^d_c(\mathfrak n, \mathbf D_{\Omega}^{\mathbf s})_{\leq h}$.

\begin{lem}\label{lemma:boundedness}
Assume the notations of the previous paragraph.
\begin{enumerate}
\item $H^d_c(\mathfrak n, \mathbf D_{\Omega}^{\mathbf s,\circ})_{\leq h}$ is bounded and stable under the natural action of $\mathbf T_{\mathbf Z_p}^{\nr}(\mathfrak n)$.
\item $\psi(\mathbf T_{\mathbf Z_p}^{\mathrm{nr}}(\mathfrak n)) \subset \mathscr O(\mathscr E(\mathfrak n))$ consists of power bounded elements.
\end{enumerate}
\end{lem}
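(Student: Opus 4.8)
The strategy is to exploit the fact that, away from $p$, the Hecke operators act on adelic cochains ``without touching the coefficients,'' so they preserve the tautological integral structures, and that those integral structures are bounded lattices in finite Banach modules. Throughout I would enlarge $R_0$ if necessary so that $R_0=\mathscr O(\Omega)^{\circ}$ (the maximal ring of definition, which contains the image of the weight $\lambda_{\Omega}$ because $T(\mathcal O_p)$ is compact), and take $\mathbf s$ large enough that the weight-$\lambda_{\Omega}$ action of $\Delta$ is defined on $\mathbf A^{\mathbf s,\circ}_{\Omega}$ and $\mathbf D^{\mathbf s,\circ}_{\Omega}$ as in Section \ref{subsec:monoid-action}. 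The module $\mathbf D^{\mathbf s,\circ}_{\Omega}$ is a bounded open $R_0$-lattice in the finite Banach $\mathscr O(\Omega)$-module $\mathbf D^{\mathbf s}_{\Omega}$ which, like its function-side counterpart, becomes $\mathbf D^{\mathbf s}_{\Omega}$ after inverting $p$ (elementary). Passing to the Borel--Serre cochain complex $C^{\bullet}_c(\mathfrak n,\mathbf D^{\mathbf s,\circ}_{\Omega})$, whose terms are finite direct sums of copies of $\mathbf D^{\mathbf s,\circ}_{\Omega}$ (or invariants of such under a finite group of invertible order), one gets a bounded $R_0$-subcomplex of $C^{\bullet}_c(\mathfrak n,\mathbf D^{\mathbf s}_{\Omega})$ which base changes to it along the flat map $R_0\to\mathscr O(\Omega)=R_0[1/p]$; in particular the $d$-cocycles $Z^d(\mathbf D^{\mathbf s,\circ}_{\Omega})$ are bounded in $Z^d(\mathbf D^{\mathbf s}_{\Omega})$ and span it over $\mathscr O(\Omega)$, since kernels commute with flat base change.

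For part (1): for $v\nmid \mathfrak n\mathbf p$ the operator $T_v$ on adelic cochains is $[K\delta_v K]=\sum_i\delta_{v,i}\cdot(-)$, and by Remark \ref{rmk:well-posed-locally-calculated} the coset representatives $\delta_{v,i}$ may be taken trivial at every place away from $v$, in particular at $p$; since the $\Delta$-action on $\mathbf D^{\mathbf s,\circ}_{\Omega}$ only sees the $p$-component, $T_v$ acts on $C^{\bullet}_c(\mathfrak n,\mathbf D^{\mathbf s,\circ}_{\Omega})$ purely by precomposition with translations, hence preserves it, and hence so does the $\mathbf Z_p$-algebra $\mathbf T^{\nr}_{\mathbf Z_p}(\mathfrak n)$ (using $\mathbf Z_p\subseteq R_0$). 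It follows that $\mathbf T^{\nr}_{\mathbf Z_p}(\mathfrak n)$ preserves the image $M$ of $H^d_c(\mathfrak n,\mathbf D^{\mathbf s,\circ}_{\Omega})$ in $H^d_c(\mathfrak n,\mathbf D^{\mathbf s}_{\Omega})$, and since $T_v$ commutes with $U_p$ it preserves the slope-$\leq h$ summand. Being the image of the bounded set $Z^d(\mathbf D^{\mathbf s,\circ}_{\Omega})$ under the composite of the norm-decreasing quotient map with the (continuous) slope-$\leq h$ projector, $H^d_c(\mathfrak n,\mathbf D^{\mathbf s,\circ}_{\Omega})_{\leq h}$ is bounded, which together with $\mathbf T^{\nr}_{\mathbf Z_p}(\mathfrak n)$-stability is exactly part (1). (The same argument works in each cohomological degree, and I would record that, since part (2) uses all of $H^{\ast}_c$.)

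For part (2): it is enough to prove each $\psi_{\Omega,h}(T_v)$ is power bounded in $\mathbf T_{\Omega,h}=\mathscr O(\mathscr E_{\Omega,h})$, as the $\mathscr E_{\Omega,h}$ form an admissible affinoid cover of $\mathscr E(\mathfrak n)$ and the power-bounded elements of a non-archimedean Banach ring form a subring; then $\psi(\mathbf T^{\nr}_{\mathbf Z_p}(\mathfrak n))\subseteq\mathscr O(\mathscr E(\mathfrak n))^{\circ}$. Let $V:=H^{\ast}_c(\mathfrak n,\mathscr D_{\Omega})_{\leq h}\cong H^{\ast}_c(\mathfrak n,\mathbf D^{\mathbf s}_{\Omega})_{\leq h}$, a finite $\mathscr O(\Omega)$-module on which $\mathbf T_{\Omega,h}$ acts faithfully by construction, and let $M\subseteq V$ be the bounded $R_0$-lattice from part (1) (in all degrees), which generates $V$ over $\mathscr O(\Omega)$ by the spanning statement above. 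Choosing finitely many elements of $M$ generating $V$, they span a finite $R_0$-submodule $M_0\subseteq M$ with $M_0\cdot\mathscr O(\Omega)=V$; for any $\mathscr O(\Omega)$-endomorphism $T$ of $V$ with $T(M)\subseteq M$ one has $T(M_0)\subseteq M$, and the open mapping theorem applied to a surjection $\mathscr O(\Omega)^k\twoheadrightarrow V$ sending a basis to generators of $M_0$ bounds the operator norm of $T$ by a constant depending only on $M$ and this presentation. Hence the endomorphisms preserving $M$ form a bounded $R_0$-subalgebra of $\End_{\mathscr O(\Omega)}(V)$ containing $\psi_{\Omega,h}(T_v)$; since $\mathbf T_{\Omega,h}$ is a finite $\mathscr O(\Omega)$-module, it sits as a closed submodule of $\End_{\mathscr O(\Omega)}(V)$ with the subspace topology, so $\psi_{\Omega,h}(T_v)$ is power bounded in $\mathbf T_{\Omega,h}$, proving (2).

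The only genuinely non-formal input is the identification in part (1) of the away-from-$p$ Hecke coset representatives as trivial at $p$, hence acting trivially on $\mathbf D^{\mathbf s,\circ}_{\Omega}$; I expect the mild technical obstacle to be part (2)'s passage from ``preserves a generating bounded lattice'' to ``bounded operator norm,'' which is exactly where the open mapping theorem is needed, together with the routine but careful bookkeeping with bounded lattices, flat base change $R_0\to\mathscr O(\Omega)$, and the comparison of the $\mathbf D^{\mathbf s}$- and $\mathscr D$-versions of the slope-$\leq h$ cohomology.
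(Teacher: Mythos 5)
Your proposal is correct and follows essentially the same route as the paper: both proofs reduce boundedness to the image of the bounded integral Borel--Serre subcomplex under the slope-$\leq h$ projector, observe that the away-from-$\mathfrak n\mathbf p$ Hecke operators preserve the integral structure because their coset representatives are trivial at $p$, and deduce (2) from the fact that an operator algebra preserving a bounded generating lattice in a finite Banach module is bounded, combined with the inverse-limit description of the topology on $\mathscr O(\mathscr E(\mathfrak n))$. The only cosmetic difference is that the paper handles non-neat levels explicitly by passing to a neat open normal subgroup $K'\subset K$ and identifying the level-$K$ cohomology with $K/K'$-invariants, where your parenthetical about invariants glosses over this reduction.
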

\begin{proof}
First, we will show that (2) follows from (1). Since, $\psi$ is an algebra morphism, it is enough to check that $\psi(\mathbf T_{\mathbf Z_p}^{\nr}(\mathfrak n))$ is bounded. Part (1) of this lemma implies that the induced endomorphisms on  $H^d_c(\mathfrak n, \mathbf D_{\Omega}^{\mathbf s})_{\leq h}$ are bounded and that is enough because the topology on $\mathscr O(\mathscr E(\mathfrak n))$ is the weakest topology making all of the natural maps $\mathscr O(\mathscr E(\mathfrak n)) \rightarrow \mathscr O(\mathscr E_{\Omega,h})=\mathbf T_{\Omega,h}$ continuous.

Now we prove (1). Write $K = K_1(\mathfrak n)I$. If $K' \subset K$ is an open and normal subgroup then we consider the diagram
\begin{equation*}
\xymatrix{
H^{d}_c(Y_K,\mathbf D_{\Omega}^{\mathbf s,\circ}) \ar[dd] \ar[r] & H^{d}_c(Y_K,\mathbf D_{\Omega}^{\mathbf s}) \ar@{>>}[r] \ar@{=}[d] & H^{d}_c(Y_K,\mathbf D_{\Omega}^{\mathbf s})_{\leq h}  \ar@{=}[d]\\
 & H^{d}_c(Y_{K'},\mathbf D_{\Omega}^{\mathbf s})^{K/K'} \ar[r] \ar@{^{(}->}[d] & H^{d}_c(Y_{K'},\mathbf D_{\Omega}^{\mathbf s})_{\leq h}^{K/K'}  \ar@{^{(}->}[d]\\
H^{d}_c(Y_{K'},\mathbf D_{\Omega}^{\mathbf s,\circ})  \ar[r] & H^{d}_c(Y_{K'},\mathbf D_{\Omega}^{\mathbf s}) \ar@{>>}[r]  & H^{d}_c(Y_{K'},\mathbf D_{\Omega}^{\mathbf s})_{\leq h}.
}
\end{equation*}
The two equalities are because $\mathbf D_{\Omega}^{\mathbf s}$ is a $\mathbf Q$-vector space and $K/K'$ is a finite group. The right-hand column consists of finite $R$-modules and thus the inclusion is continuous for the unique Banach $R$-module topologies. So, to check that the image of the top horizontal row is bounded, it is enough to check that the image of the bottom horizontal row is bounded. Replacing $\mathfrak n$ by a smaller ideal we can assume $Y_K$ is a neat level (Proposition \ref{prop:neatness}). In that case, the cohomology $H^d_c(Y_K,M)$ is computed by Borel--Serre complexes $C_c^{\bullet}(Y_K,M)$ for $M = \mathbf D_{\Omega}^{\mathbf s, \circ}$ or $M = \mathbf D_{\Omega}^{\mathbf s}$ (see the start of Section \ref{subsec:eigenvarieties} or \cite[p.15-16]{Hansen-Overconvergent}). In that case, the image of $H^d_c(Y_K,\mathbf D_{\Omega}^{\mathbf s,\circ}) \rightarrow H^d_c(Y_K,\mathbf D_{\Omega}^{\mathbf s})_{\leq h}$ is obviously bounded as it is the image, in cohomology, of the bounded subcomplex $C_c^{\ast}(K,\mathbf D_{\Omega}^{\mathbf s,\circ}) \subset C_c^{\ast}(K,\mathbf D_{\Omega}^{\mathbf s})$ under the quotient map $C_c^{\ast}(K,\mathbf D_{\Omega}^{\mathbf s}) \rightarrow C_c^{\ast}(K,\mathbf D_{\Omega}^{\mathbf s})_{\leq h}$.
\end{proof}

The lemma completes the proof of Proposition \ref{prop:pseudo-representation}. So now, for $x \in \mathscr E(\mathfrak n)_{\rmmid}(\overline{\mathbf Q}_p)$, we write $T_x$ for the specialization of the pseudorepresentation in Proposition \ref{prop:pseudo-representation} to the residue field $k_x$. A theorem of Taylor (\cite[Theorem 1(2)]{Taylor-Pseudoreps}) implies that for each $x$ there exists a unique continuous and semi-simple representation $\rho_x: G_F \rightarrow \GL_2(\overline{\mathbf Q}_p)$ so that $\tr(\rho_x) = T_x$. Note that if $x$ is a classical point then in fact $\rho_x$ may be defined over $k_x$ by the unicity, and the construction of the classical $\rho_x$ (as in the proofs of Theorem \ref{thm:classical-galois-representations}).

We now turn towards the important properties of $\rho_x$ at the $p$-adic places. If $\lambda =(\lambda_1,\lambda_2) \in \mathscr W$ is any $p$-adic weight then we can restrict to each $\lambda_i$ to $\lambda_{i,v}$ along $\mathcal O_v^\times \hookrightarrow \mathcal O_p^\times$. We then define characters $\eta_{i,v}$ on $\mathcal O_v^\times$ by
\begin{align}\label{defn:eta-map}
\eta_{1,v}(\lambda) &:= \lambda_{2,v}^{-1}\\
\eta_{2,v}(\lambda) &:= (\lambda_{1,v}\prod_{\sigma \in \Sigma_v} \sigma)^{-1}.\nonumber
\end{align}
Writing $\eta(\lambda)_v = \eta_{1,v}(\lambda)\eta_{2,v}(\lambda)$, which is a character on $\mathcal O_v^\times$, for each $\lambda$ we thus have a $\eta(\lambda) \in \mathscr X(\mathcal O_p^\times)$ given by the collection $(\eta(\lambda)_v)_{v \mid p}$. All together, $\lambda \mapsto \eta(\lambda)$ defines a morphism of rigid analytic spaces
\begin{equation*}
\eta : \mathscr W \rightarrow \mathscr X(\mathcal O_p^\times),
\end{equation*}
which induces a morphism on sub-rigid analytic spaces
\begin{equation}\label{eqn:eta-morphism}
\eta : \mathscr W(1) \rightarrow \mathscr X(\mathcal O_p^\times/\overline{\mathcal O_{F,+}^\times}).
\end{equation}
The next lemma will only be used later (see the proof of Theorem \ref{thm:smoothness}). If $x \in \mathscr E(\mathfrak n)_{\rmmid}(\overline{\mathbf Q}_p)$ then $\det(\rho_x)$, which is a character on $G_F$, can be restricted to a character $\det(\rho_x)|_{\mathcal O_p^\times}$ using the local Artin map at the $p$-adic places.
\begin{lem}\label{lem:relevant-determinant}
If $x \in \mathscr E(\mathfrak n)_{\rmmid}(\overline{\mathbf Q}_p)$, then $\det(\rho_{x})|_{\mathcal O_p^\times} = \eta(\lambda_x)$. In particular, the map 
\begin{align*}
\mathscr E(\mathfrak n)_{\rmmid} &\overset{\det_p}{\longrightarrow} \mathscr X(\mathcal O_p^\times)\\
x &\longmapsto \det(\rho_x)|_{\mathcal O_p^\times}
\end{align*}
factors through $\mathscr X(\mathcal O_p^\times/\overline{\mathcal O_{F,+}^\times})$.
\end{lem}
\begin{proof}
The lemma is true at classical $x$ by Theorem \ref{thm:classical-galois-representations}, twist classical $x$ by the definition of twisting, and all $x$ by interpolation.
\end{proof}

\begin{lem}\label{lemma:lift-to-O}
Suppose that $x \in \mathscr E(\mathfrak n)_{\rmmid}(\overline{\mathbf Q}_p)$ is a classical point. Then, there exists a good affinoid neighborhood $x \in U \subset \mathscr E(\mathfrak n)_{\rmmid}$ and a continuous linear representation $\rho_U: G_{F,\mathfrak n \mathbf p} \rightarrow \GL_2(\mathscr O(U))$ such that $\rho_U\otimes_{\mathscr O(U)} k_u = \rho_u$ for each $u \in U$.
\end{lem}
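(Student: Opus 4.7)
The plan is as follows. Since $x = x(\pi,\alpha)$ is classical, $\rho_x = \rho_\pi$ is absolutely irreducible by Theorem~\ref{thm:classical-galois-representations}. Fix any good affinoid neighborhood $V$ of $x$ in $\mathscr E(\mathfrak n)_{\rmmid}$; the restriction $T|_V : G_{F,\mathfrak n \mathbf p} \to \mathscr O(V)$ is a continuous pseudo-representation whose reduction modulo $\mathfrak m_x$ is $\tr \rho_x$. Let $A := \widehat{\mathscr O}_{V, x}$ be the completed local ring, a complete Noetherian local ring with residue field $k_x$, and let $T_A := T|_V \otimes_{\mathscr O(V)} A$.

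By the standard lifting theorem of Nyssen--Rouquier for pseudo-representations over complete local rings with absolutely irreducible residual image (see e.g.\ \cite[Theorem 1.4.4]{BellaicheChenevier-Book}, or apply the determinant formalism of \cite[Proposition 7.1.1]{Chenevier-pAdicAutomorphicForm}), there is a continuous representation $\rho_A : G_{F,\mathfrak n \mathbf p} \to \GL_2(A)$, unique up to $\GL_2(A)$-conjugation, such that $\tr \rho_A = T_A$ and $\rho_A \otimes_A k_x \simeq \rho_x$. The only real obstacle in the proof is then to descend $\rho_A$ from the formal stalk to an honest affinoid neighborhood, and for this I would use a generalized matrix algebra argument. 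By absolute irreducibility, pick $\sigma_1, \sigma_2 \in G_{F,\mathfrak n \mathbf p}$ so that $\{I_2, \rho_x(\sigma_1), \rho_x(\sigma_2), \rho_x(\sigma_1 \sigma_2)\}$ is a $k_x$-basis of $M_2(k_x)$. By Nakayama, the lifts $\{I_2, \rho_A(\sigma_1), \rho_A(\sigma_2), \rho_A(\sigma_1\sigma_2)\}$ form an $A$-basis of $M_2(A)$, and expanding an arbitrary $\rho_A(g)$ in this basis lets Cramer's rule express the coefficients of $\rho_A(g)$ as polynomials in the four traces $T(g \cdot \sigma_{i_1}\cdots \sigma_{i_k})$, divided by the change-of-basis determinant, which is a unit in $A$.

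One then shrinks $V$ to a good affinoid neighborhood $U$ of $x$ inside $\mathscr E(\mathfrak n)_{\rmmid}$ so that: (i) the entries of $\rho_A(\sigma_1)$ and $\rho_A(\sigma_2)$ lie in the image of $\mathscr O(U) \hookrightarrow A$, and (ii) the aforementioned change-of-basis determinant is a unit in $\mathscr O(U)$. Both conditions hold at $x$ and are open, so they hold on a small enough $U$. Plugging into Cramer's rule, and using that the traces $T(g \cdot \sigma_{i_1}\cdots)$ already live in $\mathscr O(U)$ by Proposition~\ref{prop:pseudo-representation}, one obtains a well-defined continuous map $\rho_U : G_{F,\mathfrak n \mathbf p} \to \GL_2(\mathscr O(U))$ with $\tr \rho_U = T|_U$; the homomorphism property is verified on $U$ because it holds after passage to $A$.

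Finally, for $u \in U$, the specialization $\rho_U \otimes_{\mathscr O(U)} k_u$ satisfies $\tr(\rho_U \otimes k_u) = T_u = \tr \rho_u$. Shrinking $U$ once more so that $\rho_U \otimes k_u$ remains absolutely irreducible for all $u \in U$ (an open condition, since it holds at $x$), it follows that $\rho_U \otimes k_u$ is semisimple, so Brauer--Nesbitt yields $\rho_U \otimes_{\mathscr O(U)} k_u \simeq \rho_u$. The hardest step is the descent argument, carried out via the generalized matrix algebra basis and Cramer's rule just described.
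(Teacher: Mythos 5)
Your overall strategy — absolute irreducibility of $\rho_x$, lifting the pseudorepresentation to a representation over a local ring, spreading out, then Brauer--Nesbitt — is the same as the paper's, but there is a genuine gap in the spreading-out step. You lift $T$ to a representation $\rho_A$ over the \emph{completed} local ring $A = \widehat{\mathscr O}_{V,x}$, and then claim that after shrinking $V$ one can arrange that the entries of $\rho_A(\sigma_1), \rho_A(\sigma_2)$ lie in the image of $\mathscr O(U)$, asserting this is "an open condition holding at $x$." It is not. Whether an element of $A$ lies in the image of some $\mathscr O(U)$ is precisely the question of whether it lies in the rigid local ring $\mathscr O_x = \varinjlim_{U \ni x} \mathscr O(U)$, and a lift $\rho_A$ produced over the completion $A$ has no a priori reason to take values in $\mathscr O_x \subsetneq A$. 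This is not a pointwise or open condition that can be achieved by shrinking $U$; it is a structural question about which subring of $A$ the lift lands in. The Cramer's-rule computation only reconstructs the \emph{coefficients} $a_i(g)$ of $\rho_A(g)$ in the basis $\{I, \rho_A(\sigma_1), \rho_A(\sigma_2), \rho_A(\sigma_1\sigma_2)\}$ from traces (and these do lie in $\mathscr O(U)$), but the basis matrices themselves still have entries in $A$.

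The paper closes this gap by observing that the rigid local ring $\mathscr O_x$ is Henselian (Berkovich) and applying Rouquier's lifting theorem, which holds over Henselian local rings, not merely complete ones. This produces $\rho_{\mathscr O_x}$ directly over $\mathscr O_x$, whose matrix entries therefore automatically lie in some $\mathscr O(U)$ for $U$ small; the remaining spreading-out is then handled by \cite[Lemma 4.3.7]{BellaicheChenevier-Book}. In other words, your reduction to the complete local ring discards exactly the Henselian structure that makes the descent work. If you were to repair your argument — say by conjugating $\rho_A$ to put $\rho_A(\sigma_1)$ in diagonal form and solving the characteristic polynomial — you would still need to invoke the Henselian property of $\mathscr O_x$ to put the eigenvalues back in the rigid local ring. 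You should apply the pseudorepresentation lifting theorem at the level of the Henselian local ring $\mathscr O_x$ rather than its completion.
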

\begin{proof}
Write $x = x(\pi,\alpha)$. Since $\pi$ is cuspidal, the Galois representation $\rho_x = \rho_\pi$ is absolutely irreducible. Write $\mathscr O_x$ for the rigid local ring of $x$ on $\mathscr E(\mathfrak n)_{\rmmid}$. Then $\mathscr O_x$ is a Henselian local ring (\cite[Theorem 2.1.5]{Berkovich-EtaleCohomology}), so by \cite[Corollarie 5.2]{Rouqier-Jalgebra96-Pseudocharacters} there exists a continuous lift $\rho_{\mathscr O_x}$ of $\rho_x$ to $\mathscr O_x$ such that $\tr(\rho_{\mathscr O_x})$ is equal to the specialization of the pseudorepresentation $T$ as in Theorem \ref{prop:pseudo-representation} to the ring $\mathscr O_x$. By \cite[Lemma 4.3.7]{BellaicheChenevier-Book} we can extend $\rho_{\mathscr O_x}$ to a continuous representation $\rho_U$ over some affinoid neighborhood of $U$ in a manner compatible with the pseudorepresentation $T$. Being absolutely irreducible is a Zariski-open condition on $U$ (\cite[Section 7.2.1]{Chenevier-pAdicAutomorphicForm}) and so we may, if necessary, shrink $U$ and assume that $\rho_u$ is absolutely irreducible at each $u \in U$. At that point the equality $\tr(\rho_u) = \tr(\rho_U\otimes_{\mathcal O(U)} k_u)$ becomes an equality of true representations by the theorem of Brauer and Nesbitt. This proves the lemma.
\end{proof} 

\begin{lem}\label{lem:newform-accumulation}
Suppose that $x \in \mathscr E(\mathfrak n)_{\rmmid}(\overline{\mathbf Q}_p)$ is a classical point of prime-to-$p$ conductor $\mathfrak n$. Then, if $U$ is a good neighborhood of $x$ in $\mathscr E(\mathfrak n)_{\rmmid}$, then $U$ contains a Zariski-dense and accumulating subset of points $y$ that are twist classical of the form $y = \tw_{\vartheta}(x')$ where $x'$ is classical and also has prime-to-$p$ conductor $\mathfrak n$.
\end{lem}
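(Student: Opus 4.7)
The plan is to take the Zariski-dense accumulating set of extremely non-critical points supplied by Proposition \ref{prop:density-results}(4) --- all of which are twist classical by Proposition \ref{prop:implications-extremely-non-critical}(1) --- and cut out the ones whose underlying classical point has the full prime-to-$p$ conductor $\mathfrak n$, using the family of Galois representations on $U$ as a constraint.

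First, after replacing $U$ by a smaller good neighborhood of $x$ (cofinal by Proposition \ref{prop:good-neighborhoods}), Lemma \ref{lemma:lift-to-O} produces a continuous family $\rho_U: G_{F,\mathfrak n \mathbf p} \to \GL_2(\mathscr O(U))$ interpolating the $\rho_u$, and by Theorem \ref{thm:classical-galois-representations} the Artin conductor of $\rho_{x,v}$ equals $n_v := v(\mathfrak n)$ for every $v \mid \mathfrak n$ with $v \nmid p$. The standard upper semicontinuity of the Artin conductor in a rigid analytic family of continuous $G_v$-representations ($v \nmid p$) then yields a Zariski-open $V \subset U$ containing $x$ on which $\cond_v(\rho_u) \geq n_v$ for every such $v$. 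Extremely non-critical points are Zariski-dense and accumulating in $V$, and for each such $y \in V$ we write $y = \tw_\vartheta(x')$ with $x'$ classical and $\vartheta \in \mathscr X(\Gamma_F)(\overline{\mathbf Q}_p)$. Since $\Gamma_F$ is unramified outside $p$, the character $\vartheta$ is unramified at every $v \nmid p$, so $\rho_y = \rho_{x'} \otimes \vartheta$ and $\rho_{x'}$ have the same Artin conductor at $v$; combining the lower bound $\cond_v(\rho_{x'}) = \cond_v(\rho_y) \geq n_v$ (from $y \in V$) with the upper bound $\cond_v(\rho_{x'}) \leq n_v$ (because $x'$ is classical on $\mathscr E(\mathfrak n)$, so the attached $\pi'$ has level dividing $\mathfrak n$) gives equality. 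Hence $\pi'$ has prime-to-$p$ conductor exactly $\mathfrak n$, as required.

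The main obstacle is making the upper semicontinuity of the Artin conductor in $p$-adic rigid analytic families precise. After shrinking $U$ one can arrange that wild inertia at each $v \mid \mathfrak n$ acts through a fixed finite quotient (so its contribution to the conductor is locally constant on $U$), and that the tame part is controlled by the matrix of a topological generator of tame inertia together with the monodromy operator; the invariants and kernel of these can only jump up under specialization, so the total conductor can only drop, which is exactly the upper semicontinuity needed. A final bookkeeping step is to verify that $V$ is Zariski-dense in $U$, not merely Zariski-open: on each irreducible component of $U$ passing through $x$, upper semicontinuity plus the Zariski-density of classical points with conductor dividing $\mathfrak n$ force the generic conductor at $v$ to equal $n_v$, so $V$ is Zariski-dense on every such component; after a last shrinking of $U$ to a good neighborhood all of whose irreducible components pass through $x$ (again using Proposition \ref{prop:good-neighborhoods}), the desired Zariski-density in $U$, together with accumulation, follows.
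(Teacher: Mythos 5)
Your proof is correct but takes a genuinely different route from the paper. The paper's argument is purely eigenvariety-theoretic: for each proper divisor $\mathfrak n' \supsetneq \mathfrak n$ it builds an auxiliary eigenvariety $\widetilde{\mathscr E}(\mathfrak n')$ from overconvergent cohomology at level $\mathfrak n'$ but with only the $\mathbf T(\mathfrak n)$-action remembered, obtains a closed immersion $\widetilde{\mathscr E}(\mathfrak n')\hookrightarrow \mathscr E(\mathfrak n)$, and observes (following Bella\"iche's Lemma 2.7) that $x$ lies in none of these finitely many closed images since its prime-to-$p$ conductor is exactly $\mathfrak n$; the complement of the union is a Zariski-open containing $x$, on which the accumulating extremely non-critical points have underlying classical point of full conductor. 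Your argument instead descends to the Galois side, using the rigid-analytic family $\rho_U$ from Lemma \ref{lemma:lift-to-O} together with semicontinuity of the local Artin conductor at $v \nmid p$. Both work, and there is no circularity in your use of $\rho_U$, since Lemma \ref{lemma:lift-to-O} depends only on Proposition \ref{prop:pseudo-representation} and reducedness, not on the present lemma. The trade-off is that the paper's approach is self-contained and avoids having to make the semicontinuity precise, whereas yours is more direct about \emph{why} the conductor can't drop and makes contact with the subsequent Proposition \ref{prop:family-galois-properties}.

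Two small technical points to tidy in your write-up. First, the semicontinuity you are using is \emph{lower} semicontinuity of the conductor (equivalently, upper semicontinuity of $u \mapsto \dim \rho_u^{I_v}$): $\{ \cond_v \geq c \}$ is open because the dimension of $I_v$-invariants, realized as the kernel of finitely many endomorphisms of a vector bundle after noting wild inertia and the prime-to-$p$ part of tame inertia have finite image while the pro-$p$ tame part is topologically generated by a single matrix, can only jump up on closed subsets; you called it upper semicontinuity but used it correctly. Second, the final density step is simpler than you make it: once $U$ is shrunk to a good neighborhood whose irreducible components all pass through $x$ (Proposition \ref{prop:good-neighborhoods}, together with equidimensionality from Proposition \ref{prop:density-results}(3)), any nonempty Zariski-open $V$ containing $x$ automatically meets and is dense in every component, so no auxiliary genericity argument about the conductor on each component is needed.
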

\begin{proof}
For $\mathfrak n \subsetneq \mathfrak n'$, write $\widetilde{\mathscr E}(\mathfrak n')$ for the eigenvariety constructed out of the finite slope subspaces $H^{\ast}(\mathfrak n', \mathscr D_{\lambda})_{\leq h}$ except only with endomorphisms by $\mathbf T(\mathfrak n)$ (i.e.\ ignore the Hecke operators at primes dividing $\mathfrak n/\mathfrak n'$). Then the construction we outlined gives a natural closed immersion $\widetilde{\mathscr E}(\mathfrak n') \hookrightarrow \mathscr E(\mathfrak n)$.

Let $x$ be in the statement of the lemma, so $x$ is a classical point of prime-to-$p$ conductor. Then, $x$ is not in the image of any of the finitely many such embeddings by the same argument as \cite[Lemma 2.7]{Bellaiche-CriticalpadicLfunctions} (which relies on the family of Galois representations we've just established). So, you can shrink $U$ and assume in fact that $U$ misses any of these embeddings. Then, $U$ contains a Zariski-dense and accumulating subset of points $y$ as above, where the claim on the conductors follows because the quantity ``prime-to-$p$ conductor of $x'$'' is actually independent of choosing $x'$, since any chosen $\vartheta$ is unramified away from $p$.
\end{proof}

For any character $\eta$ on $\mathcal O_v^\times$, we write $\LT_{\varpi_v}(\eta)$ for the extension of $\eta$ to a character of $F_v^\times$ defined by stipulating that $\varpi_v \mapsto 1$. The character $\LT_{\varpi_v}(\eta)$ is unitary, so we use the same notation to denote its continuous extension to a Galois character on $G_{F_v}$.\footnote{As a character of $G_{F_v}$, $\LT_{\varpi_v}(\eta)$ coincides with the composition $\eta \circ \chi_{\varpi_v}$, where $\chi_{\varpi_v}:G_{F_v} \to \mathcal{O}_{v}^{\times}$ is the character obtained from the $G_{F_v}$-action on the Tate module of the Lubin-Tate formal $\mathcal{O}_v$-module associated with the uniformizer $\varpi_v$. This explains the notation.} These normalizations are designed so that $\lambda=(\kappa,w)$ is a cohomological weight, then $\HT_{\sigma}(\LT_{\varpi_v}(\eta_{1,v}(\lambda))) = {w - \kappa_\sigma\over 2}$ and $\HT_{\sigma}(\LT_{\varpi_v}(\eta_{2,v}(\lambda))) = {w + \kappa_\sigma\over 2} + 1$ for all $\sigma \in \Sigma_v$ (compare with Theorem \ref{thm:classical-galois-representations}). 

\begin{prop}\label{prop:family-galois-properties}
Let $x = x(\pi,\alpha)\in \mathscr E(\mathfrak n)_{\rmmid}(\overline{\mathbf Q}_p)$ be a classical point with prime-to-$p$ conductor $\mathfrak n$. Choose  $U$ and $\rho_U$ as in Lemma \ref{lemma:lift-to-O}. Write $\mathscr O_x$ for the rigid local ring on $\mathscr E(\mathfrak n)_{\rmmid}$ at $x$ and $\rho_{\mathscr O_x}$ for the specializiation of $\rho_U$ along $\mathscr O(U) \rightarrow \mathscr O_x$.
\begin{enumerate}
\item If $w \nmid p$ and $I_w$ is the choice of an inertia subgroup at $w$ then $\rho_{\mathscr O_x}|_{I_w} \simeq \rho_x|_{I_w} \otimes_{k_x} \mathscr O_x$.
\item Assume further that if $v \mid p$ and $\pi_v$ is an unramified principal series then the $v$-th Hecke polynomial has distinct roots.\footnote{Compare with condition 2(c) in Definition \ref{defn:decent} below.} Then, if $v \mid p$, then 
\begin{equation*}
D_{\crys}^+(\rho_{U,v}\otimes \LT_{\varpi_v}(\eta_{1,v}(\lambda_U))^{-1})^{\varphi^{f_v} = \psi(U_v)}
\end{equation*}
is locally free of rank one over $F_{v}^{\nr}\otimes_{\mathbf Q_p} \mathscr O(U)$ and commutes with base change on $U$.
\end{enumerate}
\end{prop}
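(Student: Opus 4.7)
My plan is to handle parts (1) and (2) separately. Part (1) is a density/rigidity argument at tame places, while part (2) relies on the theory of crystalline periods in families due to Kisin and Liu.

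For part (1), I would begin by applying Lemma \ref{lem:newform-accumulation} to produce a Zariski-dense and accumulating subset $Z \subset U$ of twist-classical points $y = \tw_{\vartheta_y}(x_y')$ with $x_y'$ classical of prime-to-$p$ conductor \emph{exactly} $\mathfrak n$. Since $\vartheta_y$ is unramified at every finite place $w \nmid p$, restricting to inertia gives $\rho_y|_{I_w} \simeq \rho_{x_y'}|_{I_w}$, and by Theorem \ref{thm:classical-galois-representations} (local-global compatibility) each $\rho_{x_y'}|_{I_w}$ factors through a finite quotient of $I_w$ whose order is bounded uniformly in terms of $\mathfrak n$. After shrinking $U$, the trace of any fixed $g \in I_w$ is then pinned down to a single value on $Z$, so by Zariski-density it is constant across $U$. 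Using that $I_w$ acts through a fixed finite group in a neighborhood of $x$ (so the action is automatically semisimple for coefficient reasons), standard Brauer--Nesbitt-type arguments then promote the equality of traces to an equality of representations, yielding the desired identification $\rho_{\mathscr O_x}|_{I_w} = \rho_x|_{I_w}\otimes_{k_x}\mathscr O_x$ after specialization.

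For part (2), I plan to follow the Kisin--Liu framework. The Tate twist by $\LT_{\varpi_v}(\eta_{1,v}(\lambda_U))^{-1}$ is chosen precisely so that the interpolated Hodge--Tate--Sen weights of $\rho_{U,v}\otimes\LT_{\varpi_v}(\eta_{1,v}(\lambda_U))^{-1}$ are non-negative at classical weights, and moreover one of the two weights vanishes identically. In this regime the relevant extension of Kisin's theorem (\cite{Kisin-OverconvergentModularForms}, and its enhancement in \cite{Liu-Triangulations}) to general rigid bases asserts that
\[
\mathscr D \;:=\; D_{\crys}^+\bigl(\rho_{U,v}\otimes \LT_{\varpi_v}(\eta_{1,v}(\lambda_U))^{-1}\bigr)
\]
is a coherent $F_v^{\nr}\otimes_{\mathbf Q_p}\mathscr O(U)$-module equipped with a natural semilinear $\varphi$, and that its formation commutes with base change to each $u \in U$ (after possibly shrinking $U$). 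At the classical point $x$: if $\pi_v$ is an unramified principal series, the hypothesis that the $v$-th Hecke polynomial has distinct roots forces the $\varphi^{f_v}=\psi(U_v)(x)$-eigenspace on $\mathscr D\otimes k_x$ to be one-dimensional over $F_v^{\nr}\otimes k_x$, separated from the other refinement's eigenspace; if $\pi_v$ is unramified Steinberg, there is a unique refinement and the $\varphi^{f_v}$-eigenline is cut out by the kernel of monodromy, again of dimension one.

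The main obstacle is upgrading this pointwise rank-one statement to local freeness of rank one on a neighborhood of $x$, compatibly with base change. My plan is to exploit the fact that $\psi(U_v)\in\mathscr O(U)^\times$ is a nowhere-vanishing unit, so that after shrinking $U$ the characteristic polynomial of $\varphi^{f_v}$ acting on $\mathscr D$ factors over $\mathscr O(U)$ into two relatively prime pieces interpolating the two refinements (in the principal series case). The associated idempotent then decomposes $\mathscr D$ as a direct sum of Frobenius-isotypic components, of which the $\psi(U_v)$-piece is the required locally free rank-one $F_v^{\nr}\otimes_{\mathbf Q_p}\mathscr O(U)$-module; base change compatibility of the summand is inherited from that of $\mathscr D$ itself together with the rank-one-ness. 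The concrete execution is essentially that of \cite[Section 3]{Bergdall-Smoothness} or \cite[Section 4]{BHS-Classicality} transplanted to the present setting, with the Steinberg case handled in parallel by using the monodromy operator in place of Frobenius.
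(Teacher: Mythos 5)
Part (1) matches the paper: the paper defers to Bella\"iche's argument for ``property (iii)'' in \cite[Theorem 2.1.6]{Bellaiche-CriticalpadicLfunctions} (with Lemma \ref{lem:newform-accumulation} playing the role of \cite[Lemma 2.7]{Bellaiche-CriticalpadicLfunctions}), and your density/rigidity sketch is exactly that argument.

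Part (2) has a genuine gap. You propose to first obtain a coherent $\varphi$-module $\mathscr D = D_{\crys}^+(\rho_{U,v}\otimes \cdots)$ over $F_v^{\nr}\otimes_{\mathbf Q_p}\mathscr O(U)$, and then cut out the desired rank-one piece by factoring the characteristic polynomial of $\varphi^{f_v}$ on $\mathscr D$ into the two refinements and decomposing by idempotents. This does not work: away from classical points of $U$ the representation $\rho_{u,v}\otimes\LT_{\varpi_v}(\eta_{1,v}(\lambda_u))^{-1}$ is generically not de Rham, so $D_{\crys}^+$ of the fiber is not two-dimensional and there is no ``second refinement'' or rank-two characteristic polynomial available to factor. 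The surprising content of Kisin's theorem (and Liu's generalization) is precisely that the single eigenspace $D_{\crys}^+(\cdot)^{\varphi^{f_v}=\psi(U_v)}$ persists as a rank-one module in the family even though no rank-two crystalline structure does; an argument that presupposes a rank-two $\mathscr D$ and splits it is begging the question. Your reading of Kisin--Liu as asserting coherence of $D_{\crys}^+$ as a whole is not what those results say.

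The paper instead verifies that $\rho_U\otimes\LT_{\varpi_v}(\eta_{1,v}(\lambda_U))^{-1}$ is a \emph{weakly refined family} in the sense of \cite[Definition 1.5]{Liu-Triangulations}, taking $\psi(U_v)$ as the auxiliary function $F$ and the extremely non-critical points as the dense set $Z$, and then directly quotes \cite[Proposition 5.13]{Liu-Triangulations}, which yields exactly the local freeness and base-change compatibility you are after (here the regularity hypothesis at $x$ uses condition 2(c) via Remark \ref{rmk:crystalline-eigenvalues}). The step you omit entirely, and which the paper highlights as the crucial verification, is axiom (d) of Liu's definition: one must show the $\varphi^{f_v}=\psi(U_v)$-eigenspace is nonzero at each point of a Zariski-dense subset. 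The paper does this by writing an extremely non-critical point as $z=\tw_\vartheta(x)$ with $x$ classical, untwisting by the unramified crystalline character $\LT_{\varpi_v}(\vartheta_v^{-1})\vartheta_v$, and reducing to the classical case via Theorem \ref{thm:classical-galois-representations}. You should replace your ``upgrading'' step by this verification and a direct appeal to \cite[Proposition 5.13]{Liu-Triangulations}.
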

In part (2), $F_v^{\nr} \subset F_v$ means the maximal unramified extension of $\mathbf Q_p$ inside $F_v$. If $\rho$ is an $R$-linear representation of $G_{F_v}$, then $D_{\crys}(\rho)$ is an $(F_v^{\nr}\otimes_{\mathbf Q_p} R)$-module.
\begin{proof}[Proof of Proposition \ref{prop:family-galois-properties}]
The argument for part (1) follows exactly as in the argument for ``property (iii)'' in the proof of \cite[Theorem 2.16]{Bellaiche-CriticalpadicLfunctions}, including when $w$ is a ramified place for $\rho_x$, once \cite[Lemma 2.7]{Bellaiche-CriticalpadicLfunctions} is replaced by Lemma \ref{lem:newform-accumulation}. The details of that argument, which we provide here, require referencing the text \cite{BellaicheChenevier-Book}. Let $\mathscr K_x$ be the total field of fractions of the (reduced, by Theorem \ref{theorem:reducedness}) ring $\mathscr O_x$. Thus $\mathscr K_x = \prod_{s(x)} \mathscr K_{s(x)}$ where $s(x)$ runs over germs of irreducible components of $\mathscr E(\mathfrak n)_{\rmmid}$ passing through $x$, and $\mathscr K_{s(x)}$ is the fraction field of the germ $s(x)$. We write $\rho_{s(x)}^{\operatorname{gen}} = \rho_{\mathscr O_x} \otimes \mathscr K_{s(x)}^{\operatorname{gen}}$ for the extension of scalars to $\mathscr K_{s(x)}^{\operatorname{gen}}$. We now fix the place $w$ as in (1) and consider the three Weil--Deligne representations $(r_x,N_x)$, $(r_{\mathscr O_x},N_{\mathscr O_x})$, and $(r_{s(x)}^{\operatorname{gen}},N_{s(x)}^{\operatorname{gen}})$ associated with $\rho_{x,w}$, $\rho_{\mathscr O_{x},w}$, and $\rho_{s(x),w}^{\operatorname{gen}}$.  Since $x$ is classical, $\rho_x$ is irreducible and defined over $k_x$, so $\rho_{s(x)}^{\operatorname{gen}}$ is the unique continuous and irreducible representation $G_{F,\mathfrak n \mathbf p} \rightarrow \GL_2(\mathscr K_{s(x)})$ whose trace agrees with the pseudorepresentation $T$ (compare with the text prior to \cite[Corollary 7.5.10]{BellaicheChenevier-Book}). Thus $(r_{s(x)}^{\operatorname{gen}},N_{s(x)}^{\operatorname{gen}})$ is the projection onto $\mathscr K_{s(x)}$ of the generic Weil--Deligne representation as in \cite[Definition 7.8.16]{BellaicheChenevier-Book}, which validates using the references below. First, by \cite[Lemma 7.8.17]{BellaicheChenevier-Book}, each $r_{s(x)}^{\operatorname{gen}}|_{I_w}$ is isomorphic to the constant $I_w$-representation $r_x|_{I_w} \otimes_{k_x} \mathscr K_{s(x)}^{\operatorname{gen}}$, which implies $r_{\mathscr O_x}|_{I_w}$ is also constant. To prove (1) it thus suffices to identify $N_x$ and $N_{\mathscr O_x}$. Using \cite[Proposition 7.8.9(ii)]{BellaicheChenevier-Book}, we can replace $\mathscr O_x$ by each $\mathscr K_{s(x)}$ and show that $N_{s(x)}^{\operatorname{gen}}$ and $N_x$ have the same Jordan normal forms. There are only two such forms, since each operator is nilpotent on a two-dimensional vector space. By \cite[Proposition 7.8.19(iii)]{BellaicheChenevier-Book}, if $N_{s(x)}^{\operatorname{gen}} = 0$, then $N_x = 0$ as well. On the other hand, if $N_{s(x)}^{\operatorname{gen}}$ is non-trivial, then \cite[Proposition 7.8.19(ii)]{BellaicheChenevier-Book} and Lemma \ref{lem:newform-accumulation} implies that $N_{x'}$ is non-trivial on a Zariski dense set of a twist classical points $x'$, with prime-to-$p$ conductor $\mathfrak n$, that accumulate near at $x$. In particular, this forces $w$ to be a place dividing $\mathfrak n$, so $N_x \neq 0$ in this case. This completes the proof of (1).

Now we prove part (2). Fix $v \mid p$. We claim that the family $r_U:=\rho_{U,v} \otimes \mathrm{LT}_{\varpi_K}(\eta_{1,v}(\lambda_U))^{-1}$ of two-dimensional Galois representations over the reduced rigid analytic space $U$ is a weakly-refined family in the sense of \cite[Definition 1.5]{Liu-Triangulations}. Namely, we choose as the definining data $\{h_1,h_2,F,Z\}$, where
\begin{itemize}
\item $h_1 = 0$ and $h_2 = \frac{\partial}{\partial z}\big|_{z = 1}\left( \eta_{2,v}(\lambda_U)\eta_{1,v}(\lambda_U)^{-1}\right)$,
\item $F = \psi(U_v)$, and
\item $Z \subseteq U$ is the Zariski-dense subset of the extremely non-critical points $z$ that satisfy the slightly stronger conditions in Remark \ref{rmk:extremely-non-critical-unramified-principal-series}. Namely, writing the weight of $z$ as $(\kappa,\ast)$, then $\kappa_{\sigma} \geq 1$ for all $\sigma \in \Sigma_F$, and $v_p(\psi_z(U_p)) < \frac{1}{3}\inf_{\sigma} (1+\kappa_\sigma)$. 
\end{itemize}
The subset of extremely non-critical points is known to be dense by part (4) of Proposition \ref{prop:density-results}, but the same proof shows that our chosen subset $Z$ is also Zariski-dense. By assumption in (2), the eigenvalues of $\varphi^{f_v}$ acting on $D_{\crys}^+(r_x)$ are distinct, and so part (2) of the proposition follows from \cite[Proposition 5.13]{Liu-Triangulations}, once we verify that the axioms in \cite[Definition 1.5]{Liu-Triangulations}.\footnote{There are two minor alerts for the reader. First, the notation in \cite{Liu-Triangulations} is for the functions $h_i$ to be written $\kappa_i$, notation we did not use since it collides with our prior notations in this paper. Second, technically, the axioms cannot literally be verified because \cite{Liu-Triangulations} uses the convention that the Hodge--Tate weights are the negatives of the ones we use here. However, the only change, given that $h_1 = 0$ for us, is that axiom (c) in \cite[Definition 1.5]{Liu-Triangulations} should have the word smallest in place of the word biggest.} 

We pause the proof now for an observation on twisting. Suppose that $z \in \mathscr E(\mathfrak n)_{\rmmid}(\overline{\mathbf Q}_p)$ and $\vartheta \in \mathscr X(\Gamma_F)(\overline{\mathbf Q}_p)$ and consider $z' = \tw_{\vartheta}(z)$. Then $\rho_{z'} \simeq \rho_{z} \otimes \vartheta$ and $\lambda_{z'}|_{\mathcal O_v^\times} = \lambda_z|_{\mathcal O_v^\times} \otimes  \vartheta_v^{-1}$ (see \eqref{eqn:twisting-diagram}). Since $\eta_{1,v} = \lambda_{2,v}^{-1}$, by definition \eqref{defn:eta-map}, we see that
\begin{equation}\label{eqn:twist-refined}
r_{z'} \simeq r_z \otimes \LT_{\varpi_v}(\vartheta_v^{-1})\vartheta_v.
\end{equation}
The factor $\LT_{\varpi_v}(\vartheta_v^{-1}) \vartheta_v$ appearing here is the unramified (hence crystalline, with Hodge--Tate weights all zero) character of $\mathcal O_v^\times$ sending $\varpi_v$ to $\vartheta_v(\varpi_v)$.

With this observation, we can verify the axioms of \cite[Definition 1.5]{Liu-Triangulations} (which we label (a) thru (f) as in {\em loc.\ cit.}). For instance, axiom (b) asks that $r_z$ is crystalline for $z \in Z$. When $z$ is classical, Remark \ref{rmk:extremely-non-critical-unramified-principal-series} implies that $z$ is associated with an unramified principal series. Thus $r_z$ is crystalline by Theorem \ref{thm:classical-galois-representations}. When $z$ is only twist classical, the same conclusion follows from \eqref{eqn:twist-refined}. Similarly, by Theorem \ref{thm:classical-galois-representations} and \eqref{eqn:twist-refined}, the set of Hodge--Tate weights $\HT_{\sigma}(r_z)$ is equal to $\{h_1(z),h_2(z)\}$ for $z \in Z$ and $\sigma \in \Sigma_v$. On the one hand, \cite[Lemma 7.5.12]{BellaicheChenevier-Book} implies that $\HT_{\sigma}(r_u) = \{h_1(u),h_2(u)\}$ for all $u \in U$, so axiom (a) is confirmed. On the other hand, if $z \in Z$ is classical then Theorem \ref{thm:classical-galois-representations} implies $h_1(z) = 0 < h_2(z)$, and inequality continues to hold on all of $Z$ by \eqref{eqn:twist-refined}. So, axiom (c) is confirmed. The most crucial axiom is (d), which states that
$$
D_{\crys}^+(r_z)^{\varphi^{f_v} = \psi_z(U_v)} \neq 0
$$
for all $z \in Z$. By \eqref{eqn:twist-refined}, and because $\psi_{\tw_{\vartheta}(z)}(U_v) = \vartheta(\varpi_v)\psi_z(U_v)$, we may assume that $z \in Z$ is classical. As mentioned above, this implies that $z$ is associated with an unramified principal series. Thus, the fact that $\psi_z(U_v)$ is a crystalline eigenvalue for $r_z$ follows from Remark \ref{rmk:crystalline-eigenvalues}. Axiom (e) follows from Lemma \ref{lem:weight-density}, Proposition \ref{prop:openness-maximal-dimensional}, and part (3) of Proposition \ref{prop:density-results} (which is the key point that components of $\mathscr E(\mathfrak n)_{\rmmid}$ have maximal dimension). Finally, axiom (f) is true by construction of the $h_i$. This completes the proof.
\end{proof}

\subsection{Smoothness at $p$-distinguished decent classical points}\label{subsec:smoothness}
We now generalize the definition of non-critical.
\begin{defn}\label{defn:decent}
A classical point $x = x(\pi,\alpha) \in \mathscr E(\mathfrak n)(\overline{\mathbf Q}_p)$ is decent if either:
\begin{enumerate}
\item It is non-critical as in Definition \ref{defn:classical-noncritical}, or 
\item The following three conditions hold.
\begin{enumerate}
\item $H^{\ast}_c(\mathfrak n,\mathscr D_{\lambda})_{\mathfrak m_x}$ is concentrated only in degree $d$,
\item The Selmer group $H^1_f(G_{F},\ad \rho_{\pi})$ vanishes, and
\item For each $v \mid p$, $\alpha_v$ is a simple root of $X^2 - a_v(\pi)X + \omega_{\pi}(\varpi_v)q_v$.
\end{enumerate}
\end{enumerate}
\end{defn}
In condition 2(b) of Definition \ref{defn:decent}, $\ad \rho_\pi$ is the adjoint representation $\rho_\pi \otimes \rho_\pi^{\vee} \simeq \End(\rho_\pi)$.

\begin{lem}
If $x \in \mathscr E(\mathfrak n)(\overline{\mathbf Q}_p)$ is decent, then $x \in \mathscr E(\mathfrak n)_{\rmmid}(\overline{\mathbf Q}_p)$.
\end{lem}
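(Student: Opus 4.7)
The lemma follows essentially by inspection once the right reformulation of $\mathscr E(\mathfrak n)_{\rmmid}$ is in place. The plan is to split on the two clauses of Definition \ref{defn:decent} and invoke an already-proven result in each case.

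First, suppose $x$ is non-critical in the sense of Definition \ref{defn:classical-noncritical}. Then Lemma \ref{lem:non-critical-pts-middle-degree} already asserts $x \in \mathscr E(\mathfrak n)_{\rmmid}$, so there is nothing to do.

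Second, suppose instead that $x=x(\pi,\alpha)$ satisfies conditions 2(a)--(c) of Definition \ref{defn:decent}. By Proposition \ref{prop:middle-alternate-characterization}, membership in $\mathscr E(\mathfrak n)_{\rmmid}$ is equivalent to the assertion that $H^j_c(\mathfrak n, \mathscr D_{\lambda_x}\otimes_{k_{\lambda_x}} k_x)_{\mathfrak m_x}$ is nonzero precisely when $j=d$. But condition 2(a) is exactly this same statement with $k_x$ replaced by $k_{\lambda_x}$, and since $k_{\lambda_x} \subset k_x$ is a field extension (hence a flat base change), localization and extension of scalars commute and preserve nonvanishing. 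In particular 2(a) implies the criterion of Proposition \ref{prop:middle-alternate-characterization}, so $x \in \mathscr E(\mathfrak n)_{\rmmid}$.

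There is no real obstacle here: conditions 2(b) and 2(c) play no role for this statement (they enter only in the smoothness arguments later), and the essential content has been packaged into Lemma \ref{lem:non-critical-pts-middle-degree} and Proposition \ref{prop:middle-alternate-characterization}.
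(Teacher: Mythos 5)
Your proof is correct and takes precisely the same route as the paper's (which is exactly the two-sentence case split you describe: Lemma \ref{lem:non-critical-pts-middle-degree} for the non-critical case, Proposition \ref{prop:middle-alternate-characterization} for case (2)). You've also correctly filled in the small scalar-extension detail ($k_{\lambda_x}$ versus $k_x$) that the paper leaves implicit; since $k_{\lambda_x}\hookrightarrow k_x$ is faithfully flat, localization at $\mathfrak m_x$ commutes with $\otimes_{k_{\lambda_x}}k_x$ and nonvanishing is preserved, so condition 2(a) indeed yields the criterion in Proposition \ref{prop:middle-alternate-characterization}.
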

\begin{proof}
If $x$ is non-critical, then this follows from Lemma \ref{lem:non-critical-pts-middle-degree}. Otherwise, condition (2a) implies the claim by Proposition \ref{prop:middle-alternate-characterization}.
\end{proof}

We will see later (Theorem \ref{thm:socle-one-d}) that the Hecke eigensystem corresponding to a decent point $x$ has multiplicity one in the distribution-valued cohomology. When $x$ is a non-critical point, this is a classical automorphic fact. But if $x$ satisfies condition (2) of Definition \ref{defn:decent}, we deduce it from the following geometric theorem on the eigenvariety. The proof occupies the rest of this subsection.
\begin{thm}\label{thm:smoothness}
Suppose that $x\in \mathscr E(\mathfrak n)_{\rmmid}(\overline{\mathbf Q}_p)$ is decent, the prime-to-$p$ conductor of $x$ is $\mathfrak n$, and condition 2(c) in Definition \ref{defn:decent} is satisfied. Then, $\mathscr E(\mathfrak n)_{\rmmid}$ is smooth at $x$.
\end{thm}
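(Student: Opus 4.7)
The plan is to handle the two cases of Definition \ref{defn:decent} separately. In the non-critical case, we follow an argument originally due to Chenevier which proves the strictly stronger statement that the weight map $\lambda: \mathscr{E}(\mathfrak n)_{\rmmid} \to \mathscr W(1)$ is \'etale at $x$. The key input is that non-criticality identifies $H^{\ast}_c(\mathfrak n,\mathscr D_{\lambda_x}\otimes k_x)_{\mathfrak m_x}$ with $H^{\ast}_c(\mathfrak n, \mathscr L_{\lambda_x}^{\sharp}(k_x))_{\mathfrak m_x}$, which by Corollary \ref{cor:one-dimensional-eigens} has dimension $2^d$ (one per sign character). The distinct-root hypothesis 2(c) rules out congruences between $\alpha$ and its ``neighboring'' refinements, and a standard deformation argument (which only needs the equidimensionality in Proposition \ref{prop:density-results}(3)) promotes this multiplicity-one statement to the infinitesimal statement that $\lambda^{\ast}:\mathscr O_{\mathscr W(1),\lambda_x} \to \mathscr O_{\mathscr E(\mathfrak n)_{\rmmid},x}$ is an isomorphism.

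For the critical case, i.e., case (2) of Definition \ref{defn:decent}, we argue via Galois deformation theory. Fix a good neighborhood $U$ of $x$ in $\mathscr E(\mathfrak n)_{\rmmid}$ and, by Lemma \ref{lemma:lift-to-O}, a continuous linear lift $\rho_U: G_{F,\mathfrak n\mathbf p} \to \GL_2(\mathscr O(U))$ of $\rho_x=\rho_\pi$. Proposition \ref{prop:family-galois-properties} equips $\rho_U$ with two pieces of local structure: at each $w \nmid p$, the inertia restriction $\rho_{U}|_{I_w}$ is constant across the family; at each $v \mid p$, hypothesis 2(c) extracts a canonical $(F_v^{\nr}\otimes \mathscr O(U))$-line inside $D^+_{\crys}(\rho_{U,v}\otimes \LT_{\varpi_v}(\eta_{1,v}(\lambda_U))^{-1})$ on which $\varphi^{f_v}$ acts via $\psi(U_v)$. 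Together with the constraint on $\det \rho_{U,v}|_{\mathcal O_v^\times}$ (Lemma \ref{lem:relevant-determinant}), these data define a morphism from the formal rigid local ring $\mathscr O_{\mathscr E(\mathfrak n)_{\rmmid},x}$ into the universal refined deformation ring $R^{\mathrm{ref}}$ of $\rho_\pi$ with fixed inertia away from $p$, fixed refinement at each $v\mid p$, and fixed determinant-on-inertia.

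The proof reduces to two assertions: (i) the induced cotangent map $T_x\mathscr E(\mathfrak n)_{\rmmid} \hookrightarrow T^{\mathrm{ref}}$ is injective, and (ii) $\dim_{k_x} T^{\mathrm{ref}} \leq 1 + d + \delta_{F,p}$. For (i), one uses that $\rho_U$ is absolutely irreducible (since $\rho_\pi$ is) to conclude that any first-order deformation of $x$ is determined by the associated Galois pseudocharacter, which by Proposition \ref{prop:pseudo-representation} factors through the pseudocharacter of the refined lift. For (ii), one identifies $T^{\mathrm{ref}}$ with a Selmer group inside $H^1(G_{F,\mathfrak n\mathbf p},\ad\rho_\pi)$ cut out by refined conditions at $v\mid p$ and by trivial inertia conditions at $w\nmid p$, and computes its dimension via the global Euler characteristic formula, local Tate duality, and the local trianguline calculations of \cite{Bergdall-Smoothness}. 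The crucial cancellation comes from condition 2(b): it makes $H^1_f(G_F,\ad\rho_\pi)$ vanish, which removes the obstruction term in the Selmer-group calculation and leaves precisely $1 + d + \delta_{F,p}$ as the bound. Combined with equidimensionality of $\mathscr E(\mathfrak n)_{\rmmid}$ at dimension $1 + d + \delta_{F,p}$ (Proposition \ref{prop:density-results}(3)), the inequality $\dim T_x \mathscr E(\mathfrak n)_{\rmmid} \leq 1+d+\delta_{F,p}$ forces smoothness.

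The main obstacle is the injectivity (i), which hinges on showing that an infinitesimal deformation of $x$ on the eigenvariety actually gives rise to a compatible refinement at each $v\mid p$ of $\rho_U \bmod \mathfrak m_x^2$. This is exactly where the local freeness and base-change compatibility in Proposition \ref{prop:family-galois-properties}(2) are essential, and in turn where 2(c) is indispensable: without distinct Frobenius eigenvalues the rank-one crystalline eigenspace need not deform, the map to $R^{\mathrm{ref}}$ would fail to be well-defined, and the entire deformation-theoretic setup collapses. Once this infinitesimal refinement is secured, the Selmer group computation giving (ii) is formal and already worked out in \cite{Bergdall-Smoothness,BHS-Classicality}.
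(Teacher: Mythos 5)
Your overall strategy matches the paper: split into the non-critical and condition-(2) cases, handle the first via Chenevier's \'etaleness argument, and the second via Galois deformation theory using Lemma \ref{lemma:lift-to-O}, Proposition \ref{prop:family-galois-properties}, Lemma \ref{lem:relevant-determinant}, and the local calculations of \cite{Bergdall-Smoothness}. But in the critical case there are two places where your account is off.

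First, the ring map runs the other way. The universal property of $R^{\Ref}_\rho$ applied to the lift $\rho_{\mathscr O_x}$ produces a morphism $R^{\Ref}_\rho \to \widehat{\mathscr O}_x$, and the content of the argument (which requires that the Hecke eigenvalues away from $\mathfrak n\mathbf p$ topologically generate $\widehat{\mathscr O}_x$) is that this map is \emph{surjective}. That surjection of local rings is what dualizes to an inclusion $\mathfrak t_x \hookrightarrow \mathfrak t_{R^{\Ref}_\rho}$ of tangent spaces. You instead describe ``a morphism from $\mathscr O_{\mathscr E(\mathfrak n)_{\rmmid},x}$ into $R^{\mathrm{ref}}$,'' which would produce a map of tangent spaces in the opposite direction and would not, without more, give the inequality $\dim \mathfrak t_x \leq \dim \mathfrak t_{R^{\Ref}_\rho}$ that you need. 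The rest of your outline assumes the correct tangent-space inclusion, so this is a sign-error in the setup rather than a fatal flaw, but it obscures exactly what must be proved (namely, surjectivity of the map out of the universal ring).

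Second, and more seriously, you misattribute the ``crucial cancellation'' to condition 2(b). Vanishing of $H^1_f(G_F,\ad\rho_\pi)$ delivers the containment of $\mathfrak t_\rho^{\Ref}$ into $\bigoplus_{v\mid p} \mathfrak t_v^{\Ref}/H^1_f(G_{F_v},\ad\rho_v)$, but the ambient space has dimension at most $\sum_{v\mid p} 2(F_v:\mathbf Q_p) = 2d$ by Proposition \ref{prop:local-bound}, which is strictly larger than the target $1 + d + \delta_{F,p}$ whenever $d - 1 - \delta_{F,p} > 0$. The reduction from $2d$ down to $1 + d + \delta_{F,p}$ comes from a separate mechanism: the constraint that the determinant restricted to $\mathcal O_p^\times$ must kill a finite-index subgroup of the global units (Lemma \ref{lem:relevant-determinant}), encoded in the ``relevant'' condition, which cuts out a subspace of codimension exactly $d - 1 - \delta_{F,p}$ inside $\bigoplus_{v\mid p}H^1_{/f}(G_{F_v},L)$ (Lemma \ref{lemma:co-dimension}). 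The paper then observes that the $\mathfrak t_v^{\Ref}$ each surject onto $H^1(G_{F_v},L)$ via the determinant, so the codimension propagates to the relevant refined tangent space and one arrives at $\dim \mathfrak t_\rho^{\Ref,\rel} \leq 2d - (d - 1 - \delta_{F,p}) = 1 + d + \delta_{F,p}$. Calling this ``formal and already worked out in \cite{Bergdall-Smoothness,BHS-Classicality}'' overstates what those references provide: they supply the purely local bound $\dim \mathfrak t_v^{\Ref}/H^1_f \leq 2(F_v:\mathbf Q_p)$, but the global balance with $\delta_{F,p}$ is a separate calculation that you have not carried out. Invoking the global Euler characteristic formula and local Tate duality (a genuinely different route) is plausible in spirit but would require you to show the refined Selmer conditions are compatible with duality in the right sense, which is not obvious and not attempted here.
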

To be clear, the assumption on $x$ in Theorem \ref{thm:smoothness} is that either $x$ satisfies condition (2) of Definition \ref{defn:decent} or $x$ is non-critical and further satisfies condition 2(c) of Definition \ref{defn:decent}. The proof in case $x$ satisfies (2) is at the end of the subsection. In case $x$ is non-critical, the proof is in Proposition \ref{prop:etaleness} below.

We now fix some notation that will remain in force for the rest of this section. We will write $x \in \mathscr E(\mathfrak n)_{\rmmid}(\overline{\mathbf Q}_p)$ and $\lambda=\lambda_x$ for its weight. Write $L = k_x$ for the residue field at $x$. We write $\mathscr O_x$ for the rigid local ring on $\mathscr E(\mathfrak n)_{\rmmid}$ at $x$ and $\mathscr O_{\lambda}$ for the rigid local ring on $\mathscr W(1)$ at $\lambda$.

We first prove Theorem \ref{thm:smoothness} in the non-critical case.
\begin{prop}\label{prop:etaleness}
If $x \in \mathscr E(\mathfrak n)_{\rmmid}(\overline{\mathbf Q}_p)$ is as in Theorem \ref{thm:smoothness} and non-critical, then $\lambda:\mathscr E(\mathfrak n)_{\rmmid} \rightarrow \mathscr W(1)$ is \'etale at $x$.
\end{prop}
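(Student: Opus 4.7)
The plan is to prove that the map of rigid local rings $\mathscr O_\lambda \to \mathscr O_x$ is an isomorphism, which (given that the weight map is locally finite on $\mathscr E(\mathfrak n)_{\rmmid}$ by Proposition \ref{prop:looks-like-eigen}(2), and both spaces are equidimensional of the same dimension) is equivalent to \'etaleness at $x$. The argument follows Chenevier's classical strategy from the eigencurve setting.

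First I would pick a good neighborhood $U \ni x$ belonging to a slope adapted pair $(\Omega,h)$, with $\lambda \in \Omega$ chosen small. By Proposition \ref{prop:looks-like-eigen}, $M := \mathscr M_c^d(U)$ is finite projective over $\mathscr O(\Omega)$ and $\mathscr O(U)$ is the image of $\mathbf T(\mathfrak n)\otimes \mathscr O(\Omega)$ in $\End_{\mathscr O(\Omega)}(M)$, so in particular $\mathscr O(U)$ acts faithfully on $M$. Write $T = \mathscr O(U)$.

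Next I would analyze the fiber of $M$ over $\lambda$ after localizing at $\mathfrak m_x$. Using the spectral sequence degeneration in the proof of Proposition \ref{prop:looks-like-eigen}(1) (or directly the fact that $x$ lies in $\mathscr E(\mathfrak n)_{\rmmid}$), one has
\begin{equation*}
M_{\mathfrak m_x}/\mathfrak m_\lambda M_{\mathfrak m_x} \;\simeq\; H^d_c(\mathfrak n,\mathscr D_\lambda\otimes_{k_\lambda} L)_{\mathfrak m_x}.
\end{equation*}
By non-criticality of $x$, the integration map $I_\lambda$ identifies this with the classical eigenspace $H^d_c(\mathfrak n,\mathscr L_\lambda^\sharp(L))_{\mathfrak m_x}$. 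Now I would invoke Eichler--Shimura (Corollary \ref{cor:one-dimensional-eigens}) together with the distinctness hypothesis 2(c) to conclude that this classical eigenspace is a sum of sign components, each one-dimensional over $L$, with the Hecke algebra acting semisimply through the single eigensystem $\psi_x$. (Condition 2(c) is essential here: it guarantees that passing from spherical level to the Iwahori eigenspace for the refinement $\alpha$ does not create nontrivial multiplicities coming from oldforms at $v\mid p$.)

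The payoff is that $T_{\mathfrak m_x}$ acts on the fiber $M_{\mathfrak m_x}/\mathfrak m_\lambda M_{\mathfrak m_x}$ through scalars in $L$, so $T_{\mathfrak m_x}/\mathfrak m_\lambda T_{\mathfrak m_x} = L = k_\lambda$. By Nakayama applied to the finite $\mathscr O_\lambda$-module $T_{\mathfrak m_x}$, the structural map $\mathscr O_\lambda \to T_{\mathfrak m_x}$ is surjective. For injectivity, observe that $M_{\mathfrak m_x}$ is a direct summand of $M$, hence flat (in fact free) over $\mathscr O_\lambda$; any element $a$ of the kernel of $\mathscr O_\lambda \to T_{\mathfrak m_x}$ annihilates $M_{\mathfrak m_x}$, which by flatness and the non-vanishing $M_{\mathfrak m_x}\ne 0$ forces $a=0$. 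Therefore $\mathscr O_\lambda \xrightarrow{\sim} T_{\mathfrak m_x} = \mathscr O_x$, proving \'etaleness. The only nontrivial input here beyond formal commutative algebra is the dimension count in classical cohomology, which is where condition 2(c) enters; all other steps are bookkeeping with the module $M$ and non-criticality.
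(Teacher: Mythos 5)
Your overall strategy is the right one (Chenevier's), and you correctly identify the role of non-criticality and of condition 2(c) in computing the weight fiber. But there is a gap in the step where you pass from the fiber of the module to the fiber of the Hecke algebra: from ``$T_{\mathfrak m_x}$ acts on $M_{\mathfrak m_x}/\mathfrak m_\lambda M_{\mathfrak m_x}$ through scalars in $L$'' you conclude ``$T_{\mathfrak m_x}/\mathfrak m_\lambda T_{\mathfrak m_x} = L$.'' What the scalar action actually shows is that the \emph{image} of $T_{\mathfrak m_x}/\mathfrak m_\lambda T_{\mathfrak m_x}$ in $\End_L(M_{\mathfrak m_x}/\mathfrak m_\lambda M_{\mathfrak m_x})$ equals $L$; it does not show the map is injective. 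Even though $T_{\mathfrak m_x}$ is reduced and embeds in $\End_{\mathscr O_\lambda}(M_{\mathfrak m_x})$, reduction modulo $\mathfrak m_\lambda$ need not preserve injectivity — the cokernel of $T_{\mathfrak m_x}\hookrightarrow\End_{\mathscr O_\lambda}(M_{\mathfrak m_x})$ is not known to be $\mathscr O_\lambda$-flat. (The standard toy example $\mathscr O_\lambda = \mathbf Q_p\langle x\rangle\to T = \mathbf Q_p\langle x,y\rangle/(y^2-x)$, where $T$ is reduced, $T\hookrightarrow M_2(\mathscr O_\lambda)$ is faithful, the fiber module is two-dimensional, yet $T/xT = \mathbf Q_p[y]/(y^2)$ has nilpotents, illustrates the phenomenon you need to rule out.) Saying the fiber ring is reduced is essentially the content of \'etaleness, so it cannot be asserted for free at this stage.

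The paper's proof closes this gap by working with a single sign component $M^\epsilon$, which it shows is projective of rank \emph{one} over $\mathscr O(\Omega)$ — not rank $2^d$. Since the rank of a projective module is locally constant, the paper computes the fiber rank both at $\lambda$ (using non-criticality, 2(c), and the conductor assumption) and at the accumulating family of extremely non-critical weights, finding one-dimensional fibers, hence rank one. Once $M^\epsilon$ has rank one, $\End_{\mathscr O(\Omega)}(M^\epsilon)\cong\mathscr O(\Omega)$ canonically, and the chain $\mathscr O(\Omega)\to\mathscr O(U)\to\End_{\mathscr O(\Omega)}(M^\epsilon)=\mathscr O(\Omega)$ composes to the identity, which immediately forces $\mathscr O(\Omega)\xrightarrow{\sim}\mathscr O(U)$ (the needed injectivity of $\mathscr O(U)\to\End(M^\epsilon)$ comes from the density of classical points and reducedness of $\mathscr O(U)$, so that the annihilator of $M^\epsilon$ vanishes). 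Your argument can be repaired along these lines: restrict throughout to $M^\epsilon_{\mathfrak m_x}$, observe it is free of rank one over $\mathscr O_\lambda$ (nonzero fiber plus projectivity, with rank one coming from Corollary \ref{cor:one-dimensional-eigens} and 2(c) exactly as you describe), so that $\End_{\mathscr O_\lambda}(M^\epsilon_{\mathfrak m_x})=\mathscr O_\lambda$, and then the embedding $T_{\mathfrak m_x}\hookrightarrow\mathscr O_\lambda$ together with the structure map $\mathscr O_\lambda\to T_{\mathfrak m_x}$ gives the isomorphism. Without this reduction to rank one, the Nakayama step does not close.
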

\begin{proof}
This argument is essentially due to Chenevier (\cite[Theorem 4.8]{Chenevier-InfiniteFern}).\footnote{In the case of $F=\mathbf Q$ there is also an argument given by Bella\"iche (\cite[Lemma 2.8]{Bellaiche-CriticalpadicLfunctions}) that relies on  {\em a priori} knowing that the weight map is flat. In general, this is only observed at decent points and only after the arguments in this section. See Section \ref{subsec:consequences}.}

Let $U$ be a sufficiently small good neighborhood of $x$, belonging to a slope adapted pair $(\Omega,h)$, such that $x$ is the unique reduced point of $U$ lying above $\lambda \in \Omega$. Set $M = \mathscr M_c^d(U)$. For each $\epsilon \in \{\pm 1\}^{\Sigma_F}$, let $M^{\epsilon}$ be the $\epsilon$-component, so $M = \bigoplus_{\epsilon} M^{\epsilon}$. Since these are $\mathscr O(\Omega)$-direct summands of $M$ they are each finite projective over $\mathscr O(\Omega)$ (see Proposition \ref{prop:looks-like-eigen}) and $U$ is the rigid analytic spectrum of the image of $\mathbf T(\mathfrak n)\otimes_{\mathbf Q_p} \mathscr O(\Omega) \rightarrow \End_{\mathscr O(\Omega)}(M^{\epsilon})$ (for any $\epsilon$). Further, if $\lambda' \in \Omega$ is any weight then 
\begin{equation}\label{eqn:module-fiber}
M^{\epsilon}/\mathfrak m_{\lambda'}M^{\epsilon} = \bigoplus_{\substack{y\in U \\ \lambda_y = \lambda'}} H^d_c(\mathfrak n, \mathscr D_{\lambda'})^{\epsilon}_{\mathfrak m_{y'}}.
\end{equation}
Remember that we have assumed $x$ is the unique point above $\lambda$. So, since $x$ is assumed to be non-critical, the prime-to-$p$ conductor of $\pi$ is $\mathfrak n$, and part (c) of Definition \ref{defn:decent} holds, we deduce that \eqref{eqn:module-fiber} is in fact $1$-dimensional. If $\lambda'$ is any other weight near to $\lambda$ over which all the points $y'$ are extremely non-critical  with prime-to-$p$ conductor $\mathfrak n$ (such weights are accumulating at $\lambda$) then $H^d_c(\mathfrak n, \mathscr D_{\lambda'})_{\mathfrak m_{y'}}^{\epsilon}$ is also $1$-dimensional. Since the dimension of \eqref{eqn:module-fiber} is constant with respect to $\lambda'$ we deduce $M^{\epsilon}$ is projective of rank one over $\mathscr O(\Omega)$. So, the composition $\mathscr O(\Omega) \rightarrow \mathscr O(U) \rightarrow \End_{\mathscr O(\Omega)}(M^{\epsilon})$ becomes an isomorphism after a finite field extension, meaning $\mathscr O(\Omega) \rightarrow \mathscr O(U)$ is \'etale.
\end{proof}
For the remainder of this subsection we fix a decent classical point $x \in \mathscr E(\mathfrak n)_{\rmmid}(\overline{\mathbf Q}_p)$ of weight $\lambda$ as in Theorem \ref{thm:smoothness}. Because Proposition \ref{prop:etaleness} deals with the non-critical case of Theorem \ref{thm:smoothness}, we will further assume that $x$ satisfies condition (2) of Definition \ref{defn:decent}. Write $L = k_x$ for the residue field at $x$.\label{page:start-deformation-theory}

We now begin to use the language of deformation theory of Galois representations. Let $\operatorname{Set}$ be the category of sets and $\mathfrak{AR}_L$ be the category of local Artinian $L$-algebras with residue field $L$. Recall that a functor $\mathfrak X : \mathfrak{AR}_L \rightarrow \operatorname{Set}$ is called (pro-)representable if there exists a complete local noetherian $L$-algebra $R$ with residue field $L$ such that $\mathfrak X \cong \Hom(R,-)$. Recall also that a morphism $\mathfrak X' \rightarrow \mathfrak X$ of functors is called relatively representable if for any morphism $\mathfrak Y \rightarrow \mathfrak X$, with $\mathfrak Y$ representable, the fibered product $\mathfrak Y \times_{\mathfrak X} \mathfrak X'$ is representable (see \cite[Section 19]{Mazur-Fermat-Deformations}).

Write $\rho = \rho_x$ for the global Galois representation and $\rho_v$ for its restriction to a place $v$. Let $\mathfrak X_v:\mathfrak{AR}_L \rightarrow \mathrm{Set}$ be the functor  parameterizing deformations $\widetilde \rho_v$ of $\rho_v$ up to strict equivalence, i.e.\ for $A \in \mathfrak {AR}_L$, the set $\mathfrak X_v(A)$ consists of lifts $\widetilde \rho_v : G_{F_v} \rightarrow \GL_2(A)$ of $\rho_v$ along $A \rightarrow L$ and two lifts are equivalent if they are conjugate by an invertible matrix congruent to the identity modulo $\mathfrak m_A$. The Zariski tangent space $\mathfrak X_v(L[u]/u^2L[u])$ to the functor $\mathfrak X_v$ is canonically isomorphic to $H^1(G_{F_v},\ad\rho_v)$. We will next describe relatively representable subfunctors of $\mathfrak X_v$, for each $v$, and use them to define a global deformation functor. 

First, suppose that $v \nmid p$ and write $I_v$ for the inertia group at $v$. Then, we define $\mathfrak X_{v,f}$ as the subfunctor of minimally ramified deformations of $\rho_v$, which means $\mathfrak X_{v,f}(A) \subseteq \mathfrak X_v(A)$ are those deformations such that $\widetilde \rho_v \simeq \rho_v\otimes_L A$ as $I_v$-representations. The relative representability of $\mathfrak X_{v,f} \subseteq \mathfrak X_v$ follows from Schlessinger's criterion as in \cite[Section 23]{Mazur-Fermat-Deformations} (see \cite[Proposition 7.6.3]{BellaicheChenevier-Book} for a proof). The Zariski tangent space $\mathfrak X_{v,f}(L[u]/u^2L[u])$ is canonically isomorphic to the local Bloch--Kato Selmer group 
$$
H^1_f(G_{F_v},\ad\rho_v) = \ker\left(H^1(G_{F_v},\ad\rho_v) \rightarrow H^1(I_v,\ad \rho_v)\right) \subset H^1(G_{F_v},\ad\rho_v),
$$
defined in \cite[Section 3]{BlochKato-TamagawaNumbersOfMotives}.

Now suppose that $v \mid p$. If $\sigma \in \Sigma_v$, part (1) of Theorem \ref{thm:classical-galois-representations} implies that the Hodge--Tate weights $\HT_\sigma(\rho_v)$ are $\{\frac{w-\kappa_\sigma}{2}, \frac{w+\kappa_\sigma}{2} + 1\}$, which in particular is a pair of distinct integers since $\kappa_\sigma \geq 0$. Thus for each $\widetilde \rho_v$, the Hodge--Tate--Sen weights of $\widetilde \rho_v$ are also distinct. We can thus choose characters $\widetilde \eta_{i,v} : \mathcal O_v^\times \rightarrow A^\times$ (for $i=1,2$) such that the Hodge--Tate--Sen weights of $\widetilde \rho_v$ are $\{\HT_{\sigma}(\LT_{\varpi_v}(\widetilde \eta_{1,v})),\HT_{\sigma}(\LT_{\varpi_v}(\widetilde \eta_{2,v}))\}$ and
$$
\HT_{\sigma}(\LT_{\varpi_v}(\widetilde \eta_{i,v})) \equiv \HT_{\sigma}(\LT_{\varpi_v}(\eta_{i,v}(\lambda))) \bmod \mathfrak m_A,
$$
where $\mathfrak m_A$ is the maximal ideal of $A$ (see \cite[Section 2.3]{Bergdall-Smoothness}, for instance).

Recall that $\alpha_v^{\sharp} = \psi_x(U_v)$ is an eigenvalue for $\varphi^{f_v}$ acting on $D_{\crys}^+(\rho_v\otimes \LT_{\varpi_v}(\eta_{1,v}(\lambda))^{-1})$. The functor of weakly-refined deformations $\mathfrak X_v^{\Ref}$ is defined by: 
$$
\mathfrak X_v^{\Ref}(A) = \{\widetilde \rho_v \in \mathfrak X_v(A) \mid D_{\crys}^+(\widetilde \rho_v \otimes \LT_{\varpi_v}(\widetilde \eta_{1,v})^{-1})^{\varphi^{f_v} = \widetilde \Phi} \text{ is free of rank one for some lift $\widetilde \Phi$ of $\alpha_v^{\sharp}$}\}.
$$
Recall now that assumption 2(c) in Definition \ref{defn:decent} implies $\alpha_v^{\sharp}$ is a simple eigenvalue of $\varphi^f$ acting on $D_{\crys}^+(\rho_v\otimes \LT_{\varpi_v}(\eta_{1,v}(\lambda))^{-1})$. For this reason, the natural inclusion $\mathfrak X_v^{\Ref} \subset \mathfrak X_v$ is relatively representable. This was first proven by Kisin (\cite[Proposition 8.13]{Kisin-OverconvergentModularForms}) when $F_v = \mathbf Q_p$ and later generalized by Tan (\cite[Section 5.2.2]{Tan-Thesis}). A third proof, using the language of $(\varphi,\Gamma)$-modules, is given in \cite[Section 3.1]{Bergdall-Smoothness}. 

Let $\mathfrak t_v^{\Ref} = \mathfrak X_v^{\Ref}(L[u]/u^2L[u])$ be the Zariski tangent space to $\mathfrak X_v^{\Ref}$. This is a subspace of $H^1(G_{F_v},\ad\rho_v)$, as is the local Bloch--Kato Selmer group 
$$
H^1_f(G_{F_v},\ad\rho_{v}) = \ker\left(H^1(G_{F_v},\ad\rho_v) \rightarrow H^1(G_{F_v},\ad\rho_v \otimes_{\mathbf Q_p} B_{\crys})\right)
$$
defined in \cite{BlochKato-TamagawaNumbersOfMotives}. If $\rho_v$ is crystalline (i.e.\ $\pi_v$ is an unramified principal series; see Theorem \ref{thm:classical-galois-representations}) then $H^1_f(G_{F_v},\ad\rho_{v})$ parameterizes infinitesmial crystalline deformations. 

The next proposition provides a crucial bound for the dimension of $\mathfrak t_v^{\Ref}$. Note that everything written thus far is stated in terms of Galois representations, but the proof of the next result genuinely requires using $(\varphi,\Gamma)$-modules. When $\rho_v$ is crystalline, the proposition is proven in \cite{Bergdall-Smoothness} and when $\rho_v$ is semi-stable and non-crystalline, we provide a proof in Appendix \ref{app:semistable} in order to (i) include a proof, but (ii) limit mentioning $(\varphi,\Gamma)$-modules within this text. It is possible to avoid the language of $(\varphi,\Gamma)$-modules when $\rho_v = \chi_1 \oplus \chi_2$ is a sum of characters. The reader interested in understanding this simpler case prior to reading either \cite{Bergdall-Smoothness} or Appendix \ref{app:semistable} can examine the final two paragraphs of the proof of \cite[Proposition 2.16]{Bellaiche-CriticalpadicLfunctions}.

\begin{prop}\label{prop:local-bound}
\leavevmode
\begin{enumerate}
\item $H^1_f(G_{F_v},\ad\rho_v) \subset \mathfrak t_{v}^{\Ref}$.
\item $\dim_L \mathfrak t_v^{\Ref}/H^1_f(G_{F_v},\ad\rho_v) \leq 2[F_v:\mathbf Q_p]$.
\end{enumerate}
\end{prop}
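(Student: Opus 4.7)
The plan is to split the proof into the two possibilities allowed for $\pi_v$ by Definition \ref{defn:p-refined}(1). If $\pi_v$ is an unramified principal series then $\rho_v$ is crystalline by Theorem \ref{thm:classical-galois-representations}(3); if $\pi_v$ is an unramified special representation then $\rho_v$ is semistable but not crystalline by Theorem \ref{thm:classical-galois-representations}(2). Before invoking the local deformation theory, the key regularity inputs to verify are (a) that $\alpha_v^{\sharp}$ is a simple $\varphi^{f_v}$-eigenvalue on $D^+_{\crys}(\rho_v \otimes \LT_{\varpi_v}(\eta_{1,v}(\lambda))^{-1})$, which is exactly condition 2(c) of Definition \ref{defn:decent} (see Remark \ref{rmk:crystalline-eigenvalues}), and (b) that the two $\sigma$-Hodge--Tate weights of $\rho_v$ are distinct at every $\sigma \in \Sigma_v$, which is immediate from Theorem \ref{thm:classical-galois-representations}(1) since $\kappa_\sigma \geq 0$.

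In the crystalline case both parts are already treated in \cite[Section 3]{Bergdall-Smoothness}. For part (1), a class in $H^1_f(G_{F_v},\ad\rho_v)$ corresponds to a crystalline deformation with unchanged Hodge--Tate weights; since $\alpha_v^{\sharp}$ is a simple $\varphi^{f_v}$-eigenvalue on $D^+_{\crys}$, Hensel's lemma produces a unique lift $\widetilde{\Phi}$ with a free rank-one $\varphi$-eigenspace on the deformed module, thereby producing the refinement datum required to land in $\mathfrak t_v^{\Ref}$. For part (2), the tangent directions in $\mathfrak t_v^{\Ref}$ that are not crystalline (not in $H^1_f$) correspond to infinitesimal perturbations of the Hodge--Tate--Sen weights of the two refinement characters $\widetilde{\eta}_{1,v}$ and $\widetilde{\eta}_{2,v}$; each contributes at most $[F_v:\mathbf Q_p]$ parameters, yielding the claimed bound $2[F_v:\mathbf Q_p]$.

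The semistable, non-crystalline case is handled by exactly parallel arguments but one has to account for the monodromy operator $N$. This is the content of Appendix \ref{app:semistable}. The main obstacle, and the only nontrivial point, is verifying that the Hensel lifting of the simple eigenvalue $\alpha_v^{\sharp}$ and the tangent-space count both go through in the presence of $N$. The appendix carries out this bookkeeping; the upshot is that the monodromy operator imposes no additional obstruction because $\alpha_v^{\sharp}$ appears on $D^+_{\crys}$ (which is a sub-$\varphi$-module of $D_{\sstable}$) and simplicity of the eigenvalue still guarantees a unique lift transverse to any deformation preserving the semistable structure. Granting the appendix, assembling the two cases completes both (1) and (2).
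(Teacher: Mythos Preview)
Your proposal is correct and follows essentially the same route as the paper: split according to whether $\rho_v$ is crystalline or semistable non-crystalline, invoke \cite[Section 3]{Bergdall-Smoothness} in the first case, and defer to Appendix~\ref{app:semistable} (specifically Lemma~\ref{lem:containment} for part (1) and Corollary~\ref{corollary:semi-stable-bound} for part (2)) in the second.

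One caveat on your heuristic for part (2) in the crystalline case: you suggest that the classes in $\mathfrak t_v^{\Ref}$ not lying in $H^1_f$ are accounted for precisely by the $2[F_v:\mathbf Q_p]$ infinitesimal weight perturbations, i.e.\ that $\mathfrak t_v^{\Ref,\HT}=H^1_f$. This is not generally true; there can be refined deformations with constant Hodge--Tate weights that are not crystalline (indeed, the appendix shows $\dim \mathfrak t_D^{\Ref,\HT}/H^1_f$ can be as large as $\#\{\tau \mid c_\tau \neq 1\}$ in the semistable case, and the crystalline analysis in \cite{Bergdall-Smoothness} has the same feature). The actual bound comes from combining a weight-variation estimate (your $2[F_v:\mathbf Q_p]$, minus a correction for critical embeddings) with a separate bound on $\mathfrak t_v^{\Ref,\HT}/H^1_f$ (the critical-embedding count again); these two corrections cancel. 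Since you are citing the reference rather than reproving the bound, this does not create a gap, but your one-line explanation oversimplifies the mechanism. A second small remark: in the Steinberg case the simplicity of the crystalline eigenvalue is automatic (Lemma~\ref{lem:appendix-iso-lemma}(2)), so hypothesis 2(c) is not actually needed there.
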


\begin{proof}
If $\rho_v$ is semi-stable but non-crystalline, see Lemma \ref{lem:containment} for part (1) and  Corollary \ref{corollary:semi-stable-bound} for part (2). In the case $\rho_v$ is crystalline, we just noted that $H^1_f(G_{F_{v}},\ad\rho_v)$ is the tangent space of crystalline deformations, all of which are weakly-refined, and so (1) is clear. Part (2) follows from \cite[Corollary 3.19]{Bergdall-Smoothness}, except there are hypotheses in {\em loc.\ cit.}\ that need to be checked in the present context.

In Appendix \ref{app:semistable}, we recall the definition of the Robba ring $\mathcal R_{F_v,L}$ and give details on $(\varphi,\Gamma)$-modules, triangulations, and their connection to Galois representations through the $D_{\rig}$ functor in the two-dimensional setting. The choice of $p$-refinement $\alpha_v$ induces a choice of triangulation $P_{\bullet}$ on $D_{\rig}(\rho_v)$. The functor of weakly-refined deformations of $\rho_v$ as defined in the prior paragraphs is isomorphic, via the $D_{\rig}$-functor, to the functor of weakly-refined deformations of $D_{\rig}(\rho_v)$ with respect to $P_{\bullet}$ that is defined in \cite[Section 3.2]{Bergdall-Smoothness}. The hypotheses of \cite[Corollary 3.19]{Bergdall-Smoothness} are that $P_{\bullet}$ is a {\em regular} and {\em generic} triangulation whose {\em critical type} is a collection of distinct simple transposes. We now explain what these terms means in terms of the refinement $\alpha_v$ and, at the same time, establish that they hold.

\begin{itemize}
\item {\em Regular}. The triangulation $P_{\bullet}$ is regular as in \cite[Definition 3.4]{Bergdall-Smoothness} if and only if $\alpha_v$ is a simple eigenvalue for $\varphi^f$ acting on $D_{\crys}(\rho_v)$. This holds by our assumption 2(c) of Definition \ref{defn:decent}.
\item {\em Generic}. The triangulation $P_{\bullet}$ is generic as in \cite[Definition 3.4]{Bergdall-Smoothness} if a certain second cohomology group of a rank one $(\varphi,\Gamma)$-module vanishes. In our case, it is sufficient to check the eigenvalues $\alpha_v$ and $\beta_v$ of $\varphi^{f_v}$ acting on $D_{\crys}(\rho_v)$ satisfy $\alpha_v\beta_v^{-1} \neq p^{f_v}$. But this follows because $\alpha_v$ and $\beta_v$ are Weil numbers of the same weight (see Remark \ref{rmk:crystalline-eigenvalues}).
\item {\em Critical type}. The critical type of $P_{\bullet}$ is a collection of elements $(\pi_{\sigma})_\sigma \in S_2^{\Sigma_v}$, where $S_2$ is the permutation group on two letters, that depends on $\alpha$ (see \cite[Definition 1.1]{Bergdall-Smoothness}). Regardless of their definition, the $\pi_{\sigma}$ are products of distinct simple transpositions and so the hypothesis on critical type in \cite[Corollary 3.19]{Bergdall-Smoothness} holds.
\end{itemize}
This completes the proof.
\end{proof}
\label{page:end-deformation-theory}

\begin{rmk}\label{rmk:critical-miracle}
The reality that the critical type in the prior proof is a collection of products of simple transpositions is a tautology, yet a miracle, limited to two-dimensional Galois representations. In the $n$-dimensional case, a  critical type is a permutation of $n$ letters and thus not always products of distinct simple transpositions. In particular, the bounds in \cite[Corollary 3.19]{Bergdall-Smoothness} only hold for {\em some} choices of refinements and so the smoothness statement we are building towards can also only be established on higher-dimensional eigenvarieties for {\em some} choices of refinement. Breuil, Hellmann, and Schraen, in fact, established systematic singularities occur in higher dimensions \cite[Theorem 5.4.2]{BreuilHellmannSchraen-LocalModel}.
\end{rmk}

We also need a minor result on the variation of determinants over $\mathfrak t_v^{\Ref}$. If $\chi: G_{F_v} \rightarrow L^\times$ is a character then its universal deformation functor $\mathfrak X_{\chi}$ is representable and its Zariski tangent space $\mathfrak t_{\chi}$ is $H^1(G_{F_v},\ad \chi) \cong H^1(G_{F_v}, L)$. Given $v \mid p$, the next lemma concerns the determinant morphism of functors $\det: \mathfrak X_{v} \rightarrow \mathfrak X_{\det \rho_v}$.
\begin{lem}\label{lem:det-surjective}
If $v \mid p$ then $\det: \mathfrak t_v^{\Ref} \rightarrow \mathfrak t_{\det \rho_v}$ is surjective.
\end{lem}
\begin{proof}
Write $d = \det \rho_v : G_{F_v} \rightarrow L^\times$ and suppose that $\widetilde d : G_{F_v} \rightarrow L[\varepsilon]^\times$ is an infinitesmial deformation. Thus we can write $\widetilde d = d\cdot (1 + a\varepsilon)$ where $a : G_{F_v} \rightarrow L$ is a continuous group morphism; the map $\widetilde d \mapsto a$ is the isomorphism $\mathfrak t_{\det \rho_v} \cong H^1(G_{F_v},L)$.

We let $\chi_a  = 1 + \frac{a}{2}\varepsilon$, which is now a character $\chi_a : G_{F_v} \rightarrow L[\varepsilon]^\times$ deforming the trivial character on $G_{F_v}$ whose square equals the character $1 + a\varepsilon$.  Let $\rho_{v,L[\varepsilon]} = \rho_v \otimes_L L[\varepsilon]$ be the constant deformation of $\rho_v$ to $L[\varepsilon]$ and then set $\widetilde \rho = \rho_{v,L[\varepsilon]} \otimes \chi_a \in \mathfrak t_v$. We have $\det(\widetilde \rho) = \widetilde d$ and so to prove the lemma it suffices to show that $\widetilde \rho \in \mathfrak t_v^{\Ref}$.

For that, consider $\chi_a|_{\mathcal O_v^\times}$ (via local class field theory). Since $\chi_a$ deforms the trivial character, the characters $\widetilde \eta_{i,v}$  above, for $\widetilde \rho$, can be taken to be $\widetilde \eta_{i,v} = \chi_a|_{\mathcal O_v^\times} \cdot \eta_{i,v}(\lambda)$ and we have
$$
\widetilde \rho \otimes \LT_{\varpi_v}(\widetilde \eta_{i,v})^{-1} = \rho_{v,L[\varepsilon]}\otimes \LT_{\varpi_v}(\eta_{i,v}(\lambda))^{-1} \otimes \left[\chi_a \otimes \LT_{\varpi_v}(\chi_a|_{\mathcal O_v^\times})^{-1}\right].
$$
The character $\chi_a \otimes \LT_{\varpi_v}(\chi_a|_{\mathcal O_v^\times})^{-1}$ is trivial on $\mathcal O_v^\times$, and in particular it is crystalline, and thus 
\begin{equation}\label{eqn:Dcrys}
D_{\crys}^+(\widetilde \rho \otimes \LT_{\varpi_v}(\widetilde \eta_{i,v})^{-1}) \cong  D_{\crys}^+(\rho_{v,L[\varepsilon]}\otimes \LT_{\varpi_v}(\eta_{i,v}(\lambda))^{-1})
\end{equation}
up to a twisting on the Frobenius operator $\varphi$ whch is trivial modulo $\varepsilon$ . Since the trivial deformation $\rho_{v,L[\varepsilon]}$ clearly lies in $\mathfrak t_v^{\Ref}$, we deduce from \eqref{eqn:Dcrys}  that $D_{\crys}^+(\widetilde \rho \otimes \LT_{\varpi_v}(\widetilde \eta_{i,v})^{-1})$ contains a free rank one submodule on which $\varphi^{f_v}$ acts through an eigenvalue deforming $\alpha_v^{\sharp}$ and thus $\widetilde \rho$ lies in $\mathfrak t_v^{\Ref}$ also.
\end{proof}

We now have the technical ingredients required to prove Theorem \ref{thm:smoothness}. Let $\mathfrak X_{\rho}$ denote the deformation functor of the global Galois representation $\rho$. The functor $\mathfrak X_{\rho}$ is representable because $\rho$ is (absolutely) irreducible by Theorem \ref{thm:classical-galois-representations}. So, if we define $X_{\rho}^{\Ref} \subseteq \mathfrak X_{\rho}$ to be the subfunctor of deformations that are weakly-refined at $v \mid p$ and minimally ramified at $v \nmid p$, then since each local functor is relatively representable we deduce that $\mathfrak X_{\rho}^{\Ref}$ is representable as well. Write $R_{\rho}^{\Ref}$ for the universal deformation ring representing $\mathfrak X_{\rho}^{\Ref}$.

From now on, we use the notation $H^1_{/f}(\ast,\ast)$ for the quotient $H^1(\ast,\ast)/H^1_f(\ast,\ast)$. The tangent space $\mathfrak t_\rho^{\Ref}$ to $\mathfrak X_{\rho}^{\Ref}$  sits in a an exact sequence
\begin{equation}\label{eqn:trhoref-ses}
0 \rightarrow \mathfrak t_{\rho}^{\Ref} \rightarrow H^1(G_{F},\ad \rho) \rightarrow \left(\prod_{v\mid p} H^1(G_{F_v},\ad\rho_v)/\mathfrak t_{v}^{\Ref}\right) \oplus \left(\prod_{v\nmid p} H^1_{/f}(G_{F_v},\ad\rho_v)\right).
\end{equation}
Let $H^1_f(G_F,\ad\rho)$ be the global adjoint Selmer group. Note that by part (a) of Proposition \ref{prop:local-bound} we have that $H^1_f(G_F,\ad\rho) \subseteq \mathfrak t_{\rho}^{\Ref}$ and then by \eqref{eqn:trhoref-ses} we have a canonical short exact sequence
\begin{equation}\label{eqn:selmer-about-to-vanish}
0 \rightarrow H^1_f(G_F,\ad\rho) \rightarrow \mathfrak t_{\rho}^{\Ref} \rightarrow \bigoplus_{v \mid p} \mathfrak t_v^{\Ref}/H^1_f(G_{F_v},\ad\rho_v).
\end{equation}
We are now ready to prove Theorem \ref{thm:smoothness}.

\begin{proof}[Proof of Theorem \ref{thm:smoothness} when $x$ satisfies condition (2) in Definition \ref{defn:decent}]
Let $T_x\mathscr E(\mathfrak n)_{\rmmid}$ be the tangent space to $\mathscr E(\mathfrak n)_{\rmmid}$ at $x$. Since $\mathscr E(\mathfrak n)_{\rmmid}$ is equidimensional of dimension $1 + d + \delta_{F,p}$ (Proposition \ref{prop:density-results}), we have a lower bound $1 + d + \delta_{F,p} \leq \dim T_x\mathscr E(\mathfrak n)_{\rmmid}$. To prove the theorem we need to show the reverse inequality holds.

Lemma \ref{lemma:lift-to-O} defines a lift $\rho_{\mathscr O_x}$ of $\rho$ to $\mathscr O_x$ and Proposition \ref{prop:family-galois-properties} shows that $\rho_{\mathscr O_x}$ satisfies the local deformation conditions for $\mathfrak X_{\rho}^{\Ref}$. So, by universality, we have a  canonical map $R_{\rho}^{\Ref} \rightarrow \widehat{\mathscr O}_x$ which is surjective, by a standard argument (see \cite[Proposition 4.3]{Bergdall-Smoothness} for instance). Thus, there is an induced inclusion $T_x\mathscr E(\mathfrak n)_{\rmmid} \subset \mathfrak t_{\rho}^{\Ref}$ on tangent spaces, sending a tangent vector to the attendant infinitesimal deformation of $\rho$. 

Here we have used part (c) of condition (2) in Definition \ref{defn:decent} in order to use the deformation functor $\mathfrak X_{\rho}^{\Ref}$. Part (a) of condition (2) is implicit in the entire discussion as we have assumed that $x$ lies on $\mathscr E(\mathfrak n)_{\rmmid}$. Finally, we use part (b) of condition (2), which states that $H^1_f(G_F,\ad \rho) = (0)$. Combining the vanishing with \eqref{eqn:selmer-about-to-vanish} we have natural containments
$$
T_x\mathscr E(\mathfrak n)_{\rmmid} \subset \mathfrak t_{\rho}^{\Ref} \subset \bigoplus_{v \mid p} \mathfrak t_v^{\Ref}/H^1_f(G_{F_v},\ad\rho_v).
$$
By part (2) of Proposition \ref{prop:local-bound}, the sum of the local spaces on the right of these containments has dimension $2d$. However, on $\mathfrak t_{\rho}^{\Ref}$ and its local avatars there is no restriction placed on a determinants such as the restriction that exists over $\mathscr E(\mathfrak n)_{\rmmid}$ (compare Lemma \ref{lem:relevant-determinant}. and Lemma \ref{lem:det-surjective}). After taking determinants into account, we see that $T_x\mathscr E(\mathfrak n)_{\rmmid}$ has dimension at most $1 + d + \delta_{F,p}$.

More precisely, recall that at $v \mid p$ we write $\mathfrak t_{\det \rho_v} \cong H^1(G_{F_v},\ad \det \rho_v)$ for the tangent space for deformations of the character $\rho_v$. Restriction $\mathcal O_v^\times$ defines, via the local Artin map, a canonical isomorphism $\mathfrak t_{\det \rho_v}/H^1_f(G_{F_v},\ad \rho\det_v) \cong T_{\det \rho_v|_{\mathcal O_v^\times}}\mathscr X(\mathcal O_v^\times)$. Recalling the notations of Lemma \ref{lem:relevant-determinant}, we then have a commuting diagram
\begin{equation*}
\xymatrix{
T_x\mathscr E(\mathfrak n)_{\rmmid} \ar[r] \ar[dd]_-{\det_p}  & \bigoplus_{v \mid p} \mathfrak t_{\rho_v}^{\Ref}/H^1_f(G_{F_v},\ad \rho_v) \ar[d]^-{(\det v)_{v \mid p}}\\
& \bigoplus_{v \mid p} \mathfrak t_{\det \rho_v}/H^1_f(G_{F_v},\ad \det \rho_v) \ar[d]^-{\cong}\\
T_{\eta(\lambda_x)}\mathscr X(\mathcal O_p^\times/\overline{\mathcal O_{F,+}^\times}) \ar[r] & T_{\eta(\lambda_x)}\mathscr X(\mathcal O_p^\times)
}
\end{equation*}
where the horizontal rows arrows are injections and, moreover, Lemma \ref{lem:det-surjective} implies the arrows in the right column are surjections. Thus, $T_x\mathscr E(\mathfrak n)_{\rmmid} \subset K$ where $K$ is defined by the short exact sequence
$$
0 \rightarrow K \rightarrow \bigoplus_{v \mid p} \mathfrak t_{\rho_v}^{\Ref}/H^1_f(G_{F_v},\ad\rho_v) \rightarrow T_{\eta(\lambda_x)}\mathscr X(\mathcal O_p^\times)/T_{\eta(\lambda_x)}\mathscr X(\mathcal O_p^\times/\overline{\mathcal O_{F,+}^\times}) \rightarrow 0.
$$
Combining this with part (2) of Proposition \ref{prop:local-bound}, we have
\begin{equation*}
\dim_L T_x\mathscr E(\mathfrak n)_{\rmmid} \leq 2d - (d - 1 - \delta_{F,p}) = d + 1 + \delta_{F,p}.
\end{equation*}
This completes the proof.
\end{proof}

\section{Period maps}\label{sec:period-maps}
Recall that we write $\Gamma_F$ for the maximal abelian extension of $F$ unramified away from $p$ and $\infty$. This is a CPA group and hence we have $R$-valued distributions $\mathscr D(\Gamma_F,R)$ for any affinoid point $\Sp(R) = \Omega \rightarrow \mathscr W$. The goal of this section is to define, and study, canonical morphisms
\begin{equation*}
\mathscr P_{\Omega}: H^d_c(\mathfrak n,\mathscr D_{\Omega}) \rightarrow \mathscr D(\Gamma_F,R) 
\end{equation*}
which we call period maps. Amice's theorem then links the period maps to $p$-adic $L$-functions.

\subsection{Analytic distributions on $\Gamma_F$}\label{subsec:distribution-GammaF}
Consider the canonical exact sequence
\begin{equation}\label{eqn:GammaF-exact}
1 \rightarrow \overline{\mathcal O_{F,+}^{\times}} \rightarrow \mathcal O_p^\times \overset{j_p}{\longrightarrow} \Gamma_F \rightarrow \Cl_F^+ \rightarrow 1
\end{equation}
where $\Cl_F^+$ is the narrow class group, and the map $j_p$ is induced by the natural inclusion $\mathcal O_p^\times \hookrightarrow \mathbf A_F^\times$. We will need to make explicit some presentations of rings of analytic functions as limits of Banach algebras (LB-structures).

We begin with $\mathcal O_p^\times$. In Section \ref{subsec:monoid-action} we defined, for $f \in \mathscr A(\mathcal O_p^\times,\mathbf Q_p)$, the ``extension by zero'' function  $f_{!}: \mathcal O_p \rightarrow \mathbf Q_p$ 
\begin{equation*}
f_{!}(a) = \begin{cases}
f(a) & \text{if $a \in \mathcal O_p^\times$},\\
0 & \text{otherwise.}
\end{cases}
\end{equation*}
The map $f \mapsto f_{!}$ defines a closed embedding $\mathscr A(\mathcal O_p^\times,\mathbf Q_p) \hookrightarrow \mathscr A(\mathcal O_p,\mathbf Q_p)$. For $\mathbf s \in \mathbf Z_{\geq 0}^{\{v \mid p \}}$ we set $\mathbf A^{\mathbf s, \circ}(\mathcal O_p^\times, \mathbf Q_p) := \mathscr A(\mathcal O_p^\times, \mathbf Q_p) \cap \mathbf A^{\mathbf s, \circ}(\mathcal O_p,\mathbf Q_p)$ and 
\begin{equation*}
\mathbf A^{\mathbf s}(\mathcal O_p^\times, \mathbf Q_p) :=  \mathbf A^{\mathbf s, \circ}(\mathcal O_p^\times,\mathbf Q_p)[1/p] = \mathscr A(\mathcal O_p^\times, \mathbf Q_p) \cap \mathbf A^{\mathbf s}(\mathcal O_p,\mathbf Q_p),
\end{equation*} 
all the intersections happening within $\mathscr A(\mathcal O_p^\times, \mathbf Q_p)$. By \eqref{eqn:defn-locally-anal}, and because $\mathscr A(\mathcal O_p^\times, \mathbf Q_p)$ is closed inside $\mathscr A(\mathcal O_p,\mathbf Q_p)$, we deduce from \cite[Proposition 1.1.41]{Emerton-LocalAnalMemoir} that there is a natural topological identification
\begin{equation}\label{eqn:Optimes-ident}
\mathscr{A}(\mathcal O_p^\times, \mathbf Q_p) \simeq \dirlim_{|\mathbf s|\rightarrow +\infty} \mathbf A^{\mathbf s}(\mathcal O_p^\times, \mathbf Q_p).
\end{equation}

Now consider $\Gamma_F$. If $\gamma \in \Gamma_F$ write $r_{\gamma}: \Gamma_F \rightarrow \Gamma_F$ for multiplication by $\gamma$. Then, if $\gamma \in \Gamma_F$ and $f \in \mathscr A(\Gamma_F,\mathbf Q_p)$ we define
\begin{equation*}
f|_{\gamma \mathcal O_p^\times} := f \circ r_{\gamma} \circ j_p
\end{equation*}
which is an element of $\mathscr A(\mathcal O_p^\times,\mathbf Q_p)$. For each $\mathbf s \in \mathbf Z_{\geq 0}^{\{v \mid p \}}$ we define
\begin{equation}\label{eqn:definition-GammaF-s-analytic}
\mathbf A^{\mathbf s,\circ}(\Gamma_F,\mathbf Q_p) := \{f \in \mathscr A(\Gamma_F,\mathbf Q_p) \mid f|_{\gamma \mathcal O_p^\times} \in \mathbf A^{\mathbf s,\circ}(\mathcal O_p^\times,\mathbf Q_p) \text{ for each $\gamma\in\Gamma_F$}\},
\end{equation}
and
\begin{equation*}
\mathbf A^{\mathbf s}(\Gamma_F,\mathbf Q_p) := \mathbf A^{\mathbf s,\circ}(\Gamma_F,\mathbf Q_p)[1/p] = \{f \in \mathscr A(\Gamma_F,\mathbf Q_p) \mid f|_{\gamma \mathcal O_p^\times} \in \mathbf A^{\mathbf s}(\mathcal O_p^\times,\mathbf Q_p)\text{ for each $\gamma\in\Gamma_F$}\}.
\end{equation*}
\begin{lem}\label{lem:GammaF-dirlim}
The natural map 
\begin{equation}\label{eqn:GammaF}
\dirlim_{|\mathbf s|\rightarrow + \infty} \mathbf A^{\mathbf s}(\Gamma_F,\mathbf Q_p) \rightarrow\mathscr A(\Gamma_F,\mathbf Q_p)
\end{equation}
is a topological isomorphism.
\end{lem}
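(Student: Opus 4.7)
The plan is to reduce the statement for $\Gamma_F$ to the analogous identification \eqref{eqn:Optimes-ident} for $\mathcal O_p^\times$, using the structure of $\Gamma_F$ encoded in \eqref{eqn:GammaF-exact}. First, I observe that the exact sequence shows $j_p(\mathcal O_p^\times)$ is a closed subgroup of finite index $h := \#\Cl_F^+$ in the profinite group $\Gamma_F$, hence also open. Fixing coset representatives $\gamma_1,\dots,\gamma_h \in \Gamma_F$, one then obtains a decomposition $\Gamma_F = \bigsqcup_{i=1}^h \gamma_i \cdot j_p(\mathcal O_p^\times)$ into clopen pieces, each of which is homeomorphic via $x \mapsto \gamma_i j_p(x)$ to the quotient $\mathcal O_p^\times/\overline{\mathcal O_{F,+}^\times}$.

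Next, I would use this decomposition to match both sides of \eqref{eqn:GammaF} with finite direct sums of function spaces on $\mathcal O_p^\times$. A function $f$ on $\Gamma_F$ is locally analytic if and only if each of the restrictions $f|_{\gamma_i \mathcal O_p^\times}$ is locally analytic on $\mathcal O_p^\times$; any such restriction is automatically $\overline{\mathcal O_{F,+}^\times}$-invariant because $j_p$ kills $\overline{\mathcal O_{F,+}^\times}$. This yields a topological identification
\[
\mathscr A(\Gamma_F,\mathbf Q_p) \;\simeq\; \bigoplus_{i=1}^{h} \mathscr A(\mathcal O_p^\times,\mathbf Q_p)^{\overline{\mathcal O_{F,+}^\times}}.
\]
Because $\mathbf A^{\mathbf s}(\mathcal O_p^\times,\mathbf Q_p)$ is stable under multiplicative translation by $\mathcal O_p^\times$, the condition in \eqref{eqn:definition-GammaF-s-analytic} at an arbitrary $\gamma \in \Gamma_F$ reduces to the same condition at a coset representative, giving an analogous identification
\[
\mathbf A^{\mathbf s}(\Gamma_F,\mathbf Q_p) \;\simeq\; \bigoplus_{i=1}^{h} \mathbf A^{\mathbf s}(\mathcal O_p^\times,\mathbf Q_p)^{\overline{\mathcal O_{F,+}^\times}}.
\]

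Finally, I would take $\dirlim_{|\mathbf s|\to+\infty}$ of this last isomorphism. Since finite direct sums trivially commute with direct limits, it suffices to establish, for each $i$, that
\[
\dirlim_{|\mathbf s|\to+\infty} \mathbf A^{\mathbf s}(\mathcal O_p^\times,\mathbf Q_p)^{\overline{\mathcal O_{F,+}^\times}} \;\simeq\; \mathscr A(\mathcal O_p^\times,\mathbf Q_p)^{\overline{\mathcal O_{F,+}^\times}}
\]
topologically. This is the main technical point of the argument. It follows from \eqref{eqn:Optimes-ident} combined with the facts that $\overline{\mathcal O_{F,+}^\times}$ is compact (being closed in the compact group $\mathcal O_p^\times$), that it acts continuously on each Banach space $\mathbf A^{\mathbf s}(\mathcal O_p^\times,\mathbf Q_p)$, and that the invariants form a closed subspace compatible with the (compact, injective) transition maps as $\mathbf s$ grows. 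Once one verifies that a system of closed equivariant subspaces of a compact-type inductive system again has compact-type direct limit equal to the invariants of the direct limit, the proof is complete.
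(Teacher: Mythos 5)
Your argument is correct and follows essentially the same route as the paper: decompose $\Gamma_F$ into cosets of $H = \im(j_p)$, identify everything with direct sums over the coset representatives, and reduce to the key fact that a closed subspace of a compact-type inductive limit is itself the compact-type inductive limit of its intersections with the defining Banach spaces. The only cosmetic difference is that you phrase this in terms of $\overline{\mathcal O_{F,+}^\times}$-invariants $\mathscr A(\mathcal O_p^\times,\mathbf Q_p)^{\overline{\mathcal O_{F,+}^\times}}$, while the paper works with the closed embedding $j_p^{\ast}:\mathscr A(H,\mathbf Q_p)\hookrightarrow\mathscr A(\mathcal O_p^\times,\mathbf Q_p)$ — the two descriptions agree since $j_p^{\ast}$ has exactly the invariants as its image. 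The ``verification'' you defer at the end is precisely \cite[Proposition~1.1.41]{Emerton-LocalAnalMemoir}, which is the same citation the paper invokes.
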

\begin{proof}
Note that $H= \im(j_p)$ is a CPA group and the natural map $\mathscr A(H,\mathbf Q_p) \rightarrow \mathscr A(\mathcal O_p^\times, \mathbf Q_p)$ is closed embedding. By the same argument above (especially \eqref{eqn:Optimes-ident} and \cite[Proposition 1.1.41]{Emerton-LocalAnalMemoir}) we deduce that $\mathbf A^{\mathbf s}(H,\mathbf Q_p) := \mathscr A(H,\mathbf Q_p) \cap \mathbf A^{\mathbf s}(\mathcal O_p^\times, \mathbf Q_p)$ presents $\mathscr A(H,\mathbf Q_p)$ topologically as a locally convex inductive limit 
\begin{equation}\label{eqn:H-isomorphism}
\mathscr A(H,\mathbf Q_p) \simeq \dirlim_{|\mathbf s|\rightarrow +\infty} \mathbf A^{\mathbf s}(H,\mathbf Q_p).
\end{equation}
Choose coset representatives $\gamma_1,\dots,\gamma_h$ for $\Gamma_F/H$. Then, the natural topological isomorphism
\begin{align*}
\mathscr A(\Gamma_F,\mathbf Q_p) &\overset{\simeq}{\longrightarrow} \bigoplus_{i=1}^h \mathscr A(H,\mathbf Q_p)\\
f &\mapsto \left(h \mapsto f(\gamma_i h)\right)
\end{align*}
 identifies the subspace $\mathbf A^{\mathbf s}(\Gamma_F,\mathbf Q_p)$ defined above with the direct sum of the subspaces $\mathbf A^{\mathbf s}(H,\mathbf Q_p)$ we just defined. So the map \eqref{eqn:GammaF} being a topological isomorphism is a consequence of the same fact for \eqref{eqn:H-isomorphism} and the fact that locally convex inductive limits commute with finite products. This completes the proof.
\end{proof}

Now suppose that $R$ is a $\mathbf Q_p$-Banach algebra and $R_0$ is a ring of definition. Then, for any of the CPA groups $G$ which appear above, we set $\mathbf A^{\mathbf s,\circ}(G,R) := \mathbf A^{\mathbf s, \circ}(G,\mathbf Q_p)\widehat{\otimes}_{\mathbf Z_p} R_0$ and $\mathbf A^{\mathbf s}(G,R) := \mathbf A^{\mathbf s,\circ}(G,R)[1/p] = \mathbf A^{\mathbf s}(G,\mathbf Q_p)\widehat{\otimes}_{\mathbf Q_p}R$. We define distribution algebras $\mathbf D^{\mathbf s}(G,R) = \mathbf A^{\mathbf s}(G,R)'$ and $\mathbf D^{\mathbf s,\circ}(G,R) = \Hom_{R_0}(\mathbf A^{\mathbf s,\circ}(\ast,R),R_0)$, with the same caveat as in Remark \ref{remark:notation}.

We note the following analogue of Lemma \ref{lem:character-analytic}, which illustrates the compatibility of our notations of $\mathbf s$-analytic.
\begin{lem}\label{lem:trivial-analog}
Suppose that $\chi: \mathcal O_p^\times \rightarrow R$ is a continuous character and $R_0 \subset R$ is a ring of definition containing the image of $\chi$. Then for $\mathbf s^{\circ}(\chi)$ as in Lemma \ref{lem:character-analytic}, we have $\chi \in \mathbf A^{\mathbf s^{\circ}(\chi)+1,\circ}(\mathcal O_p^\times,R)$ (similarly for $\mathbf s(\chi)$).
\end{lem}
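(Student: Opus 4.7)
The plan is to deduce the statement essentially formally from Lemma \ref{lem:character-analytic} together with the definition of the $\mathbf s$-analytic functions on $\mathcal O_p^\times$ given in Section \ref{subsec:distribution-GammaF}. By that earlier lemma, the extension-by-zero $\chi_!\colon \mathcal O_p \to R$ lies in $\mathbf A^{\mathbf s^\circ(\chi),\circ}(\mathcal O_p, R) = \mathbf A^{\mathbf s^\circ(\chi),\circ}(\mathcal O_p, \mathbf Q_p)\widehat{\otimes}_{\mathbf Z_p} R_0$. Setting $\mathbf s = \mathbf s^\circ(\chi) + 1$, we certainly still have $\chi_! \in \mathbf A^{\mathbf s,\circ}(\mathcal O_p, R)$, since the transition maps at increasing radius are inclusions.

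The purpose of the ``$+1$'' is to guarantee $\mathbf s \geq (1,\dots,1)$ componentwise. Under this condition, the partition of $\mathcal O_p$ into cosets $a+\varpi_p^{\mathbf s}\mathcal O_p$ refines the partition $\mathcal O_p = \mathcal O_p^\times \sqcup \varpi_p\mathcal O_p$: each such coset lies entirely inside $\mathcal O_p^\times$ (if $a$ represents a unit) or entirely inside $\varpi_p\mathcal O_p$ (otherwise). This partition yields a direct sum decomposition
\begin{equation*}
\mathbf A^{\mathbf s,\circ}(\mathcal O_p, \mathbf Q_p) \;=\; \mathbf A^{\mathbf s,\circ}(\mathcal O_p^\times, \mathbf Q_p) \;\oplus\; \{f \in \mathbf A^{\mathbf s,\circ}(\mathcal O_p,\mathbf Q_p) : f|_{\mathcal O_p^\times} = 0\},
\end{equation*}
where the first summand matches the definition of $\mathbf A^{\mathbf s,\circ}(\mathcal O_p^\times,\mathbf Q_p)$ from Section \ref{subsec:distribution-GammaF} (both are finite direct sums of Tate algebras indexed by cosets of $\mathcal O_p^\times$ modulo $\varpi_p^{\mathbf s}\mathcal O_p$). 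Because this is a finite direct sum of free $\mathbf Z_p$-modules, completing tensor with $R_0$ preserves the decomposition.

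Since $\chi_!$ vanishes on $\varpi_p\mathcal O_p$ by construction, its projection to the second summand (after tensoring with $R_0$) is zero. Hence $\chi_!$ lies in $\mathbf A^{\mathbf s^\circ(\chi)+1,\circ}(\mathcal O_p^\times,\mathbf Q_p)\widehat{\otimes}_{\mathbf Z_p} R_0 = \mathbf A^{\mathbf s^\circ(\chi)+1,\circ}(\mathcal O_p^\times, R)$, which is the claim for $\chi$. The analogous assertion with $\mathbf s(\chi)$ in place of $\mathbf s^\circ(\chi)$ follows by inverting $p$ in the containment just obtained (or by rerunning the argument with the $p$-inverted versions of Lemma \ref{lem:character-analytic} and the definitions). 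There is no real obstacle here; the only thing to verify carefully is the compatibility of the finite coset decomposition with the completed tensor product, which is immediate because the decomposition involves only finitely many summands.
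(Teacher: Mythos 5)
Your proof is correct and takes essentially the same approach as the paper's. The paper's proof simply cites ``an observation whose proof we omit'' about gluing $\mathbf s$-analytic data on $\varpi_p$-cosets into an $(\mathbf s+\mathbf 1)$-analytic function; you have supplied exactly the content of that observation, namely the finite coset decomposition of $\mathbf A^{\mathbf s,\circ}(\mathcal O_p,\mathbf Q_p)$ for $\mathbf s \geq \mathbf 1$, its refinement of the unit/non-unit partition, and its compatibility with $\widehat{\otimes}_{\mathbf Z_p} R_0$ because the decomposition is finite.
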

\begin{proof}
This follows immediately from the following observation whose proof we omit:\ if $f: \mathcal O_p \rightarrow R$ is a function and $z\mapsto f(a+\varpi_p z)$ defines an element of $\mathbf A^{\mathbf s,\circ}(\mathcal O_p,R)$ for each $a \in \mathcal O_p$, then $f$ itself defines an element of $\mathbf A^{\mathbf s + \mathbf 1, \circ}(\mathcal O_p,R)$.
\end{proof}

\subsection{Definition of period maps}\label{subsec:period-definition}
Recall (Section \ref{subsec:sym-spaces}) that $\mathrm C_\infty$ denotes the Shintani cone. If $\Omega=\Sp(R) \rightarrow \mathscr W$ is a $\mathbf Q_p$-affinoid with corresponding weight $\lambda_{\Omega}$, then we write $\mathrm t^{\ast}{\mathbf A}^{\mathbf s}_{\Omega}$ for the local system on $\mathrm C_\infty$ induced by the right action of $\mathcal O_p^\times$
\begin{equation*}
f\big|_{u_p}(z) := f\big|_{\begin{smallpmatrix} u_p \\ & 1 \end{smallpmatrix}}(z) = \lambda_{\Omega,2}(u_p)f(u_pz)
\end{equation*}
for each $f \in \mathbf A^{\mathbf s}(\mathcal O_p,R)$, $u_p \in \mathcal O_p^\times$, and $z \in \mathcal O_p$ (here $\mathbf s \geq \mathbf s(\Omega)$). The action is compatible with changing $\mathbf s\geq \mathbf s(\Omega)$, and if $R_0 \subset R$ is a ring of definition for $R$ containing the values of $\lambda_{\Omega}$ and $\mathbf s \geq \mathbf s^{\circ}(\Omega)$ then it preserves the $R_0$-submodule $\mathrm t^{\ast}\mathbf A^{\mathbf s,\circ}_{\Omega}$.

\begin{lem}\label{lem:construction-Qlambda}
Fix a ring of definition $R_0 \subset R$ and $\mathbf s \geq \mathbf s^{\circ}(\Omega)$.  For $f \in \mathbf A^{\mathbf s,\circ}(\Gamma_F,R)$, $x \in \mathbf A_{F}^\times$, and $z \in \mathcal O_p$ define
\begin{equation}\label{eqn:definitinon-Q-map}
Q_{\Omega}^{\mathbf s,\circ}(f)(x)(z) = \begin{cases}
\lambda_{\Omega,2}^{-1}(z)\cdot f(xz) & \text{if $z \in \mathcal O_p^\times$};\\
0 & \text{otherwise.}
\end{cases}
\end{equation}
Then, $f \mapsto Q_{\Omega}^{\mathbf s,\circ}$ defines an $R_0$-module morphism
\begin{equation*}
Q_{\Omega}^{\mathbf s,\circ}: \mathbf A^{\mathbf s,\circ}(\Gamma_F,R) \rightarrow H^0(\mathrm C_\infty, \mathrm t^{\ast}{\mathbf A}^{\mathbf s,\circ}_{\Omega}).
\end{equation*}
Moreover, the induced map $Q_{\Omega}^{\mathbf s} : \mathbf A^{\mathbf s}(\Gamma_F,R) \rightarrow H^0(\mathrm C_\infty,\mathrm t^{\ast}{\mathbf A}^{\mathbf s})$ is independent of $R_0$ and if $\mathbf s' \geq \mathbf s$ then fits naturally into a commuting diagram
\begin{equation*}
\xymatrix{
\mathbf A^{\mathbf s}(\Gamma_F,R) \ar[d] \ar[r]^-{Q_{\Omega}^{\mathbf s}} & H^0(\mathrm C_\infty, \mathrm t^{\ast}{\mathbf A}^{\mathbf s}_{\Omega}) \ar[d]\\
\mathbf A^{\mathbf s'}(\Gamma_F,R) \ar[r]^-{Q_{\Omega}^{\mathbf s'}} & H^0(\mathrm C_\infty, \mathrm t^{\ast}{\mathbf A}^{\mathbf s'}_{\Omega})
}
\end{equation*}
and these extend to a map
\begin{equation*}
Q_{\Omega} : \mathscr A(\Gamma_F,R) \rightarrow H^0(\mathrm C_\infty, \mathrm t^{\ast}{\mathscr A}_{\Omega}).
\end{equation*}
This map is natural in the weight $\Omega$, in the sense that if $\Sp(R) = \Omega \rightarrow \mathscr W$ factors through some $\Sp(R') =\Omega'\to \mathscr W$ then we have a commuting diagram
\begin{equation*}
\xymatrix{
\mathscr A(\Gamma_F,R) \ar[r]^-{Q_{\Omega}} & H^0(\mathrm C_\infty, \mathrm t^{\ast}\mathscr A_{\Omega})\\
\mathscr A(\Gamma_F,R') \ar[r]_-{Q_{\Omega'}} \ar[u] & H^0(\mathrm C_\infty, \mathrm t^{\ast}\mathscr A_{\Omega'}). \ar[u]
}
\end{equation*}

\end{lem}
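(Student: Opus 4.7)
The plan is to check the assertions one at a time by direct calculation, with all the work concentrated in verifying the descent to $\mathrm{C}_\infty$ as a section of the local system. Throughout I will think of the formula \eqref{eqn:definitinon-Q-map} as defining, for fixed $x$, the product of the two functions $z \mapsto \lambda_{\Omega,2}^{-1}(z)_!$ and $z \mapsto f(xz)_!$ in $\mathbf A^{\mathbf s,\circ}(\mathcal O_p,R)$, where $!$ denotes extension by zero from $\mathcal O_p^\times$ to $\mathcal O_p$.

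First, I would check well-definedness of $Q_\Omega^{\mathbf s,\circ}(f)(x)$ as an element of $\mathbf A^{\mathbf s,\circ}_\Omega$. The first factor lies in $\mathbf A^{\mathbf s^\circ(\lambda_{\Omega,2}^{-1}),\circ}(\mathcal O_p,R)\subset \mathbf A^{\mathbf s,\circ}(\mathcal O_p,R)$ by Lemma \ref{lem:character-analytic}, since by construction $\mathbf s \geq \mathbf s^\circ(\Omega) \geq \mathbf s^\circ(\lambda_{\Omega,2}^{-1})$. For the second factor, unwinding the definition in \eqref{eqn:definition-GammaF-s-analytic}, the hypothesis $f \in \mathbf A^{\mathbf s,\circ}(\Gamma_F,R)$ means $f|_{\jmath(x)\mathcal O_p^\times}= f\circ r_{\jmath(x)}\circ j_p$ is in $\mathbf A^{\mathbf s,\circ}(\mathcal O_p^\times, R)$, where $\jmath: \mathbf A_F^\times \to \Gamma_F$ is the canonical map; its extension by zero to $\mathcal O_p$ therefore lies in $\mathbf A^{\mathbf s,\circ}(\mathcal O_p,R)$ by the discussion following \eqref{eqn:Optimes-ident}. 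The product then lies in $\mathbf A^{\mathbf s,\circ}(\mathcal O_p,R)$ since this is a ring.

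Next I would check that the assignment $x\mapsto Q_\Omega^{\mathbf s,\circ}(f)(x)$ descends to a section of $\mathrm t^{\ast}\mathbf A^{\mathbf s,\circ}_\Omega$. Since the left $\GL_2^+(F)$-action on $\mathbf A^{\mathbf s,\circ}_\Omega$ is trivial (only the monoid $\Delta$ acts), the descent to $\mathrm C_\infty$ is the pair of requirements
\begin{enumerate}
\item $Q(f)(\xi x) = Q(f)(x)$ for $\xi \in F^\times$,
\item $Q(f)(xu) = Q(f)(x)\big|_{u_p}$ for $u\in \widehat{\mathcal O}_F^\times$,
\end{enumerate}
where $|_{u_p}$ denotes the right action of $\begin{smallpmatrix} u_p & \\ & 1\end{smallpmatrix}\in K_p \subset \Delta$. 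Requirement (1) is immediate from the formula because $f(\xi xz)=f(xz)$ on $\Gamma_F$. For (2), observe that only the $p$-component $u_p$ affects $f(xuz)=f(xu_pz)$ (the prime-to-$p$ components lie in the kernel of $\mathbf A_F^\times\to\Gamma_F$). Evaluating \eqref{eqn:function-action} at $g=\begin{smallpmatrix} u_p & \\ & 1\end{smallpmatrix}$ (so $c=0$, $d=1$, $\det g = u_p \in \mathcal O_p^\times$, and $v(\det g)=0$) gives $h|_{u_p}(z)=\lambda_{\Omega,2}(u_p)h(u_p z)$ for any $h\in \mathbf A^{\mathbf s,\circ}_\Omega$. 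Setting $h=Q(f)(x)$ and using $\lambda_{\Omega,2}(u_pz)=\lambda_{\Omega,2}(u_p)\lambda_{\Omega,2}(z)$ for $z \in \mathcal O_p^\times$ yields
\begin{equation*}
Q(f)(x)\big|_{u_p}(z) = \lambda_{\Omega,2}(u_p)\lambda_{\Omega,2}^{-1}(u_p z)\,f(xu_p z)_! = \lambda_{\Omega,2}^{-1}(z)\,f(xu_p z)_! = Q(f)(xu)(z),
\end{equation*}
as required. This is the one genuinely non-formal step, and the formula \eqref{eqn:definitinon-Q-map} is designed precisely to balance the $\lambda_{\Omega,2}$-factor coming from the right $\Delta$-action against the translation in $f$.

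For the final clauses, the independence of $R_0$ after inverting $p$ is clear because the formula \eqref{eqn:definitinon-Q-map} makes sense verbatim over $R$ and does not mention $R_0$. The commutative square for $\mathbf s'\geq\mathbf s$ is also clear: the horizontal arrows are given by the same pointwise formula and the vertical arrows are the tautological inclusions. To extend to $\mathscr A(\Gamma_F,R)$, the cleanest approach is to apply the formula \eqref{eqn:definitinon-Q-map} directly to $f\in\mathscr A(\Gamma_F,R)$ and repeat verbatim the verifications above with $\mathbf A^{\mathbf s,\circ}$ replaced by $\mathscr A$; the output is a section of $\mathrm t^{\ast}\mathscr A_\Omega$, defining $Q_\Omega$, and its restriction to $\mathbf A^{\mathbf s}(\Gamma_F,R)$ agrees tautologically with $Q_\Omega^{\mathbf s}$ via \eqref{eqn:Rdefn-locally-anal} and Lemma \ref{lem:GammaF-dirlim}.
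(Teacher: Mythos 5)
Your proof is correct and takes essentially the same route as the paper: the only non-formal point is the $\widehat{\mathcal O}_F^\times$-equivariance $q(\xi xu)=q(x)|_{u_p}$, which you verify by the same cancellation of the $\lambda_{\Omega,2}$-factor against the translation acting on $f$, and your factorization of $q(x)$ into $(\lambda_{\Omega,2}^{-1})_!\cdot(f(x\cdot))_!$ (using that $\mathbf{s}\geq\mathbf{s}^\circ(\Omega)\geq\mathbf{s}^\circ(\lambda_{\Omega,2}^{-1})$, which is precisely why $\mathbf{s}^\circ(\lambda_{\Omega,2}^{-1})$ was inserted into the definition of $\mathbf{s}^\circ(\Omega)$) is just a more explicit version of the paper's appeal to Lemma \ref{lem:trivial-analog}. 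The one small thing you leave implicit, which the paper states, is that $q$ is locally constant on $F_\infty^\times$; this is immediate because $f$ is a function on $\Gamma_F$, which kills $(F_\infty^\times)^\circ$, but it is part of what it means to produce a section of the local system on $\mathrm{C}_\infty$ and should be said.
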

\begin{proof}
All the claims after inverting $p$ are clear, so we just prove the first statement.

Let $f \in \mathbf A^{\mathbf s,\circ}(\Gamma_F,R)$ and set $q = Q_{\Omega}^{\mathbf s,\circ}(f)$ defined in \eqref{eqn:definitinon-Q-map}. It follows from Lemma \ref{lem:trivial-analog} and the precise definitions of the radii that $q(x) \in \mathbf A_{\Omega}^{\mathbf s,\circ}$ for each $x \in \mathbf A_F^\times$, giving us a continuous function $q: \mathbf A_F^\times \rightarrow \mathbf A_{\Omega}^{\mathbf s, \circ}$ which we want to show it is a section in $H^0(\mathrm C_\infty, \mathrm t^{\ast}\mathbf A_{\Omega}^{\mathbf s, \circ})$. 

First, $q$ is locally constant on $F_\infty^\times$ because the function $f$ itself factors through $(F_{\infty}^\times)^{\circ}$. It remains to show that $q(\xi x u) = q(x)|_{u_p}$ for all $\xi \in F^\times$, $x \in \mathbf A_F^\times$ and $u \in \widehat{\mathcal O}_F^\times$. If $z\in \mathcal O_p - \mathcal O_p^\times$ then both $q(\xi xu)$ and $q(x)|_{u_p}$ vanish on $z$. If $z \in \mathcal O_p^\times$ though, then
\begin{equation*}
q(x)|_{u_p}(z) = \lambda_{\Omega,2}(u_p)q(x)(u_p z)= \lambda_{\Omega,2}^{-1}(z)f(xu_pz) = \lambda_{\Omega,2}^{-1}(z)f(\xi x u z) = q(\xi x u)(z).
\end{equation*}
For the second to last equality we used that $f$ is a function on $\Gamma_F$. This completes the proof.
\end{proof}

Throughout the rest of this subsection we consider an integral ideal $\mathfrak m \subset \mathcal O_F$ and we assume that $\mathfrak m \subset \mathbf p$. Since $K_1(\mathfrak m)$ is $\mathrm t$-good, we have a proper embedding $\mathrm t : \mathrm C_\infty \hookrightarrow Y_1(\mathfrak m)$ as in \eqref{eqn:shintani-torus}.

For each $\Omega$ as above, $\mathrm t^{\ast}\mathbf A_{\Omega}^{\mathbf s,\circ}$ is the pullback of the local system $\mathbf A_{\Omega}^{\mathbf s,\circ}$ on $Y_1(\mathfrak m)$ (which is well-posed because $\mathfrak m \subset \mathbf p$). There are similar obvious comments regarding $\mathbf A_{\Omega}^{\mathbf s}$ and $\mathscr A_{\Omega}$. Thus, by Lemma \ref{lem:construction-Qlambda}, we get a composition $\mathscr Q_{\Omega} = \mathrm t_{\ast} \circ \PD \circ Q_{\Omega}$
\begin{equation}\label{eqn:QOmega-defn}
\xymatrix{
\mathscr A(\Gamma_F,R) \ar@/_2pc/[rrr]_-{\mathscr Q_{\Omega}} \ar[r]^-{Q_\Omega} & H^0(\mathrm C_\infty, \mathrm t^{\ast}{\mathscr A}_\Omega) \ar[r]^-{\PD} & H^{\BM}_d(\mathrm C_\infty, \mathrm t^{\ast}{\mathscr A}_\Omega) \ar[r]^-{\mathrm t_{\ast}} & H^{\BM}_d(Y_{1}(\mathfrak m), {\mathscr A}_\Omega).
}
\end{equation}
We also have natural analogs $\mathscr Q_{\Omega}^{\mathbf s}$ and $\mathscr Q_{\Omega}^{\mathbf s,\circ}$. 

Now recall that the natural pairing $\mathscr D_{\Omega} \otimes_R \mathscr A_{\Omega} \rightarrow R$ together with the cap product defines a canonical $R$-bilinear pairing
\begin{equation*}
\langle - , - \rangle : H^d_c(Y_{1}(\mathfrak m), {\mathscr D}_\Omega) \otimes_R H^{\BM}_d(Y_{1}(\mathfrak m), {\mathscr A}_\Omega)  \rightarrow R.
\end{equation*}
Thus we define $\mathscr P_{\Omega}: H^d_c(Y_{1}(\mathfrak m), {\mathscr D}_\Omega) \rightarrow \Hom_R(\mathscr A(\Gamma_F,R),R)$ to be given by
\begin{equation}\label{eqn:pairing-defn}
\mathscr P_{\Omega}(\Psi)(f) = \langle \Psi, \mathscr Q_{\Omega}(f) \rangle.
\end{equation}
Replacing $\mathscr Q_{\Omega}$ with $\mathscr Q_{\Omega}^{\mathbf s}$ or $\mathscr Q_{\Omega}^{\mathbf s,\circ}$, we also get analogous morphisms $\mathscr P_{\Omega}^{\mathbf s}$ and $\mathscr P_{\Omega}^{\mathbf s, \circ}$. The rest of this subsection is devoted to proving the following theorem.

\begin{thm}\label{theorem:period-image}
The image of $\mathscr P_\Omega$ is contained in $\mathscr D(\Gamma_F,R) \subset \Hom_R(\mathscr A(\Gamma_F,R),R)$.
\end{thm}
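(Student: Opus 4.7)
The plan is to reduce the question to a statement at finite radius and then prove it via integral structures. Recall that $\mathscr{A}(\Gamma_F, R) = \varinjlim_{|\mathbf{s}| \to \infty} \mathbf{A}^{\mathbf{s}}(\Gamma_F, R)$ in the category of locally convex $R$-modules (Lemma \ref{lem:GammaF-dirlim}) and $\mathscr{D}(\Gamma_F, R) = \varprojlim_{|\mathbf{s}|\to\infty} \mathbf{D}^{\mathbf{s}}(\Gamma_F, R)$. Accordingly, it suffices to exhibit, for each $\mathbf{s} \geq \mathbf{s}(\Omega)$, a continuous $R$-linear functional $\mathscr{P}_\Omega^{\mathbf{s}}(\Psi) \in \mathbf{D}^{\mathbf{s}}(\Gamma_F, R)$ whose restrictions to smaller $\mathbf{A}^{\mathbf{s}'}$'s are compatible and which agree with $\mathscr{P}_\Omega(\Psi)$ on $\mathscr{A}(\Gamma_F, R)$. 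Writing $\Psi_{\mathbf{s}} \in H^d_c(Y_1(\mathfrak{m}), \mathbf{D}_\Omega^{\mathbf{s}})$ for the image of $\Psi$ under the natural map coming from $\mathscr{D}_\Omega = \varprojlim \mathbf{D}_\Omega^{\mathbf{s}}$, the value on $f \in \mathbf{A}^{\mathbf{s}}(\Gamma_F, R)$ must be $\langle \Psi_{\mathbf{s}}, \mathscr{Q}_\Omega^{\mathbf{s}}(f) \rangle$. The $\mathbf{s}$-compatibility of $\mathscr{Q}_\Omega^{\mathbf{s}}$ (Lemma \ref{lem:construction-Qlambda}) already guarantees the compatibility of these candidate functionals across $\mathbf{s}$, so only the continuity at each level is at issue.

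To establish continuity, I would fix a ring of definition $R_0 \subset R$ containing the values of $\lambda_\Omega$ and exploit the natural $R_0$-integral structures on everything in sight. By Lemma \ref{lem:construction-Qlambda}, the map $Q_\Omega^{\mathbf{s}, \circ}$ sends $\mathbf{A}^{\mathbf{s}, \circ}(\Gamma_F, R)$ into $H^0(\mathrm{C}_\infty, \mathrm{t}^* \mathbf{A}_\Omega^{\mathbf{s}, \circ})$. Poincaré duality $\PD$ on the orientable manifold $\mathrm{C}_\infty$ and the proper pushforward $\mathrm{t}_\ast$ are defined by cap product and functoriality on the level of (Borel--Moore) chain complexes with arbitrary coefficients, so they carry the integral coefficient sheaf $\mathrm{t}^* \mathbf{A}_\Omega^{\mathbf{s}, \circ}$ (respectively $\mathbf{A}_\Omega^{\mathbf{s}, \circ}$) to the corresponding integral homology. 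Hence the composition $\mathscr{Q}_\Omega^{\mathbf{s}, \circ}$ takes values in $H^{\BM}_d(Y_1(\mathfrak{m}), \mathbf{A}_\Omega^{\mathbf{s}, \circ})$.

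Next I would lift $\Psi_{\mathbf{s}}$ to an integral class. The Borel--Serre complex $C_c^\bullet(\mathfrak{m}, \mathbf{D}_\Omega^{\mathbf{s}})$ has finite-dimensional terms obtained from $\mathbf{D}_\Omega^{\mathbf{s}}$ by the constructions of Section~\ref{subsec:adelic-cochains}, and contains the subcomplex $C_c^\bullet(\mathfrak{m}, \mathbf{D}_\Omega^{\mathbf{s}, \circ})$ whose sections are $R_0$-dual to $\mathbf{A}_\Omega^{\mathbf{s},\circ}$-valued chains. The inclusion becomes an equality after inverting $p$, so after scaling $\Psi_{\mathbf{s}}$ by some power of $p$ it is represented by an integral cocycle $\widetilde{\Psi}_{\mathbf{s}}$. (This scaling factor depends on $\mathbf{s}$, but the resulting compatible system still lies in $\mathscr{D}(\Gamma_F, R)$ because the only point is continuity at each radius separately.) Since cap product and the trace map preserve the $R_0$-integral structure, the pairing $\langle \widetilde{\Psi}_{\mathbf{s}}, \mathscr{Q}_\Omega^{\mathbf{s}, \circ}(f) \rangle$ lands in $R_0$ whenever $f \in \mathbf{A}^{\mathbf{s}, \circ}(\Gamma_F, R)$. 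Thus $\mathscr{P}_\Omega^{\mathbf{s}}(\Psi_{\mathbf{s}})$ is bounded on the unit ball, hence continuous, hence an element of $\mathbf{D}^{\mathbf{s}}(\Gamma_F, R)$; the compatibility under shrinking $\mathbf{s}$ is automatic from the definitions, giving the desired element of $\mathscr{D}(\Gamma_F, R)$.

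The hard point here is the passage to an integral cocycle: one needs to know that $H^d_c(Y_1(\mathfrak{m}), \mathbf{D}_\Omega^{\mathbf{s}})$ has a manageable integral structure so that every class is represented by an integral cocycle up to bounded denominators at each $\mathbf{s}$. This follows, as indicated in the proof of Lemma \ref{lemma:boundedness}, from the fact that the Borel--Serre cochain complex is built out of finitely many copies of the coefficient module (possibly after passing to a neat sublevel and taking invariants), so the integral subcomplex is open, bounded, and $p$-adically separated; every rational cochain thus lies in $p^{-N}$ times the integral lattice for some $N$ depending only on the degree and the level.
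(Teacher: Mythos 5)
Your proposal is correct and follows essentially the same strategy as the paper's proof. Both reduce to finite radius via Lemma \ref{lem:GammaF-dirlim}, fix a ring of definition $R_0$ containing the values of $\lambda_\Omega$, observe that $\mathscr Q_\Omega^{\mathbf s,\circ}$ is defined integrally (Lemma \ref{lem:construction-Qlambda}), and use the fact that $H^d_c(Y_1(\mathfrak m),\mathbf D_\Omega^{\mathbf s,\circ})[1/p]\simeq H^d_c(Y_1(\mathfrak m),\mathbf D_\Omega^{\mathbf s})$ to show that after $p$-power scaling $\Psi^{\mathbf s}$ admits an integral representative, so the pairing lands in $R_0$ after scaling. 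The only presentational difference is that you conclude continuity directly from boundedness, whereas the paper packages the same conclusion in the abstraction of Lemma \ref{lem:abstract-invert-p}, identifying $\mathbf D^{\mathbf s}(\Gamma_F,R)$ with $\Hom_{R_0}(\mathbf A^{\mathbf s,\circ}(\Gamma_F,R),R_0)[1/p]$; the underlying argument is the same. One minor imprecision: the terms of the Borel--Serre complex are not ``finite-dimensional'' (the coefficient modules are infinite-dimensional Banach modules), but rather finite direct sums of copies of the coefficient module, which is what the argument actually uses.
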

Omitting the proof, we record precisely the definition of the period map(s).
\begin{defn}\label{defn:period-map}
If $\Omega=\Sp(R) \rightarrow \mathscr W$ is a point, then the period map $\mathscr P_{\Omega}$ is the $R$-linear map
\begin{align*}
\mathscr P_\Omega : H^d_c(Y_1(\mathfrak m), {\mathscr D}_\Omega) &\rightarrow \mathscr D(\Gamma_F,R)\\
\mathscr P_{\Omega}(\Psi)(f) &= \langle \Psi, \mathscr Q_{\Omega}(f)\rangle
\end{align*}
defined above.
\end{defn}
To prove Theorem \ref{theorem:period-image}, we note the following lemma on recognizing when certain linear functions are continuous.
\begin{lem}\label{lem:abstract-invert-p}
Suppose that $R$ is a $\mathbf Q_p$-Banach algebra and $R_0$ a ring of definition for $R$. If $M$ is a potentially orthonormalizable $R$-Banach module with $R$-Banach dual $M'$, and $M_0$ is any open and bounded $R_0$-submodule of $M$, then the natural map $\Hom_{R_0}(M_0,R_0)[1/p] \rightarrow \Hom_R(M,R)$ factors through an isomorphism
\begin{equation*}
\Hom_{R_0}(M_0,R_0)[1/p] \simeq M' ,
\end{equation*}
and the topology on $M'$ is the gauge topology defined by the $R_0$-submodule $\Hom_{R_0}(M_0,R_0)$ (i.e., the submodules $p^n\Hom_{R_0}(M_0,R_0)$ define a neighborhood basis of $0$).
\end{lem}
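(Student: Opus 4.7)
The plan is to verify the algebraic bijection and then match the two Banach topologies via gauge arguments.

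First I would record the basic consequence of the hypotheses on $M_0$: since $M_0$ is an open and bounded $R_0$-submodule of $M$, its gauge functional defines an $R$-Banach norm on $M$ equivalent to the given norm, and the natural map $M_0[1/p] \to M$ is an equality (openness gives $p^n \cdot (\text{unit ball of } M) \subset M_0$ for some $n$, and boundedness gives the reverse inclusion up to scaling). In particular, an $R$-linear map out of $M$ is determined by its restriction to $M_0$, and conversely any $R_0$-linear map on $M_0$ extends uniquely and $R$-linearly to $M_0[1/p] = M$ by the formula $\phi(p^{-n} m_0) := p^{-n}\phi_0(m_0)$.

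The next step is to produce the claimed bijection. Given $\phi_0 \in \Hom_{R_0}(M_0, R_0)$, its $R$-linear extension $\phi$ is continuous because $\phi(M_0) \subset R_0$ is bounded and $M_0$ is open; this provides a map $\Hom_{R_0}(M_0, R_0)[1/p] \to M'$. Conversely, if $\phi \in M'$, then $\phi(M_0)$ is bounded in $R$ (continuous image of a bounded set), so $p^n \phi(M_0) \subset R_0$ for $n$ sufficiently large, giving a preimage $p^n \phi \in \Hom_{R_0}(M_0, R_0)$; after inverting $p$ this provides the inverse map. Injectivity of the first map follows from $M = M_0[1/p]$, so the two constructions are mutually inverse.

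Finally, to compare topologies, I would observe that the gauge topology on $\Hom_{R_0}(M_0, R_0)[1/p]$ has unit ball exactly $\Hom_{R_0}(M_0, R_0)$, i.e.\ functionals with $\phi(M_0) \subset R_0$. Under the bijection above, this is precisely the unit ball of the operator norm on $M'$ when the norm on $M$ is taken to be the gauge of $M_0$. Since this operator norm is equivalent to the original operator norm on $M'$ (the two norms on $M$ being equivalent), the two Banach topologies on $M'$ agree.

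The main (mild) obstacle is bookkeeping the norm equivalences: the Banach norm on $M$ need only be equivalent to, not equal to, the gauge of $M_0$, so one must check that this equivalence passes through the operator norm construction to yield the same topology on $M'$. Potential orthonormalizability of $M$ is invoked only to ensure $M'$ is a well-behaved Banach $R$-module in the standard way; no deeper input is required.
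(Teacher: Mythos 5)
Your proof is correct, and it takes a genuinely different route from the paper's. The paper leans directly on potential orthonormalizability: it fixes an isomorphism $f\colon c(I,R)\simeq M$, checks the lemma by hand for the special lattice $M_0 = f(c(I,R_0))$ (where everything is explicitly computable: $\Hom_{R_0}(M_0,R_0)\simeq\prod_I R_0$ and $M'\simeq b(I,R)$), and then reduces a general open bounded $M_0$ to this special one via the sandwiching $p^N M_{0,1} \subset M_{0,2} \subset p^{-N} M_{0,1}$. You instead derive the bijection directly from the hypotheses on $M_0$: openness and boundedness give $M = M_0[1/p]$, any $R_0$-linear $\phi_0\colon M_0\to R_0$ extends uniquely $R$-linearly and this extension is automatically continuous (bounded on an open neighborhood of $0$), and conversely any $\phi\in M'$ restricts to a scaled element of $\Hom_{R_0}(M_0,R_0)$ because $\phi(M_0)$ is bounded. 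The topology is handled by noting that the gauge norms associated to $M_0$ and $R_0$ are equivalent to the given ones and yield the obvious operator-norm unit ball. Your approach is more intrinsic and avoids the explicit orthonormal basis entirely; it also reveals (correctly, it seems) that potential orthonormalizability is not actually required by the statement — your closing remark that it is "invoked to ensure $M'$ is well-behaved" undersells your own argument, which only uses that $M_0$ is open, bounded, and that $R_0$ absorbs bounded subsets of $R$. The only piece worth writing out more carefully is the norm-equivalence bookkeeping in the last paragraph (tracking that the gauge of $M_0$ and the gauge of $R_0$ are each equivalent to the ambient norms and hence induce an equivalent operator norm on $M'$), but you have already flagged this yourself and the gap is genuinely mild.
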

\begin{proof}
We first set some notation. If $I$ is a set we write $c(I,R)$ for the set of sequences $(r_i)_{i \in I}$ with $r_i \in R$ and such that for each $\varepsilon > 0$, $|r_i| < \varepsilon$ for all but finitely many $i$ (cf. \cite[Section 1]{Serre-CompactOperators}). We let $c(I,R_0)$ be those sequences with $r_i \in R_0$ for each $i$. Finally, we let $b(I,R)$ but those sequences $r_i$ which are bounded. Note that $c(I,R)' \simeq b(I,R)$.

By definition, we can choose a $R$-Banach module isomorphism $f:c(I,R)\simeq M$ for some set $I$.  Then $c(I,R_0) \subset c(I,R)$ is open and bounded, and $M_0 = f(c(I,R_0))$ is then an open and bounded $R_0$-submodule of $M$ (boundedness is clear, and openness follows from the open mapping theorem).  For this particular choice of $M_0$, the lemma follows by direct inspection, since $f$ induces compatible isomorphisms $\Hom_{R_0}(M_0,R_0) \simeq \prod_{I} R_0$ and $M'  \simeq b(I,R) \simeq (\prod_{I}R_0)[1/p]$.  The case of a general $M_0$ then reduces to this special case upon noting that any two open bounded $R_0$-submodules $M_{0,1},M_{0,2}$ satisfy $p^N M_{0,1} \subset M_{0,2} \subset p^{-N} M_{0,1}$ for $N \gg 0$.
\end{proof}
\begin{proof}[Proof of Theorem \ref{theorem:period-image}]
By Lemma \ref{lem:GammaF-dirlim} we have
\begin{equation}\label{eqn:dual-projective-limit}
 \Hom_{R}(\mathscr A(\Gamma_F,R) ,R) \cong \invlim_{|\mathbf{s}|\to \infty}  \Hom_{R}(\mathbf A^{\mathbf{s}}(\Gamma_F,R) , R)
 \end{equation}
and
 \begin{equation}\label{eqn:distr-projective-limit}
 \mathscr D(\Gamma_F,R) = \invlim_{|\mathbf s|\rightarrow +\infty} \mathbf D^{\mathbf s}(\Gamma_F,R).
 \end{equation}
 Choose now a ring of definition $R_0 \subset R$ containing the image of $\lambda_{\Omega}$. By definition, $R_0$ is open and bounded in $R$ and $\mathbf A^{\mathbf s,\circ}(\Gamma_F,R) \subset \mathbf A^{\mathbf s}(\Gamma_F,R)$ is also open and bounded. Furthermore, $\mathbf A^{\mathbf s}(\Gamma_F,R)$ is potentially orthonormalizable for each $\mathbf s$ since it is the completed scalar extension of a $\mathbf Q_p$-Banach space, which is always potentially orthornormalizable (see \cite[Proposition 1]{Serre-CompactOperators} and \cite[Lemma 2.8]{Buzzard-Eigenvarieties}). Thus, Lemma \ref{lem:abstract-invert-p}, together with \eqref{eqn:dual-projective-limit} and \eqref{eqn:distr-projective-limit}, implies that 
 \begin{equation}\label{eqn:distribution-isomorphism}
 \mathscr D(\Gamma_F,R) \simeq \invlim_{|\mathbf s|\rightarrow +\infty} \Hom_{R_0}(\mathbf A^{\mathbf s,\circ}(\Gamma_F,R),R_0)[1/p]\subset \Hom_R(\mathscr A(\Gamma_F,R),R).
 \end{equation}
Now consider the commuting diagram
 \begin{equation*}
 \xymatrix{
 \mathscr A(\Gamma_F,R) \ar[r]^-{\mathscr Q_{\Omega}} & H^{\BM}_d(Y_1(\mathfrak m), {\mathscr A}_{\Omega}) \\
 \mathbf A^{\mathbf s}(\Gamma_F,R) \ar[r]^-{\mathscr Q_{\Omega}^{\mathbf s}}\ar[u] & H^{\BM}_d(Y_1(\mathfrak m), {\mathbf A}_{\Omega}^{\mathbf s})\ar[u]\\
 \mathbf A^{\mathbf s,\circ}(\Gamma_F,R)\ar[r]^-{\mathscr Q_{\Omega}^{\mathbf s,\circ}} \ar[u] & H^{\BM}_d(Y_1(\mathfrak m), {\mathbf A}_{\Omega}^{\mathbf s,\circ}).\ar[u]
 }
 \end{equation*}
Since $\mathbf D_{\Omega}^{\mathbf s}$ is the $R$-Banach dual of $\mathbf A_{\Omega}^{\mathbf s}$ and $\mathbf D_{\Omega}^{\mathbf s,\circ} \subset \mathbf D_{\Omega}^{\mathbf s}$ is the $R_0$-linear dual of $\mathbf A^{\mathbf s,\circ}_{\Omega}$ (similarly for $\Gamma_F$), the naturality of the pairings $\langle -, - \rangle$ implies that
\begin{equation}\label{eqn:distribution-diagram}
\xymatrix{
H^d_c(Y_1(\mathfrak m),{\mathscr D}_{\Omega}) \ar[r]^-{\mathscr P_{\Omega}} \ar[d]& \Hom_R(\mathscr A(\Gamma_F,R),R) \ar[d]\\
H^d_c(Y_1(\mathfrak m),{\mathbf D}^{\mathbf s}_{\Omega}) \ar[r]^-{\mathscr P_{\Omega}^{\mathbf s}} & \Hom_R(\mathbf A^{\mathbf s}(\Gamma_F,R),R)\\
H^d_c(Y_1(\mathfrak m),{\mathbf D}_{\Omega}^{\mathbf s,\circ}) \ar[u] \ar[r]^-{\mathscr P_{\Omega}^{\mathbf s, \circ}} & \Hom_{R_0}(\mathbf A^{\mathbf s,\circ}(\Gamma_F,R),R_0) \ar[u]
}
\end{equation}
is also a commuting diagram.

Finally, consider $\Psi \in H^d_c(Y_1(\mathfrak m),\mathscr D_{\Omega})$ and write $\Psi^{\mathbf s} \in H^d_c(Y_1(\mathfrak m), \mathbf D_{\Omega}^{\mathbf s})$ for its restriction to $\mathbf D_{\Omega}^{\mathbf s}$. Since sheaf cohomology commutes with flat scalar extension in the coefficients, and $\mathbf D_{\Omega}^{\mathbf s,\circ}[1/p] = \mathbf D_{\Omega}^{\mathbf s}$, the bottom left vertical arrow in \eqref{eqn:distribution-diagram} is an isomorphism after inverting $p$. Following the diagram \eqref{eqn:distribution-diagram} around, we deduce that
\begin{equation*}
\mathscr P_{\Omega}^{\mathbf s}(\Psi^{\mathbf s}) \in \Hom_{R_0}(\mathbf A^{\mathbf s,\circ}(\Gamma_F,R), R_0)[1/p] \subset \Hom_R(\mathbf A^{\mathbf s}(\Gamma_F,R),R).
\end{equation*}
Since $\mathbf s$ is arbitrary, \eqref{eqn:distribution-isomorphism} shows that $\mathscr P_{\Omega}(\Psi) \in \mathscr D(\Gamma_F,R)$ by \eqref{eqn:distribution-isomorphism}.
\end{proof}

\subsection{Compatibilities}\label{subsec:compatibilities}
In this brief subsection we catalog some straightforward features of the period maps. We let $\mathfrak m \subset \mathbf p$ be an integral ideal and we generally let $\Omega=\Sp(R) \rightarrow \mathscr W$ be an affinoid point of weight space. An example of a straightforward feature is, for instance:\ if $\Omega \rightarrow \mathscr W$ factors through $\Sp(R')=\Omega'$ then the natural diagram
\begin{equation*}
\xymatrix{
H^d_c(Y_1(\mathfrak m), \mathscr D_{\Omega}) \ar[r]^-{\mathscr P_{\Omega}} & \mathscr D(\Gamma_F,R)\\
H^d_c(Y_1(\mathfrak m), \mathscr D_{\Omega'}) \ar[r]_-{\mathscr P_{\Omega'}} \ar[u] & \mathscr D(\Gamma_F,R') \ar[u]
}
\end{equation*}
commutes. Here is another compatibility.

\begin{lem}\label{lem:independence-of-level}
If $\mathfrak m' \subset \mathfrak m$ and $\pr: Y_1(\mathfrak m') \rightarrow Y_1(\mathfrak m)$ is the projection map, then $\mathscr P_{\Omega}(\Psi) = \mathscr P_{\Omega}(\pr^{\ast}\Psi)$ for all $\Psi \in H^d_c(Y_1(\mathfrak m), \mathscr D_{\Omega})$.
\end{lem}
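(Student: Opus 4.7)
The plan is to reduce the statement to the naturality of pushforward in Borel--Moore homology combined with the adjointness of pushforward and pullback under the pairing $\langle -, - \rangle$. Since both ``period maps'' $\mathscr P_\Omega$ on $Y_1(\mathfrak m)$ and on $Y_1(\mathfrak m')$ are defined by the same recipe, the key observation is that the Shintani torus embedding factors through $\pr$: namely, because both $\mathrm t$-embeddings are defined by $x \mapsto \begin{smallpmatrix} x & \\ & 1 \end{smallpmatrix}$, if we denote by $\mathrm t'\colon \mathrm C_\infty \hookrightarrow Y_1(\mathfrak m')$ and $\mathrm t\colon \mathrm C_\infty \hookrightarrow Y_1(\mathfrak m)$ the two embeddings (both are proper closed immersions by Proposition \ref{prop:neatness}), then $\mathrm t = \pr\circ \mathrm t'$. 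Moreover, the local system $\mathscr A_\Omega$ on $Y_1(\mathfrak m)$ pulls back to $\mathscr A_\Omega$ on $Y_1(\mathfrak m')$, so both $\mathrm t^\ast \mathscr A_\Omega$ and $\mathrm t'^\ast \mathscr A_\Omega$ agree on $\mathrm C_\infty$ with the local system used in the definition of $Q_\Omega$ in Lemma \ref{lem:construction-Qlambda}.

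First, I would observe that by functoriality of pushforward in Borel--Moore homology \eqref{eqn:pushfoward/pullback} applied to the factorization $\mathrm t = \pr\circ \mathrm t'$, we have $\mathrm t_\ast = \pr_\ast \circ \mathrm t'_\ast$ as maps
\begin{equation*}
H^{\BM}_d(\mathrm C_\infty, \mathrm t^\ast \mathscr A_\Omega) \longrightarrow H^{\BM}_d(Y_1(\mathfrak m), \mathscr A_\Omega).
\end{equation*}
Combining this with \eqref{eqn:QOmega-defn} (and the fact that $Q_\Omega$ and Poincar\'e duality on $\mathrm C_\infty$ are defined intrinsically on $\mathrm C_\infty$, independent of the target $Y_1(\mathfrak m)$ or $Y_1(\mathfrak m')$), we get, for each $f \in \mathscr A(\Gamma_F,R)$, an equality
\begin{equation*}
\mathscr Q_\Omega(f) = \mathrm t_\ast \PD(Q_\Omega(f)) = \pr_\ast \bigl(\mathrm t'_\ast \PD(Q_\Omega(f))\bigr) = \pr_\ast \mathscr Q_\Omega'(f),
\end{equation*}
where I write $\mathscr Q_\Omega'$ for the version of $\mathscr Q_\Omega$ defined at level $\mathfrak m'$.

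Second, I would apply the adjointness of pushforward and pullback under the pairing $\langle -, - \rangle$ described in Section \ref{subsec:topology} (which follows from the compatibility \eqref{eqn:push-pull-caps-formula} of cap product with pushforward along the proper map $\pr\colon Y_1(\mathfrak m') \to Y_1(\mathfrak m)$, combined with the fact that trace commutes with pushforward). This gives
\begin{equation*}
\mathscr P_\Omega(\Psi)(f) = \langle \Psi, \mathscr Q_\Omega(f) \rangle = \langle \Psi, \pr_\ast \mathscr Q_\Omega'(f) \rangle = \langle \pr^\ast \Psi, \mathscr Q_\Omega'(f) \rangle = \mathscr P_\Omega(\pr^\ast \Psi)(f),
\end{equation*}
which is the desired equality since $f$ was arbitrary. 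There is no serious obstacle: the only thing to verify carefully is that $\pr$ is proper (it is a finite cover of orbifolds, hence proper) so that $\pr_\ast$ is defined on Borel--Moore homology and the push-pull formula applies, and that the two local systems on $\mathrm C_\infty$ obtained by pulling back from $Y_1(\mathfrak m')$ and $Y_1(\mathfrak m)$ literally coincide (they do, being defined by the same monoid action of $\mathcal O_p^\times$ as in Section \ref{subsec:period-definition}).
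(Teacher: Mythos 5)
Your proof is correct and is essentially the paper's own argument, just spelled out in somewhat more detail. The paper likewise observes that $\pr$ is compatible with the two Shintani embeddings (so that $\pr_\ast \mathscr Q_\Omega^{\mathfrak m'}(f) = \mathscr Q_\Omega^{\mathfrak m}(f)$), and then concludes by the same adjointness $\langle \pr^\ast\Psi, \mathscr Q_\Omega^{\mathfrak m'}(f)\rangle = \langle \Psi, \pr_\ast\mathscr Q_\Omega^{\mathfrak m'}(f)\rangle$ under the pairing.
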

\begin{proof}
Temporarily denote $\mathscr P_{\Omega}^{\mathfrak m}$ and $\mathscr Q_{\Omega}^{\mathfrak m}$ for the maps defined above with the level specified. We want to show $\mathscr P_{\Omega}^{\mathfrak m}(\Psi) = \mathscr P_{\Omega}^{\mathfrak m'}(\pr^{\ast}\Psi)$ for all $\Psi \in H^d_c(Y_1(\mathfrak m),\mathscr D_{\Omega})$. 

What  is clear is that $\pr$ is compatible with the two possible embeddings $\mathrm t$. So, it follows from the definition \eqref{eqn:QOmega-defn} that $\pr_{\ast}(\mathscr Q_{\Omega}^{\mathfrak m'}(f)) = \mathscr Q_{\Omega}^{\mathfrak m}(f)$ for all $f \in \mathscr A(\Gamma_F,R)$. And now if $f \in \mathscr A(\Gamma_F,R)$ and $\Psi \in H^d_c(Y_1(\mathfrak m),\mathscr D_{\Omega})$ then we see that
\begin{equation*}
\langle \pr^{\ast} \Psi , \mathscr Q_{\Omega}^{\mathfrak m'}(f) \rangle = \langle \Psi, \pr_{\ast}(\mathscr Q_{\Omega}^{\mathfrak m'}(f))\rangle = \langle \Psi, \mathscr Q_{\Omega}^{\mathfrak m}(f)\rangle.
\end{equation*}
This proves the lemma.
\end{proof}

We also note the following truly tautological relationship between the period map and the Amice transform (Proposition \ref{prop:amice}).
\begin{prop}\label{prop:amice-pairing-compat}
If $\chi : \Gamma_F \rightarrow R^\times$ is a continuous character and $\Psi \in H^d_c(Y_1(\mathfrak m),\mathscr D_{\Omega})$, then 
\begin{equation}
\mathscr P_{\Omega}(\Psi)(\chi) = \mathcal A_{\mathscr P_{\Omega}(\Psi)}(\chi).
\end{equation}
\end{prop}

Finally, it will be helpful to note the interaction between the period map and the Archimedean Hecke operators from Section \ref{subsec:archimedean}, where the notations $\pi_0(F_\infty^\times)$, $T_{\zeta}$, and so on are defined. A more involved calculation with the $U_v$-operators is the subject of Section \ref{subsec:abstract-interpolation} below.

\begin{prop}\label{prop:support-calculation}
Let $\Psi \in H^d_c(Y_1(\mathfrak m),\mathscr D_{\Omega})$. Then, 
\begin{enumerate}
\item If $\chi: \Gamma_F \rightarrow R^\times$ is a continuous character and $\zeta \in \pi_0(F_\infty^\times)$, then $\mathscr P_{\Omega}(T_\zeta \Psi)(\chi) = \chi(\zeta)\mathscr P_{\Omega}(\Psi)(\chi)$.
\item If $\varepsilon \in \{\pm 1\}^{\Sigma_F}$ and $\Psi \in H^d_c(Y_1(\mathfrak m),\mathscr D_{\Omega})^{\epsilon}$ then $\mathscr P_{\Omega}(\Psi)(\chi) = 0$ unless $\chi(\zeta) = \epsilon(\zeta)$ for all $\zeta \in \pi_0(F_\infty^\times)$.
\end{enumerate}
\end{prop}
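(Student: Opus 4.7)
The plan is to prove (1) by pushing $T_\zeta$ through the definition of $\mathscr{P}_\Omega$ to a tractable computation on the Shintani cone, and then to deduce (2) as a formal consequence.

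For part (1), I would first use the fact (Section \ref{subsec:archimedean}) that $T_\zeta$ acts as pullback along the right-multiplication homeomorphism $r_\zeta:Y_1(\mathfrak{m})\to Y_1(\mathfrak{m})$ by $\begin{smallpmatrix}\zeta\\ & 1\end{smallpmatrix}$, together with the adjointness of the pairing $\langle-,-\rangle$ (Section \ref{subsec:topology}), so that
\[
\mathscr{P}_\Omega(T_\zeta\Psi)(\chi) = \langle r_\zeta^\ast\Psi,\mathscr{Q}_\Omega(\chi)\rangle = \langle\Psi,(r_\zeta)_\ast\mathscr{Q}_\Omega(\chi)\rangle.
\]
Everything thus boils down to showing $(r_\zeta)_\ast\mathscr{Q}_\Omega(\chi) = \chi(\zeta)\mathscr{Q}_\Omega(\chi)$. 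Writing $s_\zeta:\mathrm{C}_\infty\to\mathrm{C}_\infty$ for right multiplication by $\zeta$, the torus embedding satisfies $r_\zeta\circ\mathrm{t} = \mathrm{t}\circ s_\zeta$. Combined with the factorization $\mathscr{Q}_\Omega = \mathrm{t}_\ast\circ\PD\circ Q_\Omega$ and with the naturality \eqref{eqn:naturality-PD} of Poincar\'e duality applied to the orientation-preserving homeomorphism $s_\zeta$ (Proposition \ref{prop:oritentation-preserving}), one obtains
\[
(r_\zeta)_\ast\mathscr{Q}_\Omega(\chi) = \mathrm{t}_\ast\,\PD\,(s_\zeta^{-1})^\ast Q_\Omega(\chi),
\]
reducing the problem to computing $(s_\zeta^{-1})^\ast Q_\Omega(\chi)$.

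The core computation is then immediate from the explicit formula \eqref{eqn:definitinon-Q-map}: for $x\in\mathbf{A}_F^\times$ and $z\in\mathcal{O}_p^\times$,
\[
\bigl((s_\zeta^{-1})^\ast Q_\Omega(\chi)\bigr)(x)(z) = Q_\Omega(\chi)(x\zeta^{-1})(z) = \lambda_{\Omega,2}^{-1}(z)\,\chi(x\zeta^{-1}z) = \chi(\zeta^{-1})\,Q_\Omega(\chi)(x)(z),
\]
while both sides vanish for $z\notin\mathcal{O}_p^\times$. Since $\zeta^2=1$ in $\mathbf{A}_F^\times$ we have $\chi(\zeta^{-1}) = \chi(\zeta)$, and assembling the steps gives (1).

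Part (2) is then formal: if $\Psi\in H^d_c(Y_1(\mathfrak{m}),\mathscr{D}_\Omega)^\epsilon$, then $T_\zeta\Psi = \epsilon(\zeta)\Psi$ for every $\zeta$, and (1) immediately yields $(\chi(\zeta)-\epsilon(\zeta))\mathscr{P}_\Omega(\Psi)(\chi)=0$. If $\chi(\zeta)\neq\epsilon(\zeta)$ for some $\zeta$, their difference equals $\pm 2$, a unit in any $\mathbf{Q}_p$-algebra, forcing $\mathscr{P}_\Omega(\Psi)(\chi)=0$. The only subtle bookkeeping in the whole argument is that the identification $(s_\zeta^{-1})^\ast(\mathrm{t}^\ast\mathscr{A}_\Omega)\simeq\mathrm{t}^\ast\mathscr{A}_\Omega$ underlying our use of \eqref{eqn:naturality-PD} is tautological here, because the monodromy of this local system is defined purely via the $p$-adic action of $\mathcal{O}_p^\times$ while $\zeta$ is purely Archimedean; this is the only step requiring care, and the argument is otherwise routine.
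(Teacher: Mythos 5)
Your argument is correct and is essentially the paper's own proof: you use the same push-pull adjointness, the same compatibility $r_\zeta\circ\mathrm t=\mathrm t\circ s_\zeta$, the same orientation/naturality facts for $\PD$, and the same direct computation of the action of $\zeta$ on $Q_\Omega(\chi)$ from the defining formula, with only a cosmetic rearrangement (you apply adjointness first and push the pullback to the Shintani cone afterward, and you work with $s_\zeta^{-1}$ rather than $s_\zeta$, harmlessly since $\zeta^2=1$). Your closing remark about the local-system lift being tautological because $\zeta$ is Archimedean is a nice sanity check that the paper leaves implicit.
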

\begin{proof}
$\mathscr P_{\Omega}(\Psi)$ is linear in $\Psi$. In particular, part (2) clearly follows from part (1). To prove (1), we set some notation. Write $\rho_{\zeta}: Y_1(\mathfrak m) \rightarrow Y_1(\mathfrak m)$ for right multiplication by $\begin{smallpmatrix} \zeta \\ & 1 \end{smallpmatrix}$, so $T_{\zeta}$ is the pullback $\rho_\zeta^{\ast}$. On the other hand, write $r_\zeta:  \mathrm C_\infty \rightarrow \mathrm C_\infty$ for right multiplication by $\zeta$.

It is trivial to check from the definition in Lemma \ref{lem:construction-Qlambda} that $r_\zeta^{\ast}(Q_{\Omega}(\chi)) = \chi(\zeta) Q_{\Omega}(\chi)$. Since $(r_\zeta)_{\ast} \circ \PD \circ r_\zeta^{\ast} = \PD$ (see Proposition \ref{prop:oritentation-preserving} and \eqref{eqn:naturality-PD}) we deduce that
\begin{equation}
((r_{\zeta})_{\ast} \circ \PD)(Q_{\Omega}(\chi)) = \chi(\zeta) \PD(Q_{\Omega}(\chi)).
\end{equation}
But $\mathscr Q_{\Omega} = \mathrm t_{\ast} \circ \PD \circ Q_\Omega$, and $(\rho_\zeta)_{\ast} \circ \mathrm t_{\ast}  = \mathrm t_{\ast} \circ (r_\zeta)_{\ast}$, so we get
\begin{equation*}
\mathscr P_{\Omega}(T_\zeta \Psi)(\chi) = \langle \rho_\zeta^{\ast} \Psi, \mathscr Q_{\Omega}(\chi)\rangle = \langle \Psi, (\rho_\zeta)_{\ast}\mathscr Q_{\Omega}(\chi) \rangle = \chi(\zeta)\langle \Psi, \mathscr Q_{\Omega}(\chi)\rangle,
\end{equation*}
as we promised in part (1).
\end{proof}

\begin{rmk}\label{rmk:direct-sum-decomps}
If $\epsilon \in \{\pm 1\}^{\Sigma_F}$ then write $\mathscr X(\Gamma_F)^{\epsilon}$ for those characters $\chi$ on $\Gamma_F$ such that $\chi(\zeta) = \epsilon(\zeta)$ for all $\zeta \in \pi_0(F_\infty^\times)$. Then, $\mathscr X(\Gamma_F)$ is a disjoint union 
\begin{equation*}
\mathscr X(\Gamma_F) = \bigcup_{\epsilon} \mathscr X(\Gamma_F)^{\epsilon}
\end{equation*}
and so $\mathscr O(\mathscr X(\Gamma_F)) = \bigoplus_{\epsilon} \mathscr O(\mathscr X(\Gamma_F)^{\epsilon})$. The previous two lemmas say that $\mathcal A \circ \mathscr P_{\Omega}$ respects the direct sum decompositions in the following diagram
\begin{equation*}
\xymatrix{
H^d_c(Y_1(\mathfrak m), \mathscr D_{\Omega}) \ar@{=}[d] \ar[r]^-{\mathscr P_{\Omega}} & \mathscr D(\Gamma_F,R) \ar[r]^-{\mathcal A} & \mathscr O(\mathscr X(\Gamma_F)) \widehat{\otimes}_{\mathbf Q_p} R \ar@{=}[d]\\
\bigoplus_{\epsilon} H^d_c(Y_1(\mathfrak m), \mathscr D_{\Omega})^{\epsilon} \ar[rr] & & \bigoplus_{\epsilon} \mathscr O(\mathscr X(\Gamma_F)^{\epsilon}) \widehat{\otimes}_{\mathbf Q_p} R.
}
\end{equation*}
\end{rmk}

\subsection{Growth properties}\label{subsec:growth-properties}

In this subsection we analyze the growth properties of our period morphisms $\mathscr P_\lambda$ (over a field). If $L/\mathbf Q_p$ is a finite extension then we always take the ring of integers $\mathcal O_L \subset L$ to be a ring of definition. We also fix an integral ideal $\mathfrak m \subset \mathbf p$ as in the previous subsection.

\begin{defn}\label{defn:order-of-growth}
Let $L/\mathbf Q_p$ be a finite extension and $h\geq 0$. If $\mu \in \mathscr D(\Gamma_F,L)$, then we say that $\mu$ has growth \emph{of order} $\leq h$ if
\begin{equation*}
\sup_{\mathbf s} \left(\sup_{f\in A^{\mathbf s, \circ}(\Gamma_F,L)} p^{-|\mathbf s| h}|\mu(f)| \right) < +\infty.
\end{equation*}
\end{defn}

\begin{prop}\label{prop:growth-condition}
If $\Psi \in H^d_c(Y_1(\mathfrak m),\mathscr D_{\lambda}\otimes_{k_\lambda} L)_{\leq h}$ then $\mathscr P_{\lambda}(\Psi)$ is a distribution with growth of order $\leq h$.
\end{prop}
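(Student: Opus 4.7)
The starting point is to interpret the growth bound as a statement comparing $\Psi$ with the integral lattices $\mathbf A^{\mathbf s,\circ}_\lambda$ on $Y_1(\mathfrak m)$. Lemma \ref{lem:construction-Qlambda} combined with the fact that both Poincar\'e duality and proper pushforward along $\mathrm t$ preserve integral structures shows that the morphism $\mathscr Q^{\mathbf s,\circ}_\lambda$ carries $\mathbf A^{\mathbf s,\circ}(\Gamma_F,L)$ into $H^{\BM}_d(Y_1(\mathfrak m),\mathbf A^{\mathbf s,\circ}_\lambda)$. Hence the growth bound is equivalent to showing that the functional $\Phi\mapsto\langle\Psi,\Phi\rangle$, restricted to the integral lattice $H^{\BM}_d(Y_1(\mathfrak m),\mathbf A^{\mathbf s,\circ}_\lambda)$, has $L$-norm $\leq C\cdot p^{|\mathbf s|h}$ uniformly in $\mathbf s$.

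Next I would use the slope decomposition. By Proposition \ref{prop:salient-slope-decomps}, for each $\mathbf s\geq\mathbf s(\lambda)$ the class $\Psi$ has a unique lift $\Psi^{\mathbf s}\in H^d_c(Y_1(\mathfrak m),\mathbf D^{\mathbf s}_\lambda)_{\leq h}$, and these are compatible under the restriction maps $H^d_c(\mathbf D^{\mathbf s'}_\lambda)_{\leq h}\to H^d_c(\mathbf D^{\mathbf s}_\lambda)_{\leq h}$, which are in fact isomorphisms of finite-dimensional $L$-vector spaces. By linearity (and at worst a harmless polynomial-in-$N$ correction when $U_p$ is not semisimple), I may assume $\Psi$ is a generalized $U_p$-eigenvector with eigenvalue $\alpha\in\overline L$ of slope $v_p(\alpha)\leq h$.

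The key computation then exploits the Hecke-adjunction for the cap-product pairing, $\langle U_p\Psi,\Phi\rangle=\langle\Psi,U_p^{\vee}\Phi\rangle$, where $U_p^{\vee}$ denotes the induced action on $H^{\BM}_d$. Iterating, for every $N\geq 0$,
\begin{equation*}
\mathscr P_\lambda(\Psi)(f)=\langle\Psi,\mathscr Q_\lambda(f)\rangle
=\alpha^{-N}\langle\Psi,(U_p^{\vee})^N\mathscr Q_\lambda(f)\rangle.
\end{equation*}
The crucial geometric input is that the $U_p$-action on test functions shrinks the radius of analyticity: $U_p^{\vee}$ factors through $H^{\BM}_d(Y_1(\mathfrak m),\mathbf A^{\mathbf s-\mathbf e,\circ}_\lambda)\to H^{\BM}_d(Y_1(\mathfrak m),\mathbf A^{\mathbf s,\circ}_\lambda)$, reflecting the fact that $U_p$ acts compactly on the $\mathbf A^{\mathbf s}_\lambda$ by improving analyticity one step at a time. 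Taking $N$ proportional to $|\mathbf s|$, specifically the smallest $N$ with $\mathbf s-N\mathbf e\leq\mathbf s(\lambda)$, the class $(U_p^{\vee})^N\mathscr Q_\lambda(f)$ lies in a fixed integral lattice independent of $\mathbf s$. Pairing with $\Psi$ contributes a bounded constant $C(\Psi)$, while the factor $\alpha^{-N}$ contributes $p^{Nv_p(\alpha)}\leq p^{Nh}\leq p^{|\mathbf s|h}$, giving the desired bound.

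The step I expect to require the most care is the bookkeeping in Step 3: making precise the claim that the Hecke adjoint $U_p^{\vee}$ really does improve the radius of analyticity on the Borel--Moore homology side, with the correct ring-of-definition-compatible constants. This is dual to the compactness of $U_p$ on the distribution modules at fixed radius, and is morally the same input that underlies the existence of the slope decompositions in Proposition \ref{prop:salient-slope-decomps}; nevertheless, checking that the integral lattices $H^{\BM}_d(Y_1(\mathfrak m),\mathbf A^{\mathbf s,\circ}_\lambda)$ behave as claimed requires an adelic cochain-level computation analogous to those of Section \ref{subsec:adelic-cochains} combined with the formulas for $U_v$ in Remark \ref{rmk:U-operator-notation}.
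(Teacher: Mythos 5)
Your overall strategy---slope decomposition, self-adjointness of $U_p$, shift the $U_p$'s onto the test-function side, and argue integrality there---is the same as the paper's, but there is one genuinely misleading hedge and one place where you make the argument more delicate than it needs to be.

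The hedge ``at worst a harmless polynomial-in-$N$ correction when $U_p$ is not semisimple'' is not harmless if taken at face value. The growth condition (Definition \ref{defn:order-of-growth}) is $\sup_{\mathbf s}\sup_{f\in\mathbf A^{\mathbf s,\circ}}p^{-|\mathbf s|h}|\mu(f)|<\infty$, so a factor of $|\mathbf s|^k$ would destroy the bound: you would only prove growth of order $\leq h'$ for every $h'>h$, not growth of order $\leq h$. What actually saves your reduction to generalized eigenvectors is $p$-adic integrality of binomial coefficients: writing $U_p=\alpha(I+\alpha^{-1}N)$ on a Jordan block and expanding $(I+\alpha^{-1}N)^{-m}=\sum_{j}\binom{-m}{j}(\alpha^{-1}N)^j$, the coefficients $\binom{-m}{j}$ are ordinary integers and hence $p$-adically bounded, while the $j$ range is finite; so $p^{mh}U_p^{-m}$ is uniformly bounded even in the non-semisimple case. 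You should either make this observation explicit or, as the paper does, avoid the eigenvector reduction entirely and assert directly that the family $\{p^{|\mathbf s|h}U_p^{-|\mathbf s|}\}_{\mathbf s}$ of operators on the finite-dimensional slope-$\leq h$ subcomplex $C^{\bullet}_c(\mathbf D^{\mathbf s_0}_{\lambda}\otimes L)_{\leq h}$ is uniformly bounded in operator norm. The latter route is shorter and avoids exactly the fragility you were worried about.

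Your Step 3 (tracking the radius improvement $\mathbf A^{\mathbf s,\circ}\rightsquigarrow\mathbf A^{\mathbf s-\mathbf e,\circ}$ under $U_p^{\vee}$ and iterating down to a fixed lattice at radius $\mathbf s_0$) is a legitimate mechanism, but it is more than you need. The paper instead writes $\Psi=p^{-C}p^{-|\mathbf s|h}U_p^{|\mathbf s|}\bigl(p^C\Psi_{\mathbf s}\bigr)$ with $p^C\Psi_{\mathbf s}$ integral (this is where the uniform operator bound is used), then moves the $U_p^{|\mathbf s|}$ across the pairing by self-adjointness and simply observes that $U_p^{|\mathbf s|}\mathscr Q^{\mathbf s,\circ}_{\lambda}(f)$ still lies in the integral $\mathbf A^{\mathbf s,\circ}$-lattice---no need to descend to radius $\mathbf s_0$ on the test-function side. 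Both routes terminate in a pairing between integral classes, and both rest on the same cochain-level observation that $\Delta$ preserves $\mathbf A^{\mathbf s,\circ}$; the paper's version is just cleaner because it keeps the radius fixed on the $\mathbf A$-side. Your concern that Step 3 ``requires the most care'' is well-founded, and the paper's formulation is precisely the way to avoid most of that bookkeeping.
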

\begin{proof}
By Lemma \ref{lem:independence-of-level} and Proposition \ref{prop:neatness} we may assume that $Y_1(\mathfrak m)$ is a manifold (compare with the proof of Lemma \ref{lemma:boundedness}). 

With $h$ fixed, this means that for some $\mathbf s_0$, the slope-$\leq h$ part 
\begin{equation*}
H^d_c(Y_1(\mathfrak m), \mathscr D_{\lambda}\otimes_{k_\lambda} L)_{\leq h} \simeq H^d_c(Y_1(\mathfrak m), \mathbf D^{\mathbf s_0}_{\lambda}\otimes_{k_\lambda} L)_{\leq h}
\end{equation*}
is equal to the slope-$\leq h$ part of the $d$-th cohomology of a Borel--Serre complex
\begin{equation*}
C^{\bullet}_c(\mathbf D_{\lambda}^{\mathbf s_0}\otimes_{k_\lambda} L) \simeq C^{\bullet}_c(\mathbf D_{\lambda}^{\mathbf s_0}\otimes_{k_\lambda} L)_{\leq h} \oplus C^{\bullet}_c(\mathbf D_{\lambda}^{\mathbf s_0}\otimes_{k_\lambda} L)_{>h}.
\end{equation*}
The terms which make up the complex $C^{\bullet}_c(\mathbf D_{\lambda}^{\mathbf s_0}\otimes_{k_\lambda} L)$ are finite direct sums of the Banach space $D_{\lambda}^{\mathbf s_0}\otimes_{k_\lambda} L$. Thus, the family of operators $\{ p^{|\mathbf s|h} U_p^{-|\mathbf s|}\}$ on $C^{\bullet}_c(\mathbf D_{\lambda}^{\mathbf s_0}\otimes_{k_\lambda} L)_{\leq h}$ is a family whose operator norms are bounded independent of $\mathbf s$.

Now choose $\Psi \in H^d_c(Y_1(\mathfrak m), \mathscr D_{\lambda}\otimes_{k_\lambda} L)_{\leq h}$, $\mathbf s_0$ as in the previous paragraph and write $\Psi^{\mathbf s_0}$ in $H^d_c(Y_1(\mathfrak m), \mathbf D^{\mathbf s_0}_{\lambda}\otimes_{k_\lambda} L)_{\leq h}$ for the restriction of $\Psi$ to radius $\mathbf s_0$. Given $\mathbf s$, write
\begin{equation*}
\Psi_{\mathbf s} := (p^{h}U_p^{-1})^{|\mathbf s|} (\Psi^{\mathbf s_0}) \in H^d_c(Y_1(\mathfrak m),\mathbf D_{\lambda}^{\mathbf s_0}\otimes_{k_\lambda} L).
\end{equation*}
By the boundedness discussion in the previous paragraph, we may choose a single $C > 0$ such that $p^C \Psi_{\mathbf s} \in H^d_c(Y_1(\mathfrak m),\mathbf D_{\lambda}^{\mathbf s_0,\circ}\otimes_{k_\lambda^{\circ}} \mathcal O_L)$ for all $\mathbf s \geq \mathbf s_0$. Here we are using the reduction in the first sentence of this proof so that $H^d_c(Y_1(\mathfrak m),\mathbf D_{\lambda}^{\mathbf s_0,\circ}\otimes_{\mathcal O_\lambda} \mathcal O_L)$ is the cohomology in degree $d$ of the bounded sub-complex $C^{\bullet}_c(\mathbf D^{\mathbf s_0,\circ}_\lambda \otimes_{k_\lambda^{\circ}} \mathcal O_L) \subset C^{\bullet}_c(\mathbf D^{\mathbf s_0}_\lambda \otimes_{k_\lambda} L)$.

Now let $\mathbf s \geq \mathbf s_0$ and $f \in \mathbf A^{\mathbf s,\circ}(\Gamma_F,L)$. Then we compute
\begin{equation}\label{eqn:growth-first-calculation}
\mathscr P_{\lambda}(\Psi)(f) = \mathscr P_{\lambda}^{\mathbf s_0}(\Psi^{\mathbf s_0})(f) = p^{-C}p^{-|\mathbf s|h} \mathscr P_{\lambda}^{\mathbf s_0}(U_p^{|\mathbf s|} p^C \Psi_{\mathbf s})(f).
\end{equation}
Now note that the Hecke operator $U_p$ is self-adjoint under $\langle - , - \rangle$, and so
\begin{equation}\label{eqn:OL-valued-pairing}
\mathscr P_{\lambda}^{\mathbf s_0}(U_p^{|\mathbf s|}p^C \Psi_{\mathbf s})(f) = \langle U_p^{|\mathbf s|} p^C \Psi_{\mathbf s}, \mathscr Q_{\lambda}^{\mathbf s, \circ}(f)\rangle = \langle p^C \Psi_{\mathbf s}, U_p^{|\mathbf s|} \mathscr Q_{\lambda}^{\mathbf s,\circ}(f)\rangle.
\end{equation}
Since \eqref{eqn:OL-valued-pairing} is the pairing between the element $U_p^{|\mathbf s|} \mathscr Q_{\lambda}^{\mathbf s,\circ}(f) \in H_d^{\BM}(Y_1(\mathfrak m), \mathbf A^{\mathbf s,\circ}\otimes_{k_\lambda^{\circ}} \mathcal O_L)$ and the image of $p^C \Psi_{\mathbf s}$ in $H_c^{d}(Y_1(\mathfrak m), \mathbf D^{\mathbf s,\circ}\otimes_{k_\lambda^{\circ}} \mathcal O_L)$, it is necessarily an element of $\mathcal O_L$. And so \eqref{eqn:growth-first-calculation} shows that
\begin{equation*}
\big|p^{-|\mathbf s|h} \mathscr P_{\lambda}(\Psi)(f)\big| < p^C,
\end{equation*}
independent of $\mathbf s$ and $f$, completing the proof.
\end{proof}

\subsection{The $p$-adic evaluation class}\label{subsec:padic-eval-class}
In this subsection we consider $L \subset \overline{\mathbf Q}_p$  finite over $\mathbf Q_p$ and containing the Galois closure of $F$. We also use $\lambda = (\kappa,w)$ to denote a cohomological weight, which we view as a $p$-adic weight as in Section \ref{subsec:integration-map}. 
\begin{defn}
If $m$ is an integer critical with respect to $\lambda$, then we define $\delta_{m,p}^{\star} \in \mathscr L_\lambda(L)^\vee$ by
\begin{equation*}
\delta_{m,p}^{\star}(X^j) =  \begin{cases}
{\kappa \choose j}^{-1} & \text{if $j = {\kappa + w \over 2} - m$},\\
0 & \text{otherwise.}
\end{cases}
\end{equation*}
\end{defn}
Now write $\mathbf N_p: \mathbf A_{F}^\times \rightarrow L^\times$ for the $p$-adic realization of the adelic norm $|\cdot|_{\mathbf A_F}$ via $\iota$. That is, $\mathbf N_p$ is given by the following formula
\begin{equation}\label{eqn:p-adic-realization}
\mathbf N_p(x) = |x_f|_{\mathbf A_F} \left(\prod_{v \mid \infty} \sgn(x_v)\right)\cdot\left(\prod_{v \mid p}\prod_{\sigma \in \Sigma_v} \sigma(x_v)\right).
\end{equation}
The character $\mathbf N_{p}$ is the adelic version of the cyclotomic character on $\Gamma_F$, but we also write $\mathbf N_p$ for the induced element of $\mathscr X(\Gamma_F)$. We also consider the local system $\mathrm t^{\ast}{\mathscr L}_{\lambda}(L)^\vee$ on $\mathrm C_\infty$ corresponding to the right $\mathcal O_p^\times$-module structure on $\mathscr L_{\lambda}(L)^{\vee}$ gotten by restricting to $\begin{smallpmatrix}\mathcal O_p^\times \\ & 1 \end{smallpmatrix} \hookrightarrow \GL_2(F_p)$.

\begin{lem}\label{lem:padic-class}
If $x_p \in F_p^\times$ then $\delta_{m,p}^{\star}|_{\begin{smallpmatrix}x_p & \\ & 1 \end{smallpmatrix}} = \left(\prod_{v \mid p}\prod_{\sigma \in \Sigma_v}\sigma(x_v)\right)^m\cdot \delta_{m,p}^{\star}$. Thus,
\begin{enumerate}
\item If $x_p \in \mathcal O_p^\times$ then $\delta_{m,p}^{\star}|_{\begin{smallpmatrix}x_p & \\ & 1 \end{smallpmatrix}} = \mathbf N_p^m(x_p)\delta_{m,p}^{\star}$.
\item The formula $\delta_{m,p}(x) = \mathbf N_p^m(x)\delta_{m,p}^{\star}$ defines an element of $H^0(\mathrm C_\infty, \mathrm t^{\ast}{\mathscr L}_{\lambda}(L)^{\vee})$.
\end{enumerate}
\end{lem}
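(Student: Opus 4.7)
The proof of the main formula proceeds by direct computation, copying the strategy of Lemma \ref{lem:eval-2}. Using the tensorand-by-tensorand description of $\mathscr L_\lambda(L)$ in \eqref{eqn:padic-Llambda}, where the $\sigma$-th factor (for $\sigma \in \Sigma_v$) receives the action of $\GL_2(L)$ via the embedding $\sigma: F_v \hookrightarrow L$, I compute that
\begin{equation*}
\begin{pmatrix} x_p & \\ & 1 \end{pmatrix} \cdot \prod_{\sigma} X_\sigma^{j_\sigma} = \prod_{v\mid p}\prod_{\sigma\in\Sigma_v}\sigma(x_v)^{(w+\kappa_\sigma)/2 - j_\sigma}\cdot \prod_{\sigma} X_\sigma^{j_\sigma},
\end{equation*}
exactly as in the proof of Lemma \ref{lem:eval-1} applied in each factor. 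Dualizing and specializing to the critical multi-index $j_\sigma = (\kappa_\sigma+w)/2 - m$, the exponents collapse to $m$, yielding the claimed formula.

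For part (1), I view $x_p \in \mathcal O_p^\times$ as an element of $\mathbf A_F^\times$ via the canonical inclusion (extension by $1$ at all places away from $p$). In this case the three factors of $\mathbf N_p$ in \eqref{eqn:p-adic-realization} simplify: the adelic absolute value $|x_f|_{\mathbf A_F} = \prod_{v\mid p} |x_v|_v = 1$ because each $x_v$ is a unit, the sign factor is trivial for lack of archimedean components, and the remaining piece is precisely $\prod_{v\mid p}\prod_{\sigma\in\Sigma_v}\sigma(x_v)$. So part (1) is immediate from the main formula.

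For part (2), the section condition has to be interpreted in light of how the local system is defined: following the paragraph preceding the lemma, the relevant structure on $\mathscr L_\lambda(L)^\vee$ is the right $\mathcal O_p^\times$-action obtained by restriction from $\GL_2(F_p)$, so the fiber is a $(\text{trivial } F^\times, \text{right }\widehat{\mathcal O}_F^\times)$-bimodule (with $\widehat{\mathcal O}_F^\times$ acting through its $p$-component). The two conditions to verify are therefore (i) $\delta_{m,p}(\xi x) = \delta_{m,p}(x)$ for $\xi \in F^\times$, which is immediate because $\mathbf N_p$ descends to a character on $\Gamma_F$ and in particular is trivial on $F^\times$; and (ii) $\delta_{m,p}(xu) = \delta_{m,p}(x)|_{u_p}$ for $u \in \widehat{\mathcal O}_F^\times$. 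For (ii), a calculation analogous to part (1) gives $\mathbf N_p(u) = \prod_{v\mid p}\prod_{\sigma\in\Sigma_v}\sigma(u_v)$, and combining with the main formula applied to $u_p\in\mathcal O_p^\times$ shows both sides equal $\mathbf N_p^m(x)\bigl(\prod_{v,\sigma}\sigma(u_v)\bigr)^m\delta_{m,p}^\star$.

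The only subtle point in this argument is the contrast with the archimedean Definition/Proposition \ref{prop:eval-3}, where $F^\times$ acts nontrivially on the fiber and the coefficient function must transform by $\xi_\infty^{-m}$; in the $p$-adic setting the decision to build the local system only from the right $\mathcal O_p^\times$-structure means the coefficient must instead be $F^\times$-invariant, and it is precisely the triviality of $\mathbf N_p$ on $F^\times$ (a manifestation of the product formula packaged into the definition of $\mathbf N_p$) that allows the same formal shape of argument to go through.
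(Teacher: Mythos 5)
Your proof is correct and takes essentially the same approach as the paper: compute the action of the diagonal matrix tensorand-by-tensorand, observe the exponents collapse to $m$ at the critical multi-index, then verify the $(F^\times,\widehat{\mathcal O}_F^\times)$-equivariance directly. The paper also notes (and you omit, though it is evident) that $\mathbf N_p$ is locally constant on $F_\infty^\times$, which is part of the section condition on $\mathrm C_\infty$.
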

\begin{proof}
By definition,
\begin{equation*}
\delta_{m,p}^{\star}|_{\begin{smallpmatrix} x_p & \\ & 1 \end{smallpmatrix}}(X^j) = \prod_{v \mid p} \prod_{\sigma \in \Sigma_v} \sigma(x_v)^{w-\kappa_{\sigma}\over 2}\sigma(x_v)^{\kappa_{\sigma}}\delta_{m,p}^{\star}\left((X_{\sigma}/\sigma(x_v))^{j_\sigma}\right).
\end{equation*}
The final term in the product is only non-zero if $j = {\kappa + w \over 2} - m$, in which case what we get is $\delta_{m,p}^{\star}(X^j)$ times the coefficient
\begin{equation*}
\prod_{v \mid p} \prod_{\sigma \in \Sigma_v} \sigma(x_v)^{w-\kappa_{\sigma}\over 2}\sigma(x_v)^{\kappa_{\sigma}}\sigma(x_v)^{m - {\kappa_{\sigma} + w \over 2} } = \prod_{v \mid p}\prod_{\sigma \in \Sigma_v} \sigma(x_v)^m.
\end{equation*}
This completes the proof point (1). To prove point (2) we first note that $\mathbf N_p$ is locally constant on $F_\infty^\times$ and thus to check $\delta_{m,p}$ actually defines a section we need to check that $\delta_{m,p}(\xi x u) = \delta_{m,p}(x)|_{u_p}$ if $\xi \in F^\times$, $x \in \mathbf A_F^\times$ and $u \in \widehat{\mathcal O}_F^\times$. But that follows immediately from point (1).
\end{proof}

Recall from Section \ref{subsec:integration-map} that we have the dual integration map $I_\lambda^{\vee} : \mathscr L_\lambda(L)^\vee \rightarrow \mathscr A_{\lambda} \otimes_{k_\lambda} L$, which is equivariant for the $\mathcal O_p^\times$-module structures on either side (Proposition \ref{prop:change-to-sharp}(3)). The reader expecting a $\sharp$ in the notation can look ahead to Remark \ref{rmk:what-twisting}.
\begin{lem}\label{eqn:transpose-eval-classes}
If $m$ is an integer critical with respect to $\lambda$, then
\begin{equation*}
I_\lambda^\vee(\delta_{m,p}^{\star}) = \prod_{v \mid p} \prod_{\sigma \in \Sigma_v} \sigma(-)^{{\kappa_\sigma - w \over 2} + m}.
\end{equation*}
In particular, if $z \in \mathcal O_p^\times$ then $I_\lambda^{\vee}(\delta_{m,p}^{\star})(z) = \mathbf N_p^m(z) \lambda_2^{-1}(z)$.
\end{lem}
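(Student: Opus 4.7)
The plan is to compute $I_\lambda^\vee(\delta_{m,p}^\star)$ by unwinding the definitions. By construction, the transpose integration map $I_\lambda^\vee$ sends a functional $\delta \in \mathscr L_\lambda(L)^\vee$ to the function $\mu \mapsto \delta(I_\lambda(\mu))$ on $\mathscr D_\lambda \otimes_{k_\lambda} L$, which via the evaluation pairing corresponds to an element of $\mathscr A_\lambda \otimes_{k_\lambda} L$. So the first step is to plug $\delta_{m,p}^\star$ into the expansion \eqref{eqn:I-lambda-defn} of $I_\lambda$.

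Writing $X = (X_\sigma)$ and interpreting $(z+X)^\kappa$ as $\prod_{v\mid p}\prod_{\sigma \in \Sigma_v}(\sigma(z_v)+X_\sigma)^{\kappa_\sigma}$, the expansion of $I_\lambda(\mu)$ in \eqref{eqn:I-lambda-defn} is a sum over multi-indices $j = (j_\sigma)$ with $0 \leq j \leq \kappa$, with coefficients $\binom{\kappa}{j}\mu(z^j)$ attached to $X^{\kappa-j}$. Since $\delta_{m,p}^\star(X^{j'}) = \binom{\kappa}{j'}^{-1}$ exactly when $j' = \tfrac{\kappa+w}{2} - m$ and is zero otherwise, only the single term $j' = \kappa-j = \tfrac{\kappa+w}{2}-m$, i.e.\ $j = \tfrac{\kappa-w}{2}+m$, survives. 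Using the symmetry $\binom{\kappa_\sigma}{\kappa_\sigma - j_\sigma} = \binom{\kappa_\sigma}{j_\sigma}$ componentwise, the binomial factor $\binom{\kappa}{j}$ cancels $\binom{\kappa}{\kappa-j}^{-1}$, so
\begin{equation*}
\delta_{m,p}^\star\bigl(I_\lambda(\mu)\bigr) \;=\; \mu\bigl(z^{\frac{\kappa-w}{2}+m}\bigr) \;=\; \mu\Bigl(\prod_{v\mid p}\prod_{\sigma \in \Sigma_v}\sigma(z_v)^{\frac{\kappa_\sigma-w}{2}+m}\Bigr).
\end{equation*}
This realizes $I_\lambda^\vee(\delta_{m,p}^\star)$ as pairing against the function $z \mapsto \prod_{v\mid p}\prod_{\sigma\in\Sigma_v}\sigma(z_v)^{\frac{\kappa_\sigma-w}{2}+m}$ on $\mathcal O_p$. (The hypothesis that $m$ is critical with respect to $\lambda$ is exactly what ensures $0 \leq \tfrac{\kappa-w}{2}+m \leq \kappa$, so the relevant multi-index $j$ lies in the range of summation.)

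For the ``in particular'' statement, I just need to recognize the above product as $\mathbf N_p^m(z)\lambda_2^{-1}(z)$ when $z \in \mathcal O_p^\times$. Under the embedding $\mathcal O_p^\times \hookrightarrow \mathbf A_F^\times$ concentrated at the $p$-adic places, the formula \eqref{eqn:p-adic-realization} collapses to $\mathbf N_p(z) = \prod_{v\mid p}\prod_{\sigma\in\Sigma_v}\sigma(z_v)$ (the archimedean signs and finite adelic norm contribute $1$). Combined with $\lambda_2^{-1}(z) = \prod_{v\mid p}\prod_{\sigma\in\Sigma_v}\sigma(z_v)^{\frac{\kappa_\sigma-w}{2}}$, from the recipe in Section \ref{subsec:integration-map}, the two formulas match term by term. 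There is no real obstacle here: the only point requiring care is bookkeeping the multi-index conventions so that $(z+X)^\kappa$, $X^{\kappa-j}$, and the binomial coefficients are all interpreted consistently across the tuple $\Sigma_F = \bigsqcup_{v\mid p} \Sigma_v$.
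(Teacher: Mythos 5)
Your proof is correct and essentially identical to the paper's: both unwind $I_\lambda^\vee$ via $\mu \mapsto \delta_{m,p}^\star(I_\lambda(\mu))$, use the formula \eqref{eqn:I-lambda-defn} for $I_\lambda$, and observe that $\delta_{m,p}^\star$ kills all but the single multi-index $j = \tfrac{\kappa-w}{2}+m$, with the binomial coefficients cancelling by the symmetry $\binom{\kappa}{j}=\binom{\kappa}{\kappa-j}$. Your remark that criticality of $m$ guarantees $0\le j\le\kappa$ is a welcome bit of explicitness that the paper leaves implicit.
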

\begin{proof}
Recall that $\delta_{m,p}^{\star}(X^j)$ is zero except if $j = {\kappa+w\over 2} - m$, in which case it takes the value ${\kappa \choose j}^{-1}$. Thus, if $\mu \in \mathscr D_\lambda(L)$, then
\begin{equation*}
\mu\left(I_{\lambda}^\vee(\delta_{m,p}^{\star})\right) = \delta_{m,p}^{\star}\left(I_{\lambda}(\mu)\right) = \delta_{m,p}^{\star}\left(\sum_j {\kappa \choose j} \mu(z^j) X^{\kappa-j}\right) = \mu\left(z^{{\kappa - w \over 2} + m}\right)
\end{equation*}
Since $\mu$ is arbitrary, we are finished.
\end{proof}
It is convenient here to calculate the interaction between $\delta_{m,p}$ and the map \[Q_{\lambda} : \mathscr A(\Gamma_F,k_{\lambda}) \rightarrow H^0(\mathrm C_\infty, \mathrm t^{\ast}{\mathscr A}_{\lambda})\] defined in Section \ref{subsec:period-definition}.

\begin{lem}\label{lem:Qlambda-p-adic-class}
Let $m$ be an integer critical with respect to $\lambda$.
\begin{enumerate}
\item If $x \in \mathbf A_F^\times$, then $Q_{\lambda}(\mathbf N_p^m)(x)|_{\mathcal O_p^\times} = I_\lambda^{\vee}(\delta_{m,p}(x))|_{\mathcal O_p^\times}$. 
\item If $f=(f_v) \in \mathbf Z_{\geq 1}^{\{v \mid p\}}$ and $a \in \mathcal O_p^\times$, then $Q_{\lambda}(\mathbf N_p^m)(x)\big|\begin{smallpmatrix} \varpi_p^{f} & a \\ & 1 \end{smallpmatrix} = I_\lambda^{\vee}(\delta_{m,p}(x))\big|\begin{smallpmatrix} \varpi_p^{f} & a \\ & 1 \end{smallpmatrix}$.
\end{enumerate}
\end{lem}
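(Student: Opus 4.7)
The plan is to unwind the definitions and reduce everything to a routine computation. The only substantive observation is that for the matrices appearing in part (2), the weight-$\lambda$ action simplifies dramatically, because both normalizing factors in \eqref{eqn:function-action} collapse to $1$.

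For part (1), fix $z\in\mathcal O_p^\times$. The definition of $Q_{\lambda}$ from Lemma \ref{lem:construction-Qlambda} gives
\[
Q_{\lambda}(\mathbf N_p^m)(x)(z)=\lambda_2^{-1}(z)\,\mathbf N_p^m(xz)=\mathbf N_p^m(x)\cdot\mathbf N_p^m(z)\lambda_2^{-1}(z).
\]
On the other side, $\delta_{m,p}(x)=\mathbf N_p^m(x)\,\delta_{m,p}^{\star}$ by Lemma \ref{lem:padic-class}, and Lemma \ref{eqn:transpose-eval-classes} states that $I_{\lambda}^{\vee}(\delta_{m,p}^{\star})(z)=\mathbf N_p^m(z)\lambda_2^{-1}(z)$ for $z\in\mathcal O_p^\times$. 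Linearity of $I_\lambda^\vee$ immediately gives equality, proving (1).

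For part (2), set $g=\begin{smallpmatrix}\varpi_p^{f} & a \\ & 1\end{smallpmatrix}$. Then $c=0$, $d=1$, and $\det g\cdot\varpi_p^{-v(\det g)}=\varpi_p^{f}\cdot\varpi_p^{-f}=1$; in particular $\lambda_1\lambda_2^{-1}(cz+d)=1$ and $\lambda_2(\det g\cdot\varpi_p^{-v(\det g)})=1$. So the formula \eqref{eqn:function-action} for the weight-$\lambda$ action reduces, for any $F\in\mathscr A_\lambda$ and any $z\in\mathcal O_p$, to the translation
\[
F\big|_{g}(z)=F(\varpi_p^{f}z+a).
\]
Since $f\geq\mathbf 1$ and $a\in\mathcal O_p^\times$, the element $\varpi_p^{f}z+a$ lies in $\mathcal O_p^\times$ for every $z\in\mathcal O_p$. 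Applying the identity from (1) at the point $\varpi_p^{f}z+a\in\mathcal O_p^\times$ then yields
\[
Q_{\lambda}(\mathbf N_p^m)(x)\big|_{g}(z)=Q_{\lambda}(\mathbf N_p^m)(x)(\varpi_p^{f}z+a)=I_{\lambda}^{\vee}(\delta_{m,p}(x))(\varpi_p^{f}z+a)=I_{\lambda}^{\vee}(\delta_{m,p}(x))\big|_{g}(z),
\]
which is precisely the claim.

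There is no real obstacle. The only point that could cause confusion is that $Q_{\lambda}(\mathbf N_p^m)(x)$ is extension-by-zero off $\mathcal O_p^\times$ whereas $I_{\lambda}^{\vee}(\delta_{m,p}(x))$ is a genuine polynomial on all of $\mathcal O_p$, so agreement on $\mathcal O_p^\times$ does not imply agreement as elements of $\mathscr A_\lambda$; part (2) is therefore not a formal consequence of $\Delta$-equivariance, but rather uses the specific feature that the translation $z\mapsto\varpi_p^{f}z+a$ takes $\mathcal O_p$ into $\mathcal O_p^\times$, which is exactly what makes the two sides coincide after the action of $g$.
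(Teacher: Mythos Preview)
Your proof is correct and follows the same approach as the paper: establish part~(1) by direct computation from the definitions of $Q_\lambda$, $\delta_{m,p}$, and $I_\lambda^\vee$, and then deduce part~(2) from part~(1) using that $\varpi_p^f z + a \in \mathcal O_p^\times$ for all $z \in \mathcal O_p$. Your explicit unwinding of the weight-$\lambda$ action (showing both normalizing factors in \eqref{eqn:function-action} collapse to $1$) and your closing remark about why part~(2) is not a formal consequence of $\Delta$-equivariance are details the paper leaves implicit, but the logical structure is identical.
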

\begin{proof}
(2) follows from (1) because if $a \in \mathcal O_p^\times$ and $f_v \geq 1$ for all $v \mid p$, then $a + \varpi_p^{f}z \in \mathcal O_p^\times$ for all $z \in \mathcal O_p$.  It remains to prove (1). By definition, in Lemma \ref{lem:padic-class}, $\delta_{m,p}(x) = \mathbf N_p^m(x)\delta_{m,p}^{\star}$. Let $u \in \mathcal O_p^\times$. By Lemma \ref{eqn:transpose-eval-classes}, we have $I_{\lambda}^\vee(\delta_{m,p}^{\star})(u) = \mathbf N_p(u)^m\lambda_2^{-1}(u)$. Thus,  $I_{\lambda}^{\vee}(\delta_{m,p}(x))(u) = \mathbf N_p^m(x)\mathbf N_p^m(u) \lambda_2^{-1}(u)$. Since $\mathbf N_p(-)$ is multiplicitive and $u \in \mathcal O_p^\times$, this is also the value of $Q_\lambda(\mathbf N_p^m)(x)(u)$ (see \eqref{eqn:definitinon-Q-map}).
\end{proof}

In analogy with Definition \ref{defn:infinity-class} we make the following definition. 
\begin{defn}\label{defn:padic-class}
If $K \subset \GL_2(\mathbf A_{F,f})$ is a $\mathrm t$-good subgroup, then we define $\cl_p(m) := \mathrm t_{\ast}(\PD(\delta_{m,p})) \in H_d^{\BM}(Y_K,\mathscr L_{\lambda}(L)^{\vee})$ where $\delta_{m,p}$ is as in Lemma \ref{lem:padic-class}.
\end{defn}

The next proposition explains how the $p$-adic evaluation class is completely analogous to the Archimedean one previously defined in Definition \ref{defn:infinity-class}. (Note, that just as in Lemma \ref{lemma:infinity-class} there is no need to include the subgroup $K$ in the notation.) Namely, suppose that $E \subset \mathbf C$ is a subfield containing the Galois closure of $F$ in $\mathbf C$ and let $L = \mathbf Q_p(\iota(E))$. Then for any compact open subgroup $K \subset \GL_2(\mathbf A_{F,f})$ containing $\begin{smallpmatrix} \widehat{\mathcal O}_F^\times \\ & 1 \end{smallpmatrix}$ we have a natural commuting diagram
\begin{equation}\label{eqn:transfer-classes}
\xymatrix{
H_d^{\BM}(Y_K,\mathscr L_\lambda(E)^{\vee}) \ar[r]^-{\iota}_-{\simeq} & H_d^{\BM}(Y_K,\mathscr L_\lambda(L)^{\vee})\\
H_d^{\BM}(\mathrm C_\infty,\mathrm t^{\ast}\mathscr L_\lambda(E)^{\vee}) \ar[r]^-{\iota}_-{\simeq} \ar[u]^-{\mathrm t_{\ast}} & H_d^{\BM}(\mathrm C_\infty,\mathrm t^{\ast}\mathscr L_\lambda(L)^{\vee}) \ar[u]_-{\mathrm t_{\ast}}\\
H^0(\mathrm C_\infty,\mathrm t^{\ast}\mathscr L_\lambda(E)^{\vee}) \ar[r]^-{\iota}_-{\simeq} \ar[u]^-{\PD} & H^0(\mathrm C_\infty,\mathrm t^{\ast}\mathscr L_\lambda(L)^{\vee}) \ar[u]_-{\PD}
}
\end{equation}
The horizontal maps are all isomorphisms as indicated (Proposition \ref{prop:archi-shift}).
\begin{prop}\label{prop:p-adic-eval-class-defn}
If $m$ is an integral critical with respect to $\lambda$, then $\iota(\cl_\infty(m)) = \cl_p(m)$.
\end{prop}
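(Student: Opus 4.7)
The plan is to verify the stronger statement $\iota(\delta_m)=\delta_{m,p}$ as sections in $H^0(\mathrm{C}_\infty,\mathrm{t}^\ast\mathscr{L}_\lambda(L)^\vee)$; then, by the commutativity of diagram \eqref{eqn:transfer-classes}, applying $\mathrm{t}_\ast\circ\PD$ to both sides yields $\iota(\cl_\infty(m))=\cl_p(m)$ in $H^{\BM}_d(Y_K,\mathscr{L}_\lambda(L)^\vee)$.

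I would begin by spelling out the $\iota$-transfer at the level of sections on $\mathrm{C}_\infty$. The subtlety is that the two local systems are built from different equivariance data: $\mathrm{t}^\ast\mathscr{L}_\lambda(E)^\vee$ is the local system associated to the left $F^\times$-module structure on $\mathscr{L}_\lambda(E)^\vee$ (with $F^\times$ embedded into $\GL_2(E)$ via complex embeddings), while $\mathrm{t}^\ast\mathscr{L}_\lambda(L)^\vee$ is built from the right $\mathcal{O}_p^\times$-action on $\mathscr{L}_\lambda(L)^\vee$ (with $\mathcal{O}_p^\times \hookrightarrow \GL_2(F_p)$ via the diagonal torus). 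Dualizing Proposition \ref{prop:archi-shift}(2) shows that the $\iota$-transfer on sections sends a function $\delta\colon\mathbf{A}_F^\times\to\mathscr{L}_\lambda(E)^\vee$ to the function $x\mapsto \iota(\delta(x))\big|_{\begin{smallpmatrix}x_p\\&1\end{smallpmatrix}}$ valued in $\mathscr{L}_\lambda(L)^\vee$, where $\iota$ on the fiber is the $\mathbf{Q}$-linear isomorphism $\mathscr{L}_\lambda(E)^\vee\overset{\sim}{\to}\mathscr{L}_\lambda(L)^\vee$ determined on each basis element $X^j$ by applying $\iota$ to the coefficient.

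Since $\delta_m^\star$ and $\delta_{m,p}^\star$ are defined by the identical formulas on $\{X^j\}$ with values in $\mathbf{Q}$, we have $\iota(\delta_m^\star)=\delta_{m,p}^\star$; and the scalar $(\sgn(x_\infty)|x_f|_{\mathbf{A}_F})^m$ lies in $\mathbf{Q}^\times$, hence is fixed by $\iota$. Applying Lemma \ref{lem:padic-class}(1) to compute the $x_p$-twist of $\delta_{m,p}^\star$ then yields
\begin{equation*}
\iota(\delta_m)(x) \;=\; (\sgn(x_\infty)|x_f|_{\mathbf{A}_F})^m\cdot\prod_{v\mid p}\prod_{\sigma\in\Sigma_v}\sigma(x_v)^m\cdot\delta_{m,p}^\star.
\end{equation*}
By the definition \eqref{eqn:p-adic-realization} of $\mathbf{N}_p$, the product of scalars on the right-hand side equals $\mathbf{N}_p^m(x)$, so the expression collapses to $\mathbf{N}_p^m(x)\delta_{m,p}^\star=\delta_{m,p}(x)$, proving the desired equality pointwise on $\mathbf{A}_F^\times$ and hence as sections.

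The principal obstacle is the careful bookkeeping of the $\iota$-transfer on the dual local system, in particular reconciling the left $F^\times$-equivariance of $\delta_m$ (via Lemma \ref{lem:eval-2}) with the right $\mathcal{O}_p^\times$-equivariance of $\delta_{m,p}$ (via Lemma \ref{lem:padic-class}(1)) through the twist by $\begin{smallpmatrix}x_p\\&1\end{smallpmatrix}$. Once this transfer is correctly identified, the entire proof reduces to recognizing that the scalar combinations are precisely a manifestation of the product formula that is built into the definition of $\mathbf{N}_p$.
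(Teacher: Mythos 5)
Your proof is correct and takes essentially the same approach as the paper: both reduce to checking $\iota(\delta_m)=\delta_{m,p}$ as sections by applying diagram \eqref{eqn:transfer-classes}, identify the $\iota$-transfer as $x\mapsto\iota(\delta_m(x))\big|_{\begin{smallpmatrix}x_p\\&1\end{smallpmatrix}}$, note $\iota(\delta_m^\star)=\delta_{m,p}^\star$, and combine the rational scalar with the twist factor $\prod_{v\mid p}\prod_{\sigma\in\Sigma_v}\sigma(x_v)^m$ to recover $\mathbf{N}_p^m(x)$. One minor citation slip: the twist formula you use is the unnumbered statement at the head of Lemma \ref{lem:padic-class} (valid for all $x_p\in F_p^\times$), not part (1), which restricts to $x_p\in\mathcal{O}_p^\times$.
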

\begin{proof}
By \eqref{eqn:transfer-classes} and the definitions it is enough to check that $\iota(\delta_m) = \delta_{m,p}$ (where $\delta_m$ is as in Proposition \ref{prop:eval-3} and $\delta_{m,p}$ is as in Lemma \ref{lem:padic-class}). 

To be clear, by the construction in Proposition \ref{prop:archi-shift},  $\iota(\delta_m)$ is the section $x \mapsto \iota(\delta_m(x))|_{x_p}$ where the $\iota$ on the right-hand side is the natural way of turning an element of $\mathscr L_\lambda(E)^{\vee}$ into an element of $\mathscr L_{\lambda}(L)^{\vee}$ via scalar extension along $\iota$. In particular, $\iota(\delta_m^{\star}) = \delta_{m,p}^{\star}$. Thus we can compute 
\begin{equation*}
\iota(\delta_m(x)) = \iota((|x_f|_{\mathbf A_{F}}\prod_{v \mid \infty} \sgn(x_v))^m \delta_m^{\star}) = (\mathbf N_p(x)\prod_{v \mid p} \prod_{\sigma \in \Sigma_v} \sigma(x_v)^{-1})^{m}\delta_{m,p}^{\star}
\end{equation*}
(compare with \eqref{eqn:p-adic-realization}). And now Lemma \ref{lem:padic-class} tells us that $\iota(\delta_m(x))|_{x_p} = \mathbf N_p(x) \delta_{m,p}^{\star} = \delta_{m,p}(x)$. This completes the proof.
\end{proof}

We will finally make a computation regarding the $p$-adic evaluation class that is used later in Corollary \ref{cor:abstract-interp-eigen}. (One could also give an analogous Archimedean computation and use Proposition \ref{prop:p-adic-eval-class-defn}.)

Let $v \mid p$ and denote $V_v^+ = \begin{smallpmatrix} \varpi_v \\ & 1 \end{smallpmatrix} \in \GL_2(\mathbf A_{F,f})$. Suppose that we fix a $\mathrm t$-good subgroup $K$. Write $K_{\varpi_v} := K \cap V_v^+ K (V_v^+)^{-1}$ and similarly $K_{\varpi_v^{-1}} = K \cap (V_v^+)^{-1} K V_v^+$. Then right multiplication by $V_v^+$ induces a map $V_v^+ : Y_{K_{\varpi_v}} \rightarrow Y_{K_{\varpi_v^{-1}}}$ that lifts to a map of local systems $\mathscr L_\lambda(L)^\vee \rightarrow \mathscr L_\lambda(L)^\vee$ given by $\delta \mapsto \delta|_{V_v^+}$. More precisely we are considering the composition of two maps on the level of local systems. The first is the map on the base given by $V_v^+$ and the identity map on the local system where $K_{\varpi_v^{-1}}$ acts on $\mathscr L_\lambda(L)$ by the twisted action $\mathscr L_\lambda(L)((V_v^+)^{-1})$ of $(V_v^+)^{-1} K V_v^+$. The second map is the identity on the base $Y_{K_{\varpi_v^{-1}}}$ and the right translation on the level of local systems. (Compare with \eqref{eqn:adelic-cochains}.)

In any case, we thus have a pushforward map 
\begin{equation}\label{eqn:V_v^+-pushforward}
(V_v^+)_{\ast} : H_d^{\BM}(Y_{K_{\varpi_v}}, \mathscr L_\lambda^\vee(L)) \rightarrow H_d^{\BM}(Y_{K_{\varpi_v^{-1}}}, \mathscr L_\lambda^\vee(L)).
\end{equation}
Note that both $K_{\varpi_v}$ and $K_{\varpi_v^{-1}}$ are still $\mathrm t$-good because $K$ is. Thus there is a $p$-adic evaluation class $\cl_p(m)$ on either side of  \eqref{eqn:V_v^+-pushforward}.

\begin{lem}\label{lemma:pushforward-class}
$(V_v^+)_{\ast} \cl_p(m) = q_v^{m} \cl_p(m)$.
\end{lem}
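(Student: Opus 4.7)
The plan is to track the pushforward step-by-step through the diagram
\begin{equation*}
\xymatrix{
\mathrm{C}_\infty \ar[r]^-{\mathrm{t}} \ar[d]_-{r_{\varpi_v}} & Y_{K_{\varpi_v}} \ar[d]^-{V_v^+} \\
\mathrm{C}_\infty \ar[r]_-{\mathrm{t}'} & Y_{K_{\varpi_v^{-1}}}
}
\end{equation*}
which is commutative on the level of base spaces because $\begin{smallpmatrix} x\\ & 1\end{smallpmatrix}\begin{smallpmatrix}\varpi_v\\ & 1\end{smallpmatrix} = \begin{smallpmatrix}x\varpi_v\\ & 1\end{smallpmatrix} = \mathrm{t}'(x\varpi_v)$. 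This reduces the calculation of $(V_v^+)_{\ast}\cl_p(m) = (V_v^+)_{\ast}\mathrm{t}_{\ast}\PD(\delta_{m,p})$ to a computation on the Shintani cone, just as in the proof of Proposition \ref{prop:pushforward-eval-class}.

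The main subtlety, and the step I expect to be the most delicate, is accounting for the twist of the local system that arises in the definition of $(V_v^+)_{\ast}$. As explained in the excerpt, $(V_v^+)_{\ast}$ factors as a pushforward on bases plus an identification of $\mathscr L_\lambda^\vee(L)$ with $\mathscr L_\lambda^\vee(L)((V_v^+)^{-1})$ via $\delta \mapsto \delta|_{V_v^+}$. Thus to push $\delta_{m,p}$ forward we first replace it by the section $\widetilde\delta_{m,p}(x) := \delta_{m,p}(x)|_{V_v^+}$, and Lemma \ref{lem:padic-class}(1) gives
\begin{equation*}
\widetilde\delta_{m,p}(x) = \mathbf{N}_p^m(x)\,\delta_{m,p}^{\star}|_{V_v^+} = \Bigl(\prod_{\sigma\in\Sigma_v}\sigma(\varpi_v)^m\Bigr) \delta_{m,p}(x).
\end{equation*}

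Next, applying $(r_{\varpi_v})_{\ast}\PD$, we use the naturality of Poincar\'e duality, equation \eqref{eqn:naturality-PD}: since $r_{\varpi_v}$ is an orientation-preserving homeomorphism of $\mathrm{C}_\infty$ with inverse $r_{\varpi_v^{-1}}$, one has $(r_{\varpi_v})_{\ast}\PD = \PD\circ r_{\varpi_v^{-1}}^{\ast}$. A direct calculation using the definition of $\delta_{m,p}$ (Lemma \ref{lem:padic-class}) gives $r_{\varpi_v^{-1}}^{\ast}\delta_{m,p} = \mathbf{N}_p(\varpi_v)^{-m}\delta_{m,p}$.

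Combining these steps yields
\begin{equation*}
(V_v^+)_{\ast}\cl_p(m) = \Bigl(\prod_{\sigma\in\Sigma_v}\sigma(\varpi_v)^m\Bigr)\mathbf{N}_p(\varpi_v)^{-m}\cl_p(m).
\end{equation*}
The proof then concludes by computing $\mathbf{N}_p(\varpi_v)$ directly from formula \eqref{eqn:p-adic-realization}: viewing $\varpi_v$ as the idele which is $\varpi_v$ at $v$ and $1$ elsewhere, we have $|\varpi_v|_{\mathbf{A}_F} = q_v^{-1}$ and no Archimedean contribution, so $\mathbf{N}_p(\varpi_v) = q_v^{-1}\prod_{\sigma\in\Sigma_v}\sigma(\varpi_v)$. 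Therefore $\mathbf{N}_p(\varpi_v)^{-m} = q_v^m\prod_{\sigma\in\Sigma_v}\sigma(\varpi_v)^{-m}$, and the two products of embeddings cancel, leaving $(V_v^+)_{\ast}\cl_p(m) = q_v^m\cl_p(m)$.
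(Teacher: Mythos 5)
Your proof is correct and follows essentially the same route as the paper: reduce to the Shintani cone via the commuting diagram, use Proposition \ref{prop:oritentation-preserving} and \eqref{eqn:naturality-PD} to convert the pushforward into a pullback of the section $\delta_{m,p}$, then extract the scalar $q_v^m$ from Lemma \ref{lem:padic-class} and formula \eqref{eqn:p-adic-realization}. The only cosmetic difference is that you split the twisted map $r_{\varpi_v}$ into a pure coefficient automorphism $\delta\mapsto\delta|_{V_v^+}$ followed by a base-only multiplication, whereas the paper folds the twist into a single formula $(r_{\varpi_v}^{\ast}s)(x)=s(x\varpi_v)|_{(V_v^+)^{-1}}$ and computes $r_{\varpi_v}^{\ast}\delta_{m,p}=q_v^{-m}\delta_{m,p}$ in one go; the two factorizations compose to the same map $(x,\delta)\mapsto(x\varpi_v,\delta|_{V_v^+})$, so both computations are valid.
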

\begin{proof}
Consider the diagram
\begin{equation}\label{eqn:diagram-pushforward-crap}
\xymatrix{
H_d^{\BM}(Y_{K_{\varpi_v}}, \mathscr L_\lambda(L)^\vee) \ar[r]^-{(V_v^+)_{\ast}} & H_d^{\BM}(Y_{K_{\varpi_v^{-1}}}, \mathscr L_\lambda(L)^\vee)\\
H_d^{\BM}(\mathrm C_\infty, \mathrm t^{\ast}\mathscr L_\lambda(L)^\vee) \ar[u]^-{\mathrm t_{\ast}} \ar[r]^-{(r_{\varpi_v})_{\ast}} & H_d^{\BM}(\mathrm C_\infty, \mathrm t^{\ast}\mathscr L_\lambda(L)^\vee) \ar[u]_-{\mathrm t_{\ast}}\\
H^0(\mathrm C_\infty, \mathrm t^{\ast}\mathscr L_\lambda(L)^\vee)  \ar[u]_-{\PD}  & H^0(\mathrm C_\infty, \mathrm t^{\ast}\mathscr L_\lambda(L)^\vee)  \ar[l]_-{r_{\varpi_v}^{\ast}}. \ar[u]_-{\PD}
}
\end{equation}
Here we write $r_{\varpi_v}$ for the map on $\mathrm C_\infty$ which is right multiplication by $\varpi_v$ and with a non-trivial action to the level of local systems as above. The pullback map $r_{\varpi_v}^{\ast}$ is the map given by $(r_{\varpi_v}^{\ast}s)(x) = s(x\varpi_v)|_{(V_v^+)^{-1}}$ for all $s \in H^0(\mathrm C_\infty,\mathrm t^{\ast}\mathscr L_\lambda(L)^\vee)$ and $x \in \mathbf A_F^\times$. Taking $s = \delta_{m,p}$ we get
\begin{equation*}
r_{\varpi_v}^{\ast}(\delta_{m,p})(x) = \delta_{m,p}(x\varpi_v)|_{(V_v^+)^{-1}} = \mathbf N_p^m(x \varpi_v) \delta_{m,p}^{\star}|_{(V_v^+)^{-1}} = |\varpi_v|_{\mathbf A_F}^m \mathbf N_p^m(x)\delta_{m,p}^{\star} = q_v^{-m} \delta_{m,p}(x).
\end{equation*}
(The third equality used \eqref{eqn:p-adic-realization} and Lemma \ref{lem:padic-class}.) Thus, $r_{\varpi_v}^{\ast}\delta_{m,p} = q_v^{-m}\delta_{m,p}$. The conclusion now follows from Proposition \ref{prop:oritentation-preserving} and \eqref{eqn:naturality-PD}.
\end{proof}

\subsection{Abstract interpolation}\label{subsec:abstract-interpolation}

The main result in this subsection (Theorem \ref{thm:abstract-interpolation} below) is an ``abstract'' equality of functionals on a certain overconvergent cohomology group. It relates the Hecke action at $p$ to the $p$-adic evaluation classes via the period maps.
 
 In the remainder of this section we fix a finite order Hecke character $\theta$ of conductor $\mathfrak f = \prod_{v} \mathfrak p_v^{f_v}$ where $f_v = 0$ if $v \nmid p$. We write $\theta^{\iota}$ for $\iota\circ \theta$, which is thus a $\overline{\mathbf Q}_p$-valued Hecke character. We also fix a field $L \subset \overline{\mathbf Q}_p$ containing the Galois closure of $F$ in $\overline{\mathbf Q}_p$ and the values of $\theta^{\iota}$.  Thus $\theta^{\iota}$ is an element of $\mathscr A(\Gamma_F,L)$.
Set $f_{+,v} = \max(f_v,1)$ and let $f_+ =(f_{+,v}) \in \mathbf Z_{\geq 1}^{\{ v \mid p \}}$. We also fix a cohomological weight $\lambda$.   
 
 Recall the definition of
\begin{equation*}
Q_{\lambda} : \mathscr A(\Gamma_F,L) \rightarrow H^0(\mathrm C_\infty, \mathrm t^{\ast}\mathscr A_{\lambda}\otimes_{k_\lambda} L)
\end{equation*}
from Lemma \ref{lem:construction-Qlambda}. In particular, if $g \in \mathscr A(\Gamma_F,L)$ and $x \in \mathbf A_F^\times$ then $Q_{\lambda}(g)(x)$ is an element of $\mathscr A_{\lambda}\otimes_{k_\lambda} L$ and $\mathscr A_\lambda\otimes_{k_\lambda} L$ has a right action of $\Delta$.
\begin{lem}\label{lem:Q-lambda-theta}
If $a \in \mathcal O_p$, $g \in \mathscr A(\Gamma_F,L)$ and $x \in \mathbf A_{F}^\times$ then
\begin{equation*}
Q_{\lambda}(g \theta^{\iota})(x)\big|\begin{smallpmatrix} \varpi_p^{f_+} & a \\ & 1 \end{smallpmatrix} = 
\begin{cases}
\theta^{\iota}(ax)\cdot Q_{\lambda}(g)(x)\big|\begin{smallpmatrix} \varpi_p^{f_+} & a \\ & 1 \end{smallpmatrix} & \text{if $a \in \mathcal O_p^\times$},\\
0 & \text{if $a \not\in \mathcal O_p^\times$}.
\end{cases}
\end{equation*}
\end{lem}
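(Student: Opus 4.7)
The proof is a direct unwinding of definitions, and the plan is as follows.

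Write $g' = \begin{smallpmatrix} \varpi_p^{f_+} & a \\ & 1 \end{smallpmatrix}$. First I would simplify the right $\Delta$-action. Since $g'$ has $c=0$ and $d=1$, and since $\det g' = \varpi_p^{f_+}$ so that $\det g' \cdot \varpi_p^{-v(\det g')} = 1$, the formula \eqref{eqn:function-action} for the $\Delta$-action on $\mathscr{A}_\lambda$ collapses to pure substitution:
\begin{equation*}
f\big|{g'}(z) = f(\varpi_p^{f_+} z + a)
\end{equation*}
for every $f \in \mathscr{A}_\lambda \otimes_{k_\lambda} L$. Consequently, for any $h \in \mathscr{A}(\Gamma_F,L)$,
\begin{equation*}
Q_\lambda(h)(x)\big|{g'}(z) = Q_\lambda(h)(x)(\varpi_p^{f_+} z + a),
\end{equation*}
and by Lemma~\ref{lem:construction-Qlambda} this vanishes unless $\varpi_p^{f_+} z + a \in \mathcal{O}_p^\times$, in which case it equals $\lambda_2^{-1}(\varpi_p^{f_+} z + a)\cdot h(x(\varpi_p^{f_+} z + a))$.

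Next I would observe that, since $\varpi_p^{f_+} z \in \varpi_p \mathcal{O}_p$ for all $z \in \mathcal{O}_p$, the condition $\varpi_p^{f_+} z + a \in \mathcal{O}_p^\times$ is equivalent to $a \in \mathcal{O}_p^\times$. This immediately disposes of the second case of the lemma.

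For the case $a \in \mathcal{O}_p^\times$, applying the displayed formula with $h = g\theta^\iota$ gives
\begin{equation*}
Q_\lambda(g\theta^\iota)(x)\big|{g'}(z) = \lambda_2^{-1}(\varpi_p^{f_+} z + a)\,g(x(\varpi_p^{f_+} z + a))\,\theta^\iota(x(\varpi_p^{f_+} z + a)).
\end{equation*}
The key point is that $\theta^\iota$ has conductor $\mathfrak{f} = \prod_v \mathfrak{p}_v^{f_v}$ which, by definition of $f_+$, divides $\mathbf{p}^{f_+}$. Since $a \in \mathcal{O}_p^\times$ we may factor $x(\varpi_p^{f_+} z + a) = (xa)\bigl(1 + \varpi_p^{f_+} z a^{-1}\bigr)$, and the second factor lies in $1 + \mathbf{p}^{f_+}\widehat{\mathcal{O}}_F \subset 1 + \mathfrak{f}\widehat{\mathcal{O}}_F$, on which $\theta^\iota$ is trivial. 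Therefore $\theta^\iota(x(\varpi_p^{f_+} z + a)) = \theta^\iota(ax)$, and pulling this constant out gives
\begin{equation*}
Q_\lambda(g\theta^\iota)(x)\big|{g'}(z) = \theta^\iota(ax)\cdot \lambda_2^{-1}(\varpi_p^{f_+} z + a)\,g(x(\varpi_p^{f_+} z + a)) = \theta^\iota(ax)\cdot Q_\lambda(g)(x)\big|{g'}(z),
\end{equation*}
which is the desired identity.

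The calculation is entirely formal; there is no genuine obstacle, only the mild bookkeeping point that one must use $f_+ \geq f_v$ (even when $f_v = 0$) to ensure that the conductor of $\theta^\iota$ divides $\mathbf{p}^{f_+}$. This is precisely why $f_{+,v}$ is defined as $\max(f_v,1)$ rather than just $f_v$: for an unramified place the displayed factorization still puts $1 + \varpi_p^{f_+} z a^{-1}$ into the kernel of $\theta^\iota$, and the argument goes through uniformly.
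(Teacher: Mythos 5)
Your proof is correct and takes essentially the same route as the paper's: simplify the right $\Delta$-action (the scalar factors collapse to $1$ for a matrix with $c=0$, $d=1$, and determinant $\varpi_p^{f_+}$), observe that $a + \varpi_p^{f_+}z \in \mathcal O_p^\times$ if and only if $a \in \mathcal O_p^\times$, and then use the conductor hypothesis to replace $\theta^\iota(x(a + \varpi_p^{f_+}z))$ by $\theta^\iota(ax)$. The paper phrases the last step as $\theta^\iota(a + \varpi_p^{f_+}z) = \theta^\iota(a)$ followed by multiplicativity, while you explicitly factor out $a$, but these are the same computation.

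One small clarification on your closing aside: the inclusion $\mathbf p^{f_+} \subset \mathfrak f$ already holds with $f_{+,v} = f_v$ (including at places where $f_v = 0$), so conductor divisibility is not the reason the paper defines $f_{+,v} = \max(f_v,1)$. The actual role of the $\max(\cdot,1)$ is the clean dichotomy at the start of your argument: if one allowed $f_{+,v} = 0$ at an unramified $v$, then $a_v + z_v$ could be a unit even when $a_v$ is not, and the case analysis (and the vanishing clause of the lemma) would break. This is harmless to your proof of the lemma as stated, but it is worth attributing the design choice correctly since it also drives the coset decomposition of $U_{\varpi_p}^{f_+}$ used downstream.
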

\begin{proof}
If $z \in \mathcal O_p$, then $a + \varpi_p^{f_+}z \in \mathcal O_p^\times$ if and only if $a \in \mathcal O_p^\times$. Thus, by \eqref{eqn:function-action} and the definition of $Q_{\lambda}$ we deduce
\begin{multline}
Q_{\lambda}(g \theta^{\iota})(x)\big|\begin{smallpmatrix} \varpi_p^{f_+} & a \\ & 1 \end{smallpmatrix}(z)\\
=Q_{\lambda}(g\theta^{\iota})(x)(a + z\varpi_p^{f_+}) = \begin{cases}
(g\theta^{\iota})(x(a+\varpi_p^{f_+}z))\lambda_2^{-1}(a+z\varpi_p^{f_+}) & \text{if $a \in \mathcal O_p^\times$},\\
0 & \text{if $a \not\in \mathcal O_p^\times$}.
\end{cases}
\end{multline}
This already proves the case $a \not\in \mathcal O_p^\times$. When $a \in \mathcal O_p^\times$, $\theta^{\iota}(a+\varpi_p^{f_+}z) = \theta^{\iota}(a)$ by definition of the conductor of $\theta$ and thus the case $a \in \mathcal O_p^\times$ follows from multiplicativity of $\theta$.
\end{proof}
We now fix further notation. Set
\begin{align*}
S_0^\times &:= \prod_{\substack{v \mid p\\ f_v=0}} (\mathcal O_v/\varpi_v \mathcal O_v)^\times\\
S_1^\times &:= \prod_{\substack{v \mid p \\ f_v>0}} (\mathcal O_v/\varpi_v^{f_v} \mathcal O_v)^\times.
\end{align*}
If $b \in S_0^\times$ and $c \in S_1^\times$ then we write $b+c$ for the natural element of $(\mathcal O_p/\varpi_p^{f_+}\mathcal O_p)^\times \simeq S_0^\times \times S_1^\times$. Implicit in the notation below is that any choices of lifts are irrelevant. For instance, $\theta^{\iota}(b+c)$ makes perfect sense for $b \in S_0^\times$ and $c \in S_1^\times$.

As before, let $v \mid p$ and let $V_v^+$ be the matrix $\begin{smallpmatrix} \varpi_v \\ & 1 \end{smallpmatrix} \in \GL_2(\mathbf A_{F,f})$. In general, if $K \subset \GL_2(\mathbf A_{F,f})$ is a compact open subgroup and $\mathfrak m$ is an ideal then we have a natural map
\begin{equation*}
\begin{smallpmatrix} \varpi_v^{f_{v,+}} \\ & 1 \end{smallpmatrix} = (V_v^+)^{f_{v,+}} : H^{\ast}_c(Y_1(\mathfrak m),\mathscr L_{\lambda}(L)) \rightarrow H^{\ast}_c(Y_1^0(\mathfrak m; \mathfrak p_v^{f_v^+}), \mathscr L_{\lambda}(L))
\end{equation*}
where $Y_1^0(\mathfrak m; \mathfrak p_v^{f_{v,+}}) = Y_{K_1^0(\mathfrak m; \mathfrak p_v^{f_{v,+}})}$ and 
\begin{equation*}
K_1^0(\mathfrak m; \mathfrak p_v^{f_{v,+}}) = \{ g = \begin{smallpmatrix} a & b \\ c & d \end{smallpmatrix} \in K_1(\mathfrak m) \mid b \in \mathfrak p_v^{f_{v,+}}\widehat{\mathcal O}_F\}.
\end{equation*}
It is clear that this morphism is independent of the choice of uniformizer $\varpi_v$. Furthermore, since $K_1(\mathfrak m) \supset K_1^0(\mathfrak m; \mathfrak p_v^{f_{v,+}})$ if $\mathfrak p_v \mid \mathfrak m$ then we can also take the endomorphism $U_v$ of $H^{\ast}_c(Y_1(\mathfrak m), \mathscr L_{\lambda}(L))$ and post-compose it with pullback along $Y_1^0(\mathfrak m; \mathfrak p_v^{f_{v,+}}) \rightarrow Y_1(\mathfrak m)$. This discussion gives meaning to the following lemma.

\begin{lem}\label{lemma:second-half-of-computations}
Let $\mathfrak m \subset \mathbf p$. If $\psi \in H^{\ast}_c(Y_1(\mathfrak m), \mathscr L_\lambda(L))$ is represented by an adelic cochain $\widetilde \psi$ and $W := \prod_{f_v=0} (U_v-V_v^+) \prod_{f_v > 0} (V_v^+)^{f_v}$, then $W(\psi) \in H^{\ast}_c(Y_1^0(\mathfrak m;\mathbf p^{f_+}), \mathscr L_{\lambda}(L))$ is represented by the adelic cochain
\begin{equation*}
W(\widetilde \psi)(\sigma) = \sum_{b \in S_0^\times} \begin{smallpmatrix} \varpi_p^{f_+} & b \\ & 1\end{smallpmatrix} \cdot \widetilde \psi\left( \sigma \begin{smallpmatrix} \varpi_p^{f_+} & b \\ & 1 \end{smallpmatrix}\right).
\end{equation*}
\end{lem}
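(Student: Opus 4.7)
The plan is to compute $W(\widetilde{\psi})$ on the level of adelic cochains by splitting $W$ into a composition of Hecke-type operators localized at individual places $v \mid p$, and then multiplying out. The key input is formula \eqref{eqn:hecke-action-adelic-cochains} from Section \ref{subsec:adelic-cochains}, together with the fact that the operators at distinct places above $p$ commute (they are supported on different components of $\GL_2(F_p)$).

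First, I would treat one place $v \mid p$ with $f_v = 0$. Applying \eqref{eqn:hecke-action-adelic-cochains} together with the standard double-coset decomposition from Remark \ref{rmk:U-operator-notation}, the operator $U_v$ acts on an adelic cochain by
\[
(U_v \widetilde{\psi})(\sigma) = \sum_{a \in \mathcal O_v/\varpi_v\mathcal O_v} \begin{smallpmatrix} \varpi_v & a \\ & 1 \end{smallpmatrix}\cdot \widetilde{\psi}\left(\sigma \begin{smallpmatrix} \varpi_v & a \\ & 1 \end{smallpmatrix}\right),
\]
while $V_v^+$ is just right translation by $\begin{smallpmatrix} \varpi_v \\ & 1 \end{smallpmatrix}$, which is the $a = 0$ contribution to the sum. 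Subtracting, I obtain
\[
((U_v - V_v^+)\widetilde{\psi})(\sigma) = \sum_{b_v \in (\mathcal O_v/\varpi_v\mathcal O_v)^\times} \begin{smallpmatrix} \varpi_v & b_v \\ & 1 \end{smallpmatrix}\cdot \widetilde{\psi}\left(\sigma \begin{smallpmatrix} \varpi_v & b_v \\ & 1 \end{smallpmatrix}\right),
\]
where the nonzero representatives $b_v$ are interpreted as units in $\mathcal O_v$.

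Next, for $v \mid p$ with $f_v > 0$ the operator $(V_v^+)^{f_v}$ acts on adelic cochains as right translation by $\begin{smallpmatrix} \varpi_v^{f_v} \\ & 1 \end{smallpmatrix}$, with no summation. The operators $U_v - V_v^+$ (for $f_v = 0$) and $(V_v^+)^{f_v}$ (for $f_v > 0$) all commute because the matrices defining them lie in $\GL_2(F_v)$ for distinct places $v$ and hence in commuting factors of $\GL_2(F_p) \subset \GL_2(\mathbf A_{F,f})$. Iterating and using the multiplicativity of the cochain action, I would therefore get
\[
W(\widetilde{\psi})(\sigma) = \sum_{(b_v) \in \prod_{f_v = 0}(\mathcal O_v/\varpi_v\mathcal O_v)^\times} M(b) \cdot \widetilde{\psi}(\sigma M(b)),
\]
where $M(b) := \prod_{f_v = 0} \begin{smallpmatrix} \varpi_v & b_v \\ & 1 \end{smallpmatrix} \prod_{f_v > 0} \begin{smallpmatrix} \varpi_v^{f_v} \\ & 1 \end{smallpmatrix}$. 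Recognizing the right-hand side as the global matrix $\begin{smallpmatrix} \varpi_p^{f_+} & b \\ & 1 \end{smallpmatrix}$, with $b \in R_0^\times$ regarded as an element of $\mathcal O_p$ by extending by zero at the places where $f_v > 0$, yields the claimed formula.

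The only remaining point is to verify that $W(\widetilde{\psi})$ represents a class at the predicted level $K_1^0(\mathfrak m;\mathbf p^{f_+})$ rather than $K_1(\mathfrak m)$. This is a bookkeeping exercise: each $(U_v - V_v^+)$ shrinks the level by requiring upper-triangular-mod-$\varpi_v$ structure, while each $(V_v^+)^{f_v}$ shrinks it by requiring the $(1,2)$-entry to lie in $\varpi_v^{f_v}\widehat{\mathcal O}_F$; together these cut out exactly $K_1^0(\mathfrak m;\mathbf p^{f_+})$. I expect no serious obstacle here — the argument is essentially a composition calculation — beyond being careful that the single summation index $b \in R_0^\times$ at the end really corresponds to the product of local summations.
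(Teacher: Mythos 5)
Your proof follows the same route as the paper's: compute $(U_v - V_v^+)$ and $(V_v^+)^{f_v}$ separately on adelic cochains via the formulas from Section \ref{subsec:adelic-cochains} and Remark \ref{rmk:U-operator-notation}, observe that the defining matrices at distinct places above $p$ commute (since coset representatives can be chosen supported at a single place), and multiply out. The argument is correct; the concluding remark about the target level is a harmless addition that the paper leaves implicit, relying on the discussion preceding the lemma.
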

\begin{proof}
According to the definitions (Section \ref{subsec:adelic-cochains}) we have
\begin{align*}
((V_v^+)^{f_v} \widetilde \psi)(\sigma) &= \begin{smallpmatrix} \varpi_v^{f_v} & \\  & 1 \end{smallpmatrix} \cdot \widetilde \psi \left(\sigma \begin{smallpmatrix} \varpi_v^{f_v} & \\  & 1 \end{smallpmatrix}\right)\\ 
((U_v-V_v^+)\widetilde \psi)(\sigma) &= \sum_{b_v \in (\mathcal O_v/\varpi_v\mathcal O_v)^\times} \begin{smallpmatrix} \varpi_v & b_v \\  & 1 \end{smallpmatrix} \cdot \widetilde \psi \left(\sigma \begin{smallpmatrix} \varpi_v & b_v \\  & 1 \end{smallpmatrix}\right).
\end{align*}
Here we are using $\mathfrak m \subset \mathbf p$ to use the given description of the $U_v$-operator (see Remark \ref{rmk:U-operator-notation}). In the second formula, we are free to choose coset representatives in $\widehat{\mathcal O}_F$ for $(\mathcal O_v/\varpi_v \mathcal O_v)^\times$ that are supported only on $v$. But then the matrices in the two formulas above, as one ranges over all $v \mid p$,  necessarily commute and the formula for $W(\widetilde \psi)$ is clear.
\end{proof}
We make a similar calculation for the next lemma. We do not specify the level at which the result ends up, but this omission is harmless because we will apply Lemma \ref{lemma:adelic-twisting-expansion} only through  Lemma \ref{lemma:second-half-of-computations} at which point we know precisely the resulting level subgroup.
\begin{lem}\label{lemma:adelic-twisting-expansion}
Let $\mathfrak m \subset \mathbf p$. If $\psi \in H^{\ast}_c(Y_1(\mathfrak m), \mathscr L_\lambda(L))$ is represented by an adelic cochain $\widetilde \psi$ and $b \in S_0^\times$, then $\begin{smallpmatrix} \varpi_p^{f_+} & b \\ & 1 \end{smallpmatrix}\cdot \tw_{\theta^{\iota}}^{\cl}(\psi)$ is represented by the adelic cochain
\begin{equation*}
\sigma = \sigma_\infty \otimes [g_f] \mapsto (\prod_{\substack{v \mid p\\ f_v=0}}\theta^{\iota}(\varpi_v))\theta^{\iota}(\det g_f)\sum_{c \in S_1^\times} \theta^{\iota}(c + b)\begin{smallpmatrix} \varpi_p^{f_+} & b + c \\ & 1 \end{smallpmatrix}\cdot \widetilde \psi\left(\sigma \begin{smallpmatrix} \varpi_p^{f_+} & b + c \\ & 1 \end{smallpmatrix}\right).
\end{equation*}
\end{lem}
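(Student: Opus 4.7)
The plan is to unwind both sides directly using the adelic cochain conventions of Section~\ref{subsec:adelic-cochains} together with the explicit formula in Lemma~\ref{lem:twisting-adelic-cochains}. Set $\delta := \begin{smallpmatrix}\varpi_p^{f_+} & b \\ & 1\end{smallpmatrix} \in \Delta$. Recall that the left action of $\delta$ on an adelic cochain $\phi$ is $(\delta\cdot\phi)(\sigma) = \delta\cdot\phi(\sigma\delta)$ (cf.\ \eqref{eqn:hecke-action-adelic-cochains}). Applying this to $\phi = \tw_{\theta^{\iota}}^{\cl}(\widetilde\psi)$ at $\sigma = \sigma_\infty \otimes [g_f]$, and using $\det(g_f\delta) = \det(g_f)\varpi_p^{f_+}$, I would expand via Lemma~\ref{lem:twisting-adelic-cochains} to obtain
\begin{equation*}
\theta^{\iota}(\det g_f)\,\theta^{\iota}(\varpi_p^{f_+})\sum_{t\in\Upsilon_{\mathfrak f}^\times}\theta^{\iota}(t)\,\delta\begin{smallpmatrix}1 & t_0 \\ & 1\end{smallpmatrix}\cdot\widetilde\psi\!\left(\sigma_\infty\otimes\left[g_f\,\delta\begin{smallpmatrix}1 & t_0 \\ & 1\end{smallpmatrix}\right]\right).
\end{equation*}
The matrix identity $\delta\begin{smallpmatrix}1 & t_0 \\ & 1\end{smallpmatrix} = \begin{smallpmatrix}\varpi_p^{f_+} & b+\varpi_p^{f_+} t_0 \\ & 1\end{smallpmatrix}$ then collapses the two matrices on both the coefficient action and the argument of $\widetilde\psi$.

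Next I would perform the change of variables $\Upsilon_{\mathfrak f}^\times \xrightarrow{\sim} R_1^\times$ sending $t$ to $c$, where $c_v \equiv \varpi_v^{f_v}(t_0)_v \pmod{\varpi_v^{f_v}}$ at each $v$ with $f_v>0$. Because $\mathfrak f$ is supported only above $p$ and $t_0$ is chosen to vanish off $v\mid\mathfrak f$, the idele $b + \varpi_p^{f_+}t_0$ has component $b_v$ at $v$ with $f_v=0$ (here $f_{+,v}=1$ but $(t_0)_v=0$) and $\varpi_v^{f_v}(t_0)_v$ at $v$ with $f_v>0$, so the top-right matrix entry represents $b+c \in (\mathcal O_p/\varpi_p^{f_+}\mathcal O_p)^\times$; after fixing lifts consistently the two entries are literally equal.

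It then remains to identify the scalar factor
\begin{equation*}
\theta^{\iota}(\varpi_p^{f_+})\,\theta^{\iota}(t) \;=\; \left(\prod_{f_v=0}\theta^{\iota}(\varpi_v)\right)\,\theta^{\iota}(c+b).
\end{equation*}
Splitting $\varpi_p^{f_+} = \prod_{f_v=0}\varpi_v \cdot \prod_{f_v>0}\varpi_v^{f_v}$ reduces this to checking $\prod_{f_v>0}\theta^{\iota}(\varpi_v^{f_v})\,\theta^{\iota}(t) = \theta^{\iota}(c+b)$. I would verify this locally: on the right, since $\theta^{\iota}$ is unramified at $v$ with $f_v = 0$ and $(c+b)_v = b_v \in \mathcal O_v^\times$ there, $\theta^{\iota}(c+b) = \prod_{f_v>0}\theta^{\iota}_v(c_v)$; on the left, writing $t = x/f$ with $x\in\widehat{\mathcal O}_F^\times$ and $f \in \mathfrak f$ chosen so that $v(f)=f_v$ at $v\mid\mathfrak f$ (available by weak approximation, totally positive) and using $\theta(f)=1$, one rewrites $\theta^{\iota}(t) = \prod_{v\mid\mathfrak f}\theta^{\iota}_v(x_v)$; multiplying by $\prod_{f_v>0}\theta^{\iota}(\varpi_v^{f_v})$ and using the relation $x_v \equiv \varpi_v^{f_v}(t_0)_v \equiv c_v \pmod{\varpi_v^{f_v}}$ produces exactly $\prod_{f_v>0}\theta^{\iota}_v(c_v)$.

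The main obstacle is the last identity, where one must carefully untangle the two competing conventions for $\theta^{\iota}$ (the Dirichlet-style evaluation on $\Upsilon_{\mathfrak f}^\times$ fixed in Section~\ref{subsec:twisting-archimedean} versus the Hecke-character evaluation on finite ideles) and verify that the uniformizer factors $\theta^{\iota}(\varpi_v^{f_v})$ at the ramified places absorb the discrepancy between them. Everything else is a direct matrix calculation and bookkeeping with the monoid action on adelic cochains.
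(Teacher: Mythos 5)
Your proof is correct and follows essentially the same route as the paper's: apply Lemma~\ref{lem:twisting-adelic-cochains} to expand $\tw_{\theta^\iota}^{\cl}(\widetilde\psi)$ evaluated on the translated chain, collapse $\begin{smallpmatrix}\varpi_p^{f_+} & b \\ & 1\end{smallpmatrix}\begin{smallpmatrix}1 & t_0 \\ & 1\end{smallpmatrix}$ by the obvious matrix identity, reparameterize the sum over $\Upsilon_{\mathfrak f}^\times$ by $R_1^\times$, and verify the resulting scalar factor. The paper packages the reparameterization slightly more cleanly — it starts from $c\in R_1^\times$, chooses a lift $\widehat c\in\widehat{\mathcal O}_F^\times$ that reduces to $c$ in $R_1^\times$ \emph{and} to $b$ in $R_0^\times$, and observes that $\{\widehat c/\varpi^{(1)}\}_c$ is a system of representatives for $\Upsilon_{\mathfrak f}^\times$; with that choice the matrix entry is literally $\widehat c_0 + b$ and the coefficient identity $\theta^\iota(\varpi_p^{f_+}\widehat c/\varpi^{(1)}) = \theta^\iota(\varpi^{(0)})\theta^\iota(\widehat c) = (\prod_{f_v=0}\theta^\iota(\varpi_v))\theta^\iota(c+b)$ falls out immediately, avoiding the local-factor bookkeeping in your final step. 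Both versions are fine; the only thing to be careful about (and you flagged it) is that the identification of $\theta$ with a character on $\Upsilon_{\mathfrak f}^\times$ is uniformizer-dependent, so the various $\theta^\iota(\varpi_v)$ factors must be tracked consistently — which both proofs do.
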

\begin{proof}
First, by definition we have
\begin{equation}\label{eqn:first-action}
\left(\begin{smallpmatrix} \varpi_p^{f_+} & b \\ & 1 \end{smallpmatrix}\cdot \tw_{\theta^{\iota}}^{\cl}(\widetilde \psi)\right)(\sigma) = \begin{smallpmatrix} \varpi_p^{f_+} & b \\ & 1 \end{smallpmatrix}\cdot \tw_{\theta^{\iota}}^{\cl}(\widetilde \psi)\left(\sigma \begin{smallpmatrix} \varpi_p^{f_+} & b \\ & 1 \end{smallpmatrix}\right).
\end{equation}
Set $\varpi^{(0)} = \prod_{\substack{v \mid p\\ f_v=0}} \varpi_v$ and $\varpi^{(1)} := \prod_{\substack{v \mid p\\ f_v>0}} \varpi_v^{f_v}$, so that $\mathfrak f \widehat{\mathcal O}_F = \varpi^{(1)} \widehat{\mathcal O}_F$. If $c \in S_1^\times$ then  choose a lift $\widehat c$ to $\widehat{\mathcal O}_F^\times$ so that $\widehat{c} \mapsto c$ in $S_1^\times$ but $\widehat{c} \mapsto b$ in $S_0^\times$. Then, $\{\widehat{c}/\varpi^{(1)}\}_{c \in S_1^\times}$ is a set of representatives for $\Upsilon_{\mathfrak f}^\times$, so  Proposition \ref{prop:twisting-adelic-cochains} implies that 
\begin{multline}\label{eqn:second-action}
\tw_{\theta^{\iota}}^{\cl}(\widetilde \psi)\left(\sigma \begin{smallpmatrix} \varpi_p^{f_+} & b \\ & 1 \end{smallpmatrix}\right)\\ = \theta^{\iota}(\varpi_p^{f_+}\det g_f) \sum_{c \in S_1^\times} \theta^{\iota}(\widehat{c}/\varpi^{(1)}) \begin{smallpmatrix} 1 & \widehat{c}_0/\varpi^{(1)} \\ & 1 \end{smallpmatrix} \widetilde \psi\left(\sigma \begin{smallpmatrix} \varpi_p^{f_+} & b \\ & 1 \end{smallpmatrix}\begin{smallpmatrix} 1 & \widehat{c}_0/\varpi^{(1)} \\ & 1 \end{smallpmatrix}\right),
\end{multline}
where as before $\widehat{c}_0$ is zero at places $v \nmid \mathfrak f$. In particular, $\varpi_p^{f_+} \widehat{c}_0/\varpi^{(1)} = \widehat{c}_0$ and so
\begin{equation*}
\begin{smallpmatrix} \varpi_p^{f_+} & \ast \\ & 1 \end{smallpmatrix}\begin{smallpmatrix} 1 & \widehat{c}_0/\varpi^{(1)} \\ & 1\end{smallpmatrix} = \begin{smallpmatrix} \varpi_p^{f_+} & \ast + \widehat{c}_0\\ & 1 \end{smallpmatrix}.
\end{equation*}
On the other hand, $\varpi_p^{f_+}\widehat{c}/\varpi^{(1)} = \varpi^{(0)} \widehat{c}$, whence $\theta^{\iota}(\varpi_p^{f_+} \widehat c / \varpi^{(1)}) = \theta^{\iota}(\varpi^{(0)})\theta^{\iota}(\widehat c)$. We finally remark that $\theta^{\iota}(\widehat c) = \theta^{\iota}(c+b)$ by construction of $\widehat c$. Putting these observations into \eqref{eqn:first-action} and \eqref{eqn:second-action}, we have completed the proof.
\end{proof}
In the statement of the next theorem, we write $U_{\varpi_p}$ for the Hecke operator defined by the double coset of $\begin{smallpmatrix} \varpi_p \\ & 1 \end{smallpmatrix}$. We could have also called it $U_{\mathbf p}$ but we fear it looks too close to $U_p$. In any case, the point is that $U_{\varpi_p}^{f_+} = \prod_{v \mid p} U_v^{f_{v,+}}$.

\begin{thm}\label{thm:abstract-interpolation}
Let $\mathfrak m \subset \mathbf p$,  $\Psi \in H^d_c(Y_1(\mathfrak m),{\mathscr{D}}_{\lambda}\otimes_{k_\lambda} L)$, and let $m$ be an integer which is critical with respect to $\lambda$.  Then,
\begin{multline}\label{eqn:abstract-interpolation}
\langle U_{\varpi_p}^{f_+}\Psi, \mathscr Q_{\lambda}(\mathbf N_p^m \theta^{\iota}) \rangle\\
= \varpi_{p}^{-f_+ {w-\kappa \over 2}}\bigg\langle \left(\prod_{\substack{v \mid p\\ f_v=0}} \theta^{\iota}(\varpi_v)^{-1}(U_v-V_v^+)\prod_{\substack{v \mid p\\ f_v>0}}(V_v^+)^{f_v}\right) \tw_{\theta^{\iota}}^{\cl}I_{\lambda}(\Psi), \cl_p(m)\bigg\rangle
\end{multline}
\end{thm}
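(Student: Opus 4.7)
The proof strategy is a direct computation at the level of adelic cochains, leveraging the adjunction between $I_\lambda$ and its transpose $I_\lambda^\vee$ together with Lemmas \ref{lem:Q-lambda-theta}, \ref{lem:Qlambda-p-adic-class}, \ref{lemma:second-half-of-computations}, and \ref{lemma:adelic-twisting-expansion}. First I would expand the Hecke operator $U_{\varpi_p}^{f_+} = \prod_{v\mid p} U_v^{f_{v,+}}$ via Remark \ref{rmk:U-operator-notation}: the cochain $(U_{\varpi_p}^{f_+}\widetilde\Psi)(\sigma)$ is a sum over $a \in \mathcal O_p/\varpi_p^{f_+}\mathcal O_p$ of terms $\begin{smallpmatrix}\varpi_p^{f_+} & a \\ & 1\end{smallpmatrix}\cdot\widetilde\Psi(\sigma\begin{smallpmatrix}\varpi_p^{f_+} & a \\ & 1\end{smallpmatrix})$. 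Since $\mathscr Q_\lambda(f) = \mathrm t_{\ast}\PD\,Q_\lambda(f)$, push--pull reduces the LHS to the integral over $\mathrm C_\infty$ (as in the proof of Theorem \ref{thm:cap-product}) of the sum over $a$ of the pairings of $\widetilde\Psi(\sigma\,\cdots)$, viewed as a $\mathscr D_\lambda$-valued cochain, against $Q_\lambda(\mathbf N_p^m\theta^\iota)(x)|_{\begin{smallpmatrix}\varpi_p^{f_+} & a \\ & 1\end{smallpmatrix}}$, where $x$ is the Shintani parameter.

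Lemma \ref{lem:Q-lambda-theta} kills every term with $a \notin \mathcal O_p^\times$ and identifies the surviving ones as $\theta^\iota(ax)\cdot Q_\lambda(\mathbf N_p^m)(x)|_{\begin{smallpmatrix}\varpi_p^{f_+} & a \\ & 1\end{smallpmatrix}}$. Since $f_{v,+}\geq 1$ for every $v\mid p$, Lemma \ref{lem:Qlambda-p-adic-class}(2) rewrites this last factor as $I_\lambda^\vee(\delta_{m,p}(x))|_{\begin{smallpmatrix}\varpi_p^{f_+} & a \\ & 1\end{smallpmatrix}}$. The defining adjunction $\mu(I_\lambda^\vee(\delta)) = \delta(I_\lambda(\mu))$ from \eqref{eqn:I-lambda-defn} then moves the integration map into the $\mathscr D_\lambda$-coefficient; simultaneously the $\Delta$-equivariance identity \eqref{eqn:I-lambda-action} applied to $g = \begin{smallpmatrix}\varpi_p^{f_+} & a \\ & 1\end{smallpmatrix}$, whose determinant is $\varpi_p^{f_+}$, extracts the overall scalar $\varpi_p^{-f_+(w-\kappa)/2}$ seen on the RHS and converts the $\mathscr D_\lambda$-action into the $\mathscr L_\lambda^{\sharp}$-action.

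To match the resulting expression with the RHS, decompose $a \in (\mathcal O_p/\varpi_p^{f_+}\mathcal O_p)^\times \simeq R_0^\times \times R_1^\times$ as $a = b+c$ and split the sum accordingly. The inner sum over $c \in R_1^\times$, carrying the weights $\theta^\iota(ax) = \theta^\iota(b+c)\theta^\iota(x)$, is precisely the adelic cochain representative of $\begin{smallpmatrix}\varpi_p^{f_+} & b \\ & 1\end{smallpmatrix}\cdot\tw^{\cl}_{\theta^\iota} I_\lambda(\Psi)$ given by Lemma \ref{lemma:adelic-twisting-expansion}, once one observes that the Shintani parameter $x$ supplies the $\theta^\iota(\det g_f)$ cocycle factor appearing there. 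The remaining outer sum over $b \in R_0^\times$ of the matrices $\begin{smallpmatrix}\varpi_p^{f_+} & b \\ & 1\end{smallpmatrix}\cdot(-)$ is exactly the cochain incarnation of $W = \prod_{f_v=0}(U_v-V_v^+)\prod_{f_v>0}(V_v^+)^{f_v}$ produced by Lemma \ref{lemma:second-half-of-computations}, and the leftover prefactor $\prod_{f_v=0}\theta^\iota(\varpi_v)$ from Lemma \ref{lemma:adelic-twisting-expansion} is absorbed into the $\theta^\iota(\varpi_v)^{-1}$ coefficients displayed in $W$ on the RHS; pairing with $\cl_p(m) = \mathrm t_{\ast}\PD(\delta_{m,p})$ closes the identity. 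The main obstacle will be the careful bookkeeping of normalizations---especially the interplay between the $\Delta$-twist inherent in the $I_\lambda$-adjunction and the cocycle $\theta^\iota(\det g_f)$ from $\tw^{\cl}_{\theta^\iota}$---together with verifying that the Shintani variable $x \in \mathbf A_F^\times$ supplies exactly the determinant-cocycle factor demanded by Lemma \ref{lemma:adelic-twisting-expansion}.
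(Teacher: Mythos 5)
Your proposal is correct and takes essentially the same approach as the paper's proof: both compute directly at the level of adelic cochains, reduce via push--pull to $\mathrm C_\infty$, expand the $U_{\varpi_p}^{f_+}$ operator as a sum over $a\in\mathcal O_p/\varpi_p^{f_+}\mathcal O_p$, kill the non-unit terms via Lemma \ref{lem:Q-lambda-theta}, extract the $\varpi_p^{-f_+(w-\kappa)/2}$ scalar via \eqref{eqn:I-lambda-action} and the $I_\lambda$-adjunction, then match the surviving sum against the cochain formulas of Lemmas \ref{lemma:second-half-of-computations} and \ref{lemma:adelic-twisting-expansion} by decomposing $a=b+c$. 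The paper organizes the final matching by computing both sides of \eqref{eqn:interpolation-goal} separately and comparing, whereas you work forward from the LHS, but the content and all key lemma invocations are identical.
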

Before giving the proof, we want to clarify two points about the statement of the theorem.
\begin{rmk}\label{rmk:what-twisting}
On the right-hand side, the element $I_\lambda(\Psi)$ is meant to be an $\mathscr L_\lambda(L)$-valued cohomology class, not an $\mathscr L_\lambda^{\sharp}(L)$-valued one. (See Definition \ref{defn:sharp-notation} for the definition of $\mathscr L_\lambda^{\sharp}(L)$.) The same remark applies to its twist by $\theta^{\iota}$. The only difference is the Hecke action at $p$, and if you want an $\mathscr L_\lambda^{\sharp}(L)$-valued class, which is arguably more a more natural choice, then of course you remove the $\varpi_p$-factor from the front of the formula. See \eqref{eqn:see-above} below. 

But for the sake of comparing to classical $L$-values, if we make the switch in the previous paragraph then we also have to remember to view $\cl_p(m)$ as a $(\mathscr L_{\lambda}^{\vee})^{\sharp}$-valued homology class and take this into account during computations. (Compare with Corollary \ref{cor:abstract-interp-eigen} below).
\end{rmk}

\begin{rmk}\label{rmk:clarification-meaning}
In the proof below we are going to work at the level of adelic cochains (as indicated by the previous lemmas). Since we elide the actual cohomology in the arguments, and thus omit making precise the levels, let us clarify further the two sides of the formula \eqref{eqn:abstract-interpolation}.

We hope that the left-hand side of \eqref{eqn:abstract-interpolation} is clear:\ we are taking the class $U_{\varpi_p}^{f_+}\Psi$  in $H^d_c(Y_1(\mathfrak m), \mathscr D_\lambda \otimes_{k_\lambda} L)$ and pairing it with the class $\mathscr Q_{\lambda}(\mathbf N_p^{m}\theta^{\iota}) \in H_d^{\BM}(Y_1(\mathfrak m), \mathscr A_{\lambda}\otimes_{k_\lambda} L)$. 

Let's unwind the right-hand side of \eqref{eqn:abstract-interpolation}. First, $\tw_{\theta^{\iota}}I_{\lambda}(\Psi)$ is a class in $H^d_c(Y_{1}(\mathfrak m \mathfrak f^2), \mathscr L_{\lambda}(L))$. If we write $W$ for the operator acting on this class in  \eqref{eqn:abstract-interpolation} (and the proof below), then $W\tw_{\theta^{\iota}}(I_\lambda(\Psi))$ defines a class in $H^d_c(Y_1^0(\mathfrak{mf}^2; \mathfrak f^2); \mathscr L_{\lambda}(L))$ by the discussion preceding Lemma \ref{lemma:second-half-of-computations}. And since $\begin{smallpmatrix} \widehat{\mathcal O}_F^\times \\ & 1 \end{smallpmatrix} \subset K_1^0(\mathfrak{mf}^2; \mathfrak f^2)$, we can make sense of the evaluation class $\cl_p(m) \in H_d^{\BM}(Y_1^0(\mathfrak{mf}^2;\mathfrak f^2), \mathscr L_{\lambda}^\vee(L))$ which was carefully and universally defined in Definition \ref{defn:padic-class}. We then pair these classes, and this is what we mean by the right-hand side of \eqref{eqn:abstract-interpolation}.
\end{rmk}
\begin{proof}[Proof of Theorem \ref{thm:abstract-interpolation}]
For the purposes of the proof, write
\begin{equation*}
W := \prod_{\substack{v \mid p\\ f_v = 0}} \theta^{\iota}(\varpi_v)^{-1}(U_v-V_v^+) \prod_{\substack{v\mid p\\ f_v > 0}} (V_v^+)^{f_v}
\end{equation*}
for the operator appearing on the right-hand side of \eqref{eqn:abstract-interpolation}, as in Remark \ref{rmk:clarification-meaning}. (It is a scaling of the operator ``$W$'' in Lemma \ref{lemma:second-half-of-computations}.)

Recall that $\mathscr Q_{\lambda} = \mathrm t_{\ast}\circ \PD \circ Q_\lambda$. Thus, according to \eqref{eqn:push-pull-duality-formula} we have
\begin{equation}\label{eqn:Up-pairing}
\langle U_{\varpi_p}^{f_+}\Psi, \mathscr Q_\lambda(\mathbf N_p^m \theta^{\iota}) \rangle = \langle \mathrm t^{\ast}(U_{\varpi_p}^{f_+}\Psi) \cup Q_\lambda(\mathbf N_p^m \theta^{\iota}), [\mathrm C_\infty] \rangle,
\end{equation}
where $[\mathrm C_\infty]$ is the Borel--Moore fundamental class for $\mathrm C_\infty$. For the purposes of this equation, the cup-product $\mathrm t^{\ast}(U_{\varpi_p}^{f_+}\Psi) \cup Q_\lambda(\mathbf N_p^m \theta^{\iota}) \in H^d_c(\mathrm C_\infty, \mathrm t^{\ast}(\mathscr D_{\lambda} \otimes_L \mathscr A_{\lambda}))$ is implicitly its image in $H^d_c(\mathrm C_\infty, L)$ under the natural map. 

Similarly, since $\cl_p(m) = \mathrm t_{\ast}(\PD(\delta_{m,p}))$ (Definition \ref{defn:padic-class}) we have
\begin{equation}\label{eqn:W-pairing}
\varpi_p^{-f_+\cdot {w- \kappa\over 2}}\langle W \tw_{\theta^{\iota}} I_\lambda(\Psi), \cl_p(m)\rangle =  \varpi_p^{-f_+\cdot {w-\kappa \over 2}} \langle \mathrm t^{\ast}\left(W \tw_{\theta^{\iota}} I_\lambda(\Psi)\right) \cup\delta_{m,p}, [\mathrm C_\infty]\rangle
\end{equation}
(with the same caveat on the meaning of the cup product). Comparing \eqref{eqn:Up-pairing} and \eqref{eqn:W-pairing}, it is enough to show that the cup products appearing define the same elements of $H^d_c(\mathrm C_\infty, L)$. For that, we will explicitly compute using adelic cochains. 

Fix a singular $d$-chain $\sigma = \sigma_\infty \otimes [x]$ on $(F_{\infty}^\times)^{\circ}\times \mathbf A_{F,f}^\times$, and a representative $\widetilde \Psi$ for $\Psi$ in the adelic cochains $C_{\ad,c}^{\bullet}(K_{1}(\mathfrak m), \mathscr D_{\lambda}\otimes_{k_\lambda} L)$. To cut down on parentheses, let us write $\mathrm t_{\sigma} := \mathrm t(\sigma)$ for the image of $\sigma$ under $\mathrm t$. Then, the definition of the cup product on the level of cochains means that we want to show
\begin{equation}\label{eqn:interpolation-goal}
\underset{\in \mathscr D_{\lambda}(L)}{\underbrace{(U_{\varpi_p}^{f_+}\widetilde \Psi)(\mathrm t_\sigma)}}\bigl(\underset{\in \mathscr A_{\lambda}(L)}{\underbrace{Q_{\lambda}(\mathbf N_p^m\theta^{\iota})(x)}}\bigr) = \varpi_p^{-f_+\cdot{w-\kappa\over 2}}\underset{\in \mathscr L_{\lambda}(L)}{\underbrace{\left(W\tw_{\theta^{\iota}}I_{\lambda}(\widetilde \Psi)\right)(\mathrm t_\sigma)}}\bigl(\underset{\in \mathscr L_{\lambda}(L)^{\vee}}{\underbrace{\delta_{m,p}(x)}}\bigr).
\end{equation}
(To aid the reader, we have indicated where each object lives with underbraces.)

We begin computing the left-hand side of \eqref{eqn:interpolation-goal}. In general, if $s \in H^0(\mathrm C_\infty, \mathrm t^{\ast}\mathscr A_{\lambda})$, then 
\begin{equation}\label{eqn:cup-product-summation}
(U_{\varpi_p}^{f_+}\widetilde{\Psi})(\mathrm t_\sigma)\left(s(x)\right) = \sum_{a \in \mathcal O_p/\varpi_p^{f_+}\mathcal O_p} \widetilde \Psi\left(\mathrm t_\sigma\begin{smallpmatrix} \varpi_p^{f_+} & a \\ & 1 \end{smallpmatrix}\right)\left(s(x)|\begin{smallpmatrix} \varpi_p^{f_+} & a \\ & 1 \end{smallpmatrix} \right).
\end{equation}
Consider $s = Q_{\lambda}(\mathbf N_p^m \theta^{\iota})$. By Lemma \ref{lem:Q-lambda-theta}, the term in the sum on the right-hand side of \eqref{eqn:cup-product-summation} is zero if $a \not\in (\mathcal O_p/\varpi_p^{f_+}\mathcal O_p)^{\times}$, but otherwise we have
\begin{align}
Q_{\lambda}(\mathbf N_p^m \theta^{\iota})(x)|\begin{smallpmatrix} \varpi_p^{f_+} & a \\ & 1 \end{smallpmatrix} &= \theta^{\iota}(ax)I_{\lambda}^{\vee}(\delta_{m,p}(x))|\begin{smallpmatrix} \varpi_p^{f_+} & a \\ & 1 \end{smallpmatrix} & \text{(by Lemmas \ref{lem:Qlambda-p-adic-class} \& \ref{lem:Q-lambda-theta})}\label{eqn:see-above} \\
&= \varpi_p^{-f_+\cdot{w-\kappa\over 2}}\theta^{\iota}(ax) I_{\lambda}^{\vee}\left(\delta_{m,p}(x)|\begin{smallpmatrix} \varpi_p^{f_+} & a \\ & 1 \end{smallpmatrix}\right) & \text{(by \eqref{eqn:I-lambda-action})}\nonumber.
\end{align}
Combining this with \eqref{eqn:cup-product-summation}, and transposing $I_{\lambda}$, we see that 
\begin{multline}\label{eqn:half-way-one-side}
(U_{\varpi_p}^{f_+}\widetilde \Psi)(\mathrm t_\sigma)\left(Q_\lambda(\mathbf N_p^m \theta^{\iota})(x)\right)\\
= \varpi_p^{-f_+\cdot {w - \kappa\over 2}}\sum_{a \in (\mathcal O_p/\varpi_p^{f_+}\mathcal O_p)^\times} \theta^{\iota}(ax) I_{\lambda}(\widetilde \Psi)\left(\mathrm t_\sigma\begin{smallpmatrix} \varpi_p^{f_+} & a \\ & 1 \end{smallpmatrix}\right)\left(\delta_{m,p}(x)|\begin{smallpmatrix} \varpi_p^{f_+} & a \\ & 1 \end{smallpmatrix}\right)\\
= \varpi_p^{-f_+\cdot {w-\kappa \over 2}} \sum_{a \in (\mathcal O_p/\varpi_p^{f_+}\mathcal O_p)^\times} \theta^{\iota}(ax) \left(\begin{smallpmatrix} \varpi_p^{f_+} & a \\ & 1 \end{smallpmatrix}\cdot I_{\lambda}(\widetilde \Psi)\right)(\mathrm t_\sigma)(\delta_{m,p}(x)).
\end{multline}

We want to see that this expression is the same as the right-hand side of \eqref{eqn:interpolation-goal}. For that, let $\widetilde{\psi}= I_{\lambda}(\widetilde{\Psi})$ and then Lemma \ref{lemma:second-half-of-computations} and Lemma \ref{lemma:adelic-twisting-expansion} combine to show that 
\begin{align}\label{eqn:most-of-work}
W \tw_{\theta^{\iota}} \widetilde \psi(\mathrm t_\sigma)  &= \bigl(\prod_{\substack{v\mid p\\f_v=0}}\theta^{\iota}(\varpi_v)^{-1}\bigr)\sum_{b \in S_0^\times} \begin{smallpmatrix} \varpi_p^{f_+} & b \\ & 1\end{smallpmatrix} \cdot \widetilde \psi\left( \mathrm t_\sigma \begin{smallpmatrix} \varpi_p^{f_+} & b \\ & 1 \end{smallpmatrix}\right).\\
&= \theta^{\iota}(x)\sum_{c \in S_1^\times} \sum_{b \in S_0^\times} \theta^{\iota}(c+b) \begin{smallpmatrix} \varpi_p^{f_+} & c+b \\ & 1 \end{smallpmatrix} \widetilde \psi \left(\mathrm t_\sigma\begin{smallpmatrix} \varpi_p^{f_+} & c+b \\ & 1 \end{smallpmatrix} \right) \nonumber\\
&= \theta^{\iota}(x)\sum_{a \in (\mathcal O_p/\varpi_p^{f_+}\mathcal O_p)^\times} \theta^{\iota}(a) \left(\begin{smallpmatrix} \varpi_p^{f_+} & a \\ & 1 \end{smallpmatrix}\cdot \widetilde \psi\right)(\mathrm t_\sigma).\nonumber 
\end{align}
Multiplying \eqref{eqn:most-of-work} by $\varpi_p^{-f_+\cdot{w-\kappa\over 2}}$ and evaluating at $\delta_{m,p}(x)$, we see exactly \eqref{eqn:half-way-one-side}. This completes the proof.
\end{proof}
Our interest is in eigenclasses, so we separate out the following corollary of Theorem \ref{thm:abstract-interpolation}.
\begin{cor}\label{cor:abstract-interp-eigen}
Suppose that $\Psi \in H^d_c(Y_1(\mathfrak m),\mathscr D_\lambda\otimes_{k_\lambda} L)$ is an eigenvector for each operator $U_v$,with eigenvalue $\alpha_v^{\sharp}$. Set $\alpha_v = \varpi_v^{w-\kappa\over 2}\alpha_v^{\sharp}$. Then, for all integers $m$ critical with respect to $\lambda$,
\begin{equation*}
\mathscr P_\lambda(\Psi)(\mathbf N_p^m \theta^{\iota}) = \prod_{\substack{v \mid p\\f_v > 0}} (\alpha_v^{-1} q_v^m)^{f_v}\cdot \prod_{\substack{v \mid p\\f_v=0}} (1 - \theta^{\iota}(\varpi_v)^{-1}\alpha_v^{-1}q_v^m)\cdot \langle \tw_{\theta^{\iota}}^{\cl}(I_\lambda(\Psi)), \cl_p(m)\rangle.
\end{equation*}
\end{cor}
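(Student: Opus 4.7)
The plan is to deduce the corollary directly from Theorem \ref{thm:abstract-interpolation}. By the definition of the period map (Definition \ref{defn:period-map} and Lemma \ref{lem:amice-pairing-compat}), $\mathscr P_\lambda(\Psi)(\mathbf N_p^m\theta^\iota) = \langle \Psi, \mathscr Q_\lambda(\mathbf N_p^m\theta^\iota)\rangle$. Since $U_{\varpi_p}^{f_+} = \prod_{v\mid p} U_v^{f_{v,+}}$ and $\Psi$ is a $U_v$-eigenvector with eigenvalue $\alpha_v^\sharp$, we have $U_{\varpi_p}^{f_+}\Psi = \prod_v (\alpha_v^\sharp)^{f_{v,+}}\Psi$, so
\[
\langle U_{\varpi_p}^{f_+}\Psi, \mathscr Q_\lambda(\mathbf N_p^m\theta^\iota)\rangle = \prod_{v\mid p} (\alpha_v^\sharp)^{f_{v,+}}\cdot \mathscr P_\lambda(\Psi)(\mathbf N_p^m\theta^\iota).
\]
I would then apply Theorem \ref{thm:abstract-interpolation} to the left-hand side and solve for $\mathscr P_\lambda(\Psi)(\mathbf N_p^m\theta^\iota)$, obtaining
\[
\mathscr P_\lambda(\Psi)(\mathbf N_p^m\theta^\iota) = \prod_v (\alpha_v^\sharp)^{-f_{v,+}}\cdot \varpi_p^{-f_+\cdot(w-\kappa)/2}\cdot \langle W\tw_{\theta^\iota}^{\cl}I_\lambda(\Psi),\cl_p(m)\rangle,
\]
where $W$ is the operator appearing in the theorem. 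Using $\alpha_v^\sharp = \varpi_v^{(\kappa-w)/2}\alpha_v$, the $\varpi$-contribution from $\prod_v (\alpha_v^\sharp)^{-f_{v,+}}$ cancels $\varpi_p^{-f_+\cdot(w-\kappa)/2}$ exactly, reducing the prefactor to $\prod_v \alpha_v^{-f_{v,+}} = \prod_{f_v=0}\alpha_v^{-1}\prod_{f_v>0}\alpha_v^{-f_v}$.

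The remaining task is to compute $\langle W\tw_{\theta^\iota}^{\cl}I_\lambda(\Psi),\cl_p(m)\rangle$. Three ingredients enter. First, Proposition \ref{prop:change-to-sharp} shows $I_\lambda\colon \mathscr D_\lambda \to \mathscr L_\lambda^\sharp$ is $\Delta$-equivariant; since $\mathscr L_\lambda^\sharp$ differs from $\mathscr L_\lambda$ by the central twist $(\varpi_p^{-v(\det g)})^{(w-\kappa)/2}$, viewing $I_\lambda(\Psi)$ as a class in $\mathscr L_\lambda$-valued cohomology (as required by Remark \ref{rmk:what-twisting}) rescales the $U_v$-eigenvalue from $\alpha_v^\sharp$ to $\alpha_v$. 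Second, Proposition \ref{prop:twisting-hecke-eigenvalues}, combined with the compatibility of twisting and integration in Remark \ref{rmk:twisting-compatibility}, shows that $\tw_{\theta^\iota}^{\cl}$ multiplies the $U_v$-eigenvalue by $\theta^\iota(\varpi_v)$. Third, Lemma \ref{lemma:pushforward-class} and the pushforward adjointness from Section \ref{subsec:topology} give $\langle V_v^+\psi,\cl_p(m)\rangle = q_v^m\langle \psi,\cl_p(m)\rangle$.

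Assembling these, for each $v\mid p$ with $f_v=0$ the factor $\theta^\iota(\varpi_v)^{-1}(U_v-V_v^+)$ contributes $\alpha_v - \theta^\iota(\varpi_v)^{-1}q_v^m = \alpha_v\bigl(1-\theta^\iota(\varpi_v)^{-1}\alpha_v^{-1}q_v^m\bigr)$ to the pairing, while for each $v$ with $f_v>0$ the factor $(V_v^+)^{f_v}$ contributes $q_v^{mf_v}$. Multiplying by the prefactor $\prod_v \alpha_v^{-f_{v,+}}$, the $\alpha_v$ factors at $f_v=0$ places are absorbed into the $\alpha_v^{-1}$, yielding precisely the claimed Euler-type expression
\[
\prod_{f_v=0}(1-\theta^\iota(\varpi_v)^{-1}\alpha_v^{-1}q_v^m)\prod_{f_v>0}(\alpha_v^{-1}q_v^m)^{f_v}\cdot \langle \tw_{\theta^\iota}^{\cl}I_\lambda(\Psi),\cl_p(m)\rangle.
\]
No step is a serious obstacle; the main bookkeeping subtlety is carefully distinguishing the $\Delta$-actions on $\mathscr L_\lambda$ and $\mathscr L_\lambda^\sharp$ so that the $\varpi_p$-prefactor from the theorem cancels cleanly with the $(\alpha_v^\sharp)^{-f_{v,+}}$ factors.
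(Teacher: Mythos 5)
Your proposal is correct and follows essentially the same route as the paper's proof: both apply Theorem \ref{thm:abstract-interpolation}, divide by the $U_{\varpi_p}^{f_+}$-eigenvalue, note that the $\varpi_p$-prefactor cancels against the $\varpi$-content of $(\alpha_v^\sharp)^{-f_{v,+}}$, use that $\tw_{\theta^\iota}^{\cl}I_\lambda(\Psi)$ is a $U_v$-eigenvector with eigenvalue $\theta^\iota(\varpi_v)\alpha_v$ as an $\mathscr L_\lambda$-valued class, and then replace $V_v^+$ by $q_v^m$ via adjointness and Lemma \ref{lemma:pushforward-class}. The only difference is presentational: you unpack the $\alpha_v^\sharp$-to-$\alpha_v$ and twisting bookkeeping in more detail than the paper, which compresses these into the single citation of Remark \ref{rmk:what-twisting}.
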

\begin{proof}
To summarize our assumptions:\ we are assuming that $U_{\varpi_p}^{f_+}\Psi = \prod_{v \mid p} (\alpha_v^{\sharp})^{f_{v,+}} \Psi$ and hence $U_v \tw_{\theta^{\iota}}^{\cl} I_{\lambda}(\Psi) = \theta^{\iota}(\varpi_v) \alpha_v \tw_{\theta^{\iota}}^{\cl} I_{\lambda}(\Psi)$ (see Remark \ref{rmk:what-twisting}). Then, by Theorem \ref{thm:abstract-interpolation} we get that
\begin{multline*}
\mathscr P_{\lambda}(\Psi)(\mathbf N_p^m \theta^{\iota}) = \left(\prod_{v\mid p} (\alpha_v^{\sharp})^{-f_{v,+}}\right) \langle U_{\varpi_p}^{f_+}\Psi, \mathscr Q_\lambda(\mathbf N_p^m \theta^{\iota})\rangle \\
= \left(\prod_{v \mid p} \alpha_v^{-f_{v,+}}\right) \bigg\langle \left(\prod_{\substack{v \mid p\\ f_v=0}} \alpha_v -\theta^{\iota}(\varpi_v)^{-1} V_v^+)\prod_{\substack{v \mid p\\ f_v>0}}(V_v^+)^{f_v}\right) \tw_{\theta^{\iota}}^{\cl}I_{\lambda}(\Psi), \cl_p(m)\bigg\rangle.
\end{multline*}
But here $V_v^+$ is the pullback along $\begin{smallpmatrix} \varpi_v \\ & 1 \end{smallpmatrix}$ and so it is adjoint to the pushforward of the same matrix under the pairing $\langle - , - \rangle$. By Lemma \ref{lemma:pushforward-class}, we can  thus replace each instance of $V_v^+$ with $q_v^m$. The result follows.
\end{proof}

\section{$p$-adic $L$-functions}\label{sec:padicL-functions}
\subsection{Consequences of smoothness}\label{subsec:consequences}

We begin by proving a lemma in commutative algebra. If $(R,\mathfrak m_R)$ is a Noetherian local ring and $M$ is a module over $R$ then we write $\pd_R(M)$ for its projective dimension over $R$ and $\depth_R(M)$ for its $\mathfrak m_R$-depth. (These terms are defined in either \cite{Matsumura-CommutativeRingTheory} or \cite[Section 16-17]{EGA4-1}, and we refer the reader to the citations in the proof for definitions.)

\begin{lem}\label{lem:comm-alg-conclusions}
Suppose that $(R,\mathfrak m_R)$ and $(T,\mathfrak m_T)$ are Noetherian local rings with $R$ regular and $T$ Cohen--Macaulay and $R \rightarrow T$ is a finite injective local morphism. The following conclusions hold.
\begin{enumerate}
\item $T$ is flat over $R$.
\item If $T$ is regular then $T/\mathfrak m_RT$ is a local complete intersection.
\end{enumerate}
Suppose that $M$ is a finite $T$-module such that $\pd_T(M) < \infty$.
\begin{enumerate}
\setcounter{enumi}{2}
\item $\pd_R(M) = \pd_T(M)$.
\item $M$ is projective over $T$ if and only if $M$ is projective over $R$, in which case the natural map $T/\mathfrak m_RT \rightarrow \End_{R/\mathfrak m_R R}(M/\mathfrak m_RM)$ is injective.
\end{enumerate}
\end{lem}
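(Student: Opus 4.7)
The plan for (1) is to invoke the miracle flatness theorem: a local morphism of noetherian local rings $R \to T$ with $R$ regular and $T$ Cohen--Macaulay is flat whenever $\dim T = \dim R + \dim(T/\mathfrak m_R T)$. In our situation, finiteness and injectivity give $\dim T = \dim R$ by going-up, while finiteness also makes $T/\mathfrak m_R T$ a finite $R/\mathfrak m_R$-algebra, hence zero-dimensional, so the criterion applies. For (2), regularity of $R$ lets us write $\mathfrak m_R$ as generated by a regular sequence of length $\dim R$; by the flatness from (1), its image in $T$ remains $T$-regular, so $T/\mathfrak m_R T$ is the quotient of the regular local ring $T$ by a regular sequence and is therefore a complete intersection (of dimension $0$).

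For (3), the Auslander--Buchsbaum formula is available over both $R$ (automatically, since $R$ is regular, so every finite module has finite projective dimension) and $T$ (by the hypothesis $\pd_T(M) < \infty$), giving
\begin{equation*}
\pd_R(M) = \depth(R) - \depth_R(M), \qquad \pd_T(M) = \depth(T) - \depth_T(M).
\end{equation*}
One then has $\depth(R) = \dim R = \dim T = \depth(T)$, using regularity, going-up, and Cohen--Macaulayness, while $\depth_R(M) = \depth_T(M)$ because $\mathfrak m_R T$ is $\mathfrak m_T$-primary (since $T/\mathfrak m_R T$ is Artinian by finiteness) and depth only depends on the radical of the defining ideal. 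Subtracting yields the claimed equality of projective dimensions.

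For (4), the equivalence of projectivity over $R$ and over $T$ is immediate from (3), since each is equivalent to vanishing of the corresponding projective dimension. In that case $M$ is necessarily $T$-free, say $M \cong T^m$ with $m \geq 1$ (the case $M = 0$ being vacuous), and the action of $T$ on $T^m$ by diagonal scalar multiplication defines an injective map $T \hookrightarrow \End_R(M)$ that remains injective after reducing modulo $\mathfrak m_R$, as diagonal scalar multiplication on $(T/\mathfrak m_R T)^m$ is visibly faithful.

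The one potentially delicate point is the depth equality in (3): one must confirm that depths computed with respect to $\mathfrak m_R$ and $\mathfrak m_T$ agree. However, the argument via $\mathfrak m_R T$ being $\mathfrak m_T$-primary and depth-invariance under passage to the radical is entirely standard. Ultimately the whole lemma reduces to two well-known tools --- miracle flatness and the Auslander--Buchsbaum formula --- applied to the particular geometry afforded by a finite map from a regular local ring to a Cohen--Macaulay one.
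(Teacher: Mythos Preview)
Your proof is correct and follows essentially the same route as the paper: miracle flatness for (1), the image of a regular system of parameters being $T$-regular via flatness for (2), Auslander--Buchsbaum plus the depth invariance $\depth_R(M)=\depth_T(M)$ for (3), and faithfulness of $T/\mathfrak m_R T$ on a free module for (4). The only cosmetic difference is that the paper cites \cite[Proposition 16.4.8]{EGA4-1} for the depth equality where you argue directly via $\mathfrak m_R T$ being $\mathfrak m_T$-primary, which amounts to the same thing.
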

\begin{proof}
Part (1) follows from \cite[Theorem 23.1]{Matsumura-CommutativeRingTheory}. For (2), since $R$ is regular and $R \rightarrow T$ is flat by (1), the ideal $\mathfrak m_R T$ is generated by a $T$-regular sequence. Thus $T/\mathfrak m_R T$ is a local complete intersection by \cite[Theorem 21.2(iii)]{Matsumura-CommutativeRingTheory}.

Now write $n = \dim R = \dim T$. Since $R$ and $T$ are both Cohen--Macaulay, $n = \depth_R(R) = \depth_T(T)$. 
Since $R$ is regular, $\pd_R(M) < \infty$ by \cite{Serre-RegularDimension}. So, if $\pd_T(M) < \infty$ as well, the Auslander--Buchsbaum formula (\cite[Theorem 3.7]{AuslanderBuchsbaum-Formula}) implies that
\begin{equation}\label{eqn:ab-formula}
\depth_R(M) + \pd_R(M) = n = \depth_T(M) + \pd_T(M).
\end{equation}
Since $R \rightarrow T$ is a local morphism, \cite[Proposition 16.4.8]{EGA4-1} implies that $\depth_R(M) = \depth_T(M)$ and thus \eqref{eqn:ab-formula} reduces to $\pd_R(M) = \pd_T(M)$ as we claimed in (3).

For (4), the first clause immediately follows from (3). For the second clause, if $M$ is projective over $R$ then $M/\mathfrak m_RM$ is finite projective over $T/\mathfrak m_RT$ and so clearly $T/\mathfrak m_RT$ acts faithfully on $M/\mathfrak m_RM$.
\end{proof}

\begin{rmk}\label{rmk:regular-case}
If $T$ is regular in Lemma \ref{lem:comm-alg-conclusions} (which will always be the case below) then the hypothesis on projective dimension before (3) is automatic by \cite{Serre-RegularDimension}. The Auslander--Buchsbaum formula for regular local rings is also proven in \cite[Proposition 17.3.4]{EGA4-1}, in case the reader is interested.
\end{rmk}

We now return to the setting and notation of Section \ref{subsec:middle-degree}. Let $x \in \mathscr E(\mathfrak n)_{\rmmid}(\overline{\mathbf Q}_p)$ be a point of weight $\lambda$ and $h = v_p(\psi_x(U_p))$. Choose an affinoid neighborhood $\Omega \subset \mathscr W(1)$ containing $\lambda$ so that $(\Omega,h)$ is slope adapted. Thus, $x$ defines a maximal ideal $\mathfrak m_x \subset \mathbf T_{\Omega,h}$. Now define $R_x := \mathscr O(\Omega)_{\mathfrak m_{\lambda}}$, $\mathbf T_x := (\mathbf T_{\Omega,h})_{\mathfrak m_x}$, and $M_x :=  \left(H^d_c(\mathfrak n, \mathscr D_{\Omega})_{\leq h}\right)_{\mathfrak m_x} = H^d_c(\mathfrak n, \mathscr D_{\Omega})_{\mathfrak m_x}$. Note that $\mathbf T_x$ is naturally a subring of $\End_{R_x}(M_x)$. Recall that since $x\in \mathscr E(\mathfrak n)_{\rmmid}(\overline{\mathbf Q}_p)$, there is a natural isomorphism $M_x/\mathfrak m_{\lambda}M_x \cong H^d_c(\mathfrak n, \mathscr D_{\lambda})_{\mathfrak m_{x}}$ by Remark \ref{eqn:base-change-middle-degree}. Finally, we also write $T_x$ for the image of $\mathbf T(\mathfrak n)$ in $\End_{k_{\lambda}}(M_x/\mathfrak m_{\lambda}M_x) = \End_{k_{\lambda}}\left(H^d_c(\mathfrak n, \mathscr D_{\lambda})_{\mathfrak m_{x}}\right)$.

\begin{prop}\label{prop:smoothness-general}
If $\mathscr E(\mathfrak n)_{\rmmid}$ is smooth at $x$, then $M_x$ is finite projective over $\mathbf T_x$, and the natural surjection $\mathbf T_x/\mathfrak m_{\lambda}\mathbf T_x \to T_x$ is an isomorphism.
\end{prop}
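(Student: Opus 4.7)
The plan is to apply Lemma \ref{lem:comm-alg-conclusions} to the finite local morphism $R_x \to \mathbf T_x$ and the $\mathbf T_x$-module $M_x$. First I would record the basic structural facts: the rigid local ring $R_x = \mathscr O(\Omega)_{\mathfrak m_\lambda}$ is regular (as $\lambda$ is a smooth point of the weight space $\mathscr W(1)$), and the hypothesis that $\mathscr E(\mathfrak n)_{\rmmid}$ is smooth at $x$ is precisely the statement that $\mathbf T_x$ is regular local (since $\mathscr E(\mathfrak n)_{\rmmid}$ is reduced by Theorem \ref{theorem:reducedness}, the rigid local ring at $x$ coincides with $\mathbf T_x$). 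The map $R_x \to \mathbf T_x$ is finite and local by the construction of eigenvarieties, and it is injective because $\mathbf T_x$ acts faithfully on $M_x$ by Proposition \ref{prop:looks-like-eigen}(2), and $M_x$ is in turn a faithful (finite projective) $R_x$-module by Proposition \ref{prop:looks-like-eigen}(1).

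Next I would verify the hypotheses on $M_x$: it is finitely generated over $\mathbf T_x$ (as it is finitely generated over $R_x$), projective over $R_x$ (by Proposition \ref{prop:looks-like-eigen} again), and has $\pd_{\mathbf T_x}(M_x)<\infty$ by Serre's theorem (Remark \ref{rmk:regular-case}) applied to the regular ring $\mathbf T_x$. Then Lemma \ref{lem:comm-alg-conclusions}(3) yields
\[
\pd_{\mathbf T_x}(M_x) = \pd_{R_x}(M_x) = 0,
\]
so $M_x$ is projective over $\mathbf T_x$, giving the first conclusion.

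For the second conclusion, Lemma \ref{lem:comm-alg-conclusions}(4) provides an injection
\[
\mathbf T_x/\mathfrak m_\lambda \mathbf T_x \hookrightarrow \End_{k_\lambda}(M_x/\mathfrak m_\lambda M_x) = \End_{k_\lambda}\left(H^d_c(\mathfrak n, \mathscr D_\lambda)_{\mathfrak m_x}\right).
\]
By the construction of $\mathbf T_{\Omega,h}$, the ring $\mathbf T_x$ is generated over $R_x$ by the image of $\mathbf T(\mathfrak n)$, so $\mathbf T_x/\mathfrak m_\lambda \mathbf T_x$ is generated over $k_\lambda$ by the image of $\mathbf T(\mathfrak n)$, which by definition is $T_x$. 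Thus the above injection factors as
\[
\mathbf T_x/\mathfrak m_\lambda \mathbf T_x \twoheadrightarrow T_x \hookrightarrow \End_{k_\lambda}\left(H^d_c(\mathfrak n, \mathscr D_\lambda)_{\mathfrak m_x}\right),
\]
forcing the surjection to be an isomorphism.

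There is no serious obstacle here; the proposition is essentially a direct application of the Auslander--Buchsbaum formula once the regularity of both $R_x$ and $\mathbf T_x$ together with the projectivity of $M_x$ over $R_x$ are in hand. The only mild subtlety is verifying injectivity of $R_x \to \mathbf T_x$, which follows transparently from the faithfulness assertions in Proposition \ref{prop:looks-like-eigen}.
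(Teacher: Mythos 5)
Your proof is correct and follows essentially the same route as the paper: regularity of $R_x$ and $\mathbf T_x$ together with projectivity of $M_x$ over $R_x$ feed into Lemma \ref{lem:comm-alg-conclusions}(4). The only cosmetic difference is the justification of injectivity of $R_x\to\mathbf T_x$: you argue via faithfulness of $M_x$ (a nonzero free module over the domain $R_x$), while the paper deduces it from equidimensionality of $\mathscr E(\mathfrak n)_{\rmmid}$ and $\mathscr W(1)$; both are sound, and your explicit invocation of Lemma \ref{lem:comm-alg-conclusions}(3) before (4) is already subsumed in (4).
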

\begin{proof}
Since $\mathscr E(\mathfrak n)_{\rmmid}$ is equidimensional of dimension equal to the dimension of $\mathscr W(1)$, the map $R_{x} \rightarrow \mathbf T_x$ is a finite injective map of local noetherian rings with $R_x$ regular. Moreover, $M_x$ is finite projective over $R_x$ (Proposition \ref{prop:looks-like-eigen}). So, given that $\mathbf T_x$ is also regular,  Lemma \ref{lem:comm-alg-conclusions}(4) implies that $M_x$ is finite projective over $\mathbf T_x$ and the natural map $\mathbf T_x/\mathfrak m_\lambda \mathbf T_x \to \End_{k_\lambda}(M_x/\mathfrak m_\lambda M_x)$ is injective. Since this map factors over the natural surjection $\mathbf T_x/\mathfrak m_{\lambda}\mathbf T_x \to T_x$, we deduce that the latter is an isomorphism, as desired.
\end{proof}

If $\epsilon \in \{\pm 1\}^{\Sigma_F}$ then write $M_x^{\epsilon} = H^d_c(\mathfrak n, \mathscr D_{\Omega})_{\mathfrak m_x}^{\epsilon}$. In the next proposition we write $\soc_T(M)$ for the socle of $M$ as a $T$-module, i.e.\ the sum of the simple $T$-submodules.
\begin{thm}\label{thm:socle-one-d}
Suppose that $(\pi,\alpha)$ is a $p$-refined cuspidal automorphic representation of cohomological weight $\lambda$ and conductor $\mathfrak n$. If $\alpha$ is a decent refinement, $x = x(\pi,\alpha) \in \mathscr E(\mathfrak n)_{\rmmid}(\overline{\mathbf Q}_p)$, and $\epsilon \in \{\pm 1\}^{\Sigma_F}$, then
\begin{enumerate}
\item $\soc_{T_x}(H^d_c(\mathfrak n,\mathscr D_{\lambda}\otimes_{k_\lambda} k_x)^{\epsilon}_{\mathfrak m_x})$ is one-dimensional over $k_x$.
\end{enumerate}
If, further, condition 2(c) in Definition \ref{defn:decent} holds, then
\begin{enumerate}
\setcounter{enumi}{1}
\item The $\mathbf T_x$-module $M_x^\epsilon$ is free of rank one.
\end{enumerate}
\end{thm}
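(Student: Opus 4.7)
The proof naturally splits along the decency hypothesis. In the non-critical case (where condition 2(c) may fail), part (1) follows by invoking the Hecke-equivariant isomorphism
\begin{equation*}
I_\lambda : H^\ast_c(\mathfrak{n}, \mathscr{D}_\lambda \otimes_{k_\lambda} k_x)_{\mathfrak{m}_x} \overset{\sim}{\longrightarrow} H^\ast_c(\mathfrak{n}, \mathscr{L}_\lambda^\sharp(k_x))_{\mathfrak{m}_x}
\end{equation*}
of Definition \ref{defn:classical-noncritical} to reduce to a classical calculation. On the right, Corollary \ref{cor:one-dimensional-eigens} gives a one-dimensional $\psi_\pi$-eigenspace in each sign component at level $\mathfrak{n}$; raising the level to $\mathfrak{n} \cap \mathbf{p}$ spreads this into the $\pi$-oldspace spanned by the translates $V_v^-\phi_\pi$, and an explicit $U_v$-matrix computation (as in Proposition \ref{prop:refined-eigenform}) shows that the $\mathfrak{m}_x$-socle of the oldspace is precisely the line through $\phi_{\pi,\alpha}$, whence part (1). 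If in addition 2(c) holds, then $\alpha_v \neq \beta_v$ at every $v \mid p$ forces the $\mathfrak{m}_x$-localization itself to be one-dimensional; combined with the \'etaleness of the weight map at $x$ (Proposition \ref{prop:etaleness}), this yields $M_x^\epsilon$ free of rank one over $\mathbf{T}_x \cong R_x$.

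The substantive case is when condition (2) of Definition \ref{defn:decent} holds, which includes 2(c). Theorem \ref{thm:smoothness} then asserts that $\mathscr{E}(\mathfrak{n})_{\rmmid}$ is smooth at $x$, and Proposition \ref{prop:smoothness-general} gives $M_x$ free over $\mathbf{T}_x$ with $\mathbf{T}_x/\mathfrak{m}_\lambda \mathbf{T}_x \cong T_x$. Since $\mathbf{T}_x$ is local and $M_x^\epsilon$ is a direct summand, each $M_x^\epsilon$ is free of some rank $r_\epsilon$ over $\mathbf{T}_x$. By Lemma \ref{lem:comm-alg-conclusions}(2), $T_x$ is a local complete intersection, hence Gorenstein, with one-dimensional socle. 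Therefore
\begin{equation*}
\soc_{T_x}(M_x^\epsilon/\mathfrak{m}_\lambda M_x^\epsilon) \cong \soc_{T_x}(T_x^{r_\epsilon}) \cong k_x^{r_\epsilon},
\end{equation*}
reducing both parts to the claim $r_\epsilon = 1$.

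To pin down the rank, I would shrink $U$ to a good connected neighborhood of $x$ on which smoothness holds throughout (openness of the smooth locus). A pointwise application of Lemma \ref{lem:comm-alg-conclusions}(4) upgrades $M^\epsilon(U)$ from finite projective over $\mathscr{O}(\Omega)$ to finite projective over $\mathscr{O}(U)$, so its rank is the constant $r_\epsilon$ on $U$. By Lemma \ref{lem:newform-accumulation} and Proposition \ref{prop:density-results}(4), $U$ contains a Zariski-dense set of points $y$ that are non-critical, have distinct Hecke roots at every $v \mid p$, and have prime-to-$p$ conductor exactly $\mathfrak{n}$. At any such $y$, Proposition \ref{prop:etaleness} gives $\mathbf{T}_y \cong R_{\lambda(y)}$, and the fiber of $M^\epsilon$ at $y$ identifies via $I_\lambda$ with $H^d_c(\mathfrak{n}, \mathscr{L}_{\lambda(y)}^\sharp(k_{\lambda(y)}))^\epsilon_{\mathfrak{m}_y}$, which is one-dimensional by Corollary \ref{cor:one-dimensional-eigens} (distinctness of roots collapses the generalized eigenspace to the eigenspace). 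Hence $r_\epsilon = 1$.

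The main obstacle will be promoting stalkwise projectivity of $M^\epsilon$ at $x$ to genuine projectivity over $\mathscr{O}(U)$ on a whole connected neighborhood, so that the constant-rank argument is available. This rests on the openness of smoothness together with a uniform application of the Auslander--Buchsbaum formula packaged into Lemma \ref{lem:comm-alg-conclusions}.
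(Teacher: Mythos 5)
Your proposal is correct and follows essentially the same strategy as the paper's proof. The key pillars are identical: Theorem \ref{thm:smoothness} gives smoothness, Proposition \ref{prop:smoothness-general} gives $M_x$ free over $\mathbf{T}_x$ with $\mathbf{T}_x/\mathfrak{m}_\lambda\mathbf{T}_x \cong T_x$, Lemma \ref{lem:comm-alg-conclusions}(2) gives Gorensteinness of $T_x$, and density of extremely non-critical classical points pins the rank down to one. There are small organizational differences: you reduce both parts to a single claim $r_\epsilon = 1$ and then verify it, whereas the paper proves part (2) outright and then deduces part (1) in the critical case from (2) plus the Gorenstein socle computation; you spread $M^\epsilon$ to a connected neighborhood by a pointwise application of Lemma \ref{lem:comm-alg-conclusions}(4), whereas the paper uses the simpler observation that a coherent module whose stalk at a point is free is free on a Zariski-open neighborhood of that point. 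You also supply the detail the paper elides by declaring the non-critical case a ``purely automorphic calculation''; your sketch (one-dimensionality of the $U_v$-eigenspace on Iwahori-fixed vectors even when $\alpha_v=\beta_v$, via the Jordan-block structure of the stabilization matrix) is the right way to fill that gap.
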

\begin{proof}
We will actually check the second claim first. Suppose that $x$ is decent and satisfies condition 2(c) of Definition \ref{defn:decent}. Then, $x$ is a smooth point on $\mathscr E(\mathfrak n)_{\rmmid}$ by Theorem \ref{thm:smoothness}. By Proposition \ref{prop:smoothness-general}, $M_x$ is projective over $\mathbf T_x$, and hence so is its direct summand $M_x^{\epsilon}$, and furthermore its rank is equal to the rank of $M_x^{\epsilon}/\mathfrak m_\lambda M_x^{\epsilon}$ over $T_x$. By \eqref{eqn:tor-isomorphisms}, $M_x^{\epsilon}/\mathfrak m_\lambda M_x^{\epsilon} \simeq H^d_c(\mathfrak n, \mathscr D_\lambda)^{\epsilon}_{\mathfrak m_x}$ (as $T_x$-modules). Now, set $M^\epsilon=H^d_c(\mathfrak n, \mathscr D_{\Omega})_{\leq h}^{\epsilon}$, which we regard as a coherent sheaf over $X=\mathrm{Sp}\,\mathbf{T}_{\Omega,h}$. Since $M_{x}^{\epsilon}$ is free over $(\mathbf{T}_{\Omega,h})_{\mathfrak{m}_x}$, $M^\epsilon$ is free over some connected (Zariski-)open neighborhood $U$ of $x$ in $X$. In particular, to calculate the rank of $M_{x}^{\epsilon}$, it suffices to calculate the rank of the fiber of $M^\epsilon$ at any closed point $y\in U$; but by Proposition \ref{prop:density-results} we can assume that $y$ is extremely non-critical classical, in which case the rank is one.  So this completes the proof of (2).

Now we check point (1) is true. If $x$ is non-critical, this is a purely automorphic calculation. Otherwise, since $x$ is decent, point (2) applies to $x$. Thus we are reduced to showing that $\dim_{k_x} \soc_{T_x}(T_x) = 1$. But $T_x \simeq \mathbf T_x / \mathfrak m_\lambda \mathbf T_x$ by Proposition \ref{prop:smoothness-general}, so $T_x$ is a local complete intersection ring by Lemma \ref{lem:comm-alg-conclusions}(2). In particular, $T_x$ is Gorenstein (and of dimension zero) and our result follows from \cite[Theorem 18.1]{Matsumura-CommutativeRingTheory}.
\end{proof}

\subsection{$p$-adic $L$-functions}\label{subsec:padicLfunctions}

Throughout this subsection, we fix a cuspidal automorphic representation $\pi$ of weight $\lambda$ and conductor $\mathfrak n$. We make the following choices:
\begin{enumerate}
\item $\alpha$ is a decent $p$-refinement for $\pi$.
\item For each $\epsilon \in \{\pm 1\}^{\Sigma_F}$ we choose $\Omega_{\pi}^{\epsilon} \in \mathbf C^\times$ as in Theorem \ref{thm:normalize-by-periods}, which by definition also fixes the choice as in Theorem \ref{thm:algebraic-normalization}.
\end{enumerate}
We write $E$ for the subfield of $\mathbf C$ generated by $\mathbf Q(\pi)$, $\mathbf Q(\alpha)$, and the Galois closure of $F$. Let $L = \mathbf Q_p(\iota(E))$. Recall that $\iota$ induces an isomorphism $H^d_c(\mathfrak n, \mathscr L_\lambda(E)) \simeq H^d_c(\mathfrak n, \mathscr L_\lambda(L))$.

Given (1) and (2) we define $\Phi_{\pi,\alpha}^{\epsilon} \in H^d_c(Y_1(\mathfrak n),\mathscr L_{\lambda}(L))^{\epsilon}$ to be
\begin{equation*}
\Phi_{\pi,\alpha}^{\epsilon} = \iota\left({\pr^{\epsilon} \ES(\phi_{\pi,\alpha}) \over \Omega_{\pi}^{\epsilon}}\right),
\end{equation*}
where $\phi_{\pi,\alpha}$ is the $p$-refined eigenform associated to $(\pi,\alpha)$. In the notation of Section \ref{subsec:special-points} we have $\Phi_{\pi,\alpha}^{\epsilon} \in H^d_c(\mathfrak n, \mathscr L_\lambda(L))^{\epsilon}[\mathfrak m_{\pi,\alpha}]$.  On the other hand, since $\alpha$ is a decent $p$-refinement for $\pi$, Theorem \ref{thm:socle-one-d} above implies that $\dim H^d_c(\mathfrak n, \mathscr D_{\lambda} \otimes_{k_\lambda} L)^{\epsilon}[\mathfrak m_{\pi,\alpha}^{\sharp}] = 1$ and there is a natural integration map
\begin{equation}\label{eqn:map-on-eigen}
I_\lambda: H^d_c(\mathfrak n, \mathscr D_{\lambda} \otimes_{k_\lambda} L)^{\epsilon}[\mathfrak m_{\pi,\alpha}^{\sharp}] \rightarrow H^d_c(\mathfrak n, \mathscr L_\lambda(L))^{\epsilon}[\mathfrak m_{\pi,\alpha}].
\end{equation}
 (Recall the $\sharp$-notation from Definition \ref{defn:sharp-notation}.) We note the following lemma.
\begin{lem}\label{lem:dont-be-crazy}
$I_\lambda(H^d_c(\mathfrak n, \mathscr D_{\lambda} \otimes_{k_\lambda} L)^{\epsilon}[\mathfrak m_{\pi,\alpha}^{\sharp}]) \neq (0)$ if and only if $\alpha$ is non-critical.
\end{lem}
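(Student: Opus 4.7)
The forward direction is immediate from Definition \ref{defn:classical-noncritical}: if $\alpha$ is non-critical, $I_\lambda$ is an isomorphism on the entire $\mathfrak m_x$-localization.  Since $I_\lambda$ arises from a morphism of local systems that only uses the action of $\Delta \subset \GL_2(F_p)$, while each archimedean Hecke operator $T_\zeta$ is pullback by right multiplication by an element of $\GL_2(F_\infty)$ that acts trivially on coefficients, $I_\lambda$ commutes with the $T_\zeta$ and hence respects the sign decomposition.  Restricting to the $\epsilon$-eigenspace, one obtains an isomorphism between two one-dimensional spaces---dimension one on the source by Theorem \ref{thm:socle-one-d}(1), and dimension one on the target by the Iwahori-level analogue of Corollary \ref{cor:one-dimensional-eigens}, where condition (2c) of Definition \ref{defn:decent} ensures that $\alpha_v$ cuts out a unique $U_v$-eigenline in the local Iwahori-invariants at each $v \mid p$---which is in particular nonzero.

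For the converse, I assume $\alpha$ is decent and critical, so condition (2) of Definition \ref{defn:decent} holds in full.  Then Theorem \ref{thm:socle-one-d}(2) applies: $M_x^\epsilon := H^d_c(\mathfrak n, \mathscr D_\lambda \otimes_{k_\lambda} L)^\epsilon_{\mathfrak m_x}$ is free of rank one over the Gorenstein local artinian ring $T_x$, with one-dimensional socle equal to the $\epsilon$-eigenspace in the statement of the lemma. Set $s = \dim_{k_x} T_x$. The target $N_x^\epsilon := H^d_c(\mathfrak n, \mathscr L_\lambda^{\sharp}(L))^\epsilon_{\mathfrak m_x}$ is a one-dimensional $k_x$-vector space (strong multiplicity one for classical cohomology combined with (2c)) on which $T_x$ acts through the residue-field quotient $T_x \twoheadrightarrow k_x$. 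The $T_x$-linear map $I_\lambda : M_x^\epsilon \to N_x^\epsilon$ therefore factors through $M_x^\epsilon / \mathfrak m_{T_x} M_x^\epsilon \cong k_x$, and whenever $s > 1$ the socle of $M_x^\epsilon$ lies inside $\mathfrak m_{T_x} M_x^\epsilon$ and is killed by $I_\lambda$---yielding the desired vanishing.

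The remaining task, and the principal obstacle, is to exclude the borderline configuration $s = 1$ combined with $\alpha$ critical.  Here one invokes the structural isomorphism $\mathscr O_{\mathscr E(\mathfrak n), x} \otimes_{\mathscr O_{\mathscr W(1), \lambda}} k_\lambda \cong T_x$ discussed in Section \ref{subsec:intro-eigenvariety}, which follows from the smoothness of $\mathscr E(\mathfrak n)_{\rmmid}$ at $x$ (Theorem \ref{thm:smoothness}): when $s = 1$ this identifies $T_x$ with $k_\lambda$ and forces the weight map $\lambda$ to be \'etale at $x$.  On a good neighborhood $U$ of $x$, the coherent sheaf $\mathscr M_c^d(U)^\epsilon$ is then free of rank one over $\mathscr O(\Omega)$, and a family-theoretic argument using the Zariski-dense accumulating collection of extremely non-critical classical points near $x$ (Proposition \ref{prop:density-results})---at each of which $I_\lambda$ is already known to be an isomorphism on the full localization by the non-critical case---propagates the nonvanishing of $I_\lambda$ back to $x$ and forces $\alpha$ to be non-critical, contradicting criticality.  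This closing step is the most delicate component, as the target classical cohomologies $N_y^\epsilon$ do not themselves assemble into a coherent sheaf over $U$; in the $F = \mathbf Q$ setting it corresponds to Bella\"iche's treatment of the $\theta$-critical case.
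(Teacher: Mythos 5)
Your forward direction and the core of the backward direction line up with the paper's proof: under condition (2c) the classical $\epsilon$-eigenspace coincides with the $\epsilon$-localization and has dimension one; by Theorem \ref{thm:socle-one-d}(2), $M^\epsilon := H^d_c(\mathfrak n, \mathscr D_\lambda\otimes_{k_\lambda}L)^\epsilon_{\mathfrak m^\sharp_{\pi,\alpha}}$ is free of rank one over the Gorenstein local artinian $T$; and a $T$-linear map $M^\epsilon \to N^\epsilon$ with $T$ acting through its residue field factors through the cosocle, killing the socle when $\dim_L T > 1$.

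Where you diverge is the borderline $s=1$ configuration, and this is where the comparison is instructive. The paper does \emph{not} attempt to exclude $s = 1$. Instead, it asserts that since $\alpha$ is not non-critical, the map $I_\lambda: M^\epsilon \to N^\epsilon$ fails to be an isomorphism, and then splits into the dichotomy ``zero'' versus ``nonzero surjection onto the cosocle''. When $s = 1$ both source and target are one-dimensional, so ``not an isomorphism'' immediately forces the map to be zero; the ``if it is zero we are done'' clause absorbs $s=1$ with no further work. Your attempt to \emph{rule out} $s=1$ by propagating an isomorphism $I_{\lambda_y}$ from nearby extremely non-critical classical points $y$ back to $x$ is therefore a detour; and, as you yourself flag, the detour doesn't close: the classical cohomologies $N^\epsilon_y$ do not glue into a coherent sheaf over a good neighborhood $U$, so there is nothing to analytically continue. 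This is not a cosmetic objection — the argument genuinely does not work as stated, and it is also unnecessary once one takes the paper's ``zero vs.\ onto-cosocle'' route.

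One substantive point to ponder, though: the paper's pivotal assertion that $I_\lambda: M^\epsilon \to N^\epsilon$ is not an isomorphism \emph{for the specific $\epsilon$ under consideration} is exactly the place you were instinctively worried about. The definition of non-critical concerns the full localization, while the lemma concerns a single $\epsilon$-component. The uniformity one needs — that all $\epsilon$-components have the same dimension $s = \dim_L T$ — is supplied by Theorem \ref{thm:socle-one-d}(2) (the same $T$ acts on each $M^{\epsilon'}$, free of rank one), and when $s > 1$ this forces ``not iso'' for dimension reasons. Your instinct to press on the $s=1$ case is sound, but the cure is not a family-propagation argument. If you want to probe whether $s=1$ can coexist with criticality, the natural tool is the ramification of the weight map (as in Bella\"iche for $F = \mathbf{Q}$), not an ad hoc continuation of the classical fiber — and in any event the paper's dichotomy sidesteps the question entirely.
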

\begin{proof}
If $\alpha$ is non-critical then $I_\lambda$ is an isomorphism, so one implication is clear.

Now suppose that $\alpha$ is not non-critical, but recall that $\alpha$ is decent. Thus condition 2(c) of Definition \ref{defn:decent} holds. This implies that $H^d_c(\mathfrak n, \mathscr L_\lambda(L))_{\mathfrak m_{\pi,\alpha}}^{\epsilon} \simeq H^d_c(\mathfrak n, \mathscr L_\lambda(L))^{\epsilon}[\mathfrak m_{\pi,\alpha}]$, and part (2) of Theorem \ref{thm:socle-one-d} implies that $M = H^d_c(\mathfrak n, \mathscr D_\lambda \otimes_{k_\lambda} L)^{\epsilon}_{\mathfrak m_{\pi,\alpha}^{\sharp}}$ is free of rank one over $T$, where $T$ is the largest quotient of $\mathbf T(\mathfrak n)$ acting faithfully on $M$. We note that $T$ is a local complete intersection (by the above discussion).

Since $\alpha$ is not non-critical, the map
\begin{equation}\label{eqn:source-crazy-lemma}
I_\lambda:M \rightarrow H^d_c(\mathfrak n, \mathscr L_\lambda(L))^{\epsilon}[\mathfrak m_{\pi,\alpha}]
\end{equation}
is not an isomorphism. If it is zero we are done. If it is non-zero, then the target is a simple $T$-module and thus \eqref{eqn:source-crazy-lemma} is the surjection of $M$ onto its largest $T$-simple quotient (the co-socle). In particular, the socle $M[\mathfrak m_{\pi,\alpha}^{\sharp}] \subset M$ maps to zero under $I_\lambda$, as claimed.
%
%If $\alpha$ is non-critical then $I_\lambda$ is an isomorphism, so one implication is clear.
%
%Now assume that \eqref{eqn:map-on-eigen} is non-zero (and thus an isomorphism). We will show that $\alpha$ is non-critical. Since $\alpha$ is a decent, we already know that $H^j_c(\mathfrak n, \mathscr D_{\lambda}\otimes_{k_\lambda} L)^{\epsilon}_{\mathfrak m_{\pi,\alpha}^{\sharp}} = (0)$ if $j\neq d$. Write $T$ for the largest faithful quotient of $\mathbf T(\mathfrak n)$ acting on $M:= H^d_c(\mathfrak n, \mathscr D_{\lambda}\otimes_{k_\lambda} L)^{\epsilon}_{\mathfrak m_{\pi,\alpha}^{\sharp}}$, which is a regular local ring with maximal ideal $\mathfrak m_{\pi,\alpha}^{\sharp}$. By Theorem \ref{thm:socle-one-d}, we know that $M$ is free of rank one over $T$. Thus, if there exists an $r \geq 1$ such that $(\mathfrak m_{\pi,\alpha}^{\sharp})^r \neq (0)$ then we get $M[\mathfrak m_{\pi,\alpha}^{\sharp}] = (\mathfrak m_{\pi,\alpha}^{\sharp})^r M$ for the largest such $r$. But since $I_\lambda$ is Hecke equivariant, and we've assumed it is non-zero, we see that no such $r$ can exist. We deduce that $\mathfrak m_{\pi,\alpha}^{\sharp} = (0)$, making $T$ a field and $M_{\mathfrak m_{\pi,\alpha}^{\sharp}} = M[\mathfrak m_{\pi,\alpha}^{\sharp}] \simeq H^d_c(\mathfrak n, \mathscr L_\lambda(L))^{\epsilon}[\mathfrak m_{\pi,\alpha}]$, proving that $\alpha$ is non-critical.
\end{proof}
Now recall that we defined a period map
\begin{equation*}
\mathscr P_\lambda: H^d_c(\mathfrak n, \mathscr D_\lambda \otimes_{k_\lambda} L) \rightarrow \mathscr D(\Gamma_F,L)
\end{equation*}
in Definition \ref{defn:period-map} and we may post-compose it with the Amice transform $\mathcal A$ to get elements in $\mathscr O(\mathscr X(\Gamma_F))\otimes_{\mathbf Q_p} L$ (Proposition \ref{prop:amice}).

For the next definition and the results afterward, we assume that $(\pi,\alpha)$ is a decently $p$-refined cohomological cuspidal automorphic representation of weight $\lambda$ and conductor $\mathfrak n$.

\begin{defn}\label{defn:padic-Lfunction}
$L_p^{\epsilon}(\pi,\alpha) = \mathcal A\left(\mathscr P_{\lambda}(\Psi_{\pi,\alpha}^{\epsilon})\right)$ where $\Psi_{\pi,\alpha}^{\epsilon} \in H^d_c(\mathfrak n, \mathscr D_{\lambda} \otimes_{k_\lambda} L)^{\epsilon}[\mathfrak m_{\pi,\alpha}^{\sharp}]$ is any choice of non-zero vector that, if $\alpha$ is non-critical, we assume satisfies $I_\lambda(\Psi_{\pi,\alpha}^{\epsilon}) = \Phi_{\pi,\alpha}^{\epsilon}$.
\end{defn}

Note that, by Proposition \ref{prop:amice-pairing-compat}, if $\chi$ is a continuous character on $\Gamma_F$ then it defines a locally analytic function on $\Gamma_F$ and $L^{\epsilon}_p(\pi,\alpha)(\chi) = \mathscr P_\lambda(\Psi_{\pi,\alpha}^{\epsilon})(\chi) = \langle \Psi_{\pi,\alpha}^{\epsilon},\mathscr Q_{\lambda}(\chi)\rangle$ as in Section \ref{subsec:period-definition}.

With this definition, we can catalog the properties of these $p$-adic $L$-functions.

\begin{prop}[Canonicity]\label{prop:final-canoncial}
$L_p^\epsilon(\pi,\alpha)$ is naturally defined up to an element of $L^\times$ in general, and an element of $\iota(E^\times)$ if $\alpha$ is non-critical.
\end{prop}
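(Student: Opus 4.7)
The plan is to separate the two cases and trace the ambiguity through the definition.

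For the general case, the vector $\Psi_{\pi,\alpha}^\epsilon$ in Definition \ref{defn:padic-Lfunction} is any non-zero element of the $L$-vector space $H^d_c(\mathfrak n, \mathscr D_\lambda \otimes_{k_\lambda} L)^\epsilon[\mathfrak m_{\pi,\alpha}^\sharp]$, which is one-dimensional over $L$ by Theorem \ref{thm:socle-one-d}(1) (using that $\alpha$ is decent). Any two choices of $\Psi_{\pi,\alpha}^\epsilon$ therefore differ by an element of $L^\times$. Since $\mathscr P_\lambda$ is $L$-linear (Theorem \ref{theorem:period-image} and Lemma \ref{lemma:change-of-weight-period-map}) and the Amice transform $\mathcal A$ is linear (Proposition \ref{prop:amice}), the element $L_p^\epsilon(\pi,\alpha) \in \mathscr O(\mathscr X(\Gamma_F)) \otimes_{\mathbf Q_p} L$ is well-defined up to an element of $L^\times$.

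For the non-critical case, Definition \ref{defn:padic-Lfunction} demands in addition that $I_\lambda(\Psi_{\pi,\alpha}^\epsilon) = \Phi_{\pi,\alpha}^\epsilon$. When $\alpha$ is non-critical, the map \eqref{eqn:map-on-eigen} is an isomorphism between two one-dimensional $L$-vector spaces (Lemma \ref{lem:dont-be-crazy} and Theorem \ref{thm:socle-one-d}(1)), so $\Psi_{\pi,\alpha}^\epsilon$ is uniquely determined by $\Phi_{\pi,\alpha}^\epsilon$. Hence the ambiguity in $\Psi_{\pi,\alpha}^\epsilon$ reduces to the ambiguity in $\Phi_{\pi,\alpha}^\epsilon = \iota(\pr^\epsilon \ES(\phi_{\pi,\alpha})/\Omega_\pi^\epsilon)$, and by construction the only free parameter here is the complex period $\Omega_\pi^\epsilon$.

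By Corollary \ref{cor:one-dimensional-eigens} the cohomology eigenspace $H^d_c(Y_1(\mathfrak n),\mathscr L_\lambda(E))^\epsilon[\psi_\pi]$ is one-dimensional, so the period $\Omega_\pi^\epsilon$ of Corollary \ref{corollary:normalize-by-periods} is determined up to an element of $E^\times$ (as noted in the remark following that corollary). Transferring through $\iota$, the class $\Phi_{\pi,\alpha}^\epsilon$, hence $\Psi_{\pi,\alpha}^\epsilon$, and hence $L_p^\epsilon(\pi,\alpha)$, is determined up to an element of $\iota(E^\times)$. The main (and essentially only) point is the multiplicity-one statement in Theorem \ref{thm:socle-one-d}(1), which has already been established; everything else is bookkeeping along the arrows in Definition \ref{defn:padic-Lfunction}.
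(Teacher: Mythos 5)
Your proof is correct and follows the same route as the paper's: the paper's argument is a one-line version of yours (general $L^\times$ ambiguity from the free choice of $\Psi_{\pi,\alpha}^\epsilon$, reduced in the non-critical case to the $E^\times$ ambiguity in the period $\Omega_\pi^\epsilon$ via $I_\lambda$). Your version fills in the bookkeeping more explicitly, which is fine; the only nit is that the $L$-linearity of $\mathscr P_\lambda$ follows directly from Definition \ref{defn:period-map} rather than from the two results you cite for it.
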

\begin{proof}
Obviously there is a choice of $L^\times$-multiple in Definition \ref{defn:padic-Lfunction} in general. But if $\alpha$ is non-critical then the ambiguity is up to the construction of $\Phi_{\pi,\alpha}^{\epsilon}$, which is only up to $\iota(E^\times)$ through the choice of periods $\Omega_{\pi}^{\epsilon}$ as in Theorem \ref{thm:normalize-by-periods} (see Remark \ref{rmk:normalization}).
\end{proof}
Given $\epsilon \in \{\pm 1\}^{\Sigma_F}$ we write $\mathscr X(\Gamma_F)^{\epsilon}$ for the union of components of $\mathscr X(\Gamma_F)$ consisting of characters $\chi$ for which $\chi(\zeta) = \epsilon(\zeta)$ for all $\zeta \in \pi_0(F_\infty^\times)$ (see Remark \ref{rmk:direct-sum-decomps}).
\begin{prop}[Support]\label{prop:final-support}
If $\epsilon \neq \epsilon'$, then $L_p^{\epsilon}(\pi,\alpha)\big|_{\mathscr X(\Gamma_F)^{\epsilon'}} = 0$.
\end{prop}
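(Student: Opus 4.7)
The plan is to reduce the claim to a direct application of Proposition \ref{prop:support-calculation}(2) combined with Lemma \ref{lem:amice-pairing-compat}. By construction in Definition \ref{defn:padic-Lfunction}, the class $\Psi_{\pi,\alpha}^{\epsilon}$ lies in the $\epsilon$-eigenspace $H^d_c(\mathfrak n,\mathscr D_{\lambda}\otimes_{k_\lambda} L)^{\epsilon}[\mathfrak m_{\pi,\alpha}^{\sharp}]$ for the Archimedean Hecke operators. Hence by Proposition \ref{prop:support-calculation}(2), for every continuous character $\chi \colon \Gamma_F \to \overline{\mathbf Q}_p^{\times}$ one has
\begin{equation*}
\mathscr P_{\lambda}(\Psi_{\pi,\alpha}^{\epsilon})(\chi) = 0 \quad \text{whenever there exists } \zeta \in \pi_0(F_\infty^\times) \text{ with } \chi(\zeta) \neq \epsilon(\zeta).
\end{equation*}

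Next I would translate this statement about distribution values into one about the Amice transform. By Lemma \ref{lem:amice-pairing-compat}, for every such continuous character $\chi$,
\begin{equation*}
L_p^{\epsilon}(\pi,\alpha)(\chi) = \mathcal A_{\mathscr P_{\lambda}(\Psi_{\pi,\alpha}^{\epsilon})}(\chi) = \mathscr P_{\lambda}(\Psi_{\pi,\alpha}^{\epsilon})(\chi).
\end{equation*}
Now suppose $\epsilon' \neq \epsilon$. Then there is some $\zeta_0 \in \pi_0(F_\infty^\times)$ with $\epsilon'(\zeta_0) \neq \epsilon(\zeta_0)$, and every $\chi \in \mathscr X(\Gamma_F)^{\epsilon'}(\overline{\mathbf Q}_p)$ satisfies $\chi(\zeta_0) = \epsilon'(\zeta_0) \neq \epsilon(\zeta_0)$. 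Combining the two displays, $L_p^{\epsilon}(\pi,\alpha)$ vanishes at every $\overline{\mathbf Q}_p$-point of $\mathscr X(\Gamma_F)^{\epsilon'}$.

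Finally, I would conclude by a Zariski-density argument. The rigid analytic space $\mathscr X(\Gamma_F)^{\epsilon'}$ is a union of connected components of $\mathscr X(\Gamma_F)$, and its $\overline{\mathbf Q}_p$-points (i.e., the continuous characters of $\Gamma_F$ lying in those components) are Zariski-dense in it, as is standard for character varieties of CPA groups (see Definition \ref{defn:character-space}). Since $L_p^{\epsilon}(\pi,\alpha)$ is a rigid analytic function on $\mathscr X(\Gamma_F)$ that vanishes on a Zariski-dense subset of $\mathscr X(\Gamma_F)^{\epsilon'}$, it vanishes identically on $\mathscr X(\Gamma_F)^{\epsilon'}$, which is the desired conclusion. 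No step of this argument is obstructed: all the work has been done in Proposition \ref{prop:support-calculation} (the key sign-compatibility) and Lemma \ref{lem:amice-pairing-compat} (the tautological compatibility between the distribution pairing and the Amice transform).
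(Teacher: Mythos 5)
Your argument is correct and is essentially the paper's own proof, which consists of the single citation ``See Proposition~\ref{prop:support-calculation}.'' You have filled in the density step that the paper leaves implicit; note that Remark~\ref{rmk:direct-sum-decomps} already packages the same conclusion as the statement that $\mathcal A\circ\mathscr P_{\Omega}$ respects the sign decompositions on both sides, which is a slightly slicker way of organizing the argument than pointwise vanishing plus Zariski density, but there is no mathematical difference.
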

\begin{proof}
See Proposition \ref{prop:support-calculation}.
\end{proof}

If $h \geq 0$ is a real number and $f \in \mathscr O(\mathscr X(\Gamma_F))\otimes_{\mathbf Q_p} L$ then we say $f$ has order of growth $\leq h$ if $f = \mathcal A(\mu)$ for some (unique) distribution $\mu$ that has order of growth $\leq h$ as in Definition \ref{defn:order-of-growth}.
\begin{prop}[Growth]\label{prop:final-growth}
If $h_v = v_p(\iota(\alpha_v))$ and $h = \sum_{v \mid p} e_v h_v + \sum_{\sigma \in \Sigma_F} {\kappa_\sigma-w\over 2}$, then $L_{p}^{\epsilon}(\pi,\alpha)$ has order of growth $\leq h$.
\end{prop}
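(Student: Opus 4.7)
The strategy is to reduce this to the growth estimate for period maps established in Proposition \ref{prop:growth-condition}, which bounds growth by the $U_p$-slope of the distribution-valued class. So the plan is essentially bookkeeping: compute the $U_p$-slope of $\Psi_{\pi,\alpha}^{\epsilon}$ and check it equals $h$.

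First I would observe that by Definition \ref{defn:padic-Lfunction}, $L_p^{\epsilon}(\pi,\alpha) = \mathcal{A}(\mathscr{P}_{\lambda}(\Psi_{\pi,\alpha}^{\epsilon}))$, where $\Psi_{\pi,\alpha}^{\epsilon}$ lies in $H^d_c(\mathfrak n, \mathscr D_{\lambda}\otimes_{k_\lambda}L)^{\epsilon}[\mathfrak m_{\pi,\alpha}^{\sharp}]$. In particular, for every $v \mid p$, the class $\Psi_{\pi,\alpha}^{\epsilon}$ is a $U_v$-eigenvector with eigenvalue $\alpha_v^{\sharp} = \varpi_v^{(\kappa-w)/2}\alpha_v$, by the definition of $\psi_{\pi,\alpha}^{\sharp}$ given in Section \ref{subsec:special-points}. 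Since $U_p = \prod_{v\mid p} U_v^{e_v}$, it follows that $\Psi_{\pi,\alpha}^{\epsilon}$ is a $U_p$-eigenvector with eigenvalue $\prod_{v\mid p}(\alpha_v^{\sharp})^{e_v}$.

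Next I would compute the $p$-adic valuation of this eigenvalue. For any $\sigma \in \Sigma_v$ one has $v_p(\iota(\sigma(\varpi_v))) = 1/e_v$, so
\begin{equation*}
v_p(\iota(\alpha_v^{\sharp})) \; = \; h_v + \frac{1}{e_v}\sum_{\sigma \in \Sigma_v} \frac{\kappa_\sigma - w}{2}.
\end{equation*}
Multiplying by $e_v$ and summing over $v \mid p$, using the decomposition $\Sigma_F = \bigsqcup_{v\mid p}\Sigma_v$ from \eqref{eqn:union}, the $U_p$-slope of $\Psi_{\pi,\alpha}^{\epsilon}$ equals exactly $\sum_{v\mid p} e_v h_v + \sum_{\sigma \in \Sigma_F}(\kappa_\sigma-w)/2 = h$.

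Therefore $\Psi_{\pi,\alpha}^{\epsilon}$ lies in the slope-$\leq h$ part $H^d_c(\mathfrak n, \mathscr D_\lambda \otimes_{k_\lambda} L)_{\leq h}$. Proposition \ref{prop:growth-condition} then gives that $\mathscr P_\lambda(\Psi_{\pi,\alpha}^{\epsilon}) \in \mathscr D(\Gamma_F,L)$ is a distribution with growth of order $\leq h$ in the sense of Definition \ref{defn:order-of-growth}. By the definition of growth for functions on $\mathscr X(\Gamma_F)$ (expressed through the Amice transform), we conclude that $L_p^{\epsilon}(\pi,\alpha)$ itself has order of growth $\leq h$, as claimed. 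There is no real obstacle here; the only mildly subtle point is to make sure the scaling factor $\varpi_v^{(\kappa-w)/2}$ relating $\alpha_v$ to the Hecke eigenvalue $\alpha_v^{\sharp}$ acting on distribution-valued cohomology is correctly tracked, since this is precisely what produces the correction term $\sum_{\sigma}(\kappa_\sigma-w)/2$ in the growth bound.
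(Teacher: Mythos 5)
Your proof is correct and takes essentially the same route as the paper, which simply invokes Proposition \ref{prop:growth-condition} for $\Psi_{\pi,\alpha}^{\epsilon}$ and then asserts that translating the $U_p$-slope bound $\sum_{v\mid p} e_v v_p(\alpha_v^{\sharp})$ into the stated formula "is clear." What you've done is carry out that translation explicitly (unwinding $\alpha_v^{\sharp} = \varpi_v^{(\kappa-w)/2}\alpha_v$, using $v_p(\iota(\sigma(\varpi_v))) = 1/e_v$ for $\sigma\in\Sigma_v$, and the decomposition $\Sigma_F = \bigsqcup_{v\mid p}\Sigma_v$), which is the content the paper elides.
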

\begin{proof}
Proposition \ref{prop:growth-condition} implies $L_p^{\epsilon}(\pi,\alpha)$ has order of growth $\leq h$ where $h = \sum_{v \mid p} e_v v_p(\alpha_v^{\sharp})$. The translation to the claimed statement is clear.
\end{proof}

Before the next proposition, we recall the notation:
\begin{equation*}
\Lambda(\pi\otimes \theta,m+1)^{\alg} := {\sgn(\theta_\infty)  i^{1 + m + {\kappa-w\over 2}} \Delta_{F/\mathbf Q}^{m+1} \Lambda(\pi\otimes \theta,m+1) \over G(\theta)\Omega_\pi^{\epsilon}}.
\end{equation*}
Here $\theta$ is a finite order Hecke character, and $\epsilon$ is chosen so that  $\theta(\zeta)\zeta^m = \epsilon(\zeta)$ for all $\zeta \in \pi_0(F_\infty^\times)$. We have $\Lambda(\pi\otimes \theta,m+1)^{\alg} \in E(\theta)$ (it is only off by the absolute norm of the conductor of $\theta$ from the value in Theorem \ref{thm:algebraic-normalization}). We also recall that if $\mathfrak p_v \nmid \mathfrak n$ then $\alpha_v$ is a root of a quadratic polynomial (Definition \ref{defn:refinement-intext}) and we write $\beta_v = a_v(\pi) - \alpha_v$ for the other root. To save notation, in what follows, we stress that $\alpha_v$ and $\beta_v$ are viewed as $p$-adic numbers under the isomorphism $\iota: \mathbf C \simeq \overline{\mathbf Q}_p$.

\begin{prop}[Interpolation]\label{prop:final-interpolation}
Suppose that $m$ is an integer that is critical with respect to $\lambda$, $\theta$ is a finite order Hecke character of conductor $\prod_{v \mid p} \mathfrak p_{v}^{f_v}$ and $\epsilon(\zeta) = \theta(\zeta)\zeta^m$ for all $\zeta \in \pi_0(F_\infty^\times)$. Then,
\begin{enumerate}
\item If $\alpha$ is critical, then $L_p^{\epsilon}(\pi,\alpha)(\mathbf N_p^m \theta^{\iota}) = 0$.
\item If $\alpha$ is non-critical, then
\begin{multline*}
L_p^{\epsilon}(\pi,\alpha)(\mathbf N_p^m \theta^{\iota})\\
 =  \prod_{f_v > 0} \left(q_v^{m+1} \over \alpha_v\right)^{f_v} \prod_{f_v=0} (1 - \theta^{\iota}(\varpi_v)\alpha_v^{-1} q_v^m) \prod_{\substack{v \mid p\\ \mathfrak p_v \nmid \mathfrak n\\ f_v=0}} (1-\beta_v\theta^{\iota}(\varpi_v)q_v^{-(m+1)}) \cdot \iota\left(\Lambda(\pi\otimes \theta,m+1)^{\alg}\right).
\end{multline*}
\end{enumerate}
\end{prop}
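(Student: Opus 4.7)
The proof should combine Lemma \ref{lem:amice-pairing-compat} (to rewrite $L_p^{\epsilon}(\pi,\alpha)(\mathbf{N}_p^m\theta^\iota)$ as a pairing), Corollary \ref{cor:abstract-interp-eigen} (which is the abstract $p$-adic interpolation, valid because $\Psi_{\pi,\alpha}^\epsilon$ is a $U_v$-eigenvector with eigenvalue $\alpha_v^\sharp=\varpi_v^{(w-\kappa)/2}\alpha_v$), and the classical archimedean calculation from Corollary \ref{cor:mellin-twist-pair}. For part (1), since $\alpha$ is critical, Lemma \ref{lem:dont-be-crazy} guarantees that $I_\lambda(\Psi_{\pi,\alpha}^\epsilon)=0$; the right-hand side of Corollary \ref{cor:abstract-interp-eigen} therefore vanishes identically, and we conclude $L_p^{\epsilon}(\pi,\alpha)(\mathbf{N}_p^m\theta^\iota)=0$ as claimed.

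For part (2), assume $\alpha$ is non-critical, so that by our normalization $I_\lambda(\Psi_{\pi,\alpha}^\epsilon)=\Phi_{\pi,\alpha}^\epsilon = \iota(\pr^\epsilon\ES(\phi_{\pi,\alpha})/\Omega_\pi^\epsilon)$. Plugging into Corollary \ref{cor:abstract-interp-eigen} expresses $L_p^\epsilon(\pi,\alpha)(\mathbf{N}_p^m\theta^\iota)$ as the product of the explicit $\alpha_v$-factors appearing there with the $p$-adic pairing $\langle \tw_{\theta^\iota}^{\cl}(\Phi_{\pi,\alpha}^\epsilon),\cl_p(m)\rangle$. I would then transport this pairing across $\iota$: Proposition \ref{prop:p-adic-eval-class-defn} gives $\cl_p(m)=\iota(\cl_\infty(m))$, and the commuting square \eqref{eqn:twisting-diagram} identifies $\tw_{\theta^\iota}^{\cl}\circ\iota$ with $\iota\circ\tw_\theta$. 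The resulting archimedean pairing $\langle \tw_\theta(\pr^\epsilon\ES(\phi_{\pi,\alpha}))/\Omega_\pi^\epsilon,\cl_\infty(m)\rangle$ has its $\pr^\epsilon$ absorbed by Lemma \ref{lem:allowed-to-project} (precisely because of our choice of sign $\epsilon(\zeta)=\theta(\zeta)\zeta^m$), and Lemma \ref{lemma:twisting-compatibilities} lets us commute $\tw_\theta$ with $\ES$ at the cost of a Gauss sum: the pairing becomes $G(\theta^{-1})\langle \ES(\phi_{\pi,\alpha}\otimes\theta),\cl_\infty(m)\rangle/\Omega_\pi^\epsilon$.

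At this point I apply Corollary \ref{cor:mellin-twist-pair} to evaluate the archimedean pairing as a product of $(1-\beta_v\theta_v(\varpi_v)q_v^{-(m+1)})$-factors (over $v\mid p$ with $\mathfrak{p}_v\nmid \mathfrak{nf}$) times $i^{1+m+(\kappa-w)/2}\Delta_{F/\mathbf{Q}}^{m+1}\Lambda(\pi\otimes\theta,m+1)$. Using \eqref{eqn:gauss-sum-relation} to rewrite $G(\theta^{-1})=\sgn(\theta_\infty)N_{F/\mathbf{Q}}(\mathfrak{f})/G(\theta)$, one recognizes the combination $\sgn(\theta_\infty)i^{\cdots}\Delta_{F/\mathbf{Q}}^{m+1}\Lambda(\pi\otimes\theta,m+1)/(G(\theta)\Omega_\pi^\epsilon) = \Lambda^{\mathrm{alg}}(\pi\otimes\theta,m+1)$, leaving $N_{F/\mathbf{Q}}(\mathfrak{f})=\prod_{f_v>0}q_v^{f_v}$ as a surplus factor. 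This surplus combines with the $\prod_{f_v>0}(\alpha_v^{-1}q_v^m)^{f_v}$ from Corollary \ref{cor:abstract-interp-eigen} to produce the desired $\prod_{f_v>0}(q_v^{m+1}/\alpha_v)^{f_v}$, while the $\beta_v$-factors from the archimedean side and the $\alpha_v$-factors from the abstract interpolation assemble into the product in the statement. The main obstacle is purely bookkeeping: carefully tracking how the three sources of Euler-type factors (the $\varpi_v^{(w-\kappa)/2}$-shift between $\mathscr L_\lambda$ and $\mathscr L_\lambda^\sharp$, the Gauss sum normalization, and the local factors coming from passing between $\phi_\pi$ and $\phi_{\pi,\alpha}$ in Proposition \ref{prop:L-values-agree}) conspire to produce exactly the claimed formula, with no serious conceptual obstruction beyond consistent bookkeeping.
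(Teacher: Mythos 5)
Your proposal follows exactly the same route as the paper's proof: Lemma \ref{lem:amice-pairing-compat} to rewrite the evaluation as a pairing, Corollary \ref{cor:abstract-interp-eigen} for the abstract interpolation, Lemma \ref{lem:dont-be-crazy} for the critical case, then Proposition \ref{prop:p-adic-eval-class-defn}, Lemma \ref{lem:allowed-to-project}, and Lemma \ref{lemma:twisting-compatibilities} to transport the pairing across $\iota$ to the archimedean side, Corollary \ref{cor:mellin-twist-pair} to evaluate it, and \eqref{eqn:gauss-sum-relation} for the final bookkeeping that merges the Gauss-sum factor $N_{F/\mathbf Q}(\mathfrak f)$ with the $\prod_{f_v>0}(q_v^m/\alpha_v)^{f_v}$ term. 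The argument is correct and matches the paper's.
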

\begin{proof}
Choose $\Psi_{\pi,\alpha}^{\epsilon}$ as in Definition \ref{defn:padic-Lfunction}. Then, by Proposition \ref{prop:amice-pairing-compat} we want to compute $\mathscr P_\lambda(\Psi_{\pi,\alpha}^{\epsilon})(\mathbf N_p^m \theta^{\iota})$ with the notations as in Section \ref{subsec:period-definition}. For each $v \mid p$, $\Psi_{\pi,\alpha}^{\epsilon}$ is a $U_v$-eigenform with eigenvalue $\alpha_v^{\sharp} = \varpi_v^{\kappa-w\over 2} \alpha_v$. Thus Corollary \ref{cor:abstract-interp-eigen} implies that 
\begin{equation}\label{eqn:first-step}
\mathscr P_\lambda(\Psi_{\pi,\alpha}^{\epsilon})(\mathbf N_p^m \theta^{\iota}) = \prod_{f_v > 0} \left(q_v^m \over \alpha_v\right)^{f_v} \prod_{f_v=0} (1 - \theta^{\iota}(\varpi_v)\alpha_v^{-1} q_v^m) \cdot \langle \tw_{\theta^{\iota}}^{\cl}(I_\lambda(\Psi_{\pi,\alpha}^{\epsilon})), \cl_p(m)\rangle.
\end{equation}
If $\alpha$ is critical, then the right-hand side vanishes by Lemma \ref{lem:dont-be-crazy}. This proves (1). If $\alpha$ is non-critical though, we have $I_\lambda(\Psi_{\pi,\alpha}^{\epsilon}) = \Phi_{\pi,\alpha}^{\epsilon}$, by definition. Thus
\begin{align*}
\iota^{-1}\left(\langle \tw_{\theta^{\iota}}^{\cl}(I_\lambda(\Psi_{\pi,\alpha}^{\epsilon})), \cl_p(m)\rangle\right) &= \iota^{-1}\left(\langle \tw_{\theta^{\iota}}^{\cl}(\Phi_{\pi,\alpha}^{\epsilon}), \cl_p(m) \rangle\right) \\
&= {1\over \Omega_\pi^{\epsilon}}\langle \tw_{\theta} {\pr^{\epsilon} \ES(\phi_{\pi,\alpha})}, \cl_\infty(m) \rangle & \text{(by Proposition \ref{prop:p-adic-eval-class-defn})}\\
&= {1\over \Omega_\pi^{\epsilon}}\langle \tw_{\theta}(\ES(\phi_{\pi,\alpha})), \cl_\infty(m)\rangle & \text{(by Lemma \ref{lem:allowed-to-project})}\\
&= {G(\theta^{-1})\over \Omega_\pi^{\epsilon}}\langle \ES(\phi_{\pi,\alpha} \otimes \theta), \cl_\infty(m)\rangle.
\end{align*}
Combining this calculation with \eqref{eqn:first-step}, we are finished by Corollary \ref{cor:mellin-twist-pair}. (The Gauss sum can be moved to the denominator using \eqref{eqn:gauss-sum-relation}; this is where the $m$'s in the $q_v$ exponents of \eqref{eqn:first-step} becomes $m+1$'s.)
\end{proof}

Finally, we have a many-variable version of the above constructions. It follows easily from the functorial nature of our construction of the period maps. The proof is directly inspired from \cite[Remark 4.16]{Bellaiche-CriticalpadicLfunctions}.

\begin{prop}[Variation]
Let $x = x_{\pi,\alpha}$ be a smooth classical point on $\mathscr E(\mathfrak n)_{\rmmid}$. Then, for each sufficiently small good open neighborhood $U$ of $x$ in $\mathscr E(\mathfrak n)_{\rmmid}$ there exists an element $\mathbf L^{\epsilon} \in \mathscr O(U)\widehat{\otimes}_{\mathbf Q_p} \mathscr O(\mathscr X(\Gamma_F))$ specified up to $\mathscr O(U)^\times$-multiple and such that for each decent point $x' \in U$ associated with a $p$-refined cohomological cuspidal automorphic representation $(\pi',\alpha')$ we have
\begin{equation*}
\mathbf L^{\epsilon}_p|_{u=x'} = c_{x'}L^{\epsilon}_p(\pi,\alpha)
\end{equation*}
for some constant $c_{x'} \in k_{x'}^\times$.
\end{prop}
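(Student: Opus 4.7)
The plan is to build the two-variable $p$-adic $L$-function by applying the period map in families to a canonical generator of the middle-degree overconvergent cohomology localized near $x$, and then verify the specialization property one decent point at a time.

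First I would choose a good neighborhood $U$ of $x$ in $\mathscr E(\mathfrak n)_{\rmmid}$, belonging to a slope adapted pair $(\Omega,h)$, small enough so that (i) $x$ is the unique point of $U$ above $\lambda_x$, (ii) every point of $U$ is smooth on $\mathscr E(\mathfrak n)_{\rmmid}$ (possible because the smooth locus is Zariski open), and (iii) the coherent sheaf $\mathscr M_c^d(U)^\epsilon$ is free of rank one over $\mathscr O(U)$. For the last point, Proposition \ref{prop:smoothness-general} applied at $x$ gives that $M_x^\epsilon$ is finite projective over $\mathbf T_x$; the rank can be computed after further specializing to any nearby extremely non-critical classical point, where Proposition \ref{prop:implications-extremely-non-critical}(2) together with the classical one-dimensionality of Hecke eigenspaces shows the rank equals one. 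Since the set of such points is accumulating by Proposition \ref{prop:density-results}, we may shrink $U$ as required.

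Next, pick a generator $\Psi_U^{\epsilon} \in \mathscr M_c^d(U)^\epsilon$; it is unique up to $\mathscr O(U)^\times$. Using the base-change compatibility of period maps (Lemma \ref{lemma:change-of-weight-period-map}, applied to $\Omega' = U$ viewed as an affinoid equipped with the weight map $U \to \Omega \to \mathscr W(1)$), extend the $\mathscr O(\Omega)$-linear map $\mathscr P_\Omega$ to an $\mathscr O(U)$-linear map $\mathscr P_U : H^d_c(\mathfrak n,\mathscr D_U) \to \mathscr D(\Gamma_F, \mathscr O(U))$, and define
\begin{equation*}
\mathbf L_p^{\epsilon} := \mathcal A\bigl(\mathscr P_U(\Psi_U^{\epsilon})\bigr) \in \mathscr O(U)\widehat{\otimes}_{\mathbf Q_p}\mathscr O(\mathscr X(\Gamma_F)).
\end{equation*}
The $\mathscr O(U)^\times$ indeterminacy is precisely that of the generator $\Psi_U^{\epsilon}$.

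For the specialization at a decent point $x' \in U$, apply Lemma \ref{lemma:change-of-weight-period-map} along the map $\mathscr O(U) \to k_{x'}$ to deduce $\mathbf L_p^{\epsilon}|_{u=x'} = \mathcal A(\mathscr P_{\lambda_{x'}}(\Psi_{x'}))$, where $\Psi_{x'}$ is the image of $\Psi_U^{\epsilon}$ in the fiber $\mathscr M_c^d(U)^\epsilon \otimes_{\mathscr O(U)} k_{x'}$. Using the flatness of $\mathscr M_c^d$ over $\mathscr W(1)$ (Proposition \ref{prop:looks-like-eigen}(1)) and that $x'$ is the unique point of $U$ over $\lambda_{x'}$ (after possibly shrinking $U$ further, which uses the discreteness of the fiber over weight space locally), we identify $\mathscr M_c^d(U)^\epsilon \otimes_{\mathscr O(\Omega)} k_{\lambda_{x'}} \simeq H^d_c(\mathfrak n,\mathscr D_{\lambda_{x'}})^\epsilon_{\mathfrak m_{x'}}$ as modules over $T_{x'} \simeq \mathbf T_{x'}/\mathfrak m_{\lambda_{x'}}\mathbf T_{x'}$, and this module is free of rank one over $T_{x'}$ by the freeness of $\mathscr M_c^d(U)^\epsilon$.

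The hard part will be showing that $\mathscr P_{\lambda_{x'}}(\Psi_{x'})$ is a nonzero scalar multiple of $\mathscr P_{\lambda_{x'}}(\Psi_{\pi',\alpha'}^\epsilon)$. When $x'$ is non-critical (in particular whenever it is extremely non-critical, so $T_{x'} = k_{x'}$ by Proposition \ref{prop:implications-extremely-non-critical}(2)), the full generalized eigenspace $H^d_c(\mathfrak n,\mathscr D_{\lambda_{x'}})^\epsilon_{\mathfrak m_{x'}}$ coincides with the one-dimensional socle, so $\Psi_{x'}$ lies in the $k_{x'}$-line spanned by $\Psi_{\pi',\alpha'}^\epsilon$ and the conclusion is immediate. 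When $x'$ is decent but critical, $T_{x'}$ is a nontrivial Artinian Gorenstein ring by Lemma \ref{lem:comm-alg-conclusions}(2), so $\Psi_{x'}$ is an $\mathscr O(U)$-generator that does not lie in the socle. The key linear-algebra fact needed is that the restriction of $\mathscr P_{\lambda_{x'}}$ to the free rank-one $T_{x'}$-module $H^d_c(\mathfrak n,\mathscr D_{\lambda_{x'}})^\epsilon_{\mathfrak m_{x'}}$ has one-dimensional image, so that every generator maps to the same line as a socle generator; this is proved by combining the abstract interpolation theorem (Corollary \ref{cor:abstract-interp-eigen}) with the Gorenstein duality on $T_{x'}$ to identify socle and cosocle, and the observation that the pairing with $\mathscr Q_{\lambda_{x'}}(f)$ for $f \in \mathscr A(\Gamma_F,L)$ factors through the one-dimensional cosocle because the twisting and pushforward operations that intervene on the Shintani-cone side respect the $T_{x'}$-action on $\mathscr Q_{\lambda_{x'}}(f)$ modulo the Fitting ideal of the socle. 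The nonvanishing of $c_{x'}$ finally follows from Lemma \ref{lem:dont-be-crazy} in the non-critical case, and in the critical decent case from the fact that the socle element $\Psi_{\pi',\alpha'}^\epsilon$ is detected (up to scalar) by the leading-order behavior of $\mathscr P_{\lambda_{x'}}(\Psi_{x'})$ under the interpolation formula.
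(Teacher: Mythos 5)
Your construction of $\mathbf L_p^\epsilon$ has a genuine gap. You propose to fix a generator $\Psi_U^\epsilon$ of the free rank-one $\mathscr O(U)$-module $M = \mathscr M_c^d(U)^\epsilon$, base-change it to $H^d_c(\mathfrak n,\mathscr D_U)$, and apply the period map $\mathscr P_U$. But $\Psi_U^\epsilon$ lives in $H^d_c(\mathfrak n,\mathscr D_\Omega)$, so by the base-change compatibility you invoke (Lemma \ref{lemma:change-of-weight-period-map}), $\mathscr P_U(\Psi_U^\epsilon)$ is nothing but the image of $\mathscr P_\Omega(\Psi_U^\epsilon) \in \mathscr D(\Gamma_F,\mathscr O(\Omega))$ under the scalar extension $\mathscr O(\Omega) \to \mathscr O(U)$. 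Consequently your $\mathbf L_p^\epsilon$ lies in the image of $\mathscr O(\Omega)\widehat{\otimes}_{\mathbf Q_p}\mathscr O(\mathscr X(\Gamma_F))$ inside $\mathscr O(U)\widehat{\otimes}_{\mathbf Q_p}\mathscr O(\mathscr X(\Gamma_F))$, so its specialization at any point $x' \in U$ factors through the weight $\lambda_{x'}$ alone. Two points of $U$ over the same weight (which do occur, since the weight map has degree $>1$ on $U$ whenever $x$ is critical) would then receive the same $p$-adic $L$-function, which is false. The attempted fix — shrinking $U$ so that $x'$ is the unique point over $\lambda_{x'}$ — does not help because the shrinking depends on $x'$, while $\mathbf L_p^\epsilon$ must be a single element of $\mathscr O(U)\widehat{\otimes}\mathscr O(\mathscr X(\Gamma_F))$.

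The correct device, which the paper uses, is to \emph{not} pick a generator of $M$ at all, but instead to regard the restricted period map $\mathscr P_\Omega|_M$ as an element of $M^\vee\widehat{\otimes}_{\mathscr O(\Omega)}\mathscr D(\Gamma_F,\mathscr O(\Omega))$, where $M^\vee = \Hom_{\mathscr O(\Omega)}(M,\mathscr O(\Omega))$ is the $\mathscr O(\Omega)$-linear dual, naturally an $\mathscr O(U)$-module via the Hecke action on $M$. Because $U$ is smooth, $\mathscr O(U)$ is regular, hence Gorenstein, and since $M\simeq \mathscr O(U)$, the relative dualizing module $M^\vee \simeq \Hom_{\mathscr O(\Omega)}(\mathscr O(U),\mathscr O(\Omega))$ is free of rank one over $\mathscr O(U)$. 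Choosing a trivialization $M^\vee\simeq\mathscr O(U)$ (this is the true source of the $\mathscr O(U)^\times$ ambiguity) then produces an element of $\mathscr O(U)\widehat{\otimes}_{\mathscr O(\Omega)}\mathscr D(\Gamma_F,\mathscr O(\Omega)) = \mathscr D(\Gamma_F,\mathscr O(U))$ which is genuinely $\mathscr O(U)$-valued, not merely $\mathscr O(\Omega)$-valued. Concretely, if $\{\Psi_i\}$ is an $\mathscr O(\Omega)$-basis of $M$ with dual basis $\{\Psi_i^\vee\}$ and $\Psi_i^\vee = u_i\cdot\iota$ for a generator $\iota$ of $M^\vee$ and $u_i\in\mathscr O(U)$, then $\mathbf L_p^\epsilon = \sum_i u_i\mathscr P_\Omega(\Psi_i)$, where the coefficients $u_i$ live in $\mathscr O(U)\setminus\mathscr O(\Omega)$ in general. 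Your final paragraph about the specialization at critical decent points also contains unsupported claims (in particular, that the pairing $\Psi\mapsto\langle\Psi,\mathscr Q_{\lambda'}(f)\rangle$ ``factors through the cosocle'' because the cycle-side operations ``respect the $T_{x'}$-action modulo the Fitting ideal of the socle'' — the period map is not a $T_{x'}$-module map and there is no reason for such a factorization), but since the construction that precedes it is already incorrect, this is a secondary issue. The Gorenstein-ness you invoke there for $T_{x'}$ is instead needed, as above, at the level of $\mathscr O(U)$ to trivialize $M^\vee$ in the construction itself.
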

\begin{proof}
Given $x$, every sufficiently small good open neighborhood $U$ of $x$ is regular (Theorem \ref{thm:smoothness}). Fix such a neighborhood, and assume that it is belongs to a slope adapted pair $(\Omega,h)$. By Proposition \ref{prop:looks-like-eigen} we may assume that $\mathscr O(U)$ acts faithfully on the finite projective $\mathscr O(\Omega)$-module $\mathscr M_c^d(U) = e_U H^d_c(\mathfrak n, \mathscr D_{\Omega})$. By Lemma \ref{lem:comm-alg-conclusions}, $\mathscr M^d_c(U)$ is finite projective over $\mathscr O(U)$. Furthermore, for each $\epsilon$, $M=\mathscr M_c^d(U)^{\epsilon}$ is free of rank one over $\mathscr O(U)$ by the same argument as in Theorem \ref{thm:socle-one-d}.

On the other hand, in Section \ref{subsec:period-definition} we constructed a canonical period map
\begin{equation*}
\mathscr P_{\Omega} : H^d_c(\mathfrak n, \mathscr D_{\Omega}) \rightarrow \mathscr D(\Gamma_F,\mathscr O(\Omega)).
\end{equation*}
We can then specialize this to 
\begin{equation*}
\mathscr P_{\Omega}|_{M} \in M^\vee \widehat{\otimes}_{\mathscr O(\Omega)} \mathscr D(\Gamma_F,\mathscr O(\Omega)) \simeq M^\vee \widehat{\otimes}_{\mathbf Q_p} \mathscr D(\Gamma_F,\mathbf Q_p)
\end{equation*}
where $M^\vee = \Hom_{\mathscr O(\Omega)}(M,\mathscr O(\Omega))$ is the dual $\mathscr O(U)$-module.

We now combine the previous two paragraphs. Since $U$ is smooth, $\mathscr O(U)$ is regular. In particular, it is Gorenstein. Since $M \simeq \mathscr O(U)$ as an $\mathscr O(U)$-module we deduce that $M^\vee$ is also free of rank one over $\mathscr O(U)$. Choose an $\mathscr O(U)$-linear isomorphism $M^\vee \simeq \mathscr O(U)$ and then we get
\begin{equation*}
\mathscr P_{\Omega}|_M \in M^\vee \widehat{\otimes}_{\mathbf Q_p} \mathscr D(\Gamma_F,\mathbf Q_p) \simeq \mathscr O(U)\widehat{\otimes}_{\mathbf Q_p} \mathscr D(\Gamma_F,\mathbf Q_p).
\end{equation*} 
We finally define $\mathbf L_p^{\epsilon} := \mathcal A(\mathscr P_{\Omega}|_M)$ where $\mathcal A$ is the Amice transform, as usual.

From the construction, $\mathbf L_p^{\epsilon}$ was uniquely defined up to $\mathscr O(U)^\times$-multiple and it is an exercise to see that it specializes the construction(s) given above.
\end{proof}

\begin{appendix}

\section{A deformation calculation}\label{app:semistable}

\numberwithin{equation}{subsection}

The goal of this appendix is to prove Proposition \ref{prop:local-bound} in the cases not already available from \cite[Section 3]{Bergdall-Smoothness}. Namely, we calcuate the dimension of the Zariski tangent space to a certain deformation functor of certain two-dimensional semi-stable, but non-crystalline, Galois representations. The calculations are made using the associated $(\varphi,\Gamma)$-module. We recommend the reader who is not familiar with $(\varphi,\Gamma)$-modules over the Robba ring and their relationship to Galois representations also consult \cite{Berger-Representationp-adique}, along with \cite{KedlayaPottharstXiao-Finiteness} for further details (\cite[Chapter 2]{BellaicheChenevier-Book} also contains many details, in the case of $\Gal(\overline{\mathbf Q}_p/\mathbf Q_p)$-representations).

The appendix is divided as follows. In Section \ref{subsec:app-general} we recall $(\varphi,\Gamma)$-modules and their connection to Galois representations, and we describe general results. In Section \ref{subsec:app-2d-specific} we fix a rank two $(\varphi,\Gamma)$-module that is semi-stable, but not crystalline, and we establish results specific to that situation. Finally, In Section \ref{subsec:app-refined} we recall the notion of weakly-refined (infinitesimal) deformations and calculate the dimension of the corresponding Zariski tangent space.

We fix the following notations throughout Appendix \ref{app:semistable}. Let $K$ be a finite extension of $\mathbf Q_p$ with ring of integers $\mathcal O_K$ and fix a uniformizer $\varpi_K \in \mathcal O_K$. Let $K_0$ be maximal unramified subextension of $K$ and let $f = (K_0:\mathbf Q_p)$ be the residue degree of $K/\mathbf Q_p$. Let $L$ be a finite extension of $\mathbf Q_p$ such that $\# \Sigma_K = (K:\mathbf Q_p)$, where  $\Sigma_K := \Hom_{\mathbf Q_p}(K,L)$.

\subsection{($\varphi,\Gamma)$-modules and Galois representations}\label{subsec:app-general}

Let $K_\infty = \dirlim K(\zeta_{p^n})$ be the field extension of $K$ obtained by adjoining all $p$-th power roots of unity and $\Gamma_K = \Gal(K_\infty/K)$. The Robba ring $\mathcal R_K$ is the ring of Laurent series $f(T)$ in a single variable whose coefficients lie in the maximal absolutely unramified extension of $K_\infty$ and which converge on an annulus $r < |T| < 1$, with $r$ depending on $f(T)$. The ring $\mathcal R_K$ is equipped with a continuous operator $\varphi$ (the Frobenius) and a continuous action of the group $\Gamma_K$ that commutes with $\varphi$. We write $\mathcal R_{K,L} = \mathcal R_K \otimes_{\mathbf Q_p} L$ for the Robba ring extended linearly to $L$.

A $(\varphi,\Gamma_K)$-module (over $\mathcal R_{K,L}$) is a finite free $\mathcal R_{K,L}$-module $D$ equipped with a continuous operator $\varphi$ and a continuous action of the group $\Gamma_K$ that commutes with $\varphi$, and these actions are assumed to be semi-linear with respect to the $(\varphi,\Gamma_K)$-action on $\mathcal R_{K,L}$. There is a fully faithful functor
\begin{align*}
\{\text{continuous $L$-linear representations of $G_{K}$}\} &\longrightarrow \{\text{$(\varphi,\Gamma_K)$-modules over $\mathcal R_{K,L}$}\}\\
r &\longmapsto D_{\rig}(r)
\end{align*}
whose essential image is the full subcategory of \'etale $(\varphi,\Gamma_K)$-modules. Even if $r$ itself is irreducible, the $(\varphi,\Gamma_K)$-module $D_{\rig}(r)$ may become reducible.

Galois representation-theoretic notions and constructions extend to the category of $(\varphi,\Gamma_K)$-modules over $\mathcal R_{K,L}$ through the functor $D_{\rig}$. For instance, each $(\varphi,\Gamma_K)$-module has a dual $D^{\vee}$, we may speak of Hodge--Tate weights, and there are functors $D_{\crys}$ and $D_{\mathrm{st}}$ that are used to define categories of (not necessarily \'etale) crystalline and semi-stable $(\varphi,\Gamma_K)$-modules. For instance, $D_{\crys}(D) = D[1/t]^{\Gamma_K}$ where $t \in \mathcal R_{K,L}$ is Fontaine's ``$p$-adic $2\pi i$'' (and $D_{\crys}^+(D) = D^{\Gamma_K}$). In the ring $\mathcal R_{K,L}$, the element $t$ factors into a product $t = \prod_{\sigma \in \Sigma_K} t_\sigma$ and $t_{\sigma}\mathcal R_{K,L}$ is crystalline $(\varphi,\Gamma_K)$-module with Hodge--Tate weights are $\HT_{\sigma'}(t_{\sigma}\mathcal R_{K,L})) = -1$ if $\sigma' = \sigma$ and $0$ otherwise. The Frobenius acts trivially $D_{\crys}(t_\sigma \mathcal R_{K,L})$.

Rank one $(\varphi,\Gamma_K)$-modules are parametrized by continuous characters $\delta: K^\times \rightarrow L^\times$. We write $\mathcal R_{K,L}(\delta)$ for the $(\varphi,\Gamma_K)$-module corresponding to $\delta$. It is \'etale exactly when $\delta(\varpi_K)$ is a $p$-adic unit, which is to say that $\delta$ extends via the local Artin map to a continuous $L^\times$-valued character on $G_K^{\ab}$. 

Another class of characters is
$$
z^{s} := \left(z \mapsto \prod_{\sigma \in \Sigma_K} \tau(z)^{s_\sigma} \right),
$$
where $s = (s_{\sigma})_{\sigma \in \Sigma_K}$ is a collection of integers. The cyclotomic character $\chi_{\cycl}$ corresponds to the character on $K^\times$ given by 
$$
\chi_{\cycl}(z) = |N_{K/\mathbf Q_p}(z)|_pN_{K/\mathbf Q_p}(z) = |z^{1}|_p z^{1},
$$
where $1 = (1_\sigma)_{\sigma \in \Sigma_K}$ is the constant tuple. The $(\varphi,\Gamma_K)$-module $t_\sigma \mathcal R_{K,L}$ corresponds to the character $z^s$ where $s_\sigma = 1$ and $s_{\sigma'} = 0$ if $\sigma'\neq \sigma$.  As a final example, given a character $\delta_0 : \mathcal O_K^\times \rightarrow L^\times$ we write $\LT_{\varpi_K}(\delta_0)$ for the character $\delta_0$ extended to $K^\times$ by taking the value 1 on $\varpi_K$ (compare with the main text prior to Proposition \ref{prop:family-galois-properties}).

Given by $(\varphi,\Gamma_K)$-module and any character $\delta$ we write $D(\delta) = D \otimes_{\mathcal R_{K,L}} \mathcal R_{K,L}(\delta)$. We note the following lemma for later use. Note that $D$ is specifically not assumed to be crystalline in the lemma.
\begin{lem}\label{lem:annoying-issue}
If $D$ is a $(\varphi,\Gamma_{K})$-module and $\delta:K^\times \rightarrow L^\times$ is a crystalline character, the natural map $D_{\crys}(D)\otimes D_{\crys}(\mathcal R_{K,L}(\delta)) \rightarrow D_{\crys}(D(\delta))$ is an isomorphism.
\end{lem}
\begin{proof}
If $E$ is any $(\varphi,\Gamma_K)$-module, the natural map $D_{\crys}(D) \otimes D_{\crys}(E) \rightarrow D_{\crys}(D\otimes E)$ is injective. Thus, for any character $\delta$, the composition
$$
D_{\crys}(D)\otimes D_{\crys}(\mathcal R_{K,L}(\delta))\otimes D_{\crys}(\mathcal R_{K,L}(\delta^{-1})) \rightarrow D_{\crys}(D(\delta)) \otimes D_{\crys}(\mathcal R_{K,L}(\delta^{-1})) \rightarrow D_{\crys}(D)
$$
is the injective at each step. Since $\delta$ is crystalline, the domain of the composition is canonically isomorphic to $D_{\crys}(D)$ and the resulting composition $D_{\crys}(D) \rightarrow D_{\crys}(D)$ is the identity map. It follows that $D_{\crys}(D)\otimes D_{\crys}(\mathcal R_{K,L}(\delta))$ and $D_{\crys}(D(\delta))$ have the same $L$-dimension and thus the natural (injective) map $D_{\crys}(D)\otimes D_{\crys}(\mathcal R_{K,L}(\delta)) \rightarrow D_{\crys}(D(\delta))$ is an isomorphism.
\end{proof}

Every $(\varphi,\Gamma_K)$-module has Galois cohomology, written $H^{\bullet}(D)$, and Selmer groups, written $H^1_f(D)$ and $H^1_g(D)$ (see \cite[Section 1.4.1]{Benois-GreenbergL} and \cite[Section 3A]{Pottharst-FiniteSlope}). There is a canonical identification $H^0(D) = \Hom(\mathcal R_{K,L},D)$. The cohomology of rank one $(\varphi,\Gamma_K)$-modules is known (see \cite[Proposition 6.4.8]{KedlayaPottharstXiao-Finiteness}). For instance, $H^0(\mathcal R_{K,L}(\delta)) = (0)$ unless $\delta = z^{s}$ where $s = (s_\sigma)_{\sigma \in \Sigma_K}$ and $s_\sigma \leq 0$ for all $\sigma \in \Sigma_K$.

\begin{lem}\label{lem:app-rank-one}
If $\delta$ and $\delta'$ are any two characters, then any non-zero morphism of $(\varphi,\Gamma_K)$-modules $\mathcal R_{K,L}(\delta) \rightarrow \mathcal R_{K,L}(\delta')$ induces an isomorphism $D_{\crys}(\mathcal R_{K,L}(\delta)) \cong D_{\crys}(\mathcal R_{K,L}(\delta'))$.
\end{lem}
\begin{proof}
Let $f: \mathcal R_{K,L}(\delta) \rightarrow \mathcal R_{K,L}(\delta')$ be a non-zero morphism of $(\varphi,\Gamma_K)$-module. By the previous two paragraphs, $f$ induces a $(\varphi,\Gamma_K)$-equivariant isomorphism $\mathcal R_{K,L}(\delta)[1/t] \cong \mathcal R_{K,L}(\delta')[1/t]$. Thus $f$ also induces an isomorphism at the level of $D_{\crys}(-)$.
\end{proof}

\begin{exam}\label{exam:phi=1}
Suppose that $D$ a $(\varphi,\Gamma_K)$-module such that $D_{\crys}(D)^{\varphi = 1} \neq (0)$. Choose a non-zero $v \in D_{\crys}(D)^{\varphi = 1}$. By definition, we get an inclusion 
\begin{equation}\label{eqn:trivial-inclsion}
\mathcal R_{K,L} \cong \mathcal R_{K,L}\cdot v \hookrightarrow D[1/t]
\end{equation}
that is equivariant for the action of $\varphi$ and $\Gamma_K$. After clearing powers of the $t_\sigma$, the image of \eqref{eqn:trivial-inclsion} generates a rank one $(\varphi,\Gamma_K)$-submodule of $D$. Specifically, there exists (unique) integers $s = (s_\sigma)_{\sigma \in \Sigma_K}$ such that \eqref{eqn:trivial-inclsion} induces an inclusion $\mathcal R_{K,L}(z^s) \hookrightarrow D$ of $(\varphi,\Gamma_K)$-modules whose cokernel is also a $(\varphi,\Gamma_K)$-module (i.e.\ free over $\mathcal R_{K,L}$).
\end{exam}

Cohomology classes in $H^1(D)$ may be interpreted as extensions $0 \rightarrow D \rightarrow E \rightarrow \mathcal R_{K,L} \rightarrow 0$ in the category of $(\varphi,\Gamma_K)$-modules. For the Selmer groups, $H^1_f(D)$ parametrizes extensions $E$ such that
$$
0 \rightarrow D_{\crys}(D) \rightarrow D_{\crys}(E) \rightarrow D_{\crys}(\mathcal R_{K,L}) \rightarrow 0
$$
is exact, rather than only left exact. If $D$ is crystalline, $H^1_f(D)$ parametrizes crystalline extensions. The analogous definition and statement hold for $H^1_g$ after replacing ``crystalline'' with ``de Rham'' and $D_{\crys}$ with $D_{\dR}$. Berger proved that every de Rham $(\varphi,\Gamma)$-module is potentially semi-stable and that a potentially semi-stable extension of semi-stable $(\varphi,\Gamma_K)$-modules is also semi-stable (\cite{Berger-Representationp-adique}, but compare with \cite[Theorem 3.1, Corollary 3.3]{Pottharst-FiniteSlope}). Thus, if $D$ is semi-stable, then  $H^1_g(D)$ parametrizes semi-stable extensions $E$. In particular, $H^1_g(D) = H^1_{\mathrm{st}}(D)$ in the notations of \cite[Section 1.4.1]{Benois-GreenbergL}. We recall the following proposition, limited to the setting in which we will need to reference it.

\begin{prop}[{\cite[Proposition 1.4.4, Corollary 1.4.5, Corollary 1.4.6]{Benois-GreenbergL}}]\label{prop:app-benois-dimensions}
\leavevmode
\begin{enumerate}
\item Suppose that $D$ is a semi-stable $(\varphi,\Gamma_K)$-module. Then,
$$
\dim_L H^1_f(D) = \dim_L H^0(D) + h^-(D),
$$ 
where $h^-(D)$ is the number (with multiplicity) of negative Hodge--Tate weights of $D$, and
$$
\dim_L H^1_g(D) = \dim_L H^1_f(D) + \dim_L D_{\crys}(D^\vee(\chi_{\cycl})^{\varphi = 1}.
$$
\item Suppose that $0 \rightarrow E_1 \rightarrow E \rightarrow E_2 \rightarrow 0$ is an exact sequence of semi-stable $(\varphi,\Gamma_K)$-modules and the connecting map $H^0(E_2) \rightarrow H^1(E_1)$ is zero. Then, the natural sequence
$$
0 \rightarrow H^1_f(E_1) \rightarrow H^1_f(E) \rightarrow H^1_f(E_2) \rightarrow 0
$$
is exact.
\end{enumerate}
\end{prop}

\subsection{Triangulated, semi-stable, but non-crystalline, $(\varphi,\Gamma_K)$-modules of rank two}\label{subsec:app-2d-specific} 

We now begin to limit the discussion to certain rank two $(\varphi,\Gamma_K)$-modules. Suppose that $D$ is a $(\varphi,\Gamma_K)$-module of rank two. A triangulation of $D$ is an extension
\begin{equation}\label{eqn:appendix-triangulated}
0 \rightarrow \mathcal R_{K,L}(\delta_1) \rightarrow D \rightarrow \mathcal R_{K,L}(\delta_2) \rightarrow 0
\end{equation}
where the $\delta_i$ are characters. For the rest of this appendix we fix a $(\varphi,\Gamma_K)$-module $D$ and a triangulation \eqref{eqn:appendix-triangulated}. We also assume:
\begin{equation}\label{eqn:st}
\tag{st}
\text{$D$ is semi-stable but non-crystalline.}
\end{equation}
and
\begin{equation}\label{eqn:HT-reg}
\tag{HT-reg}
\text{for $\sigma \in \Sigma_K$, the $\sigma$-Hodge--Tate weights of $D$ are distinct}.
\end{equation}
Following \eqref{eqn:HT-reg} we write $h_{1,\sigma} < h_{2,\sigma}$ for the $\sigma$-Hodge--Tate weights of $D$ in the direction $\sigma \in \Sigma_K$. 

\begin{exam}
Consider the global setting of Section \ref{subsec:smoothness}, where a classical point $x = (\pi,\alpha)$ on $\mathscr E(\mathfrak n)_{\rmmid}$ has been selected. For a $p$-adic place $v$ of the number field $F$, let $K = F_v$ and $D = D_{\rig}(\rho_{x,v})$.  The Hodge--Tate weights $h_{i,\tau}$ are equal to $h_{1,\tau} = \frac{w -\kappa_\tau}{2}$ and $h_{2,\tau} = \frac{w+\kappa_\tau}{2} + 1$, as in Theorem \ref{thm:classical-galois-representations}. Thus $D$ satisfies \eqref{eqn:HT-reg}.  The choice of refinement $\alpha_v$ at $v \mid p$ defines a character $\delta_1'$ by
\begin{align*}
\delta_1' &= z^{-h_1}\cdot \nr(\alpha_v),
\end{align*}
where $\nr(\alpha_v)$ is trivial on $\mathcal O_K^\times$ and maps $\varpi_K$ to $\alpha_v$. Unwinding definitions (see Proposition \ref{prop:family-galois-properties}, for instance) we have $D_{\crys}(D\otimes\mathcal R_{K,L}(\delta_1'^{-1}))^{\varphi = 1}$ is non-zero. Thus, by Example \ref{exam:phi=1}, $D$ has a triangulation \eqref{eqn:appendix-triangulated} with characters $(\delta_1,\delta_2)$ where $\delta_1 = z^s \delta_1'$ (and $\delta_2 = \det(D)\cdot \delta_1^{-1}$). In a triangulation \ref{eqn:appendix-triangulated}, the Hodge--Tate weights of the $\delta_i$ together give the Hodge--Tate weights of $D$. Thus, the integers $s = (s_\sigma)_{\sigma \in \Sigma_K}$ are necessarily either $s_\sigma = 0$ or $s_\sigma = h_{1,\sigma} - h_{2,\sigma}$. The condition \eqref{eqn:st} arises only if $x$ is associated with an unramified special representations (see part (2) of Theorem \ref{thm:classical-galois-representations}).
\end{exam}
 
Since $D$ is semi-stable, each $\delta_i$ in \eqref{eqn:appendix-triangulated} is crystalline, and $\varphi^f$ acts on $D_{\crys}(\mathcal R_{K,L}(\delta_1\LT_{\varpi_K}(z^{h_1}))$ by $\Phi := \delta_1(\varpi_K)\prod_{\sigma \in \Sigma_K} \sigma(\varpi_K)^{\HT_{\sigma}(\delta_1) - h_{1,\sigma}}$. Since $D_{\crys}(-)$ is left exact, we have
\begin{equation}\label{eqn:non-zero}
D_{\crys}(D\left( \LT_{\varpi_K}(z^{h_1})\right))^{\varphi^f = \Phi} \neq 0.
\end{equation}
The rest of this section collects, in three lemmas, some basic facts about $D$ that follow from the hypothesis \eqref{eqn:st}. In order, the detail properties of $D_{\crys}$, the adjoint $(\varphi,\Gamma_K)$-module, and, results in Galois cohomology.

\begin{lem}\label{lem:appendix-iso-lemma}
\leavevmode
\begin{enumerate}
\item The natural injective map $D_{\crys}(\mathcal R_{K,L}(\delta_1)) \rightarrow D_{\crys}(D)$ is an isomorphism. 
\item There exists a collection of integers $s = (s_\sigma)_{\sigma \in \Sigma_K}$ such that $\delta_2 \delta_1^{-1} = \chi_{\cycl}^{-1} z^s$. In particular, if $\phi_i$ is the eigenvalue of $\varphi^f$ acting on $D_{\crys}(\mathcal R_{K,L}(\delta_i))$, then $\phi_2\phi_1^{-1} = p^f \neq 1$.
\end{enumerate}
\end{lem}
\begin{proof}
To prove (1), note that $D_{\crys}(D) = D_{\sstable}(D)^{N=0}$ is a $K_0\otimes_{\mathbf Q_p} L$-direct summand of $D_{\sstable}(D)$. Since $D_{\sstable}(D)$ is free of rank two over $K_0 \otimes_{\mathbf Q_p} L$, but $D$ is not crystalline, $D_{\crys}(D)$ is free of rank one over $K_0\otimes_{\mathbf Q_p} L$. In particular, the domain and range of the injective map $D_{\crys}(\mathcal R_{K,L}(\delta_1)) \rightarrow D_{\crys}(D)$ both have the same $L$-dimension, forcing the map to be an isomorphism.

To prove (2), note that \eqref{eqn:appendix-triangulated} and \eqref{eqn:st} implies $D$ defines a non-zero class in $H^1_g(\mathcal R_{K,L}(\delta_1\delta_2^{-1}))$ that does not lie in $H^1_f(\mathcal R_{K,L}(\delta_1\delta_2^{-1}))$. By Proposition \ref{prop:app-benois-dimensions}, applied to $\mathcal R_{K,L}(\delta_1\delta_2^{-1})$, we have that $\varphi$ acts trivially on $D_{\crys}(\mathcal R_{K,L}(\delta_2\delta_1^{-1}\chi_{\cycl}))$. The result then follows from Example \ref{exam:phi=1}.
\end{proof}

Now we write $\ad(D) = D \otimes D^\vee \cong \Hom(D,D)$ for the adjoint $(\varphi,\Gamma_K)$-module associated with $D$.

\begin{lem}\label{lem:appendix-adjoint-lemma}
\leavevmode
\begin{enumerate}
\item The natural surjective map $\ad(D) \rightarrow \mathcal R_{K,L}(\delta_2\delta_1^{-1})$ is not split.
\item We have $H^1_g(\ad(D)) = H^1_f(\ad(D))$.
\end{enumerate}
\end{lem}
\begin{proof}
Label the maps in \eqref{eqn:appendix-triangulated} as
\begin{equation*}
0 \rightarrow \mathcal R_{K,L}(\delta_1) \overset{\iota}{\rightarrow} D \overset{\pi}{\rightarrow}  \mathcal R_{K,L}(\delta_2) \rightarrow 0.
\end{equation*}
The natural surjective map in (1) is $\pi \otimes \iota^{\ast}$, which factors as
\begin{equation*}
\xymatrix{
\ad(D) \ar@{-->}[dr]_-{\pi\otimes \iota^{\ast}} \ar[r]^-{1 \otimes \iota^{\ast}} & D\otimes \mathcal R_{K,L}(\delta_1^{-1}) \ar[d]^-{\pi \otimes 1}\\
 & \mathcal R_{K,L}(\delta_2\delta_1^{-1}).
 }
\end{equation*}
If $\pi\otimes \iota^{\ast}$ were to split, then $\pi \otimes 1$ would also. Twisting by $\delta_1$, $\pi$ would have to split. But then $D$ would be a a sum of crystalline characters and thus crystalline itself, contradicting \eqref{eqn:st}. This proves (1).

We now prove (2). Note that $\ad(D)$ is semi-stable and $\ad(D)^\vee \cong \ad(D)$. Thus, Proposition \ref{prop:app-benois-dimensions}(1), it suffices to show that $D_{\crys}(\ad(D)\left(\chi_{\cycl}\right))^{\varphi = 1} = (0)$. Suppose not, and we will contradict (1). On the one hand, Example \ref{exam:phi=1} implies there is an inclusion
\begin{equation}\label{eqn:inclusion-sprime}
\mathcal R_{K,L}(\chi_{\cycl}^{-1} z^{r}) \hookrightarrow \ad(D),
\end{equation}
for some collection of integers $r = (r_\sigma)_{\sigma \in \Sigma_K}$. On the other, $\ad(D)$ sits in an exact sequence
$$
0 \rightarrow P \rightarrow \ad(D) \rightarrow \mathcal R_{K,L}(\delta_2\delta_1^{-1}) \rightarrow 0
$$
where $P$ itself has a composition series of $(\varphi,\Gamma_K)$-modules with rank one successive quotients $\mathcal R_{K,L}(\delta_1\delta_2^{-1})$ and $\mathcal R_{K,L}$ (with multiplicity two). Note that $\Hom(\mathcal R_{K,L}(\chi_{\cycl}^{-1} z^{r}), \mathcal R_{K,L}) = (0)$ regardless of $r$. Note also, by Lemma \ref{lem:appendix-iso-lemma}(2), we have $\delta_1\delta_2^{-1} = \chi_{\cycl}z^{-s}$, for some integers $s = (s_\sigma)_{\sigma \in \Sigma_K}$, and so
$$
\Hom(\mathcal R_{K,L}(\chi_{\cycl}^{-1} z^{r}), \mathcal R_{K,L}(\delta_1\delta_2^{-1})) = \Hom(\mathcal R_{K,L}(\chi_{\cycl}^{-2} z^{r+s}), \mathcal R_{K,L}) = (0),
$$
regradless of $s$ and $r$. So, our notes show \eqref{eqn:inclusion-sprime} factors through $\ad(D)/P$ and defines an isomorphism $\mathcal R_{K,L}(\chi_{\cycl}^{-1} z^{r}) \cong \mathcal R_{K,L}(\delta_2\delta_1^{-1})$ (since the cokernel of \eqref{eqn:inclusion-sprime} must be torsion free). But then \eqref{eqn:inclusion-sprime} provides a splitting of $\ad(D) \twoheadrightarrow \mathcal R_{K,L}(\delta_2\delta_1^{-1})$, which contradicts part (1).
\end{proof}
Finally we have a lemma on Galois cohomology.

\begin{lem}\label{lem:appendix-cohomology}
\leavevmode
\begin{enumerate}
\item We have $H^2(D\left(\delta_2^{-1}\right)) = (0)$.
\item The natural map $H^0(\ad(D)) \rightarrow H^0(D\left((\delta_1^{-1}\right))$ is an isomorphism.
\item The natural diagram
\begin{equation}\label{eqn:appendix-diagram}
\xymatrix{
0 \ar[r] & H^1_{f}(D\left(\delta_2^{-1}\right)) \ar[r] \ar[d]& \ar[r]\ar[d] H^1_{f}(\ad D) \ar[r] & H^1_{f}(D\left(\delta_1^{-1}\right)) \ar[r] \ar[d]& 0\\
0 \ar[r] & H^1(D\left(\delta_2^{-1}\right)) \ar[r] & \ar[r] H^1(\ad D) \ar[r] & H^1(D\left(\delta_1^{-1}\right)) \ar[r] & 0
}
\end{equation}
has exact rows.
\end{enumerate}
\end{lem}
\begin{proof}
First we prove (1). By local Tate duality (\cite[Theorem 1.2]{Liu-CohomologyDuality}), it is enough to show that $\Hom(D,\mathcal R_{K,L}(\delta_2\chi_{\cycl})) = (0)$. By way of contradiction, suppose that a non-zero $(\varphi,\Gamma_K)$-module morphism $f : D\rightarrow \mathcal R_{K,L}(\delta_2\chi_{\cycl})$ exists. Note that $\Hom(\mathcal R_{K,L}(\delta_2),\mathcal R_{K,L}(\delta_2\chi_{\cycl})) = (0)$ and thus $f$ induces a non-zero map $\mathcal R_{K,L}(\delta_1) \rightarrow \mathcal R_{K,L}(\delta_{2}\chi_{\cycl})$. Applying $D_{\crys}(-)$ and using Lemma \ref{lem:app-rank-one} and part (1) of Lemma \ref{lem:appendix-iso-lemma} we have a diagram
\begin{equation*}
\xymatrix{
D_{\crys}(\mathcal R_{K,L}(\delta_1)) \ar[r]^-{f} \ar[d] & D_{\crys}(\mathcal R_{K,L}(\delta_2\chi_{\cycl})) \\
D_{\crys}(D) \ar[ur]_-{f}
}
\end{equation*}
where every arrow is an isomorphism. This is a contradiction, though. Indeed, $\ker(f)$ is a rank one $(\varphi,\Gamma_K)$-submodule which is necessarily crystalline, since $D$ is semi-stable and a semi-stable character is also crystalline. Thus $D_{\crys}(\ker(f))$ is non-zero and certainly lies in the kernel of the map $f$ induces on the level of $D_{\crys}$. Thus no such $f$ exists.

Now we prove (2). We first note that $H^0(D\left(\delta_2^{-1}\right)) = (0)$. Indeed, a non-zero map $\mathcal R_{K,L}(\delta_2) \rightarrow D$ would induce an inclusion $D_{\crys}(\mathcal R_{K,L}(\delta_2)) \subseteq D_{\crys}(D)$, which contradicts parts (1) and (2) of Lemma \ref{lem:appendix-iso-lemma}. Thus we have  $H^0(\ad(D)) \subseteq H^0(D\left(\delta_1^{-1}\right))$. However, the same reasoning also explains that $H^0(D\left(\delta_1^{-1}\right))$ is one-dimensional, and since $H^0(\ad(D))$ is at least one-dimensional, claim (2) is settled.

The exactness of the bottom row of \eqref{eqn:appendix-diagram} is a consequence of parts (1) and (2) of this lemma. For the top row, the hypothesis of Proposition \ref{prop:app-benois-dimensions}(2), for the sequence
$$
0 \rightarrow D\left(\delta_2^{-1}\right) \rightarrow \ad(D) \rightarrow D\left(\delta_1^{-1}\right) \rightarrow 0,
$$
applies by part (2) of this lemma. Part (3) follows immediately from that proposition.
\end{proof}

\subsection{Weakly-refined deformations}\label{subsec:app-refined}

The goal of this final section is to prove Corollary \ref{corollary:semi-stable-bound}. We begin by recalling the setup. First, we assumed $D$ is triangulated as in \eqref{eqn:appendix-triangulated} and satisfies \eqref{eqn:HT-reg} and \eqref{eqn:st}. Next, recall that $\mathfrak{AR}_L$ denotes the category of local Artinian $L$-algebras with residue field $L$. Given a $(\varphi,\Gamma_K)$-module $D$ we write $\mathfrak X_D : \mathfrak{AR}_L \rightarrow \mathrm{Set}$ for its deformation functor (see \cite[Section 2.2]{Bergdall-Smoothness}). As above, we write $\ad(D) = D\otimes D^{\vee}$ and 
$$
\mathfrak t_D = \mathfrak X_D(L[\varepsilon]) \cong \Ext^1_{(\varphi,\Gamma_K)}(D,D) \cong H^1(\ad(D))
$$ 
for the Zariski tangent space to $\mathfrak X_{D}$. 

Let $\widetilde D \in \mathfrak t_D$. Since the Hodge--Tate weights of $D$ are distinct, at each embedding $\sigma \in \Sigma_K$, by \eqref{eqn:HT-reg}, we may write the Hodge--Sen--Tate weights of $\widetilde D$ according to $\eta_{i,\sigma} = h_{i,\sigma} + \varepsilon d\eta_{i,\sigma} \in L[\varepsilon]$ (see \cite[Section 2.4]{Bergdall-Smoothness}, for instance). Write $\log: \mathcal O_L^\times \rightarrow L$ for the logarithm defined on $1+p\mathcal O_L$ as usual, extended by zero on torsion elements, and homomorphically otherwise. If $\eta = (\eta_{\sigma})_{\sigma \in \Sigma_K)} \in L[\varepsilon]^{\Sigma_K}$ is given by $\eta_\sigma = h_\sigma + \varepsilon d\eta_\sigma$ with $h_\sigma \in \mathbf Z$, then write $z^{\eta}$ for the character $\mathcal O_L^\times \rightarrow L^\times$ given by
\begin{equation*}
 z \overset{z^{\eta}}{\longmapsto} \prod_{\tau \in \Sigma_K} \tau(z)^{h_\tau}(1 + \varepsilon d\eta_\tau \log(\tau(z))).
\end{equation*}
The Hodge--Sen--Tate weights of $z^{\eta}$ is $-\eta$ (meaning $\sigma$-by-$\sigma$). Recall that $\Phi_{\varpi_K} \in L^\times$ is the (only) eigenvalue of $\varphi^f$ acting on $D_{\crys}(D\left(\LT_{\varpi_K}(z^{h_1})\right))$.

\begin{defn}\label{defn:weakly-refined}
Let $\widetilde D \in \mathfrak t_D$.
\begin{enumerate}
\item $\widetilde D$ is called weakly-refined if $D_{\crys}^+(\widetilde D\left(\LT_{\varpi_K}(z^{\widetilde \eta_1})\right))^{\varphi^f = \tilde \Phi}$ is free of rank one over $K_0 \otimes_{\mathbf Q_p} L[\varepsilon]$ for some $\widetilde \Phi \in L[\varepsilon]$ with $\widetilde \Phi \equiv \Phi_{\varpi_K} \bmod \varepsilon$.
\item $\widetilde D$ is called Hodge--Tate if $\widetilde \eta_i = h_i$ for each $i$.
\end{enumerate}
\end{defn}
Since $\Phi_{\varpi_K}$ is a simple eigenvalue for $\varphi^{f}$  acting on $D_{\crys}(D\left(\LT_{\varpi_K}(z^{h_1})\right))$, the subset $\mathfrak t_D^{\Ref} \subset \mathfrak t_D$ of weakly refined deformations is an $L$-linear subspace, equal to the tangent space to the deformation functor $\mathfrak X_{D}^{\phi}$ as in \cite[Section 3.2]{Bergdall-Smoothness}.\footnote{An examination of the proof of Proposition 3.1 in {\em loc.\ cit.} shows that $D$ is not required to be crystalline, despite the context in which that result is proven, which means the reference is still valid without the crystalline hypothesis. Further, $\phi$ here is $\delta_1(\varpi_K)\prod_{\sigma \in \Sigma_K} \sigma(\varpi_K)^{\HT_{\sigma}(\delta_1)}$, i.e.\ the eigenvalue of $\varphi^f$ acting on $D$, not $D\left(\LT_{\varpi_K}(z^{h_1})\right)$.} Also write $\mathfrak t_D^{\HT}$ for the subspace of Hodge--Tate deformations of $D$ and then the intersection of $\mathfrak t_D^{\Ref}$ and $\mathfrak t_D^{\HT}$ is written $\mathfrak t_D^{\Ref,\HT}$. Note that $\widetilde D$ is a Hodge--Tate deformation if and only the underlying rank four $(\varphi,\Gamma_K)$-module is Hodge--Tate in the usual sense (compare with the proof of Lemma \ref{eqn:key-sequence} below).

The Selmer group $H^1_f(\ad(D))$, by definition, parametrizes those deformations $\widetilde D \in \mathfrak t_D$ such that the extension
\begin{equation*}
0 \rightarrow D[1/t] \rightarrow \widetilde D[1/t] \rightarrow D[1/t] \rightarrow 0
\end{equation*}
is split as $\Gamma_K$-modules. In particular, if $\widetilde D \in H^1_f(\ad D)$, then the {\em a priori} left-exact sequence
\begin{equation}\label{eqn:condition}
0 \rightarrow D_{\crys}(D) \rightarrow D_{\crys}(\widetilde D) \rightarrow D_{\crys}(D) \rightarrow 0
\end{equation}
is exact.\footnote{Warning:\ the exactness follows from $\widetilde D$ lying in $H^1_f(\ad(D))$ in general, but the converse does not hold unless $D$ is crystalline.} On the other hand, since $D$ is semi-stable by \eqref{eqn:st}, the Selmer group $H^1_g(\ad(D))$  parametrizes semi-stable deformations.

\begin{lem}\label{lem:containment}
We have $H^1_g(\ad(D)) = H^1_f(\ad(D)) \subset \mathfrak t_{D}^{\Ref,\HT}$.
\end{lem}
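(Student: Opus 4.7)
The plan is to establish the two assertions of the lemma separately. Since semistable implies de Rham, and being de Rham is precisely the splitting condition defining $H^1_f$ in the paper's convention, the inclusion $H^1_g(\ad D) \subseteq H^1_f(\ad D)$ is automatic. It remains to prove the containment in $\mathfrak{t}_D^{\Ref,\HT}$ and the reverse inclusion $H^1_f(\ad D) \subseteq H^1_g(\ad D)$.

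First I would show $H^1_g(\ad D) \subset \mathfrak{t}_D^{\Ref,\HT}$. Fix a semistable deformation $\widetilde D$ of $D$ to $L[\varepsilon]$. Since $\widetilde D$ is Hodge--Tate and deforms the Hodge--Tate module $D$, its generalized Hodge--Tate--Sen weights are honest integers lifting the integers $h_{i,\tau}$, hence equal to them, giving $\widetilde D \in \mathfrak{t}_D^{\HT}$. Setting $\widetilde D' = \widetilde D \otimes \LT_{\varpi_K}(z^{h_1})^{-1}$ (still semistable by Lemma~\ref{lem:annoying-issue}), the module $D_{\sstable}(\widetilde D')$ is free of rank two over $K_0 \otimes_{\mathbf{Q}_p} L[\varepsilon]$ with two $\varphi^f$-eigenvalues lifting $\Phi_{\varpi_K}$ and $p^f \Phi_{\varpi_K}$ by Lemma~\ref{lem:appendix-iso-lemma}(b). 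As these are distinct modulo $\varepsilon$, Hensel's lemma produces a free rank-one $\varphi^f$-eigenspace $V_{\widetilde\Phi} \subset D_{\sstable}(\widetilde D')$ on which $\varphi^f$ acts by some $\widetilde\Phi$ lifting $\Phi_{\varpi_K}$. The commutation $N\varphi^f = p^f \varphi^f N$ then forces any $v \in V_{\widetilde\Phi}$ to satisfy $Nv \in V_{\widetilde\Phi/p^f}$; but $\widetilde\Phi/p^f$ reduces to $\Phi_{\varpi_K}/p^f$ modulo $\varepsilon$, which is neither $\Phi_{\varpi_K}$ nor $p^f\Phi_{\varpi_K}$, so $V_{\widetilde\Phi/p^f}=0$ and therefore $Nv=0$. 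Thus $V_{\widetilde\Phi} \subset \ker N = D_{\crys}(\widetilde D')$, verifying the refinement condition of Definition~\ref{defn:weakly-refined}.

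Next, to show $H^1_f(\ad D) \subseteq H^1_g(\ad D)$, I would compare dimensions via the Bloch--Kato style exact sequences for $(\varphi,\Gamma_K)$-modules from \cite[Section 1.4.1]{Benois-GreenbergL}. Both Selmer groups fit into sequences involving $D_{\crys}(\ad D)$ or $D_{\sstable}(\ad D)$, the tangent space $D_{\dR}(\ad D)/D_{\dR}^+(\ad D)$, and the operators $1-\varphi^f$ and $N$. The explicit description $D_{\sstable}(\ad D) = \End_{K_0 \otimes_{\mathbf{Q}_p} L}(D_{\sstable}(D))$ with monodromy $N_{\ad}(T) = N_D T - T N_D$ shows that $D_{\crys}(\ad D) = \ker N_{\ad}$ has rank two over $K_0 \otimes_{\mathbf{Q}_p} L$ (spanned by the identity and $E_{12}$), while $D_{\sstable}(\ad D)/D_{\crys}(\ad D)$ has rank two with $\varphi^f$-eigenvalues $1$ and $p^f$. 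Plugging this data into the two Bloch--Kato sequences, and using Tate local duality to handle the terms involving $(\ad D)^*(1)$, the extra contributions from $D_{\sstable}$ beyond $D_{\crys}$ cancel, yielding $\dim_L H^1_f(\ad D) = \dim_L H^1_g(\ad D)$ and hence equality.

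The main obstacle is the dimension comparison in the second step. While the cancellation in the Bloch--Kato sequences is mechanical once the $\varphi^f$ and $N$ actions on $D_{\sstable}(\ad D)$ are pinned down, the bookkeeping is delicate: one must carefully track the contribution from the $\varphi^f = 1$ eigenvector in $D_{\sstable}(\ad D)/D_{\crys}(\ad D)$ (which is the source of any potential discrepancy between $H^1_f$ and $H^1_g$) and explain why it is matched by a corresponding term arising from the $N$-direction in the $H^1_g$ sequence. This is where the hypothesis that $D$ is semistable over $K$ itself (rather than only potentially semistable) plays its essential role, in exact parallel with how the crystalline hypothesis is used in \cite[Section 3]{Bergdall-Smoothness}.
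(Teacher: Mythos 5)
Your proposal rests on a misreading of the paper's conventions that inverts which inclusion is automatic. The paper defines $H^1_f(\ad D)$ as the set of deformations whose extension splits after inverting $t$ (a crystalline-type condition), while $H^1_g$ parameterizes the de Rham, equivalently semistable, deformations; this is the standard Bloch--Kato/Benois convention, and the footnote after equation \eqref{eqn:condition} confirms that the splitting defining $H^1_f$ is \emph{strictly stronger} than the exactness of the $D_{\crys}$-sequence when $D$ is not crystalline. Thus the automatic inclusion is $H^1_f \subseteq H^1_g$, not $H^1_g \subseteq H^1_f$ as you claim; your assertion that ``being de Rham is precisely the splitting condition defining $H^1_f$'' is simply not what the paper wrote, and ``semistable implies de Rham'' buys you nothing in the direction you need.

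With the conventions straightened out, the actual content becomes $H^1_g(\ad D)\subseteq H^1_f(\ad D)$, and this is exactly what the paper outsources to \cite[Corollary 1.4.5]{Benois-GreenbergL}, with the hypotheses checked via Lemma \ref{lem:appendix-iso-lemma}. Your plan to re-derive the equality by a direct dimension count through the Bloch--Kato exact sequences is, in principle, the same computation that underlies Benois's corollary, so it isn't conceptually wrong; but you never carry it out, and you flag it yourself as ``delicate.'' As written, the equality $H^1_f(\ad D) = H^1_g(\ad D)$ — which is the crux of the lemma — is not established by your argument.

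Your argument for the containment $H^1_g(\ad D) \subset \mathfrak t_D^{\Ref,\HT}$ is reasonable and in fact a bit more conceptual than the paper's: you exploit the $N\varphi^f = p^f\varphi^f N$ relation and Hensel's lemma on $D_{\sstable}$, whereas the paper works directly with exactness of the $D_{\crys}$-sequence and a length comparison \`a la \cite[Lemma 3.3]{Bergdall-Smoothness}. (Minor: the definition twists by $\LT_{\varpi_K}(z^{\widetilde\eta_1})$, not its inverse, so the sign in your $\widetilde D'$ is off.) But notice the paper deliberately proves this containment for $H^1_f$, \emph{after} knowing $H^1_f = H^1_g$, so that the Hodge--Tate part comes for free; you need the reverse strategy, which is fine, but it makes the unproven equality step even more load-bearing. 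In short: the refinement/Hodge--Tate part of your argument is salvageable, but the $H^1_f = H^1_g$ step is a genuine gap, and the ``automatic inclusion'' you lean on points the wrong way.
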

\begin{proof}
The first equality is part (2) of Lemma \ref{lem:appendix-adjoint-lemma}. It is clear that $H^1_g(\ad(D)) \subset \mathfrak t_D^{\HT}$ because a semi-stable $(\varphi,\Gamma_K)$-module is also a Hodge--Tate one. Thus it suffices to prove $H^1_f(\ad(D)) \subset \mathfrak t_D^{\Ref}$.

Consider $\widetilde D \in H^1_f(\ad(D))$. We will show that $M=D_{\crys}(\widetilde D\left(\LT_{\varpi_K}(z^{\widetilde \eta_1})\right))$ is free of rank one over $K_0\otimes_{\mathbf Q_p} L[\varepsilon]$, at which point the eigenvalue through which $\varphi^f$ acts must deform $\Phi_{\varpi_K}$ by part (1) of Lemma \ref{lem:appendix-iso-lemma}). Since $H^1_f(\ad(D)) = H^1_g(\ad(D)) \subseteq \mathfrak t_D^{\HT}$, as already noted,  we have $\widetilde \eta_1 = h_1$ is constant, so in fact $M = D_{\crys}(\widetilde D\left(\LT_{\varpi_k}(z^{h_1})\right))$. Since $\widetilde D$ lies in $H^1_f(\ad(D))$, the sequence
\begin{equation*}
0 \rightarrow D_{\crys}(D\left(\LT_{\varpi_k}(z^{h_1})\right)) \rightarrow M \rightarrow D_{\crys}(D\left(\LT_{\varpi_k}(z^{h_1})\right)) \rightarrow 0
\end{equation*}
is exact, by \eqref{eqn:condition} and Lemma \ref{lem:annoying-issue}. Thus $M$ is a $K_0\otimes_{\mathbf Q_p} L[\varepsilon]$-module, and $M/\varepsilon M$ is free of rank one over $K_0\otimes_{\mathbf Q_p} L$. If $m$ is the lift to $M$ of any basis vector of $M/\varepsilon M$, then the submodule $(K_0\otimes_{\mathbf Q_p} L[\varepsilon])\cdot m \subset M$ may be checked to be free of rank one over $K_0\otimes_{\mathbf Q_p} L[\varepsilon]$ (compare with the proof of ``(d) implies (b)'' in \cite[Lemma 3.3]{Bergdall-Smoothness}, which references \cite[Footnote 18, p.\ 78]{BellaicheChenevier-Book}). Since $M$ and $(K_0\otimes_{\mathbf Q_p} L[\varepsilon])\cdot m$ have the same length over $K_0\otimes_{\mathbf Q_p} L$, they must be equal. This completes the proof.
\end{proof}

By Lemma \ref{lem:containment} we now have an exact sequence of $L$-vector spaces
\begin{equation}\label{eqn:key-sequence}
0 \rightarrow \mathfrak t_{D}^{\Ref,\HT}/H^1_f(\ad(D)) \rightarrow \mathfrak t_D^{\Ref}/H^1_f(\ad(D)) \overset{d\widetilde\eta}{\longrightarrow} \bigoplus_{\sigma \in \Sigma_K} L^{\oplus 2}.
\end{equation}
Let $S_2$ be the permutations on the set $\{1,2\}$. We define the critical type of the triangulation \eqref{eqn:appendix-triangulated} as the collection of permutations $c = (c_\sigma)_{\sigma \in \Sigma_K} \in S_2^{\Sigma_K}$ such that $\HT_\sigma(\delta_i) = h_{c_\sigma(i),\sigma}$. (See \cite[Definition 2.2]{Bergdall-Smoothness}.) The next lemma controls the image of $d\widetilde \eta$ in the sequence \eqref{eqn:key-sequence}.

\begin{lem}\label{lem:ramification-lemma}
If $\widetilde D \in \mathfrak t_D^{\Ref}$ then $d\widetilde \eta_{i,\sigma} = d\widetilde \eta_{c_\sigma(i),\sigma}$ for each $i=1,2$ and all $\sigma \in \Sigma_K$. In particular,
\begin{equation*}
\dim_L \mathfrak t_D^{\Ref}/H^1_f(\ad D) \leq \dim_L \mathfrak t_D^{\Ref,\HT}/H^1_f(\ad D) + 2(K:\mathbf Q_p) - \# \{ \sigma \in \Sigma_K \mid c_\sigma \neq 1\}.
\end{equation*}
\end{lem}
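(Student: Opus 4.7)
The plan is to follow the crystalline argument of \cite[Section 3]{Bergdall-Smoothness}, using Lemma \ref{lem:appendix-iso-lemma} to recover the key feature that the ``eigenvalue'' $\Phi_{\varpi_K}$ of $\varphi^f$ singles out a distinguished rank-one crystalline sub-line of the twisted module $D' := D \otimes \LT_{\varpi_K}(z^{h_1})$, even though $D$ itself is only semi-stable.

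First I would interpret the refinedness hypothesis as producing a crystalline rank-one sub-$(\varphi,\Gamma_K)$-module $E \subset \widetilde D' := \widetilde D \otimes \LT_{\varpi_K}(z^{\widetilde \eta_1})$ over $\mathcal R_{K,L[\varepsilon]}$. Namely, the free rank-one $(K_0 \otimes_{\mathbf Q_p} L[\varepsilon])$-submodule $M := D_{\crys}(\widetilde D')^{\varphi^f = \widetilde \Phi}$ of $D_{\crys}(\widetilde D')$, equipped with the Hodge filtration induced from $\widetilde D'$, is a weakly admissible filtered $\varphi$-submodule, so the family version of Berger's equivalence produces the desired $E$ with $D_{\crys}(E) = M$. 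By Lemma \ref{lem:appendix-iso-lemma}(b), the $\Phi_{\varpi_K}$-eigenspace of $\varphi^f$ on $D_{\crys}(D')$ is one-dimensional, and by Lemma \ref{lem:appendix-iso-lemma}(a) it equals $D_{\crys}(\mathcal R_{K,L}(\delta_1 \cdot \LT_{\varpi_K}(z^{h_1})))$; hence $E$ reduces mod $\varepsilon$ to the sub-line $\mathcal R_{K,L}(\delta_1 \cdot \LT_{\varpi_K}(z^{h_1})) \subset D'$, whose Hodge--Tate weights are $(h_{c_\tau(1),\tau} - h_{1,\tau})_\tau$.

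Second I would extract the weight constraints. The Hodge--Sen--Tate weights of $\widetilde D'$ in direction $\tau$ are $0$ and $\alpha_\tau := h_{2,\tau} - h_{1,\tau} + \varepsilon(d\widetilde\eta_{2,\tau} - d\widetilde\eta_{1,\tau})$, so by additivity along the exact sequence $0 \to E \to \widetilde D' \to \widetilde D'/E \to 0$ the (integer) Hodge--Tate weight $h_{E,\tau}$ of $E$ equals either $0$ or $\alpha_\tau$. Comparing with the mod-$\varepsilon$ reduction computed above, when $c_\tau = 1$ only the option $h_{E,\tau} = 0$ is compatible (the other reduces to $h_{2,\tau}-h_{1,\tau} > 0$) and no constraint on $d\widetilde\eta$ arises, whereas when $c_\tau \neq 1$ only $h_{E,\tau} = \alpha_\tau$ is compatible and the integrality of $\alpha_\tau$ forces $d\widetilde\eta_{1,\tau} = d\widetilde\eta_{2,\tau}$. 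This establishes the first claim, which for $i=1,2$ reads $d\widetilde\eta_{i,\tau} = d\widetilde\eta_{c_\tau(i),\tau}$.

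For the ``in particular,'' the first claim places the image of $d\widetilde\eta$ in \eqref{eqn:key-sequence} inside the subspace of $\bigoplus_\tau L^{\oplus 2}$ cut out by the $\#\{\tau : c_\tau \neq 1\}$ independent linear conditions $d\widetilde\eta_{1,\tau} = d\widetilde\eta_{2,\tau}$, so the image has dimension at most $2(K:\mathbf Q_p) - \#\{\tau : c_\tau \neq 1\}$, and exactness of \eqref{eqn:key-sequence} supplies the asserted bound. The main obstacle will be the first step, namely passing from the $\varphi^f$-eigenspace $M$ to an honest crystalline sub-$(\varphi,\Gamma_K)$-module $E \subset \widetilde D'$ in the $L[\varepsilon]$-family setting and checking that its Hodge--Tate weights are computed from the filtration on $M$ inherited from $\widetilde D'$; everything after that is a direct matching of Hodge--Sen--Tate weights.
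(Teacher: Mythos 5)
Your proposal is a genuinely different route from the paper's, and it has a real gap in its first step. The paper never attempts to build a crystalline rank-one sub-$(\varphi,\Gamma_K)$-module $E \subset \widetilde D'$ over $\mathcal R_{K,L[\varepsilon]}$. Instead it forms the rank-three module $\widetilde D_1' := \ker\bigl(\widetilde D' \twoheadrightarrow \mathcal R_{K,L}(\delta_2\,\LT_{\varpi_K}(z^{h_1}))\bigr)$ over $\mathcal R_{K,L}$, a pull-back of the deformation along $\mathcal R_{K,L}(\delta_1') \hookrightarrow D'$, and shows via the dimension count on $D_{\crys}$ (using Lemma~\ref{lem:appendix-iso-lemma}) that $\widetilde D_1' \in H^1_f(D'(\delta_1'^{-1}))$, hence is Hodge--Tate. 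Reading off the explicit $4\times 4$ Sen matrix of $\widetilde D'$ (equivalently its upper $3\times 3$ block for $\widetilde D_1'$) then forces $d\widetilde\eta_{c_\tau(1),\tau} = d\widetilde\eta_{1,\tau}$.

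The gap in your first step is the claim that $M := D_{\crys}(\widetilde D')^{\varphi^f = \widetilde\Phi}$, ``with the Hodge filtration induced from $\widetilde D'$,'' is a weakly admissible filtered $\varphi$-module over $L[\varepsilon]$. A generic $\widetilde D \in \mathfrak t_D^{\Ref}$ is not Hodge--Tate, let alone de Rham, so $D_{\dR}(\widetilde D')$ does not have full rank and there is no natural filtration on $D_{\crys}(\widetilde D')$ inducing one on $M$ in the sense needed. Even setting aside the filtration, invoking ``the family version of Berger's equivalence'' to produce a crystalline $E$ embedded in $\widetilde D'$ requires a deforming-triangulations input (\`a la Kisin/Liu/KPX) that is precisely what the refined-deformation formalism is designed to encode, and whose verification over the Artinian ring $L[\varepsilon]$ is not a short step. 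Your second half (the Sen-weight dichotomy $h_{E,\tau} \in \{0,\alpha_\tau\}$, the integrality of $h_{E,\tau}$ when $E$ is crystalline, and the mod-$\varepsilon$ comparison forcing $d\widetilde\eta_{1,\tau} = d\widetilde\eta_{2,\tau}$ when $c_\tau \neq 1$) is sound once $E$ exists with the asserted properties, but as written you have pushed all the difficulty into a step that you flag as ``the main obstacle'' without actually closing it. The paper's route through $\widetilde D_1'$ avoids this entirely: one never needs a sub-$(\varphi,\Gamma_K)$-module of $\widetilde D'$ over $\mathcal R_{K,L[\varepsilon]}$, only the surjectivity at the right end of the $D_{\crys}$ sequence \eqref{eqn:ses-in-proof}, which follows from a direct dimension count.
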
 
\begin{proof}
The main claim of the lemma implies, for $d\widetilde \eta$ as in \eqref{eqn:key-sequence}, that
$$
\dim_L \im(d\widetilde \eta) \leq \sum_{c_\sigma = 1} 2 + \sum_{c_\sigma \neq 1} 1 = 2(K:\mathbf Q_p) - \#\{\sigma \in \Sigma_K \mid c_\sigma \neq 1\}.
$$
Thus, the second claim follows from the first and \eqref{eqn:key-sequence}. The main claim of the lemma is is proven in  \cite[Theorem 7.1 and Lemma 7.2]{Bergdall-ParabolineVariation},  with some unnecessary hypotheses. We give a proof here for convenience. 

By replacing $D$ with $D' = D\left(\LT_{\varpi_K}(z^{h_1}\right))$ (and $\delta_i$ by $\delta_i \LT_{\varpi_K}(z^{h_1})$) we may assume that $D_{\crys}^+(D)^{\varphi^f = \widetilde \Phi}$ is free of rank one over $K_0\otimes_{\mathbf Q_p} L[\varepsilon]$. Write $D_1$ for the image of $\widetilde D \in H^1(\ad(D))$ under the natural map $H^1(\ad D) \rightarrow H^1(D\left(\delta_1^{-1}\right))$.

To prove the main claim, it is enough to show that $D_1$ belongs to $H^1_f(D\left({\delta_1}^{-1}\right))$. To see this, first note that $H^1_f(D\left(\delta_1^{-1}\right)) = H^1_g(D\left(\delta_1^{-1}\right))$ by the same logic as in part (2) of Lemma \ref{lem:appendix-adjoint-lemma}. Thus $D_1$ is assumed to be a semi-stable (since $D\left(\delta_1^{-1}\right)$ is) and, in particular, Hodge--Tate. Second, consider the matrix of Sen's operator on $\widetilde D$ (viewed as a rank four $(\varphi,\Gamma_K)$-module over $\mathcal R_{K,L}$) in the basis induced from \eqref{eqn:appendix-triangulated}. It is given by
\begin{equation*}
\begin{pmatrix}
\HT_\sigma(\delta_1) & & d\widetilde\eta_{c_\sigma(1),\sigma}-d\widetilde\eta_{1,\sigma}\\
 & \HT_\sigma(\delta_2)& & d\widetilde\eta_{c_\sigma(2),\sigma}-d\widetilde\eta_{1,\sigma}\\
  & & \HT_\sigma(\delta_1) & \\
  & & & \HT_\sigma(\delta_2) 
\end{pmatrix},
\end{equation*}
whereas the matrix of the Sen operator on $\widetilde D_1$ is the upper $3\times 3$-block
\begin{equation*}
\begin{pmatrix}
\HT_\sigma(\delta_1) & & d\widetilde\eta_{c_\sigma(1),\sigma}-d\widetilde\eta_{1,\sigma}\\
 & \HT_\sigma(\delta_2)&\\
  & & \HT_\sigma(\delta_1)& 
\end{pmatrix}.
\end{equation*}
Since $D_1$ is Hodge--Tate, Sen's operator is semi-simple and thus $d\widetilde\eta_{c_\sigma(1),\sigma} = d\widetilde\eta_{1,\sigma}$. This proves the main claim for $i = 1$. For $i = 2$, either it is trivial because $c_\sigma = 1$ or it is equivalent to the case of $i = 1$. Regardless, it remains to prove that $D_1 \in H^1_f(D \left(\delta_1^{-1})\right))$. 

Explicitly, $D_1$ is defined as
\begin{equation}\label{eqn:D1-definition}
D_1 = \ker\left(\widetilde D \twoheadrightarrow D \twoheadrightarrow \mathcal R_{K,L}(\delta_2)\right),
\end{equation}
and, since $D_{\crys}$ is left-exact, we also have an exact sequence
\begin{equation}\label{eqn:ses-in-proof}
0 \rightarrow D_{\crys}(D)^{\varphi^{f} = \Phi_{\varpi_K}} \rightarrow D_{\crys}(D_1)^{(\varphi^f = \Phi_{\varpi_K})} \rightarrow D_{\crys}(\mathcal R_{K,L}(\delta_1))^{\varphi^f = \Phi_{\varpi_K}}.
\end{equation}
Here $(-)^{(\ast)}$ means the generalized eigenspace for $(\ast)$. (Taking generalized eigenspaces is exact.) By \eqref{eqn:D1-definition} and part (2) of Lemma \ref{lem:appendix-iso-lemma}, we see that $D_{\crys}(D_1)^{(\varphi^f = \Phi_{\varpi_K})} = D_{\crys}(\widetilde D)^{(\varphi^f = \Phi_{\varpi_K})}$, which has $L$-dimension $2(K_0:\mathbf Q_p)$ because the right-hand side of free of rank one over $K_0\otimes_{\mathbf Q_p} L[\varepsilon]$. On the other hand, part (1) of Lemma \ref{lem:appendix-iso-lemma} implies the two outside terms of \eqref{eqn:ses-in-proof} each have $L$-dimension $(K_0:\mathbf Q_p)$. In particular, by counting dimensions we deduce  that \eqref{eqn:ses-in-proof} is surjective on the right. In particular, the natural sequence
$$
0 \rightarrow D_{\crys}(D) \rightarrow D_{\crys}(D_1) \rightarrow D_{\crys}(\mathcal R_{K,L}(\delta_1)) \rightarrow 0
$$
is exact. Since $\delta_1$ is crystalline, Lemma \ref{lem:annoying-issue} implies that $\widetilde D_1 \in H^1_f(D\left({\delta_1}^{-1})\right))$, which completes the proof.
\end{proof}

Following Lemma \ref{lem:ramification-lemma}, we see that in order to control $\dim_L \mathfrak t_D^{\Ref}/H^1_f(\ad(D))$, it suffices to control $\dim_L \mathfrak t_D^{\Ref,\HT}/H^1_f(\ad(D))$ where the Hodge--Tate condition is added as well. Here is the the crucial estimate for that dimension, the left-hand term in \eqref{eqn:key-sequence}.

\begin{prop}\label{prop:constant-weight-bounds}
$\dim_L \mathfrak t_D^{\Ref,\HT}/H^1_f(\ad(D)) \leq \# \{\sigma \in \Sigma_K \mid c_\sigma \neq 1\}$.
\end{prop}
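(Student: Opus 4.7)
The plan is to follow the template of \cite[Section 3]{Bergdall-Smoothness}, adapted from the crystalline to the semi-stable non-crystalline case, reducing the bound to a Selmer-style computation on the triangulation parameter $\delta_1\delta_2^{-1}$.

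First, I would prove that every $\widetilde D \in \mathfrak{t}_D^{\Ref}$ admits a triangulation
\[
0 \to \mathcal{R}_{K,L[\varepsilon]}(\widetilde\delta_1) \to \widetilde D \to \mathcal{R}_{K,L[\varepsilon]}(\widetilde\delta_2) \to 0
\]
lifting \eqref{eqn:appendix-triangulated}. The refined hypothesis provides a $\varphi^f$-eigenvector $\widetilde m \in D_{\crys}(\widetilde D \otimes \LT_{\varpi_K}(z^{\widetilde\eta_1}))$ lifting a generator of the one-dimensional (over $K_0 \otimes_{\mathbf{Q}_p} L$) space $D_{\crys}(D \otimes \LT_{\varpi_K}(z^{h_1}))^{\varphi^f = \Phi_{\varpi_K}}$, where the dimension count uses Lemma~\ref{lem:appendix-iso-lemma}. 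The element $\widetilde m$ saturates to a rank-one sub-$B$-pair, which extends to a rank-one sub-$(\varphi,\Gamma_K)$-module of $\widetilde D$ itself because the obstruction to such an extension lies in $H^2(D \otimes \delta_2^{-1}) = 0$ (Lemma~\ref{lemm:cohomology-vanish}); this is exactly the argument of \cite[Lemma~3.15]{Bergdall-Smoothness}.

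Having fixed the triangulation, $\widetilde D$ gives rise to an extension class $e(\widetilde D) \in H^1(\mathcal{R}_{K,L}(\delta_1\delta_2^{-1}))$, well-defined modulo the self-deformations of the graded pieces. Because the characters $\delta_i$ are crystalline with distinct Hodge--Tate weights, any Hodge--Tate deformation of $\delta_i$ is automatically crystalline, hence an $f$-deformation. Combining this with the Sen-theoretic fact that the Hodge--Tate hypothesis on $\widetilde D$ forces $e(\widetilde D)$ into the subspace $H^1_{\HT}(\delta_1\delta_2^{-1})$ of Hodge--Tate extensions, I obtain a well-defined linear map
\[
\phi \colon \mathfrak{t}_D^{\Ref,\HT}/H^1_f(\ad D) \longrightarrow H^1_{\HT}(\delta_1\delta_2^{-1})/H^1_f(\delta_1\delta_2^{-1}).
\]
The map $\phi$ is injective: if $e(\widetilde D) \in H^1_f$ and both $\widetilde\delta_i$ are $f$-deformations, then $\widetilde D$ is a semi-stable deformation, i.e.\ lies in $H^1_g(\ad D)$, which equals $H^1_f(\ad D)$ by Lemma~\ref{lem:containment}.

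It remains to compute $\dim_L H^1_{\HT}(\delta_1\delta_2^{-1})/H^1_f(\delta_1\delta_2^{-1})$. By a standard analysis of $H^1$ for a character of $K^\times$, this quotient has dimension equal to the number of embeddings $\tau \in \Sigma_K$ at which the Hodge--Tate weight of $\delta_1\delta_2^{-1}$ is strictly positive. Since $\HT_\tau(\delta_1\delta_2^{-1}) = h_{c_\tau(1),\tau} - h_{c_\tau(2),\tau}$ and $h_{1,\tau} < h_{2,\tau}$, this weight is positive precisely when $c_\tau$ is the nontrivial transposition, yielding the bound $\#\{\tau : c_\tau \neq 1\}$. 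The main obstacle is the triangulation-lifting in the first step: the crystalline-case argument of \cite[Lemma~3.15]{Bergdall-Smoothness} relies on working inside $D_{\crys}$, and in the present setting we must verify that Lemma~\ref{lem:appendix-iso-lemma} (ensuring that $D_{\crys}(\delta_1) \to D_{\crys}(D)$ is an isomorphism and that the crystalline eigenvalue is simple) together with Lemma~\ref{lemm:cohomology-vanish} suffices to push the argument through despite the non-crystalline monodromy.
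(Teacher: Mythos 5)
Your proposal is organized around a different invariant than the paper's proof: you try to extract from $\widetilde D$ an extension class $e(\widetilde D)$ in $H^1(\mathcal R_{K,L}(\delta_1\delta_2^{-1}))$ and bound things by the quotient $H^1_{\HT}/H^1_f$ of that group, whereas the paper never extracts such a class. It instead shows (Steps 1--2) that modulo $H^1_f(\ad D)$ any $\widetilde D$ comes from an honest class in $H^1_{/f}(D\otimes\delta_2^{-1})$, then (Step 4) that this class dies under the map $H^1_{/f}(D\otimes\delta_2^{-1})\to H^1_{/f}(\delta_2\delta_2^{-1})$, and finally (Step 3) computes the dimension of the kernel of that map by proving the connecting surjection $H^1_f(\delta_2\delta_2^{-1})\twoheadrightarrow H^2(\delta_1\delta_2^{-1})$. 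The advantage of the paper's route is that one only ever works with genuine cohomology classes, and the potential non-vanishing of $H^2(\delta_1\delta_2^{-1})$ is handled cleanly by the surjectivity in Step 3.

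There are two concrete gaps in your version. First, the map $\phi$ is not well-defined as stated. For a deformation $\widetilde D$ lifting the triangulation, the ``extension class'' you want is the first-order part of a class in $H^1$ of an $\mathcal R_{K,L[\varepsilon]}$-module, and it only lives in $H^1(\delta_1\delta_2^{-1})$ after quotienting by the line $L\cdot e(D)$ coming from the boundary map $H^0(\mathcal R_{K,L})\to H^1(\delta_1\delta_2^{-1})$ of the triangulation. Saying it is ``well-defined modulo the self-deformations of the graded pieces'' does not repair this, because $e(D)$ is the class of the semi-stable non-crystalline extension and hence lies in $H^1_{\HT}$ but not in $H^1_f$, so the quotient by $L\cdot e(D)$ genuinely interacts with your target $H^1_{\HT}/H^1_f$ and changes its dimension. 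Second, your injectivity argument (``if $e(\widetilde D)\in H^1_f$ and both $\widetilde\delta_i$ are $f$-deformations, then $\widetilde D\in H^1_g(\ad D)$'') is an assertion, not an argument: you are deforming around a point $e(D)\notin H^1_f$, and it is not immediate that an $f$-direction tangent vector at a non-$f$ point gives a semi-stable deformation of $D$. Indeed, the paper's Step 3 is precisely where the non-crystalline monodromy of $D$ is used (the pullback $D_\iota$ is shown to be semi-stable non-crystalline, giving the crucial surjectivity onto $H^2$), and there is no analogue of this step in your sketch. Absent these points, the final numerology $\#\{\tau : c_\tau\neq 1\}$ is suggestive but does not constitute a proof.
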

\begin{proof}
In ths proof we use the notation $H^1_{/f}(E)$ for $H^1(E)/H^1_f(E)$. With this notation, we note that there is a natural inclusion  $\mathfrak t_D^{\Ref,\HT}/H^1_f(\ad D) \subset H^1_{/f}(D\left(\delta_2^{-1}\right))$. To see this, consider the diagram
\begin{equation}\label{eqn:second-diagram}
\xymatrix{
 & & \mathfrak t_D^{\Ref,\HT}/H^1_f(\ad(D)) \ar[d]\\
0 \ar[r] & H^1_{/f}(D\left(\delta_2^{-1}\right)) \ar[r] & \ar[r] H^1_{/f}(\ad(D)) \ar[r] & H^1_{/f}(D\left(\delta_1^{-1}\right)) \ar[r] & 0.
}
\end{equation}
The bottom row is exact by part (3) of Lemma \ref{lem:appendix-cohomology}, and then the proof of Lemma \ref{lem:ramification-lemma} implies that the composition from the top to the lower right is trivial. In fact, we make a stronger claim than just noted:

\begin{claim}
We have
$$
\mathfrak t_D^{\Ref,\HT}/H^1_f(\ad(D)) \subseteq \ker\left(H^1_{/f}(D\left(\delta_2^{-1}\right)) \rightarrow H^1_{/f}(\mathcal R_{K,L}(\delta_2\delta_2^{-1}))\right).
$$
Moreover, the natural map
$$
H^1_{/f}(D\left(\delta_2^{-1}\right)) \rightarrow H^1_{/f}(\mathcal R_{K,L}(\delta_2\delta_2^{-1}))
$$
is surjective. Thus,
$$
\dim_L \mathfrak t_D^{\Ref,\HT}/H^1_f(\ad(D)) \leq \dim H^1_{/f}(D\left(\delta_2^{-1}\right)) - \dim_L H^1_{/f}(\mathcal R_{K,L}(\delta_2\delta_2^{-1})).
$$
(We write $\delta_2\delta_2^{-1}$ to emphasize that the maps on cohomology is induced from the natural quotient $D\rightarrow \mathcal R_{K,L}(\delta_2)$.)
\end{claim}
The proof of the first portion of the claim follows from the technique in \cite[Section 3.3]{Bergdall-Smoothness}. Let $\widetilde D \in \mathfrak t_{D}^{\Ref,\HT}$. By the first paragraph of this proof, after changing $\widetilde D$ by an element in $H^1_f(\ad(D))$, we may suppose that $\widetilde D$ lies in the image of $H^1(D\left(\delta_2^{-1}\right)) \rightarrow H^1(\ad(D))$. By \cite[Lemma 3.8(a)]{Bergdall-Smoothness} there is a constant deformation $\mathcal R_{K,L}(\delta_1)[\varepsilon] \hookrightarrow \widetilde D$ with saturated image. Let $\mathcal R_{K,L[\varepsilon]}(\widetilde \delta_2)$ be the cokernel. By \cite[Lemma 3.8(b)]{Bergdall-Smoothness} the image of $\widetilde D$ in $H^1(\mathcal R_{K,L}(\delta_2\delta_2^{-1}))$ is the deformation $\widetilde \delta_2$ of $\delta_2$. But $\widetilde \delta_2$ is Hodge--Tate because $\widetilde D$ is, and a Hodge--Tate deformation of a crystalline character is a crystalline character, so $\widetilde D$ has trivial image in $H^1_{/f}(\mathcal R_{K,L}(\delta_2\delta_2^{-1}))$.

We now prove the second portion of the claim. By local Tate duality and \cite[Corollary 1.4.10]{Benois-GreenbergL} (which describes how the Selmer groups $H^1_f$ behave under duality), it is equivalent to check that $H^1_f(\mathcal R_{K,L}(\delta_2\delta_2^{-1}\chi_{\cycl})) \rightarrow H^1_f(D\left(\delta_2^{-1}\chi_{\cycl}\right))$ is injective. If $\iota \in H^0(\mathcal R_{K,L}(\delta_1\delta_2^{-1}\chi_{\cycl})) = \Hom(\mathcal R_{K,L}, \mathcal R_{K,L}(\delta_1\delta_2^{-1}\chi_{\cycl}))$ is a non-zero morphism, then its image in $H^1(\mathcal R_{K,L}(\delta_2\delta_2^{-1}\chi_{\cycl}))$ is the pullback $D_{\iota}$ that into a diagram
\begin{equation}\label{eqn:D-iota}
\xymatrix{
0 \ar[r] & \mathcal R_{K,L}(\delta_2\delta_2^{-1}\chi_{\cycl}) \ar@{=}[d]\ar[r] & D_{\iota} \ar[d] \ar[r] & \mathcal R_{K,L} \ar[d]^-{\iota} \ar[r] & 0\\
0 \ar[r] & \mathcal R_{K,L}(\delta_2\delta_2^{-1}\chi_{\cycl}) \ar[r] & D^{\vee}\left(\delta_2\chi_{\cycl}\right) \ar[r] & \mathcal R_{K,L}(\delta_2\delta_1^{-1}\chi_{\cycl}) \ar[r] & 0.
}
\end{equation}
The vertical arrows in \eqref{eqn:D-iota} are all injections by construction. Since $D$ is semi-stable, non-crystalline, of rank two, the same is true for $D^{\vee}\left(\delta_2\chi_{\cycl}\right)$. By part (1) of Lemma \ref{lem:appendix-iso-lemma} the functor $D_{\crys}$ induces an isomorphism 
$$
D_{\crys}(\mathcal R_{K,L}(\delta_2\delta_2^{-1}\chi_{\cycl})) \cong D_{\crys}(D^{\vee}\left(\delta_2\chi_{\cycl}\right)),
$$
which must factor through an isomorphism
$$
D_{\crys}(\mathcal R_{K,L}(\delta_2\delta_2^{-1}\chi_{\cycl})) \cong D_{\crys}(D_\iota).
$$
So, $D_{\crys}(D_\iota) \rightarrow D_{\crys}(\mathcal R_{K,L})$ is the zero map, which proves that $D_\iota \not\in H^1_f(D\left(\delta_2\delta_2^{-1}\chi_{\cycl}\right))$.

With the claim proven, we can finish the proof of the proposition. Recall that part (1) of Proposition \ref{prop:app-benois-dimensions} determines the $L$-dimension of the $H^1_f(E)$, when $E$ is a semi-stable $(\varphi,\Gamma_K)$-module. In particular, by the Euler--Poincar\'e formula (see \cite[Theorem 1.2(a)]{Liu-CohomologyDuality}) we have that if $E$ has rank $d$, then
$$
\dim_L H^1_{/f}(E) = d\cdot (K:\mathbf Q_p) - h^-(E) - \dim_L H^2(E),
$$
where we recall that $h^-(E)$ is the number of negative Hodge--Tate weights. Taking $E = \mathcal R_{K,L} \cong \mathcal R_{K,L}(\delta_2\delta_2^{-1})$ we see that
$$
\dim_L H^1_{/f}(\mathcal R_{K,L}(\delta_2\delta_2^{-1})) = (K:\mathbf Q_p).
$$
Taking $E = D\left(\delta_2^{-1}\right)$ we see, using part (1) of Lemma \ref{lem:appendix-cohomology} and the fact that $c_\sigma$ is non-trivial if and only if $\HT_\sigma(\delta_1) > \HT_\sigma(\delta_2)$, that
\begin{align*}
\dim_L H^1_{/f}(D\left(\delta_2^{-1}\right)) &= 2(K:\mathbf Q_p) - \# \{\sigma \mid \HT_\sigma(\delta_1) < \HT_\sigma(\delta_2)\} \\
&= (K:\mathbf Q_p) + \# \{\sigma \in \Sigma_K \mid c_\sigma \neq 1\}.
\end{align*}
Putting these calculations together, we see that
\begin{align*}
\dim \mathfrak t_D^{\Ref,\HT} &\leq \dim_L H^1_{/f}(D\left(\delta_2^{-1}\right)) - \dim_L H^1_{/f}(\mathcal R_{K,L}(\delta_2\delta_2^{-1}))\\
&= (K:\mathbf Q_p) + \# \{\sigma \in \Sigma_K \mid c_\sigma \neq 1\} - (K:\mathbf Q_p)\\
&= \#\{\sigma \in \Sigma_K \mid c_\sigma \neq 1\}.
\end{align*}
The proposition has been proven.
\end{proof}

Finally, we have the following corollary on the dimension of $\mathfrak t_D^{\Ref}/H^1_f(\ad(D))$.

\begin{cor}\label{corollary:semi-stable-bound}
$\dim_L \mathfrak t_D^{\Ref}/H^1_f(\ad(D)) \leq 2(K:\mathbf Q_p)$.
\end{cor}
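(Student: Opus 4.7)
The corollary follows immediately by combining the two main ingredients just established. The plan is to simply add the bounds from Lemma \ref{lem:ramification-lemma} and Proposition \ref{prop:constant-weight-bounds}.

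More precisely, Lemma \ref{lem:ramification-lemma} gives the inequality
\begin{equation*}
\dim_L \mathfrak t_D^{\Ref}/H^1_f(\ad D) \leq \dim_L \mathfrak t_D^{\Ref,\HT}/H^1_f(\ad D) + 2(K:\mathbf Q_p) - \# \{ \tau \mid c_\tau \neq 1\},
\end{equation*}
while Proposition \ref{prop:constant-weight-bounds} gives $\dim_L \mathfrak t_D^{\Ref,\HT}/H^1_f(\ad D) \leq \# \{\tau \mid c_\tau \neq 1\}$. Substituting the latter into the former, the $\# \{\tau \mid c_\tau \neq 1\}$ terms cancel and we obtain the desired bound
\begin{equation*}
\dim_L \mathfrak t_D^{\Ref}/H^1_f(\ad D) \leq 2(K:\mathbf Q_p).
\end{equation*}

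There is no real obstacle here since all the work has already been done: the genuinely nontrivial steps were proving Proposition \ref{prop:constant-weight-bounds} (the bound on Hodge--Tate refined deformations modulo $H^1_f$, via the chain of inclusions culminating in Step 4) and controlling the Sen weight variation in Lemma \ref{lem:ramification-lemma}. The corollary itself is just the additivity of these two contributions along the short exact sequence \eqref{eqn:key-sequence}.
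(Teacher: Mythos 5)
Your proposal is correct and follows exactly the paper's own one-line proof: the authors deduce the corollary precisely by substituting the bound of Proposition \ref{prop:constant-weight-bounds} into the inequality of Lemma \ref{lem:ramification-lemma}, with the $\#\{\tau \mid c_\tau \neq 1\}$ terms cancelling. You have simply spelled out the arithmetic the paper leaves implicit.
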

\begin{proof}
This follows from Lemma \ref{lem:ramification-lemma} and Proposition \ref{prop:constant-weight-bounds}.
\end{proof}

\section{Decency}\label{app:decent}

\numberwithin{equation}{section}

The main goal of this appendix is the following theorem.

\begin{thm}\label{thm:decentvanishinggalois} Assume that $p \geq 5$. Let $x \in \mathscr{E}(\mathfrak{n})(\overline{\mathbf{Q}_p})$ be any point lying over a given weight $\lambda$, and let $\rho_x:G_F \to \mathrm{GL}_2(\overline{\mathbf{Q}_p})$ be the associated Galois representation constructed in \cite{JohanssonNewton}. If the image of the (semisimplified) mod-$p$ reduction $\overline{\rho_x}:G_F \to \mathrm{GL}_2(\overline{\mathbf{F}_p})$ contains $\mathrm{SL}_2(\mathbf{F}_p)$, then $H^{\ast}(\mathfrak{n},\mathscr{D}_{\lambda})_{\mathfrak{m}_x}$ is concentrated in the middle degree $d$, and in particular $x$ lies in $\mathscr{E}(\mathfrak{n})_{\mathrm{mid}}$.
\end{thm}

To prove this, pick a sufficiently large "radius" $\mathbf{s}$ for $\lambda$, and let $\mathbf{D}_{\lambda}^{\mathbf{s},\circ}$ be the associated unit ball in $\mathbf{D}_{\lambda}^{\mathbf{s}}$. Let $\mathfrak{m} \subset \mathbf{T}_{\mathbf{Z}_p}(\mathfrak{n})$ be the maximal ideal in the integral Hecke algebra associated with $\overline{\rho_x}$. Then $H^{\ast}(\mathfrak{n},\mathscr{D}_{\lambda})_{\mathfrak{m}_x}$ is obtained from $H^{\ast}(\mathfrak{n},\mathbf{D}_{\lambda}^{\mathbf{s},\circ})_{\mathfrak{m}}$ by inverting $p$ and taking a further localization, so it is clearly enough to prove the following result.

\begin{thm}Notation as above, the cohomology group $H^{\ast}(\mathfrak{n},\mathbf{D}_{\lambda}^{\mathbf{s},\circ})_{\mathfrak{m}}$ is concentrated in the middle degree.
\end{thm}

\begin{proof} We deduce this from work in progress of Caraiani-Tamiozzo \cite{CaraianiTamiozzo}. In the notation of the present paper, their work implies that if $p\geq 5$ and $K_p K^p \subset \mathrm{GL}_2(\mathbf{A}_{F}^{f})$ is any level subgroup, then $H^{\ast}_c(Y_{K_p K^p},\mathbf{Z}/p^n)_{\mathfrak{m}}$ is concentrated in  degree $d$ for any maximal ideal $\mathfrak{m}$ whose residual Galois representation contains $\mathrm{SL}_2(\mathbf{F}_p)$ in its image. By the Hochschild-Serre spectral sequence, we immediately deduce that if $M$ is any set-theoretically finite $\mathbf{Z}_p$-module with a continuous $K_p$-action, the cohomology groups $H^{\ast}_c(Y_{K_p K^p},M)_{\mathfrak{m}}$ are concentrated in degrees $\geq d$.  Rerunning this argument with $M$ replaced by its Pontryagin dual and with $\mathfrak{m}$ replaced by the ``dual" maximal idea, Poincar\'e duality then shows that $H^{\ast}(Y_{K_p K^p},M)_{\mathfrak{m}}$ is concentrated in degrees $\leq d$. But $H^{\ast}_c(Y_{K_p K^p},M)_{\mathfrak{m}}=H^{\ast}(Y_{K_p K^p},M)_{\mathfrak{m}}$ by \cite[Theorem 4.2]{NewtonThorne}, so we deduce that $H^{\ast}_c(Y_{K_p K^p},M)_{\mathfrak{m}}$ is concentrated in degree $d$.

We now apply these observations with $K^p = K_1(\mathfrak{n})$, $K_p = I$, and $M=\mathbf{D}_{\lambda}^{\mathbf{s},\circ} / \mathrm{Fil}^j$, and with $\mathfrak{m}$ as above, where $\mathrm{Fil}^j \subset \mathbf{D}_{\lambda}^{\mathbf{s},\circ}$ is the filtration constructed in \cite{JohanssonNewton}. This shows that $H^{\ast}(\mathfrak{n},\mathbf{D}_{\lambda}^{\mathbf{s},\circ}/\mathrm{Fil}^j)_{\mathfrak{m}}$ is concentrated in degree $d$, so now we conclude upon taking the limit over $j$, using \cite[Lemma 5.1.1]{JohanssonNewton}.

\end{proof}

\begin{proof}[Proof of Theorem \ref{thm:decency-ubiquitous}] As in the theorem, assume $p$ is a sufficiently large split prime. Then 2(c) follows from the assumption that $p$ splits completely, 2(b) follows from \cite{NewtonThorneSelmer}, and 2(a) follows from Theorem \ref{thm:decentvanishinggalois} by Serre's open image theorem \cite{SerreOpen}.
\end{proof}

\end{appendix}

\bibliography{bibpadichmf}
\bibliographystyle{abbrv}

\end{document}